\newcommand{\sectionruninhead}{\@startsection{subsection}{2}{0mm}
{-\baselineskip}{-0mm}{\bf\large}}
\newcommand{\subsubsectionruninhead}{\@startsection{subsubsection}{3}{0mm}
{-\baselineskip}{-0mm}{\bf\normalsize}}
\renewcommand{\l@subsection}{\@dottedtocline{3}{3.8em}{1.8em}}
\newtheorem*{proposition*}{Proposition}
\newtheorem{proposition}{Proposition}[chapter]
\newtheorem*{theoreme*}{Th\'eor\`eme}
\newtheorem{theoreme}[proposition]{Th\'eor\`eme}
\newtheorem*{corollaire*}{Corollaire}
\newtheorem{corollaire}[proposition]{Corollaire}
\newtheorem{lemme}[proposition]{Lemme}
\newtheorem{addendum}[proposition]{Addendum}
\newtheorem*{addendum*}{Addendum}
\newtheorem*{theorem-tolerance}{Conjecture de stabilit\'e molle}
\newtheorem*{theoreme-de-stabilite}{Th\'eor\`eme d'$\Omega$-stabilit\'e}
\newtheorem*{affirmation*}{Affirmation}
\newtheorem{affirmation}{Affirmation}
\newtheorem*{consequence*}{Cons\'equence}
\newtheorem{question}[proposition]{Question}
\newtheorem{question*}{Question}
\newtheorem*{probleme*}{Probl\`eme}
\newtheorem*{conjecture*}{Conjecture}
\newtheorem*{conjecture-finitude}{Conjecture de finitude}
\newtheorem*{conjecture-hyperbolicite}{Conjecture d'hyperbolicit\'e}
\theoremstyle{definition}
\newtheorem{definition}[proposition]{D\'efinition}
\theoremstyle{remark}
\newtheorem{remarque}[proposition]{Remarque}
\newtheorem{remarques}[proposition]{Remarques}
\newtheorem{exemples}[proposition]{Exemples}
\numberwithin{equation}{section}
   \def\DD{{\mathbb D}}
 \def\NN{{\mathbb N}}  
 \def\RR{{\mathbb R}}  \def\TT{{\mathbb T}}
 \def\ZZ{{\mathbb Z}}
  \def\cG{\mathcal{G}} \def\cM{\mathcal{M}} \def\cS{\mathcal{S}}
\def\cB{\mathcal{B}}    \def\cT{\mathcal{T}}
\def\cC{\mathcal{C}}  \def\cI{\mathcal{I}} \def\cO{\mathcal{O}} \def\cU{\mathcal{U}}
\def\cD{\mathcal{D}}   \def\cP{\mathcal{P}} \def\cV{\mathcal{V}}
  \def\cK{\mathcal{K}}  \def\cW{\mathcal{W}}
\def\cF{\mathcal{F}}   \def\cR{\mathcal{R}}
\newcommand{\Tang}{\cT}
\newcommand{\Cycl}{\cC}
\newcommand{\interior}{\operatorname{Int}}
\newcommand{\supp}{\operatorname{Supp}}
\newcommand{\diff}{\operatorname{Diff}}
\newcommand{\lip}{\operatorname{Lip}}
\newcommand{\id}{\operatorname{Id}}
\newcommand{\diam}{\operatorname{Diam}}
\newcommand{\per}{\operatorname{\cP er}}
\newcommand{\cper}{\operatorname{\mathbf{Per}}}
\newcommand{\ftrans}{\operatorname{\mathbf{fTrans}}}
\newcommand{\ctrans}{\operatorname{\mathbf{pTrans}}}
\newcommand{\cl}{\operatorname{\mathbf{Cl}}}
\newcommand{\corb}{\operatorname{\mathbf{Orb}}}
\newcommand{\orbf}{\operatorname{\mathbf{OrbF}}}
\newcommand{\eorb}{\operatorname{\mathbf{pOrb}}}
\newcommand{\segment}{\operatorname{\mathbf{Seg}}}
\newcommand{\psegment}{\operatorname{\mathbf{pSeg}}}
\newcommand{\card}{\operatorname{Card}}
\begin{document}
\selectlanguage{french}

\pagestyle{empty}

\begin{centering}

\mbox{}
\vspace{2cm}

\textbf{\huge Perturbation de la dynamique de diff\'eomorphismes en topologie $C^1$}

\vspace{2cm}

\textbf{\Large Sylvain \textsc{Crovisier}}

\vspace{2cm}

\textbf{\large D\'ecembre 2009}

\end{centering}

\chapter*{}
\mbox{}\vspace{1cm}

\section*{Perturbation de la dynamique de diff\'eomorphismes en topologie $C^1$}

Les travaux pr\'esent\'es dans ce m\'emoire portent sur la dynamique
de dif\-f\'e\-o\-mor\-phis\-mes de vari\'et\'es compactes.
Pour l'\'etude des propri\'et\'es g\'en\'eriques ou pour la construction
d'exemples, il est souvent utile de savoir perturber un syst\`eme.
Ceci soul\`eve g\'en\'eralement des probl\`emes d\'elicats~:
une modification locale de la dynamique peut engendrer un changement
brutal du comportement des orbites. En topologie $C^1$,
nous proposons diverses techniques permettant de perturber
tout en contr\^olant la dynamique~: mise en transversalit\'e, connexion d'orbites, perturbation de la dynamique tangente, r\'ealisation d'extensions topologiques,...
Nous en tirons diverses applications \`a la description de
la dynamique des diff\'eomorphismes $C^1$-g\'en\'eriques.
\vspace{2cm}

\selectlanguage{english}
\section*{Perturbation of the dynamics of diffeomorphisms in the $C^1$-topology}

This memoir deals with the dynamics of diffeomorphisms
of compact manifolds.
For the study of generic properties or for the construction of examples,
it is often useful to be able to perturb a system.
This generally leads to delicate problems:
a local modification of the dynamic may cause a radical change
in the behavior of the orbits. For the $C^1$ topology,
we propose various techniques which allow to perturb
while controlling the dynamic: setting in transversal position, connection of orbits, perturbation of the tangent dynamics,...
We derive various applications to the description of
$C^1$-generic diffeomorphisms.
\selectlanguage{francais}


\tableofcontents
\pagestyle{headings}

\setcounter{chapter}{-1}
{\renewcommand{\thechapter}{}
\renewcommand{\chaptername}{}
\chapter[\negthickspace \negthickspace \negthickspace Introduction]{Introduction}}
\setcounter{chapter}{0}

Le dynamicien \'etudie les syst\`emes qui \'evoluent au cours du temps.
De nombreux exemples sont fournis par la m\'ecanique classique \`a travers une \'equation diff\'erentielle.
Poincar\'e a montr\'e dans son m\'emoire~\cite{poincare1} sur la stabilit\'e du syst\`eme solaire que l'on peut d\'ecrire
le comportement asymptotique des trajectoires sans passer par leur calcul explicite (souvent vain).
Nous nous int\'eressons dans ce texte aux syst\`emes \`a temps discret obtenus en it\'erant un diff\'eomorphisme.
Ils sont naturellement reli\'es aux syst\`emes \`a temps continu si l'on consid\`ere une section de Poincar\'e ou le temps $1$ du flot associ\'e.
Les syst\`emes provenant de la physique poss\`edent bien souvent des sym\'etries suppl\'ementaires~:
nous n'abordons pas ici le cas sp\'ecifique des diff\'eomorphismes conservatifs, i.e. qui pr\'eservent une
forme volume ou symplectique (voir~\cite{panorama} pour un panorama des diff\'eomorphismes de surface conservatifs).

La classe des diff\'eomorphismes hyperboliques est l'une des premi\`eres
\'etudi\'ees et des mieux d\'ecrites.
Dans ce m\'emoire\footnote{\emph{Je suis tr\`es reconnaissant \`a J\'er\^ome Buzzi, Rafael Potrie et Charles Pugh
pour l'attention qu'ils ont port\'ee sur ce texte et leurs nombreuses remarques.}}, nous discuterons essentiellement des diff\'eomorphismes non hyperboliques g\'en\'eriques.
(Nous renvoyons \`a~\cite{bdv} pour une vue d'ensemble des dynamiques faiblement hyperboliques.)

\section{Dynamiques g\'en\'eriques}
Pla\c{c}ons-nous sur une vari\'et\'e diff\'erentiable compacte $M$.
L'\'etude de la dynamique d'un diff\'eomorphisme arbitraire de $M$ peut sembler parfois hors de port\'ee~:
on peut imaginer la coexistence et l'accumulation d'une grande vari\'et\'e de comportements diff\'erents,
emp\^echant l'espoir de d\'ecomposer et de structurer la dynamique.
Il se pourrait cependant que de tels diff\'eomorphismes, pathologiques, puissent toujours \^etre
approch\'es par d'autres, plus simples \`a analyser~: il devient plus raisonnable d'esp\'erer
\emph{d\'ecrire la dynamique d'une partie dense des syst\`emes}.
Ce point de vue soul\`eve une difficult\'e, que nous discuterons plus amplement par la suite~:
nous devons pr\'eciser l'espace des diff\'eomorphismes consid\'er\'es, ainsi que le choix d'une
topologie sur cet espace.

Nous travaillerons en g\'en\'eral avec des espace de diff\'eomorphismes
m\'etrisables complets et nous pourrons alors utiliser
la g\'en\'ericit\'e de Baire. Une propri\'et\'e sera dite \emph{g\'en\'erique} si elle est satisfaite
par un ensemble de diff\'eomorphismes contenant une intersection d\'enombrable d'ouverts denses
(un ensemble G$_\delta$ dense).
La r\'ealisation simultan\'ee de deux propri\'et\'es g\'en\'eriques est encore g\'en\'erique.
La g\'en\'ericit\'e de Baire est donc un outil commode pour d\'ecrire des ensembles denses de diff\'eomorphismes.

\paragraph{0.1.a \quad Choix d'un espace de diff\'eomorphismes.}
Les propri\'et\'es g\'en\'eriques que l'on peut obtenir d\'ependent beaucoup
de la r\'egularit\'e des diff\'eomorphismes que l'on \'etudie.
Notons $\diff^k(M)$ l'espace des diff\'eomorphismes de classe $C^k$ de $M$.
Nous allons discuter l'existence de points p\'eriodiques
de diff\'eomorphismes g\'en\'eriques pour diff\'erents espaces.
\smallskip

\begin{itemize}
\item[--] Dans $\diff^0(M)$, les hom\'eomorphismes poss\`edent g\'en\'eriquement un ensemble
non d\'e\-nom\-bra\-ble de points p\'eriodiques (voir~\cite{ppss,hurley-livre}).
\smallskip

\item[--] Dans $\diff^1(M)$, l'existence de points p\'eriodiques est plus difficile \`a obtenir~:
c'est l'objet du lemme de fermeture de Pugh. L'ensemble des points ayant une m\^eme p\'eriode est fini.
\smallskip

\item[--] Dans $\diff^k(M)$, $k>1$, l'existence de points p\'eriodique n'est en g\'en\'eral pas connue.
\smallskip

\item[--] Herman a montr\'e~\cite{herman-closing1,herman-closing2} que sur certains tores symplectiques, pour
des ensembles ouverts de fonctions hamiltoniennes $C^\infty$ il existe des ouverts de surfaces
d'\'energies r\'eguli\`eres sur lesquelles il n'y a pas d'orbites p\'eriodiques.
Les dynamiques hamiltoniennes g\'en\'eriques sur ces surfaces d'\'energies n'ont donc pas d'orbite p\'eriodique.
\smallskip
\end{itemize}

\emph{Nous travaillerons par la suite en r\'egularit\'e $C^1$.} Il y a pour cela plusieurs raisons.
L'existence d'une structure diff\'erentiable donne une rigidit\'e minimale, permettant par exemple
la continuation des points p\'eriodiques, l'\'etude des exposants de la dynamique, l'obtention
de sous-vari\'et\'es invariantes.
Comme nous l'avons vu, la perturbation de diff\'eomorphismes en topologie $C^1$ est
assez souple pour cr\'eer des points p\'eriodiques.
La topologie $C^1$ poss\`ede \'egalement une propri\'et\'e importante~:
si $f$ est un diff\'eomorphisme de $\RR^d$ \`a support compact,
tous les conjugu\'es $x\mapsto a\; f(a^{-1}\;x)$ par des homoth\'eties de rapport $a\in (0,1)$ petit
sont \`a la m\^eme distance \`a l'identit\'e.
Cette remarque est \`a la base des principales techniques de perturbation dans $\diff^1(M)$.
En revanche, la r\'egularit\'e $C^1$ n'est pas suffisante pour certains r\'esultats
requ\'erant un contr\^ole de distorsion.

\paragraph{0.1.b \quad Dynamiques hyperboliques.}
Tr\`es rapidement, les dynamiciens ont cherch\'e quelles \'etaient les dynamiques structurellement
stables, i.e. dont le comportement topologique ne change pas apr\`es perturbation du syst\`eme.
Anosov et Smale ont introduit une classe de diff\'eomorphismes ayant cette propri\'et\'e~:
ce sont les diff\'eomorphismes hyperboliques.
Un diff\'eomorphisme est hyperbolique si, au-dessus de tout ensemble invariant
suffisamment r\'ecurrent, on peut d\'ecomposer l'espace
tangent de $M$ en deux fibr\'es lin\'eaires invariants, le premier (le fibr\'e stable) \'etant uniform\'ement
contract\'e, le second (le fibr\'e instable) uniform\'ement dilat\'e.
Paradoxalement, la stabilit\'e de ces syst\`emes face aux perturbations ne les emp\^eche
pas d'avoir une dynamique parfois ``chaotiques''. La dynamique de ces syst\`emes est tr\`es bien comprise,
notamment du point de vue symbolique (existence de partitions de Markov, de codages) et du point de vue
de la th\'eorie ergodique (existence de mesures physiques).

L'\'equivalence entre la stabilit\'e et l'hyperbolicit\'e a fait l'objet de nombreux
travaux et a finalement \'et\'e obtenue par Ma\~n\'e.
Ces recherches ont fait \'emerger de nombreux outils
int\'eressants pour l'\'etude des dynamiques diff\'erentiables.
Il est apparu cependant que l'ouvert des dynamiques hyperboliques n'est en g\'en\'eral
pas dense dans l'espace des diff\'eomorphismes.
Des objectifs naturels guident alors l'\'etude des dynamiques g\'en\'eriques~:
\smallskip

\begin{itemize}
\item[--] la compr\'ehension des d\'efauts d'hyperbolicit\'e,
\smallskip

\item[--] la g\'en\'eralisations de propri\'et\'es satisfaites par les syst\`emes hyperboliques
\`a des classes plus larges de dynamiques,
\smallskip

\item[--] la recherche de nouveaux ph\'enom\`enes dynamiques.

\end{itemize}
\medskip

Parmi les dynamiques non hyperboliques, deux obstructions apparaissent fr\'equemment~:
les tangences homoclines et les cycles h\'et\'erodimensionnels (voir la figure~\ref{f.bifurcation}).
\begin{figure}[ht]
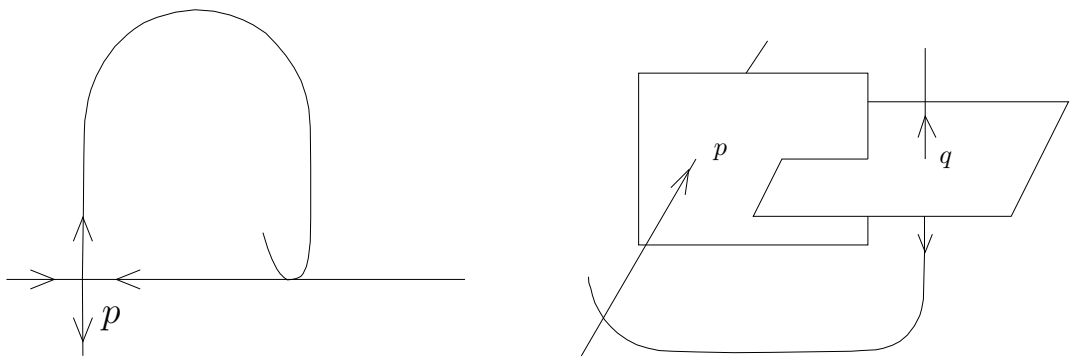

\begin{center}
\mbox{\input{tangence.pstex_t}\quad\quad\quad\quad
\input{cycle.pstex_t}}
\end{center}
\caption{Tangence homocline et cycle h\'et\'erodimensionnel. \label{f.bifurcation}}
\end{figure}
\smallskip

\begin{itemize}
\item[--] Une orbite p\'eriodique hyperbolique poss\`ede une \emph{tangence homocline}
si ses vari\'et\'es stable et instable poss\`edent un point d'intersection non transverse.
Certains vecteurs sont alors uniform\'ement contract\'es par it\'erations pass\'ees et
futures et devraient appartenir simultan\'ement aux fibr\'es stable et instable.
\smallskip

\item[--] Deux orbites p\'eriodiques hyperboliques forment
un \emph{cycle h\'et\'erodimensionnel} si elles sont li\'ees par des orbites h\'et\'eroclines
et si leurs dimensions stables diff\`erent.
Il n'existe donc pas de fibr\'e stable de dimension constante au-dessus du cycle.
\end{itemize}

Pour les diff\'eomorphismes de surface de r\'egularit\'e $C^k$, $k>1$, Newhouse a montr\'e
que l'ensemble des diff\'eomorphismes ayant une tangence homocline est dense dans un ouvert (non vide)
de l'espace des diff\'eomorphismes.
En dimension plus grande (et en toute r\'egularit\'e $C^k$, $k\geq 1$), Abraham et Smale
ont construit un ouvert dans lequel les dynamiques pr\'esentant un cycle h\'et\'erodimensionnel
sont denses. Le cas des diff\'eomorphismes de classe $C^1$ des surfaces n'est pas connu.

\begin{conjecture*}[Smale]\index{conjecture! de Smale}
Pour toute surface compacte $S$, l'ensemble des diff\'eomorphismes hyperboliques est dense dans
$\diff^1(S)$.
\end{conjecture*}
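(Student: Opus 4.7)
Mon plan s'appuierait sur le th\'eor\`eme de dichotomie de Pujals--Sambarino: tout diff\'eomorphisme $C^1$-g\'en\'erique d'une surface qui n'admet pas de tangence homocline (associ\'ee \`a une orbite p\'eriodique hyperbolique) est uniform\'ement hyperbolique, v\'erifiant Axiome~A et la condition de non-cycle. Ce r\'esultat, lui-m\^eme obtenu gr\^ace aux techniques de perturbation $C^1$ du type de celles discut\'ees dans ce m\'emoire, ram\`ene la conjecture de Smale \`a l'\'enonc\'e suivant: dans $\diff^1(S)$, l'ensemble des diff\'eomorphismes pr\'esentant une tangence homocline n'est dense dans aucun ouvert non vide.

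La strat\'egie consisterait alors \`a d\'etruire, par perturbation $C^1$ arbitrairement petite, toute tangence homocline. Concr\`etement, on \'enum\`ere les orbites p\'eriodiques hyperboliques $(p_n)_{n\in\NN}$ et, pour chacune d'entre elles, on exploite les techniques de mise en transversalit\'e et de connexion d'orbites pr\'esent\'ees dans ce m\'emoire pour rendre transverses les vari\'et\'es stable et instable de $p_n$ le long de chacun de leurs points d'intersection. On obtiendrait ainsi, pour chaque $n$, un ouvert dense $\mathcal{U}_n\subset\diff^1(S)$ de diff\'eomorphismes sans tangence homocline associ\'ee \`a $p_n$, et l'on conclurait par un argument de Baire. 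La remarque de l'introduction sur l'invariance de la distance \`a l'identit\'e par conjugaison homoth\'etique devrait permettre de localiser la perturbation pr\`es du point tangent sans affecter la dynamique au loin, condition essentielle pour que la mise en transversalit\'e locale ne d\'etruise pas la structure de la classe homocline.

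L'obstacle principal, qui explique pourquoi cette conjecture demeure ouverte, appara\^it d\`es la premi\`ere \'etape: d\'etruire la tangence associ\'ee \`a l'orbite $p_n$ risque de cr\'eer des tangences nouvelles pour d'autres orbites p\'eriodiques $p_m$ arbitrairement voisines et appartenant \`a la m\^eme classe homocline, car les vari\'et\'es stable et instable de ces derni\`eres accumulent celles de $p_n$ sur des ensembles de type Cantor. Le ph\'enom\`ene de Newhouse, en r\'egularit\'e $C^k$ pour $k\geq 2$, montre pr\'ecis\'ement que cette propagation des tangences peut \^etre robuste d\`es que l'on dispose d'un contr\^ole de distorsion. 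La question essentielle est donc de savoir si la flexibilit\'e suppl\'ementaire offerte par la topologie $C^1$ --- l'absence de contr\^ole de distorsion jouant ici en notre faveur --- suffit \`a briser cette persistance des tangences homoclines. Faute de comprendre la g\'eom\'etrie fine des configurations de Cantor form\'ees par l'intersection des laminations stable et instable lors des perturbations $C^1$, cet obstacle appara\^it comme le coeur de la difficult\'e, et je ne saurais pas le franchir avec les outils actuellement disponibles.
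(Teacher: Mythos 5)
Cet \'enonc\'e est une \emph{conjecture} ouverte~: le m\'emoire ne la d\'emontre pas, il la cite comme probl\`eme de Smale et rappelle explicitement qu'elle n'est pas r\'esolue. Il n'y a donc pas de preuve \`a laquelle comparer la v\^otre, et vous avez raison de conclure que vous ne savez pas la franchir. Votre r\'eduction initiale est correcte et co\"\i ncide avec le point de vue du texte~: le th\'eor\`eme de Pujals--Sambarino (th\'eor\`eme~\ref{t.ps}) donne la conjecture de Palis en dimension~$2$ et topologie $C^1$, et comme il n'existe pas de cycle h\'et\'erodimensionnel sur une surface, la conjecture de Smale \'equivaut bien \`a montrer que $\overline{\cT}$ est d'int\'erieur vide dans $\diff^1(S)$.

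En revanche, votre deuxi\`eme paragraphe contient un glissement qu'il faut signaler. La strat\'egie consistant \`a \'enum\'erer les orbites p\'eriodiques, \`a mettre en transversalit\'e leurs vari\'et\'es invariantes et \`a conclure par Baire ne fait que red\'emontrer le th\'eor\`eme de Kupka--Smale (th\'eor\`eme~\ref{t.kupka-smale})~: elle produit un G$_\delta$ dense de diff\'eomorphismes \emph{sans} tangence homocline, ce qui est d\'ej\`a connu et ne suffit pas. L'ensemble $\cT$ est maigre, mais son adh\'erence $\overline{\cT}$ peut a priori contenir un ouvert~; un diff\'eomorphisme Kupka--Smale peut tr\`es bien rester dans $\overline{\cT}$ et n'\^etre ni hyperbolique ni perturbable en un diff\'eomorphisme hyperbolique. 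Ce qu'il faudrait d\'emontrer est que tout diff\'eomorphisme peut \^etre perturb\'e en un diff\'eomorphisme \emph{uniform\'ement loin} des tangences, c'est-\`a-dire hors de $\overline{\cT}$, et c'est pr\'ecis\'ement l\`a que toutes les approches connues \'echouent. Le seul r\'esultat partiel dans cette direction mentionn\'e dans le texte est le th\'eor\`eme de Moreira (th\'eor\`eme~\ref{t.gugu})~: il n'existe pas de tangence homocline \emph{robuste} dans $\diff^1(S)$, ce qui exclut le m\'ecanisme de Newhouse tel qu'il op\`ere en classe $C^2$, mais ne suffit pas \`a exclure que les tangences soient localement denses. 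Votre troisi\`eme paragraphe identifie correctement cette difficult\'e de propagation des tangences~; il faudrait simplement la formuler en termes de distance \`a $\overline{\cT}$ plut\^ot qu'en termes d'appartenance \`a $\cT$.
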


Palis a conjectur\'e que ces bifurcations sont les seules obstructions \`a l'hyperbolicit\'e.

\begin{conjecture*}[Palis]\index{conjecture! de Palis}
Dans $\diff^k(M)$, $k\geq 1$, tout diff\'eomorphisme peut \^etre approch\'e par
une dynamique hyperbolique ou par une dynamique poss\'edant une tangence homocline ou un cycle h\'et\'erodimensionnel.
\end{conjecture*}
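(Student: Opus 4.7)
Le plan consiste \`a \'etablir la dichotomie sous sa forme contrapositive : on suppose que $f \in \diff^1(M)$ ne peut \^etre approch\'e ni par un diff\'eomorphisme pr\'esentant une tangence homocline, ni par un diff\'eomorphisme pr\'esentant un cycle h\'et\'erodimensionnel, et l'on veut conclure que $f$ est limite de diff\'eomorphismes hyperboliques. Par g\'en\'ericit\'e de Baire, on se ram\`ene \`a un diff\'eomorphisme g\'en\'erique dans un voisinage de $f$, ce qui fournit les bonnes propri\'et\'es habituelles : continuation des orbites p\'eriodiques, co\"\i ncidence des classes homoclines avec les classes de r\'ecurrence par cha\^\i nes (lemme de connexion \`a la Hayashi--Bonatti--Crovisier), et densit\'e des orbites p\'eriodiques dans l'ensemble r\'ecurrent par cha\^\i nes. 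On peut alors travailler classe par classe.

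La premi\`ere \'etape consiste \`a montrer que, sous ces hypoth\`eses, chaque classe de r\'ecurrence par cha\^\i nes $C$ admet une d\'ecomposition domin\'ee $T_C M = E_1 \oplus \dots \oplus E_k$ dont les rangs des fibr\'es extr\^emes sont r\'ealis\'es par les indices stables des orbites p\'eriodiques de $C$. L'outil d\'ecisif est le lemme de perturbation de Franks, qui autorise la prescription arbitraire de la dynamique tangente le long d'une orbite p\'eriodique. En l'absence d'une telle d\'ecomposition uniforme, un argument \`a la Ma\~n\'e--Bonatti--D\'\i az--Pujals--Viana produit, apr\`es perturbation $C^1$, soit une orbite p\'eriodique d'un nouvel indice et donc, via les techniques de connexion d'orbites, un cycle h\'et\'erodimensionnel, soit une configuration engendrant une tangence homocline ; dans les deux cas on contredit l'hypoth\`ese. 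Le m\^eme type d'argument, combin\'e \`a l'\'etude des indices des orbites p\'eriodiques dans $C$, impose en outre que toutes ces orbites ont le m\^eme indice stable.

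La seconde \'etape, qui constitue le c\oe ur de la difficult\'e, consiste \`a passer de la d\'ecomposition domin\'ee \`a l'hyperbolicit\'e uniforme : il faut \'etablir que $E_1$ est uniform\'ement contract\'e, que $E_k$ est uniform\'ement dilat\'e, et qu'il n'existe pas de fibr\'e central interm\'ediaire. En dimension deux, ce passage est effectu\'e par Pujals--Sambarino, qui exploitent la faible dimensionnalit\'e pour convertir toute direction centrale non hyperbolique en une tangence. En dimension sup\'erieure, l'obstacle principal r\'eside dans le traitement d'une \'eventuelle direction centrale \`a exposant de Lyapunov nul sur une mesure ergodique support\'ee par $C$ : l'heuristique veut qu'une telle direction puisse \^etre perturb\'ee pour produire soit une tangence, soit un cycle h\'et\'erodimensionnel (via un th\'eor\`eme ergodique de fermeture et le lemme de Franks appliqu\'e aux orbites p\'eriodiques ombres). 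Mais la mise en \oe uvre rigoureuse de cet argument, ainsi que le contr\^ole simultan\'e de toutes les mesures invariantes de la classe, sont pr\'ecis\'ement ce que l'on ne sait pas r\'ealiser en toute g\'en\'eralit\'e, et c'est ce qui maintient la conjecture ouverte dans $\diff^1(M)$ pour $\dim M \geq 3$.
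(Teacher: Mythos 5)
L'\'enonc\'e que vous traitez est une \emph{conjecture} : le m\'emoire ne le d\'emontre pas, et votre texte n'est d'ailleurs pas une d\'emonstration mais un expos\'e de strat\'egie qui, \`a juste titre, reconna\^\i t dans son dernier paragraphe que l'\'etape d\'ecisive n'est pas connue. Il n'y a donc pas de ``preuve du papier'' \`a laquelle comparer votre tentative ; la seule \'evaluation pertinente porte sur l'exactitude de votre description de l'\'etat de l'art, et celle-ci est fid\`ele. Votre premi\`ere \'etape (r\'eduction par g\'en\'ericit\'e de Baire, puis obtention d'une d\'ecomposition domin\'ee via le lemme de Franks et les arguments de Ma\~n\'e--Bonatti--Gourmelon--Vivier et de Pujals--Sambarino--Wen--Gourmelon, avec unicit\'e de l'indice des orbites p\'eriodiques d'une classe loin des cycles) correspond exactement \`a ce que le m\'emoire \'etablit effectivement (th\'eor\`eme~\ref{t.part-hyp} : hyperbolicit\'e partielle avec fibr\'es centraux de dimension au plus deux, loin de $\cT\cup\cC$). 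Votre seconde \'etape identifie correctement l'obstacle : convertir la domination en hyperbolicit\'e uniforme, ce qui n'est acquis qu'en dimension deux (Pujals--Sambarino, th\'eor\`eme~\ref{t.ps}), dans le cas conservatif, ou pour les fibr\'es extr\^emes et les quasi-attracteurs (th\'eor\`eme~\ref{t.cp} d'hyperbolicit\'e essentielle).

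Une seule nuance m\'erite d'\^etre signal\'ee : vous pr\'esentez la difficult\'e restante comme le traitement d'une direction centrale \`a exposant de Lyapunov nul par un argument purement perturbatif (fermeture ergodique plus lemme de Franks). Le m\'emoire souligne pr\'ecis\'ement que cette approche \'echoue lorsque \emph{toutes} les mesures invariantes ont un exposant central nul, et y substitue des outils de nature topologique (les mod\`eles centraux du chapitre~\ref{c.model}) qui permettent de classifier la dynamique centrale en types (R), (N), (H), (P) sans recourir aux exposants. C'est cette classification, et non un raffinement du lemme de fermeture ergodique, qui fournit les r\'esultats partiels les plus avanc\'es ; la conjecture compl\`ete reste ouverte en dimension $\geq 3$, comme vous le dites.
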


\paragraph{0.1.c \quad Les dynamiques g\'en\'eriques repr\'esentent-elles tous les syst\`emes~?}
La g\'e\-n\'e\-ri\-ci\-t\'e donne un sens \`a la notion de partie n\'egligeable de l'ensemble
des diff\'eomorphismes~: nous cherchons \`a d\'ecrire la plupart
des dynamiques. Ce point de vue est cependant tr\`es relatif, puisqu'il se peut que
l'espace des diff\'eomorphismes \'etudi\'es soit n\'egligeable pour une autre notion de
g\'en\'ericit\'e.

Il est ainsi envisageable que l'on puisse trouver deux parties disjointes, chacune dense
dans un ouvert de l'espace des diff\'eomorphismes et repr\'esentant des dynamiques tr\`es diff\'erentes.
Par exemple, lorsque $\dim(M)\leq 3$, on peut alors introduire~:
\begin{itemize}
\item[--] l'ensemble des hom\'eomorphismes ayant un ensemble non d\'enombrable de points p\'e\-ri\-o\-di\-ques
(nous avons vu que c'est une partie g\'en\'erique de $\diff^0(M)$),
\item[--] l'ensemble des diff\'eomorphismes structurellement stables de $M$ (Shub a montr\'e~\cite{shub-topologique} qu'il forment une partie $C^0$-dense de $\diff^\infty(M)$ et
lorsque $\dim(M)\leq 3$,
Munkres a montr\'e~\cite{munkres} que $\diff^\infty(M)$ est dense dans $\diff^0(M)$).
\end{itemize}
Les diff\'eomorphismes structurellement stables ont au plus un nombre d\'enombrable de points p\'eriodiques
et ces deux parties denses de $\diff^0(M)$ sont donc disjointes.

Remarquons que, dans certains cas, en travaillant avec des propri\'et\'es g\'en\'eriques, on peut obtenir
des conclusions satisfaites par un ouvert dense de diff\'eomorphismes, et non pas seulement par un
G$_\delta$.
(Voir par exemple la conjecture faible de Palis, plus bas.)

\section{D\'ecomposition de la dynamique}
La dynamiques d'un diff\'eomorphisme hyperbolique $f$ est port\'ee par
un nombre fini d'ensembles compacts invariants disjoints et transitifs
(i.e. poss\'edant une orbite positive dense), appel\'es pi\`eces basiques.
Plus pr\'ecis\'ement, dans ce cadre chaque pi\`ece basique contient une orbite p\'eriodique
$O$ et co\"\i ncide avec l'adh\'erence $H(O)$ de l'ensemble des intersections
transverses entre les vari\'et\'es stables et instables de $O$.
Un tel ensemble $H(O)$ associ\'e \`a une orbite p\'eriodique hyperbolique $O$
peut \^etre d\'efini pour un diff\'eomorphisme 	arbitraire et est appel\'e \emph{classe homocline} de $O$.

Conley a montr\'e que cette d\'ecomposition se g\'en\'eralise
aux diff\'eomorphismes quelconques, si l'on autorise un nombre infini de pi\`eces
et si l'on remplace la transitivit\'e par la transitivit\'e par cha\^\i nes.
Cette propri\'et\'e est une notion plus faible de r\'ecurrence qui met en jeux les
pseudo-orbites, i.e. les suites $(x_n)$ de $M$ pour lesquelles
$f(x_n)$ et $x_{n+1}$ sont proches pour tout $n$ mais ne co\"\i ncident pas n\'ecessairement.
Les pi\`eces obtenues sont alors appel\'ee \emph{classes de r\'ecurrence par cha\^\i nes}.
\medskip

La d\'ecomposition de Conley est int\'eressante si l'on parvient \`a faire le lien entre
les pseudo-orbites et les vraies orbites de la dynamique.
Il faut pour cela \^etre capable de refermer les sauts d'une pseudo-orbite par perturbation.
Ce probl\`eme a \'et\'e trait\'e progressivement par l'obtention
de lemmes de perturbation dans des situations de plus en plus g\'en\'erales.
\smallskip

\begin{itemize}
\item[--] Pugh a d\'emontr\'e un lemme de fermeture qui permet de cr\'eer un point p\'eriodique
pr\`es d'un point non-errant. Il s'agit en quelque sorte de fermer une pseudo-orbite p\'eriodique
ayant un saut unique.
\smallskip

\item[--] Hayashi a obtenu un lemme de connexion~: lorsque l'orbite positive d'un point $p$
et l'orbite n\'egative d'un point $q$ ont un point d'accumulation commun, $p$ et $q$
sont contenus dans la m\^eme
orbite apr\`es perturbation. L\`a encore, il n'y a qu'un seul saut \`a refermer.
\smallskip

\item[--] Finalement, avec C. Bonatti nous avons abord\'e le cas g\'en\'eral et
\'etabli un lemme de connexion pour les pseudo-orbites.
\end{itemize}

Le lemme de connexion pour les pseudo-orbites permet de montrer que la d\'ecomposition en classes de r\'ecurrence par cha\^\i nes est bien adapt\'ee \`a l'\'etude des diff\'eomorphismes g\'en\'eriques~:
par exemple, les classes de r\'ecurrence par cha\^\i nes
contenant une orbite p\'eriodique sont des ensembles transitifs.
Plus pr\'ecis\'ement, nous avons obtenu le r\'esultat suivant.

\begin{theoreme*}
Il existe un G$_\delta$ dense de $\diff^1(M)$ form\'e de diff\'eomorphismes
pour lesquels la classe de r\'ecurrence par cha\^\i nes de toute orbite p\'eriodique
$O$ co\"\i ncide avec sa classe homocline $H(O)$.
\end{theoreme*}

\medskip

Toutefois le lemme de connexion pour les pseudo-orbites ne permet pas de contr\^oler
com\-pl\`e\-te\-ment le support de l'orbite obtenue. J'ai \'etabli pour cela un autre lemme
de perturbation. En voici une cons\'equence.

\begin{theoreme*}
Pour tout diff\'eomorphisme appartenant \`a un G$_\delta$ dense de $\diff^1(M)$,
chaque classe de r\'ecurrence par cha\^\i nes est limite de Hausdorff d'une suite d'orbites p\'eriodiques.
\end{theoreme*}
\medskip

Ces techniques de perturbation restent valables pour les diff\'eomorphismes
conservatifs. Elles entra\^\i nent un r\'esultat de ``diffusion des orbites''
(obtenu avec C. Bonatti et M.-C. Arnaud).
\begin{theoreme*}
Consid\'erons une vari\'et\'e compacte connexe $M$ munie d'une forme volume ou symplectique $\omega$.
Tout diff\'eomorphisme appartenant \`a une partie g\'en\'erique de l'espace $\diff^1_\omega(M)$ des diff\'eomorphismes de $M$ pr\'eservant $\omega$ est transitif.
\end{theoreme*}
\medskip

Ce dernier th\'eor\`eme souligne \`a nouveau l'importance de la r\'egularit\'e choisie.
En effet, le th\'eor\`eme des tores invariants
de Herman (voir~\cite[chapitre IV]{herman}, \cite[section II.4.c]{moser} et~\cite{yoccoz-tore})
montre par des techniques de th\'eorie KAM qu'il existe un ouvert de diff\'eomorphismes
de $\diff^k_\omega(M)$, $k\geq 4$, qui poss\`edent un tore de codimension $1$. Ce tore s\'epare la vari\'et\'e
en deux composante et emp\^eche la transitivit\'e. Le th\'eor\`eme pr\'ec\'edent n'est donc
pas vrai en r\'egularit\'e sup\'erieure.
\medskip

Nous pouvons \'egalement remarquer certaines particularit\'es du sujet.
D'une part, l'essentiel des r\'esultats s'appuient sur des techniques de perturbation.
D'autre part, les diff\'erents lemmes de perturbation ont parfois des \'enonc\'es
qui peuvent sembler tr\`es similaires, bien que la difficult\'e des d\'emonstrations puisse
\^etre bien diff\'erente.
\medskip

Notre connaissance des dynamiques $C^1$-g\'en\'erique devient suffisamment pr\'ecise pour
pouvoir obtenir des conclusions qui n'\'etaient jusqu'alors connues que dans le cadre
des dynamiques hyperboliques. Par exemple, on s'attend \`a ce que le \emph{centralisateur}\index{centralisateur}
d'un diff\'eomorphisme, i.e. l'ensemble des diff\'eomorphismes qui commutent avec lui,
soit en g\'en\'eral petit.
C'est ce que nous avons v\'erifi\'e avec C. Bonatti et A. Wilkinson.

\begin{theoreme*}
L'ensemble des diff\'eomorphismes $f$ \`a centralisateur trivial
(i.e. r\'eduit aux it\'er\'es $f^n$, $n\in \ZZ$) contient un G$_\delta$ dense de
$\diff^1(M)$.
\end{theoreme*}

\section*{Caract\'erisation des dynamiques non hyperboliques}
En dimension $1$ et en toute r\'egularit\'e, la conjecture de Palis est une cons\'equence
du th\'eor\`eme de Peixoto~: tout diff\'eomorphisme du cercle peut \^etre approch\'e
par un diff\'eomorphisme ayant un nombre fini de points p\'eriodiques, tous hyperboliques
(les autres orbites sont errantes et connectent une source \`a un puits).
En dimension $2$, Pujals et Sambarino ont d\'emontr\'e la conjecture en r\'egularit\'e $C^1$.
En dimension sup\'erieure, seuls des r\'esultats partiels existent (et seulement en topologie $C^1$).
Nous allons les pr\'esenter ci-dessous.

\paragraph{0.3.a \quad Les dynamiques conservatives}
En topologie $C^1$ et dans le cadre conservatif, la conjecture de Palis est v\'erifi\'ee,
nous obtenons m\^eme un r\'esultat plus fort.

En dimension $2$, il existe une obstruction robuste \`a l'hyperbolicit\'e venant de la structure
symplectique~: l'existence d'un point p\'eriodique elliptique, i.e. ayant des valeurs propres non r\'eelles
de module $1$. Newhouse a montr\'e~\cite{newhouse-elliptique} que cette obstruction suffit a caract\'eriser
les diff\'eomorphismes robustement non hyperboliques

Lorsque $\dim(M)\geq 3$, les points p\'eriodiques sont g\'en\'eriquement hyperboliques \cite{robinson-periodique}.
Les travaux, classiques, autour de la stabilit\'e structurelle,
impliquent qu'une dynamique dont on ne peut pas faire bifurquer les points p\'eriodiques
est hyperbolique.
Si l'on parvient au contraire \`a faire bifurquer des points p\'eriodiques,
on obtient par perturbation des points p\'eriodiques ayant des dimensions
stables diff\'erentes; le lemme de connexion pour les pseudo-orbites cr\'ee alors
un cycle h\'et\'erodimensionnel
(dans le cadre conservatif et lorsque $M$ est connexe, il n'y a qu'un seule classe de r\'ecurrence par cha\^\i nes) associ\'e \`a des points p\'eriodiques dont la dimension stable diff\`ere de $1$.

Un r\'esultat de Bonatti et D\'iaz permet de renforcer les cycles h\'et\'erodimensionnels~:
apr\`es perturbation, on obtient un diff\'eomorphisme $f\in \diff^1_\omega(M)$
et une paire d'ensemble hyperboliques transitifs $K,L$ dont la dimension stable diff\`ere,
tels que pour tout diff\'eomorphisme $g$ proche de $f$,
les continuations $K_g,L_g$ sont li\'ees par des orbites
h\'et\'eroclines. Un tel cycle associ\'e \`a $K$ et $L$
est une obstruction robuste \`a l'hyperbolicit\'e et est appel\'e
\emph{cycle h\'et\'erodimensionnel robuste}.
Si l'on choisit des orbites p\'eriodiques $O\in K$ et $O'\in L$, il existe un ensemble
dense de diff\'eomorphismes au voisinage de $f$ pour lesquels $O$ et $O'$ forment un cycle h\'et\'erodimensionnel.

L'\'enonc\'e obtenu porte sur un \emph{ouvert dense}
de l'espace des diff\'eomorphismes conservatifs.
\begin{theoreme*}
Tout diff\'eomorphisme conservatif peut \^etre approch\'e dans $\diff^1_\omega(M)$
par un dif\-f\'e\-o\-mor\-phis\-me hyperbolique ou par un diff\'eomorphisme ayant un cycle h\'et\'erodimensionnel
robuste.
\end{theoreme*}

\paragraph{0.3.b \quad La conjecture faible de Palis.}\index{conjecture! faible de Palis}
On conna\^\i t depuis Poincar\'e~\cite{poincare1,poincare2} et Birkhoff~\cite{birkhoff}
l'importance du r\^ole jou\'e par les \emph{intersections homoclines transverses}, i.e.
par les points d'in\-ter\-sec\-tion
transverses entre les vari\'et\'es stables et instables d'une orbite p\'eriodique.
Cette propri\'et\'e - tr\`es simple - est \`a l'origine d'une dynamique complexe.

Smale a construit~\cite{smale-horseshoe} un mod\`ele g\'eom\'etrique, son c\'el\`ebre fer \`a cheval, qui d\'ecrit la dynamique
pr\`es d'une telle orbite. On trouve des ensembles hyperboliques dont une puissance est conjugu\'ee
au d\'ecalage sur l'ensemble de Cantor $\{0,1\}^\ZZ$. Il y a alors une infinit\'e d'orbite p\'eriodiques
et l'entropie topologique de la dynamique est strictement positive.
Remarquons \'egalement que l'existence d'une intersection homocline transverse est une propri\'et\'e ouverte.
\medskip

\`A l'oppos\'e, Smale a introduit des dynamiques tr\`es simples~: les \emph{diff\'eomorphismes
Morse-Smale}. Ce sont des diff\'eomorphismes hyperboliques pour lesquels chaque pi\`ece basique est
r\'eduite \`a une orbite p\'eriodique. Les autres orbites s'accumulent dans le pass\'e et dans le futur sur
deux orbites p\'eriodiques distinctes. En particulier, ces dynamiques sont stables (elles forment donc un ouvert) et leur entropie topologique est nulle.
\medskip

Palis a conjectur\'e (en toute classe de diff\'erentiabilit\'e $C^k$, $k\geq 1$)
que ces deux comportements forment une partie dense de l'espace des diff\'eomorphismes.
Cette seconde conjecture de Palis est impliqu\'ee par la premi\`ere et est donc v\'erifi\'ee en
dimension $2$ d'apr\`es les r\'esultats de Pujals et Sambarino.
Elle a ensuite \'et\'e d\'emontr\'ee en dimension $3$ par Bonatti, Gan et Wen.
J'ai obtenu une d\'emonstration dans le cas g\'en\'eral.

\begin{theoreme*}
Tout diff\'eomorphisme peut \^etre approch\'e dans $\diff^1(M)$ par un diff\'eomorphisme
Morse-Smale ou par un diff\'eomorphisme ayant une intersection homocline transverse.
\end{theoreme*}

Bien que la preuve fasse appel aux techniques de g\'en\'ericit\'e,
la conclusion d\'ecrit \`a nouveau une partie ouverte et dense de l'espace des diff\'eomorphismes.
Sur cet ouvert, il y a donc une dichotomie parfaite entre les dynamiques ``simples'' et ``compliqu\'ees''.

La d\'emonstration se ram\`ene \`a l'\'etude d'ensembles dont le d\'efaut d'hyperbolicit\'e
est concentr\'e sur un fibr\'e lin\'eaire invariant de dimension $1$.
Aucun vecteur de ce fibr\'e n'est contract\'e ni dilat\'e par les it\'er\'es de l'application
tangente. Les techniques habituelles de dynamiques diff\'erentiables reli\'ees \`a l'existence
d'exposants de Lyapunov non nuls ne s'appliquent donc pas et nous les avons remplac\'ees par l'\'etude
d'objets de nature plus topologique~: les \emph{mod\`eles centraux}.
\medskip

Dans le cas conservatif, on s'attendrait \`a ce que pour un ouvert dense de diff\'eomorphismes,
il existe une orbite p\'eriodique hyperbolique ayant une intersection homocline transverse.
En topologie $C^1$,
c'est une cons\'equence du th\'eor\`eme \'etabli avec Arnaud et Bonatti \'enonc\'e ci-dessus.
En r\'egularit\'e plus grande, des r\'esultats partiels existent sur les surfaces~:
Zehnder a montr\'e~\cite{zehnder} que tout point elliptique est approch\'e par un point p\'eriodique
hyperbolique ayant une intersection homocline transverse.
Plus g\'en\'eralement, les diff\'eomorphismes ayant une intersection homocline transverse sont denses
dans presque toute classe d'homotopie de diff\'eomorphisme conservatif $C^r$, $r\geq 1$,
voir~\cite{robinson-homocline, pixton, oliveira1,oliveira2}.

\paragraph{0.3.c \quad Hyperbolicit\'e partielle.}
Une approche possible pour montrer la conjecture de Palis consiste \`a
\'etudier les diff\'eomorphismes g\'en\'eriques qui ne sont pas approch\'es
par des tangence homoclines ou par des cycles h\'et\'erodimensionnels.
Il s'agit alors de montrer que chaque classe de r\'ecurrence par cha\^\i nes est
un ensemble hyperbolique.

Nous avons obtenu un r\'esultat dans cette direction~:
chaque classe de r\'ecurrence par cha\^\i nes $\Lambda$
v\'erifie une propri\'et\'e d'hyperbolicit\'e partielle.
Plus pr\'ecis\'ement, il existe une d\'ecomposition $T_\Lambda M=E^s\oplus E^c\oplus E^u$
du fibr\'e tangent unitaire. $E^s$ et $E^u$ sont uniform\'ement contract\'es
et dilat\'es. La partie centrale $E^c$
est moins contract\'ee ou dilat\'ee que les fibr\'es extr\^emes $E^s,E^u$.
De plus, sa dimension est au plus \'egale \`a $2$
(lorsqu'elle est de dimension $2$, elle se d\'ecompose \`a nouveau en deux fibr\'es
$E^c=E^c_1\oplus E^c_2$ de dimension $1$).
Un tel diff\'eomorphisme est appel\'e dans ce cadre \emph{diff\'eomorphisme partiellement hyperbolique}.

J'ai d\'emontr\'e que les diff\'eomorphismes loin des bifurcations homoclines peuvent \^etre approch\'es par des diff\'eomorphismes partiellement hyperboliques.
Wen avait obtenu pr\'ec\'edemment une version locale de ce r\'esultat
en s'appuyant sur le lemme de s\'election de Liao.
\begin{theoreme*}
Tout diff\'eomorphisme peut \^etre approch\'e dans $\diff^1(M)$ par un diff\'eomorphisme
partiellement hyperbolique ou par un diff\'eomorphisme ayant une tangence homocline
ou un cycle h\'et\'erodimensionnel.
\end{theoreme*}
\medskip

Pour achever la d\'emonstration de la conjecture de Palis,
il faudrait montrer que pour ces diff\'eomorphismes partiellement hyperboliques,
les fibr\'es centraux de dimension $1$ sont uniform\'ement contract\'es ou dilat\'es.
C'est ce qu'ont r\'eussi Pujals et Sambarino dans le cas des diff\'eomorphismes
de surface. Dans le cas de fibr\'es centraux extr\^emes (i.e. lorsque $E^s$ ou $E^u$
est d\'eg\'en\'er\'e) leur argument se g\'en\'eralise (le cas o\`u $E^c$ est de dimension $1$
a \'et\'e trait\'e par Pujals et Sambarino~; j'ai trait\'e le cas $\dim(E^c)=2$ avec E. Pujals).

\begin{addendum*}
Les diff\'eomorphismes partiellement hyperboliques du th\'eor\`eme pr\'ec\'edent
peuvent \^etre choisis pour satisfaire la propri\'et\'e additionnelle suivante~:
pour chaque classe de r\'ecurrence par cha\^\i nes qui n'est pas un puits ou une source,
les fibr\'es extr\^emes $E^s, E^u$ sont non d\'eg\'en\'er\'es.
\end{addendum*}

\paragraph{0.3.d \quad Hyperbolicit\'e essentielle.}
Il peut \^etre plus facile d'\'etudier l'hyperbolicit\'e des parties attractives de la
dynamique. En dimension $3$,
Pujals a \'etabli sous certaines hypoth\`eses que tout diff\'eomorphisme
ayant un attracteur non hyperbolique peut \^etre approch\'e par un diff\'eomorphisme ayant
une tangence homocline ou un cycle h\'et\'erodimensionnel.

Un diff\'eomorphisme $f$ est \emph{essentiellement hyperbolique} si $f$ et $f^{-1}$
poss\`edent un nombre fini d'attracteurs hyperboliques dont l'union des bassins
est dense dans $M$.
De fa\c{c}on \'equivalente, en restriction \`a un ouvert dense de $M$ on ne distingue
pas la dynamique de $f$ de celle d'un diff\'eomorphisme hyperbolique.

Le r\'esultat suivant a \'et\'e obtenu en collaboration avec E. Pujals.

\begin{theoreme*}
Tout diff\'eomorphisme peut \^etre approch\'e dans $\diff^1(M)$ par un diff\'eomorphisme
essentiellement hyperbolique ou par un diff\'eomorphisme ayant une tangence homocline ou un cycle h\'et\'erodimensionnel.
\end{theoreme*}

\section{Structure de l'espace des dynamiques}
La d\'ecomposition de la dynamique g\'en\'erique a permis la formulation de nombreuses
questions, pr\'esent\'ees dans ce texte (voir aussi l'expos\'e de Bonatti~\cite{bonatti-conjecture}).
Une des directions de recherches actuelles consiste \`a structurer l'espace des
diff\'eomorphismes et \`a classifier les divers types de comportements de la dynamique.
Une telle approche avait d\'ej\`a \'et\'e propos\'ee par Smale \cite{smale-survey,shub-genericite}.

\paragraph{0.4.a \quad Ph\'enom\`enes et m\'ecanismes.}\index{ph\'enom\`enes et m\'ecanismes}
\index{m\'ecanismes et ph\'enom\`enes}
Les \'enonc\'es pr\'esent\'es dans les paragraphes pr\'e\-c\'e\-dents ont un objectif commun
que l'on peut formuler de la fa\c{c}on suivante.
\begin{probleme*}[Pujals]
Structurer l'espace des diff\'eomorphismes en ph\'enom\`enes et m\'ecanismes.
\end{probleme*}
\noindent
Plus pr\'ecis\'ement, ces \'enonc\'es fournissent deux ouverts disjoints
$\cU_p$, $\cU_m$ d'union dense dans l'espace des diff\'eomorphismes et caract\'erisant deux types
de dynamiques tr\`es diff\'erents.
\smallskip

\begin{itemize}
\item[--] $\cU_p$ est l'ouvert associ\'e \`a un \emph{ph\'enom\`ene}~:
pour tout diff\'eomorphisme appartenant \`a une partie (G$_\delta$ parfois ouverte)
dense de $\cU_p$, il existe une description
globale de la dynamique (dynamique Morse-Smale, hyperbolicit\'e, hyperbolicit\'e partielle, hyperbolicit\'e essentielle,...)
\smallskip
 
\item[--] $\cU_m$ est l'ouvert associ\'e \`a un \emph{m\'ecanisme}~:
pour tout diff\'eomorphisme appartenant \`a une partie dense de $\cU_m$, il existe
une configuration semi-locale de la dynamique (une bifurcation) qui est \`a la fois
tr\`es simple (par exemple elle se lit sur les orbites p\'eriodiques, elle peut \^etre
d\'etect\'ee num\'eriquement), qui engendre des changements importants sur les dynamiques voisines
(par exemple l'apparition d'un grand nombre d'orbites p\'eriodiques)
et qui est une obstruction au ph\'enom\`ene associ\'e \`a $\cU_p$.
\smallskip
\end{itemize}

\paragraph{0.4.b \quad Complexit\'e des dynamiques g\'en\'eriques.}
Les r\'esultats r\'ecents sur la dynamique g\'en\'erique permettent d'imaginer une fa\c{c}on
de structurer l'espace des diff\'eomorphismes. Deux conjectures, impliquant toutes deux celle de Palis,
r\'esument l'essentiel des sc\'enarios envisag\'es pas les auteurs du sujet (voir~\cite{bonatti-conjecture}
pour un expos\'e d\'etaill\'e de diff\'erentes conjectures).

La premi\`ere constate que les seuls m\'ecanismes connus engendrant une infinit\'e de classes
distinctes sont li\'ees \`a l'existence de tangences homoclines
(c'est le cas du ph\'enom\`ene de Newhouse pour lequel il existe
une infinit\'e de puits et de sources).
\begin{conjecture-finitude}[Bonatti]\index{conjecture! de finitude}
Les diff\'eomorphismes loin des tangences homoclines dans $\diff^1(M)$ n'ont qu'un nombre fini
de classes de r\'ecurrence par cha\^\i nes.
\end{conjecture-finitude}

La seconde implique \'egalement la conjecture de Smale et est propre \`a la r\'egularit\'e $C^1$.
Pour cette topologie, les m\'ethodes pour obtenir un ensemble localement dense de diff\'eomorphismes
pr\'esentant une tangence homocline font toutes intervenir des cycles h\'et\'erodimensionnels.

\begin{conjecture-hyperbolicite}[Bonatti-D\'\i az]\index{conjecture! d'hyperbolicit\'e}
Tout diff\'eomorphisme de $\diff^1(M)$ peut \^etre approch\'e par un diff\'eomorphisme
hyperbolique ou par un diff\'eomorphisme ayant un cycle h\'e\-t\'e\-ro\-di\-men\-si\-on\-nel robuste.
\end{conjecture-hyperbolicite}
\bigskip

Bonatti propose de d\'ecomposer l'espace des diff\'eomorphismes selon les crit\`eres suivants~:
\begin{itemize}
\item[--] existence de tangences homoclines (dynamique \emph{critique}),\index{critique, dynamique}
\item[--] existence de cycle h\'et\'erodimensionnel (dynamique \emph{h\'et\'erodimensionnelle}),
\index{h\'et\'erodimensionnelle, dynamique}
\item[--] nombre fini ou infini de classes de r\'ecurrence par cha\^\i nes (dynamique \emph{mod\'er\'ee}
ou \emph{sauvage}).\index{mod\'er\'e, diff\'eomorphisme} \index{sauvage, diff\'eomorphisme}
\end{itemize}
Par ailleurs, Bonatti et D\'\i az ont montr\'e l'existence de dynamiques sauvages ayant
une propri\'et\'e remarquable~: en renormalisant la dynamique contenue dans des disques p\'eriodiques,
on obtient une famille de diff\'eomorphismes dense dans l'espace des diff\'eomorphismes du disque
pr\'eservant l'orientation. Une telle dynamique est qualifi\'ee d'\emph{universelle}.
\index{universel, diff\'eomorphisme}
\medskip

Si les deux conjectures pr\'ec\'edentes sont v\'erifi\'ees,
l'espace des diff\'eomorphismes est alors structur\'e en r\'egions de complexit\'e croissante~:
dynamique hyperbolique, mod\'er\'ee non critique, mod\'er\'ee critique, sauvage, dynamique universelle
(voir la figure~\ref{f.structure}). Des exemples de dynamique pour chaque r\'egion seront donn\'es en sections~\ref{s.diffeo-hyperbolique}, \ref{s.exemple-mane}, \ref{s.exemple-abraham-smale}
et~\ref{s.universelle}.

\vspace{2cm}

\begin{figure}[ht]
\begin{center}
\input{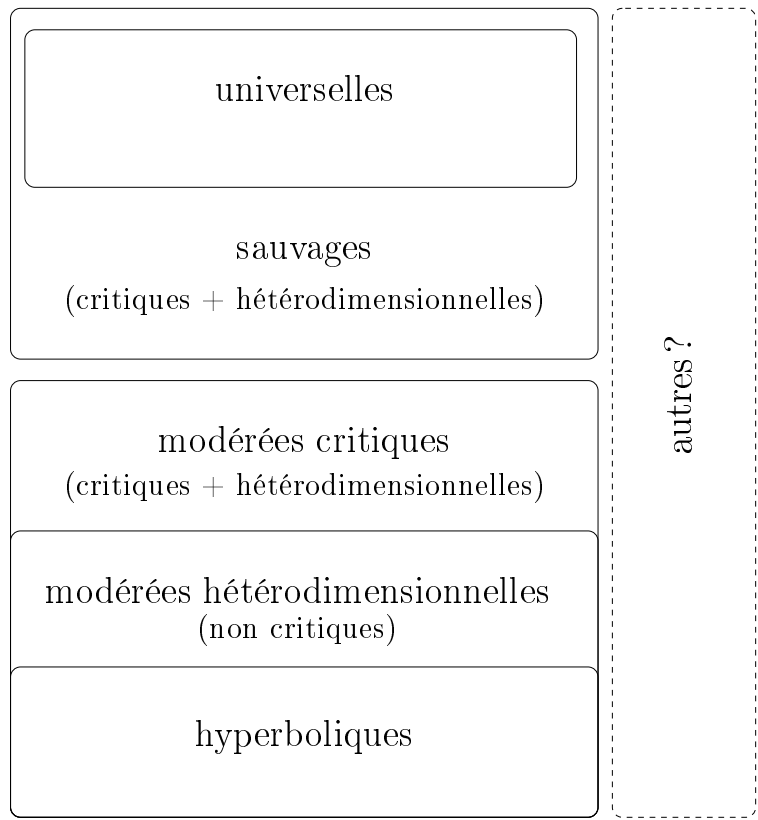}
\end{center}
\caption{Structure de l'espace des diff\'eomorphismes. \label{f.structure}}
\end{figure}

\chapter*{Notations}
\begin{description}
\item
$\cK(X)$ est l'espace (compact) des parties compactes d'un espace m\'etrique compact $X$,
muni de la distance de Hausdorff~:
$$d_H(K,K')=\sup_{x\in K,x'\in K'}d(x,x').$$
(Lorsque $K$ est vide et $K'$ non vide, on pose $d_H(K,K')=\diam(X)$.)
\medskip

\item
$M$ d\'esignera en g\'en\'eral une vari\'et\'e diff\'erentiable compacte sans bord
munie d'une m\'etrique riemannienne.
\medskip

\item
$\interior(A)$ est l'int\'erieur d'une partie $A\subset M$.
\medskip

\item
$\diff^k(M)$, pour $k\geq 0$, est  l'espace
des diff\'eomorphismes $f$ de classe $C^k$ de $M$, muni de la topologie $C^k$.
(C'est un espace m\'etrique complet, donc un espace de Baire.)
\medskip

\item
$\per(f),\Omega(f),\cR(f)$ repr\'esentent l'ensemble des points p\'eriodiques, l'ensemble non-errant et l'ensemble r\'ecurrent par cha\^\i nes du diff\'eomorphisme $f$ de $M$.
\medskip

\item
$W^s(x), pW^s(x)$ sont les ensembles stable et stable par cha\^\i nes du point $x$.
\medskip

\item
$x\prec y$ et $x\dashv y$ sont des relations dynamiques introduites au chapitre~\ref{c.decomposition}.
\medskip

\item Les mesures d'un espace compact m\'etrique $X$ seront toujours des mesures bor\'eliennes.
Leur ensemble est muni de la topologie faible (rendant continues les \'evaluations sur les fonctions continues $X\to \RR$).
\medskip

\item $\cT$ et $\cC$ d\'esignent l'ensemble des diff\'eomorphismes d'une vari\'et\'e
ayant une tangence homocline ou un cycle h\'et\'erodimensionnel.
\end{description}


\chapter{D\'ecomposition de la dynamique}\label{c.decomposition}
Ce chapitre introduit les notations et les d\'efinitions que nous utiliserons
dans la suite du texte. Nous nous pla\c cons sur une vari\'et\'e diff\'erentiable
compacte sans bord $M$ de dimension $d$.

Nous pr\'esentons tout d'abord des notions de r\'ecurrence qui permettent d'analyser les
sys\-t\`e\-mes dynamiques topologiques~:
la r\'ecurrence au sens des pseudo-orbites donne lieu au ``th\'eor\`eme fondamental de la dynamique'' de Conley.
Nous travaillons ensuite avec des dif\-f\'e\-o\-mor\-phis\-mes et nous rappelons la notion d'ensemble hyperbolique.
Nous renvoyons \`a~\cite{bowen,shub-livre,newhouse-cours,yoccoz-hyperbolique,hasselblatt-katok} pour des expos\'es d\'etaill\'es des propri\'et\'es
des dynamiques hyperboliques (stabilit\'e, pistage,...)
Nous d\'efinissons finalement ce qui sera pour nous un \emph{diff\'eomorphisme hyperbolique}.

\section{R\'ecurrence~: transitivit\'e faible, transitivit\'e par cha\^\i nes}
\noindent
Soit $f$ un diff\'eomorphisme d'une vari\'et\'e diff\'erentiable compacte sans bord $M$
munie d'une m\'etrique riemannienne\footnote{Jusqu'en section~\ref{s.hyperbolicite},
il suffirait de consid\'erer un hom\'eomorphisme d'un espace m\'etrique compact.}.

\paragraph{Ensembles faiblement transitifs.}
Un compact invariant $K$ est \emph{transitif} \index{transitivit\'e} si pour tous ouverts $U,V\subset M$ rencontrant $K$,
il existe $z\in U\cap K$ ayant un it\'er\'e positif $f^n(z)$, $n\geq 1$, dans $V$.

Plus g\'en\'eralement, $K$ est \emph{faiblement transitif} \index{transitivit\'e!faible} si, pour tous ouverts $U,V\subset M$ rencontrant $K$
et tout voisinage $W$ de $K$, la propri\'et\'e suivante est satisfaite.
\begin{description}
\item[($\prec$)] \it Il existe un segment d'orbite $\{z,f(z),\dots,f^n(z)\}\subset W$
tel que $z\in U$ et $f^n(z)\in V$.
\end{description}
Si $K\subset M$ est un ensemble ferm\'e, nous noterons
$x\prec_K y$ si pour tous voisinages $U$ de $x$, $V$ de $y$ et $W$ de $K$, la propri\'et\'e~$(\prec)$ a lieu.
Nous \'ecrirons $x\prec y$ pour $x\prec_M y$. Ces relations sont ferm\'ees mais ne sont en g\'en\'eral pas transitives.
\medskip

L'\emph{ensemble non-errant},  \index{ensemble! non-errant} $\Omega(f)$, est l'ensemble des points de $M$
dont tout voisinage $U$ intersecte l'un de ses it\'er\'es $f^n(U)$, $n\geq 1$, c'est-\`a-dire
l'ensemble des points $x$ satisfaisant $x\prec x$.
C'est un ferm\'e invariant qui contient les ensembles faiblement transitifs.
En particulier, il contient tous les points p\'eriodiques de $f$.
Remarquons que la dynamique induite par $f$ sur $\Omega(f)$ peut avoir des points errants.

\paragraph{Ensembles transitifs par cha\^\i nes.}
Nous introduisons une notion de r\'ecurrence plus grossi\`ere mais qui n'a pas les d\'efauts de la pr\'ec\'edente.

Soit $\varepsilon>0$. Les \emph{$\varepsilon$-pseudo-orbites}\index{pseudo-orbite} de $f$ sont
les suites $(x_n)_{n\in I}$ de $M$ telles que la distance $d(f(x_n),x_{n+1})$ est strictement inf\'erieure
\`a $\varepsilon$ pour tout $n\in \ZZ$ tel que $n,n+1$ appartiennent \`a $I$.
\medskip

Un ensemble $K\subset M$ est \emph{transitif par cha\^\i nes} \index{transitivit\'e! par cha\^\i nes} si pour tous $x,y\in K$ et tout $\varepsilon>0$ la propri\'et\'e suivante a lieu.
\begin{description}
\item[($\dashv$)] \it Il existe une $\varepsilon$-pseudo-orbite $\{x_0,\dots, x_n\}\subset K$
telle que $n\geq 1$, $x_0=x$ et $x_n=y$.
\end{description}

Si $K\subset M$ est un ensemble ferm\'e, nous noterons
$x\dashv_K y$ si la propri\'et\'e~($\dashv$) a lieu.
Nous \'ecrirons $x\dashv y$ pour $x\dashv_M y$. Ces relations sont ferm\'ees et transitives.
De plus $x\prec_K y$ implique $x\dashv_K y$. Par cons\'equent les ensembles faiblement transitifs
sont aussi transitifs par cha\^\i nes.

On peut \'etendre la relation $\dashv$ aux ensembles transitifs par cha\^\i nes~:
on a $K\dashv K'$ s'il existe $x\in K$ et $x'\in K'$ tels que $x\dashv x'$.
\medskip

Un point est \emph{r\'ecurrent par cha\^\i nes} \index{r\'ecurrence par cha\^\i nes} si, pour tout $\varepsilon>0$, il appartient \`a
une $\varepsilon$-pseudo-orbite p\'eriodique, c'est \`a dire si $x\dashv x$.
L'ensemble r\'ecurrent par cha\^\i nes $\cR(f)$ est l'ensemble des points r\'ecurrents par cha\^\i nes.
C'est un donc ferm\'e invariant, contenant l'ensemble non-errant.
En restriction \`a $\cR(f)$, la dynamique de $f$
ne poss\`ede que des points r\'ecurrents par cha\^\i nes.

Les \emph{classes de r\'ecurrence par cha\^\i nes}\index{classe!de r\'ecurrence par cha\^\i nes} sont les classes d'\'equivalence
de la relation sym\'etris\'ee ``$x\dashv y$ et $y\dashv x$''.
Elles sont ferm\'ees et elles co\"\i ncident avec les ensembles transitifs par cha\^\i nes maximaux.
L'ensemble r\'ecurrent par cha\^\i nes est naturellement partitionn\'e par les classes
de r\'ecurrence par cha\^\i nes, par cons\'equent l'ensemble r\'ecurrent par cha\^\i nes co\"\i ncide avec l'union des
ensembles transitifs par cha\^\i nes.

\section{Th\'eor\`eme ``fondamental'' de la dynamique, filtrations}
Conley a d\'emontr\'e ce r\'esultat tr\`es g\'en\'eral (voir~\cite{conley,robinson-livre}).

\begin{theoreme}[Conley]\label{t.fondamental}\index{th\'eor\`eme fondamental de la dynamique}
Soit $f$ un hom\'eomorphisme d'un espace compact m\'etrique $M$.
Il existe une fonction continue $h\colon M\to \RR$ ayant les propri\'et\'es suivantes~:
\begin{enumerate}
\item[--] $h$ d\'ecro\^\i t le long des orbites de $f$ et d\'ecro\^\i t strictement
le long des orbites de $f$ contenues dans $M\setminus \cR(f)$~;
\item[--] l'image de $\cR(f)$ par $h$ est totalement discontinue~;
\item[--] $h$ prend des valeurs distinctes sur des classes de r\'ecurrence par cha\^\i nes distinctes.
\end{enumerate}
\end{theoreme}
\medskip

Un ouvert $U$ de $M$ est \emph{attractif} \index{ouvert attractif} si $f(\overline{U})$ est contenu dans $U$.
Une fonction de Lyapunov\footnote{Nous renvoyons \`a~\cite{auslander}
pour une discussion plus d\'etaill\'ee des d\'ecomposition de la dynamique par fonctions
de Lyapunov.} $h$ donn\'ee par le th\'eor\`eme de Conley permet de montrer de nouvelles propri\'et\'es.
\begin{itemize}
 \item[--]
Pour tout point $x\not\in \cR(f)$, il existe un ouvert attractif $U$
tel que $x$ appartient \`a $U\setminus f(\overline U)$~:
l'ensemble r\'ecurrent par cha\^\i nes est donc la notion de r\'ecurrence raisonnable la plus faible.
\item[--]
Pour tout ensemble fini $\cS$ de classes de r\'ecurrence par cha\^\i nes, il existe
une suite embo\^\i t\'ee d'ouverts attractifs $M=U_0\supset U_1 \supset U_2\supset\dots\supset U_k=\emptyset$,
tels que $U_i\setminus U_{i+1}$ contienne un \'el\'ement de $\cS$ et un seul pour chaque entier $0\leq i<k$.
\end{itemize}
Une suite embo\^\i t\'ee d'ouverts attractifs est appel\'ee \emph{filtration} \index{filtration} de $f$.

\section{Ensemble stable par cha\^\i nes, quasi-attracteurs}

L'\emph{ensemble stable} \index{ensemble! stable} d'un point $x\in M$ est l'ensemble
$$W^s(x):=\left\{y\in M,\; d(f^n(x),f^n(y))\underset{n\to+\infty}{\longrightarrow} 0\right\}.$$
Par analogie, on introduit l'\emph{ensemble stable par cha\^\i nes} \index{ensemble! stable par cha\^\i nes} d'un point $x\in \cR(f)$
ou d'une classe de r\'ecurrence par cha\^\i nes de $f$~:
c'est l'ensemble des points $y\in M$ tels que pour tout $\varepsilon>0$ il existe
des $\varepsilon$-pseudo-orbite $(y_0,\dots,y_n)$ et $(x_0,\dots,x_n)$ telles que
$y_0=y$, $x_0=x$ et $x_n=y_n$. On le note $pW^s(x)$.
(Le ``p'' signifie ``au sens des pseudo-orbites'').
On d\'efinit sym\'etriquement l'\emph{ensemble instable} $W^u(x)$ et l'\emph{ensemble instable par cha\^\i nes}
$pW^u(x)$.
\medskip

Les classes de r\'ecurrence par cha\^\i nes sont partiellement ordonn\'ees par la relation
$\dashv$. Un \emph{quasi-attracteur} \index{quasi-attracteur} (on parle aussi parfois d'``attracteur faible'')
\index{attracteur!faible} est
une classe de r\'ecurrence par cha\^\i nes qui est minimale pour l'ordre $\dashv$, ou de fa\c con \'equivalente
qui co\"\i ncide avec son ensemble instable par cha\^\i nes.
Il existe toujours au moins un quasi-attracteur (ce qui n'est pas le cas des attracteurs).
\medskip

Si $K$ est une classe de r\'ecurrence par cha\^\i nes,
pour tout $\varepsilon>0$, l'ensemble des points que l'on peut atteindre depuis $K$
par $\varepsilon$-pseudo-orbites est un voisinage attractif de $K$.
Ceci montre la proposition suivante.
\begin{proposition}
Une classe de r\'ecurrence par cha\^\i nes est un quasi-attracteur si et seulement
si elle poss\`ede une base de voisinages ouverts attractifs.
\end{proposition}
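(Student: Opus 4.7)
Je traite les deux sens séparément. Le sens $(\Rightarrow)$ s'appuie sur la famille déjà évoquée juste avant l'énoncé,
$$V_\varepsilon(K):=\{y\in M\ :\ \text{il existe une $\varepsilon$-pseudo-orbite de $K$ vers $y$}\},$$
dont chaque élément est un ouvert attractif contenant $K$ (ouverture en perturbant le dernier point; propriété $f(\overline{V_\varepsilon})\subset V_\varepsilon$ en approchant $z\in\overline{V_\varepsilon}$ par $y_k\to z$ et en rallongeant par $f(z)$, le saut final $d(f(y_k),f(z))$ devenant $<\varepsilon$ par continuité). L'enjeu de ce sens sera de montrer qu'en plus cette famille constitue une base de voisinages de $K$ lorsque $K$ est un quasi-attracteur. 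Le sens $(\Leftarrow)$ est purement topologique et n'utilise pas les $V_\varepsilon$.

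\textbf{Sens $(\Leftarrow)$.} Soit $(U_n)_{n\geq 0}$ une base de voisinages ouverts attractifs de $K$. J'établis $pW^u(K)=K$, ce qui par la caractérisation rappelée dans le texte équivaut à dire que $K$ est un quasi-attracteur. L'argument clé est que $f(\overline{U_n})$ est un compact inclus dans l'ouvert $U_n$, donc à distance $\eta_n>0$ du fermé $M\setminus U_n$; toute $\varepsilon$-pseudo-orbite avec $\varepsilon<\eta_n$ issue de $K\subset U_n$ reste ainsi confinée dans $U_n$. Par conséquent, tout $y\in pW^u(K)$ appartient à $\bigcap_n\overline{U_n}$, intersection qui se réduit à $K$ puisque $(U_n)$ est une base de voisinages de $K$ dans un espace métrique compact (argument standard de séparation $d(y,K)>0$).

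\textbf{Sens $(\Rightarrow)$.} Supposons $pW^u(K)=K$. Les $V_\varepsilon$ étant déjà attractifs, il suffit de vérifier qu'ils forment une base de voisinages de $K$. Soit $W$ un ouvert contenant $K$; par l'absurde, on peut extraire des suites $\varepsilon_k\to 0$ et $y_k\in V_{\varepsilon_k}\setminus W$ convergeant vers un point $y\in M\setminus W$, en particulier $y\notin K$. Le cœur du raisonnement consiste à montrer $y\in pW^u(K)$: pour $\delta>0$ donné et $k$ assez grand pour que $\varepsilon_k<\delta/2$ et $d(y_k,y)<\delta/2$, on substitue $y$ au dernier terme $y_k$ d'une $\varepsilon_k$-pseudo-orbite de $K$ vers $y_k$; le nouveau saut final est majoré par $\varepsilon_k+d(y_k,y)<\delta$. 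Cela contredit $pW^u(K)=K\subset W$.

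\textbf{Obstacle principal.} La seule subtilité se concentre dans ce dernier passage à la limite: c'est la stricte inégalité $<\varepsilon$ dans la définition d'une pseudo-orbite qui permet de greffer le point limite $y$ en bout de chaîne sans sortir des tolérances. Le reste est une simple vérification à partir des définitions de quasi-attracteur, d'ouvert attractif et de base de voisinages.
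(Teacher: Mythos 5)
Votre démonstration est correcte et suit essentiellement la même voie que le texte, qui se contente de la remarque précédant l'énoncé : les ensembles $V_\varepsilon(K)$ de points atteignables depuis $K$ par $\varepsilon$-pseudo-orbites sont des voisinages ouverts attractifs, et le fait qu'ils forment une base (sens direct) ainsi que la réciproque via le confinement des pseudo-orbites de saut $\varepsilon<d(f(\overline{U}),M\setminus U)$ sont exactement les vérifications que vous explicitez.
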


Ces notions sont proches de la stabilit\'e au sens de Lyapunov.
On dit qu'un ensemble invariant $K$ est \emph{stable au sens de Lyapunov}
\index{stabilit\'e!au sens de Lyapunov}
s'il admet une base de voisinages $U$ positivement invariants, i.e.
satisfaisant $f(U)\subset U$.
Un quasi-attracteur est une classe de r\'ecurrence par cha\^\i nes 
stable au sens de Lyapunov, mais la r\'eciproque est fausse en g\'en\'eral.

\section{Hyperbolicit\'e}\label{s.hyperbolicite}
Un ensemble $K$ invariant par $f$ est \emph{hyperbolique} \index{hyperbolicit\'e!ensemble} s'il existe une d\'ecomposition
$T_KM=E^s\oplus E^u$ du fibr\'e tangent au-dessus de $K$ en deux sous-fibr\'es lin\'eaires invariants par l'application tangente
$Df$ et un entier $N\geq 1$ tels que pour tout $x\in K$ on a\footnote{
Nous pouvons bien s\^ur remplacer $e$ par n'importe quelle constante strictement
sup\'erieure \`a $1$, mais nous avons fait ce choix par coh\'erence avec la
d\'efinition de l'exposant de Lyapunov (section~\ref{s.oseledets}).}
\begin{equation}\label{e.hyperbolique}
\|D_xf^N_{|E^s(x)}\|\leq \frac 1 e \text{ et } \|D_xf^{-N}_{|E^u(x)}\|\leq \frac 1 e.
\end{equation}
La dimension du fibr\'e stable est appel\'ee \emph{indice} \index{indice} de $K$.\footnote{Certains auteurs appellent au contraire indice la dimension du fibr\'e instable.}

\noindent
Plus g\'en\'eralement, un ensemble compact invariant est dit \emph{hyperbolique (\`a indice variable)}
s'il admet une partition en ensembles hyperboliques.
\medskip

L'ensemble stable $W^s(x)$ (resp. instable $W^u(x)$)
d'un point $x$ appartenant \`a un ensemble hyperbolique est une sous-vari\'et\'e
injectivement immerg\'ee de $M$, tangente \`a $E^s(x)$ (resp. $E^u(x)$).

Une orbite p\'eriodique hyperbolique est un \emph{puits} \index{puits}
(resp. une \emph{source} \index{source}) si son ensemble stable (resp. instable) contient
un voisinage de l'orbite. Dans les autre cas, on dit que l'orbite p\'eriodique hyperbolique est une \emph{selle}\index{selle}.

\section{Classes homoclines}
Deux orbites p\'eriodiques hyperboliques $O_1,O_2$ sont dites \emph{homocliniquement \index{homocliniquement reli\'ees, orbites} reli\'ees}\footnote{On devrait dire
\emph{transversalement homocliniquement reli\'ees}.}
si la vari\'et\'e stable de $O_i$ coupe transversalement la vari\'et\'e instable de $O_j$
pour $(i,j)=(1,2)$ et $(i,j)=(2,1)$. En particulier, leur indices co\"\i ncident.
\medskip

La \emph{classe homocline} \index{classe!homocline} $H(O)$ d'une orbite p\'eriodique hyperbolique est l'adh\'erence
de l'ensemble des points d'intersection transverse entre les vari\'et\'es stables et instables de $O$.
En utilisant le lemme d'inclinaison (le ``$\lambda$-lemma'', voir~\cite[proposition 2.5]{newhouse-cours})
et le th\'eor\`eme homocline de Smale (voir~\cite[th\'eor\`eme 2.3]{newhouse-cours}), Newhouse a montr\'e~\cite{newhouse-homocline}
que c'est aussi l'adh\'erence de l'ensemble des orbites p\'eriodiques hyperboliques homocliniquement reli\'ees \`a $O$.
La classe homocline $H(O)$ est dite \emph{triviale} lorsqu'elle est r\'eduite \`a $O$.

Remarquons que les classes homoclines de deux orbites p\'eriodiques hyperboliques distinctes peuvent s'intersecter sans co\"\i ncider.
Chaque classe homocline $H(O)$ est un ensemble compact invariant transitif.
Plus pr\'ecis\'ement,
il existe toujours au moins une mesure de probabilit\'e invariante ergodique dont le support co\"\i ncide avec $H(O)$
(voir~\cite[th\'eor\`eme 3.1]{abc-ergodic}).

\section{Diff\'eomorphismes hyperboliques}\label{s.diffeo-hyperbolique}
Un diff\'eomorphisme satisfait l'\emph{axiome A} \index{axiome A} si l'ensemble non-errant $\Omega(f)$ est hyperbolique
et co\-in\-ci\-de avec l'a\-dh\'e\-ren\-ce $\overline{\per(f)}$ des points p\'e\-ri\-o\-di\-ques de $f$.
D'a\-pr\`es, le th\'e\-o\-r\`e\-me de d\'e\-com\-po\-si\-tion spectrale de Smale, il existe une d\'ecomposition disjointe
$$\Omega(f)=K_1\cup\dots\cup K_s$$
en ensembles hyperboliques compacts transitifs localement maximaux\footnote{$K_i$ est localement maximal s'il poss\`ede un voisinage $U$ tel que $K_i=\cap_{n\in \ZZ} f^n(U)$.}, appel\'es \emph{ensembles ba\-si\-ques \index{ensemble! basique} de $f$}.
Chaque ensemble $K_i$ est une classe homocline et r\'eciproquement toute classe homocline de $f$ est
un ensemble basique.

Un diff\'eomorphisme qui satisfait l'axiome $A$ poss\`ede un \emph{cycle} \index{cycle} s'il existe
une suite d'ensembles basiques
$K_{i_1},\dots,K_{i_r}$
tels que $W^u(K_{i_j})$ et $W^s(K_{i_{\ell}})$ se rencontrent hors de $\Omega(f)$
lorsque $j\in\{1,\dots,r-1\}$, $\ell=j+1$ et lorsque $(j,\ell)=(r,1)$.
Lorsque $f$ n'a pas de cycle, $\Omega(f)=\cR(f)$ et les ensembles basiques sont aussi les classes de r\'ecurrence par cha\^\i nes de $f$.

R\'eciproquement, on montre facilement \`a l'aide du lemme de pistage qu'un diff\'eomorphisme
dont l'ensemble r\'ecurrent par cha\^\i nes est hyperbolique v\'erifie l'axiome A et n'a pas de cycle.
Ceci justifie la d\'efinition suivante.\footnote{Cette d\'efinition n'est pas standard.
On lit en g\'en\'eral ``diff\'eomorphisme axiome A sans cycle''.}

\begin{definition}\label{d.hyperbolique}
Un diff\'eomorphisme $f\in \diff^1(M)$ est \emph{hyperbolique} \index{hyperbolicit\'e!diff\'eomorphisme} si $\cR(f)$ est hyperbolique,
ou de fa\c con \'equivalente s'il satisfait l'axiome A et n'a pas de cycle. 
\end{definition}
Puisque l'ensemble r\'ecurrent par cha\^\i nes varie semi-contin\^ument sup\'erieurement avec $f$,
les diff\'eomorphismes hyperboliques forment une partie ouverte de $\diff^1(M)$.

\begin{exemples}
\begin{enumerate}
\item Un diff\'eomorphisme hyperbolique pour lequel $\cR(f)$ co\"\i ncide avec $M$ est un \emph{dif\-f\'e\-o\-mor\-phis\-me d'Anosov}. \index{Anosov, diff\'eomorphisme}
C'est le cas du diff\'eomorphisme du tore $\TT^1=\RR^2/\ZZ^2$ induit par l'action de l'automorphisme
lin\'eaire $\begin{pmatrix}2 &1\\ 1 &1\end{pmatrix}$.
\item Le temps $1$ du flot associ\'e au champ de gradient d'une fonction de Morse $h\colon M\to \RR$
est un diff\'eomorphisme hyperbolique~: les ensembles basiques sont les points critiques de $h$.
Lorsque toutes les valeurs critiques de $h$ sont simples, $-h$ est une fonction de Lyapunov au sens
du th\'eor\`eme~\ref{t.fondamental} de Conley. La figure~\ref{f.filtration}
montre une filtration associ\'ee.
\end{enumerate}
\end{exemples}
\begin{figure}[ht]
\begin{center}
\input{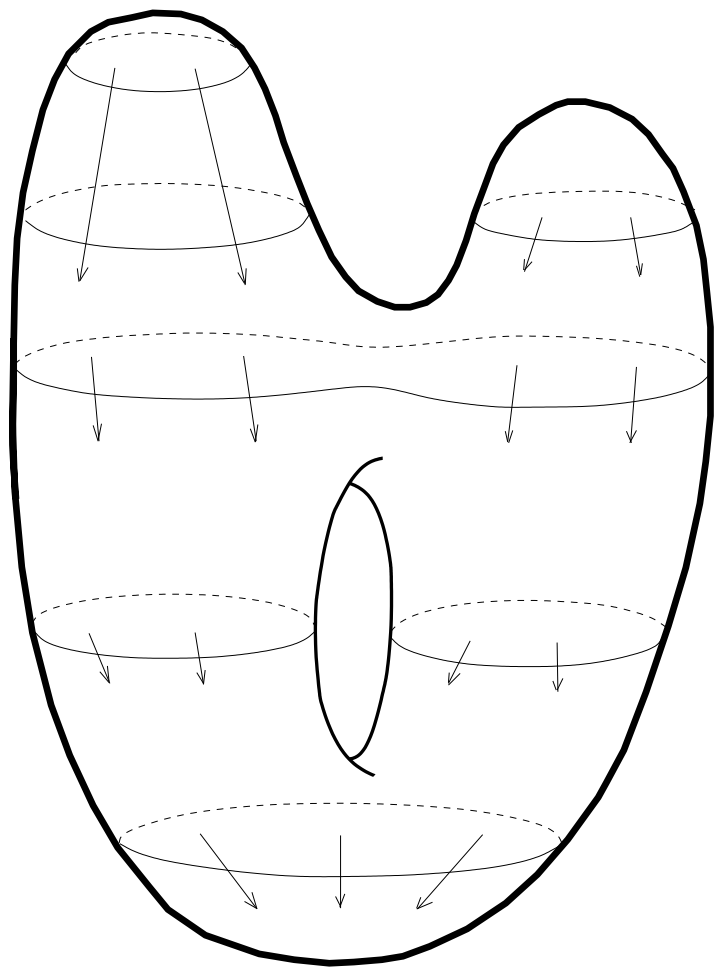}
\end{center}
\caption{Filtration pour le temps $1$ du flot associ\'e au gradient d'une fonction de Morse. \label{f.filtration}}
\end{figure}
Smale a donn\'e~\cite{smale-morse} une g\'en\'eralisation de l'exemple pr\'ec\'edent.

\begin{definition}\label{d.morse-smale}
Un diff\'eomorphisme $f\in \diff^1(M)$ est \emph{Morse-Smale}\index{Morse-Smale, diff\'eomorphisme} si $\cR(f)$ est fini, hyperbolique
et si pour tous points $x,y\in \cR(f)$, l'intersection $W^s(x)\cap W^u(y)$ est transverse~:
pour tout point $z$ dans l'intersection, $T_zM=T_zW^s(x)+T_zW^u(y)$.
\end{definition}
L'ensemble des diff\'eomorphismes Morse-Smale est ouvert~\cite{palis-smale}.


\chapter{Techniques de perturbations, g\'en\'ericit\'e}
Afin d'obtenir des propri\'et\'es satisfaites par des ensembles denses de diff\'eomorphismes,
il est naturel de s'int\'eresser aux propri\'et\'es des diff\'eomorphismes g\'en\'eriques.
La difficult\'e pour montrer qu'une propri\'et\'e est g\'en\'erique reste bien souvent la densit\'e~:
elle requiert de savoir perturber dans $\diff^1(M)$.

Nous introduisons dans ce chapitre une technique de perturbation d'orbites en topologie $C^1$,
due \`a Pugh. Elle permet d'obtenir par un argument combinatoire
(que nous pensons plus simple que l'argument initial de Pugh~\cite{pugh-closing,pugh-robinson})
le lemme de fermeture de Pugh (le ``closing lemma'').
Elle permet \'egalement d'obtenir les autres r\'esultats de connexion d'orbites,
en particulier le lemme de connexion d'Hayashi (le ``connecting lemma'') pr\'esent\'e en fin de chapitre.

Le lemme de fermeture est particuli\`erement important puisqu'il assure l'existence d'orbites p\'eriodiques
pour les diff\'eomorphismes $C^1$-g\'en\'eriques~: un grand nombre de propri\'et\'es dynamiques
s'\'enoncent ou se d\'emontrent en s'appuyant sur les orbites p\'eriodiques.

\section{Notions de g\'en\'ericit\'e, de robustesse}
Un ensemble de diff\'eomorphismes est \emph{$C^r$-g\'en\'erique}\index{g\'en\'ericit\'e}, pour $r\geq 0$,
s'il contient un G$_\delta$ dense de $\diff^r(M)$. Une propri\'et\'e est $C^r$-g\'en\'erique
si elle est satisfaite par un ensemble $C^r$-g\'en\'erique.

Puisque $\diff^r(M)$ est un espace de Baire, une intersection d\'enombrable de parties
$C^r$-g\'en\'eriques est encore $C^r$-g\'en\'erique.
\medskip

Un diff\'eomorphisme $f\in \diff^r(M)$
v\'erifie une propri\'et\'e \emph{de fa\c con robuste}
si cette propri\'et\'e est satisfaite par tous les diff\'eomorphismes
proches de $f$.

\section{Mise en transversalit\'e~: diff\'eomorphismes de Kupka-Smale}
L'une des premi\`eres propri\'et\'es de g\'en\'ericit\'e a \'et\'e montr\'ee par Kupka et Smale~\cite{kupka,smale-kupka}.
En appliquant le th\'eor\`eme de transversalit\'e de Thom
(disponible en r\'egularit\'e $C^k$, $k\geq 1$),
on peut perturber tout diff\'eomorphisme pour supprimer les points fixes non hyperboliques. Plus g\'en\'eralement, on obtient le th\'eor\`eme suivant (voir~\cite{palis-demelo}).

\begin{theoreme}[Kupka-Smale]\label{t.kupka-smale}
Chaque espace $\diff^k(M)$, $\geq 1$ contient un G$_\delta$ dense
form\'e de diff\'eomorphismes v\'erifiant les deux propri\'et\'es suivantes.
\begin{itemize}
\item[--] Toutes les orbites p\'eriodiques de $f$ sont hyperboliques.
\item[--] Pour tous points p\'eriodiques $p,q$ de $f$,
les vari\'et\'es stables $W^s(p)$ et instables $W^u(q)$ sont transverses, i.e.
$T_xW^s(p)+T_xW^u(q)=T_xM$ pour tout $x\in W^s(p)\cap W^u(q)$.
\end{itemize}
\end{theoreme}
Un diff\'eomorphisme v\'erifiant ces deux propri\'et\'es est appel\'e \emph{diff\'eomorphisme Kupka-Smale}\index{Kupka-Smale, diff\'eomorphisme}. 
En particulier pour tout $N\geq 1$, l'ensemble de ses points p\'eriodiques de p\'eriode inf\'erieure \`a $N$
est fini.

\section{Perturbation ponctuelle de la dif\-f\'e\-ren\-ti\-elle~: le lemme de Franks}
Voici un r\'e\-sul\-tat \'e\-l\'e\-men\-tai\-re, souvent u\-ti\-li\-s\'e
pour modifier la dif\-f\'e\-ren\-ti\-elle d'un dif\-f\'e\-o\-mor\-phis\-me le long d'une orbite
p\'eriodique, sans changer l'orbite.

\begin{theoreme}[Lemme de Franks~\cite{franks}\index{lemme de Franks}]\label{t.franks}
Pour tout voisinage $\cU$ de $f\in \diff^1(M)$, il existe $\varepsilon>0$ avec la propri\'et\'e suivante.

Pour tout point $p\in M$ et toute application lin\'eaire $A\colon T_{p}M\to T_{f(p)}M$ telle que
$\|D_pf-A\|\leq \varepsilon$,
il existe un diff\'eomorphisme $g\in \cU$ tel que~:
\begin{itemize}
\item[--] $f$ et $g$ co\"\i ncident en $p$ et hors d'un voisinage arbitrairement petit de $p$~;
\item[--] $D_pg=A$.
\end{itemize}
\end{theoreme}
Ce r\'esultat permet de rendre hyperbolique une orbite p\'eriodique par petite perturbation $C^1$.
Avec le m\^eme argument, on peut aussi ``lin\'eariser'' un diff\'eomorphisme au voisinage d'un point,
c'est-\`a-dire fixer une m\'etrique riemannienne et demander que $g$ co\"\i ncide au voisinage de $p$ avec l'application
$\exp_{f(p)}\circ A \circ \exp_{p}^{-1}$.
Ce r\'esultat est bien s\^ur propre \`a la topologie $C^1$.

\section{Modification \'el\'ementaire d'une orbite}
On souhaite fr\'equemment modifier une orbite d'un diff\'eomorphisme $f$.
Dans les cas les plus simples, on consid\`ere deux points proches $x,y\in M$,
et l'on cherche un diff\'eomorphisme $g$ proche de $f$ pour lequel l'image de $x$ n'est plus $f(x)$ mais $f(y)$.
On contr\^ole la taille du support \`a l'aide du lemme \'el\'ementaire suivant
(voir~\cite{arnaud-closing}).

\begin{lemme}[Perturbation \'el\'ementaire]\label{l.perturbation-elementaire}\index{perturbation \'el\'ementaire}
Pour tout voisinage $\cU$ d'un diff\'eomorphisme $f\in \diff^1(M)$, il existe $\theta>1$ et $r_0>0$ v\'erifiant la propri\'et\'e suivante~:
pour tous points $x,y\in M$, contenus dans une boule $B(z,r)$ de raynon $r<r_0$,
il existe $g\in \cU$ envoyant $x$ sur $f(y)$ et coincidant avec $f$ hors de la boule
$B(z, \theta.r)$.
\end{lemme}

\paragraph{\it Pourquoi travailler en topologie $C^1$~?}

Une telle perturbation s'obtiendrait tr\`es facilement en topologie $C^0$ avec un support
localis\'e autour d'un chemin quelconque joignant $x$ \`a $y$.
Les r\'egularit\'es sup\'erieures font appara\^\i tre des contraintes sur la taille du support de perturbation.

En topologie $C^1$, si l'on cherche une perturbation $g$ de l'identit\'e v\'erifiant $g(x)=y$
et $\|Dg-\id\|\leq \varepsilon$, pour tout point $z=g(z)$ on obtient~:
$$ \|y-x\|=\|(g(x)-x)-(g(z)-z)\|\leq \varepsilon.\|x-z\|.$$
Le support de la perturbation contient donc une boule de rayon $\varepsilon^{-1}.d(x,y)$.
Le lemme~\ref{l.perturbation-elementaire} nous montre en revanche que la taille
du support ne d\'eg\'en\`ere pas aux petites \'echelles~:
l'uniformit\'e de $\theta$ par rapport au couple $(x,y)$ sera essentielle pour les utilisations futures.

En topologie sup\'erieure, on perd cette uniformit\'e. Ceci explique pourquoi les d\'emonstrations
des \'enonc\'es perturbatifs que nous pr\'esentons dans les sections suivantes sont sp\'ecifiques
\`a la topologie $C^1$, ainsi que les difficult\'es que l'on a \`a perturber en r\'egularit\'e $C^k$, $k\geq 2$.

\paragraph{\it Quelle est la difficult\'e \`a travailler en topologie $C^1$~?}
La constante $\theta$ dans le lemme~\ref{l.perturbation-elementaire}
explose lorsque le voisinage $\cU$ d\'ecro\^\i t. Ceci constitue la principale difficult\'e des perturbations en topologie $C^1$. Si l'on suppose par exemple que $x$ est un it\'er\'e futur de $y$, on peut esp\'erer
utiliser le lemme~\ref{l.perturbation-elementaire} pour fermer l'orbite de $x$. Cette id\'ee simple ne fonctionne
pas en g\'en\'eral puisque le support risque de contenir un it\'er\'e futur de $y$ 
ant\'erieur \`a $x$ et la perturbation risque de briser la connexion entre $y$ et $x$.
On voit ainsi l'int\'er\^et de travailler avec des perturbation de support aussi petit que possible.

\section{Modification progressive d'une orbite~: le lemme de Pugh}
Pugh a \'elabor\'e une technique de perturbation en topologie $C^1$
qui permet de supposer la constante $\theta$ du lemme~\ref{l.perturbation-elementaire} petite,
quelle que soit la taille des perturbations. L'id\'ee est de r\'epartir la perturbation dans le temps~:
le diff\'eomorphisme $f$ peut \^etre perturb\'e successivement dans des domaines $U,f(U),\dots, f^{N-1}(U)$ disjoints.
Puisque l'on travaille d\'esormais avec de grands it\'er\'es de la dynamique, le domaine $U$ supportant la perturbation subit une d\'eformation 
et ce sont maintenant des ellipso\"\i des qui jouent le r\^ole des boules apparaissant au lemme~\ref{l.perturbation-elementaire}.
Il sera plus commode par la suite de travailler avec des parall\'el\'epip\`edes.
On introduit donc la d\'efinition suivante.

\begin{definition}
Si $\varphi\colon V\to \RR^d$ est une carte de $M$,
nous appelons \emph{cube} de $\varphi$ tout ensemble $C\subset V$ tel que
$\varphi(C)$ soit l'image d'un cube $[-a,a]^d$ par une translation de $\RR^d$~;
la quantit\'e $a>0$ est le \emph{rayon} du cube.
Si le cube homoth\'etique \`a $\varphi(C)$ de rayon $(1+\varepsilon).a$ et de m\^eme centre
est encore contenu dans $\varphi(C)$, nous noterons $(1+\varepsilon).C$ sa pr\'e-image par $\varphi$.
\end{definition}

Voici l'\'enonc\'e fondamental permettant les modifications d'orbites en topologie $C^1$.
\begin{theoreme}[Lemme de perturbation, Pugh\index{lemme de perturbation}]\label{t.pugh}
Pour tout voisinage $\cU$ de  $f\in \diff^1(M)$ et tous $\varepsilon,\eta>0$,
il existe un entier $N\geq 1$ et, en tout point $p\in M$, une carte $\varphi\colon V\to \RR^d$
avec la propri\'et\'e suivante.

Pour tout cube $(1+\varepsilon).C$ de $\varphi$, disjoint de ses $N-1$-premiers it\'er\'es, et tous points $x,y\in C$,
il existe $g\in \cU$ v\'erifiant~:
\begin{itemize}
\item[--] $g^N(x)=f^N(y)$~;
\item[--] $g$ co\"\i ncide avec $f$ hors de l'union des $f^k((1+\varepsilon).C)$ avec $0\leq k<N$~;
\item[--] pour tout $0< k<N$, le point $g^k(x)$ appartient \`a $f^k((1+\frac \varepsilon 2).C)$~;
\item[--] $g$ et $f$ co\"\i ncident sur $f^k((1+\varepsilon).C)$ pour $0\leq k<N$ hors d'une boule centr\'ee en $g^k(x)$
et de rayon inf\'erieur \`a $\eta$ fois la distance entre $f^k((1+\frac \varepsilon 2).C)$ et le compl\'ementaire de $f^k((1+\varepsilon).C)$.
\end{itemize}
\end{theoreme}
\begin{remarque}
La perturbation $g$ de $f$ est obtenue en composant des perturbations \'el\'ementaires
(au sens du lemme~\ref{l.perturbation-elementaire}) centr\'ee aux diff\'erents points $g^k(x)$, $k=0,\dots,N-1$.
\end{remarque}

C'est essentiellement cet \'enonc\'e que l'on retrouve dans les diff\'erents travaux traitant du ``lemme de fermeture'' de Pugh
(voir~\cite{pugh-closing,pugh-closing3,liao-closing,liao-livre,pugh-robinson}).
La version que nous donnons ici est  l\'eg\`erement plus uniforme
(en particulier $N$ ne d\'epend pas du point $p$) et correspond \`a \cite[th\'eor\`eme A.2]{bc} (voir aussi~\cite{wen-connecting}).
Pour la d\'emonstration, il suffit de travailler avec la suite d'applications lin\'eaires
$D_pf^k\colon T_pM\to T_{f^k(p)}M$, $k\geq 0$.
En effet, le support de la carte $\varphi$ est choisi tr\`es petit de sorte
que les premiers it\'er\'es de $f$ sur $V$ sont proches d'une suite d'applications lin\'eaires.
La preuve initiale de Pugh a \'et\'e simplifi\'ee par Mai~\cite{mai} (voir aussi~\cite{arnaud-closing}).
Nous renvoyons \`a~\cite{wen-xia-basic} pour une d\'emonstration digeste de ce th\'eor\`eme.

\section{Fermeture d'orbites~: le ``closing lemma''}\label{ss.closing}
Nous pouvons d\'eduire du th\'eor\`eme~\ref{t.pugh} le c\'el\`ebre ``closing lemma'' de Pugh.

\begin{theoreme}[Lemme de fermeture, Pugh\index{lemme de fermeture}\index{closing lemma}]\label{t.closing}
Pour tout voisinage $\cU$ de $f\in \diff^1(M)$ et tout $p\in\Omega(f)$,
il existe $g\in \cU$ pour lequel $p$ est un point p\'eriodique.
\end{theoreme}
\`A notre connaissance, il n'y a pas dans la litt\'erature de d\'emonstration directe du th\'eor\`eme de Pugh
\`a l'aide du lemme de perturbation (par exemple, la d\'emonstration de~\cite{pugh-robinson} requiert
de modifier la g\'eom\'etrie des cubes de perturbation). Nous en proposons une, de nature combinatoire.
\begin{proof}
Fixons $\varepsilon>0$ tel que $(1+\varepsilon)^L\leq \frac 3 2$, $L=3^d$, o\`u $d$ est la dimension
de la vari\'et\'e. La constante $\eta$ ne jouera pas de r\^ole et sera prise \'egale \`a $\frac 1 2$.
Consid\'erons un entier $N\geq 1$ et une carte $\varphi\colon V\to \RR^d$ au voisinage de $p$, donn\'es par le th\'eor\`eme~\ref{t.pugh} appliqu\'e \`a un voisinage $\cU_0$ de $f$ plus petit que $\cU$.
Nous pouvons supposer que $p$ n'est pas p\'eriodique et donc que $V$ est disjoint de ses $N-1$
premiers it\'er\'es.

Puisque $p$ est non-errant, il existe deux points $x_0,y_0$ proches de $p$ tels que $x_0$ soit un
it\'er\'e futur de $y_0$. Nous consid\'erons deux cubes $\hat C_0$ et $C_0=\frac 1 2 . \hat C_0$ de $\varphi$
tels que $C_0$ contienne $x_0$ et $y_0$.
Soit $\cP$ l'ensemble (fini) des it\'er\'es interm\'ediaires entre $x_0$ et $y_0$
contenus dans $V$. Nous construisons
\begin{itemize}
\item[--] des paires $(x_k,y_k)$ de points de $\cP$ tels que $x_k$ est un it\'er\'e futur de $y_k$,
\item[--] des cubes $\hat C_k$ de $\varphi$ tels que le cube $C_k=\frac 1 2 .\hat C_k$ contienne $x_k$ et $y_k$,
\end{itemize}
de sorte que $\hat C_{k}$ soit contenu dans $\hat C_{k-1}$ et de rayon moiti\'e.

La construction se fait par r\'ecurrence (voir la figure~\ref{f.nested}). Puisque $\cP$ est fini, elle s'arr\^ete en temps fini.
\begin{figure}[ht]
\begin{center}
\input{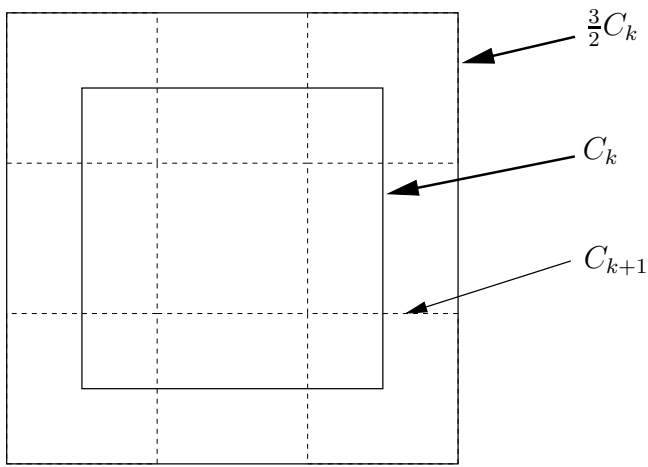}
\end{center}
\caption{Construction des cubes $C_k$. \label{f.nested}}
\end{figure}
Nous allons voir que l'arr\^et correspond \`a l'existence de points $x,y\in \hat C_0\cap \cP$
et d'un cube $\hat C$ de $\varphi$ tels que~:
\begin{enumerate}
\item\label{i.r1} $x=f^t(y)$ pour un entier $t\geq 1$~;
\item\label{i.r2} $x,y$ appartiennent \`a $C=\frac 1 2 . \hat C$~;
\item\label{i.r3} les it\'er\'es interm\'ediaires $f^s(y)$, $0<s<t$ n'appartiennent pas \`a $(1+\varepsilon).C$.
\end{enumerate}
Le th\'eor\`eme~\ref{t.pugh} permet alors de trouver un diff\'eomorphisme $g_0\in \cU_0$
tel que $x$ est p\'eriodique. Puisque $x$ est arbitrairement proche de $p$, il existe $g\in \cU$
conjugu\'e \`a $g_0$ tel que $p$ est p\'eriodique.

Il reste \`a expliquer comment construire $(x_{k+1},y_{k+1},\hat C_{k+1})$ \`a partir de $(x_{k},y_{k},\hat C_{k})$.
Le triplet $T(0)=(x_{k},y_{k},\hat C_{k})$ v\'erifie les propri\'et\'es~\ref{i.r1} et~\ref{i.r2}.
S'il ne v\'erifie pas~\ref{i.r3},
il existe un it\'er\'e $z(1)$ interm\'ediaire entre $x_k$ et $y_k$ qui appartient \`a $(1+\varepsilon).C_{k}$.
Le triplet $T(1)=(x_k,z(1),(1+\varepsilon).\hat C_k)$ v\'erifie alors les propri\'et\'es~\ref{i.r1} et~\ref{i.r2}.
On peut r\'ep\'eter la construction tant que l'on ne parvient pas \`a satisfaire
aux trois propri\'et\'es. On obtient ainsi une suite de triplets $T(j)=(x_k,z(j),(1+\varepsilon)^j.\hat C_k)$
pour $0\leq j \leq L$. Puisque $(1+\varepsilon)^L\leq \frac 3 2$, les points $z(j)$ appartiennent tous
\`a $\frac 3 2 . C_k$. Le cube $\frac 3 2 C_k$ est r\'eunion de $3^d$ cubes de rayon moiti\'e du rayon de $C_k$.
Puisque $L=3^d$, il existe donc deux points $z(j)$ et $z(j')$ qui sont contenus dans un m\^eme de ces cubes, not\'e $C_{k+1}$.
Nous notons $x_{k+1}$ et $y_{k+1}$ ces points et posons $\hat C_{k+1}=2 C_{k+1}$.
\end{proof}

\section[Le th\'eor\`eme de densit\'e de Pugh]{Exemple de d\'emonstration de g\'en\'ericit\'e~: le th\'eor\`eme de densit\'e de Pugh}

Nous montrons \`a pr\'esent comment d\'eduire un r\'esultat de g\'en\'ericit\'e (ici le th\'eor\`eme
de densit\'e de Pugh~\cite{pugh-closing2}) d'un \'enonc\'e perturbatif.

\begin{corollaire}[Pugh]\label{c.pugh}\index{th\'eor\`eme de densit\'e}
Il existe un G$_\delta$ dense $\cG$ de $\diff^1(M)$
tel que l'ensemble des points p\'eriodiques de tout diff\'eomorphisme $f\in \cG$ est dense dans l'ensemble
non-errant $\Omega(f)$.
\end{corollaire}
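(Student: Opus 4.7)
The plan is to decompose the conclusion into countably many open-dense conditions and apply the Baire category theorem. Fix a countable basis $(V_n)_{n \geq 1}$ of open sets of $M$. The property ``$\per(f)$ is dense in $\Omega(f)$'' holds for a given $f$ if and only if, for every $n$, either $V_n$ meets $\per(f)$, or $V_n$ is disjoint from $\Omega(f)$. It therefore suffices to produce, for each $n$, a dense open subset $\mathcal{O}_n \subset \diff^1(M)$ on which this dichotomy holds, and to set $\mathcal{G} = \bigcap_n \mathcal{O}_n$.

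I would take $\mathcal{O}_n = \mathcal{O}_n^+ \cup \mathcal{O}_n^-$, where $\mathcal{O}_n^+$ denotes the set of $f$ possessing a hyperbolic periodic point inside $V_n$, and $\mathcal{O}_n^-$ is the interior of $\{f : V_n \cap \Omega(f) = \emptyset\}$. The first is open because hyperbolic periodic orbits admit local continuations that remain inside $V_n$ under small $C^1$-perturbations; the second is open by construction. Both are visibly contained in the desired dichotomy, so the whole task reduces to establishing the density of $\mathcal{O}_n$.

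For density, fix $f \in \diff^1(M)$ and a $C^1$-neighborhood $\mathcal{U}$ of $f$, and assume $\mathcal{U} \cap \mathcal{O}_n^- = \emptyset$. Then no element of $\mathcal{U}$ is interior to $\{V_n \cap \Omega = \emptyset\}$, so some $g \in \mathcal{U}$ must satisfy $V_n \cap \Omega(g) \neq \emptyset$; pick $p$ in this intersection. Pugh's closing lemma (Theorem~\ref{t.closing}) furnishes $g_1 \in \mathcal{U}$ for which $p$ is periodic, and Franks' lemma (Theorem~\ref{t.franks}), applied successively at the points of the orbit of $p$ to adjust the derivatives, produces $g_2 \in \mathcal{U}$ for which this orbit is hyperbolic. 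Hence $g_2 \in \mathcal{O}_n^+ \cap \mathcal{U}$, as required.

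Baire's theorem then gives that $\mathcal{G} := \bigcap_n \mathcal{O}_n$ is dense $G_\delta$, and the conclusion is automatic: for $f \in \mathcal{G}$ and any $x \in \Omega(f)$, choosing $V_n \ni x$ arbitrarily small, the relation $x \in V_n \cap \Omega(f)$ rules out $f \in \mathcal{O}_n^-$, forcing $f \in \mathcal{O}_n^+$ and hence producing a periodic point in $V_n$. The main obstacle is precisely the density step: this is where one must invoke the full strength of the closing lemma (to create a periodic orbit through a prescribed non-wandering point) together with Franks' lemma (to make the orbit hyperbolic and thus stabilize the condition under further $C^1$-small perturbations).
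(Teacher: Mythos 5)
Your proof is correct, and it takes a genuinely different (though closely related) route from the paper's. The paper restricts first to the Kupka--Smale residual set $\cG_0$ of Theorem~\ref{t.kupka-smale}, considers the map $P\colon f\mapsto\overline{\per(f)}$ with values in $\cK(M)$, notes that hyperbolic continuation makes $P$ lower semicontinuous, and takes $\cG$ to be the set of continuity points of $P$, a dense G$_\delta$ by the abstract Proposition~\ref{p.baire}; the inclusion $\Omega(f)\subset P(f)$ at a continuity point is then extracted from the closing lemma plus Franks' lemma exactly as in your density step (one compares the perturbed $\overline{\per(g)}$ with a prescribed neighborhood $U$ of $P(f)$). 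You instead unwind the semicontinuity machinery by hand, fixing a countable basis $(V_n)$ and exhibiting explicit open dense sets $\cO_n=\cO_n^+\cup\cO_n^-$; your decomposition is essentially what one would write to prove Proposition~\ref{p.baire} in this special case, and the role played there by lower semicontinuity of $P$ is played in your argument by the openness of $\cO_n^+$ --- both rest on the persistence of hyperbolic periodic orbits, which is why the Franks step is indispensable in either version. What the paper's formulation buys is reusability: the same semicontinuity argument is invoked repeatedly elsewhere (for instance in Section~\ref{s.tolerance}). What yours buys is self-containedness: you need neither the global Kupka--Smale theorem nor the abstract proposition on continuity points, only the two perturbation lemmas.
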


La d\'emonstration utilise la propri\'et\'e classique suivante des espaces de Baire (voir~\cite{kuratowski}).

\begin{proposition}\label{p.baire}
Soit $\cB$ un espace de Baire, $X$ un espace compact m\'etrique et $h\colon \cB\to \cK(X)$ une application
semi-continue inf\'erieurement ou semi-continue sup\'erieurement,
\`a valeur dans l'espace des sous-ensembles compacts de $X$,
muni de la topologie de Hausdorff. Alors, l'ensemble des points de continuit\'e de $h$ contient un
G$_\delta$ dense de $\cB$.

En d'autres termes,
\begin{itemize}
\item[--] si pour tout ouvert $U$ de $X$,
l'ensemble $\{b\in \cB, \; h(b)\cap U\neq \emptyset\}$ est ouvert, alors
pour tout $b_0$ dans un
G$_\delta$ dense de $\cB$ et pour tout voisinage $U$ de $h(b_0)$, l'ensemble
$\{b\in \cB,\; h(b)\subset U\}$ est un voisinage de $b_0$~;
\item[--]
si pour tout ouvert $U$ de $X$,
l'ensemble $\{b\in \cB, \; h(b)\subset U\}$ est ouvert, alors
pour tout $b_0$ dans un
G$_\delta$ dense de $\cB$ et pour tout voisinage $U$ de $h(b_0)$, l'ensemble
$\{b\in \cB,\; h(b)\cap U\neq \emptyset\}$ est un voisinage de $b_0$.
\end{itemize}

\end{proposition}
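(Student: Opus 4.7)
The plan is to deduce the proposition from a single elementary observation: the boundary of any closed set in a topological space is closed with empty interior, hence nowhere dense. Applied to countably many closed sets obtained from a countable basis of $X$, Baire's theorem then yields a $G_\delta$ dense set of continuity points.

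First, I fix a countable basis $(U_n)_{n \geq 1}$ of $X$ consisting of open balls of rational radius centered at a countable dense subset, so that for every compact set $K \subset X$ and every open $U \supset K$, the compact $X \setminus U$ can be covered by finitely many $U_n$ each disjoint from $K$.

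Treat first the case where $h$ is lower semi-continuous, i.e.\ $\cO_n := \{b \in \cB : h(b) \cap U_n \neq \emptyset\}$ is open for each $n$. Its complement $\cF_n = \cB \setminus \cO_n$ is closed, so $\partial \cF_n$ is closed with empty interior (any open subset of $\cF_n$ must lie in $\operatorname{int}(\cF_n)$). Hence $\cB \setminus \partial \cF_n$ is open and dense, and by Baire
\[\cG := \bigcap_{n\geq 1} \bigl(\cB \setminus \partial \cF_n\bigr)\]
is $G_\delta$ dense in $\cB$. To check that $h$ is also upper semi-continuous at every $b_0 \in \cG$: given an open neighborhood $U$ of $h(b_0)$, cover the compact $X \setminus U$ by finitely many basic open sets $U_{n_1}, \dots, U_{n_k}$ disjoint from $h(b_0)$. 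Then $b_0 \in \cF_{n_i}$ for each $i$, and since $b_0 \notin \partial \cF_{n_i}$ it lies in $\operatorname{int}(\cF_{n_i})$. Taking a neighborhood $V_i \subset \cF_{n_i}$ of $b_0$ and intersecting, $V = \bigcap_i V_i$ is a neighborhood of $b_0$ on which $h(b) \cap U_{n_i} = \emptyset$ for every $i$, hence $h(b) \subset U$.

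The upper semi-continuous case is symmetric: replace $\cO_n$ by $\cO_n' = \{b : h(b) \subset X \setminus \overline{U_n}\}$, which is open by the upper semi-continuity hypothesis, and form $\cG$ from the boundaries of the $\cF_n' = \cB \setminus \cO_n'$ exactly as above. For $b_0 \in \cG$ and an open $U$ meeting $h(b_0)$, pick $x \in h(b_0) \cap U$ and a basic $U_n$ with $x \in U_n \subset \overline{U_n} \subset U$; then $b_0 \in \cF_n' \setminus \partial \cF_n' = \operatorname{int}(\cF_n')$, so on a neighborhood of $b_0$ one has $h(b) \cap \overline{U_n} \neq \emptyset$, and in particular $h(b) \cap U \neq \emptyset$.

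There is no real obstacle: the argument is entirely formal once one sees that the relevant sets are open (by the semi-continuity hypothesis) and that boundaries of closed sets are nowhere dense. The only point requiring mild care is the choice of basis in the covering step—using open balls of rational radius ensures that every compact set disjoint from $h(b_0)$ is covered by basic open sets individually disjoint from $h(b_0)$, which is what makes the interior argument combine through a finite intersection.
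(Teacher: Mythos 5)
Your proof is correct. The paper does not actually prove this proposition --- it simply cites it as a classical property of Baire spaces (Kuratowski) --- and your argument is precisely the standard one: the sets $\cF_n$ attached to a countable basis are closed by the semi-continuity hypothesis, their boundaries are closed with empty interior hence nowhere dense, and Baire's theorem gives the dense $G_\delta$ on which the complementary semi-continuity (and therefore Hausdorff continuity) holds via the finite-cover argument you describe. The two points that needed care --- that the boundary of a \emph{closed} set is nowhere dense, and that a compact set disjoint from $h(b_0)$ (resp.\ a point of $h(b_0)\cap U$) can be handled by finitely many basic open sets chosen disjoint from $h(b_0)$ (resp.\ with closure in $U$) --- are both handled correctly.
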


\begin{proof}[D\'emonstration du corollaire~\ref{c.pugh}]
Soit $\cG_0\subset \diff^1(M)$ un G$_\delta$ dense de diff\'eomorphismes dont les points p\'eriodiques
sont tous hyperboliques (donn\'e par le th\'eor\`eme~\ref{t.kupka-smale} de Kupka-Smale). C'est un espace de Baire.
Soit $P\colon \diff^1(M)\to \cK(M)$ l'application qui associe \`a tout diff\'eomorphisme $f$
l'adh\'erence $\overline{\per(f)}$ de ses points p\'eriodiques. Chaque point p\'eriodique poss\`ede une continuation hyperbolique
et par cons\'equent $P$ est semi-continue inf\'erieurement. D'apr\`es la proposition~\ref{p.baire} et le th\'eor\`eme de Baire,
l'ensemble de ses points de continuit\'e contient un G$_\delta$ dense $\cG$ de $\diff^1(M)$.

Pour $f\in \cG$ nous devons montrer que $P(f)$ et $\Omega(f)$ co\"\i ncident. Nous avons bien s\^ur l'inclusion
$P(f)\subset \Omega(f)$. Consid\'erons un point $x\in \Omega(f)$ et un voisinage ouvert $U$ de $P(f)$.
Le lemme de fermeture donne l'existence d'un diff\'eomorphisme $g_0\in \diff^1(M)$ proche de $f$ tel que $x$ est p\'eriodique.
Par perturbation (donn\'ee par le lemme de Franks, th\'eor\`eme~\ref{t.franks}),
on peut supposer que $x$ est hyperbolique~; l'orbite de $x$ peut donc \^etre suivie par perturbation, il existe donc
$g\in \cG_0$ proche de $g_0$ et de $f$ ayant un point p\'eriodique arbitrairement proche de $x$.
Puisque $P(g)\subset U$, on en d\'eduit que $x$ appartient \`a $\overline{U}$. Le voisinage $U$ \'etant arbitraire,
ceci permet de conclure $x\in P(f)$. Par cons\'equent $\Omega(f)=P(f)$.
\end{proof}

\section{Connexions d'orbites~: le ``connecting lemma''}\label{s.connecting}
Consid\'erons deux points p\'eriodiques hyperboliques $p,q$ et supposons que la vari\'et\'e
instable de $p$ et la vari\'et\'e stable de $q$ aient un point d'accumulation commun $z$. Peut-on par
perturbation $C^1$ cr\'eer une orbite h\'et\'erocline entre $p$ et $q$~?
Cette question se pose naturellement lorsque l'on cherche \`a caract\'eriser les diff\'eomorphismes structurellement
ou $\Omega$-stables (voir~\cite{hayashi-survey}) et a \'et\'e contourn\'ee dans ce cadre par Ma\~n\'e~\cite{mane-connecting}.
Lorsque $z$ n'est pas p\'eriodique,
la r\'eponse a finalement \'et\'e apport\'ee par Hayashi~\cite{hayashi-connecting} trente ans apr\`es la d\'emonstration du lemme de fermeture.

\begin{theoreme}[Lemme de connexion, Hayashi]\label{t.hayashi}\index{lemme de connexion}\index{connecting lemma}
Soit $\cU$ un voisinage de $f\in \diff^1(M)$ et $p,q,z$ trois points tels que~:
\begin{itemize}
\item[--] les ensembles d'accumulation des orbites future de $p$ et pass\'ee de $q$ contiennent $z$~;
\item[--] le point $z$ n'est pas p\'eriodique.
\end{itemize}
Il existe alors $g\in \cU$ tel que $q$ est un it\'er\'e futur de $p$.
\end{theoreme}
Ce r\'esultat est \'egalement d\'emontr\'e dans~\cite{arnaud-connecting,wen-xia-connecting}.
Il y a eu de nombreuses cons\'equences, voir par exemple~\cite{BD-newhouse,arnaud-connecting,hayashi-application,
cmp,morales-pacifico,arnaud-approximation,gan-wen-connecting} et la section~\ref{s.consequence}.
L'hypoth\`ese que $z$ n'est pas p\'eriodique sert dans la d\'emonstration \`a placer en $z$ un cube perturbatif
(fourni par le th\'eor\`eme~\ref{t.pugh}) disjoint d'un grand nombre d'it\'er\'es.

\paragraph{\it Pourquoi est-il plus difficile de connecter que de fermer~?}
Les lemmes de fermeture et de connexion semblent proches. Si l'on reprend la d\'emonstration du lemme de fermeture,
on place en $z$ une carte $\varphi$ fournie par le th\'eor\`eme~\ref{t.pugh} et dont le support est
disjoint de $N-1$ premiers it\'er\'es.
On consid\`ere un it\'er\'e futur $p'=f^{n(p)}(p)$ de $p$ et un it\'er\'e pass\'e $q'=f^{-n(q)}(q)$ de $q$, proches de $z$.
Comme pour le lemme de fermeture, nous ne pouvons pas directement perturber et construire un diff\'eomorphisme
$g$ tel que $g^N(p')=f^N(q')$, puisqu'une telle perturbation risque de briser les segments d'orbites entre $p$ et $p'$
et entre $f^N(q')$ et $q$~: nous ne serions pas certains que $q$ soit sur l'orbite positive de $p$.

Fixons un voisinage $V$ de $z$ disjoint de ses $N-1$ premiers it\'er\'es.
L'id\'ee naturelle serait de trouver comme dans l'argument combinatoire de la section~\ref{ss.closing},
deux it\'er\'es interm\'ediaires $x$ entre $p$ et $p'$ et $y$ entre $q'$ et $q$, et un
cube $C$ de la carte les contenant, tels que le cube $(1+\varepsilon).C$ ne contiennent pas
d'autre it\'er\'e de $p$ ou de $q$. Pour cela,
nous devons consid\'erer l'ensemble $\cP$ de tous les it\'er\'es interm\'ediaires
de $p$ et $q$ proches de $z$. Une difficult\'e nouvelle appara\^\i t alors~:
nous devons s\'electionner une paire de points de $\cP$, mais \`a la diff\'erence du lemme de fermeture,
cette paire doit contenir \`a la fois un  it\'er\'e de $p$ et un it\'er\'e de $q$.
Les points de $\cP$ ne sont plus tous interchangeables et l'argument pr\'ec\'edent ne fonctionne pas.

\paragraph{\it La strat\'egie d'Hayashi.}
L'argument d'Hayashi consiste \`a s\'electionner plusieurs paires qu'il faudra connecter simultan\'ement~:
on simplifie l'ensemble des retours $\cP$ en effa\c cant certains points.
Si une m\^eme orbite poss\`ede deux it\'er\'es
(par exemple deux it\'er\'es $f^{n_1}(p)$ et $f^{n_2}(p)$ de $p$) qui seraient trop proches relativement
\`a leur distance \`a la second orbite, nous pouvons dans ce cas
consid\'erer que ces deux points sont les m\^emes, oublier les it\'er\'es interm\'ediaires entre ces deux points
et esp\'erer qu'une petite perturbation permettra de les connecter.
En r\'ep\'etant cet argument, on s\'electionne un grand nombre de paires de retour et
nous devons appliquer, pour chacune d'elles, une perturbation donn\'ee par le th\'eor\`eme~\ref{t.pugh}.
Une difficult\'e est de garantir que toutes ces perturbations ont des supports disjoints.

Plus pr\'ecis\'ement, notons $\{p_0,p_1,\dots,p_r\}$
et $\{q_{-s},\dots,q_0\}$ les it\'er\'es de $p$ et de $q$ proches de $z$,
i.e. appartenant \`a l'ensemble $\cP$, et class\'es par ordre chronologique.
On extrait ensuite une sous-suite de la forme
$(x_0, y_0, x_1, y_1, \dots, x_\ell, y_\ell)$ de sorte qu'il existe pour chaque $i$
un diff\'eomorphisme $g_i=\psi_i\circ f$ qui satisfait $g_i^N(x_i)=f^N(y_i)$.
Le support de $\psi_i$ est contenu dans un petit cube $\widehat C_i $ proche de $z$ et dans les $N-1$ premiers
it\'er\'es  de $ \hat C_i $.

Supposons que~:
\begin{enumerate}
\item\label{i.s1} $x_0 = p_0$, $ y_\ell = q_0$,
\item\label{i.s2} pour tout $0\leq i<\ell$, le point $x_{i +1}$ est le premier retour de l'orbite de $y_i$ dans un voisinage de $z$.
\item\label{i.s3}
les supports des diff\'erentes perturbations sont deux \`a deux disjoints.
\end{enumerate}
En composant l'ensemble des perturbations $g_i$ de $f$,
on obtient alors un diff\'eomorphisme $ g = \psi_0 \circ \dots \circ \psi_{N-1} \circ f\in \cU$
envoyant $p$ sur $q$ par it\'erations positives.
Apr\`es perturbation, le segment d'orbite entre $ p $ et $ q $
est plus court que la pseudo-orbite initiale
$(p, f(p), \dots, f^{n(p)}(p)=p', f(q')=f^{-n(q)+1}(q), \dots, f^{-1}(q ), q)$,
voir la figure~\ref{f.combinatoire}.
\begin{figure}[ht]
\begin{center}
\input{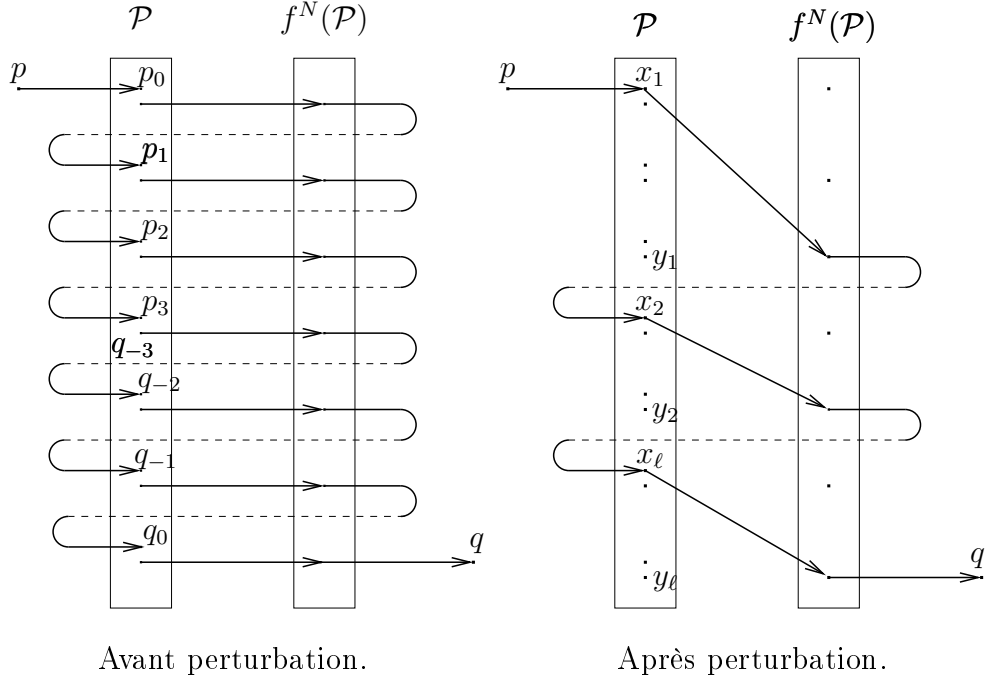}
\end{center}
\caption{Combinatoire des perturbations r\'ealis\'ee par le lemme de connexion. \label{f.combinatoire}}
\end{figure}

Le plus difficile est de choisir la sous-suite $(x_0, y_0, x_1, y_1, \dots, x_\ell, y_\ell)$.
Elle s'obtient apr\`es deux s\'elections.

\paragraph{\it Premi\`ere s\'election~: les cubes quadrill\'es}
Nous pavons tout d'abord un voisinage de $z$ par des cubes de la carte $\varphi$, comme illustr\'e en figure~\ref{f.selection1}.
Le voisinage lui-m\^eme est un cube $\hat C_0$ et le cube central $C_0=\frac 1 3 .\hat C_0$ du pavage contient $z$.
Nous pouvons supposer que les it\'er\'es $p'$ et $q'$ de $p$ et de $q$ sont contenus dans $C_0$.
Nous notons $\cP$ l'ensemble des retours furturs de $p$ et pass\'es de $q$ dans $\hat C_0$.
Le pavage permet de d\'eterminer s'il y a accumulation de points de $\cP$ dans une r\'egion du cube $\hat C_0$.
\begin{figure}[ht]
\begin{center}
\input{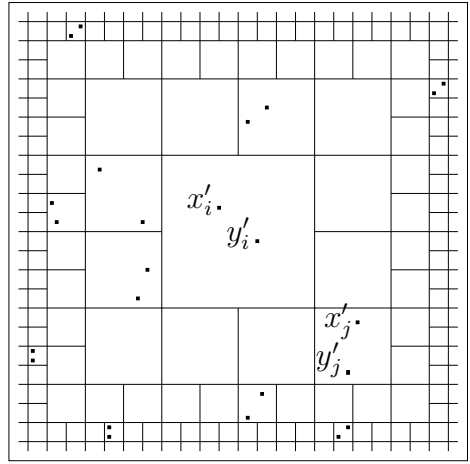}
\end{center}
\caption{Un cube quadrill\'e et la premi\`ere s\'election. \label{f.selection1}}
\end{figure}

Nous extrayons une premi\`ere sous-suite $(x'_0,y'_0,x'_1,y'_1,\dots,x'_{\ell'},y'_{\ell'})$
de $(p_0,\dots,p_r,q_{-s},\dots,q_0)$ satisfaisant les propri\'et\'es~\ref{i.s1} et~\ref{i.s2} ci-dessus, de sorte que
\begin{itemize}
\item[--] pour tout $0\leq i\leq \ell'$, les points $x'_i$ et $y'_i$ appartiennent
\`a une m\^eme tuile $T_i$ du pavage de $\hat C_0$,
\item[--] chaque tuile du pavage contient au plus une paire $(x'_i,y'_i)$.
\end{itemize}
Nous avons utilis\'e ici de fa\c con essentielle que les points $p'$ et $q'$ appartiennent \`a une m\^eme tuile du pavage.
La suite extraite est repr\'esent\'ee en figure~\ref{f.selection1}.

En appliquant le th\'eor\`eme~\ref{t.pugh}, on d\'efinit alors une suite de perturbations $g'_i $
telles que $ (g'_i) ^ N (x'_i) = f ^ N (y'_i) $.
Elle ne permet pas de conclure la d\'emonstration~:
si $x_i,y_i$ appartiennent \`a une tuile $T$ du pavage,
le support de la perturbation $g_i$ est contenu dans le cube $ (1+\varepsilon).T$
et dans ses $N-1$ premiers it\'er\'es.
S'il existe deux perturbations $g'_i,g'_j$ associ\'ees \`a deux tuiles adjacentes, les supports des perturbations correspondantes risquent
de s'intersecter et nous ne pouvons pas composer les perturbations.

\paragraph{\it Seconde s\'election~: les raccourcis.}
Supposons que le support
de deux perturbations $g'_i$ et $g'_j$ d\'efinies ci-dessus se chevauchent
(remarquons que les supports peuvent \^etre disjoints dans $\hat C_0$ et se chevaucher dans des it\'er\'es
$f^k(\hat C_0)$ pour certains $0\leq k <N$).
Cela implique que les points $(x'_i, y'_i)$ (dans la tuile $T_i$)
et les points $(x'_j, y'_j ) $ (dans la tuile $T_j $) ont des images proches pour
un it\'er\'e $f^k$.
Nous fixons un tel entier $k$ et
nous supposerons $ i <j $.

Afin de r\'esoudre le conflit, nous rempla\c cons les deux perturbations $g'_i,g'_j$
qui envoient respectivement $x'_i$ sur $f^N(y'_i)$ et $x'_j$ sur $f^N(y'_j)$,
par une seule perturbation envoyant $x'_i$ sur $f ^ N (y'_j) $ (figure~\ref{f.raccourci})~: il suffit pour cela de conserver la perturbation
$g'_i$ sur les it\'er\'es $f^\ell(\hat C_0)$ pour $0\leq \ell<k$,
de conserver la perturbation $g'_j$
sur les it\'er\'es $f^\ell(\hat C_0)$ pour $k< \ell<N$,
et d'utiliser sur $f^k(\hat C_0)$ une perturbation \'el\'ementaire
qui envoie ${g'}_i^k(x'_i)$ sur ${g'}_j^{k+1}(y'_j)$.
\`A l'issue de cette construction, on efface les paires de points interm\'ediaires $(x'_s, y'_s)$ avec $ s \in \{i +1, \dots, j-1 \}$
de la suite $ (x'_0, y'_0, x'_1, y'_1, \dots, x'_{\ell'}, y'_{\ell'}) $, pour obtenir
une nouvelle pseudo-orbite qui joint $p$ \`a $q$.
En d'autres termes, nous avons r\'ealis\'e un raccourci au sein de la pseudo-orbite obtenue apr\`es la premi\`ere s\'election.

\begin{figure}[ht]
\begin{center}
\input{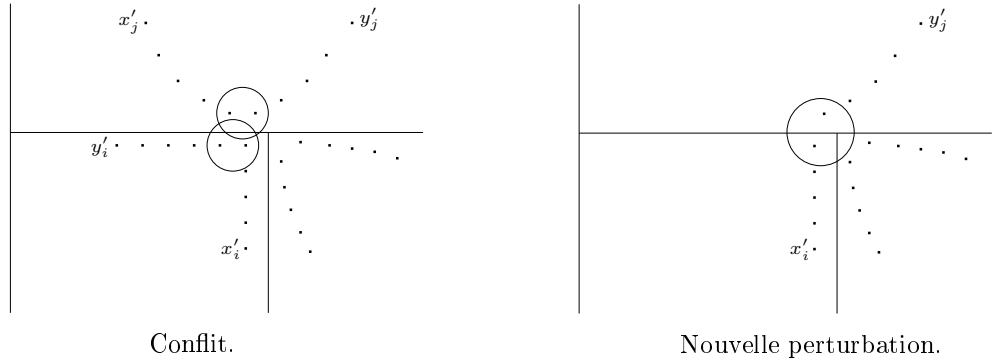}
\end{center}
\caption{Un raccourci. \label{f.raccourci}}
\end{figure}

Les supports des perturbations $g'_i$ et $g'_j$ sont contenus dans des boules de rayon $\eta$ fois plus petites que les distances
$d(\partial T_i, \partial ((1+\varepsilon).T_i))$ et
$d(\partial T_j, \partial ((1+\varepsilon).T_j))$ respectivement.
En ayant choisi $\varepsilon$ et $\eta$ suffisamment petits, on en d\'eduit que
les tuiles $T_i$ et $T_j$ sont adjacentes et que le support de la nouvelle
perturbation reste petit par rapport aux tailles des 
tuiles $T_i,T_j$ et de leur $N$ premiers it\'er\'es.

Nous poursuivons ces modifications tant que subsiste des conflits entre
perturbations. Remarquons qu'\`a chaque fois qu'un conflit est lev\'e,
nous obtenons une nouvelle perturbation de support l\'eg\`erement
plus large que le support des perturbations pr\'ec\'edentes.
Par cons\'equent le support de la nouvelle perturbation peut \`a son tour
rencontrer le support d'une troisi\`eme perturbation $g'_k$.
On peut se demander si le nombre de conflits successifs que l'on rencontre
peut \^etre arbitrairement grand, de sorte que le support de la perturbation
finale devienne bien plus important que la taille de la tuile initiale $T_i$.

Ce n'est pas le cas~: nous contr\^olons a priori la taille des perturbations
si bien que les conflits qui apparaissent sont toujours associ\'es \`a des tuiles
adjacentes \`a la tuile initiale $T_i$. Puisque a g\'eom\'etrie du pavage
est uniforme, le nombre de tuiles adjacentes \`a $T_i$ est born\'e
(par $4^d$) et partant de la perturbation initiale $g'_i$,
il y aura au plus $4^d$ conflits \`a r\'esoudre.
Si l'on choisi la constante $\eta$ suffisamment petite, le support perturbation
que l'on obtient apr\`es $4^d$ conflits reste petit par rapport \`a
la taille de la tuile $T_i$, justifiant ainsi le contr\^ole a priori
des perturbations (voir la figure~\ref{f.conflit}).
\begin{figure}[ht]
\begin{center}
\input{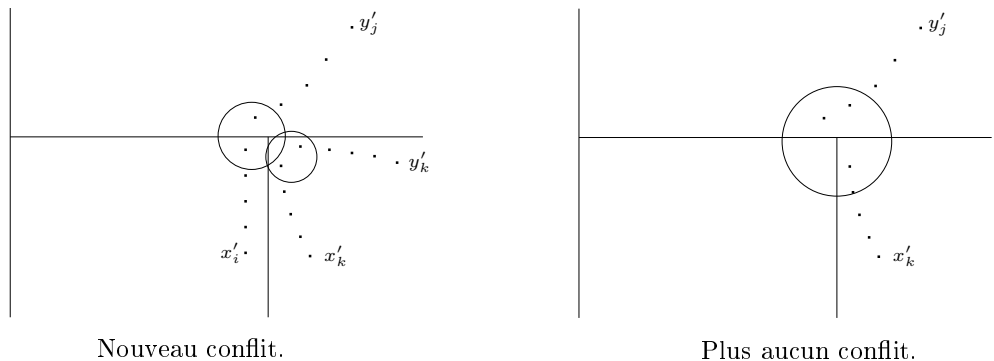}
\end{center}
\caption{Le nombre de conflits est born\'e. \label{f.conflit}}
\end{figure}

Apr\`es traitement des diff\'erents conflits, nous obtenons
une nouvelle pseudo-orbite, une nouvelle suite
$(p_0=x_0,y_0,x_1,y_1,\dots,x_\ell,y_\ell=q_0)$
et une collection de perturbations $(g_i)$
dont les supports sont deux-\`a-deux disjoints
et telle que pour tout $i$ on ait $g_i^N(x_i)=y_i$.

\section{Espaces de perturbation}\label{s.espace}
Les perturbations r\'ealis\'ees dans la preuve du lemme de Pugh et des diff\'erents lemmes de perturbations
qui en d\'ecoulent sont une succession de perturbations \'el\'ementaires
donn\'ees par le lemme~\ref{l.perturbation-elementaire} et de supports disjoints~:
nous appelons \emph{support}\index{support, d'une perturbation} d'un diff\'eomorphisme $h$ l'ensemble des points $x\in M$ tels que $h(x)\neq x$~;
le support d'une perturbation $g$ de $f$ est le support du diff\'eomorphisme $h=f\circ g^{-1}$.

Cette remarque permet de travailler en restant dans des
sous-espaces $\cS$ de $\diff^1(M)$, pourvu que les deux propri\'et\'es suivantes soit v\'erifi\'ees.
\begin{itemize}
\item[--] Le lemme~\ref{l.perturbation-elementaire} de perturbation \'el\'ementaire s'applique au sein de $\cS$.
\item[--] Tout diff\'eomorphisme $f\in \cS$ poss\`ede une base de voisinages $\cU\subset \cS$ \emph{flexibles}, i.e.
satisfaisant pour tous diff\'eomorphismes $h_{1},h_{2}\in \diff^1(M)$ \`a supports disjoints
$$f\circ h_{1}\in \cU \text{ et } f\circ h_{2}\in \cU \Longrightarrow f\circ h_{1}\circ h_{2}\in \cU.$$ 
\end{itemize}

En particulier, le lemme de Pugh reste vrai si l'on travaille avec des diff\'eomorphismes conservatifs
(i.e. v\'erifiant une forme volume ou symplectique), ou m\^eme avec des diff\'eomorphismes de classe $C^r$,
$r\geq 1$, pourvu que la topologie consid\'er\'ee soit la topologie $C^1$.

\section{Probl\`emes}

Il existe tr\`es peu de r\'esultats perturbatifs en classe $C^r$, $r>1$. 

\begin{question}
Existe-t-il un lemme de fermeture en r\'egularit\'e sup\'erieure~?
\end{question}
Des difficult\'es ont \'et\'e mises en \'evidence~\cite{pugh-against,gutierrez-against,herman-closing1,herman-closing2,herman-problems}.
Des cas particuliers ont \'et\'e trait\'es~\cite{pugh-closing-smooth}, ainsi que le cas des flots
sur les surfaces~\cite{peixoto,gutierrez-surface,gutierrez-survey}.
\smallskip

On peut se demander quels r\'esultats persistent pour la dynamique des endomorphismes,
i.e. des applications diff\'erentiables non injectives.
Wen a \'etendu le lemme de fermeture \`a ce cadre~\cite{wen-endo},
mais il n'existe pas de lemme de connexion.

\begin{question}
Existe-t-il un lemme de connexion pour les endomorphismes~?
\end{question}

Pour les diff\'eomorphismes, le lemme de connexion reste valide lorsque $z$ est p\'eriodique, si son
orbite est hyperbolique, ou plus g\'en\'eralement s'il n'y a pas de r\'esonance
non triviale entre ses valeurs propres (voir la remarque~\ref{r.pseudo} plus bas).
Il serait int\'eressant pour les applications de ne plus avoir d'hypoth\`ese
sur le point $z$.

\begin{question}
Le lemme de connexion reste-t-il v\'erifi\'e lorsque le point
$z$ est p\'eriodique~?
\end{question}

Hayashi a montr\'e~\cite{hayashi-application} un ``make or break lemma''~:
si l'orbite future de $x$ et l'orbite pass\'ee de $y$ ont un point d'accumulation commun $z$,
on peut alors perturber la dynamique en topologie $C^1$ pour disjoindre ces ensembles d'accumulation, ou bien pour que $y$ soit un it\'er\'e de $x$.

Terminons ce chapitre en remarquant qu'en l'absence de compacit\'e le lemmes de connexion est en g\'en\'eral faux~\cite{pugh-connecting}.
M.-C. Arnaud a donn\'e une version non compacte~\cite{arnaud-connecting} sous l'hypoth\`ese additionnelle que l'orbite du point $z$ a des points d'accumulation dans $M$.


\chapter{Connexions de pseudo-orbites}
Nous pr\'esentons le lemme de connexion pour les pseudo-orbites
d\'emontr\'e dans~\cite{bc,symplectic}).
La d\'emonstration n\'ecessite de construire une section de la dynamique~:
on \'enonce pour cela un th\'eor\`eme d'existence de ``tours topologiques'',
qui peut \^etre utile pour d'autres applications (voir par exemple
le chapitre~\ref{c.centralisateur}).

\section{\'Enonc\'e du lemme de connexion pour les pseudo-orbites}
Rappelons que si $K\subset M$ est un ensemble compact
et $x,y\in K$ deux points, nous notons
$x\dashv_K y$ lorsque pour tout $\varepsilon>0$ il existe une
$\varepsilon$-pseudo-orbite
$z_0=x,z_1,\dots,z_n=y$ avec $n\geq 1$ et contenue dans $K$.

\begin{theoreme}[Lemme de connexion pour les pseudo-orbites, Bonatti-Crovisier \index{lemme de connexion!pour les pseudo-orbites}]\label{t.bc}
Soit $\cU$ un voisinage d'un diff\'eomorphisme $f\in \diff^1(M)$ dont les points p\'eriodiques sont hyperboliques.
Pour tous points $x,y\in M$ satisfaisant $x\dashv y$,
il existe une perturbation $g\in \cU$ de $f$ et $n\geq 1$ tels que $g^n(x)=y$.
\end{theoreme}

\begin{remarques}\label{r.pseudo}
\begin{enumerate}
\item Si l'on consid\`ere un ensemble compact $K$ tel que $x\dashv_K y$,
alors pour tout voisinage $W$ on peut demander que la perturbation $g$
soit \`a support dans $W$.
\item Dans~\cite{symplectic},
nous avons affaibli l'hypoth\`ese sur le diff\'eomorphisme.
Il suffit de supposer que pour toute orbite p\'eriodique, \emph{il n'y a pas
de r\'esonance non triviale au sein des valeurs propres de module $1$}.

Plus pr\'ecis\'ement, pour toute orbite p\'eriodique, il n'y a pas de valeur propre qui soit racine
de l'unit\'e, les valeurs propres de module $1$ sont simples, et il n'y a pas de relation
de la forme
$$\lambda_{0}=\prod_{j=1}^{s}\lambda_j^{k_{j}},$$
o\`u les $k_{j}$ sont des entiers strictement positifs et les
$\lambda_{0},\bar \lambda_{0}, \lambda_{1},\bar \lambda_{1},\dots,\lambda_{s},\bar \lambda_{s}$
sont des valeurs propres de module $1$ toutes distinctes.
\end{enumerate}
\end{remarques}

\section{Id\'ee de la preuve}
On ne peut clairement pas esp\'erer connecter deux points li\'es par
des pseudo-orbites en effectuant simplement une perturbation locale~:
nous allons devoir perturber ind\'ependemment dans plusieurs
r\'egions en utilisant les cubes quadrill\'es fournis par la d\'emonstration
du lemme de connexion d'Hayashi. En s'assurant que les supports des
diff\'erentes perturbations sont disjoints, la section~\ref{s.espace} garantit
que la perturbation finale reste petite.

\subsection{Les domaines de perturbations\index{domaine de perturbation}}
\label{ss.domaine}
Consid\'erons une carte $\varphi\colon V\to \RR^d$.
Un \emph{domaine quadrill\'e
selon les coordonn\'ees de $\varphi$}
est la donn\'ee d'un ouvert $U\subset V$
et d'une famille $\cC$ de cubes de $\varphi$
(appel\'es \emph{tuiles du domaine}) tels que
\begin{enumerate}
\item les int\'erieurs des tuiles sont deux \`a
deux disjoints~;
\item l'union des tuiles de $\cC$ est \'egale \`a $U$~;
\item la g\'eom\'etrie du quadrillage est born\'ee, i.e.
\begin{itemize}
\item[--]
le nombre de tuiles est uniform\'ement born\'e (par $2^d$)
au voisinage de chaque point,
\item[--]
pour toute paire $(C,C')$
de tuiles adjacentes, le rapport de leur rayon est uniform\'ement
born\'e (par $2$).
\end{itemize}
\end{enumerate}
Par une construction standard, tout ouvert $U\subset V$ peut \^etre quadrill\'e
selon les cordonn\'ees de $\varphi$ (voir la figure~\ref{f.domaine}).
\begin{figure}[ht]
\begin{center}
\input{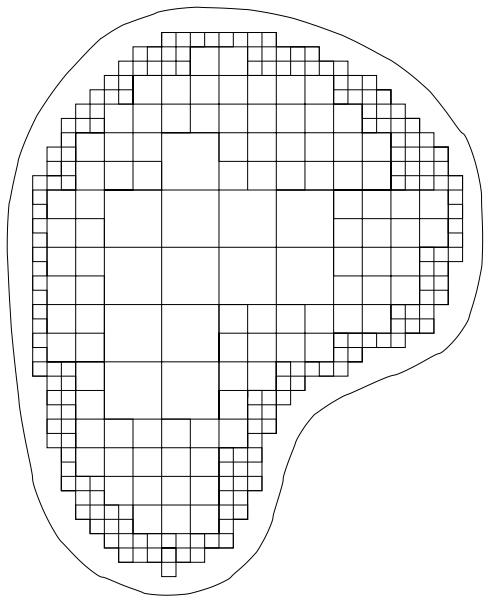}
\end{center}
\caption{Domaine quadrill\'e. \label{f.domaine}}
\end{figure}
\medskip

Une pseudo-orbite $(z_0,\dots,z_n)$ est \emph{\`a sauts dans les tuiles}
du domaine quadrill\'e $(U,\cC)$ si pour tout $0\leq i<n$
les points $f(z_i), z_{i+1}$ co\"\i ncident ou appartiennent
\`a une m\^eme tuile de $\cC$.
\medskip

Finalement, si l'on fixe un voisinage $\cU\subset \diff^1(M)$ de $f$,
un domaine quadrill\'e $(U,\cC)$ et un entier $N\geq 1$,
on dit que $(U,\cC,N)$ est un \emph{domaine de perturbation
pour $(f,\cU)$} si~:
\begin{enumerate}
\item $U$ est disjoint de ses $N-1$ premiers it\'er\'es par $f$~;
\item pour toute pseudo-orbite $(z_0,\dots,z_n)$
avec $z_0\in U$ et $z_n\in f^N(U)$ \`a sauts dans les tuiles $\cC$,
il existe une perturbation $g\in \cU$ de $f$ \`a support dans
$U\cup f(U)\cup\dots\cup f^{N-1}(U)$ et un entier $m\in \{1,\dots, n\}$
tel que $g^m(z_0)=z_n$.
\end{enumerate}
\medskip

Le lemme de perturbation de Pugh (th\'eor\`eme~\ref{t.pugh})
et le lemme de connexion d'Hayashi (th\'eor\`eme~\ref{t.hayashi})
assurent l'existence de domaines de perturbation.
\begin{theoreme}[Existence de domaines de perturbation]
Pour tout voisinage $\cU$ de $f$, il existe un entier $N\geq 1$
et, en tout point $p\in M$, il existe une carte $\varphi\colon V\to \RR^d$
tels que pour tout domaine quadrill\'e $(U,\cC)$ selon les coordonn\'ees
de $V$ qui est disjoint de ses $N-1$ premiers it\'er\'es,
$(U,\cC,N)$ est un domaine de perturbation de $(f,\cU)$.
\end{theoreme}
Le fait que l'entier $N$ ne d\'epende pas du point $p$
sera crucial dans la suite de la d\'emonstration du th\'eor\`eme~\ref{t.bc}.

\subsection{Les tours topologiques\index{tour topologique}}
Afin de traiter des pseudo-orbites arbitraires, nous
devons \^etre en mesure de construire une collection
de domaines de perturbation deux \`a deux disjoints qui couvrent
l'espace des orbites de la dynamique.
Le r\'esultat suivant permet de construire une telle section de la dynamique.

\begin{theoreme}[Tours topologiques, Bonatti-Crovisier]\label{t.tour}
Il existe $\kappa_d>0$ (ne d\'ependant que de la dimension $d$
de la vari\'et\'e $M$) tel que pour tout $m\geq 1$
et tout diff\'eomorphisme $f\in \diff^1(M)$ n'ayant
pas de point p\'eriodique non hyperbolique de p\'eriode inf\'erieure
\`a $\kappa_d.m$, il existe un ouvert $U\subset M$ et un ensemble compact
$D\subset U$ ayant les propri\'et\'es suivantes~:
\begin{itemize}
\item[--] tout point qui n'est pas p\'eriodique de p\'eriode inf\'erieure
\`a $m$ poss\`ede un it\'er\'e dans $D$~;
\item[--] les ouverts $U,\dots,f^{m-1}(U)$ sont deux \`a deux disjoints
\end{itemize}
On peut choisir $U$ pour que le diam\`etre de ses composantes connexes soit arbitrairement petit.
\end{theoreme}
La d\'emonstration utilise le lemme de transversalit\'e de Thom et
se ram\`ene \`a un probl\`eme combinatoire (exprim\'e en termes de coloriage).

\subsection{Lorsqu'il n'y a pas d'orbite p\'eriodique de basse p\'eriode}
Le lemme de perturbation de Pugh donne un entier $N\geq 1$
et permet de couvrir la vari\'et\'e $M$ par une famille finie
de cartes $\varphi_i\colon V_i\to \RR^d$.
Nous supposerons pour simplifier que $f$ n'a pas de point p\'eriodique
de p\'eriode inf\'erieure \`a $3\kappa_d.N$.
Fixons deux points $x,y\in M$ tels que $x\dashv y$.

Consid\'erons un ouvert $U$ disjoint de $3N-1$ it\'er\'es
et un ensemble compact $D\subset U$ donn\'es
par le th\'eor\`eme~\ref{t.tour}.
Quitte \`a remplacer $U$ par $f^N(U)$ ou par $f^{2N}(U)$,
les points $x,y$ n'appartiennent pas \`a $U,f(U),\dots, f^{N-1}(U)$.
Puisque les composantes de $U$ sont de diam\`etre petit, nous pouvons
supposer que chacune d'entre elles est contenue dans l'une des cartes
$V_i$. Finalement, nous quadrillons chaque composante de $U$
selon une des cartes $\varphi_i$ et nous construisons ainsi~:
\begin{itemize}
\item[--] une famille $U_1,\dots,U_s$ de domaines de perturbations
tels que $f^k(U_i), f^\ell(U_j)$ sont disjoints pour tous $0\leq k,\ell<N$
et tous $i\neq j$,
\item[--] une famille finie de tuiles $\cT$ contenue dans l'union des tuiles
des domaines $U_i$, $1\leq i\leq s$,
\end{itemize}
tels que toute orbite rencontre l'une des tuiles de $\cT$.

En travaillant plus, on obtient par un argument de compacit\'e~:
\begin{itemize}
\item[--] pour chaque tuile $T\in \cT$, un ensemble compact $\Delta\subset T$,
\item[--] un entier $n_0\geq 1$ et une constante $\varepsilon_0>0$
tels que toute $\varepsilon_0$-pseudo-orbite de longueur $n_0$
rencontre l'un des ensembles $\Delta$.
\end{itemize}
\medskip

Consid\'erons une $\varepsilon$-pseudo-orbite $x=\tilde z_0,\tilde z_1,\dots,\tilde z_n=y$.
Si $y$ n'est pas d\'ej\`a un it\'er\'e positif de $x$, en prenant
$\varepsilon>0$ suffisamment petit, nous pouvons supposer que
la pseudo-orbite est de longueur sup\'erieure \`a $n_0$.
Nous pouvons diff\'erer les sauts de la pseudo-orbite (sur des intervalles
de temps inf\'erieurs \`a $n_0$) et supposer qu'il n'ont lieu
qu'aux points appartenant aux ensembles compacts $\Delta$.
Si $\varepsilon$ a \'et\'e choisi petit, ceci assure la propri\'et\'e suivante.
\begin{lemme}\label{l.saut}
Il existe
une pseudo-orbite $x=z_0,z_1,\dots,z_n=y$
entre $x$ et $y$ \`a sauts dans les tuiles des
domaines quadrill\'es $U_1,\dots,U_s$.
\end{lemme}

Il ne reste plus qu'\`a perturber successivement
dans chaque domaine de perturbation pour supprimer finalement l'ensemble des
sauts de la pseudo-orbite et obtenir une segment d'orbite
(en g\'en\'eral plus court) qui joint $x$ \`a $y$.

\subsection{Lorsqu'il y a des points p\'eriodiques}
En pr\'esence de points p\'eriodiques de basse p\'eriode,
la tour topologique ne couvre plus l'ensemble des orbites.
De plus, lorsqu'une orbite s'approche d'une orbite p\'eriodique,
le temps de retour dans la tour peut devenir arbitrairement long.

Dans ce cas, on utilise \`a nouveau que les points p\'eriodiques sont
hyperboliques. Il existe une version du th\'eor\`eme~\ref{t.tour}
autorisant l'existence de points p\{eriodiques hyperboliques de petite p\'eriode: il s'applique aux points qui n'appartiennent pas aux vari\'et\'es
invariantes des points p\'eriodiques de petite p\'eriode.
On rajoute de nouveaux domaines de perturbations qui couvrent
des domaines fondamentaux des ensembles stables et instables des orbites
p\'eriodiques de basse p\'eriode. Le lemme~\ref{l.saut} est encore v\'erifi\'e,
ce qui permet de conclure comme dans le cas pr\'ec\'edent.

\section{Cons\'equences imm\'ediates}\label{s.consequence}

Il existe un G$_\delta$ dense de $\diff^1(M)$ form\'e de
diff\'eomorphismes pour lesquel les propri\'et\'es suivantes sont v\'erifi\'ees.

\paragraph{a) Lieu de r\'ecurrence.}
\emph{Les ensembles $\overline{\per(f)}$, $\Omega(f)$ et $\cR(f)$ co\"\i ncident.}

Il n'y a donc qu'une seule notion de r\'ecurrence.
Rappelons que le lemme de fermeture de Pugh permettait d\'ej\`a de conclure
$\overline{\per(f)}=\Omega(f)$.

\paragraph{b) Ordre dynamique.}
\emph{Pour tout ensemble compact $K$, les relations $\prec_K$
et $\dashv_K$ co\"\i ncident.}

En particulier, les ensembles faiblement transitifs et les ensembles
transitifs par cha\^\i nes co\"\i ncident~;
les classes de r\'ecurrence par cha\^\i nes sont
les ensembles faiblement transitifs maximaux.
On en d\'eduit aussi que la relation $\prec_K$
est transitive (ce qui avait \'et\'e montr\'e pr\'ec\'edemment par Arnaud~\cite{arnaud-connecting}, Gan et Wen~\cite{gan-wen-connecting}).

\paragraph{c) Quasi-attracteurs.}
\emph{L'ensemble des points dont
l'ensemble $\omega$-limite \index{ensemble! $\omega$-limite}
\emph{(i.e. l'ensemble des valeurs d'adh\'erence
de son orbite future)} est un quasi-attracteur contient un G$_\delta$ dense de $M$.}

\emph{Les quasi-attracteurs sont exactement
les classes de r\'ecurrence par cha\^\i nes stables au sens de Lyapunov.}

\emph{Une classe homocline $H(p)$ est un quasi-attracteur si et seulement elle contient la vari\'et\'e instable de $p$.}

Ceci r\'epond \`a une conjecture de Hurley~\cite{hurley} dans le cas $C^1$
et am\'eliore des r\'esultats ant\'erieurs de Arnaud~\cite{arnaud-connecting}, Morales et Pacifico~\cite{morales-pacifico}.

Comme cons\'equence, une classe de r\'ecurrence par cha\^\i nes d'int\'erieur non vide
est un quasi-attracteur \`a la fois pour $f$ et $f^{-1}$.

\paragraph{d) Classes homoclines et classes ap\'eriodiques.}
\emph{Les classes de r\'ecurrence par cha\^\i nes contenant une orbite p\'eriodique
sont des classes homoclines.}

\emph{Si $O_1,O_2$ sont deux orbites p\'eriodiques telles que $O_1\dashv O_2$
et si l'indice de $O_1$ est inf\'erieur ou \'egal \`a celui de $O_2$,
alors $W^u(O_1)$ et $W^s(O_2)$ ont un point d'intersection transverse $z$}~:
$$T_zM=T_zW^u(O_1)+ T_zW^s(O_2).$$
Les classes de r\'ecurrence par cha\^\i nes sans orbite p\'eriodique
sont appel\'ees \emph{classes ap\'eriodiques}\index{classe!ap\'eriodique}.
\medskip

On obtient facilement d'autres propri\'et\'es.
\begin{itemize}
\item[--] \emph{Toute composante connexe de $\interior(\Omega(f))$
est enti\`erement contenue dans une classe homocline.}
\item[--] \emph{Deux classes homoclines sont toujours disjointes ou confondues.}
\item[--] \emph{Si deux orbites p\'eriodiques $O_1$ et $O_2$ ont m\^eme indice et sont contenus dans une m\^eme
classe de r\'ecurrence par cha\^\i nes, elles sont homocliniquement reli\'es.}
\item[--] \emph{Si $O_1$ et $O_2$ ont des indices diff\'erents et sont contenues dans une m\^eme
classe de r\'ecurrence par cha\^\i nes, il existe une perturbation $g\in \diff^1(M)$
de $f$ telle que les continuations hyperboliques de $O_1$ et $O_2$ forment un
``cycle h\'et\'erodimensionnel''}~: la vari\'et\'e instable de l'une rencontre la vari\'et\'e stable de l'autre
(voir la section~\ref{s.cycle}).
\end{itemize}
\medskip

Des versions ant\'erieures plus faibles n'utilisant que le lemme de connexion d'Hayashi avaient
\'et\'e donn\'ees par Arnaud~\cite{arnaud-connecting}, Bonatti et D\'\i az~\cite{BD-newhouse}, Carballo, Morales et Pacifico~\cite{cmp},
Gan et Wen~\cite{gan-wen-connecting}.
\medskip

\paragraph{e) Classes isol\'ees.}
\emph{Toute classe de r\'ecurrence par cha\^\i nes isol\'ee dans $\cR(f)$
est une classe homocline $H(p)$. Pour tout diff\'eomorphisme $g$ proche de $f$,
la classe de r\'ecurrence par cha\^\i nes contenant la continuation $p_g$ de $p$
est encore isol\'ee.}

\emph{Si $f$ n'a qu'un nombre fini de classes de r\'ecurrence par cha\^\i nes,
tout diff\'eomorphisme proche de $f$ a le m\^eme nombre de classes que $f$.}
\medskip

La premi\`ere version de ces propri\'et\'es avait \'et\'e donn\'ee par
Abdenur~\cite{abdenur-decomposition,abdenur-attractor}.

\section{Exemples}
Les exemples connus de dynamiques $C^1$-g\'en\'eriques non hyperboliques
ont lieu en dimension sup\'erieure ou \'egale \`a $3$.
Nous en citons quelques uns.

\paragraph{a) Dynamiques robustement transitives.} \index{transitivit\'e!robuste}
Il existe des dynamiques
non hyperboliques ayant une unique classe de r\'ecurrence par cha\^\i nes (voir la section~\ref{s.exemple-mane}).
\begin{theoreme}[Shub~\cite{shub-transitif}]
Il existe un ouvert non vide $\cU\subset \diff^1(\TT^4)$ de diff\'eomorphismes transitifs
et non hyperboliques.
\end{theoreme}
Cet exemple est obtenu en modifiant un diff\'eomorphisme d'Anosov.
D'autres exemples similaires sur d'autres vari\'et\'e ont \'et\'e donn\'es ensuite, voir~\cite{mane-contribution,bv} et~\cite[chapitre 7]{bdv}.
Une autre m\'ethode de construction, due \`a Bonatti-D\'\i az~\cite{blender} consiste \`a perturber l'application
``temps $1$'' d'un flot d'Anosov ou \`a perturber le produit d'un diff\'eomorphisme d'Anosov
et de l'application identit\'e d'une vari\'et\'e compacte.

\paragraph{b) Cycles h\'et\'erodimensionnels.}\index{cycle!h\'et\'erodimensionnel}
L'obstruction \`a l'hyperbolicit\'e dans l'exemple pr\'ec\'edent
provient de l'existence (dans une m\^eme classe de r\'ecurrence par cha\^\i nes) de points p\'eriodiques d'indices diff\'erents. De tels exemples existent \'egalement pour des dynamiques qui ne sont pas
transitives (voir la section~\ref{s.exemple-abraham-smale}).

\begin{theoreme}[Abraham-Smale~\cite{abraham-smale}, Simon~\cite{simon}, Bonatti-D\'\i az~\cite{BD-newhouse}]
\label{t.exemple-cycle}
Pour toute vari\'et\'e compacte $M$ de dimension $d\geq 3$, il existe
un ouvert non vide $\cU\subset \diff^1(M)$ de diff\'eomorphismes non transitifs
et deux familles continues de points p\'eriodiques hyperboliques
$(p_g)_{g\in \cU}$, $(q_g)_{g\in \cU}$ d'indices diff\'erents, ayant la m\^eme classe homocline
pour tout $g\in \cU$.
\end{theoreme}
La construction exploite le fait que l'ensemble stable d'un ensemble hyperbolique
peut contenir des sous-vari\'et\'es de dimension plus grande que la dimension du fibr\'e stable.
Le m\^eme r\'esultat peut s'obtenir en utilisant les m\'elangeurs\index{m\'elangeur}
introduits par Bonatti et D\'\i az~\cite{blender,BD-newhouse}.
\medskip

\paragraph{c) Ph\'enom\`ene de Newhouse.}\index{ph\'enom\`ene de Newhouse}
Newhouse a construit~\cite{newhouse-thesis, newhouse-phenomenon} des exemples de dif\-f\'e\-o\-mor\-phis\-mes de surface $C^2$-g\'en\'eriques
ayant un comportement pathologique, aujourd'hui appel\'e \emph{ph\'enom\`ene de Newhouse}~:
pour toute surface, il existe un ouvert non vide $\cU$ de l'espace des diff\'eomorphismes
de classe $C^2$ et un G$_\delta$ dense $\cG$ de $\cU$ tel que tout diff\'eomorphisme $f\in \cG$
poss\`ede une infinit\'e de puits ou de sources.
Bonatti et D\'\i az ont ensuite utilis\'e les m\'elangeurs
pour obtenir le m\^eme r\'esultat en topologie $C^1$ sur les vari\'et\'es de dimension sup\'erieure ou \'egale
\`a $3$ (voir aussi la section~\ref{s.exemple-abraham-smale}).
\begin{theoreme}[Bonatti-D\'\i az~\cite{BD-newhouse}]\label{t.newhouse}
Pour toute vari\'et\'e compacte $M$ de dimension $d\geq 3$, il existe
un ouvert non vide $\cU\subset \diff^1(M)$ et un G$_\delta$ dense de $\cU$
form\'e de diff\'eomorphismes ayant une infinit\'e de puits et de sources.
\end{theoreme}

\paragraph{d) Classes ap\'eriodiques.}
Bonatti et D\'\i az ont \'egalement montr\'e que
les classes ap\'eriodiques peuvent appara\^\i tre parmi les diff\'eomorphismes $C^1$-g\'en\'eriques
(voir aussi la section~\ref{s.universelle}). Leur construction montre aussi que l'on peut trouver
des classes de r\'ecurrence par cha\^\i nes (des classe homoclines ainsi que des classes ap\'eriodiques)
qui sont accumul\'ees en topologie de Hausdorff
par des classes de r\'ecurrence par cha\^\i nes non triviales (ce ne sont pas
des orbites p\'eriodiques isol\'ees)~;
pour ces dynamiques l'ensemble des classes de r\'ecurrence par cha\^\i nes est non d\'enombrable.

\begin{theoreme}[Bonatti-D\'\i az~\cite{aperiodic}]
Pour toute vari\'et\'e $M$ de dimension sup\'erieure ou \'egale \`a $3$,
il existe un ouvert non vide $\cU\subset \diff^1(M)$ et un G$_\delta$ dense de $\cU$
form\'e de diff\'eomorphismes ayant un nombre non d\'enombrable de classes ap\'eriodiques.
\end{theoreme}

\paragraph{e) Absence d'attracteurs.}
Il est possible de garantir que tous les quasi-attracteurs sont accumul\'es par des sources~:
dans ce cas, il n'y a pas de classe de r\'ecurrence par cha\^\i nes qui est un attracteur.

\begin{theoreme}[Bonatti-Li-Yang~\cite{bly}]
Pour toute vari\'et\'e $M$ de dimension sup\'erieure ou \'egale \`a $3$,
il existe un ouvert non vide $\cU\subset \diff^1(M)$ et un G$_\delta$ dense de $\cU$
form\'e de diff\'eomorphismes n'ayant pas d'attracteur~: il n'y a pas d'ouvert non vide attractif
dont l'intersection des it\'er\'es est une classe de r\'ecurrence par cha\^\i nes.
\end{theoreme}

\section{Probl\`emes}

Voici quelques questions formul\'ees pour les dynamiques $C^1$-g\'en\'eriques~:
nous demandons s'il existe un G$_\delta$
dense de $\diff^1(M)$ sur lequel on peut donner une r\'eponse affirmative
\`a chacune d'entre elle.

\paragraph{a) Structure des classes ap\'eriodiques.}
Rappelons que les classes homoclines sont toujours transitives.
\begin{question}
Les classes ap\'eriodiques sont-elle toujours transitives~?
\end{question}

Les classes ap\'eriodiques d\'ecrites par~\cite{aperiodic} sont
(voir la section~\ref{s.universelle})~:
\begin{itemize}
\item[--] minimales et uniquement ergodiques (ce sont des odom\`etres),
\item[--] Lyapunov stables pour $f$ et $f^{-1}$ (ou ``dynamiquement isol\'ees''), i.e.
il existe des voisinages ouverts arbitrairement petits $U,V$ de la classe ap\'eriodique
v\'erifiant $f(\overline U)\subset U$ et $f^{-1}(\overline V)\subset V$.
\end{itemize}
On peut se demander si c'est toujours le cas.

\paragraph{b) Cardinalit\'e de l'ensemble des classes de r\'ecurrence par cha\^\i nes.}
\begin{question}
Le nombre de classes de r\'ecurrence par cha\^\i nes est-il toujours fini ou infini non d\'enombrable~?
\end{question}

Les diff\'eomorphismes pr\'esentant le ph\'enom\`ene de Newhouse sont souvent
des exemples de dynamiques pour lesquels la cardinalit\'e de l'ensemble des classes de r\'ecurrence par cha\^\i nes n'est pas connue~:
peut-il exister un diff\'eomorphisme qui pr\'esente le ph\'enom\`ene de Newhouse et qui n'aurait qu'un nombre d\'enombrable
de classes de r\'ecurrence par cha\^\i nes~?

\paragraph{c) Robustesse des classes isol\'ees.}
Une classe de r\'ecurrence par cha\^\i ne isol\'ee dans $\cR(f)$ est une classe homocline $H(p)$.
Pour tout diff\'eomorphisme $g$ proche de $f$, la classe de r\'ecurrence par cha\^\i nes contenant la continuation
hyperbolique $p_g$ de $p$ est encore isol\'ee et
pour tout diff\'eomorphisme proche appartenant \`a un G$_\delta$ dense de $\diff^1(M)$, cette classe co\"\i ncide
avec la classe homocline $H(p_g)$ (voir~\cite{abdenur-attractor}).
Cette propri\'et\'e deviendrait \emph{robuste} si l'on parvenait
\`a remplacer l'ensemble $G_\delta$ par un ensemble ouvert.

\begin{question}
Pour toute classe homocline $H(p)$ de $f$ isol\'ee dans $\cR(f)$, existe-t-il un voisinage
$\cU$ de $f$ tel que $H(p_g)$ est encore isol\'ee pour tout diff\'eomorphisme $g\in \cU$~?
\end{question}
Ceci montrerait que les classe homoclines isol\'ees sont des ensembles ``robustement transitifs'' au sens de~\cite{bdp}.

\paragraph{d) Classes d'int\'erieur non vide.}
\begin{question}\label{q.interieur}
Si $M$ est connexe, une classe de r\'ecurrence par cha\^\i nes d'int\'erieur non vide co\"\i ncide-t-elle avec $M$~?
\end{question}
Nous avons montr\'e~\cite{abcd} que c'est le cas en dimension deux.
C'est aussi le cas lorsque la classe est isol\'ee (voir Abdenur-Bonatti-D\'\i az~\cite{ABD-interior}).
En dimension sup\'erieure des r\'esultats partiels existent
(voir~\cite{ABD-interior}, ou les travaux de Potrie et Sambarino~\cite{potrie-sambarino,potrie}),
lorsque la classe est partiellement hyperbolique ou admet certaines d\'ecomposition domin\'ees (voir le chapitre~\ref{c.hyperbolicite-faible}).

Une classe de r\'ecurrence par cha\^\i nes d'int\'erieur non vide est une classe homocline qui est stable au sens de Lyapunov
pour $f$ et $f^{-1}$. Les classes ap\'eriodiques \'etudi\'ees dans~\cite{aperiodic} sont stables au sens de Lyapunov pour $f$
et $f^{-1}$. Nous pouvons donc \'etendre la question pr\'ec\'edente aux classe homoclines stables au sens de Lyapunov pour $f$ et
$f^{-1}$ (voir~\cite{potrie}).

\paragraph{e) Attracteurs.}
Les exemples~\cite{bly} de dynamiques sans attracteurs poss\`edent des
\emph{attracteurs essentiels}\index{attracteur!essentiel}~: ce sont des classes de r\'ecurrence par cha\^\i nes $K$
ayant un voisinage $U$ tel que l'orbite positive de tout point contenu dans un G$_\delta$
dense de $U$ s'accumule dans $K$. De plus, l'union des bassins des attracteurs essentiels est dense
dans la vari\'et\'e.
\begin{question}\label{q.existence-quasi-attracteur}
Existe-t-il toujours un attracteur essentiel~?

L'union des bassins des attracteurs essentiels contient-il un G$_\delta$ dense de $M$~?
un ensemble de mesure de Lebesgue totale~?
\end{question}

On peut aussi \'etudier les attracteurs essentiels.
Remarquons que les classes ap\'eriodiques construites dans~\cite{aperiodic}
sont des quasi-attracteurs dont le bassin est trivial.
\begin{question}\label{q.homocline-essentiel}
Une classe homocline qui est un quasi-attracteur est-elle un attracteur essentiel~?
(ou attire-t-elle un G$_\delta$ dense d'un ouvert non vide de $M$~?)

Un attracteur essentiel est-il toujours une classe homocline~?
\end{question}
Une r\'eponse affirmative \`a la premi\`ere question r\'epondrait
\'egalement \`a la question~\ref{q.interieur}.
Nous donnerons des r\'eponses \`a ces questions pour les dynamiques
loin des tangences homoclines (section~\ref{s.quasi-attracteur-tangence}) et pour les dynamiques loin
des tangences homoclines et des cycles h\'et\'erodimensionnels (chapitre~\ref{c.hyp}).


\chapter{Connexions globales}
Le lemme de fermeture permet de construire des points p\'eriodiques
dans tout ouvert qui rencontre l'ensemble non-errant, mais il ne donne pas de contr\^ole
sur le support des orbites p\'eriodiques cr\'ees.
Dans ce chapitre, nous r\'epondons \`a la question suivante.

{\it \'Etant donn\'es des ouverts $U_{1},U_{2},\dots,U_{n}$,
peut-on perturber la dynamique et obtenir une orbite p\'eriodique qui rencontre
tous ces domaines~?}

En s'appuyant sur le lemme de perturbation de Pugh,
nous d\'emontrons le lemme de fermeture ergodique
de Ma\~n\'e~:
lorsqu'il existe une mesure ergodique $\mu$ chargeant chaque ouvert,
on peut cr\'eer une orbite p\'eriodique
qui passe au moins une fraction de temps proche de $\mu(U_{i})$
dans $U_{i}$ pour tout $i$. 

Nous pr\'esentons \'egalement une propri\'et\'e de pistage faible et donnons de nouvelles
d\'e\-mon\-s\-tra\-tions de certains r\'esultats issus de~\cite{approximation}~:
lorsqu'il existe un ensemble transitif par cha\^\i nes
qui intersecte tous les $U_{i}$, on peut
obtenir une orbite qui visite chacun de ces ouverts
(mais nous ne contr\^olons pas la statistique des visites).

\section{Approximation des mesures ergodiques par orbites p\'e\-ri\-o\-di\-ques~: l' ``ergodic closing lemma''}

Ma\~n\'e a donn\'e~\cite{mane-ergodic-closing,mane-livre} une contrepartie mesur\'ee du lemme de fermeture de Pugh.
\begin{theoreme}[Lemme de fermeture ergodique, Ma\~n\'e\index{lemme de fermeture!ergodique}\index{closing lemma!ergodic}]\label{t.closing-ergodique}
Soit $\cU$ un voisinage de $f\in \diff^1(M)$ et $\mu$ une mesure de probabilit\'e $f$-invariante.
Alors $\mu$-presque tout point $x\in M$ a la propri\'et\'e suivante.
Pour tout $\delta>0$, il existe
$g\in \cU$ et $\tau\geq 1$ tels que $x$ soit $\tau$-p\'eriodique pour $g$
et tels que pour tout $1\leq k\leq \tau$, on ait
$$d(f^k(x),g^k(x))\leq \delta.$$
\end{theoreme}

En fixant le point $x$ et en faisant tendre $\varepsilon$ vers $0$, les orbites p\'eriodiques
$\{x,g(x),\dots,g^{\tau-1}(x)\}$ s'\'equidistribuent comme l'orbite de $x$ sous $f$~:
lorsque $\mu$ est ergodique, elle convergent donc faiblement vers $\mu$.
Ce r\'esultat est am\'elior\'e dans~\cite[Proposition 6.1]{abc-ergodic}
par un contr\^ole des exposants des mesures p\'eriodiques (voir la section~\ref{s.oseledets}).

\begin{addendum}[Abdenur-Bonatti-Crovisier]\label{a.closing-ergodique}
On peut demander dans la conclusion du th\'e\-o\-r\`e\-me~\ref{t.closing-ergodique}
que le $i^\text{\`eme}$ exposant de Lyapunov de $x$ pour $g$ soit $\varepsilon$-proche du
$i^\text{\`eme}$ exposant de Lyapunov de $\mu$ pour $f$.
\end{addendum}

On en d\'eduit un r\'esultat de g\'en\'ericit\'e (voir~\cite[th\'eor\`eme 3.8]{abc-ergodic}).

\begin{corollaire}\label{c.approximation-mesure}
Il existe un G$_\delta$ dense $\cG\subset \diff^1(M)$
tel que toute mesure de probabilit\'e $\mu$ ergodique pour un diff\'eomorphisme $f\in \cG$
est approch\'ee faiblement par une suite de mesures invariantes port\'ees par des orbites p\'eriodiques $O_n$
de $f$. De plus, les orbites $O_n$ convergent vers le support de $\mu$ en topologie de Hausdorff
et le vecteur de Lyapunov de $O_n$ converge vers le vecteur de Lyapunov de la mesure $\mu$.
\end{corollaire}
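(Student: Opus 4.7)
Strat\'egie : argument de Baire combin\'e au lemme de fermeture ergodique (avec l'Addendum~\ref{a.closing-ergodique}), lequel transforme un r\'esultat perturbatif en un r\'esultat de g\'en\'ericit\'e. Pour $f\in \diff^1(M)$, j'introduirais le ferm\'e
\[
\Lambda(f) := \overline{\bigl\{(\supp O,\mu_{O},\lambda(O,f)) : O \text{ orbite p\'eriodique hyperbolique de } f\bigr\}}
\]
dans l'espace produit $\cK(M)\times \cM(M)\times \RR^d$, o\`u $\cM(M)$ est muni de la topologie faible, $\mu_{O}$ est la mesure \'equidistribu\'ee port\'ee par $O$ et $\lambda(O,f)$ d\'esigne le vecteur des exposants de Lyapunov ordonn\'es avec multiplicit\'e. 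L'objectif est de montrer que, sur un $G_\delta$ dense de $\diff^1(M)$, le triplet $(\supp\mu,\mu,\lambda(\mu,f))$ associ\'e \`a toute mesure ergodique $\mu$ de $f$ appartient \`a $\Lambda(f)$.

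Semi-continuit\'e et Baire : en se restreignant au $G_\delta$ dense $\cG_{KS}$ des diff\'eomorphismes de Kupka-Smale (th\'eor\`eme~\ref{t.kupka-smale}), toute orbite p\'eriodique est hyperbolique, donc admet une continuation pour les diff\'eomorphismes voisins, avec d\'ependance continue du support, de la mesure et du vecteur de Lyapunov. Ceci rend $\Lambda$ inf\'erieurement semi-continue sur $\cG_{KS}$. Pour contourner la non-compacit\'e de $\RR^d$, je recouvrirais $\diff^1(M)$ par les ouverts $\cV_n := \{f : \max(\|Df\|,\|Df^{-1}\|)<n\}$ : sur $\cV_n$, les vecteurs de Lyapunov sont contenus dans le cube compact $[-\log n,\log n]^d$, et $\Lambda$ prend ses valeurs dans les compacts d'un espace m\'etrique compact. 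La proposition~\ref{p.baire} fournit alors, dans chaque $\cV_n$, un $G_\delta$ dense $\cG_n$ de points de continuit\'e de $\Lambda$. Comme la r\'esidualit\'e est une propri\'et\'e locale, l'ensemble $\cG := \cG_{KS}\cap \bigcup_n \cG_n$ contient un $G_\delta$ dense de $\diff^1(M)$.

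Conclusion : soit $f\in \cG$ et $\mu$ ergodique pour $f$. Appliquant le th\'eor\`eme~\ref{t.closing-ergodique} et l'Addendum~\ref{a.closing-ergodique} \`a un point $x$ g\'en\'erique pour $\mu$ (au sens du th\'eor\`eme ergodique de Birkhoff, ce qui assure que l'orbite finie $\{x,f(x),\dots,f^{\tau-1}(x)\}$ approxime $\supp\mu$ en Hausdorff et $\mu$ faiblement pour $\tau$ grand), on obtient des diff\'eomorphismes $g_k\to f$ et des orbites p\'eriodiques $O_k$ de $g_k$ telles que $\supp O_k\to \supp\mu$ en Hausdorff, $\mu_{O_k}\to \mu$ faiblement, et $\lambda(O_k,g_k)\to \lambda(\mu,f)$. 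Le lemme de Franks (th\'eor\`eme~\ref{t.franks}) permet en outre de rendre chaque $O_k$ hyperbolique par une perturbation $C^1$ arbitrairement petite de $g_k$, sans alt\'erer ces convergences. Les triplets correspondants appartiennent alors \`a $\Lambda(g_k)$, et la continuit\'e de $\Lambda$ en $f$ force $(\supp\mu,\mu,\lambda(\mu,f))\in \Lambda(f)$, fournissant la suite d'orbites p\'eriodiques de $f$ cherch\'ee.

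Obstacle principal : le v\'eritable travail technique est absorb\'e par l'\'enonc\'e perturbatif --- le lemme de fermeture ergodique de Ma\~n\'e avec le contr\^ole des exposants de Lyapunov. Le seul point d\'elicat \`a v\'erifier ici est la coh\'erence simultan\'ee des trois convergences (Hausdorff, faible, Lyapunov) et leur pr\'eservation d'une part par l'op\'eration d'hyperbolisation via Franks, d'autre part par le passage \`a la limite fourni par l'argument de continuit\'e de Baire.
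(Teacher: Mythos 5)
Votre démonstration est correcte et suit essentiellement la même démarche que celle du texte : le mémoire renvoie pour ce corollaire à \cite{abc-ergodic}, mais la méthode est précisément celle employée pour le corollaire~\ref{c.pugh} (semi-continuité inférieure d'une application à valeurs dans les compacts, proposition~\ref{p.baire}, puis lemme de fermeture ergodique avec son addendum et lemme de Franks pour hyperboliser et suivre les orbites). Le seul point à expliciter est la toute dernière étape : la continuité de $\Lambda$ en $f$ ne concerne que la restriction de $\Lambda$ à l'espace de Baire retenu, de sorte qu'il faut soit observer que $\Lambda$ --- définie à partir des seules orbites périodiques \emph{hyperboliques}, qui persistent toujours --- est en fait inférieurement semi-continue sur tout $\diff^1(M)$, soit, comme dans la preuve du corollaire~\ref{c.pugh}, utiliser la persistance des orbites hyperboliques obtenues par Franks pour remplacer chaque $g_k$ par un diffeomorphisme de $\cG_{KS}$ voisin avant d'invoquer la continuité en $f$.
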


Nous donnons ci-apr\`es une d\'emonstration du lemme de fermeture ergodique
\`a partir du th\'eor\`eme~\ref{t.pugh} de Pugh.
Pour montrer l'addendum~\ref{a.closing-ergodique}, nous choisissons un point $x$
qui est r\'egulier pour $\mu$, i.e. dont l'espace tangent admet une d\'ecomposition $T_xM=E_1\oplus\dots\oplus E_k$
satisfaisant le th\'eor\`eme d'Oseledets (voir la section~\ref{s.oseledets}).
Dans une carte autour de $x$, la diff\'erentielle $D_xg^\tau$ peut s'\'ecrire sous la forme
$P\circ D_xf^\tau$, o\`u $P$ est une isom\'etrie de $\RR^d$ que l'on peut choisir dans un voisinage de l'identit\'e.
On prendra $P$ de sorte que pour tous $i\leq j$, l'image $D_xg^\tau.E_{i,j}$
de l'espace $E_{i,j}=E_i\oplus\dots\oplus E_j$ est dans un c\^one uniforme autour de $E_{i,j}$.
Pour une p\'eriode $\tau$ suffisamment grande, ceci garantit que les exposants de Lyapunov
de $x$ pour $f$ et $g$ sont proches.

\begin{proof}[\bf D\'emonstration du lemme de fermeture ergodique (th\'eor\`eme~\ref{t.closing-ergodique})]
Il suffit de fixer $\delta>0$ et de montrer que $\mu$-presque tout point $x\in M$ satisfait
la conclusion du th\'eor\`eme~\ref{t.closing-ergodique} pour cette constante $\delta$.

Nous pouvons supposer que $\mu$ n'est pas support\'ee par une orbite p\'eriodique~: $\mu$-presque tout point
$p$ appartient au support de $\mu$ et n'est pas p\'eriodique.
Fixons $L\geq 1$ tel que $(3/2)^L\geq 2^{d+1}$
et $\varepsilon>0$ pour que $(1+\varepsilon)^L<\frac 3 2$ et posons $\eta=\frac 1 2$.
Le th\'eor\`eme~\ref{t.pugh} pour $f^{-1}$ nous donne un entier $N\geq 1$ et une carte
$\varphi\colon V\to \RR^d$ au voisinage de $p$, telle que $\varphi(V)$ co\"\i ncide avec le cube standard $C_0=(-1,1)^d$
et telle que les $N$ premiers it\'er\'es de $V$ par $f^{-1}$ soient disjoints et de diam\`etre inf\'erieur \`a $\delta$.
On note $\nu=\varphi^*\mu$. Soit $X$ l'image par $\varphi$ des points $x\in V$ qui v\'erifient la conclusion du th\'eor\`eme.
On doit montrer que $\nu(X)=\nu(C_0)$.

Comme avant, les cubes de $\RR^d$ que nous consid\'erons sont des images du cube standard $[-1,1]^d$ par une homoth\'etie-translation.
Un \emph{bon cube} $C$ est un cube de $C_0$ v\'erifiant $(1+\varepsilon).C\subset C_0$
et $\nu(C)>\frac 2 3 \nu((1+\varepsilon).C)$. Dans ce cas, il existe un ensemble
ayant mesure plus grande que $\frac 1 2 \nu(C)$ form\'e de points $x\in \varphi^{-1}(C)$
dont le premier retour $f^\tau(x)$ dans $\varphi^{-1}((1+\varepsilon).C)$
est contenu dans $\varphi^{-1}(C)$. On peut alors appliquer le lemme de perturbation de Pugh (th\'eor\`eme~\ref{t.pugh})
au cube $\varphi^{-1}(C)$ de la carte $\varphi$
pour obtenir un diff\'eomorphisme $g\in \cU$ tel que $f$ et $g$ co\"\i ncident le long
de l'orbite $\{x,\dots,f^{\tau-N}(x)\}$ et $g^\tau(x)=x$.
En particulier $\nu(X\cap C)\geq \frac 1 2 \nu(C)$.

Supposons par l'absurde que l'ensemble $C_0\setminus X$ soit de mesure non nulle.
On fixe $\ell\geq 1$ grand et on consid\`ere le pavage de $C_0$ par des cubes d'int\'erieurs deux \`a deux disjoints
et de taille $2^{-\ell}$. Par r\'egularit\'e de la mesure $\nu$,
on peut trouver une collection $\cC$ de cubes du pavage qui approxime $C_0\setminus X$~:
\begin{itemize}
\item[--] d'une part $2.C\subset C_0$ pour tout $C\in \cC$~;
\item[--] d'autre part l'union $Y$ des cubes $C$ et l'union $\hat Y$ des cubes $2.C$ pour $C\in \cC$
v\'erifient~:
\begin{equation}\label{e.regularite}
\nu(\hat Y\cap X)\ll \nu(Y \setminus X).
\end{equation}
\end{itemize}
Remarquons que l'on peut ``s\'eparer'' les cubes de la famille $\cC$~:
il existe une partition de $\cC$ en $2^d$ familles de cubes tels que
pour tous cubes $C_1,C_2$ appartenant \`a une m\^eme famille, les cubes
$2.C_1,2.C_2$ sont d'int\'erieurs disjoint. Pour l'une de ces familles, l'union des cubes
est de mesure sup\'erieure \`a $2^{-d}\nu(Y)$, donc quitte \`a remplacer
$\cC$ par cette famille, (\ref{e.regularite}) est toujours satisfaite et de plus,
\begin{equation}\label{e.disjonction}
\text{Pour tous $C_1,C_2\in \cC$, les cubes
$2.C_1,2.C_2$ sont d'int\'erieurs disjoints.}
\end{equation}

Nous utilisons maintenant le r\'esultat suivant.
\begin{affirmation*}
Pour tout cube $C\in \cC$, il existe un cube $C'$ tel que~:
\begin{enumerate}
\item[i)] $(1+\varepsilon).C'\subset 2.C$ (et en particulier $C'\subset \hat Y$)~;
\item[ii)] $C'$ est un bon cube (et donc $\nu(C'\cap X)\geq \frac 1 2 \nu(C')$)~;
\item[iii)] $\nu(C')\geq 2^{-d}\nu(C)$.
\end{enumerate}
\end{affirmation*}
\begin{proof}
La preuve se fait par l'absurde~: on construit par r\'ecurrence
une suite de cubes $(C_n)_{n\geq 1}$ satisfaisant $C_1=C$ et
$2.C_{n}\subset 2.C_{n-1}$, $\nu(C_{n})\geq 2\nu(C_{n-1})$ pour tout $n>1$.
En particulier $2.C_{n}$ est contenu dans $2.C$ et la mesure des cubes $C_n$
est sup\'erieure \`a $\nu(C)$ et cro\^\i t exponentiellement avec $n$. C'est une contradiction.

On construit $C_{n+1}$ \`a partir de $C_n$ de la fa\c con suivante~:
on divise $C_n$ en $2^d$ cubes de m\^eme rayon et d'int\'erieurs deux \`a deux disjoints.
L'un d'eux, not\'e $C'_n$, est de mesure sup\'erieure \`a $2^{-d}\nu(C_n)\geq 2^{-d}\nu(C)$.
Les cubes $(1+\varepsilon)^k.C'_{n}$, $0\leq k\leq L$, v\'erifient donc (iii).
Puisque $(1+\varepsilon)^L<3/2$, on a
$2.(1+\varepsilon)^L C'_{n}\subset 2C_n$.
Les cubes $(1+\varepsilon)^k.C'_{n}$, $0\leq k\leq L$, v\'erifient donc
\'egalement (i).
Par hypoth\`ese, (ii) n'est donc pas satisfaite et
on obtient pour tout $0\leq k \leq L$,
$$2/3\; \nu((1+\varepsilon)^{k+1}C'_{n})\geq \nu((1+\varepsilon)^kC'_{n}).$$
En posant $C_{n+1}=(1+\varepsilon)^L C_n$, ceci implique
$$\nu(C_{n+1})\geq (3/2)^L\nu(C'_n)\geq (3/2)^L2^{-d}\nu(C_n)\geq 2\nu(C_n).$$
Ceci conclut la construction de la suite $(C_n)$.
\end{proof}

Si l'on applique l'affirmation \`a chaque cube $C\in \cC$, on obtient donc un cube $C'$
contenu dans l'int\'erieur de $2.C$. D'apr\`es~(\ref{e.disjonction}) et (i),
les cubes $C'$ sont deux \`a deux disjoints.
En particulier, leur union $Y'$ v\'erifie d'apr\`es (ii) et (iii)~:
$$\nu(\hat Y\cap X)\geq \nu(Y'\cap X)\geq \frac 1 2 \nu(Y')\geq 2^{-(d+1)}.\nu(Y).$$
Ceci contredit~(\ref{e.regularite}). La mesure de $X$ est donc pleine dans $C_0$.
Ceci ach\`eve la d\'emonstration du lemme de fermeture ergodique.
\end{proof}

\section[Approximation des ensembles transitifs par cha\^\i nes]{Approximation des ensembles transitifs par cha\^\i nes par orbites p\'eriodiques}

Il est naturel de se demander si l'on peut approcher un ensemble transitif par cha\^\i nes $K$ par
une orbite p\'eriodique, ou plus simplement par un segment d'orbite.

\paragraph{\it Une difficult\'e~: les raccourcissements d'orbites.}
Si la dynamique est g\'en\'erique, on peut supposer que $K$ est faiblement transitif.
\`A l'aide du lemme de connexion, on peut assez facilement construire un segment d'orbite $\sigma$ qui visite trois points quelconques $x,y,z\in K$~:
puisque $K$ est faiblement transitif,
il suffit de connecter en $y$ un segment d'orbite joignant $x$ \`a $y$ avec un segment d'orbite joignant $y$ \`a $z$.
Une difficult\'e appara\^\i t lorsque l'on cherche \`a connecter plus de points~:
si l'on connecte $\sigma$ avec un segment d'orbite $\gamma$ joignant $z$ \`a un quatri\`eme point $t\in K$,
nous n'obtenons en g\'en\'eral pas une orbite qui visite les quatre points $x,y,z,t$.
En effet, si l'on applique le lemme de connexion, on obtient un segment d'orbite qui est souvent plus court
que la concat\'enation des segments $\sigma$ et $\gamma$ (voir la section~\ref{s.connecting}).
\medskip

Nous avons montr\'e dans~\cite{approximation} que de telles connexions globales peuvent \^etre r\'ealis\'ees
en prenant plus de pr\'ecautions.
Comme pour les lemmes de connexion pr\'ec\'edents, nous avons besoin d'une hypoth\`ese technique,
un peu plus faible cette fois, sur les orbites p\'eriodiques du diff\'eomorphisme.
\begin{description}
\item (I) \it Pour tout entier $\tau_{0}\geq 1$, les points p\'eriodiques de p\'eriode $\tau_{0}$ sont isol\'es dans $M$.
\end{description}

Nous disons qu'un ensemble ferm\'e invariant $K$ est une \emph{orbite faible}
si la relation $\prec_{K}$ est un ordre total sur $K$, i.e. $\prec_{K}$ est transitive
et pour tous points $x,y\in K$ distincts on a $x\prec_{K} y$ ou $y\prec_{K} x$ (on peut avoir les deux relations
\`a la fois et il peut exister des points $x\in K$ tels que $x\prec _{K} x$ n'ait pas lieu).
Par exemple l'adh\'erence, d'une orbite est toujours une orbite faible.

\begin{theoreme}[Lemme de connexion globale, Crovisier\index{lemme de connexion globale}]\label{t.global}
Soit $\cU$ un voisinage d'un dif\-f\'e\-o\-mor\-phis\-me $f\in \diff^1(M)$ v\'erifiant (I).
Pour tout $\eta>0$, les propri\'et\'es suivantes sont satisfaites.

\begin{itemize}
\item[--] Pour tout ensemble faiblement transitif $K$,
il existe $g\in \cU$ et une orbite p\'eriodique $O$ de $g$ qui est \`a distance de Hausdorff de $K$ inf\'erieure \`a $\eta$.
\item[--] Pour toute orbite faible $K$, il existe $g\in \cU$ et une orbite $\{g^n(x)\}$
de $g$ dont l'adh\'erence est \`a distance de Hausdorff de $K$ inf\'erieure \`a $\eta$.
\end{itemize}
\end{theoreme}

On en d\'eduit un lemme de pistage faible satisfait par les diff\'eomorphismes $C^1$-g\'en\'eriques.

\begin{corollaire}[Pistage faible\index{pistage!faible}]\label{c.pistage-faible}
Il existe un G$_\delta$ dense de $\diff^1(M)$ form\'e de diff\'eomorphismes
v\'erifiant la propri\'et\'e suivante.

Pour tout $\delta>0$ il existe $\varepsilon>0$ tel que toute $\varepsilon$-pseudo-orbite
$\{z_0,\dots,z_n\}$ est $\delta$-proche d'un segment d'orbite fini $\{x,f(x),\dots,f^m(x)\}$
pour la distance de Hausdorff.

Si de plus $Z$ est p\'eriodique (i.e. $z_n=z_0$), on peut choisir $x$ p\'eriodique.
\end{corollaire}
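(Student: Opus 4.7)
The plan is to establish a strengthening: for $C^1$-generic $f$, every weak orbit of $f$ is a Hausdorff limit of finite orbit segments of $f$, and every weakly transitive set is a Hausdorff limit of periodic orbits. The weak shadowing corollary then follows by a compactness argument in $\cK(M)$.

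I would first restrict to the Kupka--Smale $G_\delta$-dense set (Theorem~\ref{t.kupka-smale}) so that hypothesis (I) of Theorem~\ref{t.global} is satisfied, and then intersect with the generic set from Theorem~\ref{t.bc} on which $\prec_K = \dashv_K$ for every compact $K$. Consider the map $\cT\colon \diff^1(M) \to \cK(\cK(M))$ sending $f$ to the closure in $\cK(M)$ of
\[
\bigl\{\,\overline{\{x, f(x), \dots, f^n(x)\}} \;:\; x \in M,\ n \geq 0\,\bigr\}.
\]
Since finite orbit segments depend continuously on $f$ in the $C^0$ topology (at any fixed length), $\cT$ is lower semi-continuous. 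Proposition~\ref{p.baire} yields a $G_\delta$-dense set of continuity points of $\cT$. At any such $f$, given a weak orbit $K$ of $f$, Theorem~\ref{t.global} produces perturbations $g_n \to f$ in $C^1$ with finite orbit segments $\sigma_n$ of $g_n$ such that $\overline{\sigma_n} \to K$ in Hausdorff distance; upper semi-continuity of $\cT$ at $f$ then forces $K \in \cT(f)$, so $K$ is a Hausdorff limit of finite orbit segments of $f$ itself. The analogous argument with the map $f \mapsto$ closure of $\{\overline{O} : O\text{ periodic orbit of }f\}$ and the first bullet of Theorem~\ref{t.global} handles weakly transitive sets and periodic orbits; intersecting yields the desired $G_\delta$-dense set $\cG$.

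For the shadowing property, take $f \in \cG$ and $\delta > 0$. If no $\varepsilon$ works, there exist $(1/n)$-pseudo-orbits $Z_n = (z_0^n, \dots, z_{k_n}^n)$ whose underlying compact sets $K_n$ lie at Hausdorff distance $\geq \delta$ from every finite orbit segment of $f$. After extraction, $K_n \to K$ in $\cK(M)$. The limit $K$ is closed and $f$-invariant because $f(z_i^n)$ is $(1/n)$-close to $z_{i+1}^n$. For any $u, v \in K$ realized as $u = \lim z^n_{i_n}$, $v = \lim z^n_{j_n}$ with (after further extraction) $i_n \leq j_n$, projecting the sub-pseudo-orbit $(z^n_{i_n}, \dots, z^n_{j_n}) \subset Z_n$ onto nearby points of $K$ produces an $\eta_n$-pseudo-orbit in $K$ with $\eta_n \to 0$, so $u \dashv_K v$, hence $u \prec_K v$ by our genericity assumption. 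Therefore $K$ is a weak orbit of $f$, and Step~1 provides a finite orbit segment of $f$ within Hausdorff distance $\delta/2$ of $K$, hence within $\delta$ of $K_n$ for $n$ large---contradicting our choice. In the periodic case $z_0^n = z_{k_n}^n$, the cyclic structure of $Z_n$ makes $\prec_K$ symmetric, so $K$ is weakly transitive and is approximated by a periodic orbit of $f$ by the weakly-transitive half of Step~1.

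The principal difficulty is the Baire step: although $\cT$ is clearly lower semi-continuous, upper semi-continuity is not automatic (closures of very long orbit segments of a perturbation need not be close to any $\overline{O}$ for an orbit segment $O$ of $f$ of comparable length), and one must pass via Proposition~\ref{p.baire} to a $G_\delta$-dense set of continuity points. A secondary delicate point is the passage in Step~2 from ``Hausdorff limit of $f$-pseudo-orbits'' to ``weak orbit of $f$'': the generic coincidence $\prec_K = \dashv_K$ is essential, since it converts pseudo-orbits near $K$ into the actual $f$-orbit segments demanded by the definition of $\prec_K$.
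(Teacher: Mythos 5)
Your overall architecture --- a Baire/semicontinuity step transferring a perturbative approximation statement to the generic diffeomorphism itself, followed by a compactness argument in $\cK(M)$ --- is exactly the paper's (this is how Theorem~\ref{t.pistage2} is proved and how the corollary is deduced from it), and your Step~1 is correct. The problem sits in Step~2, at a point you flag only obliquely: the Hausdorff limit $K$ of the pseudo-orbit segments $K_n$ is in general \emph{not} $f$-invariant. Your justification (``$f(z_i^n)$ is $(1/n)$-close to $z_{i+1}^n$'') covers only interior points of the pseudo-orbits; if $u\in K$ is attained only as a limit of the terminal points $z^n_{k_n}$, nothing forces $f(u)\in K$, and symmetrically for the initial points. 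A typical case is a pseudo-orbit that follows the true orbit of a non-recurrent point $a$, accumulates on $\omega(a)$, then jumps to a nearby point $b$ with $f^{-1}(b)$ far from everything: the limit $\overline{O^+(a)}\cup\overline{O^+(b)}$ is not invariant. Since an \emph{orbite faible} is by definition a closed \emph{invariant} set, the second item of Theorem~\ref{t.global} does not apply to your $K$, and the obvious repair --- saturating $K$ by the missing backward and forward orbits of the endpoints --- restores invariance at the cost of adding points possibly far from $K$, which ruins the two-sided Hausdorff estimate you need. (Your periodic case is unaffected: in a cyclic pseudo-orbit every point has a predecessor and a successor, so the limit is invariant and chain-transitive and the first item of Theorem~\ref{t.global} applies.)

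This is precisely the gap that Theorem~\ref{t.pistage2} fills: it is stated for an arbitrary closed, non necessarily invariant set $K$ together with finitely many marked points $z_1\dashv_K\cdots\dashv_K z_n$, and it produces an orbit segment contained in the $\delta$-neighbourhood of $K$ and meeting each ball $B(z_i,\delta)$ --- which, applied to a $\delta$-dense ordered subset of $K$, yields the two-sided Hausdorff approximation. Its proof is an induction on $n$ using the connecting lemma together with the same continuity-point argument you set up in Step~1; it cannot be obtained by merely quoting Theorem~\ref{t.global}, whose hypotheses (weak orbit, weakly transitive set) are statements about invariant sets. To complete your argument you must either prove such a ``segment'' version of the approximation statement, or show directly that every element of $\psegment(f)$ is generically a Hausdorff limit of finite orbit segments; both amount to reproving Theorem~\ref{t.pistage2}.
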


En particulier, on obtient l'approximation des ensembles transitifs par cha\^\i nes
par orbites p\'eriodiques.

\begin{corollaire}\label{c.global}
Il existe un G$_\delta$ dense de $\diff^1(M)$ form\'e de diff\'eomorphismes
v\'erifiant les propri\'et\'es suivantes.
\begin{itemize}
\item[--] Un ensemble compact
est transitif par cha\^\i nes si et seulement s'il est limite de Hausdorff d'une suite d'orbites p\'eriodiques.
\item[--] Les classes ap\'eriodiques sont limites de Hausdorff de classes homoclines.
\end{itemize}
\end{corollaire}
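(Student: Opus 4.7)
Mon plan repose sur les deux outils centraux de ce chapitre~: le corollaire~\ref{c.pistage-faible} de pistage faible, et les cons\'equences de la section~\ref{s.consequence} (notamment le fait que, g\'en\'eriquement, toute classe de r\'ecurrence par cha\^\i nes contenant une orbite p\'eriodique co\"\i ncide avec sa classe homocline). Je poserai $\cG$ comme l'intersection du G$_\delta$ dense du corollaire~\ref{c.pistage-faible}, de celui fournissant les propri\'et\'es de la section~\ref{s.consequence}, et de celui des diff\'eomorphismes Kupka-Smale (th\'eor\`eme~\ref{t.kupka-smale}).

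Pour la premi\`ere assertion, je traiterai d'abord le sens ``limite d'orbites p\'eriodiques $\Rightarrow$ transitif par cha\^\i nes'', qui vaut sans hypoth\`ese de g\'en\'ericit\'e~: si $O_n\to K$ au sens de Hausdorff, la continuit\'e uniforme de $f$ sur $M$ compacte permet, pour $x,y\in K$ et $\varepsilon>0$, de trouver $\eta<\varepsilon/3$ contr\^olant les images par $f$, puis de choisir $n$ avec $d_H(O_n,K)<\eta/2$, des points $x_n,y_n\in O_n$ proches de $x,y$, un segment d'orbite de $x_n$ \`a $y_n$ dans $O_n$, et enfin d'approcher chaque point de ce segment par un point de $K$ pour obtenir une $\varepsilon$-pseudo-orbite de $x$ \`a $y$ contenue dans $K$. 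Pour le sens r\'eciproque, je fixerai $K$ transitif par cha\^\i nes, $\delta>0$, et utiliserai le pistage faible pour obtenir $\varepsilon>0$. Je choisirai alors une $\delta/2$-base $\{y_1,\dots,y_k\}$ de $K$ et, par la d\'efinition m\^eme de la transitivit\'e par cha\^\i nes, je produirai pour chaque $i$ une $\varepsilon$-pseudo-orbite $\sigma_i\subset K$ reliant $y_i$ \`a $y_{i+1}$ (indices modulo~$k$). La concat\'enation $Z$ des $\sigma_i$ sera alors une $\varepsilon$-pseudo-orbite p\'eriodique contenue dans $K$ et $\delta/2$-dense dans $K$~; le corollaire~\ref{c.pistage-faible} fournira une orbite p\'eriodique $O$ avec $d_H(Z,O)\leq \delta/2$, d'o\`u $d_H(O,K)\leq \delta$.

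Pour la seconde assertion, je partirai d'une classe ap\'eriodique $K$ de $f\in \cG$~: elle est transitive par cha\^\i nes, donc la premi\`ere assertion fournit une suite $O_n\to K$ d'orbites p\'eriodiques. Par g\'en\'ericit\'e, la classe homocline $H(O_n)$ co\"\i ncide avec la classe de r\'ecurrence par cha\^\i nes de $O_n$, et est en particulier transitive par cha\^\i nes. Par compacit\'e de $\cK(M)$, j'extrairai une sous-suite telle que $H(O_{n_k})\to K'$ pour la distance de Hausdorff. L'ensemble $K'$ contiendra $K$ (car $O_{n_k}\subset H(O_{n_k})$), sera invariant, et sera transitif par cha\^\i nes par le m\^eme argument que dans la premi\`ere partie. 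Puisque $K$ est un ensemble transitif par cha\^\i nes maximal, tout point $y\in K'$ v\'erifie $x\dashv y$ et $y\dashv x$ pour $x\in K$, donc appartient \`a la classe de r\'ecurrence par cha\^\i nes de $K$~; on a donc $K'=K$ et la convergence $H(O_{n_k})\to K$ est \'etablie.

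L'\'etape principale, et la seule r\'eellement d\'elicate, est la construction d'une pseudo-orbite p\'eriodique $Z\subset K$ \`a la fois $\varepsilon$-contr\^ol\'ee et $\delta/2$-dense dans $K$~: elle requiert une exploitation soigneuse de la transitivit\'e par cha\^\i nes pour assurer simultan\'ement ces deux propri\'et\'es. Tout le reste d\'ecoule de la puissance du lemme de pistage faible (corollaire~\ref{c.pistage-faible}) et du fait fondamental que les classes de r\'ecurrence par cha\^\i nes sont les ensembles transitifs par cha\^\i nes maximaux.
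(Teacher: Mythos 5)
Votre preuve est correcte et suit essentiellement la d\'emarche du texte~: le sens non trivial de la premi\`ere assertion s'obtient en concat\'enant des $\varepsilon$-pseudo-orbites de $K$ reliant les points d'un r\'eseau $\delta/2$-dense pour former une pseudo-orbite p\'eriodique $\delta/2$-dense dans $K$, puis en appliquant le pistage faible (corollaire~\ref{c.pistage-faible}), tandis que la seconde assertion d\'ecoule de la premi\`ere combin\'ee au fait g\'en\'erique que $H(O_n)$ est la classe de r\'ecurrence par cha\^\i nes de $O_n$, \`a la stabilit\'e de la transitivit\'e par cha\^\i nes par limite de Hausdorff et \`a la maximalit\'e des classes de r\'ecurrence par cha\^\i nes. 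C'est exactement la d\'eduction que le texte laisse implicite apr\`es le corollaire~\ref{c.pistage-faible}.
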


Arnaud avait montr\'e dans un travail ant\'erieur~\cite{arnaud-approximation}
que pour un diff\'eomorphisme $C^1$-g\'e\-n\'e\-ri\-que, les ensembles $\omega$-limites
sont limite de Hausdorff d'orbites p\'eriodiques.

Le corollaire~\ref{c.pistage-faible} donne aussi des informations sur la dynamique entre les classes
de r\'ecurrence par cha\^\i nes~:
pour tout diff\'eomorphisme $C^1$-g\'en\'erique,
si l'on consid\`ere une suite de classes de r\'ecurrence par cha\^\i nes
$K_1,\dots,K_s$, v\'erifiant $K_i\dashv K_{i+1}$ pour tout $1\leq i<s$,
et des voisinages $U_1,\dots, U_s$,
il existe une orbite qui visite successivement chaque ouvert $U_i$.

\section{D\'emonstration du pistage faible}
La d\'emonstration du th\'eor\`eme~\ref{t.global} est assez d\'elicate
puisqu'elle requiert de perturber la dynamique en plusieurs endroits.
Nous allons d\'emontrer un r\'esultat plus faible qui impliquera
le corollaire~\ref{c.pistage-faible}.

\begin{theoreme}[Pistage faible, Crovisier]\label{t.pistage2}
Il existe un G$_\delta$ dense $\cG\subset \diff^1(M)$
tel que, pour tout $f\in \cG$, pour tout ensemble ferm\'e $K\subset M$
(non n\'ecessairement invariant), pour tous points $z_1,\dots,z_n\in K$
v\'erifiant $z_1\dashv_K z_2\dashv_K\dots\dashv_K z_n$,
et pour tout $\delta>0$, il existe
un segment d'orbite $\{x,\dots,f^m(x)\}$ contenu dans le $\delta$-voisinage de $K$
qui rencontre chaque boule centr\'ee en $z_i$, $1\leq i \leq s$, de rayon $\delta$.

Si de plus on a $z_n\dashv_K z_1$, alors $x$ peut \^etre choisi p\'eriodique.
\end{theoreme}

Pour la suite, on introduira pour tout $f\in \diff^1(M)$
l'ensemble $\segment(f)$ des parties compactes de $M$ (non n\'ecessairement invariantes)
qui sont limites de Hausdorff d'une suite de segments d'orbite finis
ainsi que l'ensemble $\psegment(f)$ des parties compactes de $M$ (non n\'ecessairement invariantes)
qui sont limites de Hausdorff pour tout $\varepsilon>0$
d'une suite de segments de $\varepsilon$-pseudo-orbites finis.
Finalement $\cper(f)$ d\'esigne l'ensemble des parties compactes (invariantes) de $M$
qui sont limites d'une suite d'orbites p\'eriodiques et $\ctrans(f)$ celles qui sont transitives par cha\^\i nes.

\begin{proof}[{D\'emonstration du corollaire~\ref{c.pistage-faible} \`a partir du th\'eor\`eme~\ref{t.pistage2}}]
Consid\'erons $f$ appartenant \`a l'en\-sem\-ble r\'esiduel $\cG$ donn\'e par le th\'eor\`eme~\ref{t.pistage2}.
Lorsque $\delta>0$ est fix\'e, il existe $\varepsilon>0$ tel que tout segment de $\varepsilon$-pseudo-orbite
est $\delta/2$-proche d'un \'el\'ement $K$ de $\psegment(f)$.
D'apr\`es le th\'eor\`eme~\ref{t.pistage2}, il existe un segment d'orbite fini de $f$ qui est $\delta/2$-proche de $K$
pour la distance de Hausdorff. Ceci d\'emontre la premi\`ere partie du corollaire~\ref{c.pistage-faible}.

Le m\^eme argument montre que pour $\varepsilon$ assez petit, toute $\varepsilon$-pseudo-orbite p\'eriodique
est $\delta/2$-proche d'un \'el\'ement $K$ de $\ctrans(f)$.
D'apr\`es le th\'eor\`eme~\ref{t.pistage2}, il existe une orbite p\'eriodique de $f$ qui est $\delta/2$-proche de $K$
pour la distance de Hausdorff.
\end{proof}

\begin{proof}[D\'emonstration du th\'eor\`eme~\ref{t.pistage2}: le cas non r\'ecurrent.]
L'application $f\mapsto \segment(f)$ est semi-con\-ti\-nue inf\'erieurement sur $\diff^1(M)$.
Il existe donc un G$_\delta$ dense $\cG\subset \diff^1(M)$ de diff\'eomorphismes qui sont des points de continuit\'e
de cette application, qui sont Kupka-Smale, et qui satisfont \`a la seconde propri\'et\'e de la section~\ref{s.consequence}
($\prec_K=\dashv_K$ pour tout ensemble compact $K$). Nous consid\'erons \`a pr\'esent un \'el\'ement $f\in \cG$.
Nous montrons la premi\`ere partie du th\'eor\`eme par r\'ecurrence sur $n$. Le cas $n=1$ est \'evident.
Remarquons aussi que nous pouvons toujours supposer que les points $z_j$
appartiennent \`a des orbites distinctes (quitte \`a supprimer certains points)
et supposer qu'ils ne sont pas p\'eriodiques (qui \`a les remplacer par des points proches,
puisque $f$ est un diff\'eomorphisme dont tous les points p\'eriodiques sont hyperboliques).

Consid\'erons $n+1$ points $z_1\dashv_K z_2\dashv_K\dots\dashv_K z_{n+1}$ de $K$. Nous devons montrer
qu'ils sont contenus dans un \'el\'ement $\Gamma\in \segment(f)$ inclus dans $K$.
Puisque $f$ est un point de continuit\'e de l'application $g\mapsto \segment(g)$,
il suffit de trouver pour tout $\delta>0$ et tout voisinage $\cU$ de $f$
un diff\'eomorphisme $g\in \cU$ ayant un segment d'orbite fini $\gamma_g$ contenu dans le $\delta$-voisinage
de $K$ et rencontrant chaque boule $B(z_i,\delta)$, $1\leq i\leq n+1$.
Le lemme de perturbation de Pugh (th\'eor\`eme~\ref{t.pugh}) appliqu\'e \`a $f$, $\cU$
et $\varepsilon=\eta=\frac 1 2$ nous donne un entier $N\geq 1$
et des paires de cubes $C_j\subset \widehat C_j$ centr\'es en chaque point $z_j$, $1\leq j\leq n$,
dont les $N$ premiers it\'er\'es sont disjoints deux \`a deux et de diam\`etre inf\'erieur \`a $\delta$.
On choisit enfin $\delta'\ll \delta$ tel que chaque boule $B(z_j,\delta')$ est contenue dans $C_j$.

En appliquant l'hypoth\`ese de r\'ecurrence, on obtient un segment d'orbite $\gamma=\{f^k(x)\}_{0\leq k \leq L}$
qui intersecte chaque cube $C_j$ et qui est contenue dans le $\delta$-voisinage de $K$.
Soit alors $\sigma_0=\{f^k(x)\}_{0\leq k\leq N_0}\subset \gamma$
le plus petit segment d'orbite contenu dans $\gamma$ qui contienne $x$
et rencontre chaque cube $\widehat C_j$, $1\leq j\leq n$~; nous notons
$\zeta_f=f^{N_0}$ le point final de $\sigma_0$.
Remarquons que par minimalit\'e de $\sigma_0$, les points $f^k(x)$
pour $0\leq k<N_0$ ne rencontrent pas le cube $\widehat C_f$ contenant $\zeta_f$.
Il existe donc un it\'er\'e $f^\ell(\zeta_f)$, $\ell\leq L-N_0$, qui appartient \`a $C_f$.

Puisque $\prec_K=\dashv_K$ pour $f$, il existe un segment d'orbite
$\gamma'=\{f^{-m}(y),\dots,y\}$ contenu dans le $\delta$-voisinage de $K$
tel que $f^{-m}(y)$ appartient \`a $C_f$ et $y$ \`a la boule $B(z_{n+1},\delta)$.
On applique alors le lemme de connexion~: il existe $g\in \cU$
tel que $y$ est un it\'er\'e positif de $\zeta_f$.
Les diff\'eomorphismes $f$ et $g$ co\"\i ncident hors de $\widehat C_f$ et de ses $N-1$ premiers it\'er\'es.
Le segment d'orbite $\sigma_0$ n'a pas \'et\'e modifi\'e. On en d\'eduit que
$y$ est un it\'er\'e positif de $x$ et que l'ensemble $\gamma_g$ des it\'er\'es compris entre $x$ et $y$
rencontre chaque boule $B(z_j,\delta)$ pour $1\leq j\leq n+1$.
Par ailleurs, le nouveau segment d'orbite est inclus dans l'union de $\gamma$,
$\gamma'$ et des $N$ premiers it\'er\'es de $\widehat C_f$. Par cons\'equent, il est contenu
dans le $\delta$-voisinage de $K$.
\end{proof}

\begin{proof}[D\'emonstration du th\'eor\`eme~\ref{t.pistage2}: le cas r\'ecurrent.]
Nous montrons \`a pr\'esent la seconde partie du th\'eor\`eme:
nous supposons $z_j\dashv_K z_{j+1}$ pour tout $1\leq j<n$ et $z_n\dashv_K z_1$.
Quitte \`a consid\'erer un G$_\delta$ dense $\cG$ plus petit, nous pouvons supposer
que $f\in \cG$ (dont tous les points p\'eriodiques sont hyperboliques) est un point de continuit\'e
de l'application $g\mapsto \cper(g)$. Il suffit donc de construire $g\in \diff^1(M)$ proche de $f$
poss\'edant une orbite p\'eriodique contenue dans le $\delta$-voisinage de $K$
qui intersecte chaque boule $B(z_j,\delta)$.
Nous introduisons comme pr\'ec\'edemment les cubes $C_j\subset \widehat C_j$.
Nous supposerons de plus que chaque $\widehat C_j$ est un cube quadrill\'e dont $C_j$
est une tuile centrale (comme pour la section~\ref{s.connecting}).

D'apr\`es la premi\`ere partie du th\'eor\`eme, il existe un segment d'orbite $\gamma=\{f^k(x)\}_{0\leq k\leq L}$ qui rencontre chaque cube $C_j$ et qui est contenu
dans le $\delta$-voisinage de $K$.
Nous consid\'erons un segment d'orbite $\sigma_0=\{f^k(x)\}_{L_i\leq k\leq L_f}$
qui rencontre chaque cube $\widehat C_j$ et qui est minimal (pour l'inclusion) pour cette propri\'et\'e.
Nous notons $\zeta_i=f^{L_i}(x)$ et $\zeta_f=f^{L_f}(x)$ ses points initial et final~:
ils sont chacun contenus dans un cube $\widehat C_i$ et $\widehat C_f$.
Nous montrons que, quitte \`a remplacer $\sigma_0$ par un autre segment minimal,
nous pouvons supposer que
\begin{itemize}
\item[a)] $\zeta_i$ a un it\'er\'e n\'egatif $f^{-\ell_i}(\zeta_i)$, $\ell_i\leq L_i$, dans $C_i$,
\item[b)] $\zeta_f$ a un it\'er\'e positif $f^{\ell_f}(\zeta_f)$, $\ell_f\leq L-L_i$ dans $C_f$.
\end{itemize}
Supposons par exemple que a) n'est pas satisfait.
Puisque $\gamma$ rencontre $C_i$ et puisque $\sigma_0$ est minimal,
il existe $L'_f>L_f$ tel que $f^{L'_f}(x)$ appartient \`a $\widehat C_i$ et poss\`ede un it\'er\'e
positif $f^{\ell_f+L'_f}(x)$, $\ell_f+L'_f\leq L$, dans $C_i$.
Il existe alors $L'_i>L_i$ tel que le segment
$\sigma_0'=\{f^k(x)\}_{L_i'\leq k\leq L_f'}$ soit un nouveau segment minimal.
Par construction la propri\'et\'e b) est satisfaite par $\sigma'_0$.
Si a) n'est toujours pas satisfaite, on r\'ep\`ete cette construction.
\`A chaque \'etape, $L'_f$ augmente et puisque $\gamma$ est fini,
ce proc\'ed\'e doit s'arr\^eter~: on obtient alors \`a la fois a) et b).

Introduisons \`a pr\'esent un segment d'orbite $\gamma'$ contenu dans le $\delta$-voisinage de $K$
et rencontrant les cubes $C_i$ et $C_f$. La r\'eunion de $\gamma'$
et de $\gamma$ contient une pseudo-orbite \`a sauts dans les tuiles de $\widehat C_i$ et $\widehat C_f$
et contenant $\sigma_0$.
Par construction $\sigma_0\setminus \{\zeta_i,\zeta_f\}$ \'evite les cubes
$\widehat C_i$ et $\widehat C_f$ et rencontre les autres cubes $\widehat C_j$.
Nous pouvons donc, comme en section~\ref{ss.domaine}, perturber $f$ et construire un diff\'eomorphisme $g\in \cU$
poss\'edant une orbite p\'eriodique qui contient $\sigma_0$ et donc rencontre chaque cube $\widehat C_j$.
Cette orbite est contenue dans le $\delta$-voisinage de $K$.
\end{proof}

\section{Application~: \'etude de la stabilit\'e molle}\label{s.tolerance}

Un des buts des syst\`emes dynamiques consiste \`a d\'ecrire comment
les invariants dynamiques varient lorsque l'on perturbe le syst\`eme.
Ceci conduit \`a la notion de stabilit\'e.
Puisque la stabilit\'e structurelle
et l'$\Omega$-stabilit\'e (voir section~\ref{s.stabilite}) ne sont pas dense dans $\diff^1(M)$,
d'autres formes de stabilit\'e ont \'et\'e propos\'ees.
En suivant une id\'ee de Zeeman, Takens a introduit~\cite{takens-tolerance1} une stabilit\'e affaiblie.
La stabilit\'e structurelle a lieu lorsque l'espace des orbites est rigide~;
la stabilit\'e molle
exprime que l'espace des orbites du syst\`eme change peu lorsque l'on perturbe la dynamique.

Par exemple, si $\cK(M)$
d\'esigne l'espace des ensembles compacts de $M$ muni de la topologie de Hausdorff,
Takens montre le th\'eor\`eme suivant~\cite{takens-tolerance1}.
\begin{theoreme}[Takens]
L'ensemble des points de continuit\'e de chaque application $f\mapsto \overline{\per(f)}$, $f\mapsto \Omega(f)$ et $f\mapsto \cR(f)$
d\'efinies sur $\diff^1(M)$ et \`a valeurs dans $\cK(M)$ contient un G$_\delta$ dense.
\end{theoreme}
Pour $\cR(f)$, c'est une simple cons\'equence de la proposition~\ref{p.baire}.
Pour $\overline{\per(f)}$, c'est une cons\'equence du th\'eor\`eme~\ref{t.kupka-smale} de Kupka-Smale
et pour $\Omega(f)$, c'est une cons\'equence du lemme de fermeture de Pugh (th\'eor\`eme~\ref{t.closing}).
\footnote{Remarquons que puisque g\'en\'eriquement dans $\diff^1(M)$, on a $\Omega(f)=\cR(f)$, il n'y a
pas de $C^0$ $\Omega$-explosion, r\'epondant ainsi au probl\`eme 19 de~\cite{palis-pugh}.}
\medskip

Pour d\'ecrire comment $M$ se d\'ecompose en orbites, on travaille dans l'espace $\cK(\cK(M))$
des familles ferm\'ees d'ensembles compacts de $M$ et on introduit pour tout diff\'eomorphisme $f$,
\begin{description}
\item $\corb(f)$~:
l'ensemble des parties compactes invariantes de $M$ qui sont limite de Hausdorff d'adh\'erences d'orbite de $f$.
\end{description}
Un diff\'eomorphisme $f$ est \emph{mollement stable\index{stabilit\'e!molle}} (``tolerance stable''\index{stabilit\'e!tolerance stability})
si c'est un point de continuit\'e de l'application $f\mapsto \corb(f)$ d\'efinie sur $\diff^1(M)$.

\begin{theorem-tolerance}[Zeeman]
L'ensemble des diff\'eomorphismes mollement stables contient un G$_\delta$ dense de $\diff^1(M)$.
\end{theorem-tolerance}
\medskip

\`A notre connaissance, cette conjecture reste ouverte, mais nous pouvons traiter des questions analogues.
La structure de l'espace des orbites est d\'ecrite par les \'el\'ements suivants de $\cK(\cK(M))$~:
\begin{description}
\item[$\cl(f)$~:] l'ensemble des parties compactes invariantes de $M$,
\item[$\cper(f)$~:] l'ensemble des parties compactes invariantes de $M$ qui sont limites de Hausdorff d'or\-bi\-tes p\'eriodiques,
\item[$\ftrans(f)$~:] l'ensemble des parties compactes invariantes de $M$ qui sont faiblement transitives,
\item[$\ctrans(f)$~:] l'ensemble des parties compactes invariantes de $M$ qui sont transitives par cha\^\i nes,
\item[$\orbf(f)$~:] l'ensemble des parties compactes invariantes de $M$ qui sont limites de Hausdorff de segments
d'orbites finis,
\item[$\eorb(f)$~:] l'ensemble des parties compactes invariantes de $M$ qui sont limites de $\varepsilon$-pseudo-orbites pour tout $\varepsilon>0$ (parfois appel\'ees \emph{orbites \'etendues} de $f$).
\end{description}
\bigskip

Les propri\'et\'es de stabilit\'e correspondantes sont v\'erifi\'ees.
\begin{proposition}
L'ensemble des points de continuit\'e des ensembles $\cl(f)$, $\cper(f)$, $\ftrans(f)$,
$\ctrans(f)$, $\orbf(f)$ et $\eorb(f)$ contient un G$_\delta$ dense de $\diff^1(M)$.
\end{proposition}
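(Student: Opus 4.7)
The plan is to apply Proposition~\ref{p.baire} separately to each of the six set-valued maps $f\mapsto\Phi(f)$, viewed as maps from the Baire space $\diff^1(M)$ into the compact metric space $\cK(\cK(M))$. For each $\Phi$ it suffices to establish a suitable semi-continuity property; Proposition~\ref{p.baire} will then yield a dense $G_\delta$ of continuity points of $\Phi$, and the sought dense $G_\delta$ is the intersection of the six.

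I would first show that four of the maps---namely $\cl$, $\ctrans$, $\eorb$ and $\ftrans$---are upper semi-continuous on all of $\diff^1(M)$. For $\cl(f)=\{K\in\cK(M):f(K)=K\}$ this is immediate from joint continuity of $(f,K)\mapsto f(K)$. For $\eorb$ it follows from the fact that an $\varepsilon/2$-pseudo-orbit of $f_n$ is automatically an $\varepsilon$-pseudo-orbit of $f$ once $\|f-f_n\|_{C^0}<\varepsilon/2$. For $\ctrans$ one runs the same argument but projects each point of the approximating pseudo-orbit onto a nearby point of the limit set $K$, the extra error being absorbed into $\varepsilon$ for $n$ large using $d_H(K_n,K)\to 0$ together with a Lipschitz bound on $f$. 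For $\ftrans$ the key ingredient is the closedness of the relation $\prec_K$ under joint limits in $(K,f,x,y)$ stated in Chapter~\ref{c.decomposition}: if $K_n\in\ftrans(f_n)$, then every pair $x,y\in K$ is approximated by pairs $x_n,y_n\in K_n$ with $x_n\prec_{K_n,f_n}y_n$ and $y_n\prec_{K_n,f_n}x_n$, and passing to the limit gives weak transitivity of $K$ for $f$. Proposition~\ref{p.baire} applied to each of these four maps produces a dense $G_\delta$ of its continuity points.

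The two remaining maps $\cper$ and $\orbf$ are treated by a sandwich argument. One always has $\cper(f)\subset\orbf(f)\subset\eorb(f)$, and the consequences of the pseudo-orbit connecting lemma collected in Section~\ref{s.consequence}, together with the weak shadowing property of Corollary~\ref{c.pistage-faible}, produce a dense $G_\delta$ $\cG_0$ on which $\cper(f)=\orbf(f)=\eorb(f)=\ctrans(f)$: the equality $\cper=\ctrans$ is one of the listed consequences of the global connection theorem, and $\orbf=\eorb$ follows from weak shadowing, which allows every pseudo-orbit segment to be approximated by a genuine orbit segment in Hausdorff distance. On the other hand, at any Kupka-Smale diffeomorphism $f_0$ every periodic orbit is hyperbolic and admits a $C^1$-continuation; a diagonal extraction then shows that $\cper$ is lower semi-continuous at $f_0$ as a map from $\diff^1(M)$.

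At any $f_0$ lying in the intersection of the dense $G_\delta$ of Kupka-Smale diffeomorphisms, of $\cG_0$, and of the continuity set of $\eorb$, one obtains for every sequence $f_n\to f_0$
\[
\limsup_n\cper(f_n)\subset\limsup_n\eorb(f_n)=\eorb(f_0)=\cper(f_0),
\]
while lower semi-continuity of $\cper$ at the Kupka-Smale point $f_0$ gives the reverse inclusion $\liminf_n\cper(f_n)\supset\cper(f_0)$, so $\cper$ is continuous at $f_0$. The same sandwich with $\orbf$ in place of $\cper$, using $\cper(f_0)=\orbf(f_0)=\eorb(f_0)$, yields continuity of $\orbf$ at $f_0$. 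The main obstacle is really the generic equality $\orbf(f)=\eorb(f)$: without the weak shadowing of Corollary~\ref{c.pistage-faible} there would be no reason to expect an invariant Hausdorff limit of pseudo-orbit segments to be realisable as a Hausdorff limit of genuine orbit segments, and the sandwich for $\orbf$ would collapse.
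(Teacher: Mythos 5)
Your overall strategy (semi-continuity plus Proposition~\ref{p.baire}, completed by a sandwich through generic identities for the maps that fail to be semi-continuous) is the one the paper intends, and your treatment of $\cl$, $\ctrans$ and $\eorb$ is correct. But two of your steps contain genuine errors. First, $\ftrans$ is \emph{not} upper semi-continuous in $f$. The witness of property $(\prec)$ for $f_n$ is a genuine orbit segment of $f_n$ of a priori unbounded length; such segments do not converge to orbit segments of $f$, they only become $\varepsilon$-pseudo-orbits of $f$, so your limiting argument proves $\limsup_n\ftrans(f_n)\subset\ctrans(f)$ and nothing more. Concretely, let $f$ be the time-one map on $S^1=\RR/\ZZ$ of the vector field $\sin^2(\pi x)\,\partial_x$, with its single semi-stable fixed point at $0$: the circle is not weakly transitive for $f$ (no forward orbit crosses $0$), yet $f$ is a $C^1$-limit of circle diffeomorphisms $f_n$ with irrational rotation number, for which $S^1\in\ftrans(f_n)$. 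The closedness of $\prec_K$ recorded in Chapter~\ref{c.decomposition} is closedness in $(x,y)$ for a fixed diffeomorphism, not joint closedness in $f$. This is exactly why the paper's proof prescribes Corollary~\ref{c.global} for $\ftrans$: one must sandwich $\ftrans$ between the persisting hyperbolic periodic orbits (lower bound) and the upper semi-continuous map $\ctrans$ (upper bound), using the generic identity $\cper=\ftrans=\ctrans$ coming from Theorem~\ref{t.global}.

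Second, the chain of equalities $\cper(f)=\orbf(f)=\eorb(f)=\ctrans(f)$ on your $\cG_0$ is false, even generically. What the paper provides is $\cper=\ftrans=\ctrans$ and, separately, $\orbf=\eorb$; these two families do not coincide. For a Morse--Smale diffeomorphism (an open set of systems), the closure of a heteroclinic orbit from a source to a sink is invariant and is the Hausdorff limit of the finite orbit segments $\{f^{-n}(x),\dots,f^n(x)\}$, hence belongs to $\orbf(f)=\eorb(f)$; but it is neither chain transitive nor a limit of periodic orbits, since for small $\varepsilon$ no $\varepsilon$-pseudo-orbit contained in this set can travel from the sink back to the source. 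Consequently your sandwich for $\cper$, which bounds $\limsup_n\cper(f_n)$ by $\eorb(f_0)$ and then invokes $\eorb(f_0)=\cper(f_0)$, collapses: the upper envelope must be $\ctrans$, which is upper semi-continuous and generically equals $\cper$. Likewise, the lower bound in your sandwich for $\orbf$ cannot be borrowed from the continuation of periodic orbits, since $\cper(f_0)\subsetneq\orbf(f_0)$ in general; one has to establish lower semi-continuity of $\orbf$ (equivalently, of $\eorb$ at the generic points where they agree) by a separate argument.
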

Takens avait trait\'e~\cite{takens-tolerance1,takens-tolerance2} le cas des ensembles $\cl(f)$, $\cper(f)$
et $\eorb(f)$.
La proposition s'obtient par des arguments de semi-continuit\'e.
Pour $\cper(f)$, on utilise le lemme de fermeture et pour $\ftrans(f)$, on utilise le corollaire~\ref{c.global}.
\bigskip

Concernant la conjecture initiale de Zeeman, Takens a donn\'e~\cite{takens-tolerance2} le crit\`ere suivant.

\begin{theoreme}[Takens]
Si l'ensemble des diff\'eomorphismes $f$ tels que $\corb(f)=\eorb(f)$ contient un G$_\delta$ dense de
$\diff^1(M)$, alors la conjecture de stabilit\'e molle est vraie.
\end{theoreme}

Du th\'eor\`eme~\ref{t.global}, on d\'eduit que $\orbf(f)=\eorb(f)$ pour un G$_\delta$ dense de $\diff^1(M)$.
La conjecture de Zeeman est donc reli\'ee au probl\`eme suivant.
\begin{question}
A-t-on $\corb(f)=\orbf(f)$ pour $f$ dans un G$_\delta$ dense de $\diff^1(M)$~?
\end{question}

Le th\'eor\`eme~\ref{t.global} montre aussi que
$\cper(f)=\ftrans(f)=\ctrans(f)$.

\section{Probl\`emes}

D'autres probl\`emes de connexion d'orbites restent ouverts et peuvent \^etre int\'eressants
pour des applications.

\paragraph{a) Fermeture asymptotique.}
\begin{question}[Fermeture asymptotique]\label{q.closing-asymptotique}
\emph{Consid\'erons un voisinage $\cU$ de $f\in \diff^1(M)$ et $x$ un point de $M$.}
\begin{itemize}
\item[--] Existe-t-il $g\in \cU$ pour lequel $x$ appartient \`a la vari\'et\'e stable d'une orbite
p\'eriodique hyperbolique $\cO$~?
\item[--] Peut-on demander de plus que les adh\'erences des orbites positives de $x$ sous $f$ et sous $g$
restent proches en topologie de Hausdorff~?
Que $\cO$ et l'ensemble des valeurs d'adh\'erences de l'orbite positive de $x$ sous $f$ 
soient proches pour la distance de Hausdorff~?
\end{itemize}
\end{question}
Une r\'eponse positive impliquerait la densit\'e des vari\'et\'es stables et instables
d'orbites p\'e\-ri\-o\-di\-ques pour un diff\'eomorphisme $C^1$-g\'en\'erique
(voir~\cite{bgw} pour une r\'eponse partielle).
Elle permettrait \'egalement de montrer la conjecture de stabilit\'e de Zeeman (voir~\cite{approximation}).

\paragraph{b) Orbites avec visites rares.}
\begin{question}\label{q.proportion}
\emph{Supposons que $f$ v\'erifie une condition $C^1$-g\'en\'erique
et consid\'erons un ensemble compact et transitif par cha\^\i nes $\Lambda$
contenu dans une classe de r\'ecurrence par cha\^\i nes $K$.
Fixons un point $x\in K\setminus \Lambda$, un voisinage $U$ de $\Lambda$ et $\theta>0$.}

Existe-t-il une orbite p\'eriodique ayant un point proche de $x$
et passant une proportion de temps sup\'erieure \`a $\theta$ dans $U$~?
\end{question}
Un tel r\'esultat permettrait d'\'etendre \`a toute la classe $K$ certaines propri\'et\'es
satisfaites sur $\Lambda$. Voir la section~\ref{s.probleme-newhouse}.
On renvoie \`a~\cite{gan-wen-connecting} pour d'autres probl\`emes de connexion d'orbite.


\chapter{Hyperbolicit\'e non uniforme}\label{c.hyperbolicite-faible}
Certaines propri\'et\'es connues pour les syst\`emes hyperboliques s'\'etendent
\`a des classes plus g\'e\-n\'e\-ra\-les de dynamiques. C'est l'objet par exemple de la th\'eorie de Pesin~\cite{pesin}
qui d\'ecrit la dynamique associ\'ee \`a une mesure ergodique dont aucun exposant de Lyapunov ne s'annule,
pour des diff\'eomorphismes de classe $C^{1+\varepsilon}$ (les arguments ne se g\'en\'eralisent pas
\`a la topologie $C^1$, voir~\cite{pugh-hypotheseC2}).
Nous pr\'esentons dans ce chapitre de tels r\'esultats, non perturbatifs et valables en classe $C^1$. 

\section{D\'ecomposition domin\'ee}\label{s.domination}
\paragraph{Domination.}
Consid\'erons un ensemble invariant $K$ dont le fibr\'e unitaire tangent est la somme directe de deux
sous-fibr\'es lin\'eaires invariants par l'application tangente~: $T_KM=E\oplus F$.
C'est une \emph{d\'ecomposition domin\'ee}\index{d\'ecomposition domin\'ee}
s'il existe un entier $N\geq 1$ tel que pour tout $x\in K$ et tous $u\in E(x)$, $v\in F(x)$
unitaires on a
$$\|D_xf^N.u\|\leq \frac 1 e \|D_xf^N.v\|.$$
On dira aussi que la d\'ecomposition est \emph{$N$-domin\'ee} lorsque l'on souhaitera
pr\'eciser l'entier $N$.
La d\'ecomposition est \emph{non triviale} si les dimensions de $E$ et $F$ ne sont pas nulles.
On \'etend cette d\'efinition et on consid\'erera aussi des d\'ecompositions domin\'ees ayant plus de deux facteurs.

\paragraph{Propri\'et\'es.}
Voici quelques propri\'et\'es des d\'ecompositions domin\'ees (voir~\cite[appendice B]{bdv}).
\smallskip

\begin{itemize}
\item[--] Tout ensemble invariant $K$ poss\`ede une d\'ecomposition domin\'ee
$T_KM=E_1\oplus\dots\oplus E_\ell$ la plus fine~: pour toute autre d\'ecomposition domin\'ee $T_KM=E\oplus F$, il existe
$0\leq k \leq \ell$ tel que $E=\bigoplus_{1\leq i \leq k} E_i$ et $F=\bigoplus_{k+1\leq i\leq \ell} E_i$.
\smallskip

\item[--] Les d\'ecompositions domin\'ees s'\'etendent \`a l'adh\'erence de $K$.
Elles passent \`a la limite~: si $(f_n)$ est une suite de diff\'eomorphismes convergeant vers $f$,
si $(K_n)$ est une suite d'ensembles compacts $f_n$-invariants qui converge en topologie de Hausdorff vers $K$,
si chaque ensemble $K_n$ porte une d\'ecomposition $N$-domin\'ee $E_n\oplus F_n$ telle que la dimension de $E_n$
ne d\'epende pas de $n$, alors les fibr\'es $E_n$ et $F_n$ convergent vers des fibr\'es $E,F$ au-dessus de $K$
qui induisent une d\'ecomposition $N$-domin\'ee $T_KM=E\oplus F$.
\smallskip

\item[--] Si l'ensemble compact $K$ poss\`ede une d\'ecomposition domin\'ee $E\oplus F$ pour $f$,
tout ensemble contenu dans un voisinage de $K$ et invariant par un diff\'eomorphisme $g$ proche de $f$ poss\`ede \'egalement une
d\'ecomposition domin\'ee en sous-fibr\'es de la m\^eme dimension.
\end{itemize}

\paragraph{Hyperbolicit\'e partielle.}
Un ensemble $K$ est dit~\emph{partiellement hyperbolique}\index{hyperbolicit\'e!partielle} s'il admet une d\'ecomposition domin\'ee
$T_KM=E^s\oplus E^c\oplus E^u$ et un entier $N\geq 1$ tels que $E^s$ et $E^u$
soient respectivement $N$-uniform\'ement contract\'es et $N$-uniform\'ement dilat\'es (i.e. on a~(\ref{e.hyperbolique}))
et ne soient pas tous deux triviaux.
\medskip

Les notions d'hyperbolicit\'e, hyperbolicit\'e partielle, d\'ecompositions domin\'ees ne d\'ependent pas
de la m\'etrique riemannienne. Gourmelon a montr\'e~\cite{gourmelon-metrique} que l'on peut toujours trouver une m\'etrique pour que dans ces
d\'efinitions l'hyperbolicit\'e ou la domination se voient d\`es la premi\`ere it\'eration,
i.e. en tout point $x\in K$ on a~:

$$\sup\left(\|D_xf_{|E^s(x)}\|, \; \|D_xf^{-1}_{|E^u(x)}\|,\;
\|D_xf_{|E^s(x)}\|.\|D_{f(x)} f^{-1}_{E^c(f(x))}\|,\;
\|D_xf_{|E^c(x)}\|.\|D_{f(x)} f^{-1}_{E^u(f(x))}\|\right) \; < 1.$$

\section{Familles de plaques}
On peut associer \`a tout ensemble ayant une d\'ecomposition domin\'ee une famille de sous-vari\'et\'es
qui g\'en\'eralise les vari\'et\'es invariantes locales des ensembles hyperboliques.

\begin{definition}
Soit $K$ un ensemble invariant avec une d\'ecomposition domin\'ee
$T_KM=F_1\oplus E\oplus F_2$.\\
Une \emph{famille de plaques}\index{famille de plaques} tangente \`a $E$ est une application continue $\cW\colon E \to M$ satisfaisant~:
\begin{itemize}
\item[--] pour tout $x\in K$, l'application induite $\cW_x\colon E_x\to M$ est un plongement $C^1$
pour lequel $\cW_x(0)=x$ et dont l'image est tangente en $x$ \`a $E_x$~;
\item[--] $(\cW_x)_{x\in K}$ est une famille continue de plongements $C^1$.
\end{itemize}
La famille de plaques $\cW$ est \emph{localement invariante}\index{famille de plaques!localement invariante} s'il existe $\rho>0$ tel que pour tout $x\in K$,
l'image de la boule $B(0,\rho)\subset E_x$ par $f\circ \cW_x$ est contenue dans la plaque $\cW_{f(x)}$.
\end{definition}

D'apr\`es~\cite[th\'eor\`eme 5.5]{hps}, il existe toujours des familles de plaques localement invariantes.
\begin{theoreme}[Hirsch-Pugh-Shub]\label{t.hps}
Pour tout ensemble compact invariant $K$ dont l'espace tangent poss\`ede une d\'ecomposition domin\'ee
$T_KM=E\oplus F$, il existe une famille de plaques localement invariante tangente \`a $E$.
\end{theoreme}

\begin{remarque}
\begin{itemize}
\item[a)] En g\'en\'eral, les plaques ne sont pas d\'efinies dynamiquement.
Par con\-s\'e\-quent, deux plaques peuvent avoir des points d'intersection isol\'es
et la famille de plaques n'est pas unique a priori.
\item[b)] On peut \'enoncer une version uniforme de ce r\'esultat~:
il existe des voisinages $U$ de $K$ et $\cU\subset \diff^1(M)$ de $f$
et une collection de familles de plaques $(\cW_g)_{g\in \cU}$
tangentes aux continuations $(E_g)_{g\in \cU}$ du fibr\'e $E$
et d\'efinies au-dessus des ensembles invariants maximaux $(K_g)$ de $U$
tels que
\begin{itemize}
\item[--] $(\cW_{g,x})_{g\in \cU,x\in K_g}$ est une famille continue de plongements $C^1$,
\item[--] les familles de plaques sont uniform\'ement localement invariantes~: il existe
$\rho>0$ tel que pour tous $g\in \cU$ et $x\in K_g$,
l'image de la boule $B(0,\rho)\subset E_{g,x}$ par $f\circ \cW_{g,x}$ est contenue dans la plaque $\cW_{g,g(x)}$.
\end{itemize}
\item[c)] Lorsqu'il y a une d\'ecomposition domin\'ee $T_KM=F_1\oplus E\oplus F_2$ en trois fibr\'es,
on peut obtenir une famille de plaques localement invariante tangente \`a $E$ en intersectant des familles
de plaques localement invariantes tangentes \`a $F_1\oplus E$ et \`a $E\oplus F_2$ respectivement.
\item[d)] Les plaques seront g\'en\'eralement de petit diam\`etre. 
\end{itemize}
\end{remarque}

\section{Points hyperboliques}
Consid\'erons un entier $N\geq 1$ et
un ensemble compact invariant $K$ muni d'une d\'ecomposition
domin\'ee $T_KM=E\oplus F$.
\begin{definition}
Un point $x\in K$ est \emph{$N$-hyperbolique le long de $E$}\index{hyperbolicit\'e!ponctuelle} si
pour tout $k\geq 0$ on a:
$$\prod_{i=0}^{k-1} \|D_{f^{iN}(x)}f^N_{|E}\|\leq e^{-k}.$$
\end{definition}

L'existence de points hyperboliques fait souvent appel au lemme
de Pliss~\cite{pliss}.
Dans le cas d'orbites p\'eriodiques, on obtient
le r\'esultat suivant.

\begin{proposition}[Cons\'equence du lemme de Pliss]\label{p.pliss-periodique}
Pour ensemble invariant $K$ ayant une d\'ecomposition domin\'ee $T_KM=E\oplus F$,
il existe $\rho\in (0,1)$ avec la propri\'et\'e suivante.
Pour tout $N\geq 1$ suffisamment grand et
pour toute orbite p\'eriodique hyperbolique
$O=\{x,\dots,f^{\tau}(x)=x\}$ de $K$ satisfaisant
\begin{equation}\label{e.periode}
\prod_{i=0}^{\tau-1} \|D_{f^{i}(x)}f_{|E}\|\leq e^{-\tau},
\end{equation}
l'ensemble des points $N$-hyperboliques de $O$ a une proportion sup\'erieure \`a $\rho$.
\end{proposition}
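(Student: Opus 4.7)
The plan is to apply the Pliss lemma to the logarithms of the derivative norms along the periodic orbit, and then convert the resulting per-step contraction estimate into the required estimate for products of $f^N$-iterates. The dominated decomposition plays only an auxiliary role here, ensuring that the invariant subbundle $E$ and the restricted derivatives $Df_{|E}$ are well defined along $O$; the rest of the argument is combinatorial, using only the bound (\ref{e.periode}) together with the uniform constant $A := \log \sup_{M}\|Df\|$, which is finite by compactness of $M$.

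\smallskip

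Fix $c \in (0,1)$ (say $c = 1/2$). The Pliss lemma in the form I would use provides a constant $\theta = (1-c)/(A+1) > 0$ such that every finite sequence $b_1,\dots,b_\ell$ of reals with $b_i \leq A$ and $\sum_{i=1}^\ell b_i \leq -\ell$ admits at least $\theta\ell$ indices $n \in \{0,\dots,\ell-1\}$ satisfying
$$\sum_{i=n+1}^{n+m} b_i \leq -cm \qquad \text{for every } 1 \leq m \leq \ell - n.$$
I would apply this to the sequence $b_i := \log \|D_{f^{i-1}(x)} f_{|E}\|$, extended $\tau$-periodically: by (\ref{e.periode}), any $k$ consecutive periods sum to at most $-k\tau$, so Pliss yields at least $\theta k\tau$ good indices in $\{0,\dots,k\tau-1\}$.

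\smallskip

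The main step is to upgrade the one-sided estimate to a cyclic one on $O$. Suppose a good index $n$ lies in $\{0,\dots,(k-1)\tau-1\}$, so that at least $\tau$ forward terms of the bound are available. For any $m \geq 1$, write $m = q\tau + r$ with $0 \leq r < \tau$; periodicity together with $\sum_{i=1}^\tau b_i \leq -\tau$ gives
$$\sum_{i=n+1}^{n+m} b_i \;=\; q\sum_{i=1}^\tau b_i + \sum_{i=n+1}^{n+r} b_i \;\leq\; -q\tau - cr \;\leq\; -cm.$$
Such indices project modulo $\tau$ to a set $G \subset \{0,\dots,\tau-1\}$ of \emph{cyclically good} residues; since each element of $G$ has exactly $k-1$ preimages in $\{0,\dots,(k-1)\tau-1\}$, a counting argument yields $|G|(k-1) + \tau \geq \theta k\tau$, so $|G|/\tau \geq \theta$ in the limit $k \to \infty$. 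For $n \in G$ and $y := f^n(x)$, submultiplicativity of operator norms gives
$$\prod_{i=0}^{k-1} \|D_{f^{iN}(y)} f^N_{|E}\| \;\leq\; \exp\Bigl(\sum_{j=1}^{kN} b_{n+j}\Bigr) \;\leq\; e^{-ckN},$$
which is at most $e^{-k}$ as soon as $N \geq \lceil 1/c \rceil$. Hence every $y$ indexed by $G$ is $N$-hyperbolic along $E$, and the proposition holds with, say, $\rho := \theta/2$ (the halving absorbs the discretization correction above).

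\smallskip

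I do not anticipate any serious obstacle. The only delicate point is the periodic extension of the Pliss lemma, where one must check that the forward contraction bound survives wrapping around the orbit; this is precisely what the averaging hypothesis (\ref{e.periode}) guarantees, since each traversal of the orbit contributes an additional decrement of at least $\tau$ to the partial sums of the $b_i$.
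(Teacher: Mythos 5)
Your overall strategy --- apply the Pliss lemma to $b_i=\log\|D_{f^{i-1}(x)}f_{|E}\|$ over many concatenated periods, upgrade the selected times to ``cyclically good'' ones using~(\ref{e.periode}), and count residues modulo $\tau$ --- is exactly the intended route: the paper states this proposition without proof, as a direct consequence of Pliss's lemma, and your periodic-extension step and the counting $|G|(k-1)+\tau\geq\theta k\tau$ are both correct.

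However, the version of the Pliss lemma you invoke is false as stated. For \emph{forward} contraction times (indices $n$ with $\sum_{i=n+1}^{n+m}b_i\leq -cm$ for all admissible $m$), the relevant hypothesis is a \emph{lower} bound $b_i\geq -A$, not the upper bound $b_i\leq A$ you assume: the selected indices are the records-from-the-right of the walk $S_n=\sum_{i\leq n}(b_i+c)$, and their number is governed by how much $S$ can rise in a single step, i.e.\ by $-\inf_i b_i$. Concretely, take $\ell=100$, $b_1=-200$ and $b_2=\cdots=b_{100}=1$: then $b_i\leq 1$ and $\sum_i b_i\leq-\ell$, yet only $n=0$ is a good index (every $n\geq 1$ fails already at $m=1$), whereas your formula with $A=1$, $c=1/2$ predicts at least $25$. (The upper bound $b_i\leq A$ is the hypothesis of the \emph{backward} version of Pliss, which selects times at which all past products contract; that is not what the definition of $N$-hyperbolicity requires here.) The error is harmless for the proposition itself, since on a compact manifold $\|Df_{|E}\|\geq\|Df^{-1}\|^{-1}$, hence $b_i\geq -A'$ with $A'=\log\sup_M\|Df^{-1}\|$, and replacing $A$ by $A'$ in your $\theta$ repairs the argument with no other change; but as written the key lemma you rely on is incorrect and must be restated with the right hypothesis and constant.
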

\begin{remarque}
Lorsque la condition~\ref{e.periode} est \'egalement satisfaite pour
les it\'erations de $Df^{-1}$ le long de $F$, on obtient
des points simultan\'ement hyperboliques le
long des espaces $E$ et $F$.

En effet, on choisit un entier $N$ premier.
On applique la proposition au fibr\'e $E$
et on consid\`ere un point $x\in O$ qui est $N$-hyperbolique le long du fibr\'e $E$.
On applique ensuite la proposition au fibr\'e $F$~: 
il existe un point $y\in O$ qui est $N$-hyperbolique le long de $F$ pour $f^{-1}$.
Puisque $N$ est premier, on peut le mettre sous la forme $y=f^{-kN}(x)$.
On choisit $k\geq 0$ minimal avec cette propri\'et\'e.
Le fait que les points $f^{N}(y),f^{2N}(y)$,\dots, $f^{kN}(y)$ ne soient pas $N$-hyperboliques
le long de $F$ implique pour tout $1\leq j\leq k$,
$$\prod_{i=1}^{j}\|D_{f^{iN}(y)}f^{-N}_{|F}\|>e^{-j}.$$
La domination entre $E$ et $F$ entra\^\i ne alors (si $N$ est suffisamment grand),
$$\prod_{i=0}^{j-1}\|D_{f^{iN}(y)}f^{N}_{|E}\|\leq e^{-j}$$
pour tout $1\leq j \leq k$. Puisque $x=f^{kN}(y)$ est $N$-hyperbolique pour $E$,
ceci est encore vrai pour tout $k\geq 0$.
Par cons\'equent, $y$ est simultan\'ement $N$-hyperbolique le long de $E$ et $F$.
\end{remarque}

\section{Vari\'et\'es invariantes}
Par un argument classique (voir par exemple~\cite[section 8]{abc-ergodic}),
tout point qui est $N$-hy\-per\-bo\-li\-que le long de $E$ poss\`ede une vari\'et\'e stable tangente \`a $E$.

\begin{proposition}\label{p.variete}
Consid\'erons un ensemble compact invariant $K$
dont l'espace tangent poss\`ede une d\'ecomposition domin\'ee $T_KM=E\oplus F$,
une famille de plaques localement invariante $\cW$ tangente \`a $E$
et un entier $N\geq 1$. Il existe $\varepsilon,\delta>0$ tels que pour tout point $x\in K$
qui est $N$-hyperbolique le long de $E$, l'ensemble
$$W_{E^s}(x):=\left\{y\in M,\quad \lim_{n\to +\infty}\frac {d(f^n(y), f^n(x))}{e^{n\varepsilon}\|D_xf^n_{|E^s}\|}<+\infty\right\} $$
a les propri\'et\'es suivantes~:
\begin{itemize}
\item[--] $W_{E^s}(x)$ est une sous-vari\'et\'e injectivement immerg\'ee tangente \`a $E^s(x)$ et ne d\'epend pas de $\varepsilon$~;
\item[--] la boule $B(0,\delta)\subset \cW_x$
est contenue dans $W_{E^s}(x)$ et son image par $f^k$ est contenue dans $\cW_{f^k(x)}$ pour tout $k\geq 0$.
\end{itemize}
\end{proposition}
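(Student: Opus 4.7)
The plan is to construct a local stable manifold $W^s_{\mathrm{loc}}(x)$ inside the plaque $\cW_x$, propagate it by backward iteration to build an injectively immersed submanifold $\widetilde W(x)$, and then identify this dynamically defined object with the intrinsic set $W_{E^s}(x)$. As a preparatory step, I would replace $f$ by $f^N$ and fix a riemannian metric adapted to the domination (Gourmelon); the $N$-hyperbolicity at $x$ then reads $\prod_{i=0}^{k-1}\|D_{f^i(x)}f|_E\|\le e^{-k}$, and the bundles $E$, $F$ are separated by a cone field with one-step domination.

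The first substantial step is a uniform-contraction argument inside the plaques. Using continuity of $x\mapsto \cW_x$ and of $Df$ over the compact set $K$, I would show that there exist $\delta,\varepsilon_0>0$ such that at every point $y\in\cW_x$ with $\|\cW_x^{-1}(y)\|\le\delta$ the derivative of $f$ restricted to $T\cW$ is close enough to $D_xf|_E$ for the $N$-hyperbolicity estimate to survive composition. Combined with the local invariance of $\cW$, a standard telescoping/graph-transform computation then yields, for such $y$, both that $f^k(y)\in\cW_{f^k(x)}$ with $\|\cW_{f^k(x)}^{-1}(f^k(y))\|\le\delta$ for all $k\ge 0$ and the estimate
$$d(f^k(y),f^k(x))\le C\, e^{-\varepsilon_0 k}\,\|D_xf^k|_E\|\,d(y,x),$$
which proves the second bullet of the proposition and yields $\cW_x(B(0,\delta))\subset W_{E^s}(x)$ for every $\varepsilon>0$. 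Set
$$\widetilde W(x):=\bigcup_{k\ge 0}f^{-k}\bigl(\cW_{f^k(x)}(B(0,\delta))\bigr).$$
This is an increasing union of $C^1$ discs, each tangent to $E$ at its base point, hence an injectively immersed $C^1$ submanifold tangent to $E(x)$, contained in $W_{E^s}(x)$.

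The delicate point is the converse inclusion $W_{E^s}(x)\subset\widetilde W(x)$, which also gives independence from the choice of $\varepsilon$. Picking $\varepsilon<\varepsilon_0/2$, any $y\in W_{E^s}(x)$ satisfies $d(f^n(y),f^n(x))\to 0$, so for large $k$ the point $f^k(y)$ lies in an arbitrarily small neighborhood of $f^k(x)$. In such a neighborhood I would introduce, via an auxiliary locally invariant plaque family tangent to $F$ (whose existence is guaranteed by Theorem~\ref{t.hps}), a local product chart in which $\cW_{f^k(x)}$ is the $E$-leaf and the transverse direction corresponds to $F_{f^k(x)}$. The domination then forces any transverse component of $f^k(y)$ to be amplified at a definite exponential rate, faster than $\|D_xf^k|_E\|$ decays; comparing with the upper bound $Ce^{n\varepsilon}\|D_xf^n|_E\|$ rules out any nonzero transverse component. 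Hence $f^k(y)\in\cW_{f^k(x)}$ for some large $k$, placing $y$ in $\widetilde W(x)$, and simultaneously showing that $W_{E^s}(x)=\widetilde W(x)$ does not depend on $\varepsilon$.

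The main obstacle is precisely this last transverse/longitudinal comparison. The plaques are not dynamically defined, so they may intersect their iterates transversally on a large scale; the product-structure argument must be executed only in small charts along the orbit of $x$, and the one-step contraction/expansion estimates coming from domination must be patched together using the \emph{non-uniform} $N$-hyperbolicity at $x$. Controlling this uniformly in $k$ — so that the transverse growth really beats $e^{k\varepsilon}\|D_xf^k|_E\|$ for all $k$ — is what requires the adapted metric and the choice $\varepsilon<\varepsilon_0/2$, and is where the bulk of the technical work of the proof lies.
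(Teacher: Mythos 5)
The paper does not actually prove this proposition: it invokes ``un argument classique'' and refers the reader to \cite[section 8]{abc-ergodic}, and your outline (forward contraction of a trapped plaque, then a transverse-expansion argument for the reverse inclusion) reconstructs precisely that classical scheme. There is, however, a concrete error in your first step: the estimate $d(f^k(y),f^k(x))\le C\,e^{-\varepsilon_0 k}\,\|D_xf^k_{|E}\|\,d(y,x)$ has the wrong sign in the exponent. The continuity errors made in replacing $D_{f^i(x)}f_{|E}$ by the derivative of $f$ along the plaque at the points of the orbit of $y$ accumulate multiplicatively as $e^{+\varepsilon_0 k}$ (with $\varepsilon_0$ small when $\delta$ is small); one cannot contract strictly faster than the linearization along $E$. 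The correct conclusion is a bound of the form $C\,e^{+\varepsilon_0 k}\prod_{i<k/N}\|D_{f^{iN}(x)}f^N_{|E}\|\,d(y,x)$, which places $\cW_x(B(0,\delta))$ in $W_{E^s}(x)$ only for the $\varepsilon$ of the statement chosen larger than $\varepsilon_0$ --- not ``for every $\varepsilon>0$'' as you assert. The independence of $\varepsilon$ has to come out of the converse inclusion, not out of the forward estimate.

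In the converse inclusion the argument you sketch is incomplete in a way that matters. The correct dichotomy is the cone-field one: either the displacement $f^k(y)-f^k(x)$ (read in charts along the orbit of $x$) eventually enters the unstable cone around $F$, in which case domination expands it at a rate at least $e^{k/N}\|D_xf^k_{|E}\|$, contradicting $y\in W_{E^s}(x)$ once $\varepsilon<1/N$; or it stays in the stable cone for all $k$. In the second case you have only shown that $W_{E^s}(x)$ is a Lipschitz graph over the $E$-direction --- you have \emph{not} placed $f^k(y)$ on the plaque $\cW_{f^k(x)}$, because the plaques are not dynamically defined (two distinct locally invariant discs tangent to $E$ through the orbit of $x$ can both be contracted at the right rate, and a priori only intersect at isolated points). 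Closing this requires the uniqueness half of the Hadamard--Perron graph transform: two forward-invariant graphs in the stable cone whose forward orbits remain $\delta$-close must coincide, which is proved by applying the same transverse-expansion estimate to their difference. Your proposal gestures at ``the bulk of the technical work'' here but does not identify this uniqueness step, which is the actual content of the classical argument being cited.
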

L'ensemble $W_{E^s}(x)$ est appel\'e \emph{vari\'et\'e stable forte de $x$ associ\'ee \`a $E^s$}\index{vari\'et\'e stable forte} et not\'e $W^{ss}(x)$ lorsqu'il n'y a pas d'ambigu\"\i t\'e sur le fibr\'e $E^s$. On d\'efinit de fa\c con sym\'etrique la \emph{vari\'et\'e instable forte} $W_{E^u}(x)$ (encore not\'ee $W^{uu}(x)$).
Dans le cas de la d\'ecomposition $E^s\oplus E^u$ d'un ensemble hyperbolique,
la vari\'et\'e $W_{E^s}(x)$ co\"\i ncide avec l'ensemble stable $W^s(x)$.

\begin{remarque}
On peut \'enoncer une version uniforme de ce r\'esultat.
\end{remarque}

Le r\'esultat suivant d\'ecoule des propositions~\ref{p.pliss-periodique} et~\ref{p.variete}
et peut servir \`a borner le nombre
de classes homoclines d'un diff\'eomorphisme.
Il a \'et\'e d\'emontr\'e initialement par Pliss~\cite{pliss}
dans le cas d'une d\'ecomposition domin\'ee triviale ($T_KM=E$)
pour majorer le nombre de puits d'un diff\'eomorphisme.
\begin{corollaire}\label{c.homocline-bornee}
Pour tout ensemble invariant $K$ ayant une d\'ecomposition domin\'ee
$T_KM=E\oplus F$ et tout entier $N\geq 1$,
il existe $k\geq 1$ ayant la propri\'et\'e suivante.

Dans toute famille d'orbites p\'eriodiques
$\{O_1,\dots,O_k\}$ de $K$ v\'erifiant pour tout $1\leq i\leq k$,
\begin{equation}\label{e.lie}
\prod_{x\in O_i}\|D_xf^N_{|E}\|\leq e^{-\card (O_i)} \text{ et }
\prod_{x\in O_i}\|D_xf^{-N}_{|F}\|\leq e^{-\card (O_i)},
\end{equation}
il existe au moins deux orbites $O_i,O_j$ homocliniquement reli\'ees.
\end{corollaire}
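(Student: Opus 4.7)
\medskip

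\noindent\textbf{Plan de d\'emonstration.} Quitte \`a remplacer $K$ par son adh\'erence, nous le supposerons compact. L'id\'ee est de s\'electionner dans chaque orbite p\'eriodique $O_i$ un point ``bien hyperbolique'' dont les vari\'et\'es invariantes fortes ont une taille uniforme, puis d'invoquer un argument de tiroirs bas\'e sur la compacit\'e de $K$.

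\medskip

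\noindent\emph{\'Etape 1 : s\'election d'un point doublement hyperbolique.}
Pour chaque orbite $O_i$ v\'erifiant~(\ref{e.lie}), j'appliquerai la proposition~\ref{p.pliss-periodique} et la remarque qui la suit~: quitte \`a augmenter $N$ (on peut se ramener \`a un $N$ premier), il existe un point $x_i\in O_i$ qui est simultan\'ement $N$-hyperbolique le long de $E$ (pour $f$) et le long de $F$ (pour $f^{-1}$). Les deux in\'egalit\'es de~(\ref{e.lie}) fournissent en effet les hypoth\`eses n\'ecessaires pour appliquer le lemme de Pliss dans chacune des deux directions, et la domination permet de combiner les deux s\'elections.

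\medskip

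\noindent\emph{\'Etape 2 : vari\'et\'es invariantes fortes de taille uniforme.}
On applique le th\'eor\`eme~\ref{t.hps} pour obtenir deux familles de plaques localement invariantes $\cW^E$ et $\cW^F$ tangentes respectivement \`a $E$ et \`a $F$ au-dessus de $K$. La proposition~\ref{p.variete} (dans sa version uniforme, appliqu\'ee \`a $f$ puis \`a $f^{-1}$) fournit alors une constante $\delta>0$ \emph{ind\'ependante de $O_i$} telle que la plaque $\cW^E_{x_i}$ (resp.\ $\cW^F_{x_i}$) restreinte \`a sa boule de rayon $\delta$ est contenue dans la vari\'et\'e stable forte $W^{ss}(x_i)$ (resp.\ instable forte $W^{uu}(x_i)$). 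Ces disques locaux sont tangents en $x_i$ \`a $E(x_i)$ et $F(x_i)$ respectivement, et leurs tailles et leurs g\'eom\'etries sont contr\^ol\'ees uniform\'ement.

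\medskip

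\noindent\emph{\'Etape 3 : tiroirs et intersections transverses.}
Par continuit\'e des fibr\'es $E,F$ et compacit\'e de $K$, il existe $\eta>0$ tel que pour tous points $x,y\in K$ avec $d(x,y)<\eta$, tout disque $C^1$ de rayon $\delta$ centr\'e en $x$ et tangent \`a $E(x)$ intersecte transversalement, en un point unique proche de $x$, tout disque $C^1$ de rayon $\delta$ centr\'e en $y$ et tangent \`a $F(y)$ (c'est un r\'esultat classique de transversalit\'e uniforme pour une d\'ecomposition $TM=E\oplus F$ avec c\^ones continus). Je fixe un recouvrement fini de $K$ par $k-1$ boules de rayon $\eta/2$. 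Par principe des tiroirs, si l'on nous donne $k$ orbites v\'erifiant~(\ref{e.lie}), deux points s\'electionn\'es $x_i,x_j$ (avec $i\neq j$) tombent dans une m\^eme telle boule, et donc $d(x_i,x_j)<\eta$.

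\medskip

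\noindent\emph{\'Etape 4 : conclusion.}
Par l'\'etape 3, les disques locaux $W^{ss}_\delta(x_i)$ et $W^{uu}_\delta(x_j)$ se rencontrent transversalement en un point, qui appartient donc \`a $W^s(O_i)\pitchfork W^u(O_j)$~; de m\^eme, $W^{ss}_\delta(x_j)$ et $W^{uu}_\delta(x_i)$ se coupent transversalement, fournissant un point de $W^s(O_j)\pitchfork W^u(O_i)$. Les deux orbites $O_i$ et $O_j$ sont donc homocliniquement reli\'ees. Le nombre $k$ ainsi construit ne d\'epend que de $K$, de la d\'ecomposition $E\oplus F$ et de $N$ (via $\delta$ et $\eta$), comme annonc\'e.

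\medskip

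\noindent\emph{Principale difficult\'e.} Le c\oe ur de l'argument est l'\'etape 1 : il faut extraire un point \emph{doublement} hyperbolique, alors que le lemme de Pliss ne fournit a priori qu'une direction. C'est ici que la domination entre $E$ et $F$ est utilis\'ee de mani\`ere cruciale, selon le sch\'ema d\'ej\`a expliqu\'e dans la remarque qui suit la proposition~\ref{p.pliss-periodique}. Une fois ce point obtenu, la taille uniforme des vari\'et\'es fortes (\'etape 2) et le principe des tiroirs (\'etape 3) rendent la conclusion essentiellement automatique.
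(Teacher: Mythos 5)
Votre démonstration est correcte et suit exactement la voie que le texte indique (le corollaire y est annoncé comme découlant des propositions~\ref{p.pliss-periodique} et~\ref{p.variete})~: sélection d'un point doublement $N$-hyperbolique via le lemme de Pliss et la remarque sur le choix d'un $N$ premier, variétés fortes de taille uniforme, puis principe des tiroirs et transversalité uniforme entre disques tangents à $E$ et à $F$. Votre rédaction explicite simplement les étapes que le texte laisse au lecteur.
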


\section{Mesures hyperboliques}\label{s.oseledets}

Si $\mu$ est une mesure de probabilit\'e invariante sur $M$, le th\'eor\`eme d'Oseledets (voir~\cite[th\'eor\`eme S.2.9]{hasselblatt-katok})
associe \`a $\mu$-presque tout point $x\in M$ une unique d\'ecomposition invariante $T_xM=E_1\oplus\dots\oplus E_s$
et des r\'eels $\lambda_1<\lambda_2<\dots<\lambda_s$
tels que pour tout $u\in E_i\setminus \{0\}$ la quantit\'e
$$\frac 1 n \log\|D_xf^{n}.u\|$$
converge vers $\lambda_i$ lorsque $n$ tend vers $\pm \infty$.

On appelle $\lambda_i$ l'\emph{exposant de Lyapunov}\index{exposant de Lyapunov} de $x$ selon l'espace $E_i$
et on lui affecte la multiplicit\'e $\dim(E_i)$.
La suite ordonn\'ee des exposants de Lyapunov de $\mu$, compt\'es avec multiplicit\'e, est le \emph{vecteur de Lyapunov}\index{vecteur de Lyapunov} $L(x,\mu)$ de $x$.
Lorsque $\mu$ est ergodique, le vecteur de Lyapunov
ne d\'ependent pas du point $x$.
\bigskip

Consid\'erons une mesure ergodique dont le support $K$
ait une d\'ecomposition domin\'ee $T_KM=E\oplus F$.
Alors l'exposant de Lyapunov maximal de $\mu$ en restriction au fibr\'e $E$
est \'egal (voir~\cite{ledrappier})
\`a la limite
$$\lambda^+(\mu,E)=\lim_{n\to +\infty} \frac 1 n \int \log\|Df^n_{|E}\| d\mu.$$
La proposition suivante issue de~\cite{abc-ergodic}
permet de construire des vari\'et\'es
invariante en tout point r\'egulier d'une mesure ergodique.
Il permet de retrouver une partie de la th\'eorie de Pesin
lorsque la r\'egularit\'e du diff\'eomorphisme est seulement $C^1$
mais en pr\'esence d'une d\'ecomposition domin\'ee.

\begin{proposition}[th\'eor\`eme 3.11 de~\cite{abc-ergodic}]\label{p.mesure}
Consid\'erons une mesure ergodique dont le support $K$
ait une d\'ecomposition domin\'ee $T_KM=E\oplus F$.
Lorsque l'exposant de Lyapunov maximal de $\mu$ en restriction au fibr\'e $E$
est strictement n\'egatif, $\mu$-presque tout point
est hyperbolique le long de $E$ et poss\`ede donc une vari\'et\'e stable forte
tangente \`a $E$.
\end{proposition}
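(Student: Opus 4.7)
Le plan est de combiner trois ingr\'edients: la formule $\lambda^+(\mu,E) = \lim_N \frac{1}{N} \int \log\|Df^N_{|E}\|\,d\mu$, le lemme de Pliss sous-jacent \`a la proposition~\ref{p.pliss-periodique}, et la construction de vari\'et\'es stables de la proposition~\ref{p.variete}. Posons $\phi_N(x) := \log \|D_x f^N_{|E}\|$ et $A := \sup_K \phi_N < +\infty$. Puisque $\lambda^+(\mu, E) < 0$, on fixe $N$ assez grand pour que $\int \phi_N \, d\mu < -2$, et on note $G_N \subset K$ l'ensemble des points $N$-hyperboliques le long de $E$. L'essentiel de la preuve consiste \`a montrer que $\mu(G_N) > 0$; l'ergodicit\'e de $\mu$ combin\'ee \`a la proposition~\ref{p.variete} permettra alors de conclure.

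Pour minorer $\mu(G_N)$, on applique le th\'eor\`eme de Birkhoff au syst\`eme $(f^N, \mu)$ avec observable $\phi_N$: la limite $\bar\phi$ est $f^N$-invariante et v\'erifie $\int \bar\phi \, d\mu < -2$, ce qui fournit $\eta>0$ tel que l'ensemble $B = \{\bar\phi \le -2-\eta\}$ soit de mesure positive. Pour $M$ assez grand, le sous-ensemble
\[
B_M = \Bigl\{ x \in B : \sum_{i=0}^{n-1} \phi_N(f^{iN}(x)) \le -2n \text{ pour tout } n \ge M \Bigr\}
\]
est encore de mesure positive. Pour tout $x \in B_M$ et tout $n \ge M$, le lemme de Pliss (appliqu\'e avec moyenne~$-2$, seuil~$-1$ et majoration~$A$) produit au moins $\rho\,n$ indices $j \in \{0,\dots,n-1\}$ v\'erifiant $\sum_{i=j}^{j+m-1} \phi_N(f^{iN}(x)) \le -m$ pour tout $1 \le m \le n-j$, o\`u $\rho = 1/(A+1) > 0$.

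Il s'agit ensuite de convertir cette densit\'e ponctuelle en une minoration de $\mu(G_N)$. Posons
\[
Q_m = \Bigl\{ y : \sum_{i=0}^{k-1} \phi_N(f^{iN}(y)) \le -k \text{ pour tout } 1 \le k \le m\Bigr\},
\]
de sorte que $\bigcap_m Q_m = G_N$, et la minoration de Pliss se r\'e\'ecrit $\chi_n(x) := \frac{1}{n} \#\{j<n : f^{jN}(x) \in Q_{n-j}\} \ge \rho$ pour $x \in B_M$ et $n \ge M$. L'invariance de $\mu$ sous $f^N$ donne $\int \chi_n\,d\mu = \frac{1}{n}\sum_{m=1}^n \mu(Q_m)$, et le lemme de Fatou entra\^\i ne $\liminf_n \frac{1}{n}\sum_{m=1}^n \mu(Q_m) \ge \rho\,\mu(B_M) > 0$; comme $\mu(Q_m) \searrow \mu(G_N)$, la moyenne de Ces\`aro de $\mu(Q_m)$ converge vers $\mu(G_N)$, ce qui force $\mu(G_N) > 0$.

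L'ergodicit\'e de $\mu$ sous $f$ assure alors que $\mu$-presque tout point $x$ poss\`ede un it\'er\'e futur $y = f^k(x)$ dans $G_N$; ce point $y$ \'etant $N$-hyperbolique le long de $E$, la proposition~\ref{p.variete} lui associe une vari\'et\'e stable forte $W^{ss}(y)$ tangente \`a $E(y)$, dont l'image r\'eciproque $f^{-k}(W^{ss}(y))$ est une sous-vari\'et\'e $C^1$ passant par $x$, tangente \`a $E(x)$, et satisfaisant la contraction exponentielle d\'efinissant l'ensemble stable fort en $x$. La principale difficult\'e attendue se situe dans l'\'etape interm\'ediaire: le lemme de Pliss ne fournit de bons indices qu'\`a l'int\'erieur de fen\^etres d'orbite finies, et les ensembles $Q_m$ ne sont pas $f^N$-invariants, de sorte qu'il faut combiner la densit\'e donn\'ee par Pliss, l'invariance de $\mu$ sous $f^N$ et le lemme de Fatou pour passer d'une information ponctuelle \`a une minoration de mesure globale.
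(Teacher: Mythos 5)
Le m\'emoire ne red\'emontre pas cette proposition (elle est cit\'ee de~\cite{abc-ergodic}), mais votre argument est exactement la preuve standard de ce r\'esultat : choix de $N$ avec $\int\log\|Df^N_{|E}\|\,d\mu<-2$, lemme de Pliss sur des fen\^etres finies pour produire une densit\'e positive de temps hyperboliques, passage \`a une minoration de $\mu(G_N)$ via l'invariance de $\mu$ sous $f^N$ et la d\'ecroissance des $Q_m$, puis ergodicit\'e et tir\'e en arri\`ere de la vari\'et\'e stable donn\'ee par la proposition~\ref{p.variete}. L'argument est correct ; signalons seulement que vous \'etablissez que $\mu$-presque tout point poss\`ede un \emph{it\'er\'e} dans $G_N$ (et donc une vari\'et\'e stable forte), ce qui est le contenu r\'eel de l'\'enonc\'e : l'ensemble des points litt\'eralement $N$-hyperboliques au sens de Pliss n'est en g\'en\'eral pas de mesure pleine, et la formulation de la proposition doit \^etre lue en ce sens.
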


Lorsque $\mu$ est ergodique et que les exposants de Lyapunov sont tous non nuls en $\mu$-presque tout point,
nous disons que $\mu$ est \emph{hyperbolique}.\index{hyperbolicit\'e!mesure}

\section{Pistage g\'en\'eralis\'e}
\begin{definition}
Fixons $N\geq 1$ et un ensemble invariant $K$ muni d'une d\'ecomposition domin\'ee $T_KM=E\oplus F$.
Un segment d'orbite $\{x,f(x),\dots,f^n(x)\}$ de longueur $n=\ell.N>0$
contenu dans $K$ 
est \emph{$N$-hyperbolique}\index{hyperbolicit\'e!segment d'orbite} pour la d\'ecomposition domin\'ee $E\oplus F$
si pour tout $1\leq k\leq \ell$ on a:
$$\prod_{i=0}^{k-1} \|D_{f^{i.N}(x)}f^N_{|E}\|\leq e^{-k} \text{ et }
\prod_{i=0}^{k-1} \|D_{f^{(\ell-i).N}(x)}f^{-N}_{|F}\|\leq e^{-k}.$$
\end{definition}

Liao a d\'emontr\'e~\cite{liao-shadowing,liao-livre} le lemme de pistage suivant
qui g\'en\'eralise le lemme de pistage classique de la th\'eorie hyperbolique
(voir aussi~\cite{gan-shadowing}).

\begin{theoreme}[Pistage g\'en\'eralis\'e, Liao\index{pistage!g\'en\'eralis\'e}]
Soit $K$ un ensemble invariant muni d'une d\'e\-com\-po\-si\-tion domin\'ee
$T_KM=E\oplus F$. Fixons $N\geq 1$ et $\delta>0$.

Il existe alors $\varepsilon>0$ tel que pour toute
famille de segments d'orbites $\{x_i,f(x_i),\dots,f^{n_i}(x_i)\}$,
$i\in \ZZ/s\ZZ$, contenus dans $K$, qui sont $N$-hyperboliques et satisfont
$$\forall i\in \ZZ/s\ZZ,\quad d(f^{n_i}(x_i),x_{i+1})<\varepsilon,$$
il existe une orbite p\'eriodique
$\{y,f(y),\dots, f^\tau(y)=y\}$ de p\'eriode
$\tau=n_1+\dots+n_s$ telle que
$$\forall i\in \{1,\dots, s\} \text{ et } k\in \{0,\dots,n_i\},\quad d(f^{n_1+\dots+n_{i-1}+k}(y),f^k(x_i))<\delta.$$
\end{theoreme}

Avec la proposition~\ref{p.mesure}, ceci permet d'approximer les mesures
ergodiques par des orbites p\'eriodiques.

\begin{corollaire}\label{c.anosov-closing}
Soit $\mu$ une mesure ergodique dont le support $K$ poss\`ede une d\'ecomposition domin\'ee non triviale $T_KM=E\oplus F$ telle que les exposants de Lyapunov de $\mu$
soient strictement n\'egatifs le long de $E$ et strictement positifs
le long de $F$.

Il existe alors une suite d'orbites p\'eriodiques hyperboliques
$(O_i)_{i\in \NN}$ qui converge vers $K$
en topologie de Hausdorff et dont les mesures induites convergent vers $\mu$.

De plus, il existe un entier $N\geq 1$ tel que~(\ref{e.lie})
ait lieu pour tout $i$ assez grand. En particulier, toutes les orbites $O_i$,
sauf un nombre fini, sont homocliniquement li\'ees et $K$ est contenu dans
leur classe homocline.
\end{corollaire}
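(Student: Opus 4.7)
Le plan est de construire, pour un point $x\in K$ bien choisi, une suite de longs segments d'orbite $N$-hyperboliques que l'on fermera en orbites p\'eriodiques via le lemme de pistage g\'en\'eralis\'e de Liao, puis de conclure par le corollaire~\ref{c.homocline-bornee}. On commence par fixer un entier $N\geq 1$ suffisamment grand pour que la d\'ecomposition $E\oplus F$ soit $N$-domin\'ee et pour que les deux int\'egrales $\int \log\|Df^N_{|E}\|\,d\mu$ et $\int \log\|Df^{-N}_{|F}\|\,d\mu$ soient strictement inf\'erieures \`a $-1$~; c'est possible puisque, pour $N$ grand, elles sont \'equivalentes respectivement \`a $N\lambda^+(\mu,E)$ et $-N\lambda^-(\mu,F)$, o\`u $\lambda^-(\mu,F)$ d\'esigne le plus petit exposant de $\mu$ le long de $F$ (strictement positif par hypoth\`ese). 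La proposition~\ref{p.mesure}, appliqu\'ee \`a $(f,E)$ puis \`a $(f^{-1},F)$, entra\^\i ne que l'ensemble $A\subset K$ des points simultan\'ement $N$-hyperboliques le long de $E$ pour $f$ et le long de $F$ pour $f^{-1}$ est de $\mu$-mesure totale. Par le th\'eor\`eme de Birkhoff et la r\'ecurrence de Poincar\'e, il existe alors $x\in A$ g\'en\'erique pour $\mu$ et une suite $\ell_i\to+\infty$ v\'erifiant $f^{\ell_iN}(x)\in A$ et $d(f^{\ell_iN}(x),x)\to 0$.

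Chaque segment $\sigma_i=\{x,f(x),\dots,f^{\ell_iN}(x)\}$ sera alors $N$-hyperbolique pour $E\oplus F$ au sens pr\'ec\'edant le lemme de pistage g\'en\'eralis\'e (la premi\`ere condition est donn\'ee par $x\in A$, la seconde par $f^{\ell_iN}(x)\in A$). Appliqu\'e \`a la famille triviale $\{\sigma_i\}$ (le cas $s=1$), le lemme de Liao fournit, pour $i$ assez grand, une orbite p\'eriodique $O_i$ de p\'eriode $\ell_iN$ qui $\delta_i$-piste $\sigma_i$ avec $\delta_i\to 0$. Comme l'orbite positive de $x$ est dense dans $K=\supp\mu$, la suite $(O_i)$ convergera vers $K$ en topologie de Hausdorff~; comme $x$ est $\mu$-g\'en\'erique, les mesures port\'ees par les $O_i$ convergeront faiblement vers $\mu$. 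La condition~(\ref{e.lie}) s'en d\'eduit par continuit\'e de $\log\|Df^N_{|E}\|$ et convergence faible~: pour $i$ assez grand,
$$\sum_{y\in O_i}\log\|D_yf^N_{|E}\| \;\leq\; \card(O_i)\cdot\left(\int\log\|Df^N_{|E}\|\,d\mu+o(1)\right) \;<\; -\card(O_i),$$
et sym\'etriquement pour $F$ et $f^{-1}$.

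Pour la conclusion homocline, on appliquera le corollaire~\ref{c.homocline-bornee}~: il fournit un entier $k\geq 1$ tel que toute famille de $k$ orbites parmi les $(O_i)$ contient deux \'el\'ements homocliniquement li\'es. Comme cette relation est une \'equivalence sur les orbites hyperboliques de m\^eme indice, la suite $(O_i)$ se partitionne en au plus $k-1$ classes~; quitte \`a extraire, toutes ses orbites appartiennent \`a une m\^eme classe, et par passage \`a la limite pour la topologie de Hausdorff, leur classe homocline commune contient $K$.

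La principale difficult\'e est la premi\`ere \'etape~: il faut coordonner en un unique point $x$ l'hyperbolicit\'e forward le long de $E$ au d\'ebut du segment, l'hyperbolicit\'e backward le long de $F$ \`a sa fin, la $\mu$-g\'en\'ericit\'e, et la r\'ecurrence de $x$. Le r\^ole pivot de la proposition~\ref{p.mesure} est \`a souligner~: elle transforme l'hyperbolicit\'e asymptotique (propri\'et\'e de type Lyapunov) en hyperbolicit\'e \`a tout instant, indispensable pour appliquer le lemme de Liao, et ce sans exiger la r\'egularit\'e $C^{1+\varepsilon}$ habituelle de la th\'eorie de Pesin gr\^ace \`a l'hypoth\`ese de d\'ecomposition domin\'ee.
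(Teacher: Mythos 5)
Votre d\'emonstration est correcte et suit exactement la voie que le texte sous-entend pour ce corollaire (proposition~\ref{p.mesure} et lemme de Pliss pour produire de longs segments $N$-hyperboliques issus d'un point r\'ecurrent et g\'en\'erique, lemme de pistage g\'en\'eralis\'e de Liao avec $s=1$ pour les fermer en orbites p\'eriodiques, convergence faible pour obtenir~(\ref{e.lie}), puis corollaire~\ref{c.homocline-bornee}). Seule pr\'ecision \`a apporter~: l'ensemble des points v\'erifiant la condition de Pliss d\`es l'instant $0$ (au sens strict de la d\'efinition de la $N$-hyperbolicit\'e) n'est en g\'en\'eral que de mesure strictement positive et non totale, mais cela suffit, combin\'e \`a la r\'ecurrence de Poincar\'e dans cet ensemble et \`a l'argument de domination de la remarque suivant la proposition~\ref{p.pliss-periodique} pour obtenir la simultan\'eit\'e de l'hyperbolicit\'e le long de $E$ et de $F$ aux deux extr\'emit\'es du segment.
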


\section{Lemmes de s\'election}\index{lemme de s\'election}
Liao~\cite{liao-obstruction2,liao-livre} (voir aussi~\cite{wen-selecting}) et Ma\~n\'e~\cite{mane-stabilite}
ont donn\'e d'autres cadres o\`u le lemme de pistage g\'en\'eralis\'e
s'applique~: la difficult\'e est de s\'electionner des segments d'orbites
hyperboliques.

\begin{theoreme}[Lemme de s\'election de Liao]\label{t.selection-liao}
Consid\'erons un ensemble compact invariant $K$ muni d'une d\'ecomposition
$1$-domin\'ee non triviale $T_KM=E\oplus F$ et $\lambda\in (0,1)$,
tels que les deux conditions suivantes
soient satisfaites.
\begin{itemize}
\item[--] Tout sous-ensemble compact invariant de $K$ contient un point $x$
v\'erifiant pour tout $n\geq 1$~:
$$\prod_{i=0}^{n-1} \|Df_{|E}(f^i(x))\|\leq \lambda^n.$$
\item[--] Il existe un point $y\in K$ tel que pour tout $n\geq 1$ on ait~:
$$\prod_{i=0}^{n-1} \|Df_{|E}(f^i(y))\|\geq 1.$$
\end{itemize}
Pour tous $\lambda_-<\lambda_+<1$ proches de $1$,
il existe alors dans tout voisinage de $K$
une orbite p\'eriodique contenant un point $p$ satisfaisant pour tout $n\geq 1$~:
$$\prod_{k=0}^n\|Df_{|E}(f^k(p))\|\leq \lambda_+^n
\text{ et }
\prod_{k=0}^n\|Df_{|E}(f^{-k}(p))\|\geq \lambda_-^n.$$
En particulier, on peut trouver une suite de points p\'eriodiques
homocliniquement reli\'es entre eux et qui converge vers un point de $K$.
\end{theoreme}
\bigskip

Le lemme de s\'election de Ma\~n\'e suppose que l'un des fibr\'es de la d\'ecomposition est uniforme.

\begin{theoreme}[Lemme de s\'election de Ma\~n\'e]\label{t.selection-mane}
Consid\'erons un ensemble compact invariant $K$ muni d'une d\'ecomposition
$1$-domin\'ee non triviale $T_KM=E\oplus F$ et $\lambda\in (0,1)$ tels que
\begin{itemize}
\item[--] le fibr\'e $F$ est uniform\'ement dilat\'e,
\item[--] la dynamique restreinte \`a $K$ n'a pas d'ouvert errant,
\end{itemize}
et tels que les deux conditions suivantes soient satisfaites.
\begin{itemize}
\item[--] Il existe un ensemble dense $\cD\subset K$ de points $x$ v\'erifiant~:
$$\liminf_{n\to \infty} \prod_{i=0}^{n-1}\|Df_{|E}(f^{-i}(x))\|^{1/n}\leq \lambda.$$
\item[--] Il existe un point $y\in K$ tel que pour tout $n\geq 1$ on ait~:
$$\prod_{i=0}^{n-1} \|Df_{|E}(f^i(y))\|\geq 1.$$
\end{itemize}
Alors, la conclusion du th\'eor\`eme~\ref{t.selection-liao} a lieu.
\end{theoreme}

Dans le cas o\`u $K$ est une classe homocline $H(q)$, on peut choisir un ensemble
dense $\cD$ de points p\'eriodiques homocliniquement li\'es \`a $q$.
Les points p\'eriodiques obtenus par le th\'eor\`eme~\ref{t.selection-mane}
peuvent \^etre choisis ``homocliniquement reli\'es'' \`a un point de $\cD$.
On obtient alors le r\'esultat suivant, d\'emontr\'e dans~\cite{bgy}.

\begin{corollaire}[Bonatti-Gan-Yang]
Consid\'erons une classe homocline $H=H(p)$ munie d'une d\'ecomposition domin\'ee
$T_HM=E\oplus F$ telle que $F$ soit uniform\'ement dilat\'e et $\dim(F)=\dim(E^u(p))$.
Si $E$ n'est pas uniform\'ement contract\'e, il existe alors une suite d'orbites p\'eriodiques hyperboliques $(O_i)$ homocliniquement li\'ees \`a $p$ et
pour tout $N\geq 1$, l'une de ces orbites n'a pas de point $N$-hyperbolique
le long de $E$.
\end{corollaire}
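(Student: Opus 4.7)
The plan is to apply the selection lemma of Mañé (Theorem~\ref{t.selection-mane}) with parameters pushed close to~$1$, producing periodic orbits whose $E$-Lyapunov exponent is negative but arbitrarily close to~$0$; for such an orbit one then shows directly that every point fails $N$-hyperbolicity along $E$ as soon as the exponent exceeds $-1/N$. First I would verify the hypotheses of Theorem~\ref{t.selection-mane} for $K = H(p)$: adopting Gourmelon's adapted metric~\cite{gourmelon-metrique} we may assume $E \oplus F$ is $1$-dominated; $F$ is uniformly expanded by assumption; and $H(p)$ is transitive so the restricted dynamics has no wandering open set. The dimension equality $\dim F = \dim E^u(p)$, combined with uniqueness of the dominated splitting and uniform expansion of $F$, forces $F(p) = E^u(p)$ and hence $E(p) = E^s(p)$; the same holds at every periodic orbit homoclinically related to $p$ (same index), so $E$ is uniformly contracted along each such orbit. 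The hypothesis that $E$ is not uniformly contracted on $H(p)$ yields, via a standard compactness/Pliss-reverse argument, a point $y \in H(p)$ with $\prod_{i=0}^{n-1}\|Df_{|E}(f^i(y))\|\geq 1$ for all $n \geq 1$.

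For each $N \geq 1$ I would then choose $\lambda_- \in (e^{-1/N},1)$ and $\lambda_+ \in (\lambda_-,1)$. The dense set $\cD$ required by Theorem~\ref{t.selection-mane} is produced from the periodic orbits homoclinically related to $p$: each such orbit $O_q$ satisfies $\liminf_n \prod_{i=0}^{n-1}\|Df_{|E}(f^{-i}(q))\|^{1/n} = e^{\lambda_E(O_q)}$ with $\lambda_E(O_q) < 0$, and upon taking $\lambda_-$ close to $1$ a dense subfamily of these orbits satisfies $e^{\lambda_E(O_q)} \leq \lambda_-$. Theorem~\ref{t.selection-mane} then produces, in any prescribed neighborhood of a point of $H(p)$, a periodic orbit $O_N$ of some period $\tau$ containing a point $q$ with
\[
\prod_{k=0}^{n}\|Df_{|E}(f^k(q))\|\leq \lambda_+^n
\quad\text{and}\quad
\prod_{k=0}^{n}\|Df_{|E}(f^{-k}(q))\|\geq \lambda_-^n
\qquad (n \geq 1).
\]
Taking $n=\tau$ in both inequalities, together with periodicity, forces the Lyapunov exponent $\lambda_E(O_N) \in [\log\lambda_-,\log\lambda_+]$, and in particular $\lambda_E(O_N) > -1/N$. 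A short computation then shows no point of $O_N$ is $N$-hyperbolic along $E$: for any base point $q' \in O_N$ and any $k \geq 1$ with $kN$ a multiple of $\tau$, we have
\[
\prod_{i=0}^{k-1}\|D_{f^{iN}(q')}f^N_{|E}\|
= \exp\bigl(kN\,\lambda_E(O_N)\bigr) > \exp(-k),
\]
which violates the $N$-hyperbolicity inequality. Because $E$ is (weakly) contracted on $O_N$ and $F$ uniformly expanded, $\dim E^s(O_N) = \dim E = \mathrm{ind}(p)$, so $O_N$ has the index of $p$; placing $O_N$ close enough to a periodic orbit already homoclinically related to $p$, the $\lambda$-lemma yields transverse homoclinic connections between $O_N$ and $p$, so $O_N$ is homoclinically related to $p$. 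Iterating over $N$ produces the desired sequence~$(O_i)$.

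The delicate point, and the main obstacle, is verifying the density of $\cD$ in Theorem~\ref{t.selection-mane} with the parameter $\lambda_-$ arbitrarily close to~$1$: one must show that in every neighborhood of every point of $H(p)$ lies a periodic orbit homoclinically related to $p$ whose $E$-exponent is \emph{strictly} less than $\log\lambda_-$. This is not automatic, because a priori the $E$-exponents of periodic orbits in $H(p)$ could cluster at $0$. The resolution is to approximate, using the $\lambda$-lemma and closing long homoclinic loops based at~$p$, periodic orbits whose average along $E$ inherits the strictly negative exponent of $p$ itself, so that such orbits remain dense in $H(p)$ regardless of how close $\lambda_-$ is chosen to~$1$. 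Once this density is established, the proof proceeds exactly as outlined.
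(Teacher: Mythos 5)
Your overall strategy --- Ma\~n\'e's selection lemma (Theorem~\ref{t.selection-mane}) applied to $K=H(p)$, with the dense set $\cD$ taken among periodic orbits homoclinically related to $p$ --- is exactly the route the paper indicates; the paper itself only gives this one-line reduction and defers the actual proof to~\cite{bgy}. Two points in your execution, however, do not hold up, and the second is a genuine gap.

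First, a misreading of the quantifiers in Theorem~\ref{t.selection-mane}: the constant $\lambda$ in the hypothesis on $\cD$ is a single \emph{fixed} number in $(0,1)$, whereas the conclusion is then valid for \emph{all} $\lambda_-<\lambda_+<1$ close to $1$, independently of $\lambda$. So the ``delicate point'' you isolate at the end --- producing, for $\lambda_-$ arbitrarily close to $1$, dense families of homoclinically related orbits with averages below $\log\lambda_-$ --- is not required; a dense set working for one fixed $\lambda<1$ suffices. Your idea of closing long homoclinic loops of $p$ is still the right way to build that set, but note that the quantity in the hypothesis is the geometric mean of the \emph{norms} $\|Df_{|E}\|$ along the backward orbit, not $e^{\lambda_E(O_q)}$: your asserted equality $\liminf_n\prod_i\|Df_{|E}(f^{-i}(q))\|^{1/n}=e^{\lambda_E(O_q)}$ holds only when $\dim E=1$.

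Second, the concluding step --- ``no point of $O_N$ is $N$-hyperbolic'' --- does not follow from the conclusion of the selection lemma once $\dim E\geq 2$. That conclusion bounds from below the products of \emph{one-step} norms $\prod_k\|Df_{|E}(f^{-k}(q))\|$; since $\|Df^{n}_{|E}\|\leq\prod_k\|Df_{|E}\|$, such a lower bound controls neither the Lyapunov exponent of $O_N$ along $E$ (so your deduction $\lambda_E(O_N)\geq\log\lambda_-$ is unjustified) nor the products $\prod_i\|D_{f^{iN}(q')}f^N_{|E}\|$ entering the definition of an $N$-hyperbolic point; your displayed identity $\prod_i\|D_{f^{iN}(q')}f^N_{|E}\|=\exp(kN\lambda_E(O_N))$ is in fact only an inequality $\geq$, useless without a lower bound on $\lambda_E(O_N)$. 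Concretely, a periodic linear cocycle on a two-dimensional $E$ with $A_k=\lambda\bigl(\begin{smallmatrix}0&-1\\ \epsilon&0\end{smallmatrix}\bigr)$, $\lambda\in[\lambda_-,\lambda_+]$, satisfies both conclusions of the lemma (every partial product of norms equals $\lambda^n$), yet its exponents equal $\log\lambda+\tfrac{1}{2}\log\epsilon$ and every point is $N$-hyperbolic along $E$ for $N\geq2$ once $\epsilon$ is small. Converting the one-step information of the selection lemma into control of the $N$-block products (equivalently, ruling out this kind of non-conformal cancellation inside $E$) is precisely the nontrivial content of~\cite{bgy}; it is the one step your plan must supply. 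If $\dim E=1$ all the quantities you conflate coincide and your argument is essentially complete.
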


\section{Fibr\'es non uniformes}
Voici une application
du lemme de s\'election de Liao, issue de~\cite{model},
qui permet d'analyser l'existence de fibr\'es non uniformes.

\begin{theoreme}\label{t.trichotomie}
Supposons que pour tout $1\leq i < d$ et tout diff\'eomorphisme $g$ $C^1$-proche de $f$
l'ensemble $\per_i(g)$ des points p\'eriodiques d'indice $i$ ait une d\'ecomposition
domin\'ee $T_{\per_i(g)}M=E_i\oplus F_i$ telle que $\dim(E_i)=i$.

Consid\'erons un ensemble compact invariant $K$ ayant une d\'ecomposition domin\'ee
$T_KM=E\oplus F$.
Si le fibr\'e $E$ n'est pas uniform\'ement contract\'e, l'un des cas suivant se produit.
\begin{enumerate}
\item $K$ intersecte une classe homocline $H(p)$ associ\'ee \`a un point p\'eriodique d'indice strictement plus
petit que $\dim(E)$.
\item $K$ intersecte des classes homoclines $H(p_n)$ associ\'ees \`a des points p\'eriodiques ayant un indice \'egal \`a $\dim(E)$
et ayant un exposant de Lyapunov le long de $E$ arbitrairement proche de $0$.
\item $K$ contient un ensemble compact invariant $\Lambda$ muni d'une structure partiellement hyperbolique
$T_\Lambda M=E^s\oplus E^c\oplus E^u$ telle que $E^c$ est de dimension $1$ et $\dim(E^s)<\dim(E)$.
De plus, pour toute mesure ergodique support\'ee par $\Lambda$, l'exposant de Lyapunov le long de $E^c$
est \'egal \`a $0$.
\end{enumerate}
\end{theoreme}

Ce th\'eor\`eme fait naturellement appara\^\i tre des ensembles hyperboliques
ayant une structure partiellement hyperbolique avec un fibr\'e central de dimension $1$.
Puisque les exposants de Lyapunov de toute mesure invariante support\'ee sur cet ensemble
sont nuls le long du fibr\'e central, les techniques pr\'esent\'ees dans ce chapitre
ne permettent pas de d\'ecrire plus pr\'ecis\'ement la dynamique. Nous introduirons
au chapitre~\ref{c.model} les mod\`eles centraux qui permettent d'analyser la dynamique
dans la direction centrale d'un point de vue topologique.

\begin{proof}
Puisque $E$ n'est pas uniform\'ement contract\'e, il existe une mesure ergodique $\mu$
support\'ee par $K$ dont l'exposant de Lyapunov maximal le long de $E$ est positif ou nul.

D'apr\`es le lemme de fermeture ergodique (th\'eor\`eme~\ref{t.closing-ergodique}) et son addendum,
il existe une suite de diff\'eomorphismes $g_n\to f$, et une suite d'orbites p\'eriodiques associ\'ees $(O_n)$ qui converge vers le support de $\mu$
en topologie de Hausdorff et dont les exposants de Lyapunov convergent vers ceux de $\mu$.
D'apr\`es l'hypoth\`ese sur la domination des orbites p\'eriodiques, les orbites $O_n$ ont au plus
un exposant proche de $0$. Par cons\'equent $\mu$ a au plus un exposant de Lyapunov proche de $0$.

Si $\mu$ est hyperbolique, l'indice des orbites $O_n$ est \'egal \`a la dimension des espaces stables de
$\mu$. Par passage \`a la limite, il existe donc une d\'ecomposition domin\'ee $T_{\supp(\mu)}M=E'\oplus F'$,
avec $\dim(E')<\dim(E)$. D'apr\`es le corollaire~\ref{c.anosov-closing},
$K$ intersecte une classe homocline d'indice $\dim(E')$. Ceci donne le premier cas du th\'eor\`eme.

Si $\mu$ n'est pas hyperbolique, on construit de la m\^eme fa\c{c}on une d\'ecomposition
domin\'ee $T_{\supp(\mu)}M=E'\oplus E^c\oplus F$, telle que les exposants de $\mu$
sont strictement n\'egatifs le long de $E'$, strictement positifs le long de $F'$ et nuls le long de $E^c$.
On a $\dim(E^c)=1$.

On peut choisir $\mu$ pour que la dimension de $E'$ soit minimale
et on note $K=\supp(\mu)$.
On en d\'eduit que pour toute mesure $\nu$ support\'ee par $K$,
l'exposant le long du fibr\'e central $E^c$ de $K$ est n\'egatif ou nul.

On peut aussi avoir choisi $K$ minimal pour l'inclusion et ces propri\'et\'es.
Ainsi, pour tout sous-ensemble compact invariant propre $K'\subsetneq K$, 
l'exposant de toute mesure $\nu$ le long du fibr\'e central est strictement n\'egatif.
Il est m\^eme inf\'erieur \`a une constante $-\varepsilon$ car dans le cas contraire
la mesure $\nu$ serait hyperbolique (il y a au plus un exposant proche de $0$).
Comme pr\'ec\'edemment, $K$ intersecterait une classe homocline ayant des
orbites p\'eriodiques d'indice $\dim(E')+1\leq \dim(E)$ dont l'exposant
le long de $E^c$ est $\varepsilon$-proches de $0$. On serait alors dans le cas 1)
(si $\dim(E')+1<\dim(E)$) ou dans le cas 2) (si $\dim(E')+1=\dim(E)$) du th\'eor\`eme.

Si $K$ contient une mesure d'exposant central strictement n\'egatif, on peut appliquer
le lemme de s\'election de Liao et $K$ intersecte une classe homocline ayant des
orbites p\'eriodiques d'indice $\dim(E')+1$ dont l'exposant
le long de $E^c$ est arbitrairement proche de $0$. On est alors \`a nouveau dans le cas 1) ou 2).

Si toutes les mesures support\'ees par $K$ ont un exposant central nul, on est dans le cas 3) du th\'eor\`eme.
\end{proof}

\section{Classes hyperboliques par cha\^\i nes}\label{s.chain-hyp}
Consid\'erons un ensemble invariant $K$, une d\'ecomposition domin\'ee
$T_KM=E\oplus F$ et une famille de plaques $\cW$ tangente \`a $E$.
On d\'efinit les notions suivantes.
\smallskip
\begin{itemize}
\item[--]
$\cW$ est \emph{pi\'eg\'ee}\index{famille de plaques!pi\'eg\'ee} si pour tout $x\in K$ on a
$$f(\overline{\cW_x})\subset \cW_{f(x)}.$$

\item[--] $\cW$ est \emph{finement pi\'eg\'ee}\index{famille de plaques!finement pi\'eg\'ee} si pour une base de voisinages
$U$ de la section $0$ de $E$, il existe~:
\begin{enumerate}
\item une famille de diff\'eomorphismes
$(\varphi_x)_{x\in K}$ de $(E_x)_{x\in K}$ qui est continue en topologie $C^1$
et support\'ee dans $U$~;
\item une constante $\rho>0$ telle que pour tous $x\in K$ on a $B(0,\rho)\subset U\cap E_x$ et~:
$$f(\overline{\cW_x\circ\varphi_x(B(0,\rho))})\subset \cW_{f(x)}\circ \varphi_{f(x)}(B(0,\rho)).$$
\end{enumerate}
\smallskip

Bien s\^ur, si $\cW$ est finement pi\'eg\'ee, il existe une famille de plaques tangentes \`a $E$
(et de diam\`etres arbitrairement petits) qui est pi\'eg\'ee.
Par ailleurs, toute autre famille de plaques $\cW'$ tangente \`a $E$ et localement invariante
est \'egalement finement pi\'eg\'ee~: il existe $\rho>0$ tel que pour tout $x\in K$
la boule $B(0,\rho)\subset E_x$ est envoy\'ee par $\cW'_x$ dans $\cW_x$.
Ceci justifie la d\'efinition~:

\item[--] $E$ est \emph{finement pi\'eg\'ee} s'il existe une famille de plaques tangente \`a $E$ et finement pi\'eg\'ee.
\end{itemize}
\medskip

Nous avons propos\'e~\cite{cp1} une nouvelle d\'efinition d'hyperbolicit\'e affaiblie.

\begin{definition}
Une classe homocline $H(p)$ est \emph{hyperbolique par cha\^\i nes} si
\begin{itemize}
\item[--] elle poss\`ede une d\'ecomposition domin\'ee $T_{H(p)}M=E^{cs}\oplus E^{cu}$ en deux fibr\'es~;
\item[--] il existe une famille de plaques $\cW^{cs}$ tangente \`a $E^{cs}$ pi\'eg\'ee par $f$
et une famille de plaques $\cW^{cu}$ tangente \`a $E^{cu}$ et pi\'eg\'ee par $f^{-1}$~;
\item[--] il existe un point p\'eriodique hyperbolique $q_s$ homocliniquement reli\'e \`a
l'orbite de $p$ dont l'ensemble stable contient $\cW^{cs}_{q_s}$ et
il existe un point p\'eriodique hyperbolique $q_u$ homocliniquement reli\'e \`a
l'orbite de $p$ dont l'ensemble instable contient $\cW^{cu}_{q_u}$.
\end{itemize}
\end{definition}
\medskip

Les familles de plaques $\cW^{cs}$ et $\cW^{cu}$ jouent alors le r\^ole des vari\'et\'es stables et instables locales des ensembles hyperboliques~: elles sont respectivement contenues dans les ensembles stables et instables par cha\^\i nes de la classe $H(p)$. Ceci justifie la terminologie ``hyperbolicit\'e par cha\^\i nes''.
En particulier, une propri\'et\'e de produit local est satisfaite
(voir~\cite{cp1}).

\begin{lemme}\label{l.hyp-chain}
Consid\'erons une classe homocline $H(p)$ hyperbolique par cha\^\i nes.
\begin{enumerate}
\item il existe un sous-ensemble dense d'orbite p\'eriodiques $O$ homocliniquement reli\'ees \`a $p$
telles que pour tout $q\in O$ on a $\cW^{cs}_q\subset W^s(q)$ et $\cW^{cu}_q\subset W^u(q)$~;
\item pour tout $x\in H(p)$ on a $\cW^{cs}_x\subset pW^s(x)$ et $\cW^{cu}_x\subset pW^u(x)$~;
\item tout point d'intersection transverse entre deux plaques $\cW^{cs}_x$ et $\cW^{cu}_y$,
$x,y\in H(p)$, est contenu dans $H(p)$.
\end{enumerate}
\end{lemme}
\medskip

L'hyperbolicit\'e par cha\^\i nes est robuste aux perturbations
(voir~\cite{cp1}).

\begin{theoreme}[Crovisier-Pujals]\label{t.chain-robust}
Consid\'erons une classe homocline $H(p)$ hyperbolique par cha\^\i nes telle que~:
\begin{itemize}
\item[--] $H(p)$ co\"\i ncide avec sa classe de r\'ecurrence par cha\^\i nes, 
\item[--] les familles de plaques $\cW^{cs}$ et $\cW^{cu}$ sont finement pi\'eg\'ees
respectivement par $f$ et $f^{-1}$.
\end{itemize}
Alors pour tout diff\'eomorphisme $g\in \diff^1(M)$ proche de $f$
la classe homocline $H(p_g)$ de $g$ 
associ\'ee \`a la continuation hyperbolique de $p$ est encore hyperbolique par cha\^\i nes.
\end{theoreme}

Nous donnons des exemples de classes hyperboliques par cha\^\i nes robustement non hyperbolique
en section suivante.

\section[Diff\'eomorphismes robustement non hyperboliques]{Diff\'eomorphismes d\'eriv\'es d'Anosov robustement non hyperboliques}
\label{s.exemple-mane}
\index{d\'eriv\'e d'Anosov, diff\'eomorphisme}

Smale a construit~\cite{smale-dynamics} un diff\'eomorphisme hyperbolique ayant un attracteur non trivial
en modifiant un diff\'eomorphisme d'Anosov lin\'eaire du tore $\TT^2$.
Il est obtenu en d\'eformant le diff\'eomorphisme initial pr\`es d'un point fixe~: la perturbation
est petite en topologie $C^0$ mais grande en topologie $C^1$.

Cette id\'ee de d\'eformer au voisinage d'un point fixe a \'et\'e reprise par Ma\~n\'e~\cite{mane-contribution},
puis par Bonatti-Viana~\cite{bv} pour construire un diff\'eomorphisme robustement transitif et non hyperbolique. (Avec cet argument,
on peut aussi construire des exemples d'attracteurs robustes non hyperboliques (voir~\cite{carvalho}).)
Nous expliquons ici comment construire de telles dynamiques dans le cas le plus simple.
Voir aussi~\cite[section 7.1]{bdv}.
\medskip

On consid\`ere un diff\'eomorphisme d'Anosov lin\'eaire $A$ du tore $\TT^3$
avec valeurs propres r\'eelles $0<\lambda_1^s<\lambda_2^s<1<\lambda^u$,
et un point fixe $p$. L'application $A$ peut donc \^etre \'ecrite localement
$$A:(x,y,z)\mapsto (\lambda^u.x, \; \lambda_1^s.y,\; \lambda_2^s.z).$$
\smallskip

Introduisons un diff\'eomorphisme $g$ qui fixe $p$, co\"\i ncide avec $A$
hors d'un petit voisinage de $p$ et de la forme
$$g:(x,y,z)\mapsto (\lambda^u.x,\; g_x(y,z)).$$
Il pr\'eserve donc le feuilletage stable $\cF^{cs}$ de $A$.
On demande \'egalement que
\begin{itemize}
\item[--] $\|Dg_{|E^{cs}}\|<\lambda^u$, de sorte
que $g$ pr\'eserve une domination entre l'espace centre-stable (tangent aux
feuilles de $\cF^{cs}$) et un fibr\'e instable,
\item[--] $g$ contracte strictement les aires le long des feuilles de $\cF^{cs}$.
\end{itemize}
Le diff\'eomorphisme initial $A$ poss\`ede ces propri\'et\'es mais nous allons voir que d'autres
diff\'eomorphismes peuvent \^etre int\'eressants.
\medskip

Fixons alors $a,b\gg 1$ et d\'efinissons un diff\'eomorphisme $f$ qui co\"\i ncide avec $A$
hors d'un voisinage de $p$ et prend la forme suivante au voisinage de $p$
$$f:(x,y,z)\mapsto (\lambda^u.x,\; (ab)^{-1}. g_{ax}(aby,abz)).$$
Pour $a$ grand, $f$ diff\`ere de $A$ dans une boule centr\'ee en $p$
de rayon arbitrairement petit.
Pour $b$ grand, le fibr\'e instable de $f$ est arbitrairement proche de celui de
$A$ et sa dilatation est arbitrairement proche de $\lambda^u$.
\medskip

Les diff\'eomorphismes $C^1$-proches de $f$ poss\`edent encore
un feuilletage centre-stable. Ceci d\'ecoule de~\cite[th\'eor\`emes 7.1 et 7.2]{hps} puisque le feuilletage centre-stable de $f$ est lisse et normalement hyperbolique.
\medskip

Les diff\'eomorphismes proches de $f$ n'ont
qu'une seule classe de r\'ecurrence par cha\^\i nes.
\begin{proposition}\label{p.ex1}
Si l'on choisit $a,b$ assez grands, tout diff\'eomorphisme $C^1$-proche de
$f$ est transitif.
Plus pr\'ecis\'ement, $\TT^3$ est une classe homocline.
\end{proposition}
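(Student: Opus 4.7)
The plan is to prove that for every diffeomorphism $g$ in a small enough $C^1$-neighborhood $\cU$ of $f$, the torus $\TT^3$ is the unique chain recurrence class of $g$, and that it coincides with the homoclinic class of a hyperbolic periodic saddle. I would exhibit two invariant foliations for $g$, transfer the minimality of the linear foliations of $A$ to $g$, and then read off both chain transitivity and the homoclinic description.

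First I would establish the existence of the two foliations. By construction, the splitting $E^{cs}\oplus E^u$ is dominated for $f$, the bundle $E^u$ is uniformly expanded, and the plane field $E^{cs}$ is smoothly integrable into the center-stable foliation $\cF^{cs}_f=\cF^{cs}_A$ of the linearization. Both domination and uniform expansion are $C^1$-open, so every $g\in\cU$ inherits a splitting $E^{cs}_g\oplus E^u_g$ with $E^u_g$ uniformly expanded. The Hirsch--Pugh--Shub theorem applied to the normally hyperbolic smooth foliation $\cF^{cs}_f$ yields a $g$-invariant continuous center-stable foliation $\cF^{cs}_g$ tangent to $E^{cs}_g$, whose leaves are leaf-$C^1$-close to those of $\cF^{cs}_A$. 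The integration of the uniformly expanded line field $E^u_g$ yields a $g$-invariant strong unstable foliation $\cF^u_g$.

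Next I would prove that both $\cF^{cs}_g$ and $\cF^u_g$ are \emph{minimal}. The linear foliations $\cF^{cs}_A$ and $\cF^u_A$ lift to foliations of $\RR^3$ by parallel affine subspaces whose slopes are irrational over $\ZZ^3$, hence are minimal on $\TT^3$. Since $g$ is $C^0$-close to $A$ in its isotopy class, the lift of $g$ to $\RR^3$ admits a leaf projection along $\cF^{cs}_g$ onto the $E^{cs}_A$-direction which descends to a continuous semi-conjugacy $h\colon \TT^3\to\TT^3$ from $g$ to $A$, carrying leaves of $\cF^{cs}_g$ into leaves of $\cF^{cs}_A$ and leaves of $\cF^u_g$ into leaves of $\cF^u_A$. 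Minimality of the linear foliations then implies minimality of $\cF^{cs}_g$ and $\cF^u_g$. With these in hand I would conclude as follows: let $K$ be any chain recurrence class of $g$; since $E^u_g$ is uniformly expanded, $K$ is saturated by strong unstable leaves, and by minimality of $\cF^u_g$ one gets $K=\TT^3$, so $\cR(g)=\TT^3$ is a single chain recurrence class. To upgrade this to a homoclinic class, I would pick a hyperbolic periodic point $q_g$ of index $2$, obtained as the continuation of a periodic saddle of $A$ chosen disjoint from the support of the perturbation. Its unstable manifold $W^u(q_g)$ is the strong unstable leaf through $q_g$, hence dense by minimality of $\cF^u_g$; its stable manifold contains the center-stable leaf $\cF^{cs}_g(q_g)$, which is dense by minimality of $\cF^{cs}_g$. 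A transverse intersection point exists, so $H(q_g)=\TT^3$.

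The main obstacle will be the second step, the \emph{robust} minimality of $\cF^u_g$. Leaf-$C^0$-closeness alone is not sufficient, and one must exploit the global structure of $\TT^3$: the construction of the semi-conjugacy $h$ requires that $\cF^{cs}_g$ be dynamically coherent with uniformly quasi-isometric leaves in the universal cover, a property that is delicate to maintain away from the smooth linear model and must be controlled uniformly in $g\in\cU$. An alternative route is to use the strict area contraction along $E^{cs}_g$ and a Pliss-type selection to rule out $\cF^u_g$-saturated proper closed invariant subsets; the two approaches balance different technical difficulties, but both hinge on a uniform global geometric control that is the core difficulty of the proof.
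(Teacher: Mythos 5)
Your strategy (invariant foliations, semi-conjugacy with $A$, minimality) is not the one the paper follows --- the paper's proof is a direct appeal to the localized-perturbation argument of Bonatti--Viana \cite[section 6.2]{bv} --- and as written it contains two genuine gaps. The first is the step on which everything rests: the robust minimality of $\cF^u_g$. The semi-conjugacy $h$ with $A$ is not injective (its fibres are nontrivial continua), so a proper closed $\cF^u_g$-saturated set can perfectly well have image all of $\TT^3$ under $h$; minimality of the linear foliation therefore does not transfer. You flag this as the core difficulty but do not resolve it, and, tellingly, your argument never uses the hypothesis that $a$ and $b$ are large. That hypothesis is exactly what the actual proof exploits: for $a$ large the locus where $f\neq A$ is a ball of arbitrarily small radius, and for $b$ large the unstable cone field and expansion of every $C^1$-nearby $g$ are arbitrarily close to those of $A$; one then shows directly that the forward iterates of any unstable curve of definite length become dense, because most of each iterated curve lies in the region where the dynamics is linear and equidistributes like an unstable leaf of $A$. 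This quantitative statement already yields transitivity ($g^n(U)$ meets $V$ for all nonempty open $U,V$), whereas your route does not: an arbitrary chain recurrence class need not be saturated by strong unstable leaves (only quasi-attractors are), and even ``$\TT^3$ is the unique chain class'' does not imply the existence of a dense orbit for a fixed, non-generic $g$.

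The second gap is the claim that $W^s(q_g)$ contains the whole center-stable leaf of $q_g$. That leaf is dense, hence passes through the deformed region, where the dynamics inside the leaves need not contract --- the construction is designed precisely so that it does not (the next step of the paper places a fixed point $p_1$ of index $1$, whose unstable manifold lies inside its center-stable leaf). Only a local stable plaque of $q_g$ is guaranteed to lie in $W^s(q_g)$ (its orbit avoids the support of the perturbation), and that is in fact all one needs: once the density of iterated unstable curves is established, some forward iterate of any unstable arc contained in a prescribed open set crosses the local stable disc of $q_g$ transversally, and pulling the intersection point back produces transverse homoclinic points of $q_g$ in that open set, whence $H(q_g)=\TT^3$. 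So the proof should pass through the density of iterated unstable curves --- which is where the parameters $a,b$ enter --- rather than through minimality of the two foliations and density of the global stable manifold.
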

\begin{proof}
L'argument est le m\^eme que dans~\cite[section 6.2]{bv}.
\end{proof}

\medskip

\begin{proposition}\label{p.ex2}
Si $a,b$ sont assez grands, $\TT^3$ est une classe hyperbolique par cha\^\i nes pour tout diff\'eomorphisem $C^1$-proche de $f$.
\end{proposition}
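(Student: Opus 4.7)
The plan is to verify, for every $g$ $C^{1}$-close to $f$, the three items of the definition of chain hyperbolicity for $\TT^{3}=H(p_{g})$, where $p_{g}$ is the continuation of a fixed periodic orbit $p$ of $f$ of index $2$. Such a $p$ exists as the continuation of a periodic orbit of $A$ lying outside the support of the deformation, and Proposition~\ref{p.ex1} gives $H(p_{g})=\TT^{3}$. The dominated splitting $T\TT^{3}=E^{cs}_{g}\oplus E^{u}_{g}$, with $\dim E^{u}_{g}=1$, persists for $g$ close to $f$ by the general theory of section~\ref{s.domination}, and the $C^{1}$-open conditions $\|Dg_{|E^{cs}_{g}}\|<\lambda^{u}$ and $|\det Dg_{|E^{cs}_{g}}|<1$ are inherited by $g$.

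For the plaque family $\cW^{cu}$ tangent to $E^{u}_{g}$, I would take small pieces of local unstable manifolds. Since $E^{u}_{g}$ is one-dimensional and uniformly expanded, standard unstable manifold theory yields $g^{-1}(\overline{\cW^{cu}_{x}})\subset \cW^{cu}_{g^{-1}(x)}$ for every $x$, so this family is trapped by $g^{-1}$. For the plaque family $\cW^{cs}$ tangent to $E^{cs}_{g}$ I would exploit the fact --- already invoked in the sketch of Proposition~\ref{p.ex1} --- that $f$ preserves a smooth foliation $\cF^{cs}$ tangent to $E^{cs}$ (the stable foliation of $A$, preserved by $g$ and then $f$ by construction), which is normally hyperbolic. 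By~\cite[théorèmes 7.1 et 7.2]{hps} every $C^{1}$-perturbation $g$ of $f$ admits a $g$-invariant continuous foliation $\cF^{cs}_{g}$ whose leaves are $C^{1}$ and tangent to $E^{cs}_{g}$, varying continuously with $g$. Locally invariant plaques inside these leaves are furnished by Theorem~\ref{t.hps}; combining the uniform strict leafwise area contraction with the domination of $E^{u}_{g}$ over $E^{cs}_{g}$, one then extracts a genuinely trapped family $\cW^{cs}$, i.e.\ $g(\overline{\cW^{cs}_{x}})\subset \cW^{cs}_{g(x)}$.

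The last item is easy once the preceding ones are in place: any hyperbolic periodic orbit $q$ of $g$ contained in the uniformly hyperbolic region (e.g.\ the continuation of a periodic orbit of $A$ chosen away from the deformation) has index $2$, its $2$-dimensional stable manifold is tangent to $E^{cs}_{g}$ and hence contains $\cW^{cs}_{q}$, and its $1$-dimensional unstable manifold is tangent to $E^{u}_{g}$ and contains $\cW^{cu}_{q}$. Since $H(p_{g})=\TT^{3}$ contains $q$, the orbit of $q$ is homoclinically related to that of $p_{g}$, and the single choice $q_{s}=q_{u}=q$ satisfies the last requirement of the definition.

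The main obstacle will be the construction of the trapped family $\cW^{cs}$: the two-dimensional fibre $E^{cs}_{g}$ is not uniformly contracted in the Mañé--Bonatti--Viana example (some vectors in $E^{cs}$ are even dilated along the perturbation), so round plaques of fixed radius will not be trapped. It is precisely the conjunction of three robust ingredients of the construction --- the persistent $C^{1}$ invariant foliation $\cF^{cs}_{g}$, the uniform domination between $E^{cs}_{g}$ and $E^{u}_{g}$, and the strict leafwise area contraction --- that allows the shape of the plaques to be chosen so that trapping holds step by step.
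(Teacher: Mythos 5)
Your architecture (persistent dominated splitting, unstable plaques trapped by $g^{-1}$, a reference periodic point away from the deformation) matches the paper's, but the step you yourself identify as the main obstacle --- trapping the centre-stable plaques --- is resolved by a mechanism that cannot work. Leafwise area contraction together with the domination of $E^{u}$ over $E^{cs}$ does not prevent expansion of \emph{lengths} inside the leaves: the deformation creates a fixed point $p_1$ of index $1$, at which $Df_{|E^{cs}}$ has an eigenvalue of modulus $>1$. If $L$ is a linear map of $\RR^2$ with an expanding eigenvalue, no bounded open neighbourhood $U$ of $0$ satisfies $L(\overline U)\subset U$ (iterate and use that points of the expanding eigendirection escape), and the same holds for the nonlinear leafwise dynamics on any plaque small enough to sit inside the region where $f$ is close to its linear part at $p_1$. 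So no choice of ``shape'' extracted locally from area contraction and domination yields $f(\overline{\cW^{cs}_{p_1}})\subset\cW^{cs}_{f(p_1)}$. Trapping here is a macroscopic phenomenon, and the paper obtains it differently: one takes for $\cW^{cs}$ the local stable plaques of the \emph{linear Anosov map} $A$, of a size $\delta$ fixed once and for all, so that $A(\overline{\cW^{cs}_x})\subset\cW^{cs}_{A(x)}$ with a uniform margin; since $f$ preserves the same foliation $\cF^{cs}$ and, for $a$ large, differs from $A$ only in a ball of radius much smaller than $\delta$ (hence is $C^0$-close to $A$), this strict inclusion survives for $f$. Only afterwards does one invoke~\cite{hps} to get, for $h$ $C^1$-close to $f$, a nearby invariant centre-stable foliation and a plaque family $C^1$-close to $\cW^{cs}$, for which trapping persists because strict inclusion of compacts is an open condition. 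The leafwise area contraction plays no role in this proposition (it is used for the transitivity argument of Proposition~\ref{p.ex1}).

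A secondary point: for the last item of the definition you write that $W^s(q)$ is tangent to $E^{cs}_g$ ``and hence contains $\cW^{cs}_q$''. Tangency at $q$ gives nothing about a plaque of fixed size $\delta$; the correct reason (as in the paper) is that for $a,b$ large the forward orbit of the whole plaque $\cW^{cs}_q$ avoids the deformation ball, so that $f$ coincides with $A$ there and the plaque lies in the genuine stable manifold --- a property which is then $C^1$-robust. The treatment of $\cW^{cu}$ and the identification $q_s=q_u=q$ are otherwise in line with the paper.
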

\begin{proof}
Consid\'erons un point p\'eriodique $q\neq p$ de $A$ et une famille de plaques centre-stable $\cW^{cs}$
pi\'eg\'ee pour $A$ (i.e. une famille continue de vari\'et\'es stables locales).
Puisque $f$ pr\'eserve le feuilletage stable $\cF^{cs}$ de $A$ et est arbitrairement proche de $A$
en topologie $C^0$, les plaques $\cW^{cs}$ sont pi\'eg\'ees par $f$.
Pour $a,b$ suffisamment grands, l'orbite de $q$ co\"\i ncide pour $A$ et pour $f$
et de plus $\cW^{cs}_q\subset W^s(q)$ pour $f$.
Pour les diff\'eomorphismes $h$ proches de $f$ en topologie $C^1$,
il existe un feuilletage centre-stable proche de $\cF^{cs}$.
Par cons\'equent il existe encore une famille de plaques centre-stable $\cW^{cs}_h$
qui est proche de la famille $\cW^{cs}$ pour la topologie $C^1$.
Cette famille est donc \'egalement pi\'eg\'ee pour $h$ et la plaque de la continuation $q_h$
de $q$ est contenue dans la vari\'et\'e stable de $q_h$.

Puisque le fibr\'e $E^u$ est uniform\'ement dilat\'e par $f$, il existe une famille de plaques $\cW^u$
tangente \`a $E^u$ pi\'eg\'ee par $f^{-1}$ et satisfaisant $\cW^u_q\subset W^u(q)$. Ceci est \'egalement v\'erifi\'e
par tout diff\'eomorphisme $h$ proche de $f$ en topologie $C^1$.

Avec la proposition~\ref{p.ex1},
ceci montre que $\TT^3$ est une classe hyperbolique par cha\^\i nes.
\end{proof}

\bigskip

Pour obtenir une dynamique robustement non hyperbolique, il suffit
de choisir $g$ avec des points fixes hyperboliques $p_1,p_2$ d'indices 1 et 2.
Voir la figure~\ref{f.deformation}.
Puisque $p_1,p_2$ appartiennent \`a une m\^eme classe homocline robustement,
tout diff\'eomorphisme proche de $f$ peut \^etre approch\'e par un diff\'eomorphisme
ayant un cycle h\'et\'erodimensionnel.
\begin{figure}[ht]
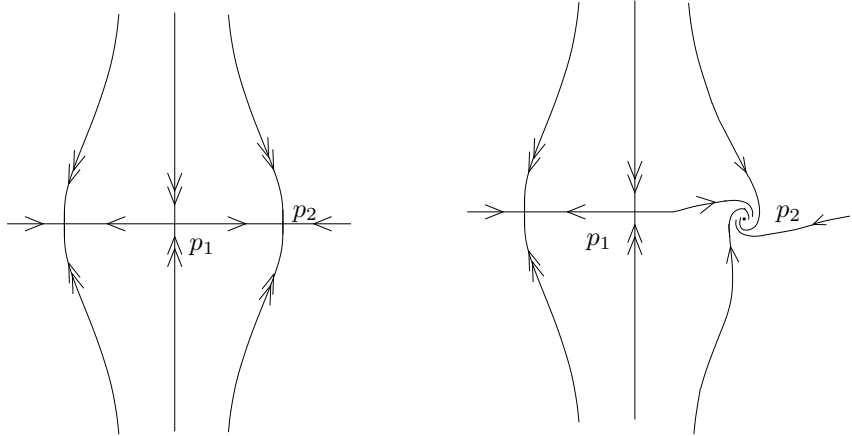

\begin{center}
\mbox{\input{deformation1.pstex_t}\quad\quad\quad\quad
\input{deformation2.pstex_t}}
\end{center}
\caption{D\'eformations de $A$ dans la vari\'et\'e stable locale de $p$. \label{f.deformation}}
\end{figure}
\smallskip

On peut construire $g$ avec une structure partiellement hyperbolique
$TM=E^s\oplus E^c\oplus E^u$ avec trois fibr\'es de dimension $1$.
Aucun diff\'eomorphisme au voisinage de $f$ ne peut alors avoir de tangence homocline
(voir la section~\ref{s.tangence}).
\smallskip

On peut aussi choisir $p_2$ avec des valeurs propres stables complexes.
Dans ce cas, pour aucun diff\'eomorphisme proche de $f$, la classe homocline
$H(p_1)$ (contenant $p_2$) n'a pas de d\'ecomposition domin\'ee de la forme
$TM=E\oplus F$ avec $\dim(E)=1$. On en d\'eduit (voir le th\'eor\`eme~\ref{t.dichotomie-tangence} plus loin)
que tout diff\'eomorphisme proche de $f$ est accumul\'e par des diff\'eomorphismes pour lesquels $p_1$
a une tangence homocline.
\smallskip

Ceci fournit des exemples de dynamiques mod\'er\'ees h\'et\'erodimensionnelle et critique mentionn\'ees en introduction.

\begin{corollaire}
Il existe sur $\TT^3$ des dynamiques g\'en\'eriques mod\'er\'ees h\'et\'erodimensionnelles
critiques et non critiques.
\end{corollaire}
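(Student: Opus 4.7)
L'id\'ee est d'exhiber deux diff\'eomorphismes $f_1,f_2$ de $\TT^3$, obtenus par la construction d\'ecrite dans cette section, au voisinage $C^1$ desquels des parties $C^1$-g\'en\'eriques sont respectivement mod\'er\'ees h\'et\'erodimensionnelles non critiques et mod\'er\'ees h\'et\'erodimensionnelles critiques. Dans les deux cas, on part d'un Anosov lin\'eaire $A$ de $\TT^3$ \`a valeurs propres r\'eelles $0<\lambda^s_1<\lambda^s_2<1<\lambda^u$ ayant un point fixe $p$, et on d\'eforme $A$ au voisinage de $p$ \`a l'aide d'un diff\'eomorphisme $g$ pr\'eservant un feuilletage centre-stable, comme au d\'ebut de la section~\ref{s.exemple-mane}, en choisissant $g$ ayant deux points fixes hyperboliques $p_1,p_2$ d'indices respectifs $1$ et $2$ contenus dans une m\^eme classe homocline robuste. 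Les param\`etres $a,b$ sont pris suffisamment grands pour que les propositions~\ref{p.ex1} et~\ref{p.ex2} s'appliquent.

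Pour chacun de ces exemples, la proposition~\ref{p.ex1} assure que $\TT^3$ est une classe homocline robustement, donc qu'il n'existe qu'une seule classe de r\'ecurrence par cha\^\i nes au voisinage de $f_i$~: la dynamique est mod\'er\'ee. Les continuations hyperboliques de $p_1$ et $p_2$ restent d'indices $1$ et $2$ dans la m\^eme classe homocline pour tout $g$ proche~; d'apr\`es la d\'ecomposition spectrale en pr\'esence d'orbites p\'eriodiques d'indices diff\'erents dans une m\^eme classe de r\'ecurrence par cha\^\i nes (section~\ref{s.consequence}, paragraphe d), on cr\'ee par perturbation un cycle h\'et\'erodimensionnel robuste~: tout diff\'eomorphisme $C^1$-proche de $f_i$ est approch\'e par un diff\'eomorphisme ayant un cycle h\'et\'erodimensionnel. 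Un ouvert $C^1$-dense au voisinage de $f_i$ consiste donc en diff\'eomorphismes h\'et\'erodimensionnels.

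Pour obtenir le cas \emph{non critique}, on choisit $g$ de sorte que $f_1$ admette une structure partiellement hyperbolique $TM=E^s\oplus E^c\oplus E^u$ avec trois fibr\'es de dimension $1$, ce qui est signal\'e dans l'avant-derni\`ere remarque de la section. Cette propri\'et\'e est robuste et implique qu'aucun diff\'eomorphisme proche de $f_1$ ne poss\`ede de tangence homocline (une tangence produirait un vecteur simultan\'ement tangent aux fibr\'es stable et instable d'une orbite p\'eriodique, en contradiction avec la d\'ecomposition domin\'ee, voir la section~\ref{s.tangence}). Pour le cas \emph{critique}, on prend au contraire $g$ de sorte que $p_2$ ait des valeurs propres stables complexes~; alors aucune d\'ecomposition domin\'ee $TM=E\oplus F$ avec $\dim E=1$ ne peut exister sur $H(p_1)$ pour un diff\'eomorphisme proche de $f_2$. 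D'apr\`es la dichotomie robuste entre d\'ecomposition domin\'ee et tangence homocline (th\'eor\`eme~\ref{t.dichotomie-tangence}), tout diff\'eomorphisme $C^1$-proche de $f_2$ est alors accumul\'e par des diff\'eomorphismes ayant une tangence homocline.

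La seule difficult\'e r\'eelle tient \`a la v\'erification que les d\'eformations $g$ envisag\'ees (avec $p_1,p_2$ d'indices diff\'erents, soit partiellement hyperboliques, soit avec valeurs propres complexes en $p_2$) peuvent \^etre effectivement choisies dans la vari\'et\'e stable locale bidimensionnelle de $p$ tout en pr\'eservant les in\'egalit\'es $\|Dg_{|E^{cs}}\|<\lambda^u$ et la contraction stricte d'aires sur les feuilles de $\cF^{cs}$, qui sont n\'ecessaires pour appliquer les propositions~\ref{p.ex1} et~\ref{p.ex2}. Cela se fait en suivant la construction classique \`a la Ma\~n\'e/Bonatti--Viana, en faisant appara\^\i tre les deux points fixes le long d'une feuille centre-stable de $p$ via une bifurcation de type Hopf ou pitchfork selon que l'on souhaite des valeurs propres complexes ou r\'eelles~; les contraintes sur $g$ portent sur la dynamique en restriction aux feuilles de $\cF^{cs}$ et laissent suffisamment de libert\'e pour r\'ealiser les deux configurations voulues.
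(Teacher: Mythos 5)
Votre démonstration est correcte et suit essentiellement la même voie que le texte : la construction dérivée d'Anosov avec deux points fixes $p_1,p_2$ d'indices $1$ et $2$, les propositions~\ref{p.ex1} et~\ref{p.ex2} pour la transitivité robuste (donc une unique classe de récurrence par chaînes, d'où la modération), la section~\ref{s.consequence} pour les cycles hétérodimensionnels, puis la dichotomie entre hyperbolicité partielle à fibrés de dimension $1$ (cas non critique) et valeurs propres stables complexes en $p_2$ combinées au théorème~\ref{t.dichotomie-tangence} (cas critique). Votre remarque finale sur la réalisabilité des déformations de $g$ dans les feuilles centre-stables tout en préservant les contraintes de domination est un complément pertinent mais non développé dans le texte, qui admet implicitement cette flexibilité.
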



\chapter{R\'eduction de la dimension ambiante}

On peut toujours repr\'esenter
les dynamiques $C^1$-g\'en\'eriques de dimension $d$ au sein des dynamiques g\'en\'eriques
de dimension sup\'erieure~: il suffit de les r\'ealiser sur des sous-vari\'et\'es invariantes
normalement hyperboliques (voir la construction en section~\ref{s.exemple-abraham-smale}).
Ceci permet d'obtenir tr\`es simplement de nouvelles classes d'exemples~:
du ph\'enom\`ene de Newhouse, on d\'eduit l'existence (en dimension $4$) de classes de r\'ecurrence
par cha\^\i nes accumul\'ees par des selles isol\'ees
(des orbites p\'eriodiques hyperboliques qui ne sont ni des sources ni des puits et dont la classe homocline
est triviale).
Dans ce chapitre nous \'etudions le probl\`eme r\'eciproque~: \emph{\'etant donn\'e un ensemble invariant $K$,
peut-on d\'etecter l'existence d'une sous-vari\'et\'e invariante qui le contient~?}
Nous pr\'esentons le crit\`ere issu de~\cite{whitney}.

\section{Vari\'et\'e normalement hyperbolique}
Consid\'erons un ensemble compact invariant $K$
muni d'une d\'ecomposition domin\'ee $T_KM=E\oplus E^u$ telle que $E^u$ est uniform\'ement dilat\'e.
Supposons qu'il existe une sous-vari\'et\'e $N\subset M$ contenant $K$,
tangente \`a $E$ (et de dimension $\dim(E)$), qui soit \emph{localement invariante}~:
il existe un voisinage $U$ de $K$ dans $N$ tel que $f(U)\subset N$
(voir la figure~\ref{f.whitney}).
\begin{figure}[ht]
\begin{center}
\input{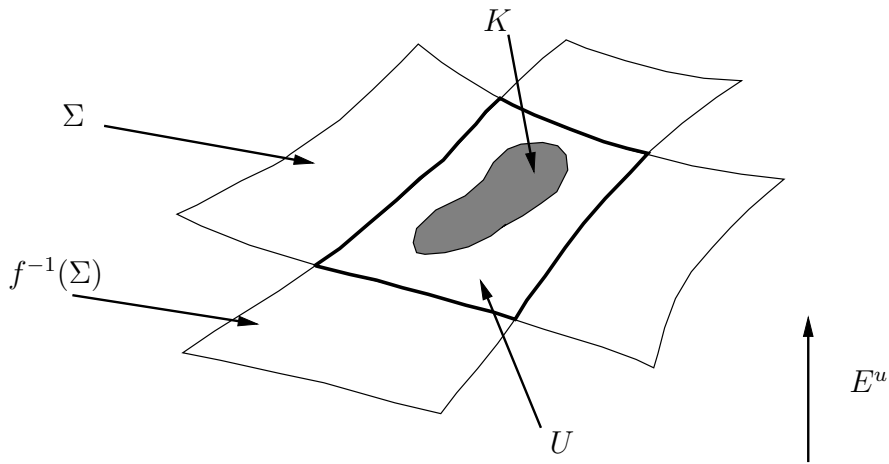}
\end{center}
\caption{Vari\'et\'e normalement hyperbolique
localement invariante au voisinage de $K$. \label{f.whitney}}
\end{figure}

Il est facile de voir que
tout point dont l'orbite est contenue dans un petit voisinage de $K$ appartient \`a $N$.
Un argument classique de transform\'ee de graphe (voir~\cite{hps,whitney})
entra\^\i ne que cette propri\'et\'e est encore v\'erifi\'ee pour les diff\'eomorphismes
proches.

\begin{theoreme}\label{t.variete-normalement-hyperbolique}
Consid\'erons un ensemble compact invariant $K$
muni d'une d\'ecomposition domin\'ee $T_KM=E\oplus E^u$ telle que $E^u$ est uniform\'ement dilat\'e,
et une sous-vari\'et\'e $N\subset M$ contenant $K$,
tangente \`a $E$, qui est localement invariante au voisinage de $K$.

Il existe alors un voisinage $\Sigma$ de $K$ dans $N$ qui est une sous-vari\'et\'e \`a bord
localement invariante par $f$, un voisinage $U$ de $K$ dans $M$
et un voisinage $\cU$ de $f$ dans $\diff^1(M)$ tels que pour tout $g\in \cU$
il existe $\Sigma_g$ sous-vari\'et\'e \`a bord $C^1$-proche de $\Sigma$ v\'erifiant~:
l'ensemble maximal invariant de $g$ dans $U$ est contenu dans $\Sigma_g$
et $g(\Sigma_g\cap U)\subset \Sigma_g$.
\end{theoreme}

\section{Existence de sous-vari\'et\'e localement invariante}
Reprenons le cadre de la section pr\'ec\'edente.
Tout point $x\in K$ poss\`ede une vari\'et\'e instable forte
$W^{uu}(x)=W_{E^u}(x)$ tangente \`a $E^u(x)$.
Il est alors facile de voir que pour tout $x\in K$,
la vari\'et\'e $W^{uu}(x)$ n'intersecte $K$ qu'au point $x$.
En effet, si $W^{uu}(x)$ recoupe $K$ en $y$, en consid\'erant les images
de $x,y$ pour un it\'er\'e $f^{-n}$ avec $n\geq 0$ large, on obtient deux points
$f^{-n}(x),f^{-n}(y)$ arbitrairement proches et joints par une courbe tangente \`a un champ
de c\^one instable, transverse \`a la direction $E$.
\medskip

Cette propri\'et\'e admet une r\'eciproque (voir~\cite{whitney}).
\begin{theoreme}[Bonatti-Crovisier]\label{t.whitney}
Consid\'erons un ensemble compact invariant $K$ ayant une d\'ecomposition domin\'ee
$T_K=E\oplus E^u$ telle que $E^u$ est uniform\'ement dilat\'e.\\
Il existe alors une sous-vari\'et\'e \`a bord $N\subset M$ de dimension $\dim(E)$ qui~:
\begin{itemize}
\item[--] contient $K$ dans son int\'erieur,
\item[--] est tangente \`a $E$ aux points de $K$,
\item[--] est localement invariante au voisinage de $K$,
\end{itemize}
si (et seulement si) la propri\'et\'e suivante est v\'erifi\'ee.
\begin{itemize}
\item[(I)] Pour tout $x\in K$, la vari\'et\'e instable $W^{uu}(x)$
ne rencontre $K$ qu'au point $x$.
\end{itemize}
\end{theoreme}

\section{Id\'ee de la preuve du th\'eor\`eme~\ref{t.whitney}}
Nous utilisons le th\'eor\`eme d'extension de Whitney en classe $C^1$
(voir par exemple~\cite[Appendice A]{abraham-robbin}).

\begin{theoreme}[Whitney]
Consid\'erons une partie ferm\'ee $A\subset \RR^{s}$ et $\varphi\colon A\to \RR^{d-s}$
une application continue. Les deux propri\'et\'es suivantes sont \'equivalentes~:
\begin{enumerate}
\item $\varphi$ s'\'etend en une fonction $\Phi\colon \RR^s\to \RR^{d-s}$ de classe $C^1$,
\item\label{i.whitney2} il existe une application continue $D$ d\'efinie sur $A$ et \`a valeurs dans l'espace
d'applications lin\'eraires $L(\RR^s,\RR^{d-s})$, telle que l'application $R\colon A\times A \to \RR^{d-s}$
d\'efinie par~:
$$R(x,y)=(\varphi(y)-\varphi(x))- D(x).(y-x),$$
v\'erifie~: pour tout $z\in A$, la quantit\'e $\frac{\|R(x,y)\|}{\|y-x\|}$ tend vers $0$ lorsque
les points $x\neq y$ de $K$ tendent vers $z$.
\end{enumerate}
\end{theoreme}

En travaillant dans des cartes, l'ensemble $K$ du th\'eor\`eme~\ref{t.whitney}
peut \^etre localement d\'ecrit comme le graphe d'une application $\varphi \colon \RR^s\to \RR^{d-s}$,
o\`u $s=\dim(E)$~: les axes $\RR^{s}\times \{y\}$ et $\{x\}\times \RR^{d-s}$ sont respectivement
tangents \`a des champs de c\^ones autour des directions $E$ et $E^u$.
En effet, si l'on fixe une section locale $S$ de la lamination par vari\'et\'es instables fortes,
on peut projeter localement $K$ sur $S$ par holonomie.
L'hypoth\`ese (I) du th\'eor\`eme~\ref{t.whitney} se traduit par l'injectivit\'e de cette application.

Le fibr\'e $E$ d\'efinit une application $K\to L(\RR^{s},\RR^{d-s})$.
Si la condition~(\ref{i.whitney2}) n'est pas satisfaite, on peut trouver
des paires de points proches $x,y$ telles que $y-x$ soit uniform\'ement transverse
\`a un champ de c\^ones autour de la direction $E$. Par cons\'equent, la dynamique dilate le vecteur
$y-x$ par it\'erations positives~: il existe $n\geq 1$ tel que la distance $d(f^n(y),f^n(x))$ soit proche de $1$~;
par ailleurs, le vecteur $f^n(y)-f^n(x)$ (vu dans une carte) est proche du champ $E^u$.
En prenant une suite de points $(x,y)$ telle que $d(x,y)$ tende vers $0$, l'entier $n$ tend vers $+\infty$.
Le long d'une suite extraite, $f^n(x)$ converge vers un point $x_0\in K$
et $f^n(y)$ vers un point $y_0\in K\cap W^{uu}(x_0)$, contredisant l'hypoth\`ese (I).
La condition~(\ref{i.whitney2}) du th\'eor\`eme de Whitney est donc satisfaite.
\bigskip

\`A l'aide d'une partition de l'unit\'e, on obtient une vari\'et\'e diff\'erentiable \`a bord $\Sigma_0$
de dimension $\dim(E)$ qui contient $K$ dans son int\'erieur
et qui est tangente \`a $E$ aux points de $K$. Un argument de transform\'ee de graphe,
permet de modifier $\Sigma_0$ pour obtenir une vari\'et\'e localement invariante au voisinage de $K$.

\section{Application~: existence de feuilletages stables}

Nous pouvons retrouver tr\`es simplement l'existence d'un feuilletage stable
au voisinage de tout ensemble hyperbolique pour les diff\'eomorphismes de surface de classe $C^2$
(voir~\cite[appendice A]{palis-takens} pour une d\'emonstration classique).
\begin{theoreme}
Consid\'erons un diff\'eomorphisme $f$ de classe $C^2$ d'une surface $M$
et un ensemble hyperbolique $K\subset M$. Il existe alors un feuilletage $\cF^s$
de classe $C^1$, d\'efini au voisinage de $K$ et localement invariant par $f$,
qui est tangent au fibr\'e stable de $K$.
\end{theoreme}
\begin{proof}
Puisque $f$ est de classe $C^2$, il induit un diff\'eomorphisme
$\widehat f$ de classe $C^1$ sur le fibr\'e unitaire tangent $\pi\colon T^1M\to M$.
Le fibr\'e stable $E^s$ au-dessus de $K$ d\'efini un ensemble compact invariant $\widehat K\subset T^1M$
qui rel\`eve $K$.

En tout point $\widehat x\in \widehat K$, notons $\widehat E^{uu}(x)$
le sous-espace de $T_{\widehat x}T^1M$ tangent aux fibres de $\pi$.
Ceci d\'efinit un fibr\'e continu invariant $\widehat E^{uu}$ au-dessus de $K$.
On v\'erifie facilement qu'il existe $C>0$ tel que pour tout $n\geq 1$, on ait
pour tout $\widehat x\in \widehat K$,
$$\|D_{\widehat x}\widehat f^n_{|\widehat E^{uu}}\|\geq C^{-1}.\frac{\|D_{\pi(\widehat x)}f^n_{|E^u}\|}{\|D_{\pi(\widehat x)}f^n_{|E^s}\|}.$$

Transversalement aux fibres, la norme de $D\widehat f^n$ est born\'ee par
$\|D_{\pi(\widehat x)}f^n_{|E^u}\|$. Ceci montre que $\widehat K$ poss\`ede une d\'ecomposition
domin\'ee en trois fibr\'es $\widehat E^s\oplus \widehat E^u\oplus \widehat E^{uu}$
telle que $\widehat E^s$ et $\widehat E^u$ se projettent par $\pi$ respectivement sur $E^s$ et $E^u$.

En chaque point $\widehat x$, on peut consid\'erer la vari\'et\'e instable forte
$\widehat W^{uu}(x)$, tangente \`a $\widehat E^{uu}(x)$~: par construction c'est
l'ensemble $T_{\pi(\widehat x)}^1M\setminus \{E^u(x)\}$.
Par cons\'equent, elle coupe $\widehat K$ en $\widehat x$ uniquement.
Nous pouvons donc appliquer le th\'eor\`eme~\ref{t.whitney}
et consid\'erer une sous-vari\'et\'e $\widehat N\subset T^1M$
contenant $\widehat K$, tangente \`a $\widehat E^s\oplus \widehat E^u$ et localement invariante.
La projection $\widehat N\to M$ induite par $\pi$ est un diff\'eomorphisme local, injectif sur
$\widehat K$, donc injectif au voisinage de $\widehat K$.
On peut donc interpr\'eter $\widehat N$ comme un champ de droites $C^1$ au voisinage de
$K$ qui est localement invariant par $Df$ et qui \'etend le champ $E^s$ d\'efini au-dessus $K$.
Le feuilletage $\cF^s$ s'obtient alors en int\'egrant ce champ de droites.
\end{proof}

\section{Probl\`emes}

\paragraph{a) Classes pelliculaires.}
Nous avons vu en introduction qu'il existe des classes
de r\'ecurrence par cha\^\i nes accumul\'ees par des orbites p\'eriodiques
qui ne sont ni des puits ni des sources~: pour les exemples construits,
la dynamique est support\'ee par une sous-vari\'et\'e normalement hyperbolique.
On peut se demander s'il existe des exemples plus int\'eressants.

\begin{question}\label{q.pelliculaire}
Existe-t-il un ouvert non vide $\cU\subset \diff^1(M)$
et un G$_\delta$ dense $\cG\subset \cU$ form\'e de diff\'eomorphismes
ayant une classe homocline $H(p)$ avec la propri\'et\'e suivante?
La classe $H(p)$ n'est pas contenue dans une sous-vari\'et\'e
normalement hyperbolique et est accumul\'ee par des orbites p\'eriodiques selles
dont l'indice n'appartient pas \`a l'ensemble des indices des points
p\'eriodiques de $H(p)$.
\end{question}
L. D\'\i az appelle \emph{classe pelliculaire}\index{classe!pelliculaire}
une classe homocline ayant de telles propri\'et\'es.

\paragraph{b) Connexions fortes.}
Lorsque la condition (I) n'est pas v\'erifi\'ee, il peut \^etre utile
de chercher un point p\'eriodique $x\in K$ tel que $W^{uu}(x)\setminus \{x\}$ rencontre $K$.
Ceci peut permettre de cr\'eer des connexions fortes
(voir la section~\ref{s.cycle} pour la d\'efinition).
Nous sommes alors int\'eress\'es par des \'enonc\'es de la forme~:
\emph{$K$ satisfait (I) ou bien poss\`ede une connexion forte.}

Nous obtiendrons dans certains cas de tels \'enonc\'es en sections~\ref{s.discontinu} et~\ref{s.hyperbolicite-quasi-attracteur}.


\chapter{Bifurcations de points p\'eriodiques}\label{c.periodique}
Nous avons rassembl\'e dans ce chapitre divers r\'esultats perturbatifs
sur les suites p\'eriodiques d'applications lin\'eaires.
Gr\^ace au lemme de Franks (th\'eor\`eme~\ref{t.franks}), on obtient des r\'esultats
perturbatifs sur les orbites p\'eriodiques des diff\'eomorphismes.
Comme cons\'equence, nous d\'eduisons que les diff\'eomorphismes $C^1$-g\'en\'eriques
dont l'ensemble r\'ecurrent par cha\^\i nes n'admet pas de d\'ecomposition domin\'e
poss\`ede une infinit\'e de puits ou de sources (ph\'enom\`ene de Newhouse).
Nous d\'emontrons aussi le th\'eor\`eme d'$\Omega$-stabilit\'e
en utilisant le lemme de connexion pour les pseudo-orbites.
Finalement, nous pr\'esentons le th\'eor\`eme de Pujals-Sambarino-Wen-Gourmelon
qui fait le lien entre les tangences homoclines et l'absence de d\'ecomposition domin\'ee.

\section{Cocycles lin\'eaires p\'eriodiques}
\paragraph{D\'efinition.}
Un \emph{cocycle lin\'eaire p\'eriodique}\index{cocycle lin\'eaire p\'eriodique} $A$ est la donn\'ee d'une suite $E$ d'espaces euclidiens $(E_i)_{i\in \ZZ}$
de dimension $d$, d'une suite d'isomorphismes lin\'eaires $A_i\colon E_i\to E_{i+1}$
et d'un entier $\tau\geq 1$,
tels que $E_{i+\tau}=E_i$ et $A_{i+\tau}=A_i$ pour tout $i$.
On appelle $\tau$ sa \emph{p\'eriode}.
Les \emph{valeurs propres de} $A$ sont les valeurs propres de l'endomorphisme $A_\tau\dots A_1$.
Ses \emph{exposants} sont les quantit\'es $\lambda=\frac 1 \tau \log |\sigma|$ o\`u $\sigma$ parcourt l'ensemble
des valeurs propres.

On dit que $A$ est \emph{born\'e} par $K\geq 1$
si pour tout $i$ on a
$$\sup\left(\|A_i\|,\;\|A_{i-1}^{-1}\|\right)\leq K.$$
On dit que les cocycles $A$ et $B$ sont $\varepsilon$-proches si pour tout $i$ on a
$$\sup\left(\|A_i-B_i\|,\;\|A_{i}^{-1}-B_i^{-1}\|\right)\leq \varepsilon.$$

\paragraph{Sous-fibr\'es.}
Si $E'=(E'_i)$ est une suite de sous-espaces de dimension $d'$, \'egalement $\tau$-p\'e\-ri\-o\-di\-que, qui est
invariante par $A$, on obtient par restriction un cocycle $A'$ de $E'$.
Si $A$ est born\'e par $K$, le cocycle $A'$ l'est \'egalement.
R\'eciproquement si $A'$ est un cocycle de $E'$ born\'e par $K$, c'est la restriction d'un cocycle $A$
de $E$ born\'e par $K$. Si $A'$ et $B'$ sont $\varepsilon$-proches, leurs extensions $A$ et $B$ seront
\'egalement $\varepsilon$-proches.

\paragraph{Hyperbolicit\'e.}
On dit que $E$ est \emph{uniform\'ement contract\'e \`a la p\'eriode},
s'il existe $N\geq 1$ tel que pour tout $i\in \ZZ$ on ait~:
\begin{equation}\label{e.contraction}
\prod_{0\leq j < \tau /N} \|(A_{(j+1).N-1+i}\dots A_{j.N+i})\|\leq e^ {-\tau/N}.
\end{equation}
C'est une propri\'et\'e plus forte que la contraction \`a la p\'eriode (toutes les valeurs propres du produit
$A_\tau\dots A_1$ sont de module strictement plus petit que $1$)
mais plus faible que la contraction uniforme (il existe $N\geq 1$ tel que pour tout $i\in \ZZ$
on a $ \|(A_{i+N-1}\dots A_i)\|\leq e^ {-1}$).

On dit que $E$ admet une \emph{d\'ecomposition domin\'ee} $E=F\oplus G$,
s'il existe deux suites $\tau$-p\'eriodiques $F=(F_i)$ et $G=(G_i)$ de sous-espaces suppl\'ementaires
qui sont $A$-invariantes et s'il existe $N\geq 1$ tel que, pour tout $i\in \ZZ$ et tous $u\in F_i$, $v\in G_i$ unitaires,
on ait~:
\begin{equation}\label{e.domination}
\|(A_{i+N-1}\dots A_i).u\|\leq \frac 1 e \|(A_{i+N-1}\dots A_i).v\|.
\end{equation}
La d\'ecomposition est \emph{non triviale} si les dimensions de $F$ et de $H$ ne sont pas nulles.

On dit aussi que $E$ est \emph{$N$-uniform\'ement contract\'e \`a la p\'eriode} ou que la d\'ecomposition $E=F\oplus G$ est \emph{$N$-domin\'ee} pour la dynamique de $A$, lorsque~(\ref{e.contraction}) ou~(\ref{e.domination}) se produisent.

\section{Contraction uniforme \`a la p\'eriode}
Le r\'esultat suivant a \'et\'e obtenu par Pliss~\cite{pliss}
(voir aussi~\cite{liao-stability,liao-livre,mane-ergodic-closing}).
\begin{theoreme}[Pliss]\label{t.pliss}
Pour tous $d,K\geq 1$, $\varepsilon>0$, il existe $N,\tau_0\geq 1$ et $\delta>0$ tels que
pour tout cocycle p\'eriodique lin\'eaire $A$ de dimension $d$, born\'e par $K$
et de p\'eriode $\tau\geq \tau_0$ l'un des cas suivants se produit~:
\begin{itemize}
\item[--] $E$ est $N$-uniform\'ement contract\'e \`a la p\'eriode~;
\item[--] il existe une $\varepsilon$-perturbation $B$ de $A$ qui poss\`ede un exposant positif.
\end{itemize}
\end{theoreme}

\section{Valeurs propres r\'eelles simples}

Il est possible par pertubation de rendre les valeurs propres d'un cocycle lin\'eaire p\'eriodique r\'eelles.
Ceci a \'et\'e d\'emontr\'e dans~\cite{bc} pour la dimension $2$ et g\'en\'eralis\'e en
dimension sup\'erieure dans~\cite{bgv}.

\begin{proposition}[lemme 6.6 de~\cite{bc} et th\'eor\`eme 2.1 de~\cite{bgv}]\label{p.reel}
Pour tous $d,K \geq 1$, $\varepsilon>0$, il existe $\tau_0\geq 1$
tel que tout cocycle p\'eriodique lin\'eaire $A$ de dimension $d$, born\'e par $K$
et de p\'eriode $\tau\geq \tau_0$ poss\`ede une $\varepsilon$-perturbation $B$ telle que~:
\begin{itemize}
\item[--] les valeurs propres de $B$ sont toutes r\'eelles et simples~;
\item[--] pour $1\leq i \leq d$, les $i^{\text{\`emes}}$ exposants de $A$ et de $B$
(compt\'es avec multiplicit\'e) sont $\varepsilon$-proches.
\end{itemize}
\end{proposition}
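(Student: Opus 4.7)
Le plan est de proc\'eder par r\'ecurrence sur la dimension $d$ en isolant le cas \emph{\'equimodulaire} o\`u toutes les valeurs propres du produit $P_\tau := A_\tau \cdots A_1$ ont le m\^eme module $\mu$. Si le spectre de $P_\tau$ se s\'epare en groupes de modules distincts, une $\varepsilon$-perturbation de type Franks, utilisant l'\'ecart spectral uniforme \`a la p\'eriode pour pr\'esenter les sous-espaces spectraux de $P_\tau$ comme invariants du cocycle, produit une d\'ecomposition $A$-invariante non triviale $E = E^-\oplus E^+$. Chaque restriction $A|_{E^\pm}$ est un cocycle p\'eriodique de dimension strictement inf\'erieure, born\'e par une constante ne d\'ependant que de $K$. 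L'hypoth\`ese de r\'ecurrence appliqu\'ee \`a chacun donne les perturbations voulues, qui se combinent en une $\varepsilon$-perturbation globale de $A$ pr\'eservant les exposants \`a $\varepsilon$ pr\`es.

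Tout se ram\`ene donc au cas \'equimodulaire, normalis\'e par $\mu=1$. Le m\'ecanisme principal consiste \`a r\'ealiser toute perturbation (born\'ee) du produit $P_\tau$ comme le produit $\tau$-i\`eme de petites modifications de chaque $A_i$. \'Etant donn\'e $G\in\mathrm{GL}(E_1)$ avec $\|\log G\|$ born\'e par une constante $C(d,K)$, on choisit un chemin lisse $t\mapsto H(t)$ de $\mathrm{Id}$ \`a $G$ et on le d\'ecoupe en $\tau$ incr\'ements $h_i$ satisfaisant $\|h_i-\mathrm{Id}\|=O(1/\tau)$; en transportant $h_i$ le long du cocycle (i.e.\ en rempla\c{c}ant $A_i$ par $B_i:=A_iV_i$, o\`u $V_i$ est la conjugu\'ee de $h_i$ par $A_{i-1}\cdots A_1$), le cocycle perturb\'e a pour produit $\tau$-i\`eme $P_\tau G$. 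Si les $V_i$ restent de taille $O(1/\tau)\leq\varepsilon$ pour $\tau$ assez grand, il suffit de choisir $G$ dans l'ouvert dense des matrices pour lesquelles $P_\tau G$ a un spectre r\'eel simple proche du cercle unit\'e; la perturbation obtenue a alors des exposants $O(C/\tau)$-proches de ceux de $A$.

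La principale difficult\'e est de contr\^oler les $V_i$, lesquelles d\'ependent du conditionnement des produits partiels $A_{i-1}\cdots A_1$; dans le r\'egime \'equimodulaire ce conditionnement peut \^etre arbitrairement grand \`a des \'echelles interm\'ediaires sans que cela n'apparaisse dans le produit final. On y rem\'edie par une pr\'eparation $\varepsilon$-petite du cocycle: une suite de petites rotations orthogonales des rep\`eres locaux (proc\'ed\'e de type Gram--Schmidt r\'eparti sur la p\'eriode) ram\`ene chaque $A_i$ \`a une forme triangulaire sup\'erieure dans laquelle les produits partiels ont un conditionnement contr\^olable en fonction des diagonales. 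L'\'equimodularit\'e du produit permet alors, par un argument combinatoire sur la r\'epartition des diagonales et un d\'ecoupage de $[1,\tau]$ en sous-intervalles adapt\'es, d'appliquer la perturbation distribu\'ee sur des morceaux o\`u les conjugaisons restent born\'ees, et de raccorder par des corrections locales. Cette \'etape technique est le coeur de la d\'emonstration et celle pour laquelle $\tau_0$ doit \^etre choisi grand en fonction de $d,K,\varepsilon$.
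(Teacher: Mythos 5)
Le m\'emoire ne d\'emontre pas cette proposition~: elle y est seulement cit\'ee (lemme 6.6 de~\cite{bc} pour $d=2$, th\'eor\`eme 2.1 de~\cite{bgv} en g\'en\'eral). Votre m\'ecanisme central --- r\'ealiser une multiplication \`a droite $P_\tau\mapsto P_\tau G$ du produit \`a la p\'eriode en r\'epartissant $G=h_\tau\cdots h_1$ le long de l'orbite via les conjugu\'ees $V_i=Q_{i-1}h_iQ_{i-1}^{-1}$ --- est alg\'ebriquement exact, mais les points o\`u votre texte s'appuie sur des affirmations non justifi\'ees sont pr\'ecis\'ement ceux qui font \'echouer cette approche. (i) Dans l'\'etape de r\'ecurrence, les sous-espaces spectraux de $P_\tau$ sont d\'ej\`a invariants par le cocycle sans aucune perturbation~; le vrai probl\`eme est de recoller des $\varepsilon$-perturbations des restrictions $A|_{E^\pm}$ en une $\varepsilon$-perturbation de $A$, ce qui exige une minoration uniforme de l'angle entre $E^-_i$ et $E^+_i$ le long de toute l'orbite --- c'est essentiellement une domination, que la seule s\'eparation des modules \`a la p\'eriode ne fournit pas (l'\'ecart spectral n'est d'ailleurs pas uniforme~: deux exposants peuvent diff\'erer de $1/\tau^2$). (ii) L'ensemble des matrices \`a spectre r\'eel simple est ouvert mais \emph{non dense}, et l'existence d'un $G$ avec $\|\log G\|\leq C(d,K)$ tel que $P_\tau G$ soit r\'eelle simple n'est pas \'etablie, $P_\tau$ n'\'etant born\'ee que par $K^\tau$. (iii) Surtout, la conclusion «~exposants $O(C/\tau)$-proches~» est fausse en g\'en\'eral~: pour $P_\tau=\left(\begin{smallmatrix}1&K^{\tau}\\ 0&1\end{smallmatrix}\right)$ (\'equimodulaire, exposants nuls, r\'ealisable par un cocycle $K$-born\'e) et $G$ une rotation d'angle $\theta>0$ arbitrairement petit, les valeurs propres de $P_\tau G$ sont de l'ordre de $K^{\tau}\theta$ et $(K^{\tau}\theta)^{-1}$, donc les exposants sautent \`a $\pm\log K$ environ. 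Le contr\^ole des exposants exige que la perturbation pr\'eserve approximativement le drapeau des espaces de Lyapunov, comme dans l'addendum~\ref{a.closing-ergodique}.

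Enfin, la difficult\'e que vous d\'esignez vous-m\^eme comme le coeur de la d\'emonstration --- le conditionnement des produits partiels $Q_{i-1}$, exponentiel en $\tau$ dans le r\'egime \'equimodulaire --- n'est pas r\'esolue~: la mise sous forme triangulaire par rotations des rep\`eres est gratuite (c'est un choix de bases orthonorm\'ees, pas une perturbation) et ne borne nullement ce conditionnement, et «~l'argument combinatoire~» annonc\'e n'est pas donn\'e. La d\'emonstration de~\cite{bc,bgv} \'evite enti\`erement ces conjugaisons~: en dimension $2$, on compose chaque $A_i$ directement avec une rotation $R_{\theta_i}$, $|\theta_i|\leq\varepsilon$, et un lemme quantitatif (toute rotation d'angle $\varepsilon$ compos\'ee avec une matrice $K$-born\'ee fait tourner chaque direction d'au moins $c(K)\varepsilon$) joint \`a un argument de valeurs interm\'ediaires force l'argument de la valeur propre complexe \`a traverser l'axe r\'eel d\`es que $\tau\geq \pi/(c(K)\varepsilon)$~; le cas g\'en\'eral s'y ram\`ene en travaillant dans les plans de Lyapunov des paires complexes, ce qui contr\^ole du m\^eme coup les exposants, la simplicit\'e des valeurs propres s'obtenant par une derni\`ere perturbation de type Franks de $A_\tau$ seul.
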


\section{Domination}

Ma\~n\'e a d\'emontr\'e~\cite{mane-ergodic-closing}
qu'en l'absence de d\'ecomposition domin\'ee,
on peut faire bifurquer les orbites p\'eriodiques de type selle d'un diff\'eomorphisme
de surface pour les transformer en puits ou source.
Ceci a \'et\'e g\'en\'eralis\'e en dimension $3$ par D\'\i az, Pujals et Ures~\cite{dpu} puis
en dimension quelconque par Bonatti, Pujals et D\'\i az dans le cadre des dynamiques
robustement transitives~\cite{bdp}~:
supposons que les diff\'eomorphismes proches de $f\in \diff^1(M)$ soient tous transitifs,
alors $M$ poss\`ede une d\'ecomposition domin\'ee pour $f$.
Plus r\'ecemment, Bonatti, Gourmelon et Vivier ont donn\'e~\cite{bgv} une preuve diff\'erente de ce r\'esultat
qui autorise \`a travailler avec des orbites p\'eriodiques n'appartenant pas \`a un m\^eme ensemble transitif.

\begin{theoreme}[Bonatti-Gourmelon-Vivier]\label{t.bgv}
Pour tous $d,K \geq 1$, $\varepsilon>0$, il existe $N,\tau_0\geq 1$
tel que pour tout cocycle p\'eriodique lin\'eaire $A$ de dimension $d$, born\'e par $K$
et de p\'eriode $\tau\geq \tau_0$ l'un des cas suivants se produit~:
\begin{itemize}
\item[--] $E$ poss\`ede une d\'ecomposition $N$-domin\'ee non triviale~;
\item[--] il existe une $\varepsilon$-perturbation $B$ de $A$ dont toutes les valeurs propres sont r\'eelles
et de m\^eme module.
\end{itemize}
\end{theoreme}

Comme cons\'equence des th\'eor\`emes~\ref{t.pliss} et~\ref{t.bgv}, on retrouve
un r\'esultat de Ma\~n\'e~\cite{mane-ergodic-closing}.

\begin{corollaire}[Ma\~n\'e]\label{c.mane-splitting}
Pour tous $d,K \geq 1$, $\varepsilon>0$, il existe $N,\tau_0\geq 1$
tel que pour tout cocycle p\'eriodique lin\'eaire $A$ de dimension $d$, born\'e par $K$
et de p\'eriode $\tau\geq \tau_0$ l'un des cas suivants se produit~:
\begin{itemize}
\item[--] il existe une d\'ecomposition $N$-domin\'ee $E=E^s\oplus E^u$
telle que $E^s$ et $E^u$ sont $N$-u\-ni\-for\-m\'e\-ment contract\'es \`a la p\'eriode par $A$ et $A^ {-1}$ respectivement~;
\item[--] il existe une $\varepsilon$-perturbation $B$ de $A$ ayant une valeur propre de module $1$.
\end{itemize}
\end{corollaire}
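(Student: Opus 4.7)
The plan is to obtain the splitting by first refining $A$ itself with Theorem~\ref{t.bgv} and then upgrading the weak contraction on each extremal factor to $N$-uniform contraction at the period via Theorem~\ref{t.pliss}, bridging the two by an intermediate value argument on the top Lyapunov exponent. Phase~1 iterates Theorem~\ref{t.bgv}, applied to $A$ itself: each time its first alternative fires one splits off a dominated subbundle and recurses inside each factor; since dimensions strictly decrease, after at most $d$ steps one obtains a finest $N_1$-dominated splitting $E = E_1 \oplus \cdots \oplus E_k$ \emph{of $A$} such that no $A|_{E_i}$ admits a non-trivial dominated splitting. Applying Theorem~\ref{t.bgv} once more on each restriction and summing the resulting perturbations yields an $\varepsilon_1$-perturbation $A'$ of $A$ preserving the splitting, whose spectrum on each $E_i$ is real and of constant modulus $\rho_i$; domination forces $\rho_1 < \cdots < \rho_k$.

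If some $\rho_{i_0} = 1$, then $A'$ is already an $\varepsilon_1$-perturbation of $A$ with a modulus-$1$ eigenvalue and we are in the second alternative. Otherwise let $s$ be the largest index with $\rho_s < 1$ and set $E^s = E_1 \oplus \cdots \oplus E_s$, $E^u = E_{s+1} \oplus \cdots \oplus E_k$. For each $i \leq s$, apply Theorem~\ref{t.pliss} to $A'|_{E_i}$, whose exponents all equal $\log \rho_i < 0$: either $E_i$ is $N_2$-uniformly contracted at the period by $A'$, or there is an $\varepsilon_2$-perturbation $\tilde A$ of $A'|_{E_i}$ with a positive exponent. In the second case the top exponent (the logarithm of the spectral radius of the period-$\tau$ holonomy) depends continuously on the cocycle entries and has opposite signs at the two endpoints of the segment from $A'|_{E_i}$ to $\tilde A$; the intermediate value theorem produces on this segment a cocycle with top exponent zero, hence a modulus-$1$ eigenvalue, and re-injecting into the full bundle gives a perturbation of $A$ of size at most $\varepsilon_1 + \varepsilon_2 \leq \varepsilon$. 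The factors with $i \geq s+1$ are treated symmetrically by applying Theorem~\ref{t.pliss} to $(A'|_{E_i})^{-1}$.

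In the remaining case, every extremal factor is $N_2$-uniformly contracted at the period (by $A'$ when $i \leq s$, by $(A')^{-1}$ when $i \geq s+1$). Using the $N_1$-domination between consecutive factors to bound the cross-product norms, these local estimates aggregate to an $N_3$-uniform contraction of $E^s$ by $A'$ and of $E^u$ by $(A')^{-1}$, with $N_3$ depending only on $N_1, N_2, d$. To finish, one transfers these estimates from $A'$ to $A$: replacing $A'$ by its $\varepsilon_1$-perturbation $A$ multiplies each $N_3$-block norm by at most $(1+\varepsilon_1)^{N_3}$, and hence the whole product of the $\tau/N_3$ blocks by at most $(1+\varepsilon_1)^\tau$; choosing $\tau_0$ large enough that this factor is absorbed by a fraction of the slack $e^{-\tau/N_3}$ gives the same inequality for $A$ with the doubled constant $2N_3$, hence the first alternative.

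The main obstacle is the circular nature of the parameters: the transfer in the last step requires $\varepsilon_1$ to be small compared to $1/N_3$, while $N_3$ is itself produced downstream by Theorem~\ref{t.pliss} with input depending on $\varepsilon_1$. The resolution is to fix the target constant $N$ first by invoking Theorems~\ref{t.bgv} and~\ref{t.pliss} with parameter $\varepsilon/3$, extract all intermediate $N_i$'s and $\varepsilon_i$'s from that single application, and only then pick $\tau_0$ large enough that each exponential slack absorbs the accumulated multiplicative errors.
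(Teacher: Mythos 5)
Your overall strategy (use Theorem~\ref{t.bgv} to isolate the stable and unstable blocks and Theorem~\ref{t.pliss} to upgrade to uniform contraction at the period, with intermediate value arguments converting sign changes of exponents into modulus-one eigenvalues) is the right one, and the first phase is essentially sound. But there is a genuine gap in the final transfer step, and you have located it yourself without actually resolving it. Establishing that $E^s$ is $N_3$-uniformly contracted at the period \emph{by the perturbed cocycle $A'$} and then passing back to $A$ costs a multiplicative error per $N_3$-block of order $1+C K^{N_3}\varepsilon_1$ (not merely $(1+\varepsilon_1)^{N_3}$: norms of products are not products of norms, so the comparison goes through $\|A^{(N_3)}-A'^{(N_3)}\|\leq N_3K^{N_3}\varepsilon_1$), hence a total error exponential in $\tau$ with rate comparable to $\varepsilon_1$; the available slack $e^{-\tau/N_3}$ is exponential in $\tau$ with rate $1/N_3$. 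No choice of $\tau_0$ arbitrates between two exponentials in the same variable: one needs $\varepsilon_1\lesssim 1/N_3$, and since $N_3$ is produced by Theorems~\ref{t.bgv} and~\ref{t.pliss} with input $\varepsilon_1$, and necessarily grows as $\varepsilon_1$ shrinks, fixing all parameters at $\varepsilon/3$ in advance, as you propose, does not establish this inequality. A secondary gap of the same nature: you define $E^s$ by the signs of the moduli $\rho_i$ of $A'$ but never check that $A$ itself has only negative exponents along $E^s$; this too requires an intermediate value argument between $A$ and $A'$.

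The repair is to never leave the original cocycle, which is what the paper does. Assume the second alternative of the corollary fails; perturbations are then used only to derive contradictions. The paper takes an $N$-dominated splitting $E=F\oplus G\oplus H$ with all exponents of $A$ negative along $F$ and positive along $H$, maximizing $\dim F$ and $\dim H$; Theorem~\ref{t.bgv} applied to $A|_G$ plus the intermediate value theorem shows $G$ must be trivial (otherwise some perturbation on the segment acquires a modulus-one eigenvalue, or the exponents along $G$ already have a constant sign, contradicting maximality). Then Theorem~\ref{t.pliss} is applied directly to $A|_{E^s}$ and to $A^{-1}|_{E^u}$: its second alternative would yield a perturbation with an exponent of the wrong sign, hence by the intermediate value theorem an $\varepsilon$-perturbation of $A$ with a modulus-one eigenvalue, a contradiction. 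So its first alternative holds for $A$ itself, no transfer of estimates between $A$ and a perturbation is ever needed, and the circularity disappears.
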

\begin{proof}
Consid\'erons des entiers $N,\tau_0$ sup\'erieur aux entiers donn\'es par les th\'eor\`emes~\ref{t.pliss}
et~\ref{t.bgv} et consid\'erons un cocycle $A$ pour lequel
le deuxi\`eme cas du corollaire ne se produit pas.

Consid\'erons une d\'ecomposition $N$-domin\'ee $E=F\oplus G\oplus H$
(\'eventuellement triviale)
telle que tous les exposants selon $F$ soient strictement n\'egatifs
et tous les exposants selon $H$ soient strictement positifs.
Il existe une telle d\'ecomposition qui maximise les dimensions de $F$ et de $G$.
Nous pouvons appliquer le th\'eor\`eme~\ref{t.bgv} \`a la restriction de $A$
au fibr\'e $G$. Puisqu'il n'existe pas de d\'ecomposition $N$-domin\'ee de $G$,
on peut perturber le cocycle $A$ pour rendre tous ses exposants selon $G$ de m\^emes signes.
Puisque nous ne sommes pas dans le second cas du corollaire, les exposants selon $G$
doivent d\'ej\`a \^etre de m\^emes signes pour $A$, ce qui contredit la maximalit\'e de $F$ ou de $H$.
Nous avons donc montr\'e qu'il existe une d\'ecomposition $N$-domin\'ee
$E=E^s\oplus E^u$
telle que tous les exposants de $A$ selon $E^s$ sont strictement n\'egatifs et tous ceux selon $E^u$ sont strictement positifs.

En appliquant \`a pr\'esent le th\'eor\`eme~\ref{t.pliss} \`a chacun des fibr\'es $E^ {s}$ et $E^u$,
on obtient que $E^s$ et $E^u$ sont $N$-uniform\'ement contract\'es \`a la p\'eriode par $A$ et $A^ {-1}$ respectivement.
\end{proof}

\section{Tangences homoclines}\label{s.tangence}
D'autres bifurcations associ\'ees aux orbites p\'eriodiques hyperboliques
font intervenir les vari\'et\'es invariantes.
C'est la cas des tangences homoclines, particuli\`erement importantes
puisqu'elles engendrent des modifications importantes de la dynamique (voir~\cite{palis-takens,bdv}).

\begin{definition}
Une orbite p\'eriodique hyperbolique $O$
a une \emph{tangence homocline}\index{tangence homocline} s'il existe un point d'intersection
non transverse $z$ entre les vari\'et\'es stables et instables de $O$.
\end{definition}

En dimension deux,
Pujals et Sambarino ont montr\'e~\cite{pujals-sambarino} que
si l'on ne peut pas approcher $f$
par des diff\'eomorphismes pr\'esentant des tangences homoclines,
l'ensemble des points p\'eriodiques de type selle de $f$ poss\`ede
une d\'ecomposition domin\'ee.
Ceci a \'et\'e g\'en\'eralis\'e par Wen~\cite{wen-tangences}
en dimension quelconque.
Gourmelon~\cite{gourmelon-tangence} a am\'elior\'e ces \'enonc\'es en donnant une version
quantitative qui autorise \`a consid\'erer chaque orbite p\'eriodique s\'epar\'ement.

\begin{theoreme}[Pujals-Sambarino, Wen, Gourmelon]\label{t.tangence}
Pour tout voisinage $\cU\subset \diff^1(M)$ de $f$, il existe $N,\tau_0\geq 1$
et un voisinage $\cV$ de $f$ tels que pour $g_0\in \cV$ et toute orbite p\'eriodique hyperbolique $O$ de $g_0$ de p\'eriode $\tau\geq \tau_0$, l'un des deux cas suivants se produit~:
\begin{itemize}
\item[--] la d\'ecomposition $T_OM$ en espaces stables et instables est $N$-domin\'ee,
\item[--] il existe une perturbation $g\in \cU$ qui co\"\i ncide avec $g_0$
sur $O$ et sur un voisinage arbitrairement petit de $O$ telle que
$O$ a une orbite de tangence homocline contenue dans un petit voisinage de $O$.
\end{itemize}
\end{theoreme}

Avec le th\'eor\`eme~\ref{t.pliss} et la proposition~\ref{p.reel},
on en d\'eduit (par une d\'emonstration similaire \`a celle du corollaire~\ref{c.mane-splitting},
voir~\cite[lemmes 3.3 et 3.4]{wen-palis}),

\begin{corollaire}[Wen]\label{c.tangence}
Supposons que $f$ n'est pas limite dans $\diff^1(M)$ de diff\'eomorphismes ayant
une tangence homocline.

Il existe alors
un voisinage $\cU\subset \diff^1(M)$ de $f$ et $N,\tau_0,\delta\geq 1$ tels que
pour tout $g\in \cU$ et toute orbite p\'eriodique $O$ de $g$,
la d\'ecomposition $T_OM=E^s_\delta\oplus E^c_\delta\oplus E^u_\delta$ en espaces caract\'eristiques
dont les exposants de Lyapunov appartiennent respectivement \`a
$(-\infty,-\delta]$, $(-\delta,\delta)$, $[\delta,+\infty)$ v\'erifie~:
\begin{itemize}
\item[--] $E^c_\delta$ est de dimension $0$ ou $1$,
\item[--] la d\'ecomposition $E^s_\delta\oplus E^c_\delta\oplus E^u_\delta$ est $N$-domin\'ee,
\item[--] si la p\'eriode de $O$ est sup\'erieure \`a $\tau_0$, alors
$E^s_\delta$ et $E^u_\delta$ sont $N$-uniform\'ement contract\'es \`a la p\'eriode par $g$ et $g^{-1}$ respectivement.
\end{itemize}
\end{corollaire}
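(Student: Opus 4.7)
The plan is to combine the stable/unstable $N$-dominated splitting furnished by Theorem~\ref{t.tangence} with the cocycle-level tools of Theorem~\ref{t.pliss} and Proposition~\ref{p.reel}, following the scheme used in the proof of Corollary~\ref{c.mane-splitting}. By hypothesis, $f$ admits a neighborhood $\cU_0\subset\diff^1(M)$ no element of which has a homoclinic tangency. Applying Theorem~\ref{t.tangence} with target $\cU_0$, we obtain integers $N_1,\tau_1\geq 1$ and a neighborhood $\cV\subset\cU_0$ such that for every $g\in\cV$ and every hyperbolic periodic orbit $O$ of $g$ of period $\geq\tau_1$, the decomposition $T_OM=E^s\oplus E^u$ into stable and unstable spaces is $N_1$-dominated: the other alternative of Theorem~\ref{t.tangence} would produce a tangency inside $\cU_0$. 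Shrink $\cV$ to a neighborhood $\cU$ on which $\|Dg^{\pm 1}\|$ is bounded by some uniform constant $K$, and take $N,\tau_0,\delta$ larger than the constants supplied by Theorem~\ref{t.pliss} and Proposition~\ref{p.reel} for dimension $d$ and bound $K$, and at least as large as $N_1$ and $\tau_1$.

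Fix a hyperbolic periodic orbit $O$ of $g\in\cU$ of period $\tau\geq\tau_0$, and consider the periodic linear cocycle $A=Dg_{|O}$, bounded by $K$. For the bound on the central dimension, assume for contradiction that two Lyapunov exponents of $A$ lie in $(-\delta,\delta)$. Proposition~\ref{p.reel} produces a cocycle perturbation $B$ of $A$ with all eigenvalues real and simple, keeping the two selected exponents within $(-\delta,\delta)$; a further arbitrarily small cocycle perturbation then slides these two exponents to opposite sides of $0$, yielding a cocycle $B'$ whose stable/unstable dimensions along $O$ differ from those of $A$. Along any continuous path from $A$ to $B'$, the cocycle must pass through a configuration having an eigenvalue of modulus $1$. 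By Franks' lemma (Theorem~\ref{t.franks}) these cocycles are realized by diffeomorphisms in $\cU_0$; in particular the non-hyperbolic configuration can be unfolded by an arbitrarily small perturbation into a hyperbolic orbit whose stable/unstable spaces cannot inherit an $N_1$-dominated splitting. Theorem~\ref{t.tangence} applied to this perturbed diffeomorphism then produces a homoclinic tangency inside $\cU_0$, the desired contradiction; hence $\dim E^c_\delta\leq 1$.

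For the $N$-uniform contraction of $E^s_\delta$ at the period (and symmetrically for $E^u_\delta$ under $g^{-1}$), apply Theorem~\ref{t.pliss} to the restriction of $A$ to the sum of characteristic spaces of exponent $\leq-\delta$: either this subcocycle is $N$-uniformly contracted at the period --- the desired conclusion --- or Pliss supplies a cocycle perturbation creating a positive exponent there. In the latter case, the same scheme applies: via Franks' lemma the perturbation is realized by a $C^1$-perturbation in $\cU_0$, and the intermediate step where an exponent crosses $0$ yields a non-hyperbolic orbit whose unfolding violates the $N_1$-domination required by Theorem~\ref{t.tangence}, thereby producing a tangency in $\cU_0$ --- a contradiction. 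The main obstacle is to execute this perturbation scheme rigorously: one must verify that the cocycle perturbations from Proposition~\ref{p.reel} and Theorem~\ref{t.pliss}, once translated via Franks' lemma, yield diffeomorphisms staying in $\cU_0$ along a continuous path, and that the non-hyperbolic configuration obtained by sliding an exponent to $0$ admits a generic unfolding producing the desired failure of $N_1$-domination. Uniformity of $N,\tau_0,\delta$ over all hyperbolic orbits of all $g\in\cU$ follows because Theorems~\ref{t.pliss} and~\ref{t.tangence} and Proposition~\ref{p.reel} all supply constants depending only on $d$ and $K$; non-hyperbolic periodic orbits are finally addressed by an approximation argument using Franks' lemma.
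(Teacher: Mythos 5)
Your overall scheme is the paper's: the paper derives this corollary in one line from Theorem~\ref{t.tangence}, Theorem~\ref{t.pliss} and Proposition~\ref{p.reel}, ``par une d\'emonstration similaire \`a celle du corollaire~\ref{c.mane-splitting}'', deferring the details to Wen's lemmas 3.3--3.4. So the choice of tools is right. But there is a genuine gap exactly where you flag ``the main obstacle'': you assert that the hyperbolic orbit obtained after sliding the two weak exponents to opposite sides of $0$ ``cannot inherit an $N_1$-dominated splitting'', and nothing in your argument justifies this. The detour through a path of cocycles, an eigenvalue of modulus~$1$, and a ``generic unfolding'' does not produce the failure of domination --- an unfolding of a non-hyperbolic orbit can perfectly well be dominated.

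The missing ingredient is quantitative and is really the heart of Wen's argument: if $T_OM=E\oplus F$ is $N$-dominated over a periodic orbit of period $\tau$, then multiplying inequality~(\ref{e.domination}) over the whole period shows that every Lyapunov exponent along $E$ is at most every Lyapunov exponent along $F$ minus $1/N$. This dictates the choice of $\delta$ (of order $1/N$, also small enough that Franks-type perturbations of size $\varepsilon$ can move an exponent across $0$) and closes the contradiction directly, with no path and no unfolding: if two exponents of $O$ lie in $(-\delta,\delta)$, Proposition~\ref{p.reel} plus the Franks lemma realize, inside $\cU_0$, a hyperbolic orbit whose index separates the two corresponding characteristic spaces and whose exponents are $\varepsilon$-close to the original ones, hence still within $2\delta+2\varepsilon<1/N$ of each other; Theorem~\ref{t.tangence} forces its stable/unstable splitting to be $N$-dominated, hence forces a gap of at least $1/N$ between these two exponents --- contradiction. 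The same input is needed in your last paragraph: when Pliss's alternative produces a positive exponent inside $E^s_\delta$, the contradiction is not an abstract ``violation of $N_1$-domination'' but comes from an intermediate perturbation at which the moving exponent lies in $(-\delta,\delta)$, which either creates a second weak exponent (contradicting the first item, already established) or, after changing the index, again contradicts the $1/N$ separation forced by Theorem~\ref{t.tangence}. Without this exponent-gap consequence of domination at the period, neither of your two contradictions actually closes.
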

Ce r\'esultat se g\'en\'eralise aux mesures ergodiques
(voir~\cite[Corollaire 1.3]{model} et~\cite{yang-mesure}).
\medskip

L'existence d'une tangence homocline est un ph\'enom\`ene de codimension $1$
et d'apr\`es le th\'eor\`eme~\ref{t.kupka-smale} de Kupka-Smale
l'ensemble des diff\'eomorphismes ayant une tangence homocline forme une partie maigre.
On peut toutefois g\'en\'eraliser la d\'efinition pr\'ec\'edente et d\'efinir une propri\'et\'e qui appara\^\i t sur des ouverts de
diff\'eomorphismes.

\begin{definition}
Une orbite p\'eriodique hyperbolique $O$ poss\`ede une \emph{tangence homocline robuste}
\index{tangence homocline!robuste}
si elle appartient \`a un ensemble hyperbolique transitif $K$ tel que pour tout
diff\'eomorphisme $g\in \diff^1(M)$ proche de $f$ la continuation hyperbolique $K_{g}$
de $K$ pour $g$ poss\`ede deux points $x,y$ tels que $W^s(x)$ et $W^u(y)$ aient une intersection non transverse.
\end{definition}

Toute vari\'et\'e de dimension sup\'erieure ou \'egale \`a $3$ poss\`ede des diff\'eomorphismes
ayant des tangences homocline robustes
(voir~\cite[section 8]{newhouse-cours}, \cite{asaoka} et la section~\ref{s.exemple-abraham-smale}).
En dimension $2$, Newhouse a montr\'e~\cite{newhouse-phenomenon} qu'en topologie $C^2$, il existe des tangences homoclines robustes.
Moreira a montr\'e~\cite{gugu} que ce n'est pas le cas en topologie $C^1$.

\begin{theoreme}[Moreira]\label{t.gugu}
Lorsque $\dim(M)=2$, il n'existe pas de tangence homocline robuste dans $\diff^1(M)$.
\end{theoreme}

\section[Application (1)~: ph\'enom\`ene de Newhouse]{Application (1)~: ph\'enom\`ene de Newhouse en l'absence de d\'ecomposition domin\'ee}
D'apr\`es le th\'eor\`eme~\ref{t.newhouse},
le ph\'enom\`ene de Newhouse\index{ph\'enom\`ene de Newhouse} existe en topologie $C^1$
sur toute vari\'et\'e de dimension sup\'erieure ou \'egale \`a $3$.
Parall\`element \`a ces exemples, Ma\~n\'e a montr\'e~\cite{mane-ergodic-closing}
que tout diff\'eomorphisme de surface
$C^1$-g\'en\'erique est hyperbolique ou bien poss\`ede une infinit\'e de puits ou de sources.
D\'\i az, Pujals, Ures en dimension $3$ (voir~\cite{dpu}), puis Bonatti, D\'\i az, Pujals en dimension quelconque~\cite{bdp} ont obtenu un r\'esultat similaire.

\begin{theoreme}[Bonatti-D\'\i az-Pujals-Ures]\label{t.bdpu}
Pour tout diff\'eomorphisme appartenant \`a un G$_\delta$ dense $\cG\subset \diff^1(M)$,
toute classe homocline poss\`ede une d\'ecomposition
domin\'ee non triviale ou est contenue dans l'adh\'erence des puits ou des sources.
\end{theoreme}

En particulier, toute classe de r\'ecurrence par cha\^\i nes ayant de l'int\'erieur
ou isol\'ee dans $\cR(f)$ poss\`ede une d\'ecomposition domin\'ee non triviale.
\smallskip

En appliquant le corollaire~\ref{c.global} et le th\'eor\`eme~\ref{t.bgv},
nous obtenon~\cite{} une version du r\'esultat pr\'ec\'edent pour les ensembles transitifs
par cha\^\i nes.

\begin{theoreme}[Abdenur-Bonatti-Crovisier]\label{t.newhouseC1}
Il existe un G$_\delta$ dense $\cG\subset \diff^1(M)$
tel que tout ensemble transitif par cha\^\i nes d'un diff\'eomorphisme $f\in \cG$
poss\`ede une d\'ecomposition domin\'ee non triviale ou bien est
limite pour la distance de Hausdorff d'une suite de puits ou de sources.
\end{theoreme}

On en d\'eduit un \'enonc\'e sur la dynamique globale.

\begin{corollaire}[Abdenur-Bonatti-Crovisier]
Pour tout diff\'eomorphisme $C^1$-g\'en\'erique,
\begin{itemize}
\item[--] ou bien il existe une d\'ecomposition de $\Omega(f)$
en un nombre fini d'ensembles compacts invariants
$$\Omega(f)=\Lambda_0\cup\dots\cup\Lambda_d$$
tels que $\Lambda_0\cup \Lambda_d$ est l'union d'un nombre fini de puits et de sources
et $\Lambda_i$ poss\`ede une d\'ecomposition domin\'ee non triviale,
\item[--] ou bien $f$ poss\`ede une infinit\'e de puits ou de sources.
\end{itemize}
\end{corollaire}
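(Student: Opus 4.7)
The plan is to derive the corollary from Theorem~\ref{t.newhouseC1} by classifying the chain-recurrence classes of $f$ according to the dimension of a bundle in a dominated splitting, and grouping classes of the same index into a single compact invariant piece.

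First I would pick $f$ in a $G_\delta$ dense subset of $\diff^1(M)$ on which $\Omega(f) = \cR(f)$ (section~\ref{s.consequence}.a) and on which Theorem~\ref{t.newhouseC1} holds, so that $\Omega(f)$ is the union of the chain-recurrence classes of $f$. Assuming the second alternative of the statement fails, $f$ admits only finitely many sinks and sources; let $\Lambda_0$ and $\Lambda_d$ be their respective unions, both compact invariant. For every chain-recurrence class $K$ distinct from a sink or a source, Theorem~\ref{t.newhouseC1} yields either a nontrivial dominated splitting on $K$ or a Hausdorff-convergent sequence of sinks and sources with limit $K$; the latter is ruled out by the finiteness assumption, so $K$ admits a nontrivial dominated splitting of some index $i_K \in \{1,\dots,d-1\}$.

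For each $i \in \{1,\dots,d-1\}$ I would set $\Lambda_i$ to be the closure in $\Omega(f)$ of the union of chain-recurrence classes admitting some dominated splitting of index $i$. By construction $\Omega(f) = \Lambda_0 \cup \dots \cup \Lambda_d$ and each $\Lambda_i$ is compact and invariant. What remains is to verify that each $\Lambda_i$ for $1 \leq i \leq d-1$ itself carries a nontrivial dominated splitting. The main tool is the limit-passage property of section~\ref{s.domination}: for a fixed index $i$ and a fixed integer $N$, an $N$-dominated splitting on compact invariant sets survives Hausdorff limits.

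The hard part will be the uniformity of $N$, since different classes making up $\Lambda_i$ may need different constants, and without a uniform bound the splittings could degenerate at the closure. The way I would address this is to stratify $\Lambda_i = \bigcup_{N \geq 1} \Lambda_i^{(N)}$, where $\Lambda_i^{(N)}$ is the closure of the union of classes admitting an $N$-dominated splitting of index $i$; by limit passage each $\Lambda_i^{(N)}$ is compact invariant and carries such a splitting. Arguing that this stratification stabilizes at some finite $N_0$, for instance by exploiting the finiteness bound on periodic orbits of given domination constant provided by Corollary~\ref{c.homocline-bornee}, is the principal technical step; once it is carried out, $\Lambda_i = \Lambda_i^{(N_0)}$ is compact and carries an $N_0$-dominated splitting of index $i$.
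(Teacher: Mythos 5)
Your reduction is the right one: assume finitely many sinks and sources, set $\Lambda_0,\Lambda_d$ equal to their unions, apply the Théorème~\ref{t.newhouseC1} to every other chain class (whose second alternative is indeed impossible when only finitely many sinks and sources exist), sort the classes by the index of a nontrivial dominated splitting, and take closures. The one point you leave open, however, is exactly the point that needs a proof, and the route you sketch for it does not work. The sets $\Lambda_i^{(N)}$ form an increasing sequence of compacta, and an increasing union of compact sets has no reason to stabilize; nothing in your argument prevents the domination constant from degenerating along a sequence of chain classes accumulating somewhere in $\Omega(f)$, in which case $\Lambda_i$ would carry no dominated splitting at all. Corollaire~\ref{c.homocline-bornee} cannot repair this: it bounds the number of homoclinic classes containing periodic orbits that are uniformly hyperbolic \emph{at the period} for a given $N$, which is a statement about counting classes, not about producing a uniform domination constant valid for all of them.

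The correct source of uniformity is upstream, in Théorème~\ref{t.bgv}: the integer $N$ there depends only on the dimension $d$, on the bound $K=\sup(\|Df\|,\|Df^{-1}\|)$ and on the size $\varepsilon$ of the allowed perturbation, hence is the \emph{same} for every periodic orbit of $f$ (and of every nearby diffeomorphism). Combining Corollaire~\ref{c.global} (each chain class is a Hausdorff limit of periodic orbits), Théorème~\ref{t.bgv} and the lemme de Franks (Théorème~\ref{t.franks}) with a genericity argument, one gets that every chain class which is not accumulated by infinitely many sinks or sources carries an $N$-dominated splitting for this single $N$; this uniform statement is what the proof of Théorème~\ref{t.newhouseC1} actually delivers. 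With it, $\Lambda_i=\Lambda_i^{(N)}$ from the start, the limit-passage property of section~\ref{s.domination} applies directly to the closure, and no stabilization argument is needed. So your skeleton is the intended one, but the "principal technical step" you defer must be replaced by this uniformity observation rather than carried out as a stabilization of the stratification.
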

\bigskip

Au voisinage d'une classe homocline isol\'ee, le ph\'enom\`ene de Newhouse n'a pas lieu
et on peut obtenir dans ce cadre un r\'esultat plus fort.
\begin{theoreme}[Bonatti-Diaz-Pujals]\label{t.hyperbolicite-volume}
Pour tout diff\'eomorphisme $f$ appartenant \`a un $G_{\delta}$ dense
$\cG\subset \diff^1(M)$, toute classe homocline $H$
qui est isol\'ee dans $\cR(f)$ est hyperbolique en volume\index{hyperbolicit\'e!en volume}.

Plus pr\'ecis\'ement, $H$ est une source, ou un puits, ou poss\`ede une d\'ecomposition
domin\'ee $T_{H(O)}M=E\oplus E^c \oplus F$ telle que
\begin{itemize}
\item[--] $E$ et $F$ sont non d\'eg\'en\'er\'es,
\item[--] il existe $N\geq 1$ tel que $Df^N$ contracte uniform\'ement le volume le long de $E$
et dilate uniform\'ement le volume le long de $F$.
\end{itemize}
\end{theoreme}
\begin{proof}
Si $H(O)$ n'est pas un puits ou une source, il existe une d\'ecomposition
domin\'ee non triviale $T_{H(O)}M=E\oplus F$.
Nous pouvons supposer que $\dim(E)$ est minimale et il s'agit de montrer qu'il existe $N\geq 1$
tel que $Df^N$ contracte uniform\'ement le volume le long de $E$.

La structure riemannienne sur $M$ permet de d\'efinir le jacobien $J(f,E)$ le long de $E$
en chaque point de $H(O)$ comme le module du d\'eterminant de $Df_{|E}$.
On obtient une fonction multiplicative sur $K$, i.e. $J(f^n,E)=(J(f,E)\circ f^{n-1})\dots J(f,E)$ pour tout $n\geq 1$.
Si l'on suppose par l'absurde que le volume le long de $E$ n'est uniform\'ement contract\'e par
$Df^N$ pour aucun entier $N\geq 1$, il existe une mesure de probabilit\'e invariante support\'ee par $K$
telle que $\int J(f,E)d\mu\geq 0$.
Le lemme de fermeture ergodique (th\'eor\`eme~\ref{t.closing-ergodique})
implique qu'il existe une suite d'orbite p\'eriodiques $(O_{n})$ s'accumulant sur une partie de $H(O)$
telles que pour tout $\varepsilon>0$,
la moyenne de $J(f,E)$ long de $O_{n}$ soit sup\'erieure \`a $-\varepsilon$ pour tout $n$ suffisamment grand.
Puisque $f$ satisfait une condition de g\'en\'ericit\'e, on en d\'eduit que tout voisinage de $H(O)$
contient une orbite p\'eriodique $O_{E}$ telle que la moyenne de $J(f,E)$ le long de $O_{E}$ soit strictement positive.

Nous utilisons \`a pr\'esent le fait que $H(O)$ est isol\'ee pour conclure que $O_{E}$ est contenue dans $H(O)$.
La g\'en\'ericit\'e de $f$ implique alors que $H(O)=H(O_{E})$.
La d\'efinition des classes homoclines implique que $H(O)$ est limite de Hausdorff d'orbites p\'eriodiques
sur lesquelles la moyenne de $J(f,E)$ est strictement positive.
Puisque $\dim(E)$ est minimale,
en appliquant le th\'eor\`eme~\ref{t.bgv}, on peut faire bifurquer l'une de
ces orbites p\'eriodiques pour que tous les exposants de Lyapunov le long de $E$ soient strictement positifs~: on obtient
alors une source dans un voisinage de $H(O)$ arbitraire.
Par un argument de g\'en\'ericit\'e, nous en d\'eduisons que $H(O)$ est limite de Hausdorff de sources,
contredisant le fait que cette classe soit isol\'ee et ne soit pas elle-m\^eme une source.
\end{proof}

\begin{remarque}\label{r.dichotomie}
Le point cl\'e de la d\'emonstration consiste \`a montrer que s'il existe
une suite d'orbite p\'eriodiques $(O_{n})$ s'accumulant sur une partie $K$ de $H(O)$
et dont la moyenne de $J(f,E)$ long de $O_{n}$ soit positive, on peut se ramener \`a $K=H(O)$.

Nous avons utilis\'e la fait que $H(O)$ est isol\'e pour voir que les orbites $O_{n}$ sont contenues dans $H(O)$
puis nous nous sommes servi de la notion de points p\'eriodiques homocliniquement reli\'es pour conclure
$K=H(O)$. Ce second argument sera pleinement exploit\'e au chapitre~\ref{c.homocline}.
\end{remarque}

\section{Application (2)~: caract\'erisation de la stabilit\'e}\label{s.stabilite}
Un probl\`eme, formul\'e par Palis et Smale~\cite{palis-smale} et qui a occup\'e les dynamiciens pendant les ann\'ees 1970-1980,
est la caract\'erisation des dynamiques structurellement stables.
Abraham et Smale ont montr\'e~\cite{abraham-smale,smale-structural-stability} que ces dynamiques ne sont pas dense dans $\diff^1(M)$
lorsque $\dim(M)\geq 3$.

\paragraph{D\'efinition.}
Nous disons qu'un diff\'eomorphisme $f$ est \emph{structurellement stable}\index{stabilit\'e!structurelle}
(pour la topologie $C^1$) si tout diff\'eomorphisme $g$
appartenant \`a  un voisinage de $f$ dans $\diff^1(M)$
est conjugu\'e \`a $f$ par un hom\'eomorphisme de $M$.
Plus g\'en\'eralement, $f$ est \emph{$\Omega$-stable}\index{stabilit\'e!$\Omega$-stabilit\'e} (pour la topologie $C^1$)
si pour tout diff\'eomorphisme $g$ appartenant \`a  un voisinage de $f$,
les syst\`emes $(\Omega(f),f)$ et $(\Omega(g),g)$ sont conjugu\'es par un hom\'eomorphisme
$h\colon \Omega(f)\to \Omega(g)$.

\paragraph{Conditions suffisantes.}
Smale a montr\'e~\cite{smale-dynamics,smale-stability} que les diff\'eomorphismes axiome A sans cycle
sont $\Omega$-stables.

Une fois que l'on sait que les diff\'eomorphismes axiome A sans cycles sont les diff\'eomorphismes
hyperboliques (voir la section~\ref{s.diffeo-hyperbolique}), son r\'esultat est une cons\'equence du
lemme de pistage pour les dynamiques hyperboliques.

Pour un diff\'eomorphisme axiome A satisfaisant l'hypoth\`ese suivante~:
\begin{description}
\item[\it (transversalit\'e forte)\index{transversalit\'e forte}] \quad \quad $\forall x,y\in \Omega(f),\; \forall z\in W^s(x)\cap W^u(y),\;\; T_zM=T_zW^s(x)+T_zW^u(y)$,
\end{description}
Robbin~\cite{robbin} et Robinson~\cite{robinson-stabilite} ont montr\'e que $f$ est structurellement stable
(la d\'emonstration est bien plus d\'elicate que pour l'$\Omega$-stabilit\'e).

\paragraph{Conditions n\'ecessaires.}
Palis et Robinson ont montr\'e~\cite{palis-omega,robinson-kupka-smale} que pour un dif\-f\'e\-o\-mor\-phis\-me axiome A,
les conditions ``sans cycle'' ou ``transversalit\'e forte'' sont n\'ecessaires pour
avoir l'$\Omega$-stabilit\'e ou la stabilit\'e structurelle respectivement.
Palis et Ma\~n\'e ont montr\'e plus tard que l'axiome A lui-m\^eme est n\'ecessaire~\cite{mane-stabilite,palis-stabilite}.
\medskip

On obtient alors le th\'eor\`eme suivant.
\begin{theoreme-de-stabilite}
Pour tout diff\'eomorphisme il y a \'equivalence entre les propri\'et\'es~:
\begin{itemize}
\item[--] $f$ est $\Omega$-stable (pour la topologie $C^1$),
\item[--] $f$ est hyperbolique (d\'efinition~\ref{d.hyperbolique}),
\item[--] $f$ poss\`ede la propri\'et\'e (*) suivante~:
\begin{description}
\item[\it (*)] \it Toute orbite p\'eriodique de
tout $g\in \diff^1(M)$ proche de $f$ est hyperbolique.
\end{description}
\end{itemize}
\end{theoreme-de-stabilite}
L'\'equivalence avec la troisi\`eme propri\'et\'e a \'et\'e montr\'ee par Franks, Aoki et Hayashi~\cite{franks,aoki,hayashi-star}.
\medskip

\begin{proof}[\bf D\'emonstration de l'$\Omega$-stabilit\'e]
Nous avons d\'ej\`a expliqu\'e pourquoi les diff\'eomorphismes hyperboliques sont
$\Omega$-stables.
Supposons \`a pr\'esent que $f\in\diff^1(M)$ est $\Omega$-stable.
D'apr\`es le th\'eor\`eme de Kupka-Smale, il existe des perturbations arbitrairement petites de $f$
dont les nombre d'orbites p\'eriodiques de chaque p\'eriode est fini.
La stabilit\'e implique que $f$ a aussi cette propri\'et\'e.
Supposons que $f$ ait un point p\'eriodique $p$ non hyperbolique~:
avec le lemme de Franks, il est possible de perturber $f$ pour que $p$ ait une valeur propre
racine de l'unit\'e et que la dynamique au voisinage de l'orbite de $p$ soit topologiquement conjugu\'ee
\`a la partie lin\'eaire de $f$ le long de l'orbite de $p$~: on obtient ainsi une infinit\'e de points p\'eriodiques
de m\^eme p\'eriode au voisinage de $p$ pour une petite perturbation de $f$, ce qui contredit la stabilit\'e.
Puisque l'$\Omega$-stabilit\'e est une propri\'et\'e ouverte, nous avons montr\'e que
l'$\Omega$-stabilit\'e entra\^\i ne la propri\'et\'e~(*).
\medskip

Supposons finalement que $f$ satisfait (*).
Remarquons que cette propri\'et\'e permet de suivre contin\^ument toute orbite p\'eriodique le long
d'un chemin de diff\'eomorphismes proches de $f$.
D'apr\`es le corollaire~\ref{c.mane-splitting},
il existe un entier $N\geq 1$ tel que pour tout $0\leq i \leq d$ et tout diff\'eomorphisme $g$ proche de $f$, l'ensemble $\per_i(g)$ des points p\'eriodiques d'indice $i$ poss\`ede une d\'ecomposition $N$-domin\'ee et leurs directions stables et instables sont $N$-uniform\'ement
contract\'ees \`a la p\'eriode par $g$ et $g^{-1}$ respectivement.
Le corollaire~\ref{c.homocline-bornee}
montre que $f$ n'a qu'un nombre fini de classes homoclines distinctes.
Nous en d\'eduisons~:
\begin{affirmation*}
$f$ n'a qu'un nombre fini de classe de r\'ecurrence par cha\^\i nes.
\end{affirmation*}
\begin{proof}
Si ce n'\'etait pas le cas, on peut consid\'erer une suite arbitrairement
longue $U_0\subset U_1\subset \dots\subset U_k=M$ d'ouverts filtrants
(i.e. $f(\overline{U_i})\subset U_i$) telle que $U_{i+1}\setminus \overline{U_i}$
contient une classe de r\'ecurrence
par cha\^\i nes. Par une perturbation arbitrairement petite donn\'ee par le lemme de fermeture,
on peut cr\'eer dans
$U_{i+1}\setminus \overline{U_i}$ une orbite p\'eriodique. Puisque (*) est satisfaite
cette orbite peut \^etre associ\'ee \`a une orbite p\'eriodique $O$ de $f$ et puisque
la suite $(U_0,U_1,\dots,U_k)$ est filtrante, $O$ appartient \`a $U_{i+1}\setminus \overline{U_i}$.
On peut donc trouver $k$ orbites p\'eriodiques pour $f$ qui ne sont pas homocliniquement reli\'ees,
contredisant la borne sur le nombre de classes homoclines de $f$.
\end{proof}
\medskip

Nous montrons par l'absurde que $f$ est hyperbolique.
Si ce n'est pas le cas, il existe une classe de r\'ecurrence par cha\^\i nes $K$
qui n'est pas hyperbolique. Nous avons vu qu'elle est isol\'ee dans $\cR(f)$.
Puisque $f$ a la propri\'et\'e (*), toute orbite p\'eriodique contenue dans un voisinage de $K$
pour un diff\'eomorphisme $g$ proche de $f$ peut \^etre associ\'ee \`a une orbite p\'eriodique de $K$.
D'apr\`es le lemme de fermeture, $K$ contient donc des orbites p\'eriodiques.
Soit $i$ l'indice maximal des points p\'eriodiques contenus dans $K$.
\begin{affirmation*}
$\per_i(K)$ n'est pas hyperbolique.
\end{affirmation*}
\begin{proof}
En effet si $\overline{\per_i(K)}$ est un ensemble hyperbolique $\Lambda$,
il se d\'ecompose en un nombre fini de classes homoclines hyperboliques
disjointes. Soit $H(p)$ l'une d'entre elles. Puisque $H(p)$ est strictement contenu
dans la classe de r\'ecurrence par cha\^\i nes $K$, on peut trouver $x^s\in K\cap W^s(H(p))\setminus \Lambda$
et $x^u\in K\cap W^u(H(p))\setminus \Lambda$. Le lemme de connexion pour les pseudo-orbites
permet par une perturbation de cr\'eer une orbite homocline transverse en $x^s$
pour l'ensemble hyperbolique $H(p)$ et un diff\'eomorphisme proche $g$.
Le diff\'eomorphisme $g$ poss\`ede donc un ensemble hyperbolique $\Lambda_g$ qui est la continuation hyperbolique
de $\Lambda$, ainsi que des points p\'eriodiques d'indice $i$ proches de $x^s$~: nous avons cr\'e\'e par perturbation de nouvelles orbites p\'eriodiques, contredisant (*).
\end{proof}

L'ensemble $\overline{\per_i(K)}$ est union d'un nombre fini de classes homoclines
associ\'ees \`a des points p\'eriodiques d'indice $i$.
L'une d'entre elles, not\'ee $H(p)$, n'est pas hyperbolique.
D'apr\`es ce qui pr\'ec\`ede, elle poss\`ede une d\'ecomposition domin\'ee
$T_{H(p)}M=E\oplus F$ telle que $\dim(E)$ est \'egale \`a la dimension stable de $p$.

\begin{affirmation*}
Le fibr\'e $F$ est uniform\'ement dilat\'e.
\end{affirmation*}
\begin{proof}
Si l'on suppose par l'absurde que ce n'est pas le cas, il existe une mesure
ergodique $\mu$ support\'ee par $H(p)$ qui v\'erifie $\int\|Df^{-N}_{|F}\|d\mu\geq 1$.
En appliquant le lemme de fermeture ergodique, on obtient une orbite p\'eriodique $O$ proche
de $H(p)$ pour un diff\'eomorphisme $g$ proche de $f$ pour laquelle le fibr\'e $F$ n'est pas
$N$-uniform\'ement contract\'e \`a la p\'eriode par $g^{-1}$.
Si $O$ est d'indice $i$, ceci contredit la propri\'et\'e (*).
Sinon $O$ est d'indice sup\'erieur \`a $i$, contredisant le choix d'un indice maximal $i$.
\end{proof}

Pour conclure, nous appliquons le lemme de s\'election de Ma\~n\'e (th\'eor\`eme~\ref{t.selection-mane})
\`a l'ensemble $H(p)$ et \`a la d\'ecomposition $E\oplus F$.
Nous en d\'eduisons que $f$ poss\`ede une orbite p\'eriodique qui n'est pas $N$-uniform\'ement contract\'ee
\`a la p\'eriode le long de $E$. C'est une contradiction.
\end{proof}

\section{Contr\^ole des vari\'et\'es invariantes}

Les techniques de Gourmelon permettent de faire bifurquer une orbite p\'eriodique
comme dans les sections pr\'ec\'edentes, tout en contr\^olant certaines orbites homoclines
(voir~\cite{gourmelon-franks}).
En particulier, ceci permet~\cite{gourmelon-tangence} de cr\'eer des tangences \`a l'int\'erieur
d'une classe homocline.

\begin{theoreme}[Gourmelon]\label{t.dichotomie-tangence}
Toute classe homocline $H(O)$ de $f$ a l'une de ces propri\'et\'es~:
\begin{itemize}
\item[--] il existe une perturbation $g$ de $f$ telle que $O$ poss\`ede une tangence homocline,
\item[--] il existe une d\'ecomposition domin\'ee
$T_{H(O)}M=E\oplus F$ telle que $\dim(E)=\dim(E^s(O))$.
\end{itemize}
\end{theoreme}

Potrie a appliqu\'e~\cite{potrie} les techniques de ce chapitre \`a l'\'etude des classes homoclines
qui sont stable au sens de Lyapunov \`a la fois pour $f$ et $f^{-1}$~:
puisqu'une telle classe $H(p)$ reste un quasi-attracteur pour tout diff\'eomorphisme g\'en\'erique
proche, il n'est pas possible de faire bifurquer l'une de ses orbites p\'eriodiques $O$
en un puits tout en pr\'eservant l'intersection $W^u(p)\cap W^s(O)$.

\begin{theoreme}[Potrie]
Il existe un G$_\delta$ dense $\cG\subset \diff^1(M)$ tel que
pour tout $f\in \cG$, toute classe homocline qui est stable au sens de Lyapunov \`a la fois
pour $f$ et $f^{-1}$ poss\`ede une d\'ecomposition domin\'ee non triviale.
\end{theoreme}

Les bifurcations d'orbites p\'eriodiques avec contr\^ole de la classe homocline seront
plus largement \'etudi\'ee au chapitre~\ref{c.homocline}.

\section{Exemples d'Abraham-Smale}\label{s.exemple-abraham-smale}
Nous reprenons la m\'ethode de construction de Abraham-Smale pour obtenir
les r\'esultats d\'ej\`a mentionn\'es de~\cite{abraham-smale,simon,BD-newhouse,asaoka}
et~\cite[section 8]{newhouse-cours}.
L'id\'ee est de consid\'erer des ensembles hyperboliques transitifs d'une vari\'et\'e
de dimension trois dont le fibr\'e stable est un fibr\'e en droites,
mais dont la vari\'et\'e stable est de dimension deux (elle est feuillet\'ee
par les vari\'et\'es stables des points de l'ensemble hyperbolique
qui sont des sous-vari\'et\'es de dimension un).
\medskip

\begin{theoreme}[Newhouse~\cite{newhouse-cours}, Asaoka~\cite{asaoka}]
Toute vari\'et\'e $M$ de dimension $\geq 3$,
poss\`ede un ouvert non vide $\cU\subset \diff^1(M)$ de dif\-f\'e\-o\-mor\-phis\-mes
ayant une orbite p\'eriodique $O$ qui pr\'esente une tangence homocline robuste.
De plus, la classe homocline $H(O)$
n'a pas de d\'ecomposition domin\'ee non triviale.
\end{theoreme}

D'apr\`es le th\'eor\`eme~\ref{t.newhouseC1}, pour tout diff\'eomorphisme $f$
appartenant \`a un G$_\delta$ dense de $\cU$,
la classe homocline $H$ est donc limite d'une infinit\'e de puits ou de sources.
(Ph\'enom\`ene de Newhouse identifi\'e par Bonatti-D\'\i az.)
Ceci fournit un exemple de dynamique sauvage, mentionn\'e en introduction.
\medskip

\begin{proof}
La construction utilise l'exemple d\^u \`a Plykin~\cite{plykin} (voir aussi~\cite[section 8.9 ]{robinson-livre})
en dimension deux d'un diff\'eomorphisme $f_0$
ayant un ensemble hyperbolique transitif $\Lambda$
d'indice $1$ qui est un attracteur.
Sa vari\'et\'e stable locale est un voisinage $\Sigma$ de $\Lambda$ feuillet\'e
par les vari\'et\'es locales $W^{s}_{loc}(x)$, $x\in\Lambda$.
Voir la figure~\ref{f.plykin}.
\begin{figure}[ht]
\begin{center}
\includegraphics[width=12cm]{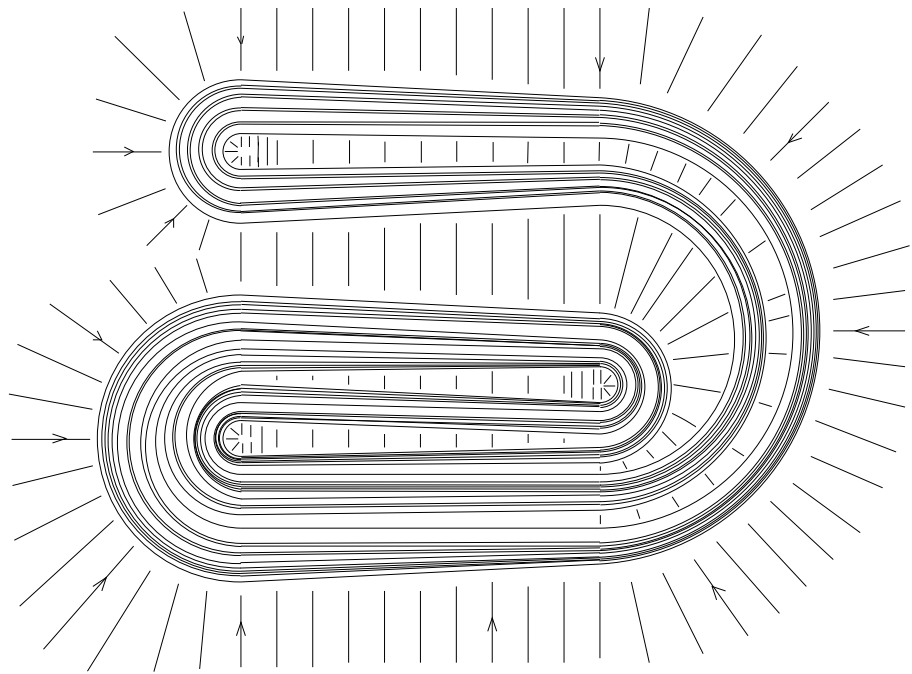}
\end{center}
\caption{Exemple d'attracteur de Plykin. \label{f.plykin}}
\end{figure}
\medskip

On peut alors plonger $\Sigma\times (-1,1)$ dans $M$
et consid\'erer des diff\'eomorphismes $f\in \diff^1(M)$
coincidant avec l'application $(x,y)\mapsto (f_0(x),\mu.y)$
sur $\Sigma\times (-1,1)$.
Pour $\mu>1$ suffisamment grand, la d\'ecomposition $T_zM=E^x\oplus E^y$
aux points $z\in \Lambda$ correspondant aux coordonn\'ees $(x,y)$ est domin\'ee.
On en d\'eduit (voir par exemple le th\'eor\`eme~\ref{t.variete-normalement-hyperbolique})
que pour tout diff\'eomorphisme $g$ proche de $f$, la continuation hyperbolique $\Lambda_g$
de $\Lambda$ est contenue dans une sous-vari\'et\'e invariante $\Sigma_g$ qui est $C^1$-proche de $\Sigma$.

Pour tout $g$ proche de $f$, la surface $\Sigma_g$ est encore contenue dans l'ensemble stable de $\Lambda_g$
et poss\`ede un feuilletage $\cF^{ss}$ en vari\'et\'es stables $W^{ss}_{loc}(z)$, $z\in\Lambda_g$.
Fixons une orbite p\'eriodique $O\subset \Lambda$.
Sa continuation $O_g$ appartient \`a une vari\'et\'e instable $W^u(O_g)$ de dimension $2$
poss\'edant un feuilletage $\cF^{uu}$ par vari\'et\'es instables fortes de dimension $1$.
Voir la figure~\ref{f.plykin3}.
\begin{figure}[ht]
\begin{center}
\input{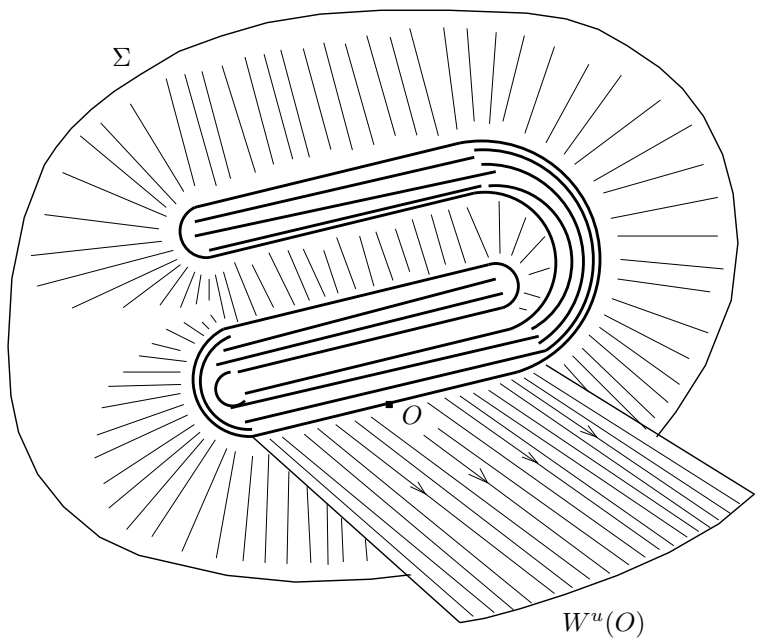}
\end{center}
\caption{Plongement de l'attracteur de Plykin en dimension $3$. \label{f.plykin3}}
\end{figure}
\medskip

On peut alors modifier $f$ hors d'un voisinage de $\Sigma$
pour que $W^{uu}(O)$ et $\Sigma$ aient un point d'intersection transverse $\zeta$
hors de $\Lambda$ tel que la feuille $\cF^{ss}(\zeta)$
soit tangente \`a $W^{uu}(O)$ et de sorte que ces propri\'et\'es restent
satisfaites pour tout diff\'eomorphisme $g\in \diff^1(M)$ proche de $f$
et un point $\zeta_g$ proche de $\zeta$.
Par cons\'equent, l'ensemble hyperbolique $\Lambda_g$ poss\`ede une tangence homocline robuste.
Voir la figure~\ref{f.tangence-robuste}.
\begin{figure}[ht]
\begin{center}
\input{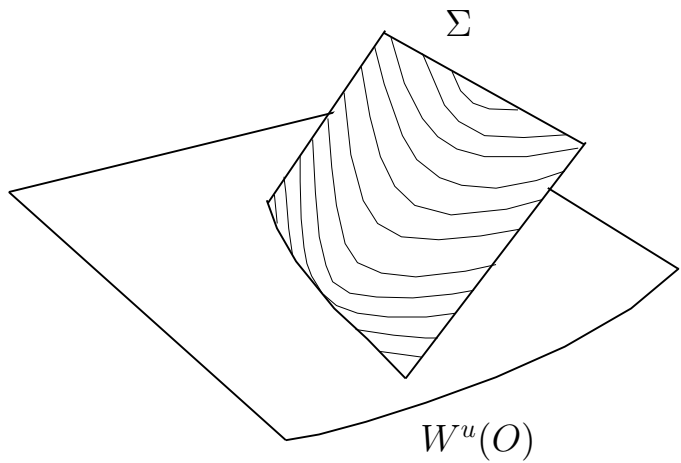}
\end{center}
\caption{Intersection critique de $\cF^{ss}$ et $W^u(O)$. \label{f.tangence-robuste}}
\end{figure}

On peut demander qu'au voisinage de $\xi$, il existe une feuille de $\cF^{ss}$
qui soit transverse \`a $W^u(O)$. Pour tout $g$ proche de $f$, il existe donc un point d'intersection
non transverse $\zeta_g$ entre une feuille de $\cF^{ss}$ et $W^u(O)$
qui soit accumul\'e par des points d'intersections transverses.
Ceci implique que $\zeta_g$ appartient \`a la classe homocline $H(O_g)$ associ\'ee \`a $\Lambda_g$.

De la m\^eme mani\`ere, on peut construire ind\'ependemment un point
d'intersection transverse $\xi$ hors de $\Lambda$ entre $W^u(O)$ et $\Sigma$
tel que la feuille $\cF^{uu}(\zeta)$
soit tangente \`a $\Sigma$ et de sorte que ces propri\'et\'es restent
satisfaites pour tout diff\'eomorphisme $g\in \diff^1(M)$ proche de $f$
et un point $\xi_g$ proche de $\xi$.

La classe homocline $H(O_g)$ ne poss\`ede pas de d\'ecomposition domin\'ee non triviale.
En effet, si elle poss\'edait par exemple une d\'ecomposition $T_{H(O_g)}M=E\oplus F$ avec $\dim(E)=1$,
il est facile de voir que l'espace $E_{\zeta_g}$ co\"\i nciderait avec l'espace tangent \`a la feuille
$\cF^{ss}(\zeta_g)$ en $\zeta_g$ et l'espace $F_{\zeta_g}$ avec l'espace tangent \`a $W^u(O)$.
Par construction ces espaces ne sont pas transverse et on obtiendrait une contradiction.
\end{proof}

\paragraph{Addenda.}
Cette construction offre beaucoup de souplesse.
On peut plonger ind\'ependemment les dynamiques
$(x,y)\mapsto (f_0(x),\mu.y)$ et $(x,y)\mapsto (f_0^{-1}(x),\mu^{-1}.y)$
et ainsi construire un dif\-f\'e\-o\-mor\-phis\-me $f$ de $M$
qui poss\`ede deux ensembles hyperboliques $\Lambda,\Lambda'$, l'un d'indice $1$
et l'autre d'indice $2$, tels que~:
\begin{itemize}
\item[--] $\Lambda$ et $\Lambda'$ sont chacun contenu dans une surface $\Sigma,\Sigma'$
invariante par $f$ et $f^{-1}$ respectivement, ayant un point d'intersection transverse.
\item[--] $\Lambda,\Lambda'$ poss\`edent chacun une orbite p\'eriodique $O,O'$
telles que les vari\'et\'es invariantes (de dimension deux) $W^{u}(O)$ et $W^{s}(O')$
aient un point d'intersection transverse.
\end{itemize}
On obtient donc la propri\'et\'e suivante qui permet d'obtenir le th\'eor\`eme~\ref{t.exemple-cycle}.
\medskip

\emph{Il existe un ouvert non vide $\cU\subset \diff^1(M)$ de diff\'eomorphismes
ayant des ensembles hyperboliques $\Lambda,\Lambda'$ d'indice $1$ et $2$ respectivement
qui pr\'esentent un cycle robuste~: $W^u(\Lambda)\cap W^s(\Lambda')$
et $W^u(\Lambda')\cap W^s(\Lambda)$ sont non vides.}
\medskip

En dimension plus grande, la m\^eme construction permet de lier robustement des ensembles
hyperboliques de tout indice compris entre $1$ et $d-1$ et d'emp\^echer l'existence
de d\'ecomposition domin\'ee non triviale.

\section{Probl\`emes}\label{s.probleme-newhouse}
\paragraph{a) Autres perturbations d'orbites p\'eriodiques.}
D'autres types de perturbations de cocycles pourraient se r\'ev\'eler int\'eressants
et donner lieu \`a des r\'esultats
perturbatifs dans le m\^eme esprit que ceux qui ont \'et\'e pr\'esent\'es dans ce chapitre.
Par exemple, on peut poser les questions suivantes.
\begin{itemize}
\item[--] \emph{Sous quelles conditions peut-on perturber une orbite p\'eriodique pour obtenir
$k>1$ valeur propres de module $1$~?}
\item[--] \emph{Quels sont les indices que l'on peut atteindre par bifurcation~?}
\item[--] \emph{Sous quelles conditions peut-on cr\'eer des valeurs propres complexes~?}
\end{itemize}

Nous renvoyons aussi \`a~\cite{gourmelon-franks} pour d'autres types de contraintes
sur les perturbations de cocycles.

\paragraph{b) Hyperbolicit\'e en volume des classes isol\'ees.}
La d\'emonstration du th\'eor\`eme~\ref{t.hyperbolicite-volume}
permet de pr\'eciser la question~\ref{q.proportion}.
\begin{question}
\emph{Supposons que $f$ v\'erifie une condition $C^1$-g\'en\'erique,
consid\'erons une mesure ergodique $\mu$ pour $f$ et un point $x$
appartenant \`a la classe de r\'ecurrence par cha\^\i nes du support de $\mu$.}

Existe-t-il une suite de points p\'eriodiques $(p_{n})$
convergeant vers $x$ telles que la suite des mesures p\'eriodiques associ\'ees
converge faiblement vers $\mu$~?
\end{question}

On obtiendrait alors une version plus forte des th\'eor\`emes~\ref{t.bdpu} et~\ref{t.hyperbolicite-volume}
et une r\'eponse affirmative \`a la question suivante.

\begin{question}\label{q.dichotomie}
\emph{Supposons que $f$ v\'erifie une condition $C^1$-g\'en\'erique.}

Est-ce que toute classe homocline $H$ (ou plus g\'en\'eralement
tout ensemble transitif par cha\^\i \-nes)
v\'erifie la dichotomie suivante~?
\begin{itemize}
\item[--] $H$ est hyperbolique en volume~;
\item[--] $H$ (ou une partie de $H$) est limite de Hausdorff d'une infinit\'e de puits ou de sources.
\end{itemize}
\end{question}

\paragraph{c) Stabilit\'e structurelle en r\'egularit\'e sup\'erieure.}
Nous avons d\'ej\`a signal\'e que peu de r\'esultats perturbatifs sont connus en r\'egularit\'e
plus grande. En particulier, le lemme de Franks (qui est la motivation principale pour les r\'esultats
de ce chapitre) devient faux (voir~\cite[th\'eor\`eme D]{pujals-sambarino2}).
Nous renvoyons \`a~\cite{pujals-pb} pour une discussion de la stabilit\'e
structurelle $C^r$, $r>1$.


\chapter{Points p\'eriodiques homocliniquement li\'es}\label{c.homocline}

Au chapitre~\ref{c.periodique}, nous avons consid\'er\'e s\'epar\'ement les orbites p\'eriodiques d'un diff\'eomorphisme.
Nous \'etudions \`a pr\'esent les orbites contenues dans une m\^eme classe de r\'ecurrence par cha\^\i nes~:
elles sont li\'ees par la dynamique.
Ceci permet d'une part de propager certaines propri\'et\'es v\'erifi\'ees sur une partie
\`a l'ensemble de la classe et d'autre part d'obtenir de nouvelles orbites p\'eriodiques
comme moyenne d'orbites p\'eriodiques de la classe (sp\'ecification).

\section{Sp\'ecification au sein d'une classe homocline (indice fix\'e)}
Bowen a montr\'e~\cite{bowen-specification} que l'ensemble des orbites p\'eriodiques
contenues dans un ensemble basique d'un diff\'eomorphisme hyperbolique poss\`ede une propri\'et\'e de
sp\'ecification.

\begin{definition}
Un ensemble d'orbites p\'eriodiques $\cO$ a la \emph{propri\'et\'e
de sp\'ecification}\index{sp\'ecification} si pour tout $\varepsilon>0$ et pour toute
collection finie d'orbites $O_1,\dots,O_s\in \cO$, il existe $N\geq 1$
v\'erifiant~:
pour toute suites finies $\{i_1,\dots,i_\ell\}\subset \{1,\dots,s\}$
et $\{n_1,\dots,n_\ell\}\subset \NN$, il existe une orbite p\'eriodique
$O=\{x,f(x),\dots,f^{\tau}(x)=x\}$ appartenant \`a $\cO$
et des entiers $k_1,\dots,k_\ell$ tels que~:
\begin{itemize}
\item[--] $0\leq k_{j+1}-(k_j+n_j)\leq N$ pour tout $j\in \{1,\dots,\ell-1\}$
et $0\leq (k_1+\tau)-(k_{\ell}+n_\ell)\leq N$~;
\item[--] $f^k(x)$ appartient au $\varepsilon$-voisinage de $O_{i_j}$ pour
tous $j\in \{1,\dots \ell\}$ et $k_j\leq k \leq k_j+n_j-1$.
\end{itemize}
\end{definition}

Pour les ensembles hyperboliques, l'entier $N$ ne d\'epend pas des orbites
$O_1,\dots,O_s$. Le r\'esultat de Bowen a \'et\'e g\'en\'eralis\'ee~\cite{bdp}
aux classes homoclines.\footnote{
La notion de sp\'ecification d\'efinie dans~\cite{bowen-specification} est plus g\'en\'erale, puisqu'elle
autorise \`a travailler avec des segments d'orbites qui ne sont pas p\'eriodiques.
La d\'efinition propos\'ee par~\cite{bdp}, sous le terme de ``transition''\index{transition},
est, elle aussi, un peu diff\'erente~: elle d\'ecrit les cocycles lin\'eaires au-dessus d'une
famille d'orbites p\'eriodiques.}

\begin{proposition}[lemme 1.9 de~\cite{bdp}]
Pour tout point p\'eriodique hyperbolique $p$,
l'ensemble des orbites p\'eriodiques hyperboliques homocliniquement reli\'ees \`a $p$
poss\`ede la propri\'et\'e de sp\'e\-ci\-fi\-ca\-tion.
\end{proposition}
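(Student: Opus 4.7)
The strategy is to reduce to Bowen's classical specification theorem for hyperbolic basic sets. Given a finite family $O_1,\dots,O_s$ of periodic orbits homoclinically related to $p$, I would first enlarge the family to include the orbit of $p$ itself (so without loss of generality $O_1$ is the orbit of $p$). Since the homoclinic relation, restricted to orbits linked to $p$, is an equivalence relation on hyperbolic periodic orbits of a common stable index, the $O_i$ are pairwise homoclinically related; in particular, for every ordered pair $(i,j)$ one may select a transverse intersection point $z_{ij}\in W^s(O_i)\cap W^u(O_j)$. The main step is then to build a transitive locally maximal hyperbolic basic set $\Lambda\subset H(p)$ containing every $O_i$ and every $z_{ij}$.

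The construction of $\Lambda$ would proceed by induction on $s$, using the Smale--Birkhoff horseshoe theorem together with the $\lambda$-lemma. The case $s=1$ reduces to Smale's theorem: a transverse homoclinic orbit of $O_1$ produces a hyperbolic basic set $\Lambda_1$ containing $O_1$. In the inductive step, assume $\Lambda_s$ has been constructed and consider a new orbit $O_{s+1}$ linked to each $O_i$ by the transverse heteroclinic points $z_{i,s+1}$ and $z_{s+1,i}$. Applying the $\lambda$-lemma at suitable iterates of these heteroclinic points, local unstable plaques of $\Lambda_s$ accumulate on $W^u(O_{s+1})$, and symmetrically in the stable direction. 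This allows one to organise Markov-like rectangles around the orbits $O_1,\dots,O_{s+1}$ and along the connecting heteroclinic segments, producing a locally maximal transitive hyperbolic set $\Lambda_{s+1}\supset\Lambda_s\cup O_{s+1}$. Iterating yields the desired basic set $\Lambda$.

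Once $\Lambda$ is available, Bowen's specification theorem applied to this basic set provides, for every $\varepsilon>0$, an integer $N$ (depending on $\Lambda$, hence on the collection $O_1,\dots,O_s$, and on $\varepsilon$) such that any prescribed concatenation of orbit segments in $\Lambda$ -- here the pieces of $O_{i_j}$ of length $n_j$ -- is $\varepsilon$-shadowed by a periodic orbit $O\subset\Lambda$ with transition gaps bounded by $N$. This is exactly the combinatorial conclusion required. It remains to verify that $O\in\cO$: since $O\subset\Lambda$ and $\Lambda$ is a transitive hyperbolic basic set containing the orbit of $p$, Smale's homoclinic theorem (combined with Newhouse's description of $H(p)$ recalled at the end of Chapter~\ref{c.decomposition}) ensures that every periodic orbit of $\Lambda$ is homoclinically related to $p$, so $O\in\cO$.

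The hard part is the inductive construction of $\Lambda$: one must arrange the Markov rectangles and the connecting segments so that the resulting set is hyperbolic, transitive, and locally maximal, while simultaneously accommodating arbitrary transitions between the different $O_i$. This is precisely the ``transition'' construction carried out in \cite{bdp}, Lemma~1.9, which in fact proves the slightly stronger statement allowing one to also prescribe linear cocycles along such orbits. Note that, in contrast with Bowen's uniform specification on a single basic set, the integer $N$ here cannot be taken uniform over $\cO$: enlarging the finite family forces one to enlarge $\Lambda$, and the hyperbolicity constants of $\Lambda$ (and therefore $N$) may degenerate as more orbits are added.
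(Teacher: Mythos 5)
The paper states this proposition as a quoted result (Lemma 1.9 of \cite{bdp}) and supplies no proof of its own, so there is nothing to compare line by line; your argument --- enlarging the family to contain the orbit of $p$, building a transitive locally maximal horseshoe $\Lambda$ through the $O_i$ via the transverse heteroclinic points and the $\lambda$-lemma, invoking Bowen's specification on $\Lambda$, and using that every periodic orbit of a transitive basic set containing $p$ is homoclinically related to $p$ --- is correct and is essentially the ``transition'' construction of the cited reference. The only step worth making explicit is that Bowen's theorem is usually stated for \emph{mixing} basic sets: for a merely transitive $\Lambda$ the cyclic spectral decomposition forces the transition times into prescribed residue classes, which is precisely what the gap window $0\leq k_{j+1}-(k_j+n_j)\leq N$ in the paper's definition of specification is designed to absorb.
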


Les cocycles lin\'eaires au-dessus d'une familles d'orbites p\'eriodiques
ayant la propri\'et\'e de sp\'ecification permettent plus de perturbations
que les cocycles lin\'eaires g\'en\'eraux \'etudi\'es au chapitre~\ref{c.periodique}.
(Il devient possible de remplacer une orbite p\'eriodique $O$ par une orbite p\'eriodique $O'$
passant une grande proportion de temps proche de $O$. La nouvelle orbite ressemble \`a $O$,
mais tout \'ev\'enement qui appara\^\i t le long de l'orbite
de $O$ appara\^\i t un grand nombre de fois le long de l'orbite de $O'$.)
On obtient une version plus forte du th\'eor\`eme~\ref{t.bgv}.

\begin{theoreme}[Bonatti-D\'\i az-Pujals]\label{t.bdp}
Pour tout $\varepsilon>0$, il existe $N\geq 1$ tel que
toute famille d'orbites p\'eriodiques $\cO$ de $f$ ayant la propri\'et\'e de sp\'ecification
satisfait l'un de ces deux cas~:
\begin{itemize}
\item[--] il existe une d\'ecomposition $N$-domin\'ee non triviale au-dessus de $\cO$,
\item[--] il existe une orbite $O=\{x,f(x),\dots,f^{\tau}(x)=x\}$
dans $\cO$ et une $\varepsilon$-perturbation $B_1,\dots,B_{\tau}$
de la diff\'erentielle $Df(f(x)),\dots,Df(f^\tau(x))$ de $f$ le long de $O$
tels que le produit $B_\tau\dots B_1$ soit une homoth\'etie.
\end{itemize}
\end{theoreme}

\section{Cycles h\'et\'erodimensionnels}\label{s.cycle}

Une autre bifurcation introduite dans~\cite{newhouse-palis} et mettant en jeu les vari\'et\'es invariantes d'orbites p\'eriodiques s'est r\'ev\'el\'ee importante (voir~\cite[chapitre 6]{bdv}).
Elle permet en particulier d'obtenir une propri\'et\'e de sp\'ecification entre points
p\'eriodiques d'indices diff\'erents.

\begin{definition}
Deux orbites p\'eriodiques hyperboliques $O_1,O_2$
forment un \emph{cycle h\'e\-t\'e\-ro\-di\-men\-si\-on\-nel}\index{cycle!h\'et\'erodimensionnel} si leurs indices sont diff\'erents et si
$W^u(O_1)\cap W^s(O_2)$ et $W^u(O_2)\cap W^s(O_1)$ ne sont pas vides.
\end{definition}

Ce type de bifurcation a \'et\'e \'etudi\'e intensivement notamment par L. D\'\i az et ses collaborateurs.
Bien s\^ur, l'ensemble des diff\'eomorphismes ayant un cycle h\'et\'erodimensionnel forme une partie
maigre de $\diff^1(M)$. On peut toutefois renforcer la d\'efinition pr\'ec\'edente.

\begin{definition}
Deux orbites p\'eriodiques hyperboliques $O_1,O_2$
forment un \emph{cycle h\'e\-t\'e\-ro\-di\-men\-si\-on\-nel robuste}\index{cycle!h\'et\'erodimensionnel robuste} s'il existe des ensembles hyperboliques transitifs $K, L$ contenant respectivement $O_1$ et $O_2$,
tels que pour tout diff\'eomorphisme $g\in \diff^1(M)$ proche de $f$, on a
$W^u(K_g)\cap W^s(L_g)\neq \emptyset$ et $W^s(K_g)\cap W^s(L_g)\neq \emptyset$
pour les continuations hyperboliques $K_g, L_g$ de $K$ et $L$.
\end{definition}

La section~\ref{s.exemple-abraham-smale} pr\'esente des diff\'eomorphismes avec cycles h\'et\'erodimensionnels robustes.
\medskip

\paragraph{Connexions fortes.}
Nous pr\'esentons maintenant un cadre qui permet d'obtenir un cycle h\'et\'erodimensionnel
par perturbation.
Consid\'erons le cas o\`u $K$ est muni d'une structure partiellement hyperbolique
$T_{K}M=E^s\oplus E^c\oplus E^u$ avec $\dim(E^c)=1$ et supposons que pour tout voisinage
$U$ de $K$ et tout $\delta>0$, il existe un diff\'eomorphisme $f'$ proche de $f$
et une orbite p\'eriodique $O$ contenue dans $U$ dont l'exposant central appartient
\`a $(-\delta,\delta)$ et telle que $W^{uu}(O)\setminus O$ et $W^{ss}(O)\setminus O$ s'intersectent.
On peut alors cr\'eer un cycle h\'et\'erodimensionnel en appliquant la remarque suivante.

\begin{lemme}\label{l.connexion-forte}
Soit $O$ une orbite p\'eriodique ayant un unique exposant nul
et telle que $W^{uu}(O)\setminus O$ et $W^{ss}(O)\setminus O$ s'intersectent.
Il existe alors un diff\'eomorphisme $C^1$-proche ayant un cycle h\'et\'erodimensionnel.
\end{lemme}
\begin{proof}
Par perturbation, on cr\'ee au voisinage de $O$ deux orbites p\'eriodiques $O_1,O_2$ ayant des exposants
respectivement positifs et n\'egatifs dans la direction centrale et connect\'e par un segment central.
L'intersection entre $W^{uu}(O)\setminus O$ et $W^{ss}(O)\setminus O$ permet
d'obtenir une intersection
entre $W^{uu}(O_2)\setminus O_2$ et $W^{ss}(O_1)\setminus O_1$.
Voir la figure~\ref{f.connexion}.
\end{proof}

\begin{figure}[ht]
\begin{center}
\input{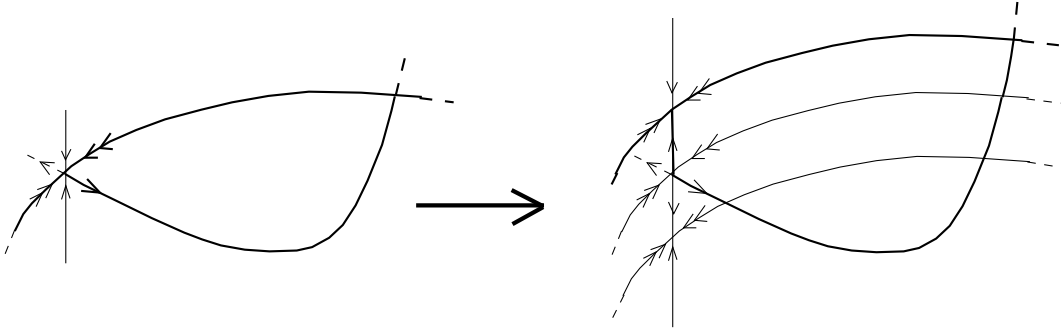}
\end{center}
\caption{Cr\'eation d'un cycle h\'et\'erodimensionnel.\label{f.connexion}}
\end{figure}

Nous dirons qu'une orbite p\'eriodique $O\subset K$
poss\`ede une \emph{connexion forte}\index{connexion forte} lorsque
$W^{uu}(O)\setminus O$ et $W^{ss}(O)\setminus O$ s'intersectent.
\medskip

\paragraph{Connexions fortes g\'en\'eralis\'ees.}
Nous donnons maintenant un m\'ecanisme plus g\'en\'eral que le lemme pr\'ec\'edent pour engendrer des
cycles h\'et\'erodimensionnels (voir par exemple~\cite{cp1}).

\begin{lemme}\label{l.generalisee}
Consid\'erons une classe homocline $H(p)$ v\'erifiant~:
\begin{itemize}
\item[--] il existe une d\'ecomposition domin\'ee $T_{H(p)}M=E\oplus E^c\oplus F$
telle que $\dim(E^c)=1$ et $\dim(E\oplus E^c)$ co\"\i ncide avec la dimension stable de $p$,
\item[--] la vari\'et\'e stable forte $W^{ss}(p)$ tangente \`a $E_p$ coupe $H(p)$ en un point diff\'erent de $p$,
\end{itemize}
et supposons qu'il existe des orbites p\'eriodiques homocliniquement reli\'ees \`a $p$ ayant des exposants centraux
arbitrairement proches de $0$.

Il existe alors un diff\'eomorphisme $C^1$-proche ayant un cycle h\'et\'erodimensionnel.
\end{lemme}

Nous dirons qu'une classe homocline poss\`ede \emph{une connexion forte g\'en\'eralis\'ee}\index{connexion forte!g\'en\'eralis\'ee}
si elle v\'erifie les deux propri\'et\'es du lemme pr\'ec\'edent.

\section{Sp\'ecification au sein d'une classe homocline (indice variable)}
Afin de travailler avec des orbites p\'eriodiques d'indices diff\'erents nous affaiblissons
la propri\'et\'e de sp\'ecification.\index{sp\'ecification}

\begin{definition}
Un ensemble d'orbites p\'eriodiques $\cO$ a la \emph{propri\'et\'e
du barycentre} si pour tous $\varepsilon>0$, $\rho\in (0,1)$
et $O_1,O_2\in \cO$, il existe $O=\{x,f(x),\dots,f^{\tau}(x)=x\}$ appartenant \`a $\cO$
et des parties disjointes $I,J\subset \{1,\dots,\tau\}$ tels que~:
\begin{itemize}
\item[--] $\card(I)\geq (\rho-\varepsilon).\tau$ et
$\card(J)\geq (1-\rho-\varepsilon).\tau$~;
\item[--] $f^k(x)$ appartient au $\varepsilon$-voisinage de $O_{1}$ (resp. de $O_2$)
pour tout $k\in I$ (resp. $k\in J$).
\end{itemize}
\end{definition}

Pour un diff\'eomorphisme $C^1$-g\'en\'erique, la propri\'et\'e du barycentre
est v\'erifi\'ee~\cite{abc-ergodic} au sein des classes homoclines.

\begin{theoreme}[Abdenur-Bonatti-Crovisier]\label{t.barycentre}
Il existe un G$_\delta$ dense $\cG\subset \diff^1(M)$
tel que pour tout $f\in \cG$ et toute classe homocline $H(p)$ de $f$,
l'ensemble des orbites p\'eriodiques de $f$ contenues dans $H(p)$
a la propri\'et\'e du barycentre.
\end{theoreme}

La d\'emonstration reprend des arguments perturbatifs de~\cite{bdpr,abcdw}~:
consid\'erons deux orbites p\'eriodiques $O_1,O_2$ de p\'eriodes $\tau_1,\tau_2$,
contenues dans $H(p)$.
En utilisant la proposition~\ref{p.reel},
on peut toujours supposer que leurs valeurs propres sont r\'eelles et de multiplicit\'e
\'egale \`a $1$. On peut alors perturber $f$ et obtenir un diff\'eomorphisme $\tilde f$
pour lequel $O_1$ et $O_2$ sont reli\'ee par un cycle h\'et\'erodimensionnel
(voir la section~\ref{s.consequence}).

On peut supposer de plus que le cycle est \emph{lin\'eaire}~:
il existe $x\in W^u(O_1)\cap W^s(O_2)$,
$y\in W^u(O_2)\cap W^s(O_1)$
et des cartes $\varphi_1\colon V_1\to \RR^d$,
$\varphi_2\colon V_2\to \RR^d$ au voisinage de $O_1$ et $O_2$ tels que
\begin{itemize}
\item[--] dans les cartes $\varphi_1,\varphi_2$,
l'application $f$ est affine et sa diff\'erentielle est diagonale~;
\item[--] il existe un it\'er\'e n\'egatif $f^{n^-_x}(x)$ (resp. $f^{n^-_y}(y)$)
dont l'orbite n\'egative reste proche de $O_1$ (resp. $O_2$)
et un it\'er\'e positif $f^{n^+_x}(x^+)$ (resp. $f^{n^+_y}(x^-)$)
dont l'orbite positive reste proche de $O_2$ (resp. $O_1$)
tels que l'application $f^{n^-_x+n^+_x}$ (resp. $f^{n^-_y+n^+_y}$)
est affine au voisinage de $f^{n_x^-}(x)$
(resp. $f^{n_y^-}(y)$) et sa diff\'erentielle est diagonale.
\end{itemize}
Pour tous $\ell_1,\ell_2$ suffisamment grands,
on cr\'ee alors par perturbation de $f$ au voisinage de $x$,
une orbite p\'eriodique de p\'eriode $\ell_1.\tau_1+\ell_2.\tau_2+n_x^-+n_x^++n^-_y+n^+_y$
qui poss\`ede $\ell_1.\tau_1$ it\'er\'es cons\'ecutifs dans $V_1$ et
$\ell_2.\tau_2$ it\'er\'es cons\'ecutifs dans $V_2$.
Par construction l'indice de l'orbite cr\'ee $O$ est compris entre
ceux de $O_1$ et $O_2$. De plus, pour un ouvert de diff\'eomorphismes,
$W^u(O)$ intersecte la vari\'et\'e stable de l'une des orbites $O_1,O_2$
et $W^s(O)$ intersecte la vari\'et\'e instable de l'autre.
Puisque pour tout diff\'eomorphisme $C^1$-g\'en\'erique au voisinage de $f$
les classes homoclines de $O_1$ et $O_2$ co\"\i ncident, il existe un diff\'eomorphisme
proche de $f$ pour lequel $O$ appartient \`a cette classe.

\section{Application (1)~: indices des classes homoclines}
Ceci permet de retrouver des r\'esultats de~\cite{bdpr,abcdw}.

\begin{corollaire}[\cite{bdpr,abcdw}]\label{c.indice}
Il existe un G$_\delta$ dense $\cG\subset \diff^1(M)$
tel que pour tout $f\in \cG$ et toute classe homocline $H(p)$ de $f$,
l'ensemble des indices des orbites p\'eriodiques de $f$ contenues dans $H(p)$
est un intervalle de $\NN$.
\end{corollaire}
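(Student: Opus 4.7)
The plan is to derive Corollary~\ref{c.indice} directly from the barycentre property of Theorem~\ref{t.barycentre}. Let $\cG \subset \diff^1(M)$ be a $G_\delta$ dense set on which Theorem~\ref{t.barycentre} and the Kupka-Smale property hold and on which the continuations of periodic orbits and homoclinic classes behave well. Fix $f \in \cG$, a homoclinic class $H(p)$, and two periodic orbits $O_1, O_2 \subset H(p)$ of indices $i_1 < i_2$; I want to produce a periodic orbit of $f$ inside $H(p)$ of each intermediate index.

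First I would apply Proposition~\ref{p.reel}, combined with a small generic perturbation preserving the homoclinic class, to reduce to the case where $O_1$ and $O_2$ have real simple eigenvalues, so that their Lyapunov vectors $L_1, L_2 \in \RR^d$ are well-ordered tuples. For each $\rho \in (0,1)$ and each small $\varepsilon > 0$, Theorem~\ref{t.barycentre} then furnishes a periodic orbit $O_\rho \subset H(p)$ of large period $\tau_\rho$ spending at least $(\rho - \varepsilon)\tau_\rho$ iterates near $O_1$ and at least $(1 - \rho - \varepsilon)\tau_\rho$ iterates near $O_2$. A computation modelled on the one used in~\cite{bdpr, abcdw} then shows that the sorted Lyapunov vector of $O_\rho$ is $O(\varepsilon)$-close to the sorted convex combination $\rho L_1 + (1-\rho) L_2$, so that the index of $O_\rho$ is determined by the sign pattern of this convex combination as soon as $\varepsilon$ is chosen small.

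The conclusion would then follow from a one-parameter argument. Each component of $\rho L_1 + (1-\rho) L_2$ is affine in $\rho$, so as $\rho$ moves from $1$ down to $0$ the number of strictly negative components of the sorted vector changes by $\pm 1$ exactly when one component crosses zero. Since this count equals $i_1$ at $\rho = 1$ and $i_2$ at $\rho = 0$, by the intermediate value theorem applied to this integer-valued function it takes every intermediate integer value; choosing $\rho$ slightly on the appropriate side of each crossing yields an orbit $O_\rho \subset H(p)$ of the desired index, and a final generic argument ensures that this orbit exists for $f$ itself and not merely for a perturbation.

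The hard part will be the control of the Lyapunov vector of $O_\rho$. The barycentre property only controls how much time $O_\rho$ spends near $O_1$ and $O_2$, not its Oseledets decomposition, and in the absence of any dominated splitting over $H(p)$ the short transition segments connecting long visits to $O_1$ and $O_2$ could a priori rotate eigendirections and scramble the exponents. This is precisely the obstacle addressed in~\cite{bdpr, abcdw}: a preliminary perturbation aligns the linearizations of $O_1$, $O_2$ and of their heteroclinic connections in coordinates adapted to a common flag, so that the product of derivatives along $O_\rho$ genuinely approximates the convex-combination matrix and the sign-counting argument applies.
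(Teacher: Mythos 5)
Your overall route is the one the paper follows -- reduce to real simple eigenvalues via Proposition~\ref{p.reel}, realize convex combinations of the Lyapunov vectors of $O_1$ and $O_2$, count the negative entries as the weight $\rho$ varies, and finish with a genericity argument -- but there is a genuine gap in the way you organize it. You propose to apply Theorem~\ref{t.barycentre} as a black box and then ``compute'' that the Lyapunov vector of the resulting orbit $O_\rho$ is $O(\varepsilon)$-close to $\rho L_1+(1-\rho)L_2$. This step is not available: the barycentre property controls only the fraction of time $O_\rho$ spends in small neighbourhoods of $O_1$ and $O_2$, and gives no control whatsoever on the derivative cocycle. Without a dominated splitting over $H(p)$, an orbit satisfying the barycentre conclusion can perfectly well have a Lyapunov vector far from the convex combination (already for products of two hyperbolic $2\times 2$ blocks, the transitions can produce elliptic behaviour), so the ``computation'' you invoke is false for a general orbit furnished by the theorem. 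Your last paragraph correctly diagnoses this, but the fix you sketch -- a preliminary perturbation aligning the linearizations -- cannot be applied after the fact to an orbit you have already extracted from the statement of Theorem~\ref{t.barycentre}.

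The correct organisation, which is the one of \cite{bdpr,abcdw} and of the paper, is to use the \emph{proof} of Theorem~\ref{t.barycentre} rather than its statement: one first perturbs to create a heterodimensional cycle between $O_1$ and $O_2$ which is linear in adapted charts (affine with diagonal differential along $O_1$, $O_2$ and along the two heteroclinic transitions), and then builds the new periodic orbit explicitly as $\ell_1$ turns around $O_1$ followed by $\ell_2$ turns around $O_2$ glued by the fixed transitions. For that specific orbit the derivative at the period is a product of commuting diagonal matrices, so its $j$-th exponent really is the convex combination $\tfrac{\ell_1\tau_1}{\tau}\lambda_{1,j}+\tfrac{\ell_2\tau_2}{\tau}\lambda_{2,j}$ up to an error $O(1/\tau)$, and varying $\ell_1/\ell_2$ (after perturbing the eigenvalues so that the zero-crossings occur at distinct ratios) yields every intermediate index. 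The remaining steps of your proposal -- pushing the orbit into the class $H(p)$ for a nearby generic diffeomorphism and then transferring the conclusion back to $f\in\cG$ by hyperbolic continuation and a Baire argument over pairs of periodic orbits -- are fine as stated.
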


En combinant ce r\'esultat avec le th\'eor\`eme~\ref{t.dichotomie-tangence},
on obtient l'existence de fibr\'es centraux de dimension $1$ au-dessus des classes
homoclines loin des tangences homoclines.

\begin{corollaire}[Gourmelon~\cite{gourmelon-tangence}]
Il existe un G$_\delta$ dense $\cG\subset \diff^1(M)$
tel que pour tout $f\in \cG$ et toute classe homocline $H(p)$ de $f$
l'une des deux propri\'et\'es suivantes est v\'erifi\'ee.
\begin{itemize}
\item[--] $H(p)$ poss\`ede une d\'ecomposition domin\'ee
$$T_{H(p)}M=E\oplus E^c_1\oplus\dots\oplus E^c_\ell\oplus F,$$
telle que $\dim(E)$ et $d-\dim(F)$ sont respectivement l'indice minimal
et l'indice maximal de $H(p)$ et chaque fibr\'e $E^c_i$ est de dimension $1$.
\item[--] Il existe une orbite p\'eriodique $O$ homocliniquement reli\'ee \`a $p$
et une perturbation $g\in \diff^1(M)$ de $f$ telle que la continuation hyperbolique de $O$ a une
tangence homocline.
\end{itemize}
\end{corollaire}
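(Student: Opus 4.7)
Le plan est de combiner le corollaire~\ref{c.indice} avec le th\'eor\`eme~\ref{t.dichotomie-tangence} de Gourmelon. Consid\'erons un $G_\delta$ dense $\cG\subset \diff^1(M)$ form\'e de diff\'eomorphismes Kupka-Smale, v\'erifiant la conclusion du corollaire~\ref{c.indice}, et pour lesquels la continuation hyperbolique des orbites p\'eriodiques est bien d\'efinie pour toute perturbation $C^1$-petite. Fixons $f\in \cG$ et une classe homocline $H(p)$. Notons $i_{\min}$ et $i_{\max}$ les indices minimal et maximal des orbites p\'eriodiques contenues dans $H(p)$. D'apr\`es le corollaire~\ref{c.indice}, l'ensemble des indices de ces orbites est l'intervalle $\{i_{\min},\dots,i_{\max}\}$~; on peut donc choisir, pour chaque $i$ dans cet intervalle, une orbite p\'eriodique $O_i\subset H(p)$ homocliniquement reli\'ee \`a $p$ et d'indice $i$.

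Ensuite, on applique le th\'eor\`eme~\ref{t.dichotomie-tangence} \`a chaque orbite $O_i$. L'alternative fournit~: soit une perturbation $C^1$ de $f$ pour laquelle la continuation hyperbolique de $O_i$ admet une tangence homocline, ce qui donne directement la seconde conclusion cherch\'ee puisque $O_i$ est homocliniquement reli\'ee \`a $p$~; soit une d\'ecomposition domin\'ee $T_{H(p)}M=E_i\oplus F_i$ avec $\dim(E_i)=\dim(E^s(O_i))=i$ (la classe $H(O_i)$ co\"\i ncide avec $H(p)$). Si l'on est dans le premier cas pour un seul $i$, le corollaire est d\'emontr\'e. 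Supposons donc que la seconde alternative a lieu pour tout $i\in\{i_{\min},\dots,i_{\max}\}$.

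Il s'agit alors de conclure \`a l'existence de la d\'ecomposition annonc\'ee. Pour cela, j'utiliserais la propri\'et\'e rappel\'ee en section~\ref{s.domination}~: il existe une unique d\'ecomposition domin\'ee la plus fine $T_{H(p)}M=G_1\oplus\dots\oplus G_k$, qui raffine toute autre d\'ecomposition domin\'ee. En particulier, pour chaque $i\in\{i_{\min},\dots,i_{\max}\}$, il existe un entier $j(i)\in\{0,\dots,k\}$ tel que $E_i=G_1\oplus\dots\oplus G_{j(i)}$, ce qui entra\^\i ne $\sum_{r\leq j(i)}\dim(G_r)=i$. La fonction $i\mapsto j(i)$ est strictement croissante, et comme elle atteint chacune des valeurs cumul\'ees $i_{\min},i_{\min}+1,\dots,i_{\max}$, chaque facteur $G_{j(i)+1}$ pour $i\in\{i_{\min},\dots,i_{\max}-1\}$ est de dimension exactement $1$. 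En regroupant $E:=G_1\oplus\dots\oplus G_{j(i_{\min})}$ (de dimension $i_{\min}$), $F:=G_{j(i_{\max})+1}\oplus\dots\oplus G_k$ (de dimension $d-i_{\max}$), et en posant $E^c_r:=G_{j(i_{\min}+r-1)+1}$ pour $1\leq r\leq \ell:=i_{\max}-i_{\min}$, on obtient la d\'ecomposition domin\'ee cherch\'ee.

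Le seul point \`a surveiller concerne la nature ``globale'' de la d\'ecomposition produite par le th\'eor\`eme~\ref{t.dichotomie-tangence}~: la seconde alternative y est \'enonc\'ee sur la classe homocline toute enti\`ere $H(O_i)=H(p)$, de sorte que les diff\'erents $E_i\oplus F_i$ sont bien d\'efinis au-dessus du m\^eme ensemble compact, ce qui autorise \`a les comparer via la d\'ecomposition la plus fine. Les constantes de domination peuvent d\'ependre de $i$, mais ceci n'est pas un obstacle~: la d\'ecomposition la plus fine existe d\`es que l'on dispose d'une d\'ecomposition domin\'ee pour chaque coupure de dimension souhait\'ee, et elle est automatiquement domin\'ee par un entier $N$ uniforme (puisque $H(p)$ est compact). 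Aucune \'etape ne demande de travail perturbatif nouveau~: toute la difficult\'e a \'et\'e absorb\'ee par le corollaire~\ref{c.indice} (qui utilise le th\'eor\`eme~\ref{t.barycentre}) et par le th\'eor\`eme~\ref{t.dichotomie-tangence}.
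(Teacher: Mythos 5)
Votre d\'emarche est exactement celle que le texte sous-entend (le m\'emoire ne d\'etaille pas la preuve et se contente d'indiquer qu'il faut combiner le corollaire~\ref{c.indice} et le th\'eor\`eme~\ref{t.dichotomie-tangence}), et la partie ``recollement'' de votre argument est correcte~: l'\'egalit\'e g\'en\'erique $H(O_i)=H(p)$ (deux classes homoclines sont disjointes ou confondues), le passage par la d\'ecomposition domin\'ee la plus fine de la section~\ref{s.domination} et le comptage des dimensions cumul\'ees, qui force chaque facteur interm\'ediaire \`a \^etre de dimension $1$, donnent bien la premi\`ere alternative lorsque le th\'eor\`eme~\ref{t.dichotomie-tangence} fournit une d\'ecomposition $E_i\oplus F_i$ pour chaque indice $i$ de l'intervalle.

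Le point qui ne tient pas est l'affirmation selon laquelle on peut choisir, pour chaque $i$, une orbite $O_i\subset H(p)$ \emph{homocliniquement reli\'ee \`a $p$} et d'indice $i$~: deux orbites homocliniquement reli\'ees ont n\'ecessairement le m\^eme indice (c'est soulign\'e dans la d\'efinition du chapitre~\ref{c.decomposition}), donc $O_i$ ne peut \^etre homocliniquement reli\'ee \`a $p$ que si $i=\dim E^s(p)$. G\'en\'eriquement, les orbites d'indice $i\neq \dim E^s(p)$ contenues dans $H(p)$ ne sont li\'ees \`a $p$ que par des intersections h\'et\'eroclines (section~\ref{s.consequence}), pas au sens fort requis par l'\'enonc\'e. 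Cela n'affecte pas la premi\`ere alternative, o\`u seule l'\'egalit\'e $H(O_i)=H(p)$ intervient, mais si la tangence est produite par le th\'eor\`eme~\ref{t.dichotomie-tangence} pour un $O_i$ d'indice $i\neq\dim E^s(p)$, votre preuve fournit seulement une orbite p\'eriodique \emph{contenue dans} $H(p)$ ayant une tangence homocline apr\`es perturbation, ce qui est plus faible que la conclusion litt\'erale. Il faut donc soit lire la seconde alternative sous cette forme affaiblie, soit ajouter un argument transf\'erant la tangence \`a une orbite de m\^eme indice que $p$~; votre texte ne fait ni l'un ni l'autre et affirme \`a tort la relation homocline avec $p$.
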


\section{Application (2)~: mesures g\'en\'eriques port\'ees par une classe homocline isol\'ee}

Voici d'autres cons\'equences tir\'ees de~\cite{abc-ergodic}.

\begin{corollaire}[Abdenur-Bonatti-Crovisier]\label{c.fermeture-isolee}
Il existe un G$_\delta$ dense $\cG\subset \diff^1(M)$
tel que pour tout $f\in \cG$ et toute classe de r\'ecurrence par cha\^\i nes $K$
de $f$, qui est isol\'ee dans $\cR(f)$, toute mesure de probabilit\'e invariante
support\'ee sur $K$ est limite faible de mesures de probabilit\'e support\'ees
sur des orbites p\'eriodiques de $K$.
\end{corollaire}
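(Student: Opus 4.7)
La strat\'egie consiste \`a combiner trois ingr\'edients d\'ej\`a disponibles~: l'approximation des mesures ergodiques par mesures p\'eriodiques (corollaire~\ref{c.approximation-mesure}, cons\'equence du lemme de fermeture ergodique~\ref{t.closing-ergodique}), la robustesse des classes isol\'ees (section~\ref{s.consequence} e), et la propri\'et\'e du barycentre pour les classes homoclines (th\'eor\`eme~\ref{t.barycentre}). On d\'efinit $\cG$ comme l'intersection d'un G$_\delta$ dense de $\diff^1(M)$ o\`u ces trois \'enonc\'es sont v\'erifi\'es simultan\'ement.

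La premi\`ere \'etape sera de r\'eduire le probl\`eme au cas des mesures ergodiques. L'ensemble $\cM_f(K)$ des mesures de probabilit\'e $f$-invariantes support\'ees par $K$ est un convexe compact m\'etrisable en topologie faible~; les points extr\'emaux \'etant les mesures ergodiques, le th\'eor\`eme de Krein-Milman assure que l'enveloppe convexe des mesures ergodiques de $\cM_f(K)$ est dense dans $\cM_f(K)$. Il suffit donc d'approcher toute combinaison convexe finie $\mu=\sum_{i=1}^s \rho_i\mu_i$ de mesures ergodiques $\mu_i$ support\'ees dans $K$ par une suite de mesures p\'eriodiques port\'ees par des orbites contenues dans $K$.

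La deuxi\`eme \'etape traite chaque mesure ergodique $\mu_i$ s\'epar\'ement. Le corollaire~\ref{c.approximation-mesure} fournit, pour $f\in \cG$, une suite d'orbites p\'eriodiques $O_{i,n}$ dont les mesures p\'eriodiques associ\'ees convergent faiblement vers $\mu_i$ et dont les supports convergent vers $\supp(\mu_i)\subset K$ pour la topologie de Hausdorff. Comme $K$ est isol\'ee dans $\cR(f)$, elle admet un voisinage $U$ tel que tout ensemble r\'ecurrent par cha\^\i nes contenu dans $U$ est inclus dans $K$~; en particulier, pour $n$ assez grand, l'orbite $O_{i,n}$ est contenue dans $U$, donc dans $K$. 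En combinant ceci avec le fait que $K=H(p)$ est une classe homocline (propri\'et\'e g\'en\'erique des classes isol\'ees, cf. section~\ref{s.consequence} e), on peut de plus supposer que toutes les orbites $O_{i,n}$ sont homocliniquement reli\'ees entre elles et \`a $p$, quitte \`a les modifier par petites perturbations (puisque, pour $f\in \cG$ g\'en\'erique, deux orbites p\'eriodiques de m\^eme indice dans une m\^eme classe de r\'ecurrence par cha\^\i nes sont homocliniquement reli\'ees, et pour des indices diff\'erents on utilise les arguments de cycles h\'et\'erodimensionnels de la section~\ref{s.cycle}).

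La troisi\`eme \'etape recombine ces approximations via la propri\'et\'e du barycentre, par r\'ecurrence sur $s$. Pour $s=2$, \'etant donn\'e $\varepsilon>0$, on choisit $n$ suffisamment grand pour que la mesure p\'eriodique sur $O_{i,n}$ soit $\varepsilon$-proche de $\mu_i$ ($i=1,2$)~; la propri\'et\'e du barycentre appliqu\'ee avec $\rho=\rho_1$ et aux orbites $O_{1,n},O_{2,n}$ (toutes deux dans $H(p)$) fournit une orbite p\'eriodique $O\subset H(p)=K$ dont la mesure associ\'ee est \`a distance $O(\varepsilon)$ de $\rho_1\mu_1+\rho_2\mu_2$. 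Pour $s>2$, on applique l'hypoth\`ese de r\'ecurrence \`a la mesure $\nu=(1-\rho_s)^{-1}\sum_{i=1}^{s-1}\rho_i\mu_i$ pour obtenir une orbite p\'eriodique $O'\subset K$ dont la mesure est proche de $\nu$, puis on combine $O'$ et $O_{s,n}$ via la propri\'et\'e du barycentre avec poids $(1-\rho_s,\rho_s)$. La principale difficult\'e technique sera de contr\^oler conjointement, \`a chaque \'etape, la proximit\'e faible des mesures et l'inclusion des orbites dans $K$~: le premier point exige d'utiliser des orbites de p\'eriode assez grande, tandis que le second s'obtient, apr\`es perturbation, en utilisant la continuit\'e semi-continue sup\'erieure de $g\mapsto \cR(g)$ au voisinage de $f$ et la stabilit\'e des classes isol\'ees. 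Ceci ach\`eve la construction et donne l'approximation faible cherch\'ee.
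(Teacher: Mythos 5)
Votre démonstration suit essentiellement la même route que celle du texte : réduction aux combinaisons convexes finies de mesures ergodiques, approximation de chaque mesure ergodique par des orbites périodiques via le lemme de fermeture ergodique (corollaire~\ref{c.approximation-mesure}), inclusion de ces orbites dans $K$ grâce à l'isolation, puis recombinaison par la propriété du barycentre (théorème~\ref{t.barycentre}). Signalons seulement que l'aparté sur la mise en relation homocline des orbites \og quitte à les modifier par petites perturbations \fg{} est superflu (et serait même dangereux s'il fallait réellement perturber $f$), car le théorème~\ref{t.barycentre} s'applique à toutes les orbites périodiques contenues dans $H(p)$ sans hypothèse de liaison homocline.
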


En effet, la propri\'et\'e est v\'erifi\'ee pour les mesures ergodiques d'apr\`es
le lemme de fermeture ergodique (th\'eor\`eme~\ref{t.closing-ergodique}).
Les orbites p\'eriodiques appartiennent n\'ecessairement \`a $K$.
Les mesures invariantes non ergodiques sont approch\'ees par des barycentres de
mesures ergodiques et on conclut en utilisant la propri\'et\'e du barycentre.

On en d\'eduit le r\'esultat suivant, d\'emontr\'e par Sigmund~\cite{sigmund} dans le cas
des ensembles hyperboliques localement maximaux transitifs et annonc\'e par Ma\~n\'e~\cite{mane-oseledets}.
\begin{theoreme}[Abdenur-Bonatti-Crovisier]
Il existe un G$_\delta$ dense $\cG\subset \diff^1(M)$
tel que pour tout $f\in \cG$ et toute classe de r\'ecurrence par cha\^\i nes $K$
de $f$, qui est isol\'ee dans $\cR(f)$, l'ensemble des mesures de probabilit\'e invariantes $\mu$
support\'ees dans $K$ contient un G$_\delta$ dense de mesures qui sont~:
\begin{itemize}
\item[--] ergodiques,
\item[--] de support total ($\supp(\mu)=K$),
\item[--] d'entropie nulle,
\item[--] hyperboliques,
\item[--] et dont la d\'ecomposition d'Oseledets $T_xM=E_1\oplus\dots\oplus E_s$
en $\mu$-presque tout point s'\'etend \`a l'espace tangent au-dessus de $K$
en une d\'ecomposition domin\'ee.
\end{itemize}
\end{theoreme}

Voici une cons\'equence du th\'eor\`eme pr\'ec\'edent et de la proposition~\ref{p.mesure}.
\begin{corollaire}[Abdenur-Bonatti-Crovisier]
Pour toute classe homocline isol\'ee $H(O)$ d'un diff\'eomorphisme $C^1$-g\'en\'erique et
toute mesure g\'en\'erique $\mu$ support\'ees sur $H(O)$,
presque tout point poss\`ede une vari\'et\'e stable et une vari\'et\'e instable. 
\end{corollaire}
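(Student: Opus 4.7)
The plan is to combine the previous theorem (giving a $G_\delta$ dense set of ``good'' measures on $H(O)$) with Proposition~\ref{p.mesure} (which produces stable manifolds in the $C^1$ setting provided a dominated decomposition is available). The central point is that the previous theorem already delivers, for a $C^1$-generic $f$ and an isolated class $H(O)$, a $G_\delta$ dense set of measures $\mu$ which are simultaneously ergodic, hyperbolic (no vanishing Lyapunov exponent), and whose Oseledets decomposition at $\mu$-almost every point extends to a dominated decomposition of $T_{H(O)}M$.

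First, I would fix such a generic $\mu$ and its global decomposition $T_{H(O)}M = E_1 \oplus \dots \oplus E_s$. Since $\mu$ is ergodic, the Lyapunov exponents $\lambda_1 < \dots < \lambda_s$ attached to the factors $E_i$ are constants, and since $\mu$ is hyperbolic, none of them vanishes. Let $k$ be the largest index with $\lambda_k < 0$; regroup the decomposition as
\[
T_{H(O)}M = E^s \oplus E^u, \qquad E^s := E_1 \oplus \dots \oplus E_k, \quad E^u := E_{k+1} \oplus \dots \oplus E_s.
\]
Because every sub-sum of consecutive factors of a dominated decomposition is itself dominated, this is still a (non-trivial) dominated splitting over $H(O)$.

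Next, I would apply Proposition~\ref{p.mesure} to the decomposition $E^s \oplus E^u$ and the measure $\mu$. The maximal exponent of $\mu$ along $E^s$ equals $\lambda_k < 0$, so the hypothesis of that proposition is satisfied; hence $\mu$-almost every point $x$ is hyperbolic along $E^s$ and carries a strong stable manifold $W^{ss}(x)$ tangent to $E^s(x)$. This is the desired stable manifold. Applying the same argument to the diffeomorphism $f^{-1}$ (for which $\mu$ is still invariant, ergodic, with the same Oseledets splitting but opposite exponents and the dominated decomposition $E^u \oplus E^s$ reversed), one obtains a strong unstable manifold tangent to $E^u(x)$ at $\mu$-almost every $x$.

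No step is really an obstacle here: the heavy lifting has been done in the theorem that provides the dominated Oseledets extension on $C^1$-generic isolated classes, and in Proposition~\ref{p.mesure} that constructs invariant manifolds out of pointwise hyperbolicity plus domination (the standard replacement, in $C^1$ regularity, for the $C^{1+\varepsilon}$ Pesin theory, which is unavailable here). The only point that deserves care is to verify that the regrouping of Oseledets subbundles into $E^s \oplus E^u$ preserves the domination property uniformly over $H(O)$, and that Proposition~\ref{p.mesure}, stated for a decomposition with two factors, applies directly to this regrouped splitting; both are immediate from the definitions recalled in Section~\ref{s.domination}.
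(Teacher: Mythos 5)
Your argument is correct and is exactly the route the paper intends: the corollary is stated there as an immediate consequence of the preceding theorem (generic measures on an isolated class are ergodic, hyperbolic, with Oseledets splitting extending to a dominated decomposition) combined with Proposition~\ref{p.mesure}, applied to $f$ for the stable manifold and to $f^{-1}$ for the unstable one. Your regrouping of the Oseledets factors into $E^s\oplus E^u$ and the check that domination survives the regrouping are precisely the (routine) details the paper leaves implicit.
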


Utilisant la sp\'ecification, D\'\i az et Gorodetski~\cite{dg} obtiennent l'existence de
mesures ergodiques non hyperboliques.

\begin{theoreme}[D\'\i az-Gorodetski]
Il existe un G$_\delta$ dense $\cG\subset \diff^1(M)$ tel que pour tout $f\in \cG$
toute classe homocline contenant des points p\'eriodiques d'indices diff\'erents
contient \'egalement une mesure invariante ergodique ayant au moins un exposant de Lyapunov nul.
\end{theoreme}

\section{Application (3)~: dynamique universelle}\label{s.universelle}

Pour $k\geq 1$,
notons $\DD^k$ la boule standard ferm\'ee unit\'e de $\RR^k$
et $\diff^r_{\interior,+}(\DD^k)$ l'espace des plongements $C^r$ pr\'eservant l'orientation
de $\DD^k$ dans l'int\'erieur d'elle-m\^eme, muni de la topologie $C^r$, $r\in [0,+\infty]$.

Un diff\'eomorphisme est $k$-universel si sa dynamique contient les dynamiques
associ\'ees \`a une partie dense d'\'el\'ements de $\diff^1_{\interior,+}(\DD^k)$.
Donnons une d\'efinition plus pr\'ecise.

\begin{definition}\label{d.universel}
Consid\'erons un entier $1\leq k\leq d$.
Un diff\'eomorphisme $f\in \diff^1(M)$ est \emph{$k$-universel}\index{universel, diff\'eomorphisme}
au voisinage d'un ensemble compact invariant $K$ par $f$ si pour tout ouvert $\cO\subset \diff^1_{\interior,+}(\DD^k)$ et tout voisinage $U$ de $K$, il existe
un plongement $\varphi\colon \DD^k\to M$ et un entier $\ell\geq 1$ v\'erifiant~:
\begin{enumerate}
\item\label{i.universel1} l'image $D$ de $\DD^k$ dans $M$ est disjointe de ses $\ell-1$
premiers it\'er\'es,
\item\label{i.universel2} les ensembles $f^i(D)$, $i\in \{0,\dots,\ell-1\}$ sont contenus dans $U$,
\item\label{i.universel3} $f^\ell(D)$ est contenu dans $D$ et l'application $\varphi^{-1}\circ f^\ell\circ \varphi$
d\'efinie sur $\DD^k$ appartient \`a $\cO$.
\end{enumerate}
Lorsque $K=M$ nous dirons simplement que $f$ est $k$-universel.
\end{definition}

\begin{remarque}
\begin{enumerate}
\item Si $f$ est $k$-universel au voisinage de $K$, l'application $f^{-1}$ l'est \'egalement
puisque pour tout diff\'eomorphisme $h\in \diff^1_{\interior,+}(\DD^k)$, il existe
$g\in \diff^1_{\interior,+}(\DD^k)$ tel que l'image de la boule standard $\frac 1 2 \DD^k$ de rayon $1/2$
par $g$ contient dans son int\'erieur $\frac 1 2 \DD^k$ et l'application $x\mapsto 2.g^{-1}(x/2)$
co\"\i ncide avec $h$ sur $\DD^k$.
\item Les dynamiques g\'en\'eriques $d$-universelles sur une vari\'et\'e de dimension $d$
sont les dynamiques g\'en\'eriques les plus compliqu\'ees puisqu'elles contiennent
toutes les pathologies associ\'ees aux diff\'eomorphismes g\'en\'eriques de
$\diff^1_{\interior,+}(\DD^d)$~: lorsque $d\geq 3$, une telle dynamique poss\`ede
une infinit\'e de puits et de sources,
des cycles h\'et\'erodimensionnels robustes, des tangences homoclines robustes,...
\end{enumerate}
\end{remarque}
\medskip

Voici un crit\`ere~\cite{blender} d'existence de dynamique universelle.
\begin{theoreme}[Bonatti-D\'\i az]\label{t.universel}
Il existe un G$_\delta$ dense de $\diff^1(M)$ dont les dif\-f\'e\-o\-mor\-phis\-mes
sont $d$-universels au voisinage de toute classe homocline $H(O)$ v\'erifiant les propri\'et\'es suivantes~:
\begin{itemize}
\item[--] $H(O)$ n'a pas de d\'ecomposition domin\'ee non triviale,
\item[--] $H(O)$ contient des orbites p\'eriodiques
$\{x,f(x),\dots,f^{\tau-1}(x)\}$ dont le jacobien
\`a la p\'eriode $|\det Df_x^\tau|$ est de module strictement plus grand que $1$,
et des orbites dont le jacobien moyen \`a la p\'eriode est de module strictement plus petit que $1$.
\end{itemize}
\end{theoreme}
\begin{proof}[Id\'ee de la d\'emonstration]
On utilisera le lemme \'el\'ementaire suivant
(voir~\cite[Chapitre 8]{hirsch}).

\begin{lemme}\label{l.isotopie}
Pour tout $k\geq 1$, et tout \'el\'ement $h\in \diff^\infty_{\interior,+}(\DD^k)$,
il existe une application diff\'erentiable $H\colon \RR^k\times [0,1]\to \RR^k$
telle que
\begin{itemize}
\item[--] pour tout $t\in [0,1]$, $h_t=H(.,t)$ est un diff\'eomorphisme de $\RR^k$
qui co\"\i ncide avec l'identit\'e hors d'une partie compacte de $\RR^k$~;
\item[--] $h_0$ est l'identit\'e de $\RR^k$~;
\item[--] la restriction de $h_1$ \`a $\DD^k$ co\"\i ncide avec $h$.
\end{itemize}
\end{lemme}
\begin{proof}[D\'emonstration du lemme]
On peut se ramener au cas o\`u $h$ fixe $0$.

On construit tout d'abord une application $H\colon \DD^k\times [0,1]\to \RR^k$ de classe $C^\infty$
telle que $h_0$ est l'identit\'e sur $\DD^k$ et $h_1$ co\"\i ncide avec $h$ sur $\DD^k$.
En effet, on peut choisir un chemin diff\'erentiable dans $GL(n,\RR)$
entre $\id$ et $D_0h$ param\'etr\'e par $[0,1/2]$.
Pour $s\in (0,1/2]$, on d\'efinit $h_{s+1/2}$ comme restriction \`a $\DD^d$ de l'application
$x\mapsto s^{-1}.h(s.x)$.

En diff\'erentiant l'application $(x,t)\mapsto (h_t(x),t)$ on obtient un champs
de vecteurs de la forme $(\frac \partial{\partial t} h_t, 1)$. En l'\'etendant au moyen d'une partition de l'unit\'e, on d\'efinit un champs de vecteurs d\'ependant du temps $(X_t)_{t\in [0,1]}$
sur $\RR^d$ \`a support compact. L'application $H$ est alors le flot associ\'e.
\end{proof}
\medskip

Consid\'erons un voisinage ouvert $U$ de $H(O)$
et un \'el\'ement $h\in \diff^\infty_{\interior,+}(\DD^d)$.
\smallskip

En utilisant le th\'eor\`eme~\ref{t.barycentre}, on d\'eduit
qu'il existe dans $H(O)$ un ensemble dense de points p\'eriodiques
$\{x,f(x),\dots,f^{\tau-1}(x)\}$ dont le jacobien
$\det Df_x^\tau$ est positif et dont le jacobien moyen
\`a la p\'eriode $(\det Df_x^\tau)^{1/\tau}$ est arbitrairement proche de $1$.
On peut aussi choisir cet ensemble pour qu'il ait la propri\'et\'e de sp\'ecification.
Puisque $H(O)$ n'a pas de d\'ecomposition domin\'ee, il existe
(th\'eor\`eme~\ref{t.bdp}) une telle orbite p\'eriodique
$\{x,f(x),\dots,f^{\tau-1}(x)\}$ et
une perturbation arbitrairement petite $f_1$ de $f$ pour laquelle
$D_xf_1^\tau$ est une homoth\'etie. Puisque le jacobien moyen \`a la p\'eriode
de l'orbite de $x$ est proche de $1$, on peut aussi perturber pour que $D_xf_1^\tau$ 
soit l'identit\'e de $T_xM$.
Une nouvelle perturbation $f_2$ permet de cr\'eer un plongement
$\varphi_0\colon \DD^d\to M$ v\'erifiant les propri\'et\'es
\ref{i.universel1} et \ref{i.universel2} de la d\'efinition~\ref{d.universel}
et telle que $f_2^\tau$ co\"\i ncide avec l'identit\'e sur $D_0=\varphi_0(\DD^d)$.

D'apr\`es le lemme~\ref{l.isotopie}, il existe
une famille de diff\'eomorphismes
$(h_t)_{t\in [0,1]}$ de $\RR^d$ et $\rho>0$ tels que
$h_0=\id$, $h_1$ co\"\i ncide avec $h$ sur $\DD^d$ et
pour tout $t\in [0,1]$ le support du diff\'eomorphisme $h_t$ soit
contenu dans la boule de rayon $\rho$.
On choisit alors un entier $L\geq 1$ suffisamment grand pour que chaque
diff\'eomorphisme $g_i:=h_{(i+1)/L}\circ h_{i/L}^{-1}$ pour $0\leq i<L$
soit arbitrairement proche de l'identit\'e de $\RR^d$ en topologie
$C^1$. Nous avons ainsi fragment\'e le diff\'eomorphisme $h$~:
$$h_1=g_{L-1}\circ\dots\circ g_0.$$

On peut perturber $f_2$ en un diff\'eomorphisme $f_3$ ayant les m\^emes
propri\'et\'es sauf que $\varphi_0^{-1}\circ f_3^\tau\circ \varphi_0$
co\"\i ncide avec la rotation $R$ d'angle $1/L\in [0,1]$ sur $\DD^d$.
Si $L$ est suffisamment grand, cette perturbation de $f_3$ est suffisamment petite.
Il existe une boule $B\subset \DD^3$ disjointe de ses $L-1$ premiers
it\'er\'es par $R$~: c'est l'image de la boule standard $\rho.\DD^d$ de rayon $\rho$
par une homoth\'etie $T$.
Sur $\varphi_0(R^i(B))$, pour $0\leq i<L$,
on remplace $f_3$ par la composition
$$f_3\circ(\varphi_0\circ R^i\circ T)\circ g_i\circ (T^{-1}\circ R^{-i}\circ\varphi_0^{-1}).$$
Par construction, cette application $f_4$ est une petite perturbation de $f$ dans $\diff^1(M)$.
Si l'on pose $\ell=L.\tau$,
la boule $\varphi_0(B)$ est $\ell$-p\'eriodique, disjointe de ses $\ell-1$ premiers it\'er\'es et
le plongement $\varphi=\varphi_0\circ T$ envoie $\DD^d$ sur une boule $D\subset B$ et v\'erifie
$h=\varphi^{-1}\circ f_4^\ell\circ  \varphi$ sur $\DD^d$.
\medskip

Puisque $\diff^\infty_{\interior,+}(\DD^d)$ est dense dans $\diff^1_{\interior,+}(\DD^d)$, nous avons
montr\'e que pour tout dif\-f\'e\-o\-mor\-phis\-me $f$ ayant une classe homocline $H(O)$
v\'erifiant les hypoth\`ese du th\'eor\`eme~\ref{t.universel}, pour tout voisinage $U$
de $H(O)$, pour tout ouvert non vide $\cO\subset \diff^1_{\interior,+}(\DD^d)$, il existe
une perturbation $h'\in \diff^1(M)$ de $f$ et un plongement $\varphi\colon \DD^d\to M$
tel que les propri\'et\'es \ref{i.universel1}, \ref{i.universel2}, \ref{i.universel3}
de la d\'efinition~\ref{d.universel} soient satisfaites.
On conclut la d\'emonstration par un argument de g\'en\'ericit\'e.
\end{proof}

\bigskip

Les constructions de la section~\ref{s.exemple-abraham-smale} montrent
que pour toute vari\'et\'e $M$ de dimension $d\geq 3$,
il existe un ouvert non vide $\cO\subset \diff^1(M)$
de diff\'eomorphismes ayant une classe homocline qui v\'erifie
les propri\'et\'es du th\'eor\`eme~\ref{t.universel}.
Voici une cons\'equence.

\begin{corollaire}
Pour toute vari\'et\'e de dimension $d\geq 3$, il existe
un ouvert non vide $\cO\subset \diff^1(M)$
et un G$_\delta$ dense de $\cO$ form\'e de diff\'eomorphismes
qui sont $d$-universels au voisinage d'une classe homocline $H(O)$.
\end{corollaire}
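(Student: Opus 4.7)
Le plan consiste \`a appliquer directement le th\'eor\`eme~\ref{t.universel} \`a l'ouvert de dynamiques non hyperboliques fourni par la construction d'Abraham-Smale pr\'esent\'ee en section~\ref{s.exemple-abraham-smale}. Je commencerais par rappeler cette construction, qui produit en dimension $d\geq 3$ un ouvert non vide $\cO\subset \diff^1(M)$ de diff\'eomorphismes poss\'edant une classe homocline $H(O)$ robuste associ\'ee \`a un ensemble hyperbolique transitif pr\'esentant une tangence homocline robuste. Suivant l'Addendum de la section~\ref{s.exemple-abraham-smale}, on pourra de plus supposer que le plongement a \'et\'e effectu\'e de mani\`ere \`a cr\'eer \emph{simultan\'ement} les deux intersections critiques~: celle de $\cF^{ss}$ avec $W^u(O)$, et celle de $\cF^{uu}$ avec $\Sigma$. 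Cette double propri\'et\'e interdit toute d\'ecomposition domin\'ee non triviale sur $H(O)$, pour $f$ et pour tout diff\'eomorphisme $C^1$-proche.

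Ensuite, je v\'erifierais la seconde hypoth\`ese du th\'eor\`eme~\ref{t.universel}~: l'existence dans $H(O)$ d'orbites p\'eriodiques de jacobien moyen \`a la p\'eriode strictement sup\'erieur \`a $1$, et d'autres strictement inf\'erieur \`a $1$. L'id\'ee est d'exploiter \`a nouveau la souplesse de la construction signal\'ee dans l'Addendum~: on peut superposer deux copies ind\'ependantes de la construction pour obtenir un cycle robuste entre deux ensembles hyperboliques d'indices diff\'erents (typiquement $1$ et $d-1$) tous deux contenus dans la m\^eme classe homocline robuste. Les orbites p\'eriodiques contenues dans l'ensemble de petit indice fourniront, par un choix appropri\'e des constantes de dilatation normale $\mu$ dans la construction, des jacobiens moyens $>1$, et sym\'etriquement pour l'autre copie. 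Par g\'en\'ericit\'e (th\'eor\`eme~\ref{t.barycentre} ou simplement~\ref{t.bdpu}) ces orbites appartiennent bien toutes \`a la classe homocline unique $H(O)$.

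La derni\`ere \'etape est alors une application m\'ecanique du th\'eor\`eme~\ref{t.universel}~: il fournit un G$_\delta$ dense $\cG\subset \diff^1(M)$ dont les diff\'eomorphismes sont $d$-universels au voisinage de toute classe homocline v\'erifiant les deux hypoth\`eses ci-dessus. L'intersection $\cG\cap \cO$ est un G$_\delta$ dense de $\cO$ form\'e de diff\'eomorphismes $d$-universels au voisinage de $H(O)$, ce qui conclut.

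L'obstacle principal sera la r\'ealisation simultan\'ee de toutes les propri\'et\'es requises au sein d'une m\^eme classe homocline robuste. Il faudra en particulier s'assurer que~: (i) la modification de la construction pour introduire deux indices distincts n'engendre pas, par effet de bord, une d\'ecomposition domin\'ee partielle (ce qui exige que les intersections critiques soient plac\'ees \`a la fois dans la direction ``forte stable'' et ``forte instable'' de chaque copie) ; (ii) les orbites p\'eriodiques de jacobien contrast\'e appartiennent effectivement \`a la m\^eme classe homocline, ce qui d\'ecoule de la robustesse du cycle et du lemme de connexion pour les pseudo-orbites appliqu\'e g\'en\'eriquement. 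Une fois ces points \'etablis, la conclusion suit imm\'ediatement du th\'eor\`eme~\ref{t.universel}.
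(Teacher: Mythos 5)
Votre démarche coïncide avec celle du texte : l'énoncé est obtenu en constatant que la construction d'Abraham--Smale de la section~\ref{s.exemple-abraham-smale} (avec les deux tangences critiques pour exclure toute décomposition dominée, et la superposition des deux copies d'indices différents pour obtenir des orbites périodiques de jacobiens moyens contrastés dans une même classe robuste) fournit un ouvert non vide $\cO$ sur lequel la classe homocline vérifie les hypothèses du théorème~\ref{t.universel}, puis en intersectant $\cO$ avec le G$_\delta$ dense donné par ce théorème. Les points de vigilance que vous signalez (robustesse de l'absence de domination, appartenance des deux types d'orbites à la même classe via la généricité) sont exactement ceux que la construction de la section~\ref{s.exemple-abraham-smale} et la section~\ref{s.consequence} règlent.
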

\bigskip

Les dynamiques $d$-universelles impliquent l'existence de classes ap\'eriodiques.

\begin{proposition}[Bonatti-D\'\i az]
Pour $d\geq 3$,
il existe un G$_\delta$ dense $\cG\subset \diff^1(M)$
tel que tout diff\'eomorphisme $d$-universel $f\in \cG$ poss\`ede un ensemble
non d\'enombrable de classes ap\'eriodiques qui sont stables au sens de Lyapunov pour $f$ et pour $f^{-1}$.
\end{proposition}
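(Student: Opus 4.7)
Le plan est d'utiliser la th\'eorie de Baire combin\'ee \`a la $d$-universalit\'e pour plonger, dans tout dif\-f\'e\-o\-mor\-phis\-me $d$-universel g\'en\'erique, un arbre dyadique infini de disques p\'eriodiques embo\^\i t\'es, dont les branches fourniront la famille de Cantor souhait\'ee de classes ap\'eriodiques. Pour chaque $n\geq 1$, je construirais tout d'abord un ouvert non vide $\cO_n\subset \diff^1_{\interior,+}(\DD^d)$ d'applications $h$ poss\'edant des disques ferm\'es $(B_s)_{s\in \{0,1\}^{\leq n}}$ deux \`a deux disjoints, avec $B_{s0},B_{s1}\subset \interior(B_s)$, chaque $B_s$ \'etant $h$-p\'eriodique de p\'eriode $\tau_s=2^{|s|}$ satisfaisant $h^{\tau_s}(\overline{B_s})\subset \interior(B_s)$, et admettant un disque l\'eg\`erement \'elargi $B'_s\supset B_s$ satisfaisant la condition duale $h^{-\tau_s}(\overline{B'_s})\subset \interior(B'_s)$. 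De tels $h$ peuvent \^etre construits en it\'erant un ``bouchon de doublement de p\'eriode'' bas\'e sur un mod\`ele \'el\'ementaire poss\'edant deux sous-disques pi\'eg\'es de p\'eriode $2$, entour\'es d'une dynamique transverse de type selle rendant leur l\'eger \'elargissement pi\'eg\'e en arri\`ere~; l'ouverture de $\cO_n$ d\'ecoule de la stricte inclusion dans chaque condition.

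Je d\'efinirais ensuite l'ensemble $\cU_n\subset \diff^1(M)$ des diff\'eomorphismes $f$ tels que, soit $f$ n'est $d$-universel au voisinage d'aucune classe, soit il existe une classe au voisinage de laquelle $f$ est $d$-universel ainsi qu'un arbre analogue $(D_s,D'_s)_{s\in \{0,1\}^{\leq n}}$ de disques $f$-p\'eriodiques, avec $\tau_{D_s}\geq n$ pour $|s|=n$. L'ouverture de $\cU_n$ d\'ecoule de la $C^1$-robustesse des inclusions de pi\'egeage et du fait que la non-universalit\'e est \'egalement une condition $C^1$-ouverte. La densit\'e parmi les dif\-f\'e\-o\-mor\-phis\-mes $d$-universels s'obtient en appliquant la d\'efinition~\ref{d.universel} \`a l'ouvert $\cO_n$~: pour tout voisinage $\cV$ de $f$ et tout voisinage $U$ de la classe d'universalit\'e, il existe une perturbation $g\in \cV$, un disque p\'eriodique $D\subset U$ de p\'eriode $\ell$ et un plongement $\varphi\colon \DD^d\to M$ tels que $\varphi^{-1}\circ g^\ell\circ \varphi\in \cO_n$~; en posant $D_s:=\varphi(B_s)$, $D'_s:=\varphi(B'_s)$ et en it\'erant par $g$, on obtient l'arbre requis. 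Par cons\'equent, $\cG:=\bigcap_n \cU_n$ est un G$_\delta$ dense de $\diff^1(M)$.

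Pour $f\in \cG$ $d$-universel, une extraction diagonale des arbres fournis par $f\in \cU_n$, $n\in \NN$, produit un arbre dyadique infini embo\^\i t\'e de disques p\'eriodiques. Chaque branche infinie $\sigma\in \{0,1\}^\NN$ donne un compact invariant
\[
\Lambda_\sigma := \bigcap_{n\geq 1}\bigcup_{i\in \ZZ} f^i\bigl(D_{\sigma_1\cdots \sigma_n}\bigr),
\]
qui ne contient aucune orbite p\'eriodique puisque $\tau_{\sigma_1\cdots \sigma_n}\geq n$ tend vers l'infini. Les orbites des $D_{\sigma_1\cdots \sigma_n}$ fournissent des voisinages arbitrairement petits de $\Lambda_\sigma$ positivement invariants par $f$, et celles des $D'_{\sigma_1\cdots \sigma_n}$ en fournissent de positivement invariants par $f^{-1}$~; toute classe de r\'ecurrence par cha\^\i nes contenue dans $\Lambda_\sigma$ est donc stable au sens de Lyapunov \`a la fois pour $f$ et pour $f^{-1}$. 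Des branches distinctes $\sigma\neq \sigma'$ produisent des $\Lambda_\sigma,\Lambda_{\sigma'}$ disjoints (la bifurcation ayant lieu \`a un niveau fini), donc les quasi-attracteurs qu'ils contiennent sont des classes ap\'eriodiques distinctes ayant la stabilit\'e requise~; l'ensemble $\{0,1\}^\NN$ \'etant non d\'enombrable, on obtient la famille cherch\'ee.

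Le point le plus d\'elicat sera la construction des mod\`eles $h\in \cO_n$ avec les disques de pi\'egeage dual $B'_s$. Un disque pi\'eg\'e en avant dans $\diff^1_{\interior,+}(\DD^d)$ n'est pas automatiquement pi\'eg\'e en arri\`ere, puisque $h^{-1}(\overline{\DD^d})\supsetneq \overline{\DD^d}$~: le pi\'egeage dual sur $B'_s$ doit donc reposer sur la g\'eom\'etrie en dimension $d\geq 3$. On peut concevoir un bouchon o\`u $h^{\tau_s}$ contracte dans certaines directions et dilate dans d'autres au sein du collet $B'_s\setminus B_s$, de sorte qu'un $B'_s$ soigneusement choisi soit \`a la fois plus profond\'ement attracteur en avant et attracteur vers l'ext\'erieur en arri\`ere. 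V\'erifier qu'un tel bouchon peut \^etre embo\^\i t\'e coh\'eremment \`a toutes les \'echelles est le point technique central, et c'est pr\'ecis\'ement le m\'ecanisme universel sous-jacent \`a la construction de Bonatti-D\'\i az des dynamiques sauvages $C^1$-g\'en\'eriques.
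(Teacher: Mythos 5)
Votre strat\'egie g\'en\'erale --- plonger, via l'universalit\'e, une famille embo\^\i t\'ee de disques p\'eriodiques pi\'eg\'es organis\'ee en arbre dyadique dont les branches fournissent un Cantor de classes ap\'eriodiques --- est bien celle de la d\'emonstration du texte. Deux \'etapes ne fonctionnent cependant pas telles quelles. La plus s\'erieuse est l'``extraction diagonale''~: savoir que pour chaque $n$ \emph{s\'epar\'ement} $f$ poss\`ede un arbre de profondeur $n$ ne produit pas d'arbre infini embo\^\i t\'e, car les arbres associ\'es \`a des profondeurs diff\'erentes proviennent de plongements sans rapport entre eux et ne se raffinent pas les uns les autres~; or les classes ap\'eriodiques n'apparaissent qu'\`a profondeur infinie (tout niveau fini ne fournit que des disques p\'eriodiques, dont les ensembles invariants maximaux contiennent des points p\'eriodiques). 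Il faut que la construction se \emph{renormalise}~: c'est pourquoi la preuve du texte prend pour $\cO$ l'ouvert des diff\'eomorphismes du disque poss\'edant une classe homocline v\'erifiant les hypoth\`eses du th\'eor\`eme~\ref{t.universel}~; pour $f$ dans le G$_\delta$ de ce th\'eor\`eme, le retour $\varphi_n^{-1}\circ f^{\pm\ell_n}\circ\varphi_n\in\cO$ fournit une nouvelle classe au voisinage de laquelle $f$ est encore $d$-universel, et l'on peut it\'erer \`a l'int\'erieur de chaque disque d\'ej\`a construit. C'est cette induction qui remplace votre sch\'ema de Baire. Notez au passage que ``ne pas \^etre $d$-universel'' n'est pas une condition $C^1$-ouverte (c'est au mieux un $F_\sigma$), mais que cette partie de votre argument est de toute fa\c{c}on superflue~: un diff\'eomorphisme $d$-universel r\'ealise directement tout ouvert non vide de mod\`eles, sans perturbation.

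Le second point est la construction des mod\`eles de $\cO_n$ avec pi\'egeage simultan\'e ($B_s$ attractif, $B'_s$ r\'epulsif), que vous identifiez comme le point d\'elicat sans la mener \`a bien. Elle est en fait r\'ealisable par un bouchon radial \'el\'ementaire (point fixe attractif entour\'e d'une sph\`ere invariante r\'epulsive l\'eg\`erement plus grande) et ne requiert nullement $d\geq 3$ --- cette hypoth\`ese ne sert qu'\`a l'existence de diff\'eomorphismes $d$-universels. Mais la preuve du texte \'evite compl\`etement cette difficult\'e en alternant les \'echelles~: $D_n$ est pi\'eg\'e par $f^{\ell_n}$ pour $n$ pair et par $f^{-\ell_n}$ pour $n$ impair, ce qui est directement fourni par l'universalit\'e de $f$ et celle de $f^{-1}$ (cons\'equence de la premi\`ere d'apr\`es la remarque suivant la d\'efinition~\ref{d.universel})~; l'intersection h\'erite alors d'une base de voisinages attractifs pour $f$ (niveaux pairs) et d'une base pour $f^{-1}$ (niveaux impairs). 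Enfin, ajoutez la condition que le supremum des diam\`etres des it\'er\'es $f^k(D_{\sigma_1\cdots\sigma_n})$, $|k|\leq \ell_n$, tende vers $0$~: c'est elle qui garantit que $\Lambda_\sigma$ est transitif par cha\^\i nes (c'est un odom\`etre), donc une seule classe de r\'ecurrence par cha\^\i nes~; sans cela, une classe strictement contenue dans $\Lambda_\sigma$ n'h\'eriterait pas d'une base de voisinages invariants et la stabilit\'e au sens de Lyapunov serait perdue.
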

\begin{proof}
Il existe un ouvert $\cO\subset \diff^1_{\interior,+}(\DD^d)$
de diff\'eomorphismes ayant une classe homocline $H(O)$
v\'erifiant les propri\'et\'es du th\'eor\`eme~\ref{t.universel}.
En appliquant inductivement le th\'eor\`eme~\ref{t.universel},
on en d\'eduit qu'il existe une suite de plongements $\varphi_n\colon \DD^d\to M$
et d'entiers $\ell_n$ tels que pour tout $n\geq 1$~:
\begin{itemize}
\item[--] l'image $D_n=\varphi_n(\DD^d)$ est disjointe de ses $\ell_n$ premiers it\'er\'es
et v\'erifie $f^{\ell_n}(D_n)\subset \interior(D_n)$ pour $n$ pair
et $f^{-\ell_n}(D_n)\subset \interior(D_n)$ pour $n$ impair~;
\item[--] le supremum des diam\`etres des ensembles $f^k(\varphi^n(\DD^d))$ pour $n\geq 1$ et
$|k|\leq \ell_n$ tend vers $0$ lorsque $n\to +\infty$~;
\item[--] $\varphi_n^{-1}\circ f^{\ell_n}\varphi_n$ pour $n$ pair
et $\varphi_n^{-1}\circ f^{-\ell_n}\varphi_n$ pour $n$ impair
appartiennent \`a $\cO$~;
\item[--] la suite $(D_n)$ est d\'ecroissante.
\end{itemize}
L'intersection d\'ecroissante
$$K=\bigcap_n \bigcup_{0\leq k < \ell_n} f^k(D_n)$$
poss\`ede une base de voisinages attractifs pour $f$ et une base
de voisinages attractifs pour $f^{-1}$. Par cons\'equent $K$
est une classe de r\'ecurrence par cha\^\i nes qui est stable au sens de
Lyapunov pour $f$ et $f^{-1}$. Par construction elle n'a pas de point p\'eriodique.

Remarquons que dans chaque orbite $D_n\cup\dots\cup f^{\ell_n-1}(D_n)$
on peut choisir deux orbites $D_{n+1}^+,\dots, f^{\ell^+_{n+1}-1}(D^+_{n+1})$
et $D_{n+1}^-,\dots, f^{\ell^-_{n+1}-1}(D^-_{n+1})$ disjointes.
Il existe donc une collection non d\'e\-nom\-bra\-ble de suites $(D_n)$
ayant les propri\'et\'es \'enonc\'ees ci-dessus qui engendrent des
classes ap\'eriodiques deux \`a deux disjointes.
\end{proof}

\begin{remarque}
Les classes ap\'eriodiques ainsi obtenues sont des odom\`etres.
En particulier, leur dynamique est minimale, uniquement ergodique.
\end{remarque}

\section{M\'elangeurs, obtention de bifurcations robustes}
Dans le cadre des dynamiques $C^1$-g\'en\'eriques, Bonatti et D\'\i az ont identifi\'e~\cite{blender,cycles} un m\'ecanisme
(les \emph{m\'elangeurs}\index{m\'elangeur}) permettant de construire des cycles h\'et\'erodimensionnels robustes. On peut se demander si tout diff\'eomorphisme ayant un cycle h\'et\'erodimensionnel
peut \^etre approch\'e par un diff\'eomorphisme ayant un cycle h\'et\'erodimensionnel robuste.

\begin{theoreme}[Bonatti-D\'\i az]
Il existe un G$_\delta$ dense $\cG\subset \diff^1(M)$
tel que, tout $f\in \cG$ ayant deux orbites p\'eriodiques hyperboliques
$O_1,O_2$ d'indices diff\'erents et contenues dans une m\^eme classe de r\'ecurrence par cha\^\i nes
poss\`ede un cycle h\'et\'erodimensionnel robuste.
\end{theoreme}
Le cycle h\'et\'erodimensionnel robuste est contenu dans un petit voisinage de la classe
$H(O_1)=H(O_2)$. (Dans les bons cas, il appartient \`a la classe.)

Bonatti et D\'\i az ont annonc\'e~\cite{tangences} pouvoir rendre robuste une tangence homocline
li\'ee \`a une classe homocline ayant plusieurs indices.
Ceci compl\`ete le th\'eor\`eme~\ref{t.dichotomie-tangence}.

\begin{theoreme}[Bonatti-D\'\i az]
Il existe un G$_\delta$ dense $\cG\subset \diff^1(M)$
tel que pour tout $f\in \cG$ et toute classe homocline $H(O)$
contenant un point p\'eriodique d'indice diff\'erent de $O$,
la dichotomie (robuste) suivante est v\'erifi\'e~:
\begin{itemize}
\item[--] $O$ poss\`ede une tangence robuste,
\item[--] il existe une d\'ecomposition domin\'ee $T_{H(O)}M=E\oplus F$ telle que
$\dim(E)=\dim(E^s(O)$. 
\end{itemize}
\end{theoreme}

\section{Probl\`emes}

Au cours des chapitres~\ref{c.periodique} et~\ref{c.homocline}, nous avons obtenu
des r\'esultats d'existence d'orbites p\'e\-ri\-o\-di\-ques par bifurcation de la dynamique
au sein d'une classe de r\'ecurrence par cha\^\i nes $K$.
Nous pouvons distinguer trois sortes d'orbites p\'eriodiques~:
\begin{itemize}
\item[--] les orbites p\'eriodiques contenues dans la classe $K$,
\item[--] celles qui sont proches de $K$ en topologie de Hausdorff,
\item[--] celles contenues dans un voisinage de $K$.
\end{itemize}

\paragraph{a) Compl\'etude des indices.}\index{indice!compl\'etude}
Par exemple, le corollaire~\ref{c.indice} donne des informations sur l'ensemble
des indices d'une classe homocline.
Nous pouvons naturellement formuler des questions sur l'ensemble
des indices des orbites p\'eriodiques contenues dans un voisinage d'une classe homocline.
Le probl\`eme de l'existence de classes pelliculaires (question~\ref{q.pelliculaire}) consiste \`a d\'ecrire
le lien entre l'ensemble des indices d'une classe et l'ensemble des indices des orbites p\'eriodiques
proches de la classe.

\begin{question}\label{q.completude}
\emph{Supposons que $f$ v\'erifie une condition $C^1$-g\'en\'erique
et soit $K$ une classe de r\'ecurrence par cha\^\i nes.}
\begin{itemize}
\item[--] Supposons que $K$ soit limite de Hausdorff d'orbites p\'eriodiques d'indice $i$ et $j>i$.
Est-elle \'egalement limite d'orbites d'indice $\ell$ pour tout $i<\ell<j$~?
\item[--] Supposons qu'une partie de $K$ soit limite de Hausdorff d'orbites p\'eriodiques d'indice $i$.
Est-ce \'egalement le cas de la classe $K$ toute enti\`ere~?
\end{itemize}
\end{question}
Cette seconde question est bien s\^ur reli\'ee \`a la question~\ref{q.proportion}.
Une r\'eponse positive permettrait \`a partir de la remarque~\ref{r.dichotomie} de r\'epondre \`a la question~\ref{q.dichotomie}.
(Voir aussi la discussion de la section~\ref{s.minimaux})

\paragraph{b) Dynamique au sein des classes homoclines.}
Il serait tr\`es int\'eressant d'\'etendre le corollaire~\ref{c.fermeture-isolee}
aux classes homoclines non isol\'ees. Les r\'esultats de ce chapitre montrent que
l'on peut se ramener au cas des mesures ergodiques.
\begin{question}\label{q.fermeture-homocline}
\emph{Supposons que $f$ v\'erifie une condition $C^1$-g\'en\'erique
et soit $H(O)$ une classe homocline de $f$.}

Les mesures ergodiques support\'ees par $H(O)$ sont-elles limites faibles de mesures
de probabilit\'e invariantes support\'ees par des orbites p\'eriodiques de $H(O)$~?
\end{question}
Un tel ``lemme de fermeture ergodique au sein des classes homoclines'' permettrait par exemple
de r\'epondre \`a la question~\ref{q.dichotomie} ainsi qu'\`a la question~\ref{q.proportion}
pour les classes homoclines.
\bigskip

Nous avons une compr\'ehension relativement fine des orbites p\'eriodiques
\`a l'int\'erieur d'une classe homocline, mais la dynamique reste mal
comprise, en particulier les questions de th\'eorie ergodique.
\begin{question}
Consid\'erons une classe homocline $H(O)$ d'un diff\'eomorphisme
$C^1$-g\'en\'erique. Existe-t-il une mesure d'entropie maximale~?
\end{question}

\bigskip

\paragraph{c) Importance des cycles h\'et\'erodimensionnels.}
Tous les exemples connus de dynamiques robustement non hyperboliques poss\`edent un cycle
h\'et\'erodimensionnel.
Bonatti et D\'\i az~\cite{cycles} ont conjectur\'e que c'\'est toujours le cas.
\begin{conjecture-hyperbolicite}[Bonatti-D\'\i az]\label{conjecture-cycle}
\index{conjecture! d'hyperbolicit\'e}
Tout diff\'eomorphisme peut \^etre approch\'e dans $\diff^1(M)$ par un diff\'eomorphisme qui est hyperbolique
ou poss\`ede un cycle h\'et\'erodimensionnel robuste.
\end{conjecture-hyperbolicite}
Cette conjecture concerne exclusivement la topologie $C^1$ puisqu'elle n'est pas v\'erifi\'ee
par les diff\'eomorphismes $C^2$ des surfaces (du fait du ph\'enom\`ene de Newhouse).
Une \'etape dans cette direction serait de montrer que la pr\'esence de tangences homoclines
implique l'existence de cycles h\'et\'erodimensionnels.

On peut \'enoncer des versions semi-locales de cette conjecture.

\begin{question}
\emph{Supposons que $f$ v\'erifie une condition $C^1$-g\'en\'erique.}

Est-ce que tout voisinage d'un ensemble transitif par cha\^\i ne non hyperbolique contient 
un cycle h\'et\'erodimensionnel robuste~?

Est-ce que toute classe homocline non hyperbolique contient un cycle h\'et\'erodimensionnel
robuste~?
\end{question}

\paragraph{d) Dynamique sur les surfaces.}
Le th\'eor\`eme~\ref{t.exemple-cycle} montre que sur toute vari\'et\'e de dimension sup\'erieure ou \'egale \`a $3$,
il existe des ouverts $C^1$ de diff\'eomorphismes non hyperboliques.
Cette question reste ouvert en dimension $2$. Smale a conjectur\'e~\footnote{Je n'ai pas trouv\'e de trace de cette conjecture sur les surfaces. Au d\'ebut des ann\'ees 1960, il a explicitement pos\'e le probl\`eme de
la densit\'e des dynamiques structurellement stables~\cite{smale-ICM-1962}.
Dans ses textes r\'ecents~\cite{Sm1,Sm2}, Smale conjecturait la densit\'e des dynamiques hyperboliques pour
les endomorphismes en dimension $1$.} que ce n'\'etait pas le cas.
C'est un cas particulier de la conjecture d'hyperbolicit\'e.

\begin{conjecture*}[Smale]\index{conjecture! de Smale}
Lorsque $\dim(M)=2$, l'ensemble des diff\'eomorphismes hyperboliques est dense dans $\diff^1(M)$.
\end{conjecture*}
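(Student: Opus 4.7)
The plan is to reduce Smale's conjecture to the non-existence of robust tangences on surfaces (Theorem~\ref{t.gugu} of Moreira), using the two-dimensional case of Palis' conjecture established by Pujals and Sambarino. Given $f \in \diff^1(M)$ with $\dim(M)=2$, I would first invoke Pujals–Sambarino (stated in the introduction): $f$ can be $C^1$-approximated by a diffeomorphism that is hyperbolic, in which case we are done, or by a diffeomorphism $f_1$ possessing a homoclinic tangence at some periodic orbit $O_1 \subset K_1$ (with $K_1$ a transitive hyperbolic set carrying the tangence). It therefore suffices to prove that the set $\cT \subset \diff^1(M)$ of diffeomorphisms exhibiting a homoclinic tangence has empty interior.

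By Moreira's theorem, $K_1$ does not carry a robust tangence, so in every $C^1$-neighborhood of $f_1$ there is an open set $\cV_1$ of diffeomorphisms $g$ for which the continuation $K_{1,g}$ carries no tangence. Pick $f_2 \in \cV_1$. By Pujals–Sambarino applied to $f_2$, either $f_2$ is approximable by a hyperbolic diffeomorphism (and we are done), or it has a tangence at some orbit $O_2$ in a hyperbolic set $K_2$. The crucial observation is that because on a surface every saddle has index $1$, no heterodimensional cycle is possible; combined with the generic coincidence of homoclinic and chain-recurrence classes (from Chapter~\ref{c.decomposition}) and the Bonatti–Díaz–Pujals–Ures theorem~\ref{t.bdpu}, every non-hyperbolic chain class of a generic surface diffeomorphism is either accumulated by sinks and sources or contains a point $p$ with a homoclinic tangence. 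I would then apply Moreira to $K_2$, iterating the procedure, and aim to extract a $C^1$-hyperbolic limit by a Baire-type argument: the set of $f$ satisfying the generic assumptions and such that every non-hyperbolic chain class is \emph{robustly} free of tangences after a single perturbation would be shown to be $C^1$-open and dense.

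The hard part will be controlling the global effect of the perturbations. Moreira's theorem gives local control: it destroys tangencies \emph{inside} one hyperbolic set $K_i$, but does not preclude the appearance of new tangences inside other (possibly new) hyperbolic sets $K_i' \subset M$ after perturbation. A priori, the procedure might loop indefinitely, moving tangences from one region of $M$ to another. To close this loop I would need a finiteness statement — some two-dimensional analogue of Bonatti's finiteness conjecture — guaranteeing that generic surface diffeomorphisms far from hyperbolicity have only finitely many homoclinic classes capable of supporting a tangence. Combined with the semi-continuity of $f \mapsto \cR(f)$ and the approximation results of Chapter~\ref{c.decomposition} (so that isolated classes persist and their continuations are controlled), such a finiteness statement would let one iterate Moreira's theorem only finitely many times, ultimately reducing to the hyperbolic case.

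The main obstacle is precisely this missing finiteness/compactness input: a purely local perturbation argument from Moreira's theorem does not in itself rule out an uncountable accumulation of homoclinic classes generically, and the current techniques of Chapter~\ref{c.hyperbolicite-faible} provide a dominated splitting on each class but no global bound on the number of classes. A successful attack would have to combine Moreira's bidimensional $C^1$-obstruction with a surface-specific mechanism (for instance, using the low dimension of stable/unstable manifolds and planar topology to force a dichotomy between hyperbolicity and the persistent existence of a single non-hyperbolic class on which the tangence is a codimension-one, removable event).
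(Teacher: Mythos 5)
This statement is labelled as a \emph{conjecture} (Smale's conjecture), and the paper offers no proof of it: it is presented, both in the introduction and in the section ``Dynamique sur les surfaces'', as an open problem. So there is no proof in the paper against which to compare your attempt, and your proposal does not constitute a proof either --- it is, essentially, the paper's own description of the known reduction, together with an honest acknowledgement of the gap that keeps the conjecture open.

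Concretely: your first step (Pujals--Sambarino, Theorem~\ref{t.ps}) correctly reduces the problem to showing that the set $\cT$ of surface diffeomorphisms with a homoclinic tangency has empty $C^1$-interior. Your second step invokes Moreira's theorem~\ref{t.gugu}, but that theorem only says that no fixed transitive hyperbolic set $K$ carries a \emph{robust} tangency; it does not exclude that tangencies are locally $C^1$-dense, with each perturbation destroying the tangency of $K_g$ only to create a new one attached to a different (possibly larger) hyperbolic set. This is precisely Question~\ref{q.tangence-robuste}, which the paper states is open and whose positive answer \emph{would} imply Smale's conjecture. The ``finiteness/compactness input'' you say is missing is not a technical detail one can expect to supply by a Baire argument: it is the entire content of the conjecture (and is closely tied to the finiteness conjecture of Bonatti and to the absence of a $C^1$ Newhouse phenomenon on surfaces, neither of which is known). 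Your text should therefore be read as a correct survey of the state of the art, not as a proof.
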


\paragraph{e) Tangences robustes.}
Par analogie avec les cycles h\'et\'erodimensionnels~\cite{cycles},
on aimerait savoir dans quelle mesure un diff\'eomorphisme pr\'esentant
une tangence homocline est approch\'e par des diff\'eomorphismes
ayant des tangences robustes.

\begin{question}\label{q.tangence-robuste}
Consid\'erons un ouvert $\cU\subset \diff^1(M)$ qui contient
un ensemble dense de diff\'eomorphismes pr\'esentant une tangence homocline.
Existe-t-il un diff\'eomorphisme dans $\cU$ qui poss\`ede une tangence robuste~?
\end{question}

Cette question peut-\^etre d\'ecompos\'ee en deux sous-probl\`emes
(voir~\cite{abcd,bonatti-conjecture}).

\begin{question}
Consid\'erons un diff\'eomorphisme $C^1$-g\'en\'erique
qui est limite de dif\-f\'e\-o\-mor\-phis\-mes ayant une tangence homocline.
Existe-t-il une classe homocline $H(O)$ sans d\'ecomposition
domin\'ee $T_{H(O)}M=E\oplus F$ avec $\dim(E)=\dim(E^s(O))$~?
\end{question}

\begin{question}
Consid\'erons un diff\'eomorphisme $C^1$-g\'en\'erique
et une classe homocline $H(O)$ n'ayant pas de d\'ecomposition domin\'ee
$T_{H(O)}=E\oplus F$ avec $\dim(E)=\dim(E^s(O))$.
Est-ce que $O$ poss\`ede une tangence robuste~?
\end{question}

Sur les surfaces, le th\'eor\`eme~\ref{t.gugu} montre qu'il n'y a pas de tangence robuste
et une r\'eponse positive \`a la question~\ref{q.tangence-robuste}
entra\^\i nerait la conjecture de Smale.
Une approche possible consiste \`a contr\^oler les \emph{points critiques} de la dynamique~:
il r\'esulte du travail de Pujals et Sambarino~\cite{pujals-sambarino} que les diff\'eomorphismes
de surface g\'en\'eriques et non hyperboliques poss\`edent des classes sans d\'ecomposition domin\'ee.
Lorsque la dynamique sur une telle classe est dissipative (le jacobien de toute mesure invariante
est strictement n\'egatif), Pujals et Rodriguez-Hertz~\cite{pujals-hertz}
ont introduit un ensemble critique~: un ensemble ferm\'e tel que tout compact invariant transitif
qui ne le rencontre pas est un puits ou poss\`ede une d\'ecomposition domin\'ee.


\chapter[Loin des tangences homoclines, mod\`eles centraux]{Dynamique loin des tangences homoclines~: mod\`eles centraux}\label{c.model}
L'un des objectifs de ce chapitre est l'\'etude des dynamiques
qui ne peuvent pas \^etre approch\'es par des diff\'eomorphismes
ayant des tangences homoclines. Une motivation est la conjecture de Palis
pr\'esent\'ee en section~\ref{s.structure}.

Un aspect fondamental de cette approche consiste
\`a comprendre la dynamique le long d'un fibr\'e de dimension un.
Lorsque les exposants de Lyapunov sont nuls pour toute mesure invariante,
la dynamique de l'application tangente donne tr\`es peu d'informations.
Nous d\'eveloppons de nouveaux outils, introduits dans~\cite{palis-faible,model}
pour obtenir des propri\'et\'es d'hyperbolicit\'e topologique
ou pour trouver des obstructions \`a l'hyperbolicit\'e qui apparaissent sur les orbites p\'eriodiques.

Nous utilisons les mod\`eles centraux pour montrer~\cite{palis-faible} que tout diff\'eomorphisme
peut \^etre approch\'e dans $\diff^1(M)$ par un diff\'eomorphisme Morse-Smale ou par un diff\'eomorphisme
ayant une intersection homocline transverse.
Nous \'etablissons un r\'esultat~\cite{model} en direction de la conjecture de Palis~:
tout diff\'eomorphisme qui ne peut pas \^etre approch\'e dans $\diff^1(M)$ par un diff\'eomorphisme
ayant une tangence homocline ou un cycle h\'et\'erodimensionnel
est partiellement hyperbolique~: l'ensemble r\'ecurrent par cha\^\i nes se d\'ecompose en un nombre fini
d'ensembles compacts invariants partiellement hyperboliques dont le fibr\'e central est de dimension
au plus \'egale \`a deux. Finalement, nous \'etudions les quasi-attracteurs des dynamiques
g\'en\'eriques loin des tangences homoclines~: en particulier, J. Yang a montr\'e~\cite{yang-lyapunov-stable}
que ce sont des classes homoclines.

\section{M\'ecanismes versus ph\'enom\`enes}\label{s.structure}
Un des buts de l'\'etude des diff\'eomorphismes $C^1$-g\'en\'eriques
est de structurer l'espace des diff\'eomorphismes en fonction des types de dynamiques
qui apparaissent. Voir~\cite{shub-genericite,smale-survey}.
Par exemple, les diff\'eomorphismes hyperboliques peuvent \^etre class\'es en trois
ouverts de complexit\'e croissante~:
les diff\'eomorphismes Morse-Smale, les diff\'eomorphismes structurellement stables
et les diff\'eomorphismes $\Omega$-stables.
Plus g\'en\'eralement, on cherche des parties (de pr\'ef\'erence ouvertes) de l'espace des diff\'eomorphismes
 pour lesquelles la dynamique peut \^etre d\'ecrite
globalement, avec un degr\'e de pr\'ecision variable~: on peut simplement chercher \`a
d\'ecrire la d\'e\-com\-po\-si\-tion en classes de r\'ecurrence par cha\^\i nes
(\emph{Est-elle finie~? sans classe ap\'eriodiques~?...})
ou vouloir comprendre la dynamique au sein de chaque classe de r\'ecurrence par cha\^\i nes.
\smallskip

Une autre approche consiste \` a caract\'eriser les ouverts
pr\'ec\'ed\'ement introduits en exhibant des obstructions qui apparaissent dans leur
compl\'ementaire. Par exemple, nous verrons en
section~\ref{s.palis-faible} que l'ensemble des diff\'eomorphismes
ayant une intersection homocline non triviale est dense dans
le compl\'ementaire de l'ensemble (ouvert) des dynamiques Morse-Smale.

On cherche des obstructions qui soient simples et faciles \`a d\'etecter
(en g\'en\'eral mettant en jeu des orbites p\'eriodiques).
Elle ne d\'ecrivent pas la dynamique de fa\c con satisfaisante puisque ce sont souvent
des bifurcations locales, mais elles peuvent engendrer une modification importante de la dynamique.
Une telle d\'ecomposition de l'espace des diff\'eomorphismes
en parties poss\'edant un ensemble dense de bifurcations et en ouverts de dynamiques globalement bien d\'ecrite
est appel\'ee par E. Pujals d\'ecomposition par \emph{m\'ecanismes et ph\'enom\`enes}.
\index{m\'ecanismes et ph\'enom\`enes}\index{ph\'enom\`enes et m\'ecanismes}
\bigskip

\section{Caract\'erisation des dynamiques hyperboliques}
Un autre exemple de dichotomie est une des conjecture
propos\'ee par Palis~\cite{palis-conjecture1,palis-conjecture2,palis-conjecture3}
qui cherche \`a caract\'eriser l'ouvert des dynamiques hyperboliques.
\begin{conjecture*}[Palis]\index{conjecture! de Palis}
Tout diff\'eomorphisme peut \^etre approch\'e
dans $\diff^1(M)$ par un dif\-f\'e\-o\-mor\-phis\-me hyperbolique ou
par un dif\-f\'e\-o\-mor\-phis\-me qui pr\'esente une tangence homocline ou un cycle h\'e\-t\'e\-ro\-di\-men\-si\-on\-nel.
\end{conjecture*}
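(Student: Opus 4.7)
The plan is to argue by contraposition: assume $f$ is not $C^1$-approximable either by a diffeomorphism with a homoclinic tangency or by one with a heterodimensional cycle, and deduce that $f$ lies in the closure of the hyperbolic diffeomorphisms. First I would restrict to a $C^1$-generic $f$ in the $C^1$-open set $\mathcal{U} = \operatorname{Int}((\mathcal{T}\cup\mathcal{C})^c)$ of diffeomorphisms far from both bifurcations, so that all the generic properties established in the earlier chapters (Kupka-Smale, ergodic closing lemma, pseudo-orbit connecting lemma, coincidence of $\overline{\operatorname{Per}(f)}$, $\Omega(f)$ and $\mathcal{R}(f)$, and the barycentre property within homoclinic classes) are available simultaneously.

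The second step would be to combine the periodic-orbit bifurcation results of Chapter~\ref{c.periodique}. From Corollary~\ref{c.tangence} (absence of tangencies), every periodic orbit of every nearby diffeomorphism carries a uniform dominated splitting $T_OM=E^s_\delta\oplus E^c_\delta\oplus E^u_\delta$ with $\dim E^c_\delta\le 1$, and moreover $E^s_\delta$, $E^u_\delta$ are uniformly hyperbolic at the period; absence of heterodimensional cycles, via Theorem~\ref{t.bgv} and the corollary on indices (Corollary~\ref{c.indice}), forces the set of indices of periodic points in each chain-recurrence class to be a singleton. By passing these decompositions to the Hausdorff limit, each chain-recurrence class $\Lambda$ of $f$ inherits a global dominated splitting $T_\Lambda M=E^s\oplus E^c_1\oplus\cdots\oplus E^c_\ell\oplus E^u$ with $\dim E^c_i=1$ and $\ell\le 2$ (an addendum that must be extracted from a careful bookkeeping of indices), thus reducing the Palis conjecture to understanding the dynamics along the one-dimensional central bundles.

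The third step would be to analyse each one-dimensional bundle $E^c_i$ by means of the central models of the forthcoming chapter: since Pesin theory is not available in class $C^1$, I would replace Lyapunov-exponent arguments by a topological study of the dynamics on a central plaque family tangent to $E^c_i$. The generic alternative to establish is: either $E^c_i$ is uniformly contracted (or dilated), in which case it can be absorbed into $E^s$ (or $E^u$); or there is a ``neutral'' central behaviour producing, via the selection lemmas of Liao and Ma\~n\'e and the strong-connection mechanism of Section~\ref{s.cycle} (Lemmas~\ref{l.connexion-forte} and~\ref{l.generalisee}), an arbitrarily small perturbation exhibiting a homoclinic tangency or a heterodimensional cycle — contradicting the standing hypothesis. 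Iterating from $E^c_1$ to $E^c_\ell$ then gives $T_\Lambda M=E^s\oplus E^u$, hence $\Lambda$ is hyperbolic; applying this to every chain-recurrence class yields hyperbolicity of $\mathcal{R}(f)$ in the sense of Definition~\ref{d.hyperbolique}.

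The hard part will be the third step, and it is where the conjecture is genuinely open: promoting a non-hyperbolic one-dimensional central direction to an actual bifurcation requires delicate topological and perturbative control of recurrent orbits whose tangential action is essentially isometric. The current state of the art only settles this when $\ell=0$ (trivial centre, already hyperbolic), when $\ell=1$ with $E^s$ or $E^u$ degenerate (Pujals--Sambarino on surfaces), and when $\ell=2$ with a degenerate extremal bundle (joint with Pujals). I would therefore formulate the third step as a conditional statement, obtaining unconditionally only the partial hyperbolicity conclusion of the theorem of the introduction and the essential hyperbolicity statement; the full conjecture would remain as the target once the generic behaviour of one-dimensional central bundles without Lyapunov exponents is understood.
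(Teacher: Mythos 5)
Your proposal correctly identifies that this statement is an open conjecture: the paper does not prove it, it only states it and establishes partial results towards it. Your outline reproduces exactly the paper's program — reduction via the periodic-orbit bifurcation results (Corollaire~\ref{c.tangence}, Th\'eor\`eme~\ref{t.bgv}) and the index arguments of la section~\ref{s.consequence} to a partially hyperbolic structure with one-dimensional central bundles (Th\'eor\`eme~\ref{t.part-hyp}, Corollaire~\ref{c.wen}), then the analysis of those bundles by central models — and your honest conclusion that the third step is precisely where the conjecture remains open, with only partial hyperbolicity, essential hyperbolicity (Th\'eor\`eme~\ref{t.cp}) and the surface case (Pujals--Sambarino) available unconditionally, is exactly the state of affairs described in the paper. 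Since no complete proof exists, there is nothing further to compare.
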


Nous verrons au chapitre~\ref{c.hyp} que cette conjecture a \'et\'e r\'esolue sur les surfaces par Pujals et Sambarino.
Une r\'eponse positive \`a la conjecture d'hyperbolicit\'e
entra\^\i nerait la conjecture de Palis. (Pr\'ecisons toutefois que la conjecture de Palis a \'et\'e
\'egalement formul\'ee en r\'egularit\'e $C^k$, $k> 1$ et que la conjecture
d'hyperbolicit\'e n'est pas satisfaite dans ce cadre.)
Nous allons voir maintenant que cette conjecture peut \^etre
g\'en\'eralis\'ee d'une autre fa\c{c}on.

\paragraph{Notation.}
\emph{Par la suite, nous noterons $\cT$ l'ensemble des diff\'eomorphismes
ayant une tangence homocline et $\cC$ l'ensemble de ceux qui poss\`edent
un cycle h\'et\'erodimensionnel.}
\medskip

La conjecture de Palis est v\'erifi\'ee si l'on se restreint
aux diff\'eomorphismes
dont le nombre de classe de r\'ecurrence par cha\^\i nes
est robustement fini (voir~\cite{abdenur-decomposition,gan-wen-connecting}).

\begin{theoreme}[Abdenur,Gan-Wen]\label{t.fini}
Tout diff\'eomorphisme
peut \^etre approch\'e dans $\diff^1(M)$
par un diff\'eomorphisme $g$ ayant l'une des propri\'et\'es suivantes~:
\begin{itemize}
\item[--] $g$ est hyperbolique,
\item[--] $g$ pr\'esente un cycle h\'e\-t\'e\-ro\-di\-men\-sion\-nel,
\item[--] $g$ poss\`ede une infinit\'e de classes de r\'ecurrence par cha\^\i nes.
\end{itemize}
\end{theoreme}
\begin{proof}
Consid\'erons un diff\'eomorphisme $f$ qui ne peut pas \^etre approch\'e
par un dif\-f\'e\-o\-mor\-phi\-sme ayant une infinit\'e de classes de r\'ecurrence
par cha\^\i nes. Quitte \`a le perturber, on peut supposer qu'il
v\'erifie une condition de g\'en\'ericit\'e.
En particulier, ses classes sont des classes homoclines
$H(p_1),\dots,H(p_k)$ et tout dif\-f\'e\-o\-mor\-phis\-me proche de $f$ poss\`ede
exactement $k$ classes de r\'ecurrence par cha\^\i nes.

Supposons que la classe $H(p_i)$ (non triviale) n'est pas hyperbolique.
Nous montrons que par perturbation, il est possible de cr\'eer au voisinage
de $H(p_i)$ un point p\'eriodique $q$ d'indice diff\'erent de l'indice de $p_i$.
Par hypoth\`ese le point $q$ appartient \`a la m\^eme classe que $p_i$
et une nouvelle perturbation permet alors de cr\'eer par perturbation
un cycle h\'et\'erodimensionnel
(section~\ref{s.consequence}).

Si la d\'ecomposition du fibr\'e tangent au-dessus des orbites p\'eriodiques
homocliniquement reli\'ee \`a $p_i$ en espaces stables et instables
n'est pas uniform\'ement domin\'ee, le th\'eor\`eme~\ref{t.bgv} permet de conclure
directement.

Si $H(p_i)$ poss\`ede une d\'ecomposition domin\'ee $T_{H(p_i)}=E\oplus F$
avec $\dim(E)=\dim(E^s(p_i))$, et si par exemple le fibr\'e $E$ n'est pas uniform\'ement contract\'e, il existe une mesure ergodique support\'ee
sur $H(p_i)$ ayant un exposant de Lyapunov selon $E$ positif ou nul.
Le th\'eor\`eme~\ref{t.closing-ergodique} et son addendum
permettent alors de cr\'eer un point p\'eriodique d'indice
strictement plus petit que $\dim(E)$.
\end{proof}
\medskip

Nous avons d\'ej\`a mentionn\'e l'existence
de diff\'eomorphismes $C^1$-g\'en\'eriques dont le
nombre de classes de r\'ecurrence par cha\^\i nes n'est pas fini.
Les exemples connus sont toujours associ\'es \`a la pr\'esence de tangences
homoclines et Bonatti a conjectur\'e~\cite{bonatti-conjecture} que c'est toujours le cas.
Cette nouvelle conjecture entra\^\i nerait donc la conjecture de Palis.

\begin{conjecture-finitude}[Bonatti]\index{conjecture! de finitude}
Il existe un ouvert dense de $\diff^1(M)\setminus \overline{\cT}$
form\'e de diff\'eomorphismes dont le nombre de classes de r\'ecurrence
par cha\^\i nes est fini~?
\end{conjecture-finitude}

Newhouse, Palis-Viana et Romero~\cite{newhouse-wild,palis-viana,romero} ont montr\'e que l'on obtient une infinit\'e de puits ou de sources
pour certains diff\'eomorphismes proches des tangences homoclines.
Une r\'eponse positive \`a la conjecture de finitude donnerait une
r\'eciproque en topologie $C^1$ en r\'epondant \`a la question suivante.

\noindent
\emph{Est-ce que tout diff\'eomorphisme $C^1$-g\'en\'erique pr\'esentant le ph\'enom\`ene de Newhouse peut \^etre approch\'e par un diff\'eomorphisme ayant une tangence homocline~?}

\section{Dynamiques non critiques}
La discussion qui pr\'ec\`ede motive l'\'etude des diff\'eomorphismes (non hyperboliques)
loin des tangences homoclines.~
Nous connaissons plusieurs classes d'exemples~:
\begin{itemize}
\item[--] certaines dynamiques de type ``d\'eriv\'e d'Anosov'' d\'ecrites en section~\ref{s.exemple-mane},
\item[--] les dynamiques proches du temps $1$ d'un flot d'Anosov
ou du produit d'un diff\'eomorphisme d'Anosov par l'identit\'e sur le cercle~\cite{blender}.
\end{itemize}
Une classe d'exemples est fournie par
la construction de Ma\~n\'e (section~\ref{s.exemple-mane}).
La d\'emonstration du th\'eor\`eme~\ref{t.fini} nous montre que,
pour de telles dynamiques, toute classe homocline non hyperbolique
est accumul\'ee par des points p\'eriodiques d'indices diff\'erents.
La question principale consiste alors \`a comprendre si ces points
appartiennent tous \`a une m\^eme classe (on obtient ainsi
des cycles h\'et\'erodimensionnels par perturbation)
ou si le diff\'eomorphisme poss\`ede une infinit\'e de classes distinctes.

Pour des dynamiques loin des tangences homoclines, le th\'eor\`eme~\ref{t.tangence}
montre que la d\'e\-com\-po\-si\-tion du fibr\'e tangent en espaces stables et instables
au-dessus des orbites p\'eriodiques est uniform\'ement domin\'ee.
Le th\'eor\`eme~\ref{t.pliss} implique que tout fibr\'e qui n'est pas uniform\'ement contract\'e ou dilat\'e permet de changer l'indice d'orbites p\'eriodiques.
Une r\'eponse positive \`a la conjecture de finitude permettrait donc de r\'epondre
affirmativement au probl\`eme suivant (voir~\cite{abcdw}).

\begin{question}\label{q.structure}
Existe-t-il un G$_\delta$ dense de $\diff^1(M)\setminus \overline{\cT}$
form\'e de diff\'eomorphismes dont toutes les classe homoclines $H$ ont
une d\'ecomposition domin\'ee
$$T_HM=E^s\oplus E^c\oplus E^u,$$
o\`u $E^s$ et $E^u$ sont uniform\'ement contract\'es et dilat\'es et o\`u
$\dim(E^s)$ et $\dim(E^s\oplus E^c)$ sont respectivement le plus petit et le plus
grand indice de la classe $H$~?
\end{question}

\section{Ensembles minimaux non hyperboliques}\label{s.minimaux}
Consid\'erons un diff\'eomorphisme $f$ non hyperbolique.
Il poss\`ede un ensemble transitif par cha\^\i nes $K$ qui n'est
pas hyperbolique. On peut choisir $K$ minimal pour l'inclusion~: un tel ensemble est alors appel\'e
\emph{ensemble minimal non hyperbolique}\index{ensemble! minimal non hyperbolique}.
\medskip

Lorsque $f$ appartient \`a $\diff^1(M)\setminus \overline{\cT}$,
le th\'eor\`eme~\ref{t.trichotomie} implique le r\'esultat suivant~\cite{gwy}
qui est une premi\`ere r\'eponse locale \`a la question~\ref{q.structure}.
\begin{theoreme}[Gan-Wen-Yang]\label{t.minimaux}
Pour tout diff\'eomorphisme non hyperbolique $f$ appartenant \`a un G$_{\delta}$ dense de $\diff^1(M)\setminus \overline{\cT}$,
tout ensemble minimal non hyperbolique $K$ admet une d\'e\-com\-po\-si\-tion domin\'ee
$$T_{K}M=E^s\oplus E^c_{1}\oplus\dots\oplus E^c_{k}\oplus E^u,$$
telle que $E^s$ et $E^u$ sont uniform\'ement contract\'es respectivement par $f$ et $f^{-1}$ et chaque fibr\'e  central $E^c_{i}$ est de dimension $1$.

Pour tout $i\in \{1,\dots,k\}$, l'ensemble $K$ est limite de Hausdorff d'une suite d'orbites p\'e\-ri\-o\-di\-ques
$(O_n)$ dont l'exposant de Lyapunov le long de $E^c_i$ est arbitrairement proche de $0$.
Lorsque $k>1$, $K$ est contenu dans une classe homocline $H(p)$ et
on peut choisir les orbites $O_n$ dans $H(p)$.
\end{theoreme}
En particulier lorsque $f$ est loin des cycles h\'et\'erodimensionnels,
une classe homocline poss\`ede un unique indice (section~\ref{s.consequence}). La dimension centrale est donc \'egale \`a $1$ ou $2$.
\begin{corollaire}[Wen~\cite{wen-palis}]\label{c.wen}
Pour tout diff\'eomorphisme non hyperbolique $f$ appartenant \`a un G$_{\delta}$ dense de $\diff^1(M)\setminus \overline{\cT\cup\cC}$,
tout ensemble minimal non hyperbolique $K$ admet une d\'ecomposition domin\'ee de type $T_{K}M=E^s\oplus E^c\oplus E^u$ ou
$T_{K}M=E^s\oplus E^c_{1}\oplus E^c_{2}\oplus E^u$, telle que $E^s$ et $E^u$ sont uniform\'ement contract\'es respectivement par $f$ et $f^{-1}$ et chaque fibr\'e  central $E^c$ ou $E^c_{1},E^c_{2}$ est de dimension $1$.

Dans le deuxi\`eme cas, $K$ est contenu dans une classe homocline d'indice $\dim(E^s)+1$.
\end{corollaire}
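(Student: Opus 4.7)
The plan is to derive this directly from Theorem~\ref{t.minimaux}. I would first select a $G_\delta$ dense subset $\cG$ of $\diff^1(M)\setminus\overline{\cT\cup\cC}$ obtained by intersecting the $G_\delta$ dense set of Theorem~\ref{t.minimaux} with a residual set on which the classical consequences of the connecting lemma for pseudo-orbits listed in Section~\ref{s.consequence} hold. In particular, for $f\in\cG$, any two periodic orbits contained in the same homoclinic class have the same index: otherwise one could create, by an arbitrarily small $C^1$-perturbation, a heterodimensional cycle, contradicting $f\not\in\overline{\cC}$.

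Fix such an $f$ and a non-hyperbolic minimal invariant set $K$. Theorem~\ref{t.minimaux} produces a dominated splitting $T_K M = E^s\oplus E^c_1\oplus\cdots\oplus E^c_k\oplus E^u$ with $\dim(E^c_i)=1$ for each $i$, together with, for every $i\in\{1,\dots,k\}$, a sequence of periodic orbits $(O_n^{(i)})$ converging to $K$ in Hausdorff topology and whose Lyapunov exponent along $E^c_i$ is arbitrarily close to $0$. If $k\le 1$, the conclusion of the corollary is already achieved, so I will concentrate on the case $k\ge 2$; in that situation Theorem~\ref{t.minimaux} additionally gives a periodic orbit $p$ with $K\subset H(p)$ and allows each $O_n^{(i)}$ to be chosen inside $H(p)$. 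By the previous paragraph, $H(p)$ admits a single index $i_0$.

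The core step will be to convert the dominated splitting into an index constraint. The $N$-dominated splitting on $K$ extends to a dominated splitting on a neighbourhood $U$ of $K$, and each orbit $O_n^{(i)}$ eventually lies in $U$. Along such an orbit, the Lyapunov exponents along the different summands of the splitting are therefore ordered in the same way as the bundles, and consecutive exponents are separated by at least $1/N$: those along $E^s$ and along $E^c_j$ for $j<i$ are strictly negative, those along $E^c_j$ for $j>i$ and along $E^u$ are strictly positive, and only the exponent along $E^c_i$ can be close to $0$. Hence the stable dimension of $O_n^{(i)}$ equals either $\dim(E^s)+i-1$ or $\dim(E^s)+i$, depending on the sign of the exponent along $E^c_i$. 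Since $O_n^{(i)}\subset H(p)$, this stable dimension equals $i_0$, so $i_0\in\{\dim(E^s)+i-1,\,\dim(E^s)+i\}$ for every $i\in\{1,\dots,k\}$.

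Taking the intersection over $i\in\{1,\dots,k\}$ of these two-element sets, the admissible values of $i_0$ form an intersection that is empty as soon as $k\ge 3$. Hence necessarily $k\le 2$; when $k=2$ the intersection reduces to $\{\dim(E^s)+1\}$, giving precisely the index announced in the second case of the corollary. The only delicate point I anticipate is the quantitative ordering of the Lyapunov exponents along the approximating orbits, which uses that a dominated splitting on a compact invariant set extends to nearby invariant orbits with a uniform separation $\lambda_j-\lambda_{j+1}\le -1/N$ between exponents along consecutive bundles; this is what rules out any ambiguity in the stable dimension of $O_n^{(i)}$ once $n$ is large enough.
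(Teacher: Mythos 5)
Your proposal is correct and follows essentially the same route as the paper: the corollary is deduced from Theorem~\ref{t.minimaux} together with the fact (from section~\ref{s.consequence}) that, far from heterodimensional cycles, a generic homoclinic class carries a single index, which forces $k\le 2$ and pins the index to $\dim(E^s)+1$ when $k=2$. The paper states this in two sentences; your explicit bookkeeping of the ordered, $1/N$-separated Lyapunov exponents along the approximating orbits $O_n^{(i)}$ and the intersection of the two-element index sets is exactly the argument being invoked implicitly.
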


\begin{proof}[D\'emonstration du th\'eor\`eme~\ref{t.minimaux}]
Le corollaire~\ref{c.tangence} implique que pour tout $1\leq i<d$, l'ensemble
$\per_{i}(f)$ des points p\'eriodiques de $f$ d'indice $i$ a une d\'ecomposition
domin\'ee $T_{\per_{i}(f)}M=E_{i}\oplus F_{i}$
telle que $\dim(E_{i})=i$.
En particulier, le th\'eor\`eme~\ref{t.trichotomie} s'applique.

Si $K$ est contenu dans une classe ap\'eriodique (en consid\'erant par exemple la d\'ecomposition
domin\'ee triviale $T_{K}M=E$) seul le troisi\`eme cas du th\'eor\`eme~\ref{t.trichotomie} peut avoir lieu~:
$K$ contient un ensemble partiellement hyperbolique $\Lambda$ ayant un fibr\'e central de dimension $1$.
Puisque $K$ est un ensemble minimal non hyperbolique, il co\"\i ncide avec $\Lambda$.

Si $K$ est contenu dans une classe homocline $H$, le th\'eor\`eme~\ref{t.trichotomie}
assure que l'ensemble des indices de $H$ est un intervalle.
Ceci entra\^\i ne l'existence d'une d\'ecomposition domin\'ee
$$T_{H}M=F_{0}\oplus F^c_{1}\oplus\dots\oplus F^c_{\ell}\oplus F_{\ell+1},$$
telle que les fibr\'es $F^c_{i}$ soient de dimension $1$
et telle que $H$ ne contienne pas de point d'indice $i<\dim(F_{0})$
ou $i>d-\dim(F_{\ell+1})$.
On peut supposer de plus que $\dim(F_{0})$ et $d-\dim(F_{\ell+1})$ sont minimales pour ces propri\'et\'es.

Si $F_{0}$ n'est pas uniform\'ement contract\'e au-dessus de $K$, nous envisageons les trois cas
du th\'eor\`eme~\ref{t.trichotomie}.
\begin{itemize}
\item[--] Le premier cas n'appara\^\i t pas puisque $H$ ne contient pas de point d'indice $i<\dim(F_{0})$.
\item[--] Dans le second cas, $H$ contiendrait des points p\'eriodiques d'indice $\dim(F_{0})$
ayant un exposant de Lyapunov stable proche de $0$.
Puisque $H$ est une classe homocline, l'ensemble de ces points p\'eriodiques est dense dans $H$.
Le th\'eor\`eme~\ref{t.trichotomie} implique alors que $F_{0}$ poss\`ede une d\'ecomposition domin\'ee
$F_0=E\oplus E^c$ telle que $\dim(E^c)=1$.
Ceci contredit la minimalit\'e de $\dim(F_{0})$.
\item[--] Dans le troisi\`eme cas, l'ensemble $K=\Lambda$ est partiellement hyperbolique et la dimension du fibr\'e central vaut $1$.
\end{itemize}
Nous obtenons donc une d\'ecomposition domin\'ee
$T_KM=E^s\oplus E^c_1\oplus \dots\oplus E^c_k\oplus E^u$
o\`u $E^s$ et $E^u$ sont uniform\'ement contract\'e et dilat\'e
et o\`u chaque fibr\'e $E^c_i$
est de dimension $1$ et n'est pas uniform\'ement contract\'e ni dilat\'e.
\medskip

Si l'on suppose que $k>1$, le troisi\`eme cas du th\'eor\`eme~\ref{t.trichotomie} n'a pas lieu.
Le th\'eor\`eme appliqu\'e \`a la d\'ecomposition $(E^s\oplus E^c_1)\oplus
(E^c_2\oplus \dots\oplus E^u)$ pour $f^{-1}$
montre alors que $K$ est contenu dans une classe homocline $H$ ayant des orbites
p\'eriodiques d'indice sup\'erieur ou \'egal \`a $\dim(E^s)+1$.
Appliquons le th\'eor\`eme \`a la d\'ecomposition $(E^s\oplus E^c_1)\oplus
(E^c_2\oplus \dots\oplus E^u)$ pour $f$~:
\begin{itemize}
\item[--] dans le premier cas du th\'eor\`eme, $H$ poss\`ede aussi des orbites p\'eriodiques d'indice $\dim(E^s)$ et, d'apr\`es le th\'eor\`eme~\ref{t.barycentre}, $H$ contient des orbites p\'eriodiques dont le $(\dim(E^s)+1)$-\`eme exposant est arbitrairement proche de $0$~;
\item[--] dans le second cas, nous concluons directement que $H$ poss\`ede des orbites p\'eriodiques dont le $(\dim(E^s)+1)$-\`eme exposant est arbitrairement proche de $0$.
\end{itemize}
\end{proof}

\paragraph{Passage du local au global.}
Nous souhaitons \'etendre la d\'ecomposition domin\'ee donn\'ee par le
th\'eor\`eme~\ref{t.minimaux} sur l'ensemble minimal non hyperbolique $K$
\`a toute la classe de r\'ecurrence par cha\^\i nes $C$ contenant $K$.

Pour chaque d\'ecomposition domin\'ee $E\oplus F$ sur $K$ donn\'ee par ce th\'eor\`eme,
il existe une suite d'orbite p\'eriodiques $(O_{n})$ qui convergent vers $K$ en topologie de Hausdorff
et dont l'indice est \'egal \`a $\dim(E)$. On peut esp\'erer utiliser la transitivit\'e par cha\^\i nes de $C$
pour construire une suite d'orbites p\'eriodiques $(O'_{n})$ ayant le m\^eme indice que les orbites $O_{n}$
mais convergeant vers $C$.
(Ce serait le cas si les questions~\ref{q.completude} ou~\ref{q.fermeture-homocline} admettaient des r\'eponses positives.)
Si le diff\'eomorphisme $f$ est loin des tangences homoclines,
le corollaire~\ref{c.tangence} impliquerait alors que la d\'ecomposition $E\oplus F$ s'\'etend \`a toute la classe $C$.

La construction d'une telle suite d'orbites p\'eriodiques $O_{n}'$ est d\'elicate et fait l'objet
des r\'esultats principaux de ce chapitre. Si l'on sait montrer que les orbites p\'eriodiques $O_{n}$
appartiennent \`a la classe $C$, nous concluons directement puisque $C$
co\"\i ncide alors avec la classe homocline $H(O_{n})$ et contient une orbite p\'eriodique
du m\^eme indice que $O_{n}$ qui est $\varepsilon$-dense dans $C$ pour tout $\varepsilon>0$.

\section{Mod\`eles centraux}
\subsection{D\'efinition}
Afin de d\'ecrire la dynamique centrale pour un ensemble
$K$ ayant une d\'ecomposition
domin\'ee $T_{K}M=E_1\oplus E^c\oplus E_3$ avec $\dim(E^c)=1$,
nous introduisons le mod\`ele suivant.

\begin{definition}
Un \emph{mod\`ele central}\index{mod\`ele central} est une paire $(\widehat K,\widehat f)$ form\'ee
\begin{itemize}
\item[--] d'un espace compact m\'etrique $\widehat K$, (sa \emph{base}),
\item[--] d'une application continue $\widehat f\colon \widehat K\times [0,1]\to \widehat K\times [0,+\infty)$,
\end{itemize}
satisfaisant les propri\'et\'es suivantes~:
\begin{itemize}
\item[--] $\widehat f(\widehat K\times \{0\})=\widehat K\times \{0\}$~;
\item[--] $\widehat f$ est un hom\'eomorphisme local au voisinage de  $\widehat K\times \{0\}$~:
il existe une application continue $\widehat g\colon \widehat K\times [0,1]\to \widehat K\times [0,+\infty)$
telle que $\widehat f\circ \widehat g$ et $\widehat g\circ \widehat f$ co\"\i ncident avec l'identit\'e respectivement sur $\widehat g^{-1}(\widehat K\times [0,1])$
et $\widehat f^{-1}(\widehat K\times [0,1])$~;
\item[--] $\widehat f$ est un produit fibr\'e de la forme
$ \widehat f (x,t)= (\widehat f_1(x),\widehat f_2(x,t))$.
\end{itemize}
\end{definition}

Puisque $\widehat K\times [0,1]$ n'est pas pr\'eserv\'e, la dynamique hors de $\widehat K\times \{0\}$ n'est pas toujours bien d\'efinie.
C'est pourtant cette partie qui nous int\'eresse ici.

\begin{definition}
\begin{itemize}
\item[--]
Un sous-ensemble $S\subset \widehat K\times [0,+\infty)$ est une \emph{bande} si pour tout $\widehat x\in \widehat K$, l'intersection
$S\cap \{\widehat x\}\times [0,+\infty)$ est un intervalle.

\item[--]
Une bande ouverte $S$ qui satisfait $\widehat f(\overline S)\subset S$ est dite \emph{attractive} pour $\widehat f$.

\item[--]
Un intervalle $\{\widehat x\}\times [0,a]$ avec $a>0$ est un \emph{segment r\'ecurrent par cha\^\i nes}\index{segment r\'ecurrent par cha\^\i nes} s'il est contenu
dans un ensemble compact invariant transitif par cha\^\i nes $\Lambda\subset \widehat K\times [0,+\infty)$.
\end{itemize}
\end{definition}

Des arguments similaires \`a ceux de Conley pour obtenir le th\'eor\`eme~\ref{t.fondamental}
donnent
un crit\`ere d'existence de bandes attractives.
\begin{proposition}[proposition 2.5 de \cite{palis-faible} et proposition 2.2 de \cite{model}]
Consid\'erons un mod\`ele central $(\widehat K,\widehat f)$ dont la base est transitive par cha\^\i nes.
Il y a $4$ cas disjoints possibles.
\begin{itemize}
\item[--] Les ensembles stables et instables par cha\^\i nes de $\widehat K\times \{0\}$ ne sont pas triviaux.
De fa\c con \'equivalente, il existe un segment r\'ecurrent par cha\^\i nes.
\item[--] Les ensembles stables et instables par cha\^\i nes de $\widehat K\times \{0\}$ sont triviaux.
De fa\c con \'e\-qui\-va\-len\-te, il existe une base de voisinages de $\widehat K\times \{0\}$ par bandes attractives pour $\widehat f$
et une base de voisinages de $\widehat K\times \{0\}$ par bandes attractives pour $\widehat f^{-1}$.
\item[--] L'ensemble instable par cha\^\i nes est trivial, l'ensemble stable par cha\^\i nes contient un voisinage
de $\widehat K\times \{0\}$. Il existe alors une base de voisinages de $\widehat K\times \{0\}$ par bandes attractives.
\item[--] L'ensemble stable par cha\^\i nes est trivial, l'ensemble instable par cha\^\i nes contient un voisinage
de $\widehat K\times \{0\}$. (C'est le cas sym\'etrique du pr\'ec\'edent.)
\end{itemize}
\end{proposition}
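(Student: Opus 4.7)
The plan is to combine two ingredients: (i) a Conley-type construction of Lyapunov functions adapted to the local setting of central models, and (ii) the product structure $\widehat f(x,t)=(\widehat f_1(x),\widehat f_2(x,t))$ together with the chain-transitivity of the base $\widehat K$, which allows one to glue chains in the fiber direction using arbitrarily large-period returns in the base.

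First I would handle the equivalence claimed in case 1. If a chain-recurrent segment $\{\widehat x\}\times [0,a]$ exists, then it is contained in some compact invariant chain-transitive set $\Lambda$. Taking $(\widehat x,s)\in \Lambda$ with $s>0$, chain-transitivity of $\Lambda$ gives chains from $\widehat K\times\{0\}$ to $(\widehat x,s)$ and back, witnessing non-triviality of both the chain stable and chain unstable sets of $\widehat K\times\{0\}$. Conversely, given $\widehat y$ with $\widehat y_2>0$ in the chain stable set and $\widehat z$ with $\widehat z_2>0$ in the chain unstable set, chain-transitivity of $\widehat K$ and continuity of $\widehat f_1$ let me concatenate a chain from $\widehat z$ down to $\widehat K\times\{0\}$, a chain inside $\widehat K\times\{0\}$ from the endpoint to a point arbitrarily close to the base-projection of $\widehat y$, and a chain from $\widehat y$ back up. This produces a chain-recurrent loop passing through points of positive second coordinate; intersecting the resulting compact chain-transitive set with a fiber $\{\widehat x\}\times[0,+\infty)$ then yields a chain-recurrent segment.

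Second, I would establish the existence of attracting bands under the hypothesis that the chain unstable (resp.\ chain stable) set of $\widehat K\times\{0\}$ is trivial. The idea is to mimic the proof of Conley's fundamental theorem (Theorem~\ref{t.fondamental}), but working only in a small neighborhood $\widehat K\times [0,\varepsilon]$ where $\widehat f$ is a homeomorphism onto its image. When the chain unstable set is trivial, no point of $\widehat K\times(0,\varepsilon]$ can be $\delta$-chained back to $\widehat K\times\{0\}$ inside this neighborhood for some $\delta>0$; this is exactly the condition needed to construct a continuous Lyapunov function $h\colon \widehat K\times[0,\varepsilon]\to\mathbb{R}$ which is zero on $\widehat K\times\{0\}$, strictly positive elsewhere, and strictly decreasing under $\widehat f^{-1}$ on points whose orbit meets $\widehat K\times(0,\varepsilon]$. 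The sublevel sets of $h$ provide a base of attracting bands for $\widehat f^{-1}$. By the same reasoning applied to $\widehat f$, triviality of the chain stable set yields a base of attracting bands for $\widehat f$. Combining these treats cases 2, 3, and 4 uniformly: in case 3, triviality of the chain unstable set gives attracting bands for $\widehat f$ (one checks that absence of a chain-recurrent segment forces each such band to be contained in the chain stable set, which therefore contains a neighborhood), and symmetrically for case 4.

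Disjointness of the four cases then follows once these equivalences are in place: case 1 is characterized by both chain sets being non-trivial, case 2 by both being trivial, and cases 3 and 4 by exactly one being trivial, with the trichotomy among 2, 3, 4 determined by which chain set is a neighborhood. The main technical obstacle will be the second step: standard Conley theory requires a globally defined homeomorphism on a compact space, whereas here $\widehat f$ is only a local homeomorphism from $\widehat K\times[0,1]$ into $\widehat K\times[0,+\infty)$, so pseudo-orbits may escape the domain of definition. The delicate point is to run the Lyapunov-function construction using only chains that remain in a fixed small tubular neighborhood of $\widehat K\times\{0\}$, and to show that triviality of a chain set in the local sense still produces an \emph{open} attracting band, not merely a closed forward-invariant set. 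The product structure of $\widehat f$ is what makes this possible: the base dynamics is genuinely compact and recurrent, so fiber-escapes are the only obstruction, and they can be excluded by shrinking $\varepsilon$.
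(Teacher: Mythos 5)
Your overall strategy (localizing Conley's construction to a neighborhood of $\widehat K\times\{0\}$ and exploiting the skew-product structure) is indeed the route the paper points to, but two essential mechanisms are missing, and without them several of your steps do not go through. First, nothing in your argument produces \emph{bands} or \emph{segments}. The objects in the statement are fiber-convex: an attracting band must meet each fiber in an interval, and a chain-recurrent segment is an interval $\{\widehat x\}\times[0,a]$, not merely a point of positive height inside a chain-transitive set. Sublevel sets of a Conley Lyapunov function have no reason to be bands, and the intersection of a compact chain-transitive set with a fiber has no reason to be an interval. What makes everything work is that, $\widehat f$ being a local homeomorphism fixing $\widehat K\times\{0\}$, each fiber map $t\mapsto \widehat f_2(\widehat x,t)$ is increasing on some $[0,\beta]$; one can then \emph{fill} any open attracting set downward along the fibers and check that the filled set is still open and attracting, and likewise show that the chain stable and unstable sets are themselves closed bands. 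This filling lemma is where the hypotheses on $\widehat f$ actually enter, and it is absent from your proposal. Moreover, in your converse for case 1 the concatenation does not close up: for $\widehat z$ in the chain \emph{unstable} set you only have chains \emph{from} $\widehat K\times\{0\}$ \emph{to} $\widehat z$ (you use them in the wrong direction), and even after correcting the orientation you obtain a chain from the stable point to the unstable point but no return chain, hence no chain-recurrent loop. The correct argument lifts arbitrarily fine base chains from $\widehat z$ to the fiber of $\widehat y$ and runs a dichotomy: either the lifted chains arrive at uniformly positive height, which places a point of the chain unstable set over the fiber of $\widehat y$ and the band property yields a common segment, or they arrive arbitrarily low, which places $(\widehat z,u)$ directly in the chain stable set.

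Second, exhaustiveness of the four cases requires showing that when exactly one of the two chain sets is trivial, the other \emph{contains a neighborhood} of $\widehat K\times\{0\}$, not merely that it is non-trivial. Your parenthetical justification (``absence of a chain-recurrent segment forces each such band to be contained in the chain stable set'') is circular --- it asserts exactly the conclusion --- and the stated cause is wrong: absence of a segment does not prevent a point of a small attracting band from being attracted to a chain-transitive set disjoint from $\widehat K\times\{0\}$. The actual mechanism uses all the hypotheses at once: triviality of the chain unstable set makes the attracting bands $A_\eta$ shrink to $\widehat K\times\{0\}$, so for small $\eta$ the trace of $A_\eta$ on the fiber of a chain-stable point $(\widehat x_0,s_0)$ lies below $s_0$; any point of $A_\eta$ can be joined to that fiber by lifting an $\varepsilon$-chain of the base (with $\varepsilon\le\eta$), and the lifted chain stays in $A_\eta$, hence lands at height at most $s_0$; the band property of the chain stable set then lets you concatenate and conclude that all of $A_\eta$ is chain stable. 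Without this argument the four cases, as you have set them up, are not exhaustive.
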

\noindent
En particulier, l'existence d'un segment r\'ecurrent par cha\^\i nes est une propri\'et\'e locale au voisinage de $\widehat K\times \{0\}$.

\subsection{Mod\`eles centraux en dynamique diff\'erentiable}
Consid\'erons un ensemble compact $f$-invariant $K\subset M$ ayant une d\'ecomposition domin\'ee
$T_{K}M=E_1\oplus E^c\oplus E_3$ avec $\dim(E^c)=1$.

\begin{definition}
Un mod\`ele central $(\widehat K,\widehat f)$ est \emph{associ\'e \`a $(K,f)$} s'il existe une application
continue
$\pi\colon \widehat K\times [0,+\infty)\to M$ ayant les propri\'et\'es suivantes~:
\begin{itemize}
\item[--] $\pi(\widehat K\times \{0\})=K$~;
\item[--] $\pi\circ \widehat f=f\circ \pi$ sur $\widehat K\times [0,1]$~;
\item[--] les applications $t\mapsto \pi(\widehat x,t)$ pour $\widehat x\in \widehat K$ forment une famille
continues de plongements $C^1$ de $[0,+\infty)$ dans $M$~;
\item[--] pour tout $\widehat x\in \widehat K$, la courbe $\pi(\{\widehat x\}\times[0,+\infty))$ est tangente \`a $E^c_{\pi(\widehat x,0)}$.
\end{itemize}
\end{definition}

Deux cas peuvent \^etre distingu\'es~:
\begin{description}
\item[\bf -- Le cas orientable.] Il existe une orientation continue de $E^c$ pr\'eserv\'ee par $Df$.
Ceci permet de d\'efinir les fibr\'es en demi-droites $E^{c,+}$ et $E^{c,-}$.
\item[\bf -- Le cas non orientable.]
Lorsque $K$ est transitif par cha\^\i nes, la dynamique induite par $Df$ sur le fibr\'e unitaire
associ\'e \`a $E^c$ est encore transitif par cha\^\i nes.
\end{description}
\medskip

Consid\'erons une famille de plaques localement invariante $\cW$
et tangente \`a $E^c$, donn\'ee par le th\'eor\`eme~\ref{t.hps}.
La dynamique de $f$ se rel\`eve par $\cW$ en une dynamique $\widehat f$ localement d\'efinie au voisinage de la
section $0\subset E^c$. Dans le cas orientable, on note $\widehat K=K$ et
les fibr\'es $E^c,E^{c,+},E^{c,-}$ s'identifient \`a $K\times \RR$,
$K\times [0,+\infty)$ et $K\times (-\infty,0]$ respectivement.
Dans le cas non orientable, on note $\widehat K$
l'espace unitaire de $E^c$ et on consid\`ere l'application surjective $\widehat K\times [0,+\infty)\to E^c$
d\'efinie par $(\widehat x,t)\mapsto t.\widehat x$. Ceci entra\^\i ne l'existence d'un mod\`ele central pour $K$.

\begin{proposition}[proposition 3.2 de \cite{palis-faible}]\label{p.existence-modele}
Associ\'e \`a un ensemble compact invariant $K$ ayant une d\'ecomposition domin\'ee
$T_{K}M=E_{1}\oplus E^c\oplus E_{3}$ avec $\dim(E^c)=1$, il existe un mod\`ele central $(\widehat K,\widehat f)$.
Lorsque $K$ est transitif par cha\^\i nes, on peut choisir $\widehat K$ transitive par cha\^\i nes~:
\begin{itemize}
\item[--]
dans le cas non orientable, l'ensemble $\widehat K$ co\"\i ncide avec
l'espace unitaire de $E^c$ et $\pi$ est la restriction de la projection $E^c\to K$~;
\item[--]
dans le cas orientable,
l'application $\pi$ est un hom\'eomorphisme et on peut choisir l'o\-ri\-en\-ta\-tion de chaque courbe
$\pi(\{\widehat x\}\times [0,+\infty))$ compatible avec $E^{c,+}$.
\end{itemize}
\end{proposition}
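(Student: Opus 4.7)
Le plan consiste \`a appliquer le th\'eor\`eme~\ref{t.hps} de Hirsch-Pugh-Shub pour obtenir une famille de plaques $\cW$ tangente \`a $E^c$ et localement invariante : chaque $\cW_x\colon E^c_x\to M$ est un plongement $C^1$ tangent en $x$ \`a $E^c_x$, et il existe $\rho>0$ tel que $f(\cW_x(B(0,\rho)))\subset \cW_{f(x)}$ pour tout $x\in K$. Quitte \`a renormaliser la m\'etrique sur $E^c$, on suppose $\rho=1$. Il s'agit ensuite de relever la dynamique de $f$ \`a travers cette famille de plaques, et deux cas doivent \^etre distingu\'es selon l'orientabilit\'e de $E^c$ sous l'action de $Df$.

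Dans le cas orientable, je pose $\widehat K=K$ et j'utilise l'orientation continue $Df$-invariante pour identifier $E^c$ \`a $K\times\RR$, et donc $E^{c,+}$ \`a $K\times[0,+\infty)$. L'application $\pi$ se d\'efinit par $\pi(x,t)=\cW_x(t\cdot e^+(x))$, o\`u $e^+(x)$ d\'esigne le vecteur unitaire dans la direction positive. La dynamique centrale s'\'ecrit alors $\widehat f(x,t)=(f(x),\widehat f_2(x,t))$, o\`u $\widehat f_2(x,t)\geq 0$ est l'unique r\'eel tel que $\cW_{f(x)}(\widehat f_2(x,t)\cdot e^+(f(x)))=f(\cW_x(t\cdot e^+(x)))$ (bien d\'efinie pour $t\in [0,1]$ par invariance locale des plaques). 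La structure produit fibr\'e, l'\'egalit\'e $\pi\circ\widehat f=f\circ\pi$ et le fait que $\widehat f$ soit un hom\'eomorphisme local r\'esultent directement de la construction, et $\pi$ est clairement un hom\'eomorphisme.

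Dans le cas non orientable, je prends pour $\widehat K$ le fibr\'e unitaire de $E^c$, c'est-\`a-dire l'ensemble des paires $(x,\widehat x)$ o\`u $\widehat x$ est une orientation unitaire de $E^c_x$ ; c'est un rev\^etement \`a deux feuillets de $K$, dont on note $p\colon\widehat K\to K$ la projection. L'application $\pi$ se d\'efinit par $\pi(\widehat x,t)=\cW_{p(\widehat x)}(t\cdot\widehat x)$, c'est-\`a-dire la composition de la surjection polaire $\widehat K\times[0,+\infty)\to E^c$ avec les plaques, et sa restriction \`a $\widehat K\times\{0\}$ co\"\i ncide avec $p$. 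L'action tangente $Df$ envoyant demi-droite sur demi-droite, elle induit un rel\`evement $\widehat f_1\colon\widehat K\to\widehat K$, et $\widehat f_2$ se d\'efinit exactement comme dans le cas orientable.

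Le point le plus d\'elicat sera d'\'etablir la transitivit\'e par cha\^\i nes de $\widehat K$ sous l'hypoth\`ese de non-orientabilit\'e. L'argument repose sur la remarque suivante : l'involution antipodale $\sigma\colon(x,\pm\widehat x)\mapsto(x,\mp\widehat x)$ commute avec $\widehat f_1$ et pr\'eserve donc la structure de r\'ecurrence par cha\^\i nes. En utilisant que $p$ est un rev\^etement, toute $\varepsilon$-pseudo-orbite de $K$ admet, pour $\varepsilon$ assez petit, un unique rel\`evement issu d'un point initial fix\'e de $\widehat K$. Si $\widehat K$ n'\'etait pas transitif par cha\^\i nes, alors il existerait une classe de r\'ecurrence par cha\^\i nes $\Lambda\subset \widehat K$ disjointe de son antipodale $\sigma(\Lambda)$ ; le rel\`evement des pseudo-orbites de $K$ combin\'e \`a la transitivit\'e par cha\^\i nes de $K$ montrerait alors que $\Lambda$ rencontre chaque fibre de $p$ en un unique point, et d\'efinirait donc une section continue et $Df$-invariante du fibr\'e unitaire, c'est-\`a-dire une orientation continue $Df$-invariante de $E^c$ sur $K$. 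Ceci contredirait l'hypoth\`ese de non-orientabilit\'e, permettant de conclure que $\widehat K$ est transitif par cha\^\i nes.
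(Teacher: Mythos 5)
Votre construction suit exactement celle du texte (famille de plaques localement invariante fournie par le th\'eor\`eme~\ref{t.hps}, rel\`evement de la dynamique dans les plaques, distinction des cas orientable et non orientable via le fibr\'e unitaire de $E^c$), et elle est correcte. Le seul point non trivial, la transitivit\'e par cha\^\i nes du fibr\'e unitaire dans le cas non orientable --- que le texte se contente d'affirmer en renvoyant \`a la r\'ef\'erence --- est trait\'e par un argument valable~: l'unicit\'e du rel\`evement des $\varepsilon$-pseudo-orbites pour $\varepsilon$ assez petit (les deux points de chaque fibre \'etant uniform\'ement s\'epar\'es) et la commutation de l'involution $\sigma$ avec la dynamique relev\'ee entra\^\i nent qu'une classe de r\'ecurrence par cha\^\i nes disjointe de son image par $\sigma$ rencontrerait chaque fibre en exactement un point et fournirait une section continue invariante, c'est-\`a-dire une orientation de $E^c$, ce qui est exclu.
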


\subsection{Classification des dynamiques centrales}
Supposons que $K$ soit transitif par cha\^\i nes. Consid\'erons
une famille de plaques localement invariante $\cW$ et la dynamique $\widehat f$ induite par $f$ sur un
voisinage de la section $0$ dans $E^c$.
\medskip

Nous obtenons les cas suivants, non disjoints en g\'en\'eral (voir la figure~\ref{f.type}).
\index{mod\`ele central!type}
\begin{description}
\item[-- Le type (R), r\'ecurrent par cha\^\i nes.]
Pour tout $\varepsilon>0$, il existe un point $x\in K$
et un segment $\gamma\subset \cW_x$ contenant $x$ tel que tous les it\'er\'es
$f^n(\gamma)$, $n\in \ZZ$, soient de longueur born\'ee par $\varepsilon$
et tel que $\gamma$ est inclus dans un ensemble transitif par cha\^\i nes
contenu dans le $\varepsilon$-voisinage de $K$.

Une telle courbe $\gamma$ est appel\'ee \emph{segment central} de $K$.
\item[-- Le type (N), neutre par cha\^\i nes.] Il existe une base de voisinages ouverts $U$ de la section $0\subset E^c$
qui sont attractifs (i.e. $\widehat f(\overline U)\subset U$)
et une base de voisinages r\'epulsifs.
\item[-- Le type (H), hyperbolique par cha\^\i nes.]
Il existe une base de voisinages ouverts de la section $0\subset E^c$
qui sont attractifs pour $\widehat f$ (le cas \emph{attractif}) ou pour $\widehat f^{-1}$
(le cas \emph{r\'epulsif}) et dont l'image par $\cW$ est respectivement contenue
dans l'ensemble stable par cha\^\i nes ou dans l'ensemble instable par cha\^\i nes de $K$.
\item[-- Le type (P), parabolique par cha\^\i nes.] Nous sommes alors dans le cas orientable.
Trois possibilit\'es apparaissent.
\begin{description}
\item[-- Le type (P$_{SU}$).] Quitte \`a renverser l'orientation sur $E^c$,
il existe une base de voisinages ouverts de la section $0\subset E^{c,+}$ qui sont attractifs pour $\widehat f$
et qui se projettent par $\cW$ dans l'ensemble stable par cha\^\i nes de $K$~;
il existe \'egalement une base de voisinages ouverts de la section $0\subset E^{c,-}$ qui sont attractifs pour $\widehat f^{-1}$
et qui se projettent par $\cW$ dans l'ensemble instable par cha\^\i nes de $K$.
\item[-- Les types (P$_{SN}$) et (P$_{UN}$)] respectivement.
Quitte \`a renverser l'orientation sur $E^c$,
il existe une base de voisinages ouverts de la section $0\subset E^{c,+}$ qui sont attractifs pour $\widehat f$
(resp. $\widehat f^{-1}$) et qui se projettent par $\cW$ dans l'ensemble stable (resp. instable)
par cha\^\i nes de $K$~;
il existe \'egalement une base de voisinages ouverts de la section $0\subset E^{c,-}$
qui sont attractifs (i.e. $\widehat f(\overline U)\subset U$)
et une base de voisinages de $0\subset E^{c,-}$ qui sont r\'epulsifs.
\end{description}
\end{description}
\begin{figure}[ht]
\begin{center}
\input{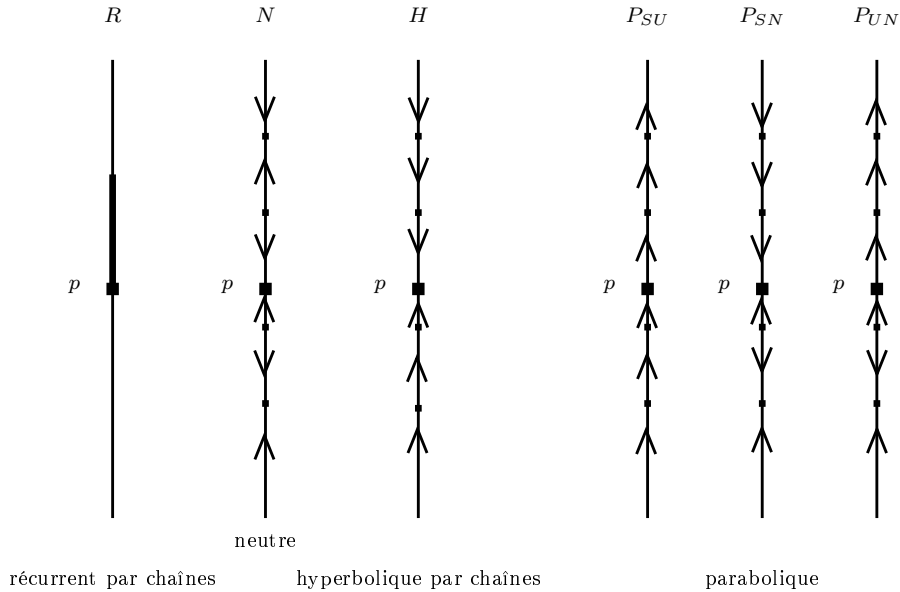}
\end{center}
\caption{Types de dynamiques centrales.\label{f.type}}
\end{figure}

Le type de la dynamique centrale de d\'epend pas du choix d'une famille de plaques $\cW$.

\begin{proposition}[Corollaire 2.6 de~\cite{model}]
Consid\'erons un ensemble compact invariant $K$ transitif par cha\^\i nes
muni d'une d\'ecomposition domin\'ee $T_KM=E_1\oplus E^c\oplus E_3$ avec $\dim(E^c)=1$
et deux familles de plaques $\cW,\cW'$ localement invariantes tangentes \`a $E^c$.
Alors les types de dynamiques centrales associ\'es \`a $\cW,\cW'$ co\"\i ncident.
\end{proposition}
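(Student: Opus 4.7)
Ma strat\'egie consiste \`a construire un hom\'eomorphisme local $H$ d\'efini au voisinage de $\widehat K\times\{0\}$ qui identifie les deux mod\`eles centraux associ\'es \`a $\cW$ et $\cW'$, modulo une erreur s'annulant sur $\widehat K\times\{0\}$. Comme la base $\widehat K$ ne d\'epend que de l'orientabilit\'e de $E^c$ (elle vaut $K$ dans le cas orientable, ou l'espace unitaire de $E^c$ dans le cas non orientable) et non de la famille de plaques, seules les dynamiques fibr\'ees $\widehat f$ et $\widehat f'$ diff\`erent. En chaque point $x\in K$, les plaques $\cW_x$ et $\cW'_x$ sont deux sous-vari\'et\'es de $M$ passant par $x$ et tangentes \`a $E^c_x$ en $x$~; dans une carte locale adapt\'ee \`a la d\'ecomposition $E_1\oplus E^c\oplus E_3$, toutes deux sont graphes sur $E^c_x$ d'applications \`a d\'eriv\'ees nulles en l'origine. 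On peut alors les identifier au moyen de la projection transverse sur $E^c_x$, et en faisant varier $x$ on obtient un hom\'eomorphisme $H$ valant l'identit\'e sur $\widehat K\times\{0\}$ et $C^0$-proche de l'identit\'e.

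Par construction, $H$ satisfait $\pi\circ H=\pi'$ \`a une petite erreur pr\`es s'annulant sur $\widehat K\times\{0\}$. Gr\^ace \`a l'invariance locale des deux familles de plaques, on en d\'eduit que $H$ semi-conjugue $\widehat f$ et $\widehat f'$ au voisinage de $\widehat K\times\{0\}$, \`a une perturbation pr\`es tendant vers $0$ lorsque l'on s'approche de $\widehat K\times\{0\}$. Il s'agit alors de v\'erifier que les propri\'et\'es caract\'erisant chaque type sont invariantes par une telle \'equivalence approch\'ee. Pour le type $(R)$, l'existence d'un segment central se traduit par celle d'un segment dans $M$ inclus dans un ensemble transitif par cha\^\i nes contenu dans un $\varepsilon$-voisinage de $K$~; cette propri\'et\'e \'etant intrins\`eque \`a la dynamique ambiante, elle est transport\'ee par $H$ (quitte \`a augmenter l\'eg\`erement $\varepsilon$). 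Les types $(N)$, $(H)$ et $(P)$ sont exprim\'es en termes d'existence de bandes attractives ou r\'epulsives au voisinage de $\widehat K\times\{0\}$~: comme l'attractivit\'e $\widehat f(\overline U)\subset U$ est une condition ouverte et que $H$ envoie bande sur bande, toute bande attractive d'un mod\`ele fournit une bande attractive de l'autre. Pour les sous-types de $(H)$ et de $(P)$, la condition suppl\'ementaire que les bandes se projettent par $\cW$ dans les ensembles stables ou instables par cha\^\i nes de $K$ est intrins\`eque \`a $(M,f,K)$ et ne d\'epend pas de la famille de plaques.

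Le point technique le plus d\'elicat sera le contr\^ole de l'\'ecart entre $H\circ\widehat f$ et $\widehat f'\circ H$. Bien qu'il s'annule sur $\widehat K\times\{0\}$, il faut s'assurer qu'il reste petit devant la largeur des bandes consid\'er\'ees pour pouvoir transporter la propri\'et\'e d'attractivit\'e stricte. Cela d\'ecoule de la domination $T_KM=E_1\oplus E^c\oplus E_3$, qui force les deux familles de plaques \`a \^etre quasi-invariantes dans la direction $E^c$~: l'erreur de semi-conjugaison, mesur\'ee dans les plaques centrales, est d'ordre sup\'erieur par rapport \`a la distance \`a $\widehat K\times\{0\}$. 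Ceci suffit pour conclure que les propri\'et\'es qualitatives d\'efinissant les types passent de l'un des mod\`eles \`a l'autre, et donc que les types associ\'es \`a $\cW$ et \`a $\cW'$ co\"\i ncident.
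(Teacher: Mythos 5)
You should first note that the memoir itself gives no proof of this statement: it is quoted as Corollaire 2.6 of~\cite{model}, so there is no in-document argument to compare yours with; I therefore assess your proposal on its own terms.

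Your overall strategy (an identification $H$ of the two central models by transverse projection, followed by transport of the type-defining properties) is natural, but it has a genuine gap at exactly the point that makes the statement non-trivial. The map $H$ you build only \emph{approximately} semi-conjugates $\widehat f$ and $\widehat f'$, with a defect that vanishes on $\widehat K\times\{0\}$ but is a \emph{fixed} positive quantity $\eta$ at any fixed positive height $t$. All the types are defined through chain-recurrence: a chain-recurrent segment $\{\widehat x\}\times[0,a]$ must lie in a compact invariant set admitting $\varepsilon$-pseudo-orbits for \emph{every} $\varepsilon>0$, including $\varepsilon\ll\eta(a)$, and the chain-stable/unstable sets of the zero section are defined the same way. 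An approximate conjugacy with defect $\eta$ only turns $\varepsilon$-pseudo-orbits into $(\varepsilon+\eta)$-pseudo-orbits, so it does not preserve chain-transitivity of a set sitting at fixed height, nor the (non)triviality of the chain-stable set. The same objection applies to your transport of attracting strips: $\widehat f'(\overline{H(S)})\subset H(S)$ requires the defect to be smaller than the gap $d(\widehat f(\overline S),\partial S)$, and nothing in your argument (nor in the domination, which only gives a defect $o(t)$) guarantees this for a given neighbourhood basis. Your sentence asserting that the defining properties ``sont invariantes par une telle \'equivalence approch\'ee'' is precisely the statement to be proved, not a consequence of what precedes.

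The missing idea is that the transfer should not go through an approximate conjugacy at all. On the one hand, the plaque-independent part of each definition (chain-transitive subsets of $M$ near $K$, chain-stable/unstable sets of $K$, the exact semi-conjugacies $\pi\circ\widehat f=f\circ\pi$ and $\pi'\circ\widehat f'=f\circ\pi'$ onto $(M,f)$) is intrinsic. On the other hand, the plaque-dependent part reduces to the following uniqueness property, which is where the domination really enters: if a point $y\in\cW_x$ has all its iterates $f^n(y)$ contained in the plaques $\cW_{f^n(x)}$ at a sufficiently small scale for all $n\in\ZZ$, then $y$ also belongs to $\cW'_x$ (project $y$ onto $\cW'_x$ along the transverse directions; the difference is transverse to $E^c$ and hence, by domination, expanded either forwards or backwards relative to the central direction, while remaining bounded — forcing it to vanish). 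Consequently a central segment of type (R), all of whose iterates have length $\le\varepsilon$, is \emph{literally the same curve} for both families, and the points of the local chain-stable/unstable sets realized inside the plaques of $\cW$ also lie in the plaques of $\cW'$; no approximate transport is needed. Also note that your claim that the (R) property is ``intrins\`eque \`a la dynamique ambiante'' is not quite right as stated: the requirement $\gamma\subset\cW_x$ does refer to the plaque family, and it is exactly this requirement that the uniqueness argument above removes.
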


Les \emph{types du fibr\'e $E^c$} seront donc d\'efinis comme \'etant les types de dynamiques centrales
associ\'e \`a un choix de famille de plaques centrales localement invariantes.

\section{Lorsqu'un fibr\'e est d\'eg\'en\'er\'e}\label{s.degenere}

Nous nous int\'eressons au cas o\`u le fibr\'e $E_3$ est d\'eg\'en\'er\'e~: dans les bons cas, l'ensemble $K$
est un puits ou bien le fibr\'e $E^c$ est ``topologiquement r\'epulsif''.

\begin{proposition}[proposition 2.7 de~\cite{model}]\label{p.degenere}
Consid\'erons un ensemble compact invariant $K$ transitif par cha\^\i nes
muni d'une d\'ecomposition domin\'ee $T_KM=E_1\oplus E^c$ avec $\dim(E^c)=1$.
Si $E^c$ est de type (N), (H)-attractif ou (P), l'ensemble $K$ contient un point p\'eriodique.
\end{proposition}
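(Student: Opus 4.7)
The strategy is to translate the one-sided attracting behavior in the central direction into a topological trapping region around $K$ in $M$, and then apply a Brouwer-type fixed-point argument. The key is to exploit the hypothesis that $E_3$ is degenerate, so that $E_1 \oplus E^c = T_KM$ covers all transverse directions at $K$.

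First, I would extract from each of the cases $(N)$, $(H)$-attractive, and $(P)$ a continuous function $\alpha \colon \widehat K \to (0,+\infty)$ such that the band
\[
B_\alpha = \{(\widehat x, t)\in \widehat K\times [0,+\infty) : t \le \alpha(\widehat x)\}
\]
satisfies $\widehat f(\overline{B_\alpha}) \subset \mathrm{int}(B_\alpha)$. This follows directly from the definitions of the types: in the orientable case the band lives on one side $E^{c,+}$ or $E^{c,-}$ (whichever side carries the attracting neighborhoods), in the non-orientable case it corresponds to a genuine two-sided neighborhood of $K$ in $E^c$. Using the plaque family $\cW^{cu}$ tangent to $E^c$ (Theorem~\ref{t.hps} and remark (c) following it), I would lift $B_\alpha$ to a half-neighborhood $V^+$ of $K$ in $M$ on the attracting side, and the trapping in the model descends to $f(\overline{V^+})\subset V^+$.

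The decisive use of the hypothesis $E_3 = 0$ comes next. Iterating the attracting property keeps the central component bounded, so the central derivative cocycle along orbits of $f$ in $K$ cannot diverge; combined with the domination $E_1 \prec E^c$ this forces a uniform contraction $\|Df^N_{|E_1}\| \le e^{-1}$ for some $N\ge 1$. Hence the plaque family $\cW^s$ tangent to $E_1$, provided by Hirsch--Pugh--Shub, consists of genuine local stable manifolds, and $\cW^s$ and $\cW^{cu}$ together furnish a local product structure that foliates a neighborhood of $K$. Thickening $V^+$ by these stable plaques and using that $\dim(E_1)+\dim(E^c) = \dim M$, one produces a compact neighborhood $V$ of $K$ in $M$ (on the attracting side) homeomorphic to a closed ball, still satisfying $f(\overline V) \subset V$.

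Finally, by Brouwer's fixed-point theorem applied to the continuous self-map $f\colon V\to V$, there is a fixed point $p\in V$. Choosing $V$ small enough ensures that the only chain-recurrence class of $f$ contained in $V$ is $K$ itself, so $p$ must belong to $K$. The main obstacle is the topological construction of the trapping disk $V$ in the penultimate step: the set $K$ may have complicated topology, and the attracting half-tube $V^+$ is often one-sided, so that one side of $V$ inherits its trapping from $V^+$ while the other must be controlled using the stable contraction alone. A careful analysis of the local product structure of $\cW^s$ and $\cW^{cu}$, and (in subtypes like $(P_{SU})$, where the two sides of $K$ behave oppositely) a more delicate manifold-with-corners argument, is where the technical work concentrates.
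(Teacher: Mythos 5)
Your strategy --- convert the central attraction into a trapping region $V\subset M$ with $f(\overline V)\subset V$ and then apply a fixed-point theorem --- cannot work, because the reduction discards the hypothesis that actually forces periodic points. Compare with the product of an irrational rotation of the circle by a strong contraction of a disc: the invariant circle $K$ carries a dominated splitting $T_KM=E_1\oplus E^c$ with $E^c$ tangent to $K$, the bundle $E_1$ is uniformly contracted, and thickening $K$ by its stable plaques yields a compact solid torus $V$ with $f(\overline V)\subset V$, foliated by stable plaques and central curves --- yet $f$ has no periodic point. This example does not contradict the proposition (its central type is (R): the central dynamics is an isometry, so no band is strictly attracting), but it satisfies every property of $V$ that your final step invokes except the assertion that $V$ is a ball. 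That assertion is not a ``technical obstacle'': a small trapping neighbourhood of $K$ is never a ball when $K$ is a circle, a Cantor set, a solenoid or a Plykin-type attractor; Brouwer is then unavailable and the Lefschetz number of $f_{|V}$ can vanish (it does in the example above); and enlarging $V$ until it becomes a ball destroys any hope of locating the fixed point in $K$. A correct argument has to stay inside the one-dimensional central plaques and exploit the strictness of $\widehat f(\overline B)\subset B$ in the central model --- for instance through the maximal invariant set $\bigcap_{n\ge 0}\widehat f^{\,n}(\overline B)$, whose fibres are trapped central arcs on which order and intermediate-value arguments are available precisely because $\dim E^c=1$; this is the route taken in~\cite{model}, and it is not a variant of a global fixed-point theorem.

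Two intermediate steps are also unjustified. The claim that ``the attracting property keeps the central component bounded, so the central derivative cocycle cannot diverge'' is a passage from $C^0$ information (trapped arcs of bounded length) to $C^1$ information (a bound on $\|Df^n_{|E^c}\|$); in the $C^1$ category this requires a distortion control that is unavailable and that you do not supply, so the uniform contraction of $E_1$ is not established by this route (nor is it clear the proof needs it). And even granting a fixed point $p\in V$, nothing places it in $K$: the maximal invariant set of a small trapping neighbourhood of $K$ may strictly contain $K$, and $K$, being merely chain-transitive, need not be a whole chain-recurrence class, so the final localisation step does not close either.
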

En particulier si $f$ est un diff\'eomorphisme Kupka-Smale, le fibr\'e $E^c$
ne peut pas avoir les types (P) ou (N). S'il est de type (H)-attractif, l'ensemble $K$ est un puits.
\medskip

Si $f$ satisfait une condition de g\'en\'ericit\'e, le type (R) ne peut pas non plus appara\^\i tre\footnote{Voir~\cite[proposition 4.3]{model}~; l'\'enonc\'e suppose $E_1$ uniform\'ement contract\'e, mais la d\'emonstration ne l'utilise pas.}. On en d\'eduit la proposition suivante.

\begin{proposition}\label{p.codim-generique}
Si $f$ appartient \`a un G$_\delta$ dense de $\diff^1(M)$
et si $K$ est un ensemble compact invariant transitif par cha\^\i nes
muni d'une d\'ecomposition domin\'ee $T_KM=E_1\oplus E^c$ avec $\dim(E^c)=1$,
alors ou bien $K$ est un puits, ou bien $E^c$ est de type (H)-r\'epulsif.
\end{proposition}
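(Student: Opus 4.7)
The plan is to combine Proposition \ref{p.degenere} with the Kupka--Smale property and the generic non-occurrence of type (R) (the latter being \cite[proposition 4.3]{model} as referenced in the footnote). More precisely, I would first pick a $G_\delta$ dense set $\cG \subset \diff^1(M)$ contained in the intersection of the Kupka--Smale diffeomorphisms (Theorem~\ref{t.kupka-smale}) with the $G_\delta$ dense set of diffeomorphisms for which no chain-transitive set with a one-dimensional center bundle admits a central segment of type (R). Fix $f \in \cG$ and a chain-transitive compact invariant set $K$ with dominated splitting $T_K M = E_1 \oplus E^c$ and $\dim(E^c)=1$.

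The classification of central dynamics listed before the statement is exhaustive: any such $E^c$ must have at least one of the types (R), (N), (H)-attractive, (H)-repulsive, or (P). First, the genericity condition rules out type (R). Next, apply Proposition~\ref{p.degenere}: if $E^c$ were of type (N) or (P), then $K$ would contain a periodic point $p$ whose central multiplier would have modulus $1$ (the local central dynamics near $p$ inherits the neutral, resp.\ parabolic, structure on at least one side), contradicting the Kupka--Smale assumption that all periodic orbits are hyperbolic. This is precisely the elementary observation recorded after Proposition~\ref{p.degenere}. It remains to handle the (H)-attractive case: invoking the second sentence of the remark after Proposition~\ref{p.degenere} directly yields that $K$ is a sink. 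In all other cases, $E^c$ is of type (H)-repulsive, which is the desired alternative.

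The steps themselves are routine once the classification is set up; the only potentially delicate point is verifying that the five listed types cover every possibility for a chain-transitive $K$. This is guaranteed by the classification proposition stated earlier in the section (the four-case trichotomy for chain-transitive bases of central models, with (H) and (P) each split into their attractive/repulsive subcases): on a chain-transitive base, the existence or non-existence of a chain-recurrent segment and of attracting/repelling bands of neighborhoods forces the central model into exactly one of these combinatorial profiles. Thus the main obstacle is essentially bookkeeping, and the proof reduces to citing Proposition~\ref{p.degenere} (twice: to exclude (N), (P) via Kupka--Smale, and to identify (H)-attractive with the sink case) together with the generic exclusion of type (R).
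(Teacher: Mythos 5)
Your proposal is correct and follows essentially the same route as the paper: Proposition~\ref{p.degenere} plus Kupka--Smale excludes types (N) and (P) and identifies the (H)-attractive case with a sink, while the generic exclusion of type (R) (the cited \cite[proposition 4.3]{model}, whose proof does not use the uniform contraction of $E_1$) leaves only the (H)-repulsive alternative. The paper derives the proposition from exactly these two remarks, so your argument matches its proof.
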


Lorsque $f$ est de classe $C^2$, Pujals et Sambarino remplacent l'hypoth\`ese de g\'en\'ericit\'e par un argument ``\`a la Denjoy-Schwartz''.
En voici une re-formulation.

\begin{theoreme}[Pujals-Sambarino, section 3.3 de~\cite{pujals-sambarino} et th\'eor\`eme 3.1 de~\cite{pujals-sambarino-integrabilite}]\label{t.codim-c2}
Consid\'erons un ensemble compact invariant
muni d'une d\'ecomposition domin\'ee $T_KM=E_1\oplus E^c$ avec $\dim(E^c)=1$
et $\cW$ une famille de plaques localement invariante tangente \`a $E^c$.
Si $f$ est de classe $C^2$, l'un des cas suivants se produit.
\begin{enumerate}
\item[a)] $K$ contient un point p\'eriodique dont l'exposant selon $E^c$ est n\'egatif ou nul.
\item[b)] $K$ contient une courbe ferm\'ee $C$ tangente au fibr\'e $E^c$
et invariante par un it\'er\'e $f^n$, $n\geq 1$.
La dynamique sur la courbe est conjugu\'ee \`a une rotation irrationnelle.
\item[c)] La dynamique centrale de $K$ est stable au sens de Lyapunov
pour $\widehat f^{-1}$~:
\begin{itemize}
\item[--] pour tout $\delta>0$, il existe $\varepsilon>0$ tel que pour tout $x\in K$,
les it\'er\'es n\'egatifs du $\varepsilon$-voisinage de $x$ dans $\cW_x$ sont tous de taille
inf\'erieure \`a $\delta$~;
\item[--] pour tout $x\in K$, il existe $\eta(x)>0$ tel que le $\eta(x)$-voisinage de $x$ dans
$\cW_x$ est contenu dans l'ensemble instable de $x$.
\end{itemize}
\end{enumerate}
\end{theoreme}

\begin{remarque}
Lorsque $K$ est transitif par cha\^\i nes,
on peut compl\'eter le cas c) de la proposition pr\'ec\'edente
et garantir que le fibr\'e $E^c$ est de type (H)-r\'epulsif, comme
pour la proposition~\ref{p.codim-generique}.

En effet, la dynamique centrale ne peut poss\'eder de bandes attractives
contenues dans un voisinage arbitrairement petit de la section $O\subset E^c$
(puisque, d'apr\`es l'item c), chaque point de $K$ poss\`ede une vari\'et\'e instable tangente \`a $E^c$).

Il ne peut y avoir non plus de segment transitif par cha\^\i nes $\gamma$~:
d'apr\`es le th\'eor\`eme 3.1 de~\cite{pujals-sambarino-integrabilite},
quitte \`a remplacer $\gamma$ par son ensemble $\omega$-limite, $\gamma$ est une courbe
p\'eriodique bord\'ee par un point p\'eriodique hyperbolique $p\in K$
d'indice $d-1$ et un point p\'eriodique $q$ ayant un exposant n\'egatif ou nul selon $E^c$.
Il doit alors exister une pseudo-orbite de segments centraux $(\gamma_n)_{0\leq n\leq s}$
au-dessus d'une pseudo-orbite $(x_n)$ de $K$ v\'erifiant $\gamma_0=\gamma$
et $|\gamma_s|= 0$. Ceci implique que $K$ contient un point p\'eriodique ayant un
exposant n\'egatif ou nul selon $E^c$.
\end{remarque}

\section{Dynamique centrale r\'ecurrente par cha\^\i nes}
Lorsque $K$ est partiellement hyperbolique avec un fibr\'e central r\'ecurrent par cha\^\i nes,
on en d\'eduit facilement que certaines orbites p\'eriodiques sont contenues dans la classe de r\'ecurrence
par cha\^\i nes de $K$.

\begin{proposition}[section 3.3 de~\cite{palis-faible} et proposition 4.1 de~\cite{model}]\label{p.R}
Si $f$ appartient \`a un G$_\delta$ dense de $\diff^1(M)$
et si $K$ est un ensemble compact transitif par cha\^\i nes
ayant une d\'ecomposition domin\'ee $T_KM=E^s\oplus E^c\oplus E^u$
telle que
\begin{itemize}
\item[--] $E^s$ et $E^u$ sont uniform\'ement contract\'es par $f$ et
$f^{-1}$ respectivement,
\item[--] $E^c$ est de dimension $1$ et de type (R),
\end{itemize}
alors tout voisinage de $K$ contient une orbite p\'eriodique
appartenant \`a la m\^eme classe de r\'ecurrence par cha\^\i nes que $K$.

Plus pr\'ecis\'ement,
il existe un voisinage $U$ de $K$ tel que pour tout segment central $\gamma$ de $K$ ayant ses it\'er\'es contenus dans $U$,
pour tout point $z\in \gamma$ qui n'est pas une extr\'emit\'e
et pour toute orbite p\'eriodique hyperbolique $O\subset U$ ayant un point suffisamment proche de $z$,
\begin{itemize}
\item[--] l'orbite $O$ est contenue dans la classe de r\'ecurrence par cha\^\i nes de $K$~;
\item[--] il existe $g\in\diff^1(M)$ arbitrairement proche de $f$ tel que
les vari\'et\'es fortes $W^{ss}(O_{g})\setminus O_{g}$ et $W^{uu}(O_{g})\setminus O_{g}$ de la continuation de $O$ pour $g$ s'intersectent.
\end{itemize}
\end{proposition}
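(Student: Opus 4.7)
The plan is to handle the two bullets separately, the first being straightforward and the second being the delicate part.

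For the first bullet, I would argue that the chain-transitive set $\Lambda$ given by the type (R) property contains both the segment $\gamma$ and, since $\gamma \subset \cW_x$ contains the point $x \in K$, also a point of $K$ itself. For a diffeomorphism in the generic $G_\delta$ of Section~\ref{s.consequence}, weak transitivity and chain-transitivity coincide, and chain-transitive sets can be merged inside a common chain-recurrence class. Hence $K$ and $\Lambda$ (and therefore every interior point $z$ of $\gamma$) lie in a single chain-recurrence class. A periodic orbit $O \subset U$ with a point arbitrarily close to $z$ admits a two-jump chain $O \dashv z \dashv K$ and, symmetrically, $K \dashv z \dashv O$, so $O$ belongs to this same chain-recurrence class.

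For the second bullet, the plan is to exploit the one-dimensional chain-recurrent central dynamics together with the pseudo-orbit connecting lemma (Theorem~\ref{t.bc}). Since $z$ is interior to $\gamma$, there are points $y^+, y^-$ of $\gamma$ strictly on either side of $z$ in the central direction. For $g$ close to $f$, the orbit $O$ has a continuation $O_g$ with point $o_g$ close to $z$, and by robustness of the partially hyperbolic splitting, $O_g$ carries strong stable and strong unstable manifolds $W^{ss}(O_g)$, $W^{uu}(O_g)$ tangent respectively to the extensions of $E^s$ and $E^u$. Via local product structure of the strong foliations along a central-unstable plaque and a central-stable plaque of $O_g$, one can select points $\xi^+ \in W^{uu}(O_g)\setminus O_g$ and $\xi^- \in W^{ss}(O_g)\setminus O_g$ whose positions in the center direction correspond to $y^+$ and $y^-$ respectively; equivalently, forward iteration of $\xi^+$ approaches the central leaf through $o_g$ near $y^+$, and backward iteration of $\xi^-$ near $y^-$.

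Now the type (R) property provides, for every $\varepsilon>0$, an $\varepsilon$-pseudo-orbit inside $\Lambda$ joining any point near $y^+$ to any point near $y^-$ (since $\gamma \subset \Lambda$ is chain-transitive). Concatenating this with a forward piece of the orbit of $\xi^+$ and a backward piece of the orbit of $\xi^-$ produces an $\varepsilon$-pseudo-orbit from $\xi^+$ to $\xi^-$ avoiding $O_g$. Applying Theorem~\ref{t.bc} to this pseudo-orbit yields a perturbation $g \in \diff^1(M)$ arbitrarily close to $f$ for which $\xi^-$ is a forward iterate of $\xi^+$. Taking the perturbation to be supported away from $O$ and from the relevant pieces of strong manifolds (using the freedom in the connecting lemma's support) ensures that $\xi^+ \in W^{uu}(O_g)$ and $\xi^- \in W^{ss}(O_g)$ are preserved, giving the desired nontrivial intersection $W^{ss}(O_g) \cap W^{uu}(O_g) \setminus O_g \neq \emptyset$.

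The main obstacle I expect is the bookkeeping of the \emph{strong} manifold positions under perturbation: the pseudo-orbit connecting lemma easily produces an orbit in $W^s(O_g) \cap W^u(O_g)$, but the conclusion requires the stronger statement that this orbit avoids the center direction entirely, i.e. lands on $W^{ss}$ and $W^{uu}$. This forces a careful choice of $\xi^\pm$ on the strong leaves, and requires that the perturbation used in Theorem~\ref{t.bc} be supported in a neighborhood disjoint from $O_g$ and disjoint from neighborhoods of the pieces of the strong leaves we wish to preserve; the fact that $z$ is strictly interior to $\gamma$ (giving positive central room on both sides) is exactly what guarantees $\xi^\pm$ can be chosen off $O_g$ while still allowing the central pseudo-orbit through $y^\pm$ to close up.
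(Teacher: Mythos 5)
Your argument for the second bullet is essentially the paper's route (produce a point of $W^{uu}(O)\setminus O$ chain-attainable to a point of $W^{ss}(O)\setminus O$ through the chain-transitive set containing $\gamma$, then apply the pseudo-orbit connecting lemma with support away from $O$ and from the local strong leaves). But your proof of the first bullet has a genuine gap. The relation $\dashv$ requires $\varepsilon$-pseudo-orbits for \emph{every} $\varepsilon>0$, whereas your ``two-jump chain'' $O\dashv z\dashv K$ uses a single jump from a point $p\in O$ to $z$ of some fixed size $d(p,z)>0$: this is an $\varepsilon$-pseudo-orbit only for $\varepsilon>d(p,z)$, so proximity alone never puts $O$ in the chain class of $K$. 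The mechanism that actually closes the chain at all scales is the one the paper uses: the unions $\cD^{cu}=\bigcup_{x\in\gamma}W^{uu}_{loc}(x)$ and $\cD^{cs}=\bigcup_{x\in\gamma}W^{ss}_{loc}(x)$ are topological discs of dimensions $\dim(E^u)+1$ and $\dim(E^s)+1$; because $z$ is interior to $\gamma$, a transversality/dimension count forces $W^{ss}_{loc}(p)$ to cross $\cD^{cu}$ and $W^{uu}_{loc}(p)$ to cross $\cD^{cs}$. These intersection points are genuine orbits asymptotic in the past to some $x\in\gamma$ and in the future to $p$ (resp.\ the other way), which yields $x\dashv p$ and $p\dashv y$ for all $\varepsilon$, and hence the chain equivalence of $O$ and $K$.

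The same omission resurfaces in your second bullet: you invoke a ``local product structure of the strong foliations'' to place $\xi^{\pm}$ on $W^{uu}(O_g)$ and $W^{ss}(O_g)$ with prescribed central positions, but there is no invariant strong foliation on a neighborhood here; what you need is precisely the crossing of $W^{uu}_{loc}(p)$ with $\cD^{cs}$ (giving $\xi^+\in W^{uu}(p)\cap W^{ss}_{loc}(y^+)$, so that the forward orbit of $\xi^+$ genuinely converges to the orbit of $y^+\in\gamma$) and symmetrically for $\xi^-$. Once $\xi^{\pm}$ are chosen this way, your concatenation $\xi^+\dashv y^+\dashv y^-\dashv\xi^-$ is valid for all $\varepsilon$ and the rest of your argument (connecting lemma with controlled support, preserving $O$ and the local strong leaves through $\xi^{\pm}$) goes through as in the paper.
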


La d\'emonstration s'appuie sur l'id\'ee suivante~: l'union des vari\'et\'es stables et instables locales fortes
des points de $\gamma$ d\'efinissent des vari\'et\'es topologiques $\cD^{cs}, \cD^{cu}$ de dimension $\dim(E^s)+1$ et $\dim(E^u)+1$. Si $p\in O$ est proche de $z\in \gamma$, les vari\'et\'es $W^{ss}(p)$ et $W^{uu}(p)$
intersectent respectivement $\cD^{cu}$ et $\cD^{cs}$. Par cons\'equent $p$ appartient \`a la classe de r\'ecurrence par cha\^\i nes de $K$. Le lemme de fermeture pour les pseudo-orbites permet alors
de cr\'eer une intersection entre $W^{ss}(O)$ et $W^{uu}(O)$. Voir la figure~\ref{f.segment}.
\begin{figure}[ht]
\begin{center}
\input{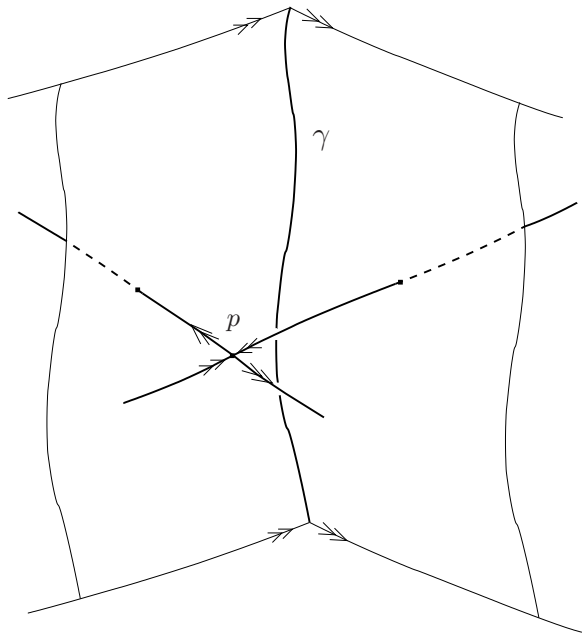}
\end{center}
\caption{Un point $p$ proche d'un segment central $\gamma$. \label{f.segment}}
\end{figure}
\bigskip

Parfois, le m\^eme argument s'applique sans que l'ensemble $K$ soit partiellement hyperbolique.
\begin{proposition}[proposition 4.2 de~\cite{model}]\label{p.R2}
Si $f$ appartient \`a un G$_\delta$ dense de $\diff^1(M)$
et si $H$ est une classe homocline
ayant une d\'ecomposition domin\'ee $T_KM=E^s\oplus E^c\oplus E_{3}$ telle que
\begin{itemize}
\item[--] $E^s$ est uniform\'ement contract\'ee,
\item[--] $E^c$ est de dimension $1$ et de type (R),
\item[--] $H$ contient des orbites p\'eriodiques dont l'exposant de Lyapunov le long de $E^c$ est arbitrairement proche de $0$,
\end{itemize}
la classe $H$ contient alors des points p\'eriodiques d'indice $\dim(E^s)$.
\end{proposition}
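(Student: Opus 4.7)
The plan is to adapt the argument of Proposition~\ref{p.R}, replacing the uniform expansion of the bundle $E^u$ by the existence of periodic orbits with central exponent arbitrarily close to $0$, exploited through a Pliss-type selection argument on the bundle $E_3$.

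We may assume $f$ belongs to a $G_\delta$ dense $\cG \subset \diff^1(M)$ for which $f$ is Kupka--Smale, every classe de r\'ecurrence par cha\^\i nes contenant une orbite p\'eriodique est une classe homocline, the pseudo-orbit connecting lemma and Theorem~\ref{t.barycentre} apply inside $H = H(p)$. Let $N\geq 1$ be such that $E^s \oplus E^c \oplus E_3$ is $N$-dominated and let $(O_n)\subset H$ be the given sequence, with central Lyapunov exponents $\lambda_n \to 0$. By $N$-domination applied across $E^c$ and across the period, whenever $|\lambda_n|$ is small enough, the exponents of $O_n$ along $E_3$ are all $\geq \lambda_n + 1/N > 0$. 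Consequently, if some $\lambda_n>0$ then $O_n$ is already hyperbolic of index $\dim(E^s)$ and we are done; otherwise we may assume $\lambda_n < 0$ for all large $n$, in which case $O_n$ has index $\dim(E^s)+1$, with stable tangent to $E^s\oplus E^c$ and strong unstable tangent to $E_3$. Because $E^c$ is of type (R), there is a central segment $\gamma \subset \cW_x$, with iterates of bounded length contained in a small neighborhood of $H$, and an interior point $z\in\gamma$; the uniform contraction of $E^s$ allows to form the topological submanifold
\[
 \cD^{cs}(\gamma) \;=\; \bigcup_{y\in \gamma} W^{ss}_{\mathrm{loc}}(y)
\]
of dimension $\dim(E^s)+1$ passing through $z$.

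The central step will be to produce a periodic orbit $\widetilde O_n\subset H$ having two simultaneous properties: a point very close to $z$, and a uniform-size strong unstable plaque tangent to $E_3$. I would obtain this by applying the barycenter property (Theorem~\ref{t.barycentre}) to $O_n$ and to a periodic orbit $p'\in H$ having an iterate arbitrarily close to $z$: for any $\rho>0$ small, there is a periodic orbit $\widetilde O_n$, homoclinically related to $p$, spending a proportion $1-\rho$ of its period close to $O_n$ (so its central exponent remains close to $\lambda_n$, hence in $(-1/(2N),0)$), and a proportion $\rho$ close to $p'$, hence passing through a prescribed small neighborhood of $z$. Here the main technical point will be to apply the Pliss lemma to the cocycle $\prod \|Df^{-N}_{|E_3}\|$ along $\widetilde O_n$: since the expansion of $E_3$ per block of $N$ iterates is uniformly bounded below by $e^{1/N}$ (by the domination estimate above), a positive proportion of points $q\in \widetilde O_n$ are $N$-uniformly hyperbolic along $E_3$ in the past, hence possess a strong unstable plaque of size bounded below by a constant $\delta_0>0$ independent of $n$ and of $\rho$. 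By a standard counting argument on the proportions, we may arrange the visit of $\widetilde O_n$ near $z$ to occur precisely at such a uniformly hyperbolic iterate $q_n$.

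The conclusion then follows the strategy of Proposition~\ref{p.R}. The plaque $W^{uu}_{\mathrm{loc}}(q_n)$, of size $\geq \delta_0$, intersects $\cD^{cs}(\gamma)$ (for $n$ large, $\widetilde O_n$ is sufficiently close to $z$), which places $q_n$ in the classe de r\'ecurrence par cha\^\i nes of $\gamma$, i.e., in $H$. Moreover this intersection, combined with the fact that $z$ belongs to the instable set par cha\^\i nes of $H$ by the (R) property, furnishes the data needed to create a strong homoclinic connection at $q_n$ by the pseudo-orbit connecting lemma, and then, by Gourmelon's refinement of Franks' lemma (cf.\ Theorem~\ref{t.dichotomie-tangence}), to tilt the central exponent of $q_n$ from slightly negative to slightly positive while keeping it in the continuation of $H$. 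The resulting orbit is hyperbolic of index $\dim(E^s)$ and homoclinically related to the continuation of $p$. The hard part will be this last perturbation argument, since one must keep control simultaneously of the class $H$, the sign of the central exponent, and the strong intersections created; once carried out, a standard $C^1$-generic density argument (using the lower semicontinuity of the set of periodic points and the continuation of homoclinic classes) transfers the index-$\dim(E^s)$ orbit back to $f$ itself, inside $H$.
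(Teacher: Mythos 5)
Votre strat\'egie globale est bien celle que le texte sugg\`ere (adapter la proposition~\ref{p.R} en rempla\c{c}ant la dilatation uniforme de $E^u$ par des vari\'et\'es instables fortes de taille uniforme obtenues, via Pliss, sur les orbites p\'eriodiques \`a exposant central faible, puis cr\'eer une connexion forte et faire bifurquer l'indice). La r\'eduction initiale (si $\lambda_n>0$ on conclut~; sinon l'indice de $O_n$ vaut $\dim(E^s)+1$ et les exposants le long de $E_3$ sont minor\'es par $\lambda_n+1/N$) est correcte.

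L'\'etape centrale comporte en revanche une lacune r\'eelle. La domination ne donne \emph{pas} que ``l'expansion de $E_3$ par bloc de $N$ it\'er\'es est uniform\'ement minor\'ee par $e^{1/N}$''~: elle ne minore que le produit \emph{\`a la p\'eriode}. C'est pr\'ecis\'ement pour cela qu'il faut le lemme de Pliss, et celui-ci ne fournit qu'une proportion $\rho_0>0$ (petite, d\'etermin\'ee par les bornes sur $\|Df^{\pm N}\|$) d'instants $N$-hyperboliques dans le pass\'e le long de $E_3$. Votre ``argument de comptage'' \'echoue alors~: l'ensemble des instants de Pliss (densit\'e $\rho_0$) et l'ensemble des instants de visite pr\`es de $z$ (densit\'e $\rho$, que vous devez prendre petite pour contr\^oler l'exposant central) n'ont aucune raison de se rencontrer lorsque $\rho_0+\rho<1$~; rien ne place un point de Pliss de $\widetilde O_n$ pendant l'excursion pr\`es de $z$. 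De plus, la propri\'et\'e du barycentre (th\'eor\`eme~\ref{t.barycentre}) ne fournit ni des plages de pistage \emph{cons\'ecutives}, ni le contr\^ole du point de $O_n$ que l'on piste aux instants de transition. Le rem\`ede demande la propri\'et\'e de sp\'ecification des orbites homocliniquement reli\'ees~: on impose que la longue plage qui piste $O_n$ se termine en un point de Pliss \emph{de $O_n$} imm\'ediatement avant la transition (de longueur born\'ee $T$) vers le voisinage de $z$~; le point de visite h\'erite alors d'un disque instable tangent \`a $E_3$ de taille $\geq \delta_0 e^{-CT}$, encore uniforme, ce qui suffit pour couper $\cD^{cs}(\gamma)$ d\`es que la visite est assez proche de $z$.

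Deux remarques secondaires~: la bascule finale de l'exposant central doit passer par la connexion forte et le lemme~\ref{l.connexion-forte} (cr\'eation d'un cycle h\'et\'erodimensionnel, donc d'orbites d'indice $\dim(E^s)$ dans la classe apr\`es perturbation), puis par l'argument g\'en\'erique standard~; le th\'eor\`eme~\ref{t.dichotomie-tangence} que vous invoquez concerne les tangences et n'est pas l'outil ad\'equat. Enfin l'orbite obtenue, d'indice $\dim(E^s)$, ne peut pas \^etre ``homocliniquement reli\'ee \`a $p$'' si l'indice de $p$ vaut $\dim(E^s)+1$~: elle appartient seulement \`a la m\^eme classe de r\'ecurrence par cha\^\i nes, ce qui suffit pour conclure g\'en\'eriquement.
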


\section{Le type hyperbolique par cha\^\i nes et situations similaires}
\subsection{Dynamique centrale hyperbolique par cha\^\i nes}
Comme dans le cas d'une dynamique hyperbolique, les ensembles
partiellement hyperboliques ayant un fibr\'e central
hyperboliques par cha\^\i nes sont contenus dans une classe homocline.

\begin{proposition}[section 3.4 de~\cite{palis-faible} et proposition 4.4 de~\cite{model}]\label{p.H}
Si $f$ appartient \`a un G$_\delta$ dense de $\diff^1(M)$
et si $K$ est un ensemble compact transitif par cha\^\i nes
ayant une d\'ecomposition domin\'ee $T_KM=E^s\oplus E^c\oplus E^u$
telle que
\begin{itemize}
\item[--] $E^s$ et $E^u$ sont uniform\'ement contract\'es par $f$ et
$f^{-1}$ respectivement,
\item[--] $E^c$ est de dimension $1$ et de type (H)-attractif,
\end{itemize}
il existe alors, dans tout voisinage $U$ de $K$,
une orbite p\'eriodique
d'indice $\dim(E^s)+1$ contenue dans la classe de r\'ecurrence par cha\^\i nes
de $K$.
\end{proposition}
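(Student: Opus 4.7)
The plan is to exploit the (H)-attractive type of $E^c$ to construct a plaque family $\cW^{cs}$ tangent to $E^{cs}:=E^s\oplus E^c$ that is trapped by $f$ and whose plaques lie in the chain-stable set of $K$. The (H)-attractive hypothesis provides a basis of attractive neighborhoods of the $0$-section in $E^c$ whose $\cW$-images are contained in $pW^s(K)$; this amounts to a trapping of intervals in the one-dimensional center direction. Combining this with the uniform contraction of $E^s$ via a standard graph-transform argument (in the spirit of Theorem~\ref{t.hps} and the constructions of Section~\ref{s.chain-hyp}), I would obtain at each $x\in K$ a plaque $\cW^{cs}_x$ tangent to $E^{cs}_x$ satisfying $f(\overline{\cW^{cs}_x})\subset \cW^{cs}_{f(x)}$ and $\cW^{cs}_x\subset pW^s(x)$. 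Since $E^u$ is uniformly dilated, one dually obtains unstable manifolds of dimension $\dim(E^u)$ at every point of $K$.

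Next, I would apply Corollary~\ref{c.global}: for a generic $f$, the chain-transitive set $K$ is the Hausdorff limit of a sequence of periodic orbits $(O_n)$, and for $n$ large we have $O_n\subset U$. The chain transitivity of $K$ together with the Hausdorff convergence of $O_n$ to $K$ ensures that $O_n$ lies in the chain recurrence class of $K$: any $p\in O_n$ is $\varepsilon$-close to some $k\in K$, and the relations $p\dashv k\dashv k'\dashv k\dashv p$ hold for every $k'\in K$ by chain transitivity together with a single $\varepsilon$-step between $p$ and $k$.

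To obtain the index, I would use that the dominated splitting $E^s\oplus E^c\oplus E^u$ extends to a neighborhood of $K$ (Section~\ref{s.domination}), hence to $O_n$ for large $n$, and that the plaque family $\cW^{cs}$ also extends continuously while remaining trapped. By Kupka-Smale genericity (Theorem~\ref{t.kupka-smale}) we may assume $O_n$ is hyperbolic. The trapping property $f(\overline{\cW^{cs}_p})\subset \cW^{cs}_{f(p)}$ forces $\cW^{cs}_p\subset W^s(p)$ for each $p\in O_n$, so the stable manifold of $p$ has dimension at least $\dim(E^{cs})=\dim(E^s)+1$. The uniform dilation of $E^u$ gives an unstable manifold of dimension $\dim(E^u)$, so the stable dimension of $p$ equals exactly $\dim(E^s)+1$.

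The hard part will be the first step: producing the $E^{cs}$ plaque family that is simultaneously trapped and contained in $pW^s(K)$. The (H)-attractive hypothesis only directly trapps intervals in the one-dimensional bundle $E^c$; promoting this to a finely trapped plaque family tangent to $E^{cs}$ requires the delicate combination of graph-transform for the uniformly contracted $E^s$ with the chain-topological contraction in $E^c$. Once this plaque family is in hand, the remaining arguments reduce to standard perturbation techniques and the generic approximation of chain-transitive sets by periodic orbits proved in Corollary~\ref{c.global}.
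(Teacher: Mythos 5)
There are two genuine gaps in your proposal, and the first one is fatal to the route you chose.

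First, your claim that the Hausdorff convergence of the periodic orbits $O_n$ towards $K$, combined with the chain transitivity of $K$, places $O_n$ in the chain recurrence class of $K$ is false. The relation $p\dashv k$ requires an $\varepsilon$-pseudo-orbit from $p$ to $k$ for \emph{every} $\varepsilon>0$; for a fixed $n$ the orbit $O_n$ sits at a fixed positive Hausdorff distance from $K$, and your ``single $\varepsilon$-step between $p$ and $k$'' only exists for $\varepsilon$ larger than that distance. If this argument worked, the corollary~\ref{c.global} would immediately put periodic orbits inside the chain class of \emph{every} chain-transitive set, with no partial hyperbolicity and no hypothesis on the type of $E^c$ --- contradicting for instance the existence of aperiodic classes. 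Establishing this membership is precisely the content of the proposition, and it is exactly where the (H)-attractive hypothesis is used. The actual argument builds genuine dynamical connections: for $x\in K$ close to a periodic point $p\in O_n$, the strong unstable plaque $W^{uu}_{loc}(x)$ meets $\cW^{cs}_p$ in a point $z$; since the trapped plaque $\cW^{cs}_p$ is contained in the stable set of a periodic point $q$ lying in that plaque, this yields $K\dashv q$, and symmetrically $W^{uu}_{loc}(q)$ meets $\cW^{cs}_x\subset pW^s(K)$, yielding $q\dashv K$.

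Second, the trapping property $f(\overline{\cW^{cs}_p})\subset\cW^{cs}_{f(p)}$ does \emph{not} force $\cW^{cs}_p\subset W^s(p)$. It only forces $\cW^{cs}_p$ to be contained in the stable set of the maximal invariant set of the plaque under $f^\tau$ ($\tau$ the period of $p$); in the one-dimensional center direction that invariant set is a finite union of periodic points among which $p$ itself may be center-repelling, hence of index $\dim(E^s)$. So the orbit $O_n$ need not have the required index. The periodic orbit of index $\dim(E^s)+1$ has in general to be found \emph{inside} the center-stable plaque of $p$ and is distinct from $O_n$: one first takes the periodic point $q_0$ whose stable set contains $z$, and if its index is only $\dim(E^s)$ one follows its one-dimensional unstable center branch to a periodic point $q$ of index $\dim(E^s)+1$. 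Your construction of the trapped plaque family $\cW^{cs}$ and the final dimension count via the uniform dilation of $E^u$ are correct, but without these two steps the proof does not go through.
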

\begin{proof}[Id\'ee de la d\'emonstration]
Consid\'erons une suite d'orbites p\'eriodiques $(O_n)$ qui s'accumule sur une partie de $K$.
Puisque $E^c$ est de type (H)-attractif, il existe une famille de plaques $\cW^{cs}$
tangentes \`a $E^s\oplus E^c$ qui sont pi\'eg\'ees et contenues dans l'ensemble stable par cha\^\i nes de $K$.
La famille s'\'etend aux points des orbites $O_n$ (pour $n$ suffisamment grand).

Consid\'erons un point $x\in K$ proche d'un point p\'eriodique $p$ appartenant \`a une orbite $O_n$.
La plaque $W^{uu}_{loc}(x)$ intersecte $\cW^{cs}_p$ en un point $z$. Puisque la famille $\cW^{cs}$ est pi\'eg\'ee
et puisque $p$ est p\'eriodique, $z$ appartient \`a l'ensemble stable d'une orbite p\'eriodique, $q_0$.
Si l'indice de $q_0$ est \'egal \`a $\dim(E^s)+1$, on d\'efinit $q=q_0$. Sinon, $W^u(q_0)$ intersecte
l'ensemble stable d'un autre point p\'eriodique $q\in \cW^{cs}_p$, d'indice $\dim(E^s)+1$.
Dans tous les cas, $z$ appartient \`a l'ensemble stable par cha\^\i nes de $q$.
On remarque que puisque $\cW^{cs}$ est pi\'eg\'ee, $q$ appartient aussi \`a $f(\overline{\cW^{cs}_{f^{-1}(p)}})$.
Puisque $p$ et $x$ sont proches, on en d\'eduit que $W^{uu}_{loc}(q)$ intersecte $\cW^{cs}_x$ et en particulier l'ensemble
stable par cha\^\i nes de $K$. Par cons\'equent $q$ et $K$ ont la m\^eme classe de r\'ecurrence par cha\^\i nes.
\end{proof}

\subsection{Ensembles non vrill\'es}
Dans le cas parabolique, l'ensemble $K$ est ``hyperbolique par cha\^\i nes
d'un c\^ot\'e''. On peut appliquer le m\^eme raisonnement lorsque
la dynamique de $K$ ``voit'' l'hyperbolicit\'e par cha\^\i nes dans la direction
centrale~: c'est le cas sauf si la g\'eom\'etrie de $K$ est ``vrill\'ee''.
\medskip

Consid\'erons un ensemble compact invariant $K$
muni d'une structure partiellement hyperbolique $E^s\oplus E^c \oplus E^u$,
telle que $E^s,E^u$ ne soient pas d\'eg\'en\'er\'es et telle que $\dim(E^c)=1$.

On peut prolonger contin\^ument le fibr\'e $E^c$ sur un voisinage de $K$.
Si l'on consid\`ere une boule $B\subset M$ suffisamment petite
rencontrant $K$, le fibr\'e $E^c_{|B}$ est orientable.
Deux points distincts et proches $p,q\in K$ sont en \emph{position vrill\'ee}
si l'on peut joindre $W^{ss}_{loc}(p)$ \`a $W^{uu}_{loc}(q)$
et $W^{ss}_{loc}(q)$ \`a $W^{uu}_{loc}(p)$ par deux courbes
tangentes \`a $E^c$ ayant la m\^eme orientation.
(En particulier, lorsque $W^{ss}_{loc}(p)$ et $W^{uu}_{loc}(q)$
s'intersectent, $p$ et $q$ sont en position vrill\'ee.)
Voir la figure~\ref{f.vrille}.
Cette notion d\'epend du choix d'un prolongement $E^c$, mais
est bien d\'efinie lorsque la distance $d(p,q)$ tend vers $0$.
\begin{figure}[ht]
\begin{center}
\input{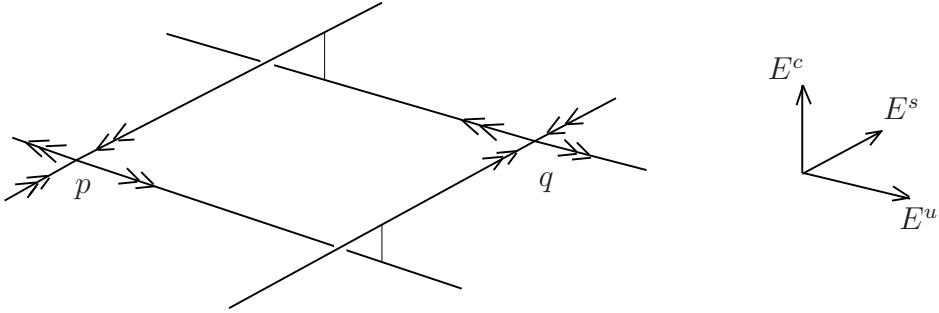}
\end{center}
\caption{Deux points en position vrill\'ee. \label{f.vrille}}
\end{figure}

\begin{definition}
L'ensemble $K$ est \emph{vrill\'e}\index{ensemble! vrill\'e} s'il existe $\varepsilon>0$
tel que toute paire de points $p,q\in K$ v\'erifiant $d(p,q)<\varepsilon$
est en position vrill\'ee.
\end{definition}

La d\'emonstration de la proposition~\ref{p.H} entra\^\i ne le r\'esultat suivant.
\begin{proposition}[section 3.4 de~\cite{palis-faible} et proposition 4.4 de~\cite{model}]\label{p.non-vrillee}
Si $f$ appartient \`a un G$_\delta$ dense de $\diff^1(M)$
et si $K$ est un ensemble compact transitif par cha\^\i nes
ayant une d\'ecomposition domin\'ee $T_KM=E^s\oplus E^c\oplus E^u$
telle que
\begin{itemize}
\item[--] $E^s$ et $E^u$ sont uniform\'ement contract\'es par $f$ et
$f^{-1}$ respectivement,
\item[--] $E^c$ est de dimension $1$ et de type (P$_{SU}$) ou (P$_{SN}$),
\item[--] $K$ n'est pas vrill\'e,
\end{itemize}
il existe alors,
dans tout voisinage $U$ de $K$, une orbite p\'eriodique
d'indice $\dim(E^s)+1$ contenue dans la classe de r\'ecurrence par cha\^\i nes
de $K$.
\end{proposition}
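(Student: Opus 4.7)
The plan is to adapt the argument used for Proposition~\ref{p.H} to the present asymmetric situation. The key observation is that types (P$_{SU}$) and (P$_{SN}$) both furnish a \emph{pieged} family of one-sided central-stable half-plaques, and the non-vrill\'e hypothesis ensures that the transverse intersections needed in the proof of Proposition~\ref{p.H} still take place on the appropriate side of each point of $K$.

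First, after possibly reversing the orientation on $E^c$, the hypothesis on the type of $E^c$ yields a continuous family of half-plaques $(\cW^{cs,+}_x)_{x\in K}$ tangent to $E^s\oplus E^{c,+}$, pieged by $f$ (from the $\widehat f$-attractive basis of neighborhoods of $0\subset E^{c,+}$), and whose interiors lie in the chain-stable set $pW^s(K)$. This family extends continuously to a small neighborhood $U$ of $K$, hence to any periodic orbit contained in $U$. In the (P$_{SU}$) case one has symmetrically a pieged family $(\cW^{cu,-}_x)_{x\in K}$ tangent to $E^{c,-}\oplus E^u$ with interior in $pW^u(K)$.

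Next, as in the proof of Proposition~\ref{p.H}, I would pick a periodic orbit $O\subset U$ accumulating on some point $x_0\in K$ (provided by genericity and the ergodic closing lemma, Theorem~\ref{t.closing-ergodique}), then take a periodic point $p\in O$ very close to a point $x\in K$ and consider $W^{uu}_{loc}(x)$. Its dimension $\dim(E^u)$ is complementary to $\dim(\cW^{cs,+}_p)=\dim(E^s)+1$, and it is transverse to $E^s\oplus E^c$ at $x$. Because $K$ is not vrill\'e and $p,x$ lie at distance less than the constant $\varepsilon$ of the non-vrill\'e condition, the chosen orientation is coherent at the pair $(x,p)$: this forces $W^{uu}_{loc}(x)$ to intersect $\cW^{cs,+}_p$ (rather than the non-pieged half $\cW^{cs,-}_p$) at a point $z$. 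Since $\cW^{cs,+}$ is pieged by $f$ and $p$ is periodic, $z$ lies in the stable set of some periodic point $q\in\cW^{cs,+}_p$, and by a standard perturbative/Pliss argument (using that $f$ is Kupka--Smale and Proposition~\ref{p.pliss-periodique}) one can ensure that $q$ has index $\dim(E^s)+1$ and that its chain recurrence class coincides with $H(q)$.

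It remains to close the cycle, showing that $q$ lies in the chain recurrence class of $K$. In the (P$_{SU}$) case this is entirely symmetric to the previous step: using the pieged family $\cW^{cu,-}$, a backward iterate of $W^{uu}_{loc}(q)$ meets $\cW^{cs,+}_x\subset pW^s(K)$, hence $q\dashv K$; combined with $K\dashv q$ (from $z\in pW^s(K)$) this gives $q\in$ chain class of $K$. In the (P$_{SN}$) case the minus side is only neutral, not pieged, so this last step must be replaced by an appeal to the pseudo-orbit connecting lemma (Theorem~\ref{t.bc}) together with the generic equality $pW^u(q)=\overline{W^u(q)}$ from Section~\ref{s.consequence}. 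The hardest part of the argument is precisely this bookkeeping of sides: one must verify that the coherent orientation provided by the non-vrill\'e hypothesis is compatible with the orientation singled out by the type of $E^c$, so that each transverse intersection $W^{uu}_{loc}\cap\cW^{cs,+}$ actually falls in the pieged half and not in the wild half where no control on the chain-stable set is available; the additional asymmetry in the (P$_{SN}$) case makes this the delicate point of the proof.
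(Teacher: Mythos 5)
Your overall strategy is the one the memoir intends: the text gives no separate proof here and simply asserts that the proof of Proposition~\ref{p.H} carries over, the types (P$_{SU}$) and (P$_{SN}$) supplying a one-sided trapped family of half-plaques and the non-twisted hypothesis controlling on which side of the plaques the strong-unstable manifolds arrive. You also correctly single out the side-bookkeeping as the delicate point. But your sketch has a genuine gap exactly there, for two reasons.

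First, you use the hypothesis backwards. ``$K$ n'est pas vrill\'e'' is the negation of ``il existe $\varepsilon>0$ tel que toute paire \`a distance $<\varepsilon$ est en position vrill\'ee''; it therefore provides no constant $\varepsilon$ below which every pair is coherently oriented, only the existence, at every scale, of at least one non-twisted pair of points of $K$. The argument must start from such a specific pair $(x,y)\subset K$ and transfer its geometry to a nearby periodic point of an approximating orbit by continuity of the strong manifolds; it cannot be run for an arbitrary pair (point of $K$, periodic point $p$) as you do, and the non-twisted condition is not even stated for pairs involving points outside $K$.

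Second, and more seriously: for a non-twisted pair the two reciprocal intersections land on \emph{opposite} central sides. In the local product picture, if $y$ sits at central height $\gamma>0$ above $x$, then $W^{uu}_{loc}(y)$ meets $\cW^{cs,+}_x$ while $W^{uu}_{loc}(x)$ meets the non-trapped half $\cW^{cs,-}_y$. Your argument needs a ``$+$-side'' intersection twice: once to produce $z\in W^{uu}_{loc}(\cdot)\cap\cW^{cs,+}_p\subset W^s(q)$, which gives $K\dashv q$ (note in passing that this is the correct justification, not ``$z\in pW^s(K)$'' as you write), and once more to send $W^{uu}_{loc}(q)$ into some $\cW^{cs,+}_{x'}\subset pW^s(K)$, which gives $q\dashv K$. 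You assert both, but they cannot both follow from the same naive orientation argument applied to one pair. The missing content is the tracking of actual central heights: taking $p$ periodic close to the \emph{lower} point $x$ of the non-twisted pair, the intersection $z=W^{uu}_{loc}(y)\cap\cW^{cs,+}_p$ sits at a definite height close to $\gamma>0$ above $p$, the periodic point $q$ attracting $z$ inside the trapped half-plaque lies strictly above $p$ (unless it belongs to the orbit of $p$, a case to handle separately), hence strictly above the nearby points of $K$, so that $W^{uu}_{loc}(q)$ does cross the trapped halves of their plaques. Finally, the index of $q$ is obtained from the one-dimensional trapped central dynamics inside the plaques of the periodic orbit, exactly as in the proof of Proposition~\ref{p.H}, not from Pliss's lemma.
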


\subsection{Dynamique centrale pi\'eg\'ee}
Nous avons vu que lorsque le fibr\'e central de $K$
est hyperbolique par cha\^\i nes, l'ensemble $K$ est contenu dans une classe
homocline. Des hypoth\`eses plus faibles entra\^\i nent par le m\^eme argument l'existence
d'une classe homocline non triviale au voisinage de $K$.

\begin{proposition}\label{p.piege}
Si $f$ appartient \`a un G$_\delta$ dense de $\diff^1(M)$
et si $K$ est un ensemble compact transitif par cha\^\i nes
ayant une d\'ecomposition domin\'ee $T_KM=E^s\oplus E^c\oplus E^u$
telle que
\begin{itemize}
\item[--] $E^s$ et $E^u$ sont uniform\'ement contract\'es par $f$ et
$f^{-1}$ respectivement,
\item[--] $E^c$ est de dimension $1$ et de type (H)-attractif,
(P$_{SN}$) ou (N),
\item[--] $K$ n'est pas une orbite p\'eriodique,
\end{itemize}
il existe alors,
dans tout voisinage $U$ de $K$,
\begin{itemize}
\item[--]
une orbite p\'eriodique $O$
d'indice $\dim(E^s)+1$ dont la classe homocline est non triviale,
\item[--] un point $z\in K$,
\end{itemize}
tels que $W^{uu}(z)$ intersecte $W^s(O)$.
\end{proposition}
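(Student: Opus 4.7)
The strategy parallels Proposition~\ref{p.H}, which handles the (H)-attractive case, but with a weaker conclusion adapted to types (P$_{SN}$) and (N). In each of the three types, $E^c$ comes with a family of plaques $\cW^{cs}$ tangent to $E^s \oplus E^c$---on one side of the zero-section in type (P$_{SN}$), on both sides in the other two---that is trapped by $f$, and the associated basis of arbitrarily small attractive neighborhoods ensures that the (one-sided) plaques lie in the chain-stable set $pW^s(K)$ of $K$.

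I would first extract approximating periodic orbits. Since $K$ is chain-transitive but not reduced to a single periodic orbit, the generic identity $\cR(f) = \overline{\per(f)}$ (Section~\ref{s.consequence}) together with Theorem~\ref{t.global} provides a sequence of hyperbolic periodic orbits $O_n \subset U$ converging to $K$ in the Hausdorff topology. By the semi-continuity of plaque families (Theorem~\ref{t.hps}), the trapped plaque family extends to the $O_n$'s for $n$ large.

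I would then produce $O$ by analyzing $f^{\tau_n} \colon \cW^{cs}_{p_n} \to \cW^{cs}_{p_n}$ for $p_n \in O_n$ of period $\tau_n$ (restricted to the one-sided plaque in type (P$_{SN}$)). If $O_n$ already has index $\dim(E^s)+1$, take $O = O_n$. Otherwise $p_n$ is a saddle inside the plaque with $E^c$ expanding, and the trapping forces a second periodic point of $f^{\tau_n}$ in the plaque via a Lefschetz/Brouwer-type argument; after a generic Kupka-Smale perturbation this yields a hyperbolic periodic orbit $O \subset \cW^{cs}_{p_n}$ of index exactly $\dim(E^s)+1$. In particular $\cW^{cs}_{p_n}$ lies in the basin of $O$ under $f^{\tau_n}$, hence in $W^s(O)$. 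Choosing $x \in K$ close to $p_n$ (and, in type (P$_{SN}$), on the $E^{c,+}$ side of $p_n$---possible because $K$ accumulates $p_n$ from both sides, by the non-periodicity of $K$), the domination of $E^u$ over $E^s \oplus E^c$ forces $W^{uu}_{loc}(x)$ to meet $\cW^{cs}_{p_n} \subset W^s(O)$ transversally at a single point near $p_n$; setting $z = x$ yields $W^{uu}(z) \cap W^s(O) \neq \emptyset$.

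Finally, non-triviality of $H(O)$ follows from $O \subset \cW^{cs}_{p_n} \subset pW^s(K)$, which gives $O \dashv K$, combined with $K \dashv O$ read off from the intersection $W^{uu}(z) \cap W^s(O)$: thus $O$ and $K$ lie in the same chain-recurrence class, and the generic identification of chain classes with homoclinic classes (Section~\ref{s.consequence}) gives $K \subset H(O)$, so $H(O)$ is non-trivial as $K$ is not a single periodic orbit. The main obstacle is the fixed-point step producing $O$ of index exactly $\dim(E^s)+1$ inside a (possibly one-sided) trapped plaque when $O_n$ has index $\dim(E^s)$: the key is that the Lefschetz number of $f^{\tau_n}$ on the trapped disk equals $1$ while $p_n$ contributes local index $-1$, forcing further fixed points whose hyperbolic perturbations include an attractor in the $E^c$-direction.
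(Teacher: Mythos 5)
Your overall strategy is the one the paper intends: it states that Proposition~\ref{p.piege} follows \emph{par le m\^eme argument} que la proposition~\ref{p.H} (plaques pi\'eg\'ees tangentes \`a $E^s\oplus E^c$, \'etendues au-dessus d'orbites p\'eriodiques $O_n$ proches, intersect\'ees avec les feuilles instables fortes des points de $K$). Deux \'etapes de votre r\'edaction ne tiennent cependant pas.

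D'abord, la d\'eduction $W^{uu}_{loc}(x)\cap\cW^{cs}_{p_n}\subset W^s(O)$ repose sur l'affirmation que la plaque $\cW^{cs}_{p_n}$ tout enti\`ere est dans le bassin de la seule orbite $O$ que vous avez produite. C'est faux pr\'ecis\'ement dans le cas que vous traitez~: lorsque $p_n$ est centralement dilat\'e, la plaque contient au moins deux orbites p\'eriodiques, et les bassins des orbites centralement contract\'ees sont s\'epar\'es par les vari\'et\'es stables (de dimension $\dim E^s$) des orbites centralement dilat\'ees~; le point $z'=W^{uu}_{loc}(x)\cap\cW^{cs}_{p_n}$ peut donc tomber dans le bassin d'une autre orbite, ou sur l'une de ces s\'eparatrices. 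L'ordre correct des quantificateurs est celui de la preuve de la proposition~\ref{p.H}~: on localise d'abord $z'$, puis on suit son orbite future dans les plaques pi\'eg\'ees de $O_n$~; la dynamique centrale unidimensionnelle force la convergence vers un point p\'eriodique qui, \'etant attractif d'un c\^ot\'e et hyperbolique (Kupka--Smale), est centralement contract\'e, donc d'indice $\dim(E^s)+1$ --- sauf si $z'$ est exactement sur la vari\'et\'e stable d'une orbite centralement dilat\'ee, cas d\'eg\'en\'er\'e qu'il faut encore \'ecarter (en d\'epla\c{c}ant $x$ dans $K$, qui est infini). Par ailleurs votre comptage de Lefschetz doit porter sur $f^{2\tau_n}$~: un point fixe centralement dilat\'e mais renversant l'orientation de $E^c$ a aussi indice local $+1$ dans le disque, de sorte que le comptage seul ne fournit pas d'orbite centralement contract\'ee~; et on ne peut pas invoquer une \og perturbation Kupka--Smale \fg{} de $f$, l'\'enonc\'e portant sur le $f$ g\'en\'erique donn\'e.

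Ensuite, et c'est plus grave, votre preuve de la non-trivialit\'e de $H(O)$ utilise l'inclusion $\cW^{cs}_{p_n}\subset pW^s(K)$ pour obtenir $O\dashv K$. Cette inclusion est vraie pour le type (H)-attractif (et pour le c\^ot\'e positif en type (P$_{SN}$)) mais est fausse par d\'efinition pour le type (N), o\`u l'ensemble stable par cha\^\i nes de la section nulle dans le mod\`ele central est trivial. C'est exactement la raison pour laquelle la proposition~\ref{p.piege}, contrairement \`a la proposition~\ref{p.H}, ne conclut pas que $O$ appartient \`a la classe de r\'ecurrence par cha\^\i nes de $K$~: votre argument d\'emontrerait l'\'enonc\'e plus fort $K\subset H(O)$, qui n'est pas disponible dans le cas (N). La non-trivialit\'e de $H(O)$ doit donc \^etre obtenue autrement que par $O\dashv K\dashv O$.
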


\section{Les autres types}
\subsection{Ensembles vrill\'es}

La g\'eom\'etrie des ensembles vrill\'es permet de cr\'eer une intersection
entre les vari\'et\'es invariantes fortes d'un point p\'eriodique.

\begin{proposition}[proposition 3.2 de~\cite{model}]\label{p.vrillee}
Consid\'erons un ensemble compact partiellement hyperbolique vrill\'e $K$.
Si $K\cap \Omega(f)$ contient un point non p\'eriodique, alors
pour tout voisinage $\cU$ de $f\in \diff^1(M)$ et $U$ de $K$,
il existe $g\in \cU$ ayant une orbite p\'eriodique $O\subset U$
telle que les vari\'et\'es fortes $W^{ss}(O)\setminus O$ et $W^{uu}(O)\setminus O$ s'intersectent.
\end{proposition}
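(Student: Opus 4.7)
The plan is to combine a closing argument at the non-periodic, non-wandering point with the topological linking forced by the twisted geometry. Fix $x \in K \cap \Omega(f)$ with $x$ non-periodic. Since $x$ is non-wandering, in any prescribed neighbourhood $V$ of $x$ one can find a point $y \in V$ and an integer $n \geq 1$ with $f^n(y) \in V$. I would first apply Pugh's perturbation lemma to obtain $g \in \cU$ together with a $g$-periodic orbit $O \subset U$ shadowing the $f$-segment $\{y,f(y),\dots,f^n(y)\}$ and passing through a point $x_0$ arbitrarily close to $x$. Since partial hyperbolicity is open in the $C^1$-topology, $O$ carries a dominated splitting $E^s_g \oplus E^c_g \oplus E^u_g$ close to the splitting of $K$, and the local strong stable and unstable manifolds $W^{ss}_{\operatorname{loc}}(x_0;g)$, $W^{uu}_{\operatorname{loc}}(x_0;g)$ are $C^1$-close to $W^{ss}_{\operatorname{loc}}(x;f)$, $W^{uu}_{\operatorname{loc}}(x;f)$ respectively; the uniform version of the Hirsch-Pugh-Shub theorem then provides locally invariant central plaques for $g$ close to those of $f$.

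The next step is to convert the twisted condition into a linking between these strong manifolds. Applying the twisted hypothesis to the pair $(x,x')$ with $x' := f^n(y) \in K$, I obtain two short curves $\gamma_1, \gamma_2$ tangent to $E^c$, the first joining $W^{ss}_{\operatorname{loc}}(x;f)$ to $W^{uu}_{\operatorname{loc}}(x';f)$ and the second joining $W^{ss}_{\operatorname{loc}}(x';f)$ to $W^{uu}_{\operatorname{loc}}(x;f)$, both traversed in the same direction with respect to a fixed continuous prolongation of $E^c$. Under the perturbation $g$, the sheets $W^{ss}_{\operatorname{loc}}(x;f)$ and $W^{ss}_{\operatorname{loc}}(x';f)$ both become pieces of the single manifold $W^{ss}(O;g)$ in a neighbourhood of $x_0$, and likewise on the unstable side. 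Thus $\gamma_1$ and $\gamma_2$ deform to two central arcs $\widetilde\gamma_1, \widetilde\gamma_2$ each joining $W^{ss}(O;g)$ to $W^{uu}(O;g)$; the twisted condition translates into the fact that, viewed inside a fixed local centre-stable plaque of $O$ at $x_0$, the central coordinate of the endpoint of $\widetilde\gamma_1$ on $W^{uu}(O;g)$ and the central coordinate of $\widetilde\gamma_2$'s endpoint on $W^{ss}(O;g)$ lie on opposite sides of $x_0$.

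I would then conclude by an intermediate-value argument along the oriented central plaque of $O$ at $x_0$: the signed central distance from $W^{ss}(O;g)$ to $W^{uu}(O;g)$, measured along a central plaque and read off along a small transverse centre-stable disc, changes sign between the two configurations witnessed by $\widetilde\gamma_1$ and $\widetilde\gamma_2$; continuity then forces a zero, which is precisely a point of $W^{ss}(O;g) \cap W^{uu}(O;g) \setminus O$. A small additional perturbation supported away from $O$, produced by a Hayashi-type connecting argument (the connecting lemma applied at the candidate intersection point, which is not periodic since $K$ contains no periodic point of the chosen local type), then realizes the intersection honestly inside a single $g' \in \cU$.

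The hard part will be the last geometric step: extracting a genuine strong connection from the linking, not merely a pseudo-orbit of central plaques. The delicate points are (i) to argue that the $g$-perturbed curves $\widetilde\gamma_1,\widetilde\gamma_2$ really do link opposite sheets of $W^{ss}(O;g)$ and $W^{uu}(O;g)$ in a common local centre-stable/centre-unstable product chart, and (ii) to rule out that the intermediate-value crossing is absorbed into $O$ itself. Both obstacles are overcome by taking the shadowing neighbourhood $V$ small enough that the return $x' = f^n(y)$ lies genuinely off the local centre leaf of $x_0$, so that $\widetilde\gamma_1$ and $\widetilde\gamma_2$ have disjoint central projections avoiding $x_0$; then the crossing point furnished by continuity cannot belong to $O$ and the desired strong homoclinic connection is obtained.
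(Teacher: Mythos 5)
Votre première étape (fermer l'orbite près du point non errant $x$ par le lemme de Pugh, et transporter la structure partiellement hyperbolique sur l'orbite périodique obtenue) est conforme à la démarche du texte. En revanche, le c\oe ur géométrique de votre argument ne fonctionne pas. Vous cherchez à produire un point de $W^{ss}(O;g)\cap W^{uu}(O;g)\setminus O$ par un argument de valeurs intermédiaires le long de la direction centrale, \emph{sans perturbation supplémentaire}. C'est impossible par un simple comptage de dimensions~: $\dim W^{ss}(O)+\dim W^{uu}(O)=\dim E^s+\dim E^u=d-\dim E^c<d$, donc ces deux variétés n'ont aucune raison de s'intersecter hors de $O$, et aucun argument de continuité en la seule coordonnée centrale ne peut forcer une intersection de sous-variétés de codimensions non complémentaires. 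De plus, votre lecture de l'hypothèse vrillée est inexacte~: la définition exige que les deux courbes centrales (de $W^{ss}_{loc}(p)$ vers $W^{uu}_{loc}(q)$ et de $W^{ss}_{loc}(q)$ vers $W^{uu}_{loc}(p)$) aient la \emph{m\^eme} orientation, ce qui ne produit pas un ``linking de part et d'autre de $x_0$'' mais au contraire force ces courbes à être de longueur arbitrairement petite devant $d(p,q)$~: c'est une estimation de \emph{proximité}, pas de croisement. Enfin, votre recours final au lemme de connexion d'Hayashi n'est pas justifié~: ses hypothèses (un point d'accumulation commun, non périodique, de l'orbite future d'un point et de l'orbite passée d'un autre) ne sont pas vérifiées dans votre configuration.

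L'argument correct exploite précisément cette proximité quantitative. On s'assure d'abord, en reprenant la démonstration du lemme de fermeture, que l'orbite périodique $O$ est elle-même ``presque vrillée'' (la propriété vrillée est une propriété de $K$, il faut la transférer à $O\not\subset K$, point que vous n'abordez pas). On choisit ensuite une paire de points distincts $p,q\in O$ \emph{minimisant} $d(p,q)$~: la condition vrillée donne des points $x_p\in W^{uu}_{loc}(p)$ et $x_q\in W^{ss}_{loc}(q)$ avec $d(x_p,x_q)$ arbitrairement petite devant $d(p,q)$, et la minimalité de $d(p,q)$ garantit que $d(x_p,x_q)$ est aussi arbitrairement petite devant la distance de $x_p,x_q$ à tous les autres points de $O$ (sinon un point $z\in O$ proche de $x_q\in W^{ss}(q)$ aurait des itérés futurs proches de ceux de $q$, contredisant la minimalité). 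On applique alors le lemme de perturbation élémentaire (lemme~\ref{l.perturbation-elementaire}) dans une boule de rayon $\theta\, d(x_p,x_q)$ qui, grâce à ces estimations, évite $O$, l'orbite passée de $x_p$ et l'orbite future de $f(x_q)$~: la perturbation envoie $x_p$ sur $f(x_q)$ et crée ainsi l'intersection $W^{uu}(O)\cap W^{ss}(O)\setminus O$. C'est cette étape de sélection de la paire minimisante puis de perturbation à support minuscule qui manque à votre proposition.
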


Une premi\`ere \'etape consiste \`a cr\'eer une orbite p\'eriodique $O$ proche de $K$.
En reprenant la d\'emonstration du lemme de fermeture, on s'assure que l'orbite ainsi obtenue
est ``presque vrill\'ee''~: pour tous points $p,q\in O$ proches, les vari\'et\'es
fortes $W^{ss}_{loc}(p)$ et $W^{uu}_{loc}(q)$ d'une part ainsi que
$W^{ss}_{loc}(q)$ et $W^{uu}_{loc}(p)$ d'autre part, peuvent \^etre reli\'ees par
des courbes tangentes \`a un fibr\'e central et dont la longueur est arbitrairement petite
par rapport \`a la distance entre $p$ et $q$.

On peut alors appliquer l'argument de~\cite[section 6.1]{bgw}
ou~\cite[th\'eor\`eme 3.18]{palis-faible} pour les orbites p\'eriodiques vrill\'ees~:
on choisit une paire de points distincts $p,q\in O$ qui minimise
la distance $d(p,q)$. Les vari\'et\'es $W^{uu}_{loc}(p)$
et $W^{ss}_{loc}(q)$ contiennent des points $x_p$ et $x_q$ proches.
On obtient les propri\'et\'es suivantes~:
\begin{itemize}
\item[--]
la distance $d(x_p,x_q)$ est arbitrairement petite par rapport aux distances
de $x_p,x_q$ \`a $p$ et $q$, car $O$ est ``presque vrill\'ee''~;
\item[--]
la distance $d(x_p,x_q)$ est arbitrairement petite par rapport aux distances
de $x_p,x_q$ aux autres points de $O$, car si $z\in O$ est proche de $x_q\in W^{ss}(q)$,
il poss\`ede des it\'er\'es futurs $f^k(z)$ proches de $f^k(q)$, contredisant le fait que $d(p,q)$
est minimale.
\end{itemize}
Une perturbation \'el\'ementaire $g$ de $f$ permet d'envoyer $x_p$ sur $f(x_q)$,
sans perturber l'orbite $O$, l'orbite pass\'ee de $x_p$, ni l'orbite future de $f(x_q)$.
Par cons\'equent les vari\'et\'es fortes $W^{uu}_{loc}(p)$ et $W^{ss}_{loc}(q)$ pour $g$
s'intersectent.

\subsection{Dynamique centrale neutre}
Si $K$ est de type N et est strictement contenu dans un ensemble transitif par cha\^\i nes,
alors ou bien $K$ est contenu dans une classe homocline ayant des orbites p\'eriodiques faibles,
ou bien on peut cr\'eer un cycle h\'et\'erodimensionnel par perturbation.

\begin{proposition}[proposition 5.1 de~\cite{model}]\label{p.N}
Si $f$ appartient \`a un G$_\delta$ dense de $\diff^1(M)\setminus \overline{\cT}$
et si $K$ est un ensemble compact transitif par cha\^\i nes
ayant une d\'ecomposition domin\'ee $T_KM=E^s\oplus E^c\oplus E^u$
telle que
\begin{itemize}
\item[--] $E^s$ et $E^u$ sont uniform\'ement contract\'es par $f$ et
$f^{-1}$ respectivement,
\item[--] $E^c$ est de dimension $1$ et de type (N),
\item[--] $K$ est strictement contenu dans sa classe de r\'ecurrence par cha\^\i nes,
\end{itemize}
alors, l'un des deux cas suivants se produit.
\begin{enumerate}
\item $K$ est contenu dans une classe homocline d'indice $\dim(E^s)$ ou $\dim(E^s)$
qui poss\`ede des orbites p\'eriodiques dont le $(\dim(E^s)+1)$-\`eme exposant de Lyapunov
est arbitrairement proche de $0$.
\item Pour tout voisinage $U$ de $K$, il existe une perturbation $g\in \diff^1(M)$
de $f$ ayant une orbite p\'eriodique contenue dans $U$
telle que $W^{ss}(O)\setminus O$ et $W^{uu}(P)\setminus O$ s'intersectent.
\end{enumerate}
\end{proposition}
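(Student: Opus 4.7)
Plan de la d\'emonstration. L'id\'ee est d'analyser la dynamique centrale de type (N) en distinguant la situation o\`u $K$ s'ins\`ere dans une classe homocline ayant des orbites p\'eriodiques d'exposant central petit (cas~1), de celle o\`u l'on peut au contraire produire, au voisinage de $K$, une orbite p\'eriodique admettant une connexion forte entre ses vari\'et\'es stable et instable fortes (cas~2). L'hypoth\`ese $f\notin \overline{\cT}$ servira \`a travers le corollaire~\ref{c.tangence} pour garantir l'uniformit\'e des d\'ecompositions domin\'ees au-dessus des orbites p\'eriodiques qui vont appara\^\i tre.

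Premi\`ere \'etape : produire deux orbites p\'eriodiques jumelles de $K$. Le type (N) fournit une famille de plaques $\cW^{cs}$ tangente \`a $E^s\oplus E^c$, pi\'eg\'ee par $f$, et une famille $\cW^{cu}$ tangente \`a $E^c\oplus E^u$, pi\'eg\'ee par $f^{-1}$ (en prenant l'intersection d'une famille de plaques centre-stable localement invariante avec un voisinage attractif donn\'e par le type (N)). La proposition~\ref{p.piege} appliqu\'ee \`a $f$ fournit dans tout voisinage de $K$ une orbite p\'eriodique $O^+$ d'indice $\dim(E^s)+1$, de classe homocline $H(O^+)$ non triviale, et un point $z^+\in K$ tel que $W^{uu}(z^+)$ rencontre $W^s(O^+)$ ; la m\^eme proposition appliqu\'ee \`a $f^{-1}$ fournit sym\'etriquement une orbite $O^-$ d'indice $\dim(E^s)$, de classe homocline non triviale, avec un point $z^-\in K$ tel que $W^{ss}(z^-)$ rencontre $W^u(O^-)$. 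Les propri\'et\'es g\'en\'eriques rappel\'ees en section~\ref{s.consequence} placent $O^+$ et $O^-$ dans la classe de r\'ecurrence par cha\^\i nes de $K$.

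Deuxi\`eme \'etape : dichotomie. Distinguons selon que $K$ est contenu ou non dans l'une des classes $H(O^\pm)$.

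Si $K\subset H(O^+)$ (le cas $K\subset H(O^-)$ \'etant enti\`erement sym\'etrique en changeant $f$ en $f^{-1}$), on est exactement dans la situation du cas~1. Il reste \`a voir que $H(O^+)$ contient des orbites p\'eriodiques dont le $(\dim(E^s)+1)$-\`eme exposant est arbitrairement proche de $0$. Pour cela, j'observe que $K\subset H(O^+)$ est un ensemble invariant sur lequel le fibr\'e $E^c$ n'est ni uniform\'ement contract\'e ni uniform\'ement dilat\'e (type (N)) ; en particulier, il existe une mesure ergodique support\'ee par $K$ dont l'exposant le long de $E^c$ est arbitrairement proche de $0$. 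Le lemme de fermeture ergodique (th\'eor\`eme~\ref{t.closing-ergodique} et son addendum~\ref{a.closing-ergodique}) produit alors des orbites p\'eriodiques voisines de $K$ dont l'exposant selon $E^c$ est proche de $0$ ; comme $f$ v\'erifie le corollaire~\ref{c.tangence} (qui impose une d\'ecomposition domin\'ee uniforme de type $E^s_\delta\oplus E^c_\delta\oplus E^u_\delta$ sur toutes les orbites p\'eriodiques), ces orbites sont d'indice $\dim(E^s)$ ou $\dim(E^s)+1$ ; par un argument de g\'en\'ericit\'e combin\'e \`a la propri\'et\'e du barycentre (th\'eor\`eme~\ref{t.barycentre}) appliqu\'e dans $H(O^+)$, on peut enfin assurer que ces orbites appartiennent bien \`a $H(O^+)$, ce qui ach\`eve le cas~1.

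Sinon, ni $H(O^+)$ ni $H(O^-)$ ne contient $K$ : il existe un point $x\in K$ hors de $H(O^+)\cup H(O^-)$. Le lemme de connexion pour les pseudo-orbites (th\'eor\`eme~\ref{t.bc}) et la coexistence des plaques $\cW^{cs}_x$ et $\cW^{cu}_x$ (pi\'eg\'ees respectivement par $f$ et $f^{-1}$ gr\^ace au type (N)) permettent, par une perturbation arbitrairement petite, de fermer une pseudo-orbite passant par $x$ en une orbite p\'eriodique $O$ contenue dans un voisinage prescrit $U$ de $K$, d'indice $\dim(E^s)$ ou $\dim(E^s)+1$, et dont un it\'er\'e reste arbitrairement proche de $x$. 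La g\'eom\'etrie des plaques centre-stables et centre-instables en $x$ (qui se projettent dans les ensembles stables et instables par cha\^\i nes de $K$ gr\^ace au type (N)) force alors les vari\'et\'es $W^{ss}(O)$ et $W^{uu}(O)$ \`a venir s'intersecter au voisinage de $x$, soit imm\'ediatement pour la perturbation initiale, soit apr\`es une seconde perturbation \'el\'ementaire dans l'esprit de la proposition~\ref{p.vrillee} (minimisation de la distance entre it\'er\'es, puis application du lemme~\ref{l.perturbation-elementaire}). Ceci produit le cas~2.

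Principale difficult\'e. L'\'etape r\'eellement d\'elicate est le cas~2. Il faut extraire du caract\`ere neutre bilat\'eral du fibr\'e central et de la stricte inclusion $K\subsetneq \cR$-classe de $K$ une configuration g\'eom\'etrique pr\'ecise \`a $x$ qui survive \`a la perturbation r\'ealisant la fermeture, de sorte que l'orbite p\'eriodique obtenue h\'erite effectivement d'une intersection entre $W^{ss}(O)\setminus O$ et $W^{uu}(O)\setminus O$, et non seulement entre les ensembles stables et instables ``faibles''. Comme pour la proposition~\ref{p.vrillee}, le contr\^ole quantitatif du lemme~\ref{l.perturbation-elementaire} (uniformit\'e de la constante $\theta$) sera ici essentiel pour garantir que la perturbation ne d\'etruit pas les connexions interm\'ediaires construites en amont.
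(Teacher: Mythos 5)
Votre plan ne fait jamais intervenir l'hypoth\`ese que $K$ est \emph{strictement} contenu dans sa classe de r\'ecurrence par cha\^\i nes, alors que c'est pr\'ecis\'ement elle qui produit la connexion forte du cas~2. L'argument du texte consiste \`a fixer un point $z\notin K$ dans la classe de $K$~: il existe alors des orbites qui quittent tout petit voisinage de $K$ pour atteindre un voisinage de $z$ (et r\'eciproquement), et ces orbites s'\'echappent en suivant l'ensemble instable par cha\^\i nes local de $K$, lequel, \emph{parce que} la dynamique centrale est pi\'eg\'ee (type (N)), est une r\'eunion de plaques $W^{uu}_{loc}(x)$, $x\in K$. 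C'est cette excursion hors de $K$ et son retour qui permettent, par perturbation, de cr\'eer hors de $K$ une intersection entre $W^{uu}(x)$ et $W^{ss}(y)$ avec $x,y\in K$, puis de la transf\'erer \`a une orbite p\'eriodique proche de $K$ en topologie de Hausdorff. Votre substitut --- \og la g\'eom\'etrie des plaques $\cW^{cs}_x$ et $\cW^{cu}_x$ force $W^{ss}(O)$ et $W^{uu}(O)$ \`a s'intersecter au voisinage de $x$ \fg{} --- n'a pas de contenu~: deux plaques pi\'eg\'ees qui se coupent le long d'une courbe centrale en $x$ ne fournissent aucune intersection entre les vari\'et\'es \emph{fortes} d'une orbite p\'eriodique voisine. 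Si par exemple $K$ v\'erifiait la condition (I) du th\'eor\`eme~\ref{t.whitney} (chaque $W^{uu}(x)$ ne recoupe $K$ qu'en $x$) et \'etait \'egal \`a sa classe, aucune perturbation locale ne cr\'eerait de connexion forte~; c'est l'inclusion stricte qui exclut ce sc\'enario, et votre preuve ne s'en sert pas.

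Deux points secondaires mais r\'eels~: (i) la proposition~\ref{p.piege} donne seulement que $W^{uu}(z)$ rencontre $W^s(O^+)$, c'est-\`a-dire $K\dashv O^+$, et non $O^+\dashv K$~; affirmer que $O^\pm$ appartiennent \`a la classe de r\'ecurrence par cha\^\i nes de $K$ via la section~\ref{s.consequence} n'est pas justifi\'e (cela vaudrait si $K$ \'etait dans un quasi-attracteur, ce qui n'est pas suppos\'e). (ii) Dans le cas~1, le lemme de fermeture ergodique fournit des orbites p\'eriodiques d'exposant central petit \emph{proches de} $K$, mais pas dans $H(O^+)$~; la propri\'et\'e du barycentre ne s'applique qu'aux orbites d\'ej\`a contenues dans la classe et ne comble pas ce vide. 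La dichotomie pertinente n'est donc pas \og $K\subset H(O^\pm)$ ou non \fg, mais plut\^ot \og la construction g\'eom\'etrique de la connexion forte aboutit, ou bien son \'echec contraint $K$ \`a \^etre captur\'e par une classe homocline \`a orbites p\'eriodiques faibles \fg.
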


La d\'emonstration est bien plus d\'elicate que pour les propositions pr\'ec\'edentes.
En voici une id\'ee grossi\`ere (qui ne fonctionne que dans certains cas).
Fixons un point $z\not \in K$ appartenant \`a la classe de r\'ecurrence par cha\^\i nes
de $K$. Il existe des orbites qui quittent un voisinage arbitraire de $K$ pour
atteindre (par it\'erations pass\'ees ou futures) un voisinage arbitraire de $z$.
Les orbites qui s'\'echappent d'un petit voisinage de $K$ suivent l'``ensemble instable
par cha\^\i nes local'' de $K$. Puisque la dynamique centrale est pi\'eg\'ee
($E^c$ est de type (N)) on s'attend \`a ce que l'ensemble instable par cha\^\i nes local
de $K$ soit r\'eunion de plaques $W^{uu}_{loc}(x)$ pour $x\in K$.
Il est donc possible (dans certains cas) par perturbation de cr\'eer hors de $K$ une intersection
entre deux vari\'et\'es $W^{uu}(x)$ et $W^{ss}(y)$, avec $x,y\in K$.
Si l'on consid\`ere une orbite p\'eriodique $O$ suffisamment proche de $K$ en topologie de Hausdorff,
on peut alors cr\'eer une intersection entre les vari\'et\'es $W^{ss}(O)\setminus O$ et $W^{uu}(P)\setminus O$ .

\subsection{Ensembles instables par cha\^\i nes}
On peut esp\'erer que tout point de la classe de r\'ecurrence par cha\^\i nes contenant $K$
est accumul\'e par des orbites p\'eriodiques d'indice $\dim(E_{1})$.
Le r\'esultat suivant est d\'emontr\'e dans~\cite[section 5]{potrie}, voir aussi~\cite{yang-lyapunov-stable}
et~\cite[proposition 1.10]{model}.
La d\'emonstration n'utilise pas les mod\`eles centraux.

\begin{proposition}[Potrie]\label{p.instable-potrie}
Si $f$ appartient \`a un G$_\delta$ dense de $\diff^1(M)$
et si $K$ est un ensemble compact transitif par cha\^\i nes
ayant une d\'ecomposition domin\'ee $T_KM=E^s\oplus E^c\oplus E^u$
telle que
\begin{itemize}
\item[--] $E^s$ et $E^u$ sont uniform\'ement contract\'es par $f$ et
$f^{-1}$ respectivement,
\item[--] $E^c$ est de dimension $1$,
\item[--] l'exposant de Lyapunov le long de $E^c$ de toute mesure support\'ee par $K$ est nul,
\end{itemize}
alors, pour tout $\delta>0$, il existe un voisinage $U$ de $K$ tel que tout point $z\in U$ v\'erifiant~:
\begin{itemize}
\item[--] $z$ appartient \`a l'ensemble instable par cha\^\i nes local de $K$,
i.e. pour tout $\varepsilon>0$, il existe une $\varepsilon$-pseudo-orbite
contenue dans $U$ qui joint $K$ \`a $z$~;
\item[--] $z$ appartient \`a la classe de r\'ecurrence par cha\^\i nes de $K$,
\end{itemize}
v\'erifie \'egalement~:
\begin{itemize}
\item[] $z$ est limite d'une suite de points p\'eriodiques dont le $\dim(E^s)+1$-\`eme
exposant de Lyapunov appartient \`a $(-\delta,\delta)$.
\end{itemize}
\end{proposition}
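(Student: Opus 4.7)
The plan is to produce, near any such $z$, a true periodic orbit of $f$ that spends an overwhelming fraction of its period in a prescribed neighborhood of $K$, and then to control its $(\dim(E^s)+1)$-th Lyapunov exponent by exploiting that every $f$-invariant probability measure supported on $K$ has vanishing central exponent. The first and most delicate step is a \emph{finite-time uniform estimate} along $E^c$ in a neighborhood of $K$: for every $\eta>0$ there should exist a neighborhood $V_\eta$ of $K$ and an integer $N_\eta\geq 1$ such that any orbit segment $(x,f(x),\dots,f^n(x))\subset V_\eta$ with $n\geq N_\eta$ satisfies $|\tfrac{1}{n}\log\|D_xf^n_{|E^c}\||<\eta$, where $E^c$ is extended continuously to $V_\eta$ via the fact that the dominated splitting on $K$ persists on a small neighborhood. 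This is proved by contradiction: otherwise, the empirical probability measures along violating segments accumulate, after extraction, to an $f$-invariant probability measure supported on $K$ whose central exponent has absolute value at least $\eta$, contradicting the hypothesis.

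Next, I would construct periodic pseudo-orbits passing near $z$ and concentrated in $V_\eta$. Since $z$ belongs both to the local chain-unstable set of $K$ (accessible from $K$ by $\varepsilon$-pseudo-orbits in $V_\eta$) and to the chain-recurrence class of $K$ (accessible from $z$ back to $K$ by $\varepsilon$-pseudo-orbits), I concatenate these with a long $\varepsilon$-pseudo-orbit contained in $K$, using chain-transitivity. The resulting periodic $\varepsilon$-pseudo-orbit has total length $m$ that can be made arbitrarily large while the portion outside $V_\eta$ has a bounded length $m_0$ depending only on $z$. Applying the $C^1$-generic weak shadowing of Corollary~\ref{c.pistage-faible}, I obtain a true hyperbolic periodic orbit $O$ of $f$ Hausdorff-close to this pseudo-orbit, passing $O(\varepsilon)$-close to $z$, with period comparable to $m$ and with at most $m_0+o(m)$ points outside $V_\eta$. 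The persistence of the dominated splitting forces the center direction of $O$ to lie in a narrow cone around the continuous extension of $E^c$.

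The $(\dim(E^s)+1)$-th Lyapunov exponent of $O$ is then the Birkhoff average of $\log\|Df_{|E^c(O)}\|$ along $O$. Splitting the sum into the portion where $O$ lies inside $V_\eta$ (of length at least $m-m_0-o(m)$, thus averaging to at most $\eta$ in absolute value by the finite-time estimate of the first step) and the remaining bounded piece (contributing $O(m_0/m)$), one obtains an exponent bounded in absolute value by $\eta+O(m_0/m)$. Choosing $\eta<\delta/2$ and then $m$ large enough gives $|\lambda^c(O)|<\delta$ as required; letting $\varepsilon\to 0$ yields the desired sequence of periodic orbits converging to $z$ whose $(\dim(E^s)+1)$-th Lyapunov exponent lies in $(-\delta,\delta)$.

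The hard step is the uniform finite-time estimate in a neighborhood of $K$. Two subtleties will appear: first, the estimate must hold along orbit segments that are not confined to $K$, so the standard uniform subadditive arguments on compact invariant sets do not apply directly, and a compactness-of-empirical-measures argument must carefully handle segments that almost escape $V_\eta$; second, the center direction of the periodic orbit $O$, determined by $Df$-invariance along $O$, must be shown to lie sufficiently close to the extended $E^c$ so that the two associated Birkhoff averages differ by an error that vanishes as $V_\eta$ shrinks. This second point is provided by the persistence of the dominated splitting under $C^1$-small perturbations and under confinement to a small neighborhood of $K$, but it must be quantified to fit within the allowed error budget $\delta$.
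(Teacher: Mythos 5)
Your first step --- the uniform finite-time estimate for $\log\|Df_{|E^c}\|$ along orbit segments confined to a small neighbourhood of $K$, obtained by contradiction from the weak-$*$ convergence of empirical measures to an $f$-invariant measure supported on $K$ with nonzero central exponent --- is correct and is indeed one of the standard ingredients here; the passage from the non-invariant continuous extension of $E^c$ to the invariant centre direction of an orbit contained in the neighbourhood is a routine cone-field matter that you rightly flag.

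The gap is in your second step. Corollary~\ref{c.pistage-faible} only produces a periodic orbit whose \emph{support} is Hausdorff-close to the periodic pseudo-orbite; it gives no control on the period of that orbit nor on the proportion of its time spent inside $V_\eta$. The paper is explicit that this statistical control is not provided by the global connecting results (``nous ne contr\^olons pas la statistique des visites'') and records it as an open problem (question~\ref{q.proportion}); your assertion that the shadowing orbit has ``period comparable to $m$'' with ``at most $m_0+o(m)$ points outside $V_\eta$'' is exactly what that open question asks for. The difficulty is unavoidable in your set-up: the hypotheses only confine to $U$ the pseudo-orbites from $K$ to $z$, while the return from $z$ to $K$ uses only that $z$ lies in the chain class, so the closing pseudo-orbite must in general leave $U$, and the periodic orbit furnished by weak shadowing may spend almost all of its (uncontrolled) period along that excursion, where neither the splitting nor the finite-time estimate is available and where the $(\dim(E^s)+1)$-th exponent is no longer a Birkhoff average of $\log\|Df_{|E^c}\|$. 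A workable route (the one followed in \cite{potrie} and \cite[proposition 1.10]{model}) does not close a pseudo-orbite through $z$ by Hausdorff shadowing: it uses the generic coincidence of $\prec_A$ and $\dashv_A$ to replace the pseudo-orbites from $K$ to $z$ \emph{contained in $U$} by true orbit segments confined to a neighbourhood where your finite-time estimate holds along their whole length, and then produces the periodic orbits by closing quasi-hyperbolic strings (Liao's shadowing lemma) rather than by corollary~\ref{c.pistage-faible}.
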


\section{Application (1)~: dynamique simple versus intersections homoclines}
\label{s.palis-faible}\index{conjecture! faible de Palis}
Nous avons d\'emontr\'e dans~\cite{palis-faible}
un r\'esultat plus faible que la conjecture de Palis, \'egalement conjectur\'e
par Palis dans~\cite{palis-survey,palis-conjecture1,palis-conjecture2,palis-conjecture3}.
Nous avons introduit au chapitre~\ref{c.decomposition} les dif\-f\'e\-o\-mor\-phis\-mes Morse-Smale~: leur dynamique est tr\`es simple
puisque leur ensemble r\'ecurrent par cha\^\i nes est fini.
\`A l'oppos\'e, les diff\'eomorphismes ayant une classe homocline non triviale poss\`edent une infinit\'e
d'orbites p\'eriodiques~: ce sont les diff\'eomorphismes qui poss\`edent une intersection homocline transverse,
i.e. une orbite p\'eriodique hyperbolique $\cO$ dont les vari\'et\'es invariantes $W^s(\cO)\setminus \cO$
et $W^u(\cO)\setminus \cO$ ont un point d'intersection transverse.

\begin{theoreme}[Crovisier]\label{t.palis-faible}
L'espace $\diff^1(M)$ contient un ouvert dense qui est l'union disjointe $\cM\cS\cup\cI$
de deux ouverts~:
\begin{itemize}
\item[--] $\cM\cS$ est l'ensemble des diff\'eomorphismes Morse-Smale,
\item[--] $\cI$ est l'ensemble des diff\'eomorphismes qui poss\`edent une intersection homocline
transverse.
\end{itemize}
\end{theoreme}
Ce r\'esultat d\'ecoule de~\cite{peixoto} en dimension $1$, de~\cite{pujals-sambarino} en dimension $2$
et a \'et\'e d\'emontr\'e initialement par Bonatti-Gan-Wen~\cite{bgw} en dimension $3$.
Il implique que l'int\'erieur de l'ensemble des diff\'eomorphismes d'entropie non nulle
a la m\^eme adh\'erence que $\cI$ et que l'int\'erieur de l'ensemble des diff\'eomorphismes d'entropie nulle
a la m\^eme adh\'erence que $\cM\cS$. Voir aussi~\cite{pujals-sambarino, gan-surface} pour une discussion
de l'entropie dans le cas des dynamiques $C^1$ sur les surfaces et comparer au th\'eor\`eme
de Katok~\cite{katok} qui implique qu'un diff\'eomorphisme $C^2$ de surface ayant une entropie non nulle
a toujours une intersection homocline transverse.

\begin{proof}[D\'emonstration du th\'eor\`eme~\ref{t.palis-faible}]
Remarquons qu'un diff\'eomorphisme ayant une tangence homocline ou un
cycle h\'et\'erodimensionnel peut \^etre perturb\'e en un diff\'eomorphisme
ayant une intersection homocline transverse.
Par cons\'equent, il nous suffit de consid\'erer un diff\'eomorphisme $f$
loin des bifurcations homoclines. Nous pouvons \'egalement supposer que
$f$ appartient \`a un G$_\delta$ dense de $\diff^1(M)$ fix\'e \`a l'avance~:
nous supposerons en particulier que $f$ v\'erifie la propri\'et\'e de Kupka-Smale.

Si $f$ est hyperbolique, l'ensemble r\'ecurrent par cha\^\i nes
se d\'ecompose en un nombre fini de classe homoclines.
Si toutes les classes homoclines sont triviales, l'ensemble r\'ecurrent par cha\^\i nes
est fini et, puisque $f$ est un diff\'eomorphisme Kupka-Smale, pour tous
$x,y\in \cR(f)$, l'intersection $W^s(x)\cap W^u(y)$ est transverse~:
par cons\'equent $f$ est Morse-Smale.
Si l'une des classes homocline n'est pas triviale,
$f$ poss\`ede une intersection homocline transverse.

Si $f$ n'est pas hyperbolique, il poss\`ede un ensemble minimal non hyperbolique $K$.
Puisque $f$ v\'erifie la propri\'et\'e de Kupka-Smale, $K$ n'est pas une orbite p\'eriodique
et par cons\'equent
si $K$ est contenu dans une classe homocline, $f$ poss\`ede une intersection
homocline transverse. Nous pouvons donc supposer que $K$ est contenu dans une
classe ap\'eriodique.
D'apr\`es le corollaire~\ref{c.wen}, $K$ poss\`ede une structure
partiellement hyperbolique $T_KM=E^s\oplus E^c\oplus E^u$
avec $\dim(E^c)=1$. D'apr\`es les propositions~\ref{p.R}, \ref{p.H}, \ref{p.piege},
si le fibr\'e $E^c$ est de type (R), (H) ou (N),
ou si $K$ n'est pas vrill\'e et $E^c$ est de type (P),
$f$ poss\`ede une classe homocline non triviale et une intersection homocline transverse.
Si $K$ est vrill\'e, la proposition~\ref{p.vrillee} permet de construire une intersection homocline transverse. Dans tous les cas $f$ appartient donc \`a l'adh\'erence de
$\cI$.
\end{proof}

\section{Application (2)~: \'etude des quasi-attracteurs}\label{s.quasi-attracteur-tangence}

On peut utiliser les mod\`eles centraux pour obtenir des propri\'et\'es sur les quasi-attracteurs
des diff\'eomorphismes g\'en\'eriques loin des tangences homoclines~:
J. Yang a montr\'e~\cite{yang-lyapunov-stable} que ce sont des classes homoclines.
Voici une version l\'eg\`erement am\'elior\'ee de son r\'esultat.

\begin{theoreme}[Yang]\label{t.quasi-attracteur}
Pour tout diff\'eomorphisme dans un G$_\delta$ dense de $\diff^1(M)\setminus \overline{\cT}$, tout quasi-attracteur est une classe homocline $H$.

De plus, si $i$ est l'indices minimal des orbites p\'eriodiques qu'il contient, $H$ poss\`ede une d\'ecomposition domin\'ee
$T_HM=E^ s\oplus E^ c\oplus F$ telle que~:
\begin{itemize}
\item[--] $E^s\oplus E^c$ est de dimension $i$ et $E^c$ est de dimension $0$ ou $1$~;
\item[--] $E^s$ est uniform\'ement contract\'e, $E^c$ est de type (H)-attractif~;
\item[--] si $E^c$ est de dimension $1$ et non uniform\'ement contract\'e,
il existe des orbites p\'eriodiques dans $H$ dont l'exposant de Lyapunov le long de $E^c$ est arbitrairement proche de $0$.
\end{itemize}
\end{theoreme}
\begin{proof}
Soit $\Lambda$ un quasi-attracteur. Nous pouvons supposer que ce n'est pas un ensemble hyperbolique et consid\'erer un sous-ensemble
non hyperbolique minimal $K\subset \Lambda$.
D'apr\`es le th\'e\-o\-r\`e\-me~\ref{t.minimaux}, $K$ poss\`ede une d\'ecomposition
domin\'ee $T_KM=E^ s\oplus E^ c\oplus E^ u$ avec $\dim(E^c)\geq 1$.
Nous pouvons supposer que $K$ a \'et\'e choisi pour que la dimension
$\dim(E^s)$ soit minimale.

\begin{affirmation*}
$\Lambda$ est une classe homocline
qui contient des orbites p\'eriodiques
\begin{itemize}
\item[--] qui sont arbitrairement proches de $K$
en topologie de Hausdorff,
\item[--] ou dont l'indice est inf\'erieur ou \'egal \`a $\dim(E^s)$,
\item[--] ou dont le $(\dim(E^s)+1)$-\`eme exposant de Lyapunov est arbitrairement proche de $0$.
\end{itemize}
\end{affirmation*}
\begin{proof}
Lorsque $\dim(E^c)>1$, d'apr\`es le th\'eor\`eme~\ref{t.minimaux},
$\Lambda$ est une classe homocline
contenant des orbites p\'eriodiques dont le
$(\dim(E^s)+1)$-\`eme exposant de Lyapunov est arbitrairement proche de $0$.
Nous supposons donc que $\dim(E^c)=1$ et consid\'erons le type
de $E^c$.

Lorsque $E^c$ est de type (R) ou (H),
les propositions~\ref{p.R}, \ref{p.H}
entra\^\i nent directement que $\Lambda$ est une classe homocline
qui contient des orbites p\'eriodiques contenues dans des voisinages
arbitrairement petits de $K$.

Lorsque $E^c$ est de type (N) ou (P$_{SN}$), la proposition~\ref{p.piege}
montre que l'ensemble instable de $\Lambda$ s'accumule sur une orbite p\'eriodique $O$ contenue dans un voisinage arbitraire de $K$.
Puisque $\Lambda$ est un quasi-attracteur, il contient $O$ et $\Lambda$ est une classe homocline.

Nous nous pla\c{c}ons finalement dans le cas o\`u
$E^c$ est de type (P$_{SU}$) ou (P$_{UN}$).
L'ensemble $K$ poss\`ede un mod\`ele central (correspondant \`a l'une des orientations du fibr\'e $E^c$)
pour lequel il existe des voisinages
arbitrairement petits de $\widehat K\times \{0\}$ qui sont r\'epulsifs
et contenus dans l'ensemble instable par cha\^\i nes de $\widehat K\times \{0\}$.

En particulier, pour tout voisinage $U$ de $K$, et tout $x\in K$,
il existe une courbe $\gamma\subset U$ tangente \`a $E^c(x)$ en $x$
telle que
pour tout $z\in \gamma$ et tout $\varepsilon>0$, il existe une $\varepsilon$-pseudo-orbite
contenue dans $U$ qui joint $K$ \`a $z$.
Puisque $\Lambda$ est un quasi-attracteur, il contient $\gamma$
et la proposition~\ref{p.instable-potrie} s'applique.

Fixons $\delta>0$ proche de $0$.
Tout point $z$ proche de $x$ est limite d'une suite d'orbites p\'eriodiques
$(O_n)$ dont le $(\dim(E^s)+1)$-\`eme exposant de Lyapunov appartient \`a
$(-\delta,\delta)$. Nous pouvons supposer que $(O_n)$ converge en topologie
de Hausdorff vers un ensemble compact invariant $D\subset \Lambda$
qui, d'apr\`es le corollaire~\ref{c.tangence} poss\`ede
une d\'ecomposition domin\'ee $T_DM = E_1\oplus E_2$
avec $\dim(E_1)=\dim(E^s)$. Deux cas sont possibles.
\begin{itemize}
\item[--] Si $E_1$ n'est pas uniform\'ement contract\'e,
puisque $\dim(E^s)$ a \'et\'e choisie minimale, le th\'e\-o\-r\`e\-me~\ref{t.trichotomie}
montre que $\Lambda$ est une classe homocline contenant des points p\'eriodiques
d'indice inf\'erieur ou \'egal \`a $\dim(E^s)$.
\item[--] Si $E_1$ est uniform\'ement contract\'e, les vari\'et\'es
stables fortes de dimension $\dim(E^s)$ des orbites $O_n$
sont de taille uniforme~: pour $n$ suffisamment grand,
$W^s(O_n)$ intersecte donc la vari\'et\'e instable
forte (de dimension $\dim(E^u)$) d'un point $z'\in \gamma$
proche de $z$. Puisque $\Lambda$ est un quasi-attracteur, il contient $O_n$.
Nous en d\'eduisons que $\Lambda$ est une classe homocline
qui contient des orbites p\'eriodiques dont le
$(\dim(E^s)+1)$-\`eme exposant de Lyapunov est arbitrairement proche de $0$.
\end{itemize}

Dans tous les cas l'affirmation est d\'emontr\'ee.
\end{proof}
\medskip

Consid\'erons l'indice $i$ minimal des points p\'eriodiques contenus dans $\Lambda$.
D'apr\`es le corollaire~\ref{c.tangence}, il existe une d\'ecomposition
domin\'ee $T_\Lambda M=E\oplus F$ avec $\dim(E)=i$.
Si $E$ est uniform\'ement contract\'e, nous obtenons
la conclusion du th\'eor\`eme~\ref{t.quasi-attracteur}
avec $E^s=E$ et $\dim(E^c)=0$.

Si $E$ n'est pas uniform\'ement contract\'e, nous appliquons
le th\'eor\`eme~\ref{t.trichotomie}.
Le premier cas du th\'eor\`eme n'appara\^\i t pas puisque $i$ est minimal.
\begin{itemize}
\item[--] Dans le second cas, on obtient une d\'ecomposition domin\'ee
$E=E'\oplus E^c$ sur $\Lambda$ avec $\dim(E^c)=1$.
\item[--] Dans le troisi\`eme cas, il existe un ensemble
minimal $K\subset \Lambda$
avec structure partiellement hyperbolique $T_KM=E^s\oplus E^c\oplus E^u$
tel que $\dim(E^s)<\dim(E)$ et tel que toute orbite p\'eriodique
proche de $K$ a un exposant de Lyapunov central proche de $0$.
Puisque $i$ est minimal, $\Lambda$ ne contient pas d'orbite p\'eriodique
d'indice inf\'erieur ou \'egal \`a $\dim(E^s)$.
D'apr\`es l'affirmation,
la classe $\Lambda$ contient donc des orbites p\'eriodiques dont le
$\dim(E)$-\`eme exposant de Lyapunov est arbitrairement proche de $0$.
Nous obtenons \`a nouveau une d\'ecomposition domin\'ee
$E=E'\oplus E^c$ sur $\Lambda$ avec $\dim(E^c)=1$.
\end{itemize}
Dans les deux cas, $E$ se d\'ecompose et $H$ contient des orbites p\'eriodiques
dont l'exposant de Lyapunov le long de $E^c$ est proche de $0$.
Le fibr\'e $E'$ est uniform\'ement contract\'e~: dans le cas contraire
on pourrait r\'ep\'eter le raisonnement et conclure
que $\Lambda$ contient des points p\'eriodiques d'indice strictement inf\'erieurs \`a $i$.
\smallskip

Puisque $H$ contient des orbites p\'eriodiques hyperboliques d'indice $i$, le fibr\'e
$E^c$ ne peut \^etre que de type (H)-attractif ou de type (R).
La proposition~\ref{p.R2} montre que le type (R) n'appara\^\i t pas puisque $H$ ne contient pas de
point p\'eriodique d'indice $i-1$.
\end{proof}
\bigskip

Ceci permet de r\'epondre aux questions~\ref{q.interieur} et~\ref{q.closing-asymptotique} pour les diff\'eomorphismes
loin des tangences homoclines.
\begin{corollaire}[J. Yang~\cite{yang-lyapunov-stable}]
Pour tout diff\'eomorphisme dans un G$_\delta$ dense de $\diff^1(M)\setminus \overline{\cT}$, la r\'eunion des ensembles stables des orbites p\'eriodiques
est dense dans $M$.
\end{corollaire}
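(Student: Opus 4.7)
The plan is to combine three ingredients: Theorem~\ref{t.quasi-attracteur} (Yang) giving the structure of quasi-attractors in $\diff^1(M) \setminus \overline{\cT}$, the generic consequence from section~\ref{s.consequence}(c) that $\omega$-limits are quasi-attractors for a $G_\delta$ dense of initial points, and a local product analysis showing that stable sets of minimal-index periodic orbits are dense in the basin of each quasi-attractor.

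Take a $G_\delta$ dense $\cG \subset \diff^1(M) \setminus \overline{\cT}$ on which Theorem~\ref{t.quasi-attracteur} applies together with the generic consequences recalled in section~\ref{s.consequence} (in particular $W^u(p) \subset H(p)$ for every quasi-attractor). Fix $f \in \cG$ and an open set $V \subset M$. Using the generic density of points $y$ with quasi-attractor $\omega$-limit, I pick such $y \in V$ and set $\Lambda := \omega(y)$, so that $\Lambda = H(p)$ is a homoclinic class carrying the dominated splitting $T_\Lambda M = E^s \oplus E^c \oplus F$ provided by Yang, where $E^s$ is uniformly contracted, $E^c$ has dimension $0$ or $1$ and is of type (H)-attractive, and $i := \dim(E^s \oplus E^c)$ is the minimal index in $\Lambda$. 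One may assume $1 \leq i < d$; the case $i = d$ reduces $\Lambda$ to a periodic sink and the conclusion is immediate.

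The key lemma is that the union of stable sets of index-$i$ periodic orbits in $H(p)$ is dense in an open neighborhood of $W^u(p)$. The trapping plaque family $\cW^{cs}$ (of leaf-dimension $i$) tangent to $E^s \oplus E^c$ on $\Lambda$ extends by Theorem~\ref{t.hps} to a locally invariant $C^1$ plaque family on a small neighborhood of $\Lambda$. The submanifold $W^u(p) \subset \Lambda$ has dimension $d - i$ and is tangent to $F$; by domination, $\cW^{cs}$ is uniformly transverse to $F$, so the saturation $\mathcal{N} := \bigcup_{x \in W^u(p)} \cW^{cs}_x$ is open in $M$ (the dimensions sum to $d$). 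For each periodic $q \in H(p)$ of index exactly $i$, the bundle $E^c$ is necessarily contracted at $q$ (since its contracting dimension equals $\dim(E^s \oplus E^c)$), so the trapping of $\cW^{cs}_q$ together with the uniform contraction of $E^s$ yields $\cW^{cs}_q \subset W^s(q)$. Since index-$i$ periodic points are dense in $H(p)$, hence in $W^u(p)$, continuity of the plaque family shows $\bigcup_q \cW^{cs}_q \subset \bigcup_O W^s(O)$ is dense in $\mathcal{N}$.

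To conclude, $f^n(y) \in \mathcal{N}$ for $n$ large enough (since $\omega(y) \supset W^u(p)$), so by continuity $f^n$ sends a small open ball $V' \subset V$ around $y$ into $\mathcal{N}$; density then provides $y' \in V'$ with $f^n(y') \in W^s(q)$ for some index-$i$ periodic $q \in H(p)$, whence $y' \in V \cap W^s(q)$ by the $f$-invariance of $W^s(q)$. The principal obstacle is establishing that $\mathcal{N}$ is open, especially when $\dim(E^c) = 1$ and $E^c$ is not uniformly contracted: in this regime $\cW^{cs}$ is genuinely a lamination rather than a foliation, and the covering of a neighborhood of $W^u(p)$ demands a careful cone-field argument combining the quasi-attractor property $W^u(p) \subset \Lambda$, the dimension matching $\dim \cW^{cs} + \dim W^u(p) = d$, and the uniform transversality from the domination. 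A subsidiary difficulty is guaranteeing $\cW^{cs}_q \subset W^s(q)$ for index-$i$ periodic $q$, which uses the strict trapping to force exponential contraction of the plaque under forward iteration.
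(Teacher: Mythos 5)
Your derivation is correct in outline and is the intended one: the paper states this corollary without proof as a consequence of Theorem~\ref{t.quasi-attracteur}, and the standard route is exactly yours — combine the generic fact (section~\ref{s.consequence}, item c) that a dense G$_\delta$ of points have a quasi-attractor as $\omega$-limit set with the structure theorem, then saturate $W^u(p)\subset H(p)$ by the trapped centre-stable plaques to cover an open set that the orbit of $y$ must enter. The two points you flag as obstacles are both handleable: the "openness" of $\mathcal{N}$ (more precisely, the fact that $\bigcup_{x\in W^u(p)}\cW^{cs}_x$ contains a neighbourhood of any compact piece of $W^u(p)$) follows from writing the plaques as uniform-size $C^1$ graphs over $E^s\oplus E^c$ in a chart, using that $T_xW^u(p)$ lies in a cone about $F$ by domination, and applying a Brouwer degree argument to the continuous map $x\mapsto \cW^{cs}_x\cap\{z_{cs}=\text{const}\}$; this is standard even though the plaque family is only a lamination.

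The one step whose justification as written would fail is the claim $\cW^{cs}_q\subset W^s(q)$ for every index-$i$ periodic point $q$. Trapping plus tangency of $E^c(q)$ to the stable space does \emph{not} force the whole plaque into $W^s(q)$: the plaques have a definite size, and when the central exponent of $q$ is close to $0$ (which Theorem~\ref{t.quasi-attracteur} explicitly allows and even asserts happens for a sequence of such $q$), the local stable manifold of $q$ inside $\cW^{cs}_q$ can be much shorter than the plaque in the central direction, and the plaque may contain other periodic points. The correct — and sufficient — statement is the one used in the paper's proof of Proposition~\ref{p.H}: since $\cW^{cs}$ is trapped, $E^s$ is uniformly contracted and $\dim(E^c)\le 1$, every point of a plaque $\cW^{cs}_q$ over a periodic point $q$ lies in the stable set of \emph{some} periodic orbit (the induced one-dimensional trapped central dynamics sends every orbit to a fixed point, hyperbolic by Kupka-Smale). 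Replacing your claim by this weaker one, the density of $\bigcup_O W^s(O)$ in $\mathcal{N}$ still follows from the density in $H(p)$ of periodic points of index $i$ and the $C^0$-continuity of the plaque family, and the rest of your argument goes through unchanged.
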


\begin{corollaire}[Potrie~\cite{potrie}]
Si $M$ est connexe,
pour tout diff\'eomorphisme dans un G$_\delta$ dense de $\diff^1(M)\setminus \overline{\cT}$, toute classe de r\'ecurrence par cha\^\i nes qui est stable au sens de Lyapunov pour $f$ et $f^{-1}$ co\"\i ncide avec $M$.
\end{corollaire}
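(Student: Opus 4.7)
The plan is to combine Theorem~\ref{t.quasi-attracteur}, applied to both $f$ and $f^{-1}$, with the double Lyapunov stability hypothesis on $K$ in order to show that $K$ contains a neighborhood of each of its points. Being closed, non-empty, and with $M$ connected, this will force $K=M$.

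First, I would fix a residual subset $\cG\subset\diff^1(M)\setminus\overline{\cT}$ on which Theorem~\ref{t.quasi-attracteur} holds for $f$ and for $f^{-1}$, and on which the generic identifications of Section~\ref{s.consequence} (chain recurrence classes containing a periodic orbit coincide with homoclinic classes, and so on) are in force. Since a chain recurrence class which is Lyapunov stable for $f$ is by definition a quasi-attractor of $f$, the double stability makes $K$ a quasi-attractor simultaneously for $f$ and for $f^{-1}$. Applying Theorem~\ref{t.quasi-attracteur} twice yields two dominated splittings
\[T_KM=E^s\oplus E^c_+\oplus F\qquad\text{and}\qquad T_KM=G\oplus E^c_-\oplus E^u,\]
with $E^s,E^u$ respectively uniformly contracted and expanded, $\dim E^c_\pm\le 1$, and $E^c_+$ (resp.\ $E^c_-$) of type $(H)$-attractif for $f$ (resp.\ for $f^{-1}$). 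Refining these two splittings yields a partially hyperbolic splitting $T_KM=E^s\oplus E^c\oplus E^u$ with $\dim E^c\le 2$.

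Next I would show that $K$ absorbs the strong invariant lamination and the two families of center plaques furnished by the $(H)$-attractif property. For $x\in K$ and $y\in W^{ss}(x)$, pick a basis of $f^{-1}$-forward invariant neighborhoods $U$ of $K$ provided by Lyapunov stability for $f^{-1}$; large forward iterates $f^n(y)$ lie in $U$, and applying $f^{-1}$ inductively (using $f^{-1}(U)\subset U$) yields $y\in U$, hence $y\in K$. The same pull-back scheme, combined with the trapping $f(\overline{\cW^{cs}_x})\subset\cW^{cs}_{f(x)}$ and the $(H)$-attractif property that forward orbits inside center-stable plaques approach $K$, yields $\cW^{cs}_x\subset K$. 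Symmetrically $W^{uu}(x)\subset K$ and $\cW^{cu}_x\subset K$ for every $x\in K$. At any $x\in K$, the tangent spaces of $\cW^{cs}_x$ and $\cW^{cu}_x$ are $E^s\oplus E^c_+$ and $E^c_-\oplus E^u$, which together span $T_xM$; a standard skew-product parameterization $(y,z)\mapsto\cW^{cu}_y\cap\cW^{cs}_{\mathrm{loc}}(z)$ then identifies an open neighborhood of $x$ with a union of plaques contained in $K$, so $K$ is open, and connectedness of $M$ gives $K=M$.

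The main obstacle is the plaque absorption step: upgrading the conclusion of the $(H)$-attractif property, which only asserts that $\cW^{cs}_x$ lies in the chain stable set of $K$ (a priori a strictly larger open set), to the much stronger statement $\cW^{cs}_x\subset K$. The key here is the dual Lyapunov stability, for it produces backward invariant neighborhoods which trap those orbits whose forward iterates approach $K$ and force them to lie in $K$ itself; this is the precise point where one needs stability from both sides. A secondary subtlety arises when the minimum index $i$ and the maximum index $j$ of periodic points in $K$ differ, since the finest common refinement of the two dominated splittings then carries an additional middle piece of dimension $j-i$; this case must be handled either by invoking Theorem~\ref{t.hyperbolicite-volume} (applicable since $K$ is isolated in $\cR(f)$ under the double Lyapunov stability), or by a direct argument exploiting the stability from both sides to rule out distinct indices in $K$.
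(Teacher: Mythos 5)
Your overall strategy---identify $K$ generically as a quasi-attractor for both $f$ and $f^{-1}$, apply Theorem~\ref{t.quasi-attracteur} in both time directions, absorb the resulting plaque families into $K$, and conclude by local product structure and connectedness---is the right one. (Note that a Lyapunov stable class is a quasi-attractor only generically, by Section~\ref{s.consequence}; it is not so ``by definition'', the paper explicitly says the converse implication fails in general.) Your absorption step is essentially correct for the strong laminations, but for the trapped center-stable plaques your mechanism does not quite close: trapping only keeps the forward iterates of $y\in\cW^{cs}_x$ within plaque-diameter of $K$, so they need not enter every $f^{-1}$-invariant neighborhood. The correct closure is that the type (H)-attractif property places $\cW^{cs}_x$ inside the chain-stable set of $K$, which coincides with $K$ precisely because $K$ is a quasi-repeller; this is a repairable imprecision.

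The genuine gap is the case $i<j$, and neither of your fixes works. Double Lyapunov stability does not make $K$ isolated in $\cR(f)$: it gives a filtrating neighborhood $U\cap V$, but other chain classes may sit inside it (the aperiodic classes of~\cite{aperiodic} are bi-Lyapunov stable yet accumulated by nontrivial classes); and even granting isolation, Theorem~\ref{t.hyperbolicite-volume} only yields volume contraction/expansion on extremal bundles and produces no invariant manifolds in the middle $(j-i)$-dimensional direction. Distinct indices also cannot be ruled out: the derived-from-Anosov examples of Section~\ref{s.exemple-mane} are far from tangencies and have $\TT^3$ itself as a bi-Lyapunov stable homoclinic class containing fixed points of indices $1$ and $2$; there the two plaque families are each one-dimensional and do not span, yet the conclusion $K=M$ holds. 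The missing idea is to fill the middle direction with invariant manifolds of periodic orbits rather than plaques: since $K$ is a quasi-repeller it contains the entire stable manifold $W^s(q)$ (dimension $j$, tangent to $E^s\oplus E^c_+\oplus D$) of every periodic point $q\in K$ of maximal index $j$, and saturating $W^s_{\mathrm{loc}}(q)\subset K$ by the plaques $\cW^{cu}_y$, $y\in W^s_{\mathrm{loc}}(q)$, yields a neighborhood of $q$ inside $K$; one must then propagate openness from the dense set of such periodic points to all of $K$. Without an argument of this kind the product structure only covers a proper subbundle of $T_KM$ and the proof does not close.
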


Bonatti, Gan, Li et D. Yang~\cite{BGLY} on donn\'e r\'ecemment une r\'eponse aux questions~\ref{q.existence-quasi-attracteur} et~\ref{q.homocline-essentiel} dans ce cadre.

\begin{theoreme}[Bonatti-Gan-Li-Yang]
Pour tout diff\'eomorphisme appartenant \`a un G$_\delta$ dense de $\diff^1(M)\setminus \overline{\cT}$,
chaque quasi-attracteur est un attracteur essentiel.
\end{theoreme}
\bigskip

J. Yang montre \'egalement~\cite{yang-newhouse} que loin des tangences homoclines
le ph\'enom\`ene de Newhouse n'a lieu qu'au voisinage de classes
homoclines particuli\`eres.

\begin{theoreme}[J. Yang]
Pour tout diff\'eomorphisme dans un G$_\delta$ dense de $\diff^1(M)\setminus \overline{\cT}$, si $K$ est la limite de Hausdorff d'une suite de puits $(O_n)$,
$K$ est contenu dans une classe homocline ayant des points p\'eriodiques
d'indice $d-1$ dont le plus grand exposant de Lyapunov est arbitrairement proche de $0$.
\end{theoreme}
\begin{proof}
D'apr\`es le corollaire~\ref{c.homocline-bornee},
les puits $(O_n)$ ne sont pas $N$-uniform\'ement contract\'es \`a
la p\'eriode pour un entier $N\geq 1$. Puisque $f$ est g\'en\'erique, on d\'eduit du th\'eor\`eme~\ref{t.pliss}
que $\Lambda$ est limite de Hausdorff d'une suite de puits $(\tilde O_n)$ ayant un exposant de Lyapunov arbitrairement proche de $0$.
D'apr\`es le corollaire~\ref{c.tangence}, $\Lambda$ poss\`ede donc une d\'ecomposition domin\'ee
$E\oplus E^c$ avec $\dim(E^c)=1$. De plus $E^c$ n'est pas uniform\'ement dilat\'e.
Le th\'eor\`eme~\ref{t.trichotomie}
implique donc que l'un des cas suivants se produit.
\begin{itemize}
\item[--] $K$ est contenu dans une classe homocline ayant des orbites p\'eriodiques d'indice $d-1$
dont le plus grand exposant de Lyapunov est arbitrairement proche de $0$.
\item[--] $K$ contient un ensemble minimal $K'$ dont les exposants de Lyapunov le long de
$E^c$ pour toute mesure invariante support\'ee sur $K'$ sont nuls.
La proposition~\ref{p.H} entra\^\i ne alors que $K'$ est contenu dans une classe homocline
ayant des orbites p\'eriodiques d'indice $d-1$ arbitrairement proches de $K'$.
En particulier, $H$ contient des orbites p\'eriodiques d'indice $d-1$ dont le plus grand exposant de
Lyapunov est arbitrairement proche de $0$.
\end{itemize}
Le th\'eor\`eme est d\'emontr\'e dans tous les cas.
\end{proof}

\section{Application (3)~: loin des cycles h\'et\'erodimensionnels}

Les r\'esultats de ce chapitre permettent d'\'etendre~\cite{model} le r\'esultat local de Wen (corollaire~\ref{c.wen}).
\begin{theoreme}[Crovisier]\label{t.part-hyp}
Tout diff\'eomorphisme $f$ appartenant \`a un G$_{\delta}$ dense de $\diff^1(M)\setminus \overline{\cT\cup \cC}$
est partiellement hyperbolique~: son ensemble r\'ecurrent par cha\^\i nes $\cR(f)$ se d\'ecompose en un nombre
fini d'ensembles compacts disjoints $\Lambda_{1},\dots,\Lambda_{s}$. Chaque ensemble $\Lambda_{i}$, $1\leq i\leq s$,
poss\`ede une d\'ecomposition domin\'ee $T_{\Lambda_{i}}M=E^s\oplus E^c_{1}\oplus E^c_{2}\oplus E^u$ qui est partiellement
hyperbolique et telle que chaque fibr\'e central $E^c_{1}, E^c_{2}$ est de dimension $0$ ou $1$.

De plus,
\begin{itemize}
\item[--]
les classes homoclines de $f$ sont hyperboliques par cha\^\i nes,
\item[--] si $H(p)$ est une classe homocline ayant un fibr\'e $ E^c_i$ non hyperbolique,
elle poss\`ede des orbites p\'eriodiques homocliniquement reli\'ees \`a $p$ avec un exposant de Lyapunov
le long de $E^c_i$ arbitrairement proche de $0$,
\item[--] les classes ap\'eriodiques ont une dynamique minimale et
un fibr\'e central de dimension $1$ et de type (N).
\end{itemize}
\end{theoreme}

\begin{remarque}
Nous verrons en section~\ref{s.degenerescence}
que les fibr\'es $E^s$ et $E^u$ ne sont pas d\'eg\'en\'er\'es, sauf dans le cas des puits et des sources
(qui sont en nombre fini).
\end{remarque}

\begin{proof}
Consid\'erons tout d'abord un ensemble minimal $K$ ayant une structure partiellement hyperbolique
$T_{K}M=E^s\oplus E^c\oplus E^u$ avec $\dim(E^c)=1$ telle que toute mesure invariante support\'ee
sur $K$ ait un exposant central nul.
D'apr\`es la proposition~\ref{p.vrillee} et le lemme~\ref{l.connexion-forte}, l'ensemble $K$ n'est pas vrill\'e.
D'apr\`es les propositions~\ref{p.R}, \ref{p.H} et \ref{p.non-vrillee},
si $E^c$ est de type (R), (H) ou (P), tout voisinage de $K$ contient une orbite p\'eriodique
contenue dans la classe de r\'ecurrence par cha\^\i nes de $K$.
En particulier, $K$ est contenu dans une classe homocline ayant des points p\'eriodiques
d'indice $\dim(E^s)$ ou $\dim(E^s)+1$ et dont le $(\dim(E^s)+1)$-\`eme exposant de Lyapunov est arbitrairement proche de $0$.
Lorsque $E^c$ est de type (N) et que $K$ est strictement contenu dans sa classe de r\'ecurrence par cha\^\i nes,
nous obtenons la m\^eme conclusion d'apr\`es la proposition~\ref{p.N}. Ainsi
$K$ satisfait l'un des cas suivants.
\begin{itemize}
\item[--] $K$ est contenu dans une classe homocline ayant des points p\'eriodiques
d'indice $\dim(E^s)$ ou $\dim(E^s)+1$ et dont le $(\dim(E^s)+1)$-\`eme exposant de Lyapunov est arbitrairement proche de $0$,
\item[--] ou $K$ co\"\i ncide avec sa classe de r\'ecurrence par cha\^\i nes et $E^c$ est de type (N).
\end{itemize}
\medskip

Consid\'erons \`a pr\'esent une classe de r\'ecurrence par cha\^\i nes $C$ de $f$.
Si $C$ est une classe ap\'eriodique, elle contient un ensemble minimal non hyperbolique $K$
dont le fibr\'e central est de dimension $1$. D'apr\`es le corollaire~\ref{c.anosov-closing}, toutes les mesures
support\'ees sur $K$ ont un exposant de Lyapunov selon $E^c$ qui est nul. Nous pouvons
donc appliquer la discussion du premier paragraphe et conclure que $K=C$ et que le fibr\'e central est de type (N).
\medskip

Si l'on suppose que $C$ est une classe homocline $H(p)$, elle poss\`ede un unique indice.
D'apr\`es le corollaire~\ref{c.tangence}, elle supporte une d\'ecomposition domin\'ee $E\oplus F$ telle que $\dim(E)$
co\"{\i}ncide avec la dimension stable de $p$.
Si $E$ n'est pas uniform\'ement contract\'e, on peut appliquer le th\'eor\`eme~\ref{t.trichotomie}.
\begin{itemize}
\item[--] Nous excluons le premier cas puisque $H(p)$ poss\`ede un unique indice.
\item[--] Dans le second cas, la classe poss\`ede des points p\'eriodiques ayant un exposant stable proche de $0$.
\item[--] Dans le troisi\`eme cas, la classe contient un ensemble minimal  non hyperbolique $K$.
Le premier paragraphe montre que la classe contient des orbites p\'eriodiques ayant un exposant central proche de $0$.
Puisque $\dim(E^s)<\dim(E)$, l'exposant central est un exposant stable.
\end{itemize}
Nous concluons (des deux derniers cas) que le fibr\'e $E$ se d\'ecompose $E=E'\oplus E^c_1$ avec $\dim(E^c)=1$.
En reprenant le raisonnement avec le fibr\'e $E'$, nous en d\'eduisons que $E'$ est uniform\'ement contract\'e.
Puisque la classe contient des points p\'eriodiques pour lesquels $E^c_{1}$ est un fibr\'e stable, le type
de $E^c_{1}$ ne peut pas \^etre (N), (P) ou (H)-r\'epulsif. D'apr\`es la proposition~\ref{p.R2}, il ne peut \^etre de type (R).
Il est donc de type (H)-attractif.
Le m\^eme raisonnement montre que $F$ est uniform\'ement dilat\'e ou se d\'ecompose
en $F=E^c_{2}\oplus E^u$ avec $\dim(E^c_{2})=1$ et $E^c_{2}$ est de type (H)-r\'epulsif.
On en d\'eduit que $H(p)$ est hyperbolique par cha\^\i nes.
\end{proof}
\bigskip

Nous obtenons aussi un r\'esultat~\cite{cp1} sur la structure des quasi-attracteurs
(comparer au th\'eor\`eme~\ref{t.quasi-attracteur}).

\begin{corollaire}[Crovisier-Pujals]\label{c.cp-attracteur}
Pour tout diff\'eomorphisme $f$ appartenant \`a un G$_{\delta}$ dense de $\diff^1(M)\setminus \overline{\cT\cup \cC}$,
les quasi-attracteurs sont des classes homoclines hyperboliques par cha\^\i nes $H$ ayant une structure partiellement hyperbolique $T_H=E^{cs}\oplus E^{cu}=(E^s\oplus E^c)\oplus E^u$ avec $\dim(E^c)=0$ ou $1$.
\end{corollaire}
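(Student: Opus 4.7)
The plan is to reduce the statement to Yang's Theorem~\ref{t.quasi-attracteur} and Theorem~\ref{t.part-hyp}, both of which apply here since $\diff^1(M)\setminus \overline{\cT\cup\cC}$ is contained in $\diff^1(M)\setminus \overline{\cT}$. From Theorem~\ref{t.quasi-attracteur}, on a residual set, every quasi-attractor is a homoclinic class $H$ equipped with a dominated splitting $T_HM = E^s\oplus E^c\oplus F$ such that $E^s$ is uniformly contracted, $\dim(E^c)\leq 1$ is of type (H)-attractive, and $H$ contains periodic orbits homoclinically related to a reference saddle whose central Lyapunov exponent along $E^c$ is arbitrarily close to $0$. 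Theorem~\ref{t.part-hyp} moreover asserts that every homoclinic class is chain-hyperbolic under our hypotheses. Consequently, the entire conclusion reduces to showing that the right-hand bundle $F$ is uniformly expanded, i.e.\ $F=E^u$, so that the chain-hyperbolic decomposition takes the announced form $E^{cs}\oplus E^{cu}=(E^s\oplus E^c)\oplus E^u$.

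The main step is therefore to rule out the existence of a non-trivial center bundle on the unstable side. Suppose, for contradiction, that $F$ is not uniformly expanded. Applying Corollary~\ref{c.tangence} to $f^{-1}$ (using that $f$ is far from $\cT$) yields a finer dominated decomposition $F=E^c_2\oplus E^u$ with $\dim(E^c_2)=1$ and $E^u$ uniformly expanded. Moreover, by the arguments of Theorem~\ref{t.part-hyp} applied to the $E^c_2$-fibre (via Theorems~\ref{t.trichotomie} and~\ref{t.barycentre}), the homoclinic class $H$ then contains periodic orbits, homoclinically related to a fixed periodic point $p\in H$, whose Lyapunov exponent along $E^c_2$ is arbitrarily close to $0$.

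Now apply Theorem~\ref{t.bgv} to such a periodic orbit to perturb its linear cocycle and produce, after composition with Franks' lemma, a nearby periodic orbit $q$ whose Lyapunov exponent along the continuation of $E^c_2$ has become strictly negative; hence $q$ has index $i+1$, where $i=\dim(E^s\oplus E^c)$ is the common index of periodic orbits of $H$ (common by section~\ref{s.consequence}, using that $f$ is far from $\cC$). Since $H$ is Lyapunov stable, it admits a base of attracting neighborhoods, and by the robustness of chain-hyperbolic Lyapunov-stable classes (Theorem~\ref{t.chain-robust}, cf.\ also Potrie's result), for any sufficiently small perturbation $g$ the continuation $H_g$ remains a quasi-attractor with an arbitrarily small attracting neighborhood. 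The perturbation producing $q$ being localized near $H$, the orbit $q$ lies in this attracting neighborhood and therefore in the chain-recurrence class of $H_g$. Thus $H_g$ contains periodic orbits of indices $i$ and $i+1$ simultaneously, and section~\ref{s.consequence} provides a further perturbation producing a heterodimensional cycle between them, contradicting $f\in \diff^1(M)\setminus \overline{\cT\cup\cC}$.

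The delicate step is the robustness argument in the last paragraph: one must ensure that the Franks-type perturbation used to bifurcate the weak central exponent does not destroy the attracting-neighborhood structure, so that the new periodic orbit is genuinely inside the continuation of the quasi-attractor. This is precisely where the chain-hyperbolic structure provided by Theorem~\ref{t.part-hyp} (and the fine-trapping of the plaque family, in the spirit of Theorem~\ref{t.chain-robust}) enters decisively; once it is secured, the creation of a heterodimensional cycle follows from the standard generic consequences of the connecting lemma, yielding the contradiction and hence $F=E^u$.
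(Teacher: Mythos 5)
Your global strategy is essentially the paper's: dispose of the aperiodic case, invoke Theorem~\ref{t.part-hyp} for the chain-hyperbolic splitting $E^{cs}\oplus E^{cu}=(E^s\oplus E^c_1)\oplus(E^c_2\oplus E^u)$, and rule out a non-hyperbolic one-dimensional bundle $E^c_2$ of type (H)-r\'epulsif by producing, from the weak periodic orbits homoclinically related to $p$, a periodic point of index $i+1$ inside the quasi-attractor, whence a heterodimensional cycle. (Your use of Yang's Theorem~\ref{t.quasi-attracteur} for the aperiodic case is a legitimate substitute for the paper's direct argument via Proposition~\ref{p.piege}.) The problem lies in how you execute the key step.

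You create the orbit $q$ of index $i+1$ by a Franks-type perturbation (Theorem~\ref{t.bgv} composed with Theorem~\ref{t.franks}) and then assert that $q$ belongs to the chain-recurrence class of $H(p_g)$ because it sits in a small attracting neighborhood of the quasi-attractor. That inference is false in general: an attracting neighborhood of a quasi-attractor may contain many other chain classes (think of a quasi-attractor accumulated by sinks, as in the Bonatti--Li--Yang examples), so membership in the neighborhood gives no chain recurrence with $H(p_g)$. What would actually tie $q$ to the class is the heteroclinic intersection $W^u(p)\cap W^s(O)$ that existed before the bifurcation, but the plain Franks lemma does not preserve invariant manifolds --- preserving such intersections is precisely the content of Gourmelon's refinement~\cite{gourmelon-franks}, which is how Potrie's theorem (quoted in the paper) handles the analogous bifurcation of a periodic orbit of a bi-Lyapunov-stable class into a sink. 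With that lemma your argument closes: $W^u(p_g)\subset H(p_g)$ since the class is a quasi-attractor, so $W^u(p_g)\cap W^s(q)\neq\emptyset$ forces $q\in H(p_g)$. Without it, the step is a gap.

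The paper sidesteps the perturbation entirely at this point: by a generic property of the weak periodic orbits homoclinically related to $p$, such an orbit $q$ carries an arbitrarily short central curve tangent to $E^c_2$ whose far endpoint is a periodic point $q'$ for which $E^c_2$ is a stable direction; this curve lies in $W^u(q)$, and since $H(p)$ is a quasi-attractor it contains $W^u(q)$ and hence $q'$. Thus $H(p)$ already contains, for $f$ itself, periodic points of two different indices, and a single perturbation (section~\ref{s.consequence}) produces the heterodimensional cycle. This non-perturbative route is exactly what makes the robustness issue you flag as ``delicate'' disappear.
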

\begin{proof}
Consid\'erons une classe ap\'eriodique $K$. D'apr\`es le th\'eor\`eme pr\'ec\'edent, elle poss\`ede
une structure partiellement hyperbolique $T_KM=E^s\oplus E^c\oplus E^u$ avec $\dim(E^c)=1$.
De plus $E^c$ est de type (N). La proposition~\ref{p.piege} implique alors que l'ensemble instable de $K$
coupe la vari\'et\'e stable d'une orbite p\'eriodique. On en d\'eduit que $K$ ne contient pas sa vari\'et\'e
instable et n'est donc pas un quasi-attracteur.

Un quasi-attracteur est donc une classe homocline hyperbolique par cha\^\i nes $H(p)$.
Nous devons montrer qu'elle n'admet pas de fibr\'e non hyperbolique $E^c_2$ de dimension $1$ de type
(H)-r\'epulsif. Si c'\'etait le cas, il existerait des orbites p\'eriodiques homocliniquement
reli\'ees \`a $p$ ayant un exposant de Lyapunov le long de $E^c_2$ positif et arbitrairement proche de $0$.
Puisque $f$ satisfait une condition de g\'en\'ericit\'e, il existe des points p\'eriodiques $q$ d'orbites homocliniquement reli\'ees
\`a $p$ avec une demi-vari\'et\'e centrale tangente \`a $E^c_2$ arbitrairement courte~:
l'autre extr\'emit\'es de la vari\'et\'e centrale de $q$ est bord\'ee
par un point p\'eriodique $q'$ pour lequel $ E^c_2$ est un fibr\'e stable.
On en d\'eduit que $q'$
a une vari\'et\'e stable de dimension plus grande que celle de $p$.
Puisque $H(p)$ est un quasi-attracteur, il contient la vari\'et\'e instable de $q$~: par cons\'equent il
contient $q'$. La classe $H(p)$ contient donc des points p\'eriodiques d'indices diff\'erents.
La section~\ref{s.consequence} montre alors qu'il existe des perturbation de $f$ qui poss\`edent un cycle h\'et\'erodimensionnel. C'est une contradiction. La classe $H(p)$ n'a donc pas de fibr\'e non hyperbolique
de type (H)-r\'epulsif.
\end{proof}
\medskip

Nous terminons par un premier r\'esultat vers la finitude des quasi-atracteurs.

\begin{proposition}[Crovisier-Pujals]\label{p.fermeture}
Pour tout diff\'eomorphisme $f$ appartenant \`a un G$_{\delta}$ dense de $\diff^1(M)\setminus \overline{\cT\cup \cC}$,
l'union des quasi-attracteurs non-triviaux est ferm\'ee.
\end{proposition}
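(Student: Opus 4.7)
The plan is to show that the chain recurrence class of any point $x$ which is accumulated by points lying in non-trivial quasi-attractors is itself a non-trivial quasi-attractor. Let $(x_k)_{k\ge 1}$ be a sequence with each $x_k$ in a non-trivial quasi-attractor $Q_k$, converging to some point $x \in M$; I want to exhibit a non-trivial quasi-attractor containing $x$.

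First, by the theorem~\ref{t.part-hyp}, the chain recurrent set $\cR(f)$ decomposes into finitely many disjoint partially hyperbolic pieces $\Lambda_1,\dots,\Lambda_s$; since each $Q_k$ is chain transitive, it sits inside one of them, and after extracting a subsequence I may assume all $Q_k$ lie in a common piece $\Lambda_i$, so that $x \in \Lambda_i$. By the corollary~\ref{c.cp-attracteur}, each $Q_k=H(p_k)$ is a chain-hyperbolic homoclinic class with a partially hyperbolic splitting $E^s \oplus E^c \oplus E^u$, $\dim E^c \le 1$, whose finely trapped center-stable plaque family $\cW^{cs}$ has plaques of size bounded below by a uniform constant $\rho > 0$, thanks to the uniform partial hyperbolicity of the ambient piece $\Lambda_i$ and the theorem~\ref{t.hps} of Hirsch--Pugh--Shub.

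Next, I would argue that the attracting tube $U_k := \bigcup_{y \in Q_k} \cW^{cs}_y$ is contained in the basin of attraction of $Q_k$. For any $y \in U_k$ the forward orbit is carried by the trapped plaques and stays in a $\rho$-neighborhood of the orbit of its basepoint in $Q_k$; combined with the Lyapunov stability of the quasi-attractor $Q_k$ (equivalent, by the characterization recalled in section~\ref{s.chain-hyp}, to the very quasi-attractor property), this forces $\omega(y) \subset Q_k$. In particular, $U_k$ contains a $\rho$-neighborhood of $Q_k$ transverse to $E^{cu}$, uniformly in $k$. Since $x_k \to x$ with $x_k \in Q_k$, for $k$ sufficiently large $d(x, Q_k) < \rho$, hence $x \in U_k$ and $\omega(x) \subset Q_k$.

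On the other hand, $x \in \cR(f)$ by semi-continuity of the chain recurrent set, so $x$ lies in some chain recurrence class $C \subset \Lambda_i$; because $C$ is closed and $f$-invariant, $\omega(x) \subset C$ as well. Thus $\emptyset \neq \omega(x) \subset Q_k \cap C$, and since chain recurrence classes are pairwise disjoint, we must have $Q_k = C$ for all such large $k$. Consequently $C$ is a quasi-attractor, and is non-trivial since $Q_k$ is.

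The main technical difficulty lies in the middle step: establishing the uniform size $\rho$ of the tubes $U_k$ and their containment in the basins of the $Q_k$, independently of $k$. This rests on the chain-hyperbolic structure of each quasi-attractor provided by corollary~\ref{c.cp-attracteur} (which gives finely trapped plaque families in both directions) together with the uniform partial hyperbolicity of the piece $\Lambda_i$ furnished by theorem~\ref{t.part-hyp}; once these uniform estimates are in hand, the rest of the argument is a purely topological manipulation of chain recurrence classes and $\omega$-limit sets.
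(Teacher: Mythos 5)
Your argument has a genuine gap at its central step, and its overall structure would prove something stronger than (and different from) the statement. The key claim is that the tube $U_k=\bigcup_{y\in Q_k}\cW^{cs}_y$ lies in the basin of $Q_k$, i.e.\ that $\omega(y)\subset Q_k$ for every $y\in U_k$. The trapping property $f(\overline{\cW_y})\subset \cW_{f(y)}$ only confines the forward orbit of $y$ to plaques of some fixed diameter centred on the forward orbit of the basepoint; it yields $\omega(y)\subset$ a fixed-size neighbourhood of $Q_k$, not $\omega(y)\subset Q_k$. That neighbourhood may contain other chain classes --- indeed, in the situation at hand it contains the accumulating quasi-attractors $Q_{k'}$ themselves --- so nothing forces $\omega(y)$ into $Q_k$. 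What you are implicitly assuming is that a quasi-attractor attracts a whole neighbourhood, which is essentially Question~\ref{q.homocline-essentiel} and is not available (even the Bonatti--Gan--Li--Yang result only gives attraction of a residual subset). The uniformity in $k$ of the plaque size $\rho$ is also asserted rather than proved: Theorem~\ref{t.part-hyp} gives a uniform splitting on the ambient piece, but the finely trapped plaque families are produced class by class.

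Structurally, if your argument worked it would show that the chain class $C$ of the limit point coincides with $Q_k$ for all large $k$, i.e.\ that the sequence of quasi-attractors is eventually constant. That is much stronger than closedness of the union and is not what one should expect. The paper's proof instead takes a Hausdorff limit $A_n\to K\subset\Lambda$ and shows that the (possibly new) chain class $\Lambda$ is a quasi-attractor, by a case analysis: first excluding that $\Lambda$ is aperiodic (using $u$-saturation of the $A_n$ and Proposition~\ref{p.piege}); then, when the unstable dimension of $\Lambda=H(p)$ is at most that of the $A_n$, deducing $cu$-saturation and $W^u(p)\subset H(p)$; and in the remaining case, where the central bundle is (H)-repulsive over $H(p)$ but (H)-attractive over the $A_n$, producing periodic orbits $O_n\subset A_n$ avoiding a given open set, showing $W^s(O_n)\cap W^u(p)\neq\emptyset$ robustly, and then using the connecting lemma to create a heterodimensional cycle --- contradicting the hypothesis of being far from $\cC$. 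This last step is where the hypothesis $f\notin\overline{\Cycl}$ is used essentially; your proposal never uses it beyond quoting the structure theorems, which is a strong sign that the approach cannot close.
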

Nous verrons en section~\ref{s.degenerescence} que l'on peut enlever l'hypoth\`ese que les quasi-atracteurs sont triviaux.
\begin{proof}
Consid\'erons une suite de quasi-attracteurs non triviaux $(A_n)$ qui converge en topologie de Hausdorff
vers un ensemble compact invariant $K$, contenu dans une classe de r\'ecurrence par cha\^\i nes $\Lambda$.
Nous devons montrer que $\Lambda$ est un quasi-attracteur.
D'apr\`es le th\'eor\`eme~\ref{t.part-hyp}, $\Lambda$ est partiellement hyperbolique.
Nous notons $E^u$ son fibr\'e instable fort.

\begin{affirmation}
$\Lambda$ est une classe homocline $H(p)$.
\end{affirmation}
\begin{proof}
Supposons par l'absurde que $\Lambda$ est une classe ap\'eriodique.
Elle a une d\'e\-com\-po\-si\-tion domin\'ee $T_\Lambda M=E^s\oplus E^c\oplus E^u$,
avec $\dim(E^c)=1$ et $E^c$ de type (N). La proposition~\ref{p.degenere} implique que $E^u$ est non  d\'eg\'en\'er\'e.
Pour $n$ grand, le fibr\'e $ E^u$ est \'egalement d\'efini au-dessus des ensembles $A_n$.
Puisque les ensembles $ A_n$ sont des quasi-attracteurs, ils sont satur\'es en vari\'et\'es instables fortes
tangentes \`a $E^u$. Par passage \`a la limite, $K$ est \'egalement satur\'e en vari\'et\'es instables fortes.
Ceci contredit la proposition~\ref{p.piege} puisque $\Lambda$ ne contient pas de points p\'eriodiques.
\end{proof}
\smallskip

\begin{affirmation}
Nous pouvons nous ramener au cas o\`u la dimension stable de $p$ est strictement inf\'erieure \`a
celle des points p\'eriodiques des classes $A_n$.
\end{affirmation}
\begin{proof}
Supposons que la dimension instable de $p$ est inf\'erieure ou \'egale \`a celle des classes $A_n$.
Consid\'erons des familles de plaques $\cW^{cs},\cW^{cu}$ d\'efinissant sur $H(p)$
une structure de classe hyperbolique par cha\^\i nes.
Elles s'\'etendent \'egalement au-dessus des quasi-attracteurs $A_n$.

Si $E^{cu}$ est uniform\'ement dilat\'e, les quasi-attracteurs $A_n$ sont satur\'es en plaques de $\cW^{cu}$.

Sinon, il y a une d\'ecomposition $E^{cu}=E^c\oplus E^u$ et l'on peut aussi consid\'erer
une famille de plaques centrales $\cW^c$ tangentes \`a $E^c$ et pi\'eg\'ees par $f^{-1}$.
Nous remarquons que les plaques $\cW^c_x$ pour $x\in A_n$ sont contenues dans $A_n$.
En effet, si ce n'\'etait pas le cas il existerait un point p\'eriodique $q$ homocliniquement reli\'e \`a l'orbite de $p$
dont la vari\'et\'e instable ne contient pas enti\`erement la plaque $\cW^c_q$~:
il existe un point p\'eriodique $q'\in \cW^c_q$ qui borde la vari\'et\'e instable de $q$.
Puisque $A_n$ est un quasi-attracteur, il contient la vari\'et\'e instable de $q$ et donc le point $q'$.
Cependant le point $q'$ est d'indice diff\'erent de l'indice de $q$ et la section~\ref{s.consequence}
montre que l'on peut cr\'eer un cycle h\'et\'erodimensionnel par perturbation. C'est une contradiction.

Dans les deux cas les quasi-attracteurs $A_n$ sont satur\'es en plaques $\cW^{cu}$. Par passage \`a
la limite c'est aussi le cas de $K$.
\smallskip

D'apr\`es le lemme~\ref{l.hyp-chain}, il existe un ensemble dense de points p\'eriodiques $q$ homocliniquement reli\'es \`a l'orbite de $p$
tels que $\cW^{cs}_q\subset W^s(q)$ et $\cW^{cu}_q\subset W^u(q)$.
Il existe un tel point $q$ proche de $K$ dont la vari\'et\'e stable intersecte transversalement une
plaque centre-instable de $K$. On en d\'eduit finalement que $H(p)$ contient la vari\'et\'e instable de $p$.
D'apr\`es la section~\ref{s.consequence}, $H(p)$ est un quasi-attracteur. Ceci conclut la d\'emonstration.
Par cons\'equent, nous sommes ramen\'es au cas o\`u la dimension instable de $p$ est strictement plus grande
que celles des classes $A_n$.
\end{proof}
\smallskip

Nous en d\'eduisons que $H(p)$ a une d\'ecomposition domin\'ee
$T_{H(p)}=E^{cs}\oplus E^c\oplus E^u$ avec $\dim(E^c)=1$
telle que $E^c$ est de type (H)-r\'epulsif au-dessus de $H(p)$
et de type (H)-attractif au-dessus des quasi-attracteurs $A_n$.
Nous allons montrer que ceci m\`ene \`a une contradiction.

On consid\`ere un point $z\in K$ et on fixe un petit voisinage $U$ de $z$.
En utilisant l'expansion uniforme le long de $E^u$, on en d\'eduit que pour tout $n$ grand, il existe une
orbite de $A_n$ qui \'evite le voisinage $U$.
D'apr\`es le th\'eor\`eme de densit\'e (corollaire~\ref{c.pugh}),
il existe une orbite p\'eriodique $\tilde O_n$ contenue dans un petit voisinage de $A_n$
qui \'evite  $U$. En consid\'erant des plaques centre-stable pi\'eg\'ee suffisamment petites
au-dessus de la dynamique proche de $A_n$, on montre que $A_n$ contient une orbite
p\'eriodique $O_n$ rencontrant les m\^emes plaques centre-stable que $\tilde O_n$.
Par cons\'equent l'orbite p\'eriodique $O_n\subset A_n$ \'evite l'ouvert $U$.

\begin{affirmation}
Pour tout $n$ assez grand, il existe un voisinage $\cU_n$ de $f$ form\'e de dif\-f\'e\-o\-mor\-phis\-mes
pour lesquels les continuations de $W^{s}(O_n)$ et $W^u(p)$ s'intersectent.
\end{affirmation}
\begin{proof}
D'apr\`es l'affirmation pr\'ec\'edente, $E^c$ est un fibr\'e stable de $O_n$.
Il existe donc un point $x\in O_n$ tel que pour tout $k\geq 0$ on ait
$\prod_{i=0}^{k-1} \|Df(f^i(x))\|\leq 1$. La domination entre $E^{cs}$ et $E^c$
implique alors que $\cW^{cs}_x$ est contenue dans l'ensemble stable de $O_n$.
Par ailleurs, $K$ est contenu dans l'adh\'erence de la vari\'et\'e $W^u(p)$.
On en d\'eduit que $W^s(O_n)$ et $W^u(p)$ ont un point d'intersection transverse $z$, i.e.
$T_zM=T_zW^s(O_n)+T_zW^u(p)$. Cette propri\'et\'e est robuste aux petites perturbations dans $\diff^1(M)$.
\end{proof}

Pour $n$ grand, $W^u(O_n)$ rencontre un voisinage arbitrairement petit de $z$.
Par ailleurs, la vari\'et\'e $W^s(f^k(p))$ rencontre tout voisinage de $z$ pour un entier $k$.
Le lemme de connexion permet de cr\'eer une perturbation $g$, $C^1$-proche de $f$,
telle que $W^u(O_n)$ et $W^s(f^k(p))$ s'intersectent. On en d\'eduit que
$p$ et $O_n$ sont li\'es par des orbites h\'et\'eroclines de $g$.
Puisque leurs indices diff\`erent, nous avons cr\'e\'e un cycle h\'et\'erodimensionnel,
ce qui est une contradiction.
\end{proof}


\chapter{Hyperbolicit\'es topologique et uniforme}\label{c.hyp}

Ce chapitre traite de la conjecture de Palis.
Nous savons (th\'eor\`eme~\ref{t.part-hyp}) que les dif\-f\'e\-o\-mor\-phis\-mes loin des bifurcations homoclines sont partiellement hyperboliques. Nous devons \`a pr\'esent
montrer que les fibr\'es centraux des dynamiques partiellement hyperboliques du th\'eor\`eme~\ref{t.part-hyp} sont des fibr\'es uniform\'ement contract\'es ou dilat\'es.
\smallskip

La conjecture de Palis a \'et\'e r\'esolue sur les surfaces par Pujals et Sambarino~\cite{pujals-sambarino}.

\begin{theoreme}[Pujals-Sambarino]\label{t.ps}
Lorsque $M$ est de dimension $2$, tout diff\'eomorphisme peut \^etre approch\'e dans $\diff^1(M)$ par un
diff\'eomorphisme hyperbolique ou par un diff\'eomorphisme ayant une tangence homocline.
\end{theoreme}

En dimension sup\'erieure, nous avons montr\'e avec Pujals~\cite{cp1}
que, loin des bifurcations homoclines,
sur un ouvert dense de $M$ la dynamique se comporte comme une dynamique hyperbolique.
Plus pr\'ecis\'ement, un diff\'eomorphisme est dit \emph{essentiellement hyperbolique}\index{hyperbolicit\'e!essentielle}
s'il poss\`ede un nombre fini
d'attracteurs hyperboliques dont l'union des bassins est dense dans $M$, ainsi qu'un nombre fini de
r\'epulseurs hyperboliques dont l'union des bassins est denses dans $M$.

\begin{theoreme}[Crovisier-Pujals]\label{t.cp}
Tout diff\'eomorphisme peut \^etre approch\'e dans $\diff^1(M)$ par un diff\'eomorphisme
essentiellement hyperbolique ou par un diff\'eomorphisme qui pr\'esente une tangence homocline ou un cycle h\'et\'erodimensionnel.
\end{theoreme}
\medskip

La d\'emonstration du th\'eor\`eme~\ref{t.ps} (et ses g\'en\'eralisations)
repose essentiellement sur un argument de distorsion
et utilise pour cela le passage en r\'egularit\'e $C^2$.
Le d\'emonstration du th\'eor\`eme~\ref{t.cp} est plus g\'eom\'etrique~: c'est pour cette raison
que l'on obtient seulement l'hyperbolicit\'e des attracteurs et que l'on ne contr\^ole pas
la dynamique sur les pi\`eces de type ``selle''.

\section{Hyperbolicit\'e des fibr\'es extr\^emes}
Nous avons vu en section~\ref{s.degenere}, qu'au-dessus d'un ensemble transitif par cha\^\i nes $K$
qui n'est pas un puits, un fibr\'e extr\^eme de dimension $1$ est ``topologiquement hyperbolique''
(il est de type (H)). Dans certains cas, il est possible de d\'emontrer qu'il est uniform\'ement
hyperbolique.
\smallskip

Sur les surfaces, la sym\'etrie entre $E$ et $F$ permet~\cite{pujals-sambarino} d'obtenir l'hyperbolicit\'e de $K$.
\begin{theoreme}[Pujals-Sambarino]\label{t.pujals-sambarino}
Lorsque $M$ est de dimension $2$, pour tout diff\'eomorphisme appartenant \`a un G$_\delta$ dense de $\diff^1(M)$, tout ensemble compact invariant $K$ ayant une d\'ecomposition domin\'ee non triviale est hyperbolique.
\end{theoreme}

Ceci entra\^\i ne la conjecture de Palis sur les surface.

\begin{proof}[D\'emonstration du th\'eor\`eme~\ref{t.ps}]
Consid\'erons un diff\'eomorphisme qui ne peut pas \^etre approch\'e par un dif\-f\'e\-o\-mor\-phis\-me
ayant une tangence homocline. Nous pouvons l'approcher par un dif\-f\'e\-o\-mor\-phis\-me $f$ appartenant \`a un G$_\delta$
dense de $\diff^1(M)$.
D'apr\`es le th\'eor\`eme~\ref{t.part-hyp}, chaque classe de r\'ecurrence par cha\^\i nes de $f$ poss\`ede une d\'ecomposition
domin\'ee non triviale, elle est donc hyperbolique d'apr\`es le th\'eor\`eme pr\'ec\'edent.
\end{proof}

Ce r\'esultat se g\'en\'eralise~\cite{pujals-sambarino-dissipative} en dimension sup\'erieure lorsque $E$ est uniform\'ement contract\'e.
\begin{theoreme}[Pujals-Sambarino]\label{t.pujals-sambarino-dissipative}
Pour tout diff\'eomorphisme appartenant \`a un G$_\delta$ dense de $\diff^1(M)$, toute
classe homocline $H$ ayant une d\'ecomposition domin\'ee $T_HM=E\oplus F$ avec $\dim(F)=1$
et $E$ uniform\'ement contract\'e est hyperbolique.
\end{theoreme}

Nous avons \'etendu~\cite{cp1} ces travaux dans un cadre o\`u $E$ n'est plus uniform\'ement contract\'e~:
nous supposerons \`a la place que $E$ est finement pi\'eg\'ee et qu'il existe une famille de plaques
localement invariante $\cW$ tangente \`a $E$ (rappelons que d'apr\`es la section~\ref{s.chain-hyp}, une telle famille de plaques est essentiellement unique)
et telle que pour tout $x\in H$
l'intersection $H\cap \cW_x$ est totalement discontinue.

\begin{theoreme}[Crovisier-Pujals]\label{t.pc-2D}
Pour tout diff\'eomorphisme appartenant \`a un G$_\delta$ dense de $\diff^1(M)$ et toute
classe homocline $H$ ayant une d\'ecomposition domin\'ee $T_HM=E\oplus F$ avec $\dim(F)=1$ et v\'erifiant~:
\begin{itemize}
\item[--] $E$ est finement pi\'eg\'ee,
\item[--] il existe une famille de plaques $\cW$ localement invariante et tangente \`a $E$
telle que $\cW_x\cap H$ est totalement discontinu pour tout $x\in H$,
\end{itemize}
le fibr\'e $F$ est uniform\'ement dilat\'e ou $H$ est un puits.
\end{theoreme}

\begin{remarque}\label{r.dimsup}
Ces r\'esultats restent vrais pour des ensemble compacts invariants $K$
contenus dans une vari\'et\'e localement invariante $N$ normalement hyperbolique.
Par exemple, si $N$ est une surface et si $K$ poss\`ede une d\'ecomposition domin\'ee tangente \`a $N$
non triviale, alors $K$ est un ensemble hyperbolique.
\end{remarque}

\section{Technique de Ma\~n\'e-Pujals-Sambarino}
Ma\~n\'e avait d\'emontr\'e un r\'esultat similaire~\cite{mane-1D} pour les endomorphismes du cercle.
Pour les diff\'eomorphismes en dimension plus grande, l'id\'ee est d'exploiter la contraction
(topologique ou uniforme) le long des plaques tangentes \`a $E$ pour ``quotienter'' la dynamique
dans la direction centre-stable et se ramener ainsi \`a un cadre essentiellement unidimensionnel.
\medskip

Les th\'eor\`emes~\ref{t.pujals-sambarino}, \ref{t.pujals-sambarino-dissipative}
et~\ref{t.pc-2D} sont la contrepartie de r\'esultats non perturbatifs en classe $C^2$.
Nous consid\'erons donc dans la suite un diff\'eomorphisme $f\in \diff^2(M)$
et un ensemble compact invariant $K$
muni d'une d\'ecomposition domin\'ee $T_KM=E\oplus F$ avec $\dim(F)=1$. Nous supposerons que~:
\smallskip

\begin{itemize}
\item[a)] pour tout point p\'eriodique de $K$, le fibr\'e $F$ est uniform\'ement dilat\'e~;
\smallskip

\item[b)] $K$ ne contient pas de courbe ferm\'ee simple tangente \`a $F$, invariante par un it\'er\'e de $f$.
\end{itemize}
\smallskip

\noindent
En effet, si l'on suppose que $f$ est un diff\'eomorphisme Kupka-Smale,
tous les points p\'eriodiques qui ne v\'erifient pas la premi\`ere condition
sont des puits et les courbes de la seconde condition sont normalement hyperboliques et portent une dynamique tr\`es simple. En retirant toutes les courbes (en nombre fini), tous les puits de $K$
et leur bassins, on se ram\`ene aux conditions \'enonc\'ees ci-dessus.

On d\'emontre alors que $F$ est uniform\'ement dilat\'e au-dessus de $K$ par une ``r\'ecurrence''~:
on se ram\`ene ais\'ement \`a un ensemble $K$ ayant la propri\'et\'e suppl\'ementaire suivante~:
\smallskip

\begin{itemize}
\item[c)] Pour tout sous-ensemble
compact invariant $K'\subsetneq K$, le fibr\'e $F_{|K'}$ est uniform\'ement dilat\'e.
\end{itemize}
\smallskip

\noindent
Nous introduisons \'egalement une famille de plaques pi\'eg\'ees $\cW^{cs}$ tangente \`a $E$.
\bigskip

La d\'emonstration comporte plusieurs \'etapes.
\smallskip

\begin{itemize}
\item[\bf 1) Hyperbolicit\'e topologique.] En utilisant que pour tout sous-ensemble, $F$ est u\-ni\-for\-m\'e\-ment dilat\'e,
on montre qu'il existe une famille de plaques $\cW^{cu}$ localement invariante et tangente \`a $F$ telle que
$|f^n(\cW^{cu}_x)|\to 0$ lorsque $n\to \infty$.
C'est cet argument, de type Denjoy-Schwartz, qui permet de d\'emontrer le th\'eor\`em~\ref{t.codim-c2}.
\smallskip

\item[\bf 2) Construction de bo\^\i tes markoviennes.]
On construit alors une ``bo\^\i te'' munie d'une lamination centre-instable, i.e. une r\'e\-u\-ni\-on $B$ de courbes contenues dans des plaques de $\cW^{cu}$
telle que $B\cap K$ est d'int\'erieur non vide dans $K$. Nous demandons \'egalement \`a $B$ de v\'erifier
des propri\'et\'es de type ``rectangles
de partitions de Markov''.
\smallskip

\item[\bf 3) Hyperbolicit\'e uniforme.] En induisant dans $B$, les propri\'et\'es markoviennes de $B$,
la contraction topologique des plaques de $\cW$ et un contr\^ole de distorsion montrent
alors que $\|D_xf^{-n}_{|F}\|\to 0$ en tout $x\in K$ lorsque $n\to \infty$.
\end{itemize}
\medskip

Les \'etapes 1) et 3) sont identiques pour chacun des th\'eor\`emes~\ref{t.pujals-sambarino},
\ref{t.pujals-sambarino-dissipative} et~\ref{t.pc-2D} mais la partie 2) est sp\'ecifique.
\smallskip

\begin{itemize}
\item[--] \emph{Dans le cas des surfaces,} on exploite le fait que les plaques $\cW^{cu}$
et $\cW^{cs}$ sont de dimension $1$ pour chercher \`a construire un rectangle g\'eom\'etrique
bord\'e par deux plaques de $\cW^{cs}$ et deux plaques de $\cW^{cu}$.
\smallskip

\item[--] \emph{Dans le cas o\`u le fibr\'e $E$ est uniform\'ement dilat\'e,} il est possible de construire
une partition de Markov pour la classe homocline $H$. La bo\^\i te $B$ est construite \`a partir de l'un des
rectangles de la partition.
\smallskip

\item[--] \emph{Dans le cas o\`u $E$ est seulement finement pi\'eg\'ee,}
l'hypoth\`ese de discontinuit\'e dans les plaques
permet \`a nouveau de construire une partition adapt\'ee. Plus pr\'ecis\'ement,
pour tout $x\in H$, il existe un ensemble compact $C_x\subset \cW^{cs}_x$ v\'erifiant~:
\begin{itemize}
\item[\quad i)] pour $x,x'\in H$ on a $C_x=C_{x'}$ ou $C_x\cap C_{x'}=\emptyset$,
\item[\quad ii)] pour tout $x\in H$, on a $f(C_x)\subset C_{f(x)}$,
\item[\quad iii)] $(C_x)$ varie contin\^ument en topologie de Hausdorff avec $x\in H$.
\end{itemize}
La bo\^\i te $B$ s'obtient alors en fixant $x\in H$,
en consid\'erant des segments de courbes contenus dans les plaques $\cW^{cu}_y$ pour $y\in H\cap C_x$ \`a extr\'emit\'es contenues
dans deux plaques $\cW^{cs}_{x^-}$ et $\cW^{cs}_{x^+}$ proches de $\cW^{cs}_x$.
\end{itemize}

\section{Classes homoclines \`a plaques centre-stables discontinues}\label{s.discontinu}
Nous \'enon\c{c}ons \`a pr\'esent dans le cadre des ensembles hyperboliques un r\'esultat non perturbatif qui permet
de satisfaire l'hypoth\`ese topologique du th\'eor\`eme~\ref{t.pc-2D}.

\begin{theoreme}[Crovisier-Pujals]\label{t.discont1}
Consid\'erons un ensemble hyperbolique $K$ localement maximal muni d'une d\'e\-com\-po\-si\-tion domin\'ee
$T_KM=E^s\oplus E^u=(E^{ss}\oplus E^c)\oplus E^u$ avec $\dim(E^c)=\dim(E^u)=1$.
Alors l'un des trois cas suivants se produit~:
\begin{itemize}
\item[--] $K$ est contenu dans une sous-vari\'et\'e localement invariante
tangente \`a $E^c\oplus E^u$~;
\item[--] $K$ poss\`ede une connexion forte g\'en\'eralis\'ee~;
\item[--] pour tout $x\in K$ l'intersection $W^s_{loc}(x)\cap K$
est totalement discontinue.
\end{itemize}
\end{theoreme}

Ce r\'esultat s'\'etend aux classes homoclines hyperboliques par cha\^\i nes.

\begin{theoreme}[Crovisier-Pujals]\label{t.discont2}
Consid\'erons une classe homocline $H$ et
une d\'ecomposition domin\'ee $T_HM=E^{cs}\oplus E^{cu}=(E^{ss}\oplus E^c)\oplus E^{cu}$ avec $\dim(E^c)=\dim(E^{cu})=1$ telle que $E^{cs}$ et $E^{cu}$ soient finement pi\'eg\'es par $f$ et $f^{-1}$ respectivement.
Alors l'un des trois cas suivants se produit~:
\begin{itemize}
\item[--] $H$ est contenue dans une sous-vari\'et\'e localement invariante
tangente \`a $E^c\oplus E^{cu}$~;
\item[--] $H$ poss\`ede une connexion forte g\'en\'eralis\'ee~;
\item[--] il existe une famille de plaques $\cW^{cs}$ localement invariante
et tangente \`a $E^{cs}$ telle que pour tout $x\in H$ l'intersection $\cW^{cs}_x\cap H$
est totalement discontinue.
\end{itemize}
\end{theoreme}

\begin{proof}[Id\'ee de la d\'emonstration]
Supposons $H$ sans connexion forte g\'en\'eralis\'ee.
\begin{affirmation*}
Il n'existe pas d'ensemble connexe $C\subset H$ non r\'eduit \`a un point et contenu dans une vari\'et\'e stable forte.
\end{affirmation*}
\begin{proof}
Supposons le contraire. Nous utilisons les deux propri\'et\'es suivantes.
\begin{itemize}
\item[--] En it\'erant $C$ n\'egativement, on obtient des ensembles connexes de diam\`etre arbitrairement grand dans les
feuilles stables fortes.
\item[--] Les projections de $C$ par holonomie le long des plaques centre-instables,
pr\'eservent les feuilles stables fortes~:
en effet, si l'image $C'$ de $C$ par holonomie est contenue dans une plaque centre-stable
et rencontre plusieurs vari\'et\'es stables fortes, on choisit un point p\'eriodique $q$
proche de $C'$ et on en d\'eduit que la plaque centre-instable $\cW^{cu}_q$ de $q$ rencontre $C$.
Les it\'er\'es $f^{-n}(C)$ lorsque $n\to +\infty$ convergent vers une partie connexe non triviale
contenue dans $H\cap W^{ss}(q)$. Ceci contredit l'hypoth\`ese sur $H$. 
\end{itemize}
En transportant $C$ par holonomie sur une vari\'et\'e stable d'orbite p\'eriodique $q$
contenue dans $H$, ces propri\'et\'es impliquent que $W^{ss}(q)$ rencontre $H$ sur un ensemble connexe non trivial,
contredisant l'hypoth\`ese que $H$ n'a pas de connexion forte g\'en\'eralis\'ee.
\end{proof}
\smallskip

Supposons que $H$ contienne un ensemble connexe non trivial $C$ contenu dans une plaque
centre-stable. Puisque $C$ intersecte chaque vari\'et\'e stable forte selon un ensemble totalement discontinu,
$C$ est un graphe au-dessus de la direction centrale~: c'est une courbe.
\smallskip

Consid\'erons \`a pr\'esent une orbite p\'eriodique $O$ ayant un point $q$ proche de $C$.
La projection de $C$ sur la plaque centre-stable de $q$ d\'efini une nouvelle courbe $C_q\subset H$~;
puisque $W^{ss}(q)\setminus \{q\}$ n'intersecte pas $C$, cette courbe contient $q$.
En it\'erant, on obtient \'egalement une courbe en chaque point de $O$.
Puisque $H$ contient un ensemble dense de points p\'eriodiques, on construit par passage \`a la limite une courbe
$C_x\subset H$ en chaque point de $H$.

Supposons que $H$ n'est pas contenue dans une vari\'et\'e tangente \`a $E^c\oplus E^{cu}$.
D'apr\`es le th\'eor\`eme~\ref{t.whitney}, il existe deux points $x\neq y$
tels que $W^{ss}(x)=W^{ss}(y)$. Soit $q$ un point p\'eriodique proche de $y$.
La projection par holonomie de $C_x$ sur la plaque centre-stable de $q$ coupe $W^{ss}(q)$ en un point
de $W^{ss}(q)\setminus\{q\}$ appartenant \`a $H$. Nous avons \`a nouveau contredit l'existence
d'une connexion forte g\'en\'eralis\'ee.
Par cons\'equent, les composantes connexes de $H$ dans les plaques centre-stables sont triviales.
\end{proof}

\section{Application~: non d\'eg\'en\'erescence des fibr\'es uniformes}\label{s.degenerescence}

Nous d\'eduisons des sections pr\'ec\'edentes
un r\'esultat~\cite{cp1} qui compl\`ete le th\'eor\`eme~\ref{t.part-hyp}.

\begin{corollaire}[Crovisier-Pujals]
Pour tout diff\'eomorphisme appartenant \`a un G$_\delta$ dense de $\diff^1(M)\setminus \overline{\Tang\cup\Cycl}$,
toute classe de r\'ecurrence par cha\^\i nes qui n'est pas un puits ou une source poss\`ede une structure
partiellement hyperbolique avec fibr\'es stables et instables forts non d\'eg\'en\'er\'es.
\end{corollaire}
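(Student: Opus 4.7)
The plan is to deduce this refinement of Theorem~\ref{t.part-hyp} from the topological and rigidity results developed in this chapter. By the symmetry $f\leftrightarrow f^{-1}$ (the set $\diff^1(M)\setminus\overline{\cT\cup\cC}$ and the notion of sink/source being invariant under this involution), it suffices to prove that whenever a chain-recurrence class $K$ is not a sink, its strong unstable bundle $E^u$ in the partially hyperbolic decomposition $E^s\oplus E^c_1\oplus E^c_2\oplus E^u$ of Theorem~\ref{t.part-hyp} is non-degenerate. I would argue by contradiction, assuming $E^u=0$. If $K$ is aperiodic, Theorem~\ref{t.part-hyp} provides a splitting $E^s\oplus E^c\oplus E^u$ with $E^c$ one-dimensional of type (N), and Proposition~\ref{p.degenere} immediately produces a periodic orbit in $K$, contradicting aperiodicity.

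I would then focus on a homoclinic class $K=H(p)$. Since $f\notin\overline{\cC}$, all periodic orbits in $H(p)$ share a common index $i$ (section~\ref{s.consequence}), and Theorem~\ref{t.part-hyp} endows $H(p)$ with a chain-hyperbolic structure $E^{cs}\oplus E^{cu}$ matching this index. With $E^u=0$, I would first rule out the ``central positive-positive'' configuration in which both $E^c_1$ and $E^c_2$ contribute to the unstable directions of periodic orbits: since $H(p)$ contains periodic orbits with arbitrarily small central exponents (Theorem~\ref{t.part-hyp}), a Franks-Pliss perturbation (Theorems~\ref{t.franks} and~\ref{t.pliss}) would turn the inner central exponent negative, producing a periodic orbit of different index in $H(p)$ and hence a heterodimensional cycle via section~\ref{s.consequence}, contradicting $f\notin\overline{\cC}$. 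What remains is either $\dim E^{cu}=0$, in which case $\cW^{cs}$ traps a neighborhood of $H(p)$ and every periodic orbit of $H(p)$ is a sink (so $H(p)=\{p\}$ is itself a sink, contradicting the hypothesis on $K$), or $\dim E^{cu}=1$ with $E^{cu}=E^c_2$ of type (H)-repulsive.

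Assuming the main case $\dim E^{cu}=1$, I would split the analysis according to $\dim E^c_1$. If $E^c_1=0$, then $E^{cs}=E^s$ is uniformly contracted and Theorem~\ref{t.pujals-sambarino-dissipative} forces $H(p)$ to be hyperbolic, so $E^c_2$ is uniformly dilated, contradicting its centrality in the finest dominated splitting. If $\dim E^c_1=1$, I would apply Theorem~\ref{t.discont2} to the decomposition $(E^s\oplus E^c_1)\oplus E^c_2$ and handle its three alternatives: (i) $H(p)$ lies inside a locally invariant surface tangent to $E^c_1\oplus E^c_2$, normally hyperbolic over $H(p)$, so the two-dimensional hyperbolicity result of Pujals-Sambarino (Theorem~\ref{t.pujals-sambarino} and Remark~\ref{r.dimsup}) forces $E^c_2$ to be uniformly dilated, again contradicting centrality; (ii) a generalized strong connection yields a heterodimensional cycle after a $C^1$-perturbation via Lemma~\ref{l.generalisee}, contradicting $f\notin\overline{\cC}$; (iii) totally disconnected slices $\cW^{cs}_x\cap H(p)$ meet exactly the hypothesis of Theorem~\ref{t.pc-2D}, which yields once more that $E^c_2$ is uniformly dilated.

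The main obstacle I foresee is verifying that the plaque families $\cW^{cs}$ and $\cW^{cu}$ produced by the chain-hyperbolic structure of Theorem~\ref{t.part-hyp} are \emph{finely} trapped, as required by Theorems~\ref{t.discont2} and~\ref{t.pc-2D}, while generic chain-hyperbolicity only guarantees trapped plaques. I would handle this by intersecting the initial $G_\delta$ dense set with a further one consisting of diffeomorphisms whose chain-hyperbolic homoclinic classes are \emph{robustly} chain-hyperbolic in the sense of Theorem~\ref{t.chain-robust}, so that the trapping of $\cW^{cs}$ and $\cW^{cu}$ survives small perturbations and can thus be upgraded to fine trapping. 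Beyond this technical point, the rest of the argument is a case-by-case reduction through the finest dominated splitting, each branch closed off by a rigidity theorem of the chapter or by the hypothesis $f\notin\overline{\cT\cup\cC}$.
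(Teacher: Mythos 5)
Your proposal is correct and follows essentially the same route as the paper: Theorem~\ref{t.part-hyp} plus Proposition~\ref{p.degenere} for aperiodic classes, Theorem~\ref{t.pujals-sambarino-dissipative} when there is a single central bundle, and the three alternatives of Theorem~\ref{t.discont2} (surface case via Pujals--Sambarino, strong connection via Lemma~\ref{l.generalisee}, totally disconnected case via Theorem~\ref{t.pc-2D}) when there are two. Your extra care about upgrading trapped to finely trapped plaque families is a legitimate point that the paper's own proof passes over silently (it is supplied by the type-(H) classification established in the proof of Theorem~\ref{t.part-hyp}), and your preliminary Franks--Pliss step is redundant with what Theorem~\ref{t.part-hyp} already provides, but neither affects correctness.
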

\begin{proof}
Nous appliquons le th\'eor\`eme~\ref{t.part-hyp} sur la structure partiellement hyperbolique des
classes de r\'ecurrence par cha\^\i nes.

Consid\'erons une classe ap\'eriodique. Elle poss\`ede une structure partiellement hyperbolique
avec central de dimension $1$ de type (N). La proposition~\ref{p.degenere} montre alors directement
que les fibr\'es uniformes sont non d\'eg\'en\'er\'es.

Consid\'erons une classe homocline non hyperbolique $H(p)$~:
elle admet une structure partiellement hyperbolique de la forme $E^s\oplus E^c\oplus E^u$
ou $E^s\oplus E^c_1\oplus E^c_2\oplus E^u$.
Dans le premier cas, le th\'eor\`eme~\ref{t.pujals-sambarino-dissipative} montre que $E^s$ et $E^u$
ne sont pas d\'eg\'en\'er\'es si la classe n'est pas un puits ou une source.
Dans le second cas, la classe est hyperbolique par cha\^\i nes avec une d\'ecomposition
$E^{cs}\oplus E^{cu}=(E^s\oplus E^c_1)\oplus (E^c_2\oplus E^u)$.
Si $E^u$ est d\'eg\'en\'er\'e, le th\'eor\`eme~\ref{t.discont2} montre que l'on est dans l'un des cas suivants.
\begin{itemize}
\item[--] La classe est contenue dans une surface tangente \`a $E^c_1\oplus E^c_2$
et d'apr\`es le th\'eor\`eme~\ref{t.pujals-sambarino} et la remarque~\ref{r.dimsup}
elle est hyperbolique~: c'est une contradiction.
\item[--] La classe poss\`ede une connexion forte. Si le fibr\'e $E^c_1$
n'est pas uniform\'ement contract\'e, il existe des orbites p\'eriodiques
homocliniquement reli\'ees \`a $p$ ayant un exposant selon $E^c_1$ arbitrairement faible
d'apr\`es le th\'eor\`eme~\ref{t.part-hyp}.
On d\'eduit du lemme~\ref{l.generalisee} que l'on perturber le diff\'eomorphisme pour cr\'eer un cycle h\'et\'erodimensionnel. C'est \`a nouveau une contradiction.
\item[--] La classe est totalement discontinue le long des plaques centre-stables.
\end{itemize}
On applique alors le th\'eor\`eme~\ref{t.pc-2D} et on en d\'eduit que $E^c_2$ est uniform\'ement dilat\'e.
\end{proof}

\begin{corollaire}[Crovisier-Pujals]\label{c.puits-fini}
Tout diff\'eomorphisme dans un G$_\delta$ dense de $\diff^1(M)\setminus \overline{\cT\cup\cC}$,
poss\`ede au plus un nombre fini de puits et de sources.
\end{corollaire}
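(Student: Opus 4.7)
Le plan consiste \`a raisonner par l'absurde en s'appuyant sur le corollaire pr\'ec\'edent, qui fournit l'information essentielle~: les fibr\'es extr\^emes de la structure partiellement hyperbolique d'une classe de r\'ecurrence par cha\^\i nes ne sont d\'eg\'en\'er\'es que si la classe est elle-m\^eme un puits ou une source. Supposons donc que $f$ poss\`ede une suite infinie $(O_n)$ de puits deux \`a deux distincts. Par compacit\'e de $M$, je commencerai par extraire une sous-suite convergeant en topologie de Hausdorff vers un ensemble compact $K$. Pour tous $x,y\in K$, en approximant ces points par $x_n,y_n\in O_n$ et en les reliant par le segment d'orbite contenu dans $O_n$, on obtient par passage \`a la limite des $\varepsilon$-pseudo-orbites dans $M$ joignant $x$ \`a $y$ pour tout $\varepsilon>0$~: $K$ est donc contenu dans une unique classe de r\'ecurrence par cha\^\i nes $\Lambda$ de $f$.

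J'appliquerai ensuite le corollaire pr\'ec\'edent \`a $\Lambda$. Deux cas sont \`a envisager. Si $\Lambda$ est elle-m\^eme un puits, il s'agit d'une orbite p\'eriodique hyperbolique attractive, donc isol\'ee dans $M$~: la continuation hyperbolique force $O_n=\Lambda$ pour tout $n$ assez grand, ce qui contredit la distinction des $O_n$. Sinon, $\Lambda$ admet une structure partiellement hyperbolique $T_\Lambda M=E^s\oplus E^c\oplus E^u$ avec fibr\'e instable fort $E^u$ non d\'eg\'en\'er\'e et uniform\'ement dilat\'e. D'apr\`es les propri\'et\'es g\'en\'erales des d\'ecompositions domin\'ees (section~\ref{s.domination}), cette d\'ecomposition s'\'etend avec les m\^emes dimensions des fibr\'es \`a tout ensemble invariant contenu dans un voisinage $U$ de $\Lambda$. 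Pour $n$ assez grand, l'orbite $O_n\subset U$ poss\`ede donc un fibr\'e instable uniform\'ement dilat\'e non trivial, ce qui contredit le fait que les exposants de Lyapunov d'un puits sont tous strictement n\'egatifs.

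Le cas des sources se ram\`ene au pr\'ec\'edent en appliquant exactement le m\^eme raisonnement \`a $f^{-1}$, l'espace $\diff^1(M)\setminus\overline{\cT\cup\cC}$ \'etant stable par inversion, ainsi que le G$_\delta$ dense auquel appartient $f$. Le c\oe ur du r\'esultat se situe enti\`erement dans le corollaire pr\'ec\'edent (via les th\'eor\`emes~\ref{t.discont1}, \ref{t.discont2} et~\ref{t.pc-2D} sur la dilatation uniforme des fibr\'es centraux)~; la pr\'esente d\'emonstration ne demande qu'un argument \'el\'ementaire de compacit\'e combin\'e \`a la stabilit\'e des d\'ecompositions domin\'ees, sans nouvelle difficult\'e technique.
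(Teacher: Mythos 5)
Votre d\'emonstration est correcte et suit exactement l'argument du texte, dont la preuve (en une ligne) dit pr\'ecis\'ement qu'une suite infinie de puits ou de sources s'accumule sur une partie d'une classe de r\'ecurrence par cha\^\i nes qui n'est ni un puits ni une source, contredisant la non-d\'eg\'en\'erescence des fibr\'es extr\^emes du corollaire pr\'ec\'edent via la robustesse des d\'ecompositions domin\'ees~; vous avez simplement explicit\'e la transitivit\'e par cha\^\i nes de la limite de Hausdorff et le cas o\`u la classe limite est elle-m\^eme triviale. Seule retouche~: votre alternative \og Sinon \fg{} doit aussi \'ecarter le cas o\`u $\Lambda$ est une source, ce qui se fait par le m\^eme argument d'isolement local que pour un puits.
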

\begin{proof}
Une suite de puits ou de source doit s'accumuler sur une partie d'une classe
de r\'ecurrence par cha\^\i nes qui n'est ni un puits ni une source.
\end{proof}

\section{Hyperbolicit\'e des quasi-attracteurs}\label{s.hyperbolicite-quasi-attracteur}
Voici un nouveau r\'esultat sur la g\'eom\'etrie des ensemble hyperboliques~:
quitte \`a perturber, l'existence de vari\'et\'e stables fortes rencontrant la classe en plusieurs
points peut \^etre d\'etect\'ee sur les orbites p\'eriodiques.
Ceci reprend un travail ant\'erieur de Pujals~\cite{pujals-3D1,pujals-3D2}.

\begin{theoreme}[Crovisier-Pujals]\label{t.dichot0}
Soit $K$ un attracteur hyperbolique avec une d\'ecomposition domin\'ee
$T_KM=E^{s}\oplus E^{u}=(E^{ss}\oplus E^c)\oplus E^u$, $\dim(E^c)=1$.

Il existe alors une perturbation $g\in \diff^1(M)$ de $f$ telle que
\begin{itemize}
\item[--] ou bien $W^{ss}(x)\cap K=\{x\}$ pour tout $x\in K$,
\item[--] ou bien $K$ poss\`ede une connexion forte g\'en\'eralis\'ee.
\end{itemize}
\end{theoreme}

Ce r\'esultat s'\'etend aux classes hyperboliques par cha\^\i nes.

\begin{theoreme}[Crovisier-Pujals]\label{t.dichot}
Soit $H$ un quasi-attracteur qui est une classe homocline avec une d\'e\-com\-po\-si\-tion domin\'ee
$T_HM=E^{cs}\oplus E^{cu}=(E^{s}\oplus E^c)\oplus E^u$, $\dim(E^c)=1$ telle que
$E^{cs}$ est finement pi\'eg\'e.

Il existe alors une perturbation $g\in \diff^1(M)$ de $f$ telle que
\begin{itemize}
\item[--] ou bien $W^{ss}(x)\cap H=\{x\}$ pour tout $x\in K$,
\item[--] ou bien $H$ poss\`ede une connexion forte g\'en\'eralis\'ee.
\end{itemize}
\end{theoreme}

Nous pouvons \`a pr\'esent terminer la d\'emonstration du th\'eor\`eme~\ref{t.cp}.

\begin{proof}[D\'emonstration du th\'eor\`eme~\ref{t.cp}]
Par sym\'etrie, il suffit de traiter le cas des attracteurs.

Nous savons d'apr\`es le th\'eor\`eme~\ref{c.cp-attracteur} que chaque quasi-attracteur qui n'est pas un ensemble
hyperbolique est une classe homocline avec une d\'ecomposition domin\'ee
$T_HM=E^{cs}\oplus E^{cu}=(E^{s}\oplus E^c)\oplus E^u$, $\dim(E^c)=1$.
Nous pouvons alors appliquer le th\'eor\`eme~\ref{t.dichot}.
Dans le premier cas, le th\'eor\`eme~\ref{t.whitney} montre que $H$
est contenue dans une sous-vari\'et\'e localement invariante tangente \`a $E^c\oplus E^u$.
Le th\'eor\`eme~\ref{t.pujals-sambarino-dissipative} et la remarque~\ref{r.dimsup} impliquent alors que
$H$ est un ensemble hyperbolique.
Dans le second cas, puisque $H$ poss\`ede des orbites p\'eriodiques faibles homocliniquement reli\'ees \`a $p$,
il existe d'apr\`es le lemme~\ref{l.generalisee} une perturbation de $f$ admettant un cycle h\'et\'erodimensionnel~:
c'est une contradiction. Nous avons donc montr\'e que tous les quasi-attracteurs sont des ensembles hyperboliques.

D'apr\`es les r\'esultats de la section~\ref{s.consequence}, l'union des bassins des attracteurs est dense dans $M$.
D'apr\`es le corollaire~\ref{c.puits-fini}, l'ensemble des puits de $f$ est fini.
D'apr\`es la proposition~\ref{p.fermeture}, l'union des attracteurs non triviaux est ferm\'ee~:
il n'y a donc qu'un nombre fini d'attracteurs.
\end{proof}

\section{Classification des connexions fortes}
Abordons maintenant certains arguments importants de la d\'emonstration du th\'eor\`eme~\ref{t.dichot}.

\begin{proposition}\label{p.trichot}
Consid\'erons une classe homocline $H(p)$ ayant les m\^emes propri\'et\'es qu'au th\'eor\`eme~\ref{t.dichot}.
Elle satisfait alors l'un des cas suivants~:
\begin{itemize}
\item[a)] il existe $g$ $C^1$-proche de $f$ tel que, pour tout
$x$ appartenant \`a la continuation $H(p_g)$ de $H(p)$ pour $g$, on ait $W^{ss}(x)\cap H(p_g)=\{x\}$~;
\item[b)] il existe $g$ $C^1$-proche de $f$ ayant une connexion forte g\'en\'eralis\'ee~;
\item[c)] il existe deux points p\'eriodiques $p_x,p_y$ homocliniquement reli\'es \`a l'orbite de $p$,
et un ouvert $\cU$ de diff\'eomorphismes $g$ $C^1$-proche de $f$ tels que pour tout $g\in \cU$
il existe deux points distincts $x\in W^u(p_x)$ et $y\in W^u(p_y)$ dans $H(p_g)$ avec $W^{ss}(x)=W^{ss}(y)$~;
\item[d)] il existe un point p\'eriodique $q$ homocliniquement reli\'e \`a l'orbite de $p$
et pour tout diff\'eomorphisme $g$ $C^1$-proche de $f$
deux points $x_g\neq y_g$ dans $W^s(q_g)\cap H(p_g)$ tels que $W^{ss}(x_g)=W^{ss}(y_g)$.
\end{itemize}
\end{proposition}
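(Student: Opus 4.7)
The plan is to assume that case (a) fails and to derive one of (b), (c), or (d). The failure of (a) means that for every $C^1$-neighborhood $\cU$ of $f$ there exists $g \in \cU$ together with distinct points $x_g, y_g \in H(p_g)$ satisfying $y_g \in W^{ss}(x_g) \setminus \{x_g\}$. My approach is to propagate this strong-stable connection through the class, using the density of periodic orbits, until it lands in one of the three standard configurations.

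First I would invoke the chain-hyperbolic structure: since $E^{cs}$ is finely trapped, Lemma~\ref{l.hyp-chain} yields a dense set of periodic orbits $q$ homoclinically related to $p$ with $\cW^{cu}_q \subset W^u(q)$ (and symmetrically for $\cW^{cs}_q \subset W^s(q)$), so the union of $W^u(q)$ over such orbits is dense in $H(p)$. Using the connecting lemma for pseudo-orbits (Theorem~\ref{t.bc}) together with a small $C^1$-perturbation, I would move $x_g$ onto an unstable manifold $W^u(p_{x,g})$ and $y_g$ onto $W^u(p_{y,g})$ for some periodic orbits $p_{x,g}, p_{y,g}$ homoclinically related to $p_g$, preserving the strong-stable connection. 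This is feasible because $W^{ss}$ is transverse to the plaques $\cW^{cu}$, so small motion of $x_g$ within $\cW^{cu}_{x_g}$ can be matched by a corresponding motion of $y_g$ within $\cW^{cu}_{y_g}$ along $W^{ss}$.

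The main alternative then reads as follows: either the pair $(p_x, p_y)$ can be chosen distinct on a $C^1$-open set of diffeomorphisms, in which case we are in case (c), openness being a consequence of the fact that the continuations $W^u(p_{x,g}), W^u(p_{y,g})$ vary continuously in $C^1$-topology while the strong-stable leaf provides a transversal intersection that persists; or we are forced into $p_x = p_y = q$, so that both $x_g$ and $y_g$ lie on the same unstable manifold $W^u(q_g)$. In the latter situation I would apply the symmetric argument on the stable side, using density of $W^s(q')$ for periodic orbits $q'$ homoclinically related to $p$, to try to place $x_g$ and $y_g$ in a common stable manifold $W^s(q'_g)$, yielding case (d) when the configuration is robust.

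The main obstacle will be the remaining degenerate situation, when neither the ``unstable-side'' arrangement with $p_x \neq p_y$ nor the ``stable-side'' arrangement in a common $W^s(q'_g)$ is robust. In that case any further small perturbation collapses the strong-stable connection onto a single periodic orbit $q$, producing $W^{ss}(q)\setminus\{q\} \cap H(p) \neq \emptyset$, i.e.\ a connexion forte at $q$. Combined with the dimension condition $\dim(E^c)=1$ and the existence of periodic orbits homoclinically related to $p$ with arbitrarily small central exponent---which follows from Gourmelon's Theorem~\ref{t.dichotomie-tangence} since otherwise the dominated decomposition could be refined and the preceding cases would already apply---this yields a generalized strong connection in the sense of Lemma~\ref{l.generalisee}, i.e.\ case (b). Controlling the perturbation so that the strong-stable connection at $q$ appears without destroying the homoclinic relation between $q$ and $p$ is the delicate technical heart of the argument; it relies on the perturbation techniques of Chapter~\ref{c.periodique} together with the local product structure of chain-hyperbolic classes provided by Lemma~\ref{l.hyp-chain}.
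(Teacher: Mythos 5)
Your proposal does not follow the paper's argument, and as written it has genuine gaps. The paper's proof (sketched in the hyperbolic case $K=H(p)$) is organized around a \emph{geometric} trichotomy that is absent from your plan: given $x\neq y$ in $K$ with $W^{ss}(x)=W^{ss}(y)$, one projects $W^u_{loc}(y)$ by the strong-stable holonomy $\Pi^{ss}$ onto the centre-unstable plaque of $x$ and asks how the projected curve sits relative to $W^{uu}_{loc}(x)$ inside the one-dimensional central direction: it either crosses to both sides (transverse case), coincides with $W^u_{loc}(x)$ (joint integrability), or always stays strictly on one side (strictly non-transverse case). This trichotomy is what drives the conclusion. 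Your claim that case c) holds because ``the strong-stable leaf provides a transversal intersection that persists'' misses the point: the coincidence $W^{ss}(x)=W^{ss}(y)$ is a codimension-one condition in the central direction and is destroyed by a generic $C^1$-perturbation; the robustness in case c) comes precisely from the topological crossing of $\Pi^{ss}(W^u_{loc}(y))$ through $W^{uu}_{loc}(x)$, which forces an intersection with $W^u(p_x)$ for nearby periodic points and nearby diffeomorphisms. Without identifying this crossing condition you cannot justify the openness in c).

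Two further steps in your outline do not work as stated. First, your route to b) (``any further small perturbation collapses the strong-stable connection onto a single periodic orbit'') is not an argument; in the paper, b) is obtained by an intermediate-value argument along an arc $(g_t)$ of diffeomorphisms: in the jointly integrable case one places $x',y'$ on $W^s_{loc}(q)$ for a periodic $q$, and if the relative position of $\Pi^{ss}(y'_{g_t})$ with respect to $W^{ss}_{loc}(x'_{g_t})$ changes along the arc, then at some parameter the projection lands on the unstable manifold of one of the periodic points accumulating $x'$, producing the generalized strong connection. (Gourmelon's theorem and weak central exponents play no role here; they enter only later, when converting strong connections into heterodimensional cycles.) Second, your proposal entirely omits the strictly non-transverse case, which in the paper requires a separate lemma: the ``strong stable boundary points'' all lie on the unstable manifolds of finitely many periodic boundary points, and the case analysis a)/b)/c) is then run on the finitely many intersections $\Pi^{ss}(W^u_{loc}(p_i))\cap W^u_{loc}(p_j)$. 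Finally, your use of the connecting lemma to move $x_g$ and $y_g$ onto unstable manifolds of periodic orbits ``preserving the strong-stable connection'' is unjustified: such a perturbation has no reason to preserve the exact coincidence of strong stable leaves, and the paper avoids it by working with points of $K$ itself via holonomy and the density of periodic orbits.
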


\begin{proof}[Id\'ee de la d\'emonstration lorsque $K=H(p)$ est un ensemble hyperbolique]
Nous pouvons sup\-po\-ser qu'il existe $x\neq y$ dans $K$ avec $W^{ss}(x)=W^{ss}(y)$
car sinon nous sommes dans le cas a).

Fixons une famille de plaques centre-instables $\cW^{uu}$ et une petite constante $\varepsilon>0$.
Nous comparons alors les vari\'et\'es instables $W^u_{loc}(x)$ et $W^u_{loc}(y)$ en projetant
$W^u_{loc}(y)$ sur la plaque centre-instable de $x$ par l'holonomie $\Pi^{ss}$ le long des vari\'et\'es stables fortes.
Trois cas sont possibles.

\begin{description}
\item[-- Intersection transverse.] Il existe $x\neq y$ dans $K$ avec $W^{ss}(x)=W^{ss}(y)$
tels que la projection $\Pi^{ss}(W^{u}_{loc}(y))$ intersecte les deux composantes connexes de
$B(x,\varepsilon)\cap(\cW^{cu}_x\setminus W^{uu}_{loc}(x))$.
\item[-- Int\'egrabilit\'e jointe.] Il existe $x\neq y$ dans $K$ avec $W^{ss}(x)=W^{ss}(y)$ et tels que
$\Pi^{ss}(W^{u}_{loc}(y))$ et $W^{u}_{loc}(x)$ co\"\i ncident dans $B(x,\varepsilon)$.
\item[-- Intersection strictement non transverse.]
Pour tout $x\neq y$ avec $W^{ss}(x)=W^{ss}(y)$, la projection $\Pi^{ss}(W^{u}_{loc}(y))$
est disjointe de l'une des composantes connexes de $B(x,\varepsilon)\cap(\cW^{cu}_x\setminus W^{uu}_{loc}(x))$
et intersecte l'autre.
\end{description}

Dans le cas transverse, on choisit $p_x,p_y$ p\'eriodiques proches de $x$ et $y$ respectivement.
Par continuit\'e du feuilletage instable, nous sommes dans le cas c).
\medskip

Dans le cas de l'int\'egrabilit\'e jointe, on choisit $q$ p\'eriodique proche de $x$. Sa vari\'et\'e stable locale coupe $W^u(x)$ et $W^u(y)$ en deux points $x',y'\in K$.
On a $W^{ss}(x')=W^{ss}(y')$.
Pour tout dif\-f\'e\-o\-mor\-phis\-me $C^1$-proche $g$,
les continuations hyperboliques $x'_g,y'_g\in K_g$ de $x',y'$ pour $g$
appartiennent toujours \`a la vari\'et\'e stable locale $W^s_{loc}(q_g)$.
Si $y'_g$ appartient \`a la vari\'et\'e $W^{ss}_{loc}(x'_g)$ pour tout $g$ $C^1$ proche de $f$,
nous sommes dans le cas d).
Si $y'_g$ n'appartient plus \`a la vari\'et\'e $W^{ss}_{loc}(x'_g)$, 
on consid\`ere un arc $(g_t)$ entre $f$ et $g$ dans un petit voisinage de $f$ dans $\diff^1(M)$.
Puisque $x'$ est accumul\'e par des points p\'eriodiques de $K$,
nous en d\'eduisons qu'il existe $g_t$ tel que $\Pi^{ss}(y'_{g_t})$ appartient \`a la vari\'et\'e instable d'un tel point p\'eriodique. Nous obtenons donc une connexion forte g\'en\'eralis\'ee (cas b).
\medskip

Il reste \`a examiner le cas strictement non transverse.
Puisqu'il n'y a pas d'intersection transverse, les points
$x\neq y$ avec $W^{ss}(x)=W^{uu}(y)$ ne sont accumul\'es par $K\cap W^s_{loc}(x)$ que d'un seul c\^ot\'e de $W^{ss}_{loc}(x)$ (voir la figure~\ref{f.intersection-stricte}).
De tels points sont appel\'es \emph{points de fronti\`ere stable forte}.
\begin{figure}[ht]
\begin{center}
\input{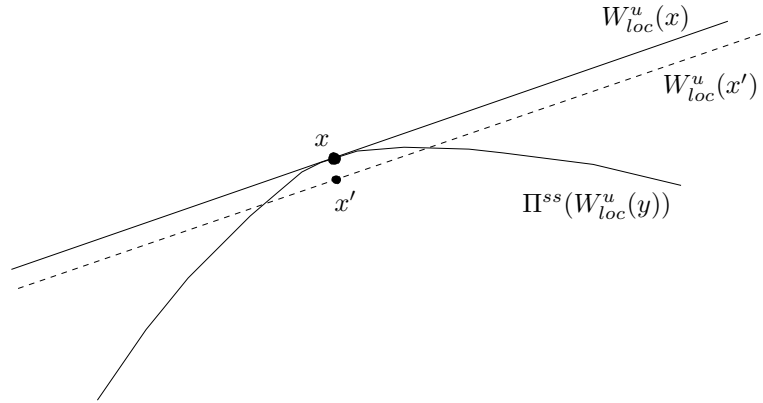}
\end{center}
\caption{Intersection strictement non transverse. \label{f.intersection-stricte}}
\end{figure}
\begin{lemme}
Les points de fronti\`ere stable forte appartiennent \`a la vari\'et\'e instable d'un point p\'eriodique
qui est un point de fronti\`ere stable forte. Ces points p\'eriodiques fronti\`ere sont en nombre fini.
\end{lemme}

Consid\'erons les points p\'eriodiques fronti\`ere stable forte $p_1,\dots,p_\ell$ de $f$ et leur vari\'et\'es instables locales.
S'il existe un arc $(g_t)$ de diff\'eomorphismes proches de $f$ et un point $y\in W^u_{loc}(p_i)$ tel que
\begin{itemize}
\item[--] $x_{g_0}:=\Pi^{ss}(y_{g_0})$ appartient \`a une autre vari\'et\'e $W^u_{loc}(p_j)$,
\item[--] $\Pi^{ss}(y_{g_1})$ appartient \`a la composante connexe de
$W^s_{loc}(x_{g_1})\setminus W^{ss}_{loc}(x_{g_1})$ dans laquelle $K$ accumule $x_g$,
\end{itemize}
nous concluons comme pr\'ec\'edemment que l'un des diff\'eomorphismes $g_t$ a une connexion forte.
Nous sommes dans le cas b).
\smallskip

Dans le cas contraire, s'il existe un diff\'eomorphisme $g$ $C^1$-proche de $f$ tel que toutes les intersections
$\Pi^{ss}(W^u_{loc}(p_i))\cap W^u_{loc}(p_j)$ sont disjointes, nous sommes dans le cas a).
\smallskip

Finalement, il existe un ouvert $\cU$ de diff\'eomorphismes proche de $f$ en topologie $C^1$ et
deux points $p_i$, $p_j$ tels que pour tout $g\in \cU$ les vari\'et\'es
$\Pi^{ss}(W^u_{loc}(p_i))$ et $W^u_{loc}(p_j)$ s'intersectent. Nous sommes alors dans le cas c).
\end{proof}

\section{Connexions de vari\'et\'es instables p\'eriodiques}
Nous discutons \`a pr\'esent le cas c) du th\'eor\`eme~\ref{t.dichot} (dans le cadre hyperbolique)
et montrons que l'on peut perturber $f$ pour obtenir une connexion forte.

\begin{proposition}\label{p.instable}
Consid\'erons un attracteur hyperbolique $K$ avec une d\'ecomposition domin\'ee
$T_KM=E^s\oplus E^u= (E^{ss}\oplus E^c)\oplus E^u$ telle que $\dim(E^c)=1$.
Supposons qu'il existe deux points p\'eriodiques $p_x,p_y\in K$ et, pour tout $g$
$C^1$-proche de $f$, deux points distincts $x\in W^{u}(p_{x,g})$ et $y\in W^u(p_{y,g})$ de $K_g$ tels que $W^{ss}(x)=W^{ss}(y)$.

Il existe alors $g$ $C^1$-proche de $f$ et $q\in H(p_g)$ p\'eriodique tel que $W^{ss}(q)$ intersecte $W^u(p_y)$.
\end{proposition}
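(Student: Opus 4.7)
The plan is to exploit the robust transverse intersection supplied by the hypothesis to identify a codimension-one locus $\Sigma\subset W^u(p_x)$ of points whose strong stable manifold meets $W^u(p_y)$, to approximate a point of $\Sigma$ by a periodic orbit $O\subset H(p)$, and finally to drive the continuation $q_g$ of a point of $O$ onto $\Sigma_g$ for some $g$ near $f$ via a local perturbation combined with an intermediate-value argument along an arc in $\diff^1(M)$.

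\textbf{Geometric setup.} The condition ``$x\in W^u(p_x)$ and $y\in W^u(p_y)$ with $W^{ss}(x)=W^{ss}(y)$'' is of codimension one in the pair $(x,y)$, so its robustness under $C^1$-small perturbations forces, in the centre-unstable plaque $\cW^{cu}_x$, the two $\dim(E^u)$-dimensional submanifolds $\Pi^{ss}(W^u(p_x))$ and $\Pi^{ss}(W^u(p_y))$ (the projections along the strong stable holonomy) to meet transversely at $x$. Iterating the hypothesis forward if necessary we may furthermore assume that $d(x,y)$ is arbitrarily small so that $y$ lies in a small centre-stable plaque around $x$. Consequently the locus
$$\Sigma := \{z\in W^u(p_x)\text{ near }x \,:\, W^{ss}(z)\cap W^u(p_y)\neq\emptyset\}$$
is a codimension-one submanifold of $W^u(p_x)$ through $x$, depending continuously on $g$ in a $C^1$-neighborhood $\cU$ of $f$.

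\textbf{Periodic approximation and signed central defect.} Since $K$ is a hyperbolic attractor, $W^u(p_x)\subset K$ and periodic orbits homoclinically related to $p_x$ (hence contained in $H(p)$) are dense in $K$. Pick such a periodic point $q_0\in K$ arbitrarily close to $x$, with hyperbolic continuation $q_g$ for $g\in\cU$. Inside the centre-stable plaque $\cW^{cs}_{q_g}$, the manifold $W^u(p_{y,g})$ meets $\cW^{cs}_{q_g}$ transversely at a point close to $y$, whose signed central coordinate relative to $q_g$ defines a continuous \emph{signed central defect} $\delta\colon\cU\to\RR$. The condition $\delta(g)=0$ is exactly $W^{ss}(q_g)\cap W^u(p_{y,g})\neq\emptyset$, so we may assume $\delta(f)\neq 0$, otherwise we are done taking $g=f$. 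To change the sign of $\delta$, pick a point $w\in W^{ss}_{loc}(q_0)$ close to $y$ but disjoint from the orbits of $p_x$, $p_y$, and $q_0$ in a long time window. Using an elementary perturbation (Lemma~\ref{l.perturbation-elementaire}) supported in a small ball around $w$, we construct $g_-\in\cU$ for which the leaf $W^{ss}(q_{g_-})$ is displaced in the central direction of $\cW^{cs}_{q_{g_-}}$ so that the intersection point of $W^u(p_{y,g_-})$ with this plaque crosses $q_{g_-}$ to the other side, i.e.\ $\delta(g_-)$ has sign opposite to $\delta(f)$. Joining $f$ to $g_-$ by a continuous arc inside $\cU$ and applying the intermediate value theorem to the continuous function $\delta$ produces $g^*\in\cU$ with $\delta(g^*)=0$, hence a periodic $q_{g^*}\in H(p_{g^*})$ with $W^{ss}(q_{g^*})\cap W^u(p_{y,g^*})\neq\emptyset$.

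The main obstacle is the construction of the sign-changing perturbation $g_-$: the ball supporting the elementary perturbation must avoid a large collection of iterates of $p_x$, $p_y$ and $O$ so that their continuations and the transverse configuration in $\cW^{cu}_x$ persist, which in turn requires that $w$ be chosen non-recurrent relative to these orbits — possible since the strong stable leaf of a non-periodic point $x$ admits points arbitrarily close to $y$ disjoint from finitely many fixed orbits. One then needs a uniform non-degeneracy estimate showing that the displacement induced on $W^{ss}(q_{g_-})$ in the central direction is of order comparable to the size of the perturbation, large enough to flip the sign of $\delta$. This is in the same spirit as the arc-perturbation argument sketched in the proof of Proposition~\ref{p.trichot} (case of \emph{int\'egrabilit\'e jointe}), and relies on the principle that a $C^1$-small perturbation localized near a non-recurrent point freely deforms one leaf of a stratified foliation while preserving the surrounding combinatorics.
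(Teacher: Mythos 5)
Your overall scheme --- approximate $x$ by a periodic point $q_0$ of $H(p)$, encode the desired intersection as the zero of a continuous signed defect $\delta(g)$, and obtain the zero by an intermediate-value argument along an arc of perturbations --- reproduces one half of the paper's argument, but it misses the case where that strategy breaks down, and this missing case is the heart of the proof.

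First, a secondary issue: your claim that robustness of the coincidence $W^{ss}(x)=W^{ss}(y)$ forces the projections $\Pi^{ss}(W^u(p_x))$ and $\Pi^{ss}(W^u(p_y))$ to meet transversely is unjustified. In the proof of Proposition~\ref{p.trichot}, the hypothesis of Proposition~\ref{p.instable} (case c)) is produced both by the transverse subcase \emph{and} by the strictly non-transverse subcase (an open set of $g$ for which $\Pi^{ss}(W^u_{loc}(p_i))$ meets $W^u_{loc}(p_j)$ without crossing). So $\Sigma$ need not be a submanifold, and the setup of your defect function must be made without that assumption (which is possible, since one only needs a signed central coordinate of the intersection of $W^u(p_{y,g})$ with the centre-stable plaque of $q_g$).

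The genuine gap is the ``uniform non-degeneracy estimate'' you defer to the end. To move $W^{ss}(q_g)$ relative to $W^u(p_{y,g})$ one must perturb on the orbit of $x$ (the paper uses a ball $U=B(f^{-1}(x),d)$), not at an auxiliary point $w$ of $W^{ss}_{loc}(q_0)$: a perturbation supported away from the orbits of $q_0$ and of $y$ changes neither the germ of $W^{ss}_{loc}(q_g)$ at $q_g$ nor $W^u(p_{y,g})$ near $y$. Once the support is on the orbit of $x$, the displacement it produces is of order $\varepsilon d$, whereas the continuations $x_g,y_g$ and the strong stable holonomies move by quantities of order $\lambda_u^{-n}$, $\lambda_u^{-\alpha n}$ and $\lambda^{n}$, where $n$ is the entry time of the orbits of $x$ and $y$ into the support. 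The comparison only goes the right way when $n\geq C_0|\log d|$ for some scale $d$ (``retours lents''), and even then it requires replacing $f$ by a $C^2$ diffeomorphism so that the strong stable holonomy is H\"older --- a point you do not address. When for every $d$ the return time is at most $C_0|\log d|$ (``retours rapides''), the perturbation is swamped by the recurrence and the sign of $\delta$ cannot be controlled; your argument then fails. The paper turns this obstruction into the proof: fast returns yield $a>b>0$ and arbitrarily large $n$ with $d(f^n(x),x)<e^{-an}$ while the intermediate iterates stay at distance $\geq e^{-bn}$ from $x$; since the image under $f^n$ of the strong stable disk $D$ containing $x$ and $y$ is then much smaller than the gap to the other iterates, a local perturbation near $f^{-1}(x)$ gives $g^n(D)\subset D$, hence a periodic point $q\in D$ with $W^{ss}(q)\supset D\ni y\in W^u(p_y)$, concluding directly without any intermediate-value argument. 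Without this dichotomy your proof is incomplete.
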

\begin{proof}[Id\'ee de la d\'emonstration]
Soient~:
\begin{itemize}
\item[--] $\lambda_u>1$ une constante qui minore la dilatation des it\'er\'es positifs de $f$ le long de $E^u$,
\item[--] $\lambda\in (0,1)$ une constante qui minore la domination entre $E^{ss}$ et $E^c$,
\item[--] $\log(\lambda_c)$ l'exposant de Lyapunov de l'orbite de $p_x$ le long de $E^c$.
\end{itemize}

Quitte \`a le remplacer par un diff\'eomorphisme proche,
nous pouvons supposer que $f$ est de classe $C^2$
et lin\'eaire au voisinage de $p_x$.
Nous pouvons supposer \'egalement que $W^{ss}(p_x)\setminus \{p_x\}$ est disjoint de $K$
puisque dans le cas contraire le lemme de connexion permet de cr\'eer par perturbation
une intersection entre $W^{ss}(p_x)$ et $W^u(p_y)$ et de conclure directement la d\'emonstration
de la proposition.
Nous noterons $W^u_{loc}(p_y)$ et $W^u_{loc}(p_x)$ des vari\'et\'es locales de $p_x$ et $p_y$
contenant $x,y$ respectivement.

\paragraph{Connexion \`a retours lents.}
Nous dirons que la connexion est \`a retours lents s'il existe des \'echelles $d>0$ arbitrairement
petites
telles que le temps d'entr\'ee pour $x$ et $y$ dans la boule $U=B(f^{-1}(x),d)$ est sup\'erieur \`a $C_0.|\log(d)|$
pour une certaine constante $C_0>0$.
Dans ce cas, nous perturbons $f$ dans la boule $U$ pour
obtenir un diff\'eomorphisme $g$ tel que $g(f^{-1}(x))$ appartienne \`a la plaque centrale de $x$.
La distance $r$ entre $g(f^{-1}(x))$ et $x$ v\'erifie $r/d>\varepsilon$ o\`u $\varepsilon$
ne d\'epend que de la taille de la perturbation $C^1$ autoris\'ee.

Apr\`es perturbation les continuations $x_g$ et $y_g$ de $x,y$ appartiennent
\`a $g(W^u_{loc}(p_x))$ et \`a $W^u_{loc}(p_y)$ mais ne co\"\i ncident pas n\'ecessairement avec $g(f^{-1}(x))$
et $y$. On a les estim\'ees
$$d(x_g,g(f^{-1}(x)))<\lambda_u^{-n_x},\quad d(y_g,y)<\lambda_u^{-n_y},$$
o\`u $n_x,n_y$ sont les temps d'entr\'ee de $x$ et $y$ dans $U$.

Puisque $f$ est de classe $C^2$, l'holonomie $\Pi^{ss}_f$ du feuilletage stable fort (d\'efini sur un voisinage
de $K$) est h\"olderienne~\cite{PSW}~: il existe $\alpha$ tels que
$$d(\Pi^{ss}_f(y_g),\Pi^{ss}_f(y))< d(y_g,y)^\alpha.$$

Nous comparons \'egalement les holonomies $\Pi^{ss}_f, \Pi_g^{ss}$ pour $f$ et pour $g$~:
$$d(\Pi^{ss}_f(y_g),\Pi^{ss}_g(y_g))< \lambda^{n_{y_g}},$$
o\`u $n_{y_g}$ est le temps d'entr\'ee de $y_g$ dans $U$.

Puisque $\Pi^{ss}_f(y)=x$, on a
\begin{equation*}
\begin{split} d(x_g,\Pi_g^{ss}(y_g))>&\\
d(g(f^{-1}&(x)),x)-d(g(f^{-1}(x)),x_g)-d(\Pi_g^{ss}(y_g),\Pi^{ss}_f(y_g))
-d(\Pi^{ss}_f(y_g),\Pi^{ss}_f(y)).
\end{split}
\end{equation*}

Par cons\'equent $x_g\neq \Pi^{ss}_g(y_g)$ lorsque
$$r-\lambda_u^{-n_x}- \lambda_u^{-\alpha.{n_y}}-\lambda^{n_{y_g}}>0.$$
Les temps d'entr\'ee $n_x,n_y,n_{y_g}$ sont essentiellement les m\^emes car
$y_g$ est la continuation de $y$ et $x,y$ appartiennent \`a une m\^eme vari\'et\'e stable forte.
Nous notons $n=\inf(n_x,n_y,n_{y_g})$.
Puisque $r>\varepsilon.d$, la connexion entre $x$ et $y$ est bris\'ee par perturbation
lorsque $n>C_0.|\log(d)|$ pour une certaine constante $C_0>0$, ce qui est le cas puisque la connexion est \`a retours lents.

Consid\'erons un arc de diff\'eomorphismes $(g_t)$ proche de $f$ et joignant $f$ \`a $g$.
Par continuit\'e, il existe un diff\'eomorphisme $g_t$ tel que $W^{ss}_{loc}(y_{g_t})$
intersecte la vari\'et\'e instable d'un point p\'eriodique de $K$ proche de $x$.
Ceci conclut la d\'emonstration de la proposition dans ce cas.

\paragraph{Connexion \`a retours rapides.}
Dans ce cas, pour tout $d>0$, le temps de retour de $x$ et $y$
dans la boule $U=B(f^{-1}(x),d)$ est inf\'erieur \`a $C_0. |\log(d)|$.
Ceci implique le lemme cl\'e suivant.

\begin{lemme}
Il existe $a>b>0$ et $n$ arbitrairement grand tel que~:
\begin{itemize}
\item[--] $f^n(x)$ est \`a distance inf\'erieure \`a $e^{-a.n}$ de $x$,
\item[--] les it\'er\'es $f^k(x)$, $0<k<n$, sont \`a distance sup\'erieure \`a $e^{-b.n}$ de $x$.
\end{itemize}
\end{lemme}
\begin{proof}
Consid\'erons la suite des plus proches retours $f^{n_i}(x)$ de $x$ pr\`es de lui m\^eme.
Nous fixons un voisinage $W$ de l'union des vari\'et\'es stables et instables locale de l'orbite de $p_x$.
Nous supposons que l'orbite pass\'ee de $f^{-1}(x)$ est contenue dans $W$, mais $x\notin W$.
Pour chaque retour $f^{n_i}(x)$ de $x$ pr\`es de lui-m\^eme, nous notons
$\{f^{m_i}(x),\dots,f^{n_i-1}(x)\}$ le segment d'orbite maximal contenu dans $W$.

Pour chaque $i$, la longueur $\ell_i=n_i-m_i$ est de l'ordre de $|\log(d_i)|$, o\`u $d_i$ est la distance entre $f^{n_i}(x)$
et la vari\'et\'e $W^u_{loc}(x)$.
En effet, puisque $W^{ss}(p_x)\setminus \{p_x\}$ est disjoint de $K$, les retours pr\`es de $x$ et de $p_x$
se font le long de la direction centrale unidimensionnelle. Puisque $f$ est lin\'eaire au voisinage de
$p_x$, la distance $d_i$ est de l'ordre de $\lambda_c^{\ell_i}$.

Par hypoth\`ese nous avons donc
$$n_i\leq C_0.|\log(d_i)|\leq C_1. \ell_i.$$

Posons
$$R=\limsup_i \frac{\ell_i}{n_i}.$$
Pour $\varepsilon>0$ petit, nous choisissons $j_0$ tel que pour tout $j>j_0$
on ait
$$\frac{\ell_j}{n_j}<(1+\varepsilon).R.$$
Nous choisissons ensuite $n_i$ arbitrairement grand pour que
$$\frac{\ell_i}{n_i}>(1-\varepsilon).R.$$
Pour tout $j_0<j<i$ nous avons $n_j< n_i- \ell_i$ et donc
$$\ell_j<(1+\varepsilon).R.n_j<(1+\varepsilon).R.(1-(1-\varepsilon).R).n_i.$$
Nous posons $a_0=(1-\varepsilon)^2.R$ et $b_0=(1+\varepsilon)^2.R.(1-(1-\varepsilon).R)$.
Puisque $R$ appartient \`a $[C_1^{-1},1]$, nous avons $a_0>b_0>0$.
On a bien
$$d_i<\lambda_c^{(1-\varepsilon)\ell_i}<\lambda_c^{a_0.n_i}$$
et pour tout $j<i$,
$$d_j> \lambda_c^{(1+\varepsilon)\ell_j}>\lambda_c^{b_0.n_i}.$$
Le lemme est donc d\'emontr\'e avec $a=a_0|\log{\lambda_c}|$
et $b=b_0|\log \lambda_c|$.
\end{proof}
\medskip

Pour conclure la d\'emonstration de la proposition~\ref{p.instable} dans ce cas, nous consid\'erons
un disque $D$ contenu dans la vari\'et\'e stable forte de $y$ et contenant $x$ et $y$.
Pour tout point $z\in D$ et tout retour $f^n(z)$ pr\`es de $x$, le rayon de $f^n(D)$
est tr\`es petit devant la distance $d$ de $f^n(z)$ \`a $W^u_{loc}(p_x)$~:
si $\ell$ est le temps de passage pr\`es de l'orbite de $p_x$,
la distance $d$ est de l'ordre de $\lambda_c^\ell$ et
le rayon de $f^n(D)$ est inf\'erieur \`a $\lambda_s^\ell$, o\`u $\lambda_s<\lambda_c$
majore la contraction de $Df^n_{p_x}$ le long de $E^s$.

Choisissons $n$ comme dans le lemme pr\'ec\'edent~:
$f^n(D)$ est contenu dans une boule centr\'ee en $x$ et de rayon tr\`es petit devant les
distances \`a $x$ des images $f^k(D)$, $0< k <n$.
Il existe donc une perturbation $g$ de $f$
pr\`es de $f^{-1}(x)$ telle que $g^{n}(D)\subset D$. On en d\'eduit que $g$
poss\`ede un point p\'eriodique $q\in D$ dont la vari\'et\'e stable forte contient $D$.
Puisque $W^u_{loc}(p_y)$ n'a pas \'et\'e perturb\'e, $W^{ss}(q)$ et $W^u(p_y)$
s'intersectent.

\end{proof}

\section{Connexions contenues dans une vari\'et\'e stable p\'eriodique}
Pour terminer la d\'emonstration du th\'eor\`eme~\ref{t.dichot0}, nous devons expliquer
comment perturber dans le cas d) de la proposition~\ref{p.trichot} afin de cr\'eer une connexion forte.

\begin{proposition}
Consid\'erons un ensemble hyperbolique $K$ localement maximal muni d'une d\'ecomposition domin\'ee
$T_KM=E^s\oplus E^u=(E^{ss}\oplus E^c)\oplus E^u$ avec $\dim(E^c)=1$.
Supposons qu'il existe $q\in K$ p\'eriodique et
deux points $x\neq y$ de $K$
et tels que pour tout $g$ $C^1$ proche de $f$ les continuations $x_g$ et $y_g$
v\'erifient $x_g\in W^{ss}(y_g)$.

Il existe alors un arc de diff\'eomorphismes $(g_t)$ $C^1$-proche de $f$
et des points $x',y'\in K$ tels que $W^{ss}_{loc}(x'_{g_t})$ traverse
$W^u_{loc}(y'_{g_t})$ lorsque $t$ varie.
\end{proposition}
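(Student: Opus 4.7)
L'id\'ee est d'utiliser la robustesse de la relation $x_g\in W^{ss}(y_g)$ comme un invariant rigide le long d'un arc de perturbations, de sorte que le d\'eplacement relatif d'une feuille $W^{ss}$ par rapport \`a une vari\'et\'e $W^u$ locale puisse \^etre d\'etect\'e par un argument de continuit\'e analogue \`a celui employ\'e dans la d\'emonstration de la proposition~\ref{p.trichot} (cas de l'int\'egrabilit\'e jointe). Plus pr\'ecis\'ement, nous approcherons $x$ et $y$ par des points p\'eriodiques $x',y'$ contenus dans $K$, puis nous construirons un arc $(g_t)$ de perturbations $C^1$ dont l'effet sera de d\'eplacer contin\^ument la position centrale de la feuille $W^{ss}_{loc}(x'_{g_t})$ par rapport \`a $W^u_{loc}(y'_{g_t})$.

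\textbf{Mise en place.} Par densit\'e des orbites p\'eriodiques homocliniquement reli\'ees \`a $q$ dans $H(q)$, et en utilisant la continuation hyperbolique jointe \`a la continuit\'e h\"olderienne du feuilletage stable fort, on se ram\`ene au cas o\`u $x$ et $y$ peuvent \^etre remplac\'es par des points p\'eriodiques $x',y'\in K$ arbitrairement proches, pour lesquels la propri\'et\'e $x'_{g}\in W^{ss}(y'_g)$ persiste pour tout $g$ dans un voisinage $C^1$ de $f$. Soit alors $z=f^{-n}(x')$ un it\'er\'e pass\'e suffisamment grand pour qu'un petit voisinage $U$ de $z$ soit disjoint de $W^{ss}_{loc}(x'),W^u_{loc}(y')$ ainsi que des $N-1$ premiers it\'er\'es futurs de $U$ pertinents. \`A l'aide du lemme~\ref{l.perturbation-elementaire}, on d\'efinit une famille \`a un param\`etre $(g_t)_{t\in[-1,1]}$ de dif\-f\'e\-o\-mor\-phis\-mes $C^1$-proches de $f$, co\"\i ncidant avec $f$ hors de $U$, telle que $g_t(z)$ soit translat\'e par rapport \`a $f(z)$ d'un vecteur de norme $O(t)$ dans la direction centrale $E^c_{f(z)}$.

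\textbf{D\'etection de la travers\'ee.} La famille $(g_t)$ fait varier contin\^ument la feuille $W^{ss}_{loc}(x'_{g_t})$, tandis que $W^u_{loc}(y'_{g_t})$ subit une d\'eformation n\'egligeable devant l'amplitude du d\'eplacement (gr\^ace \`a la s\'eparation entre $U$ et $W^u_{loc}(y')$). La contrainte globale $x'_{g_t}\in W^{ss}(y'_{g_t})$ valable pour tout $t$ entra\^\i ne que les points $x'_{g_t}$ et $y'_{g_t}$ se d\'eplacent de mani\`ere coordonn\'ee le long de $W^s(q_{g_t})$~: la coordonn\'ee centrale de la projection de $W^{ss}_{loc}(x'_{g_t})$ sur la plaque centre-instable $\cW^{cu}_{y'_{g_t}}$ (d\'efinie par la famille de plaques localement invariante tangente \`a $E^c\oplus E^u$) d\'epend contin\^ument de $t$. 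En choisissant la direction de la perturbation convenablement, cette coordonn\'ee change de signe entre $t=-1$ et $t=1$. Le th\'eor\`eme des valeurs interm\'ediaires appliqu\'e \`a cette coordonn\'ee fournit alors un instant $t^*\in(-1,1)$ pour lequel $W^{ss}_{loc}(x'_{g_{t^*}})$ rencontre transversalement $W^u_{loc}(y'_{g_{t^*}})$.

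\textbf{Principale difficult\'e.} L'obstacle essentiel consiste \`a garantir que, malgr\'e la contrainte $x'_{g_t}\in W^{ss}(y'_{g_t})$ pr\'eserv\'ee par construction, le d\'eplacement effectif de $W^{ss}_{loc}(x'_{g_t})$ par rapport \`a $W^u_{loc}(y'_{g_t})$ est non trivial et d\'etectable transversalement. Cela n\'ecessite un contr\^ole fin sur les holonomies du feuilletage stable fort et sur les continuations hyperboliques le long de l'arc --- comparable au contr\^ole de distorsion men\'e dans la d\'emonstration de la proposition~\ref{p.instable} --- ainsi qu'un choix d\'elicat de $z$ et de l'orientation de la perturbation pour que l'amplitude effective du d\'eplacement surpasse les termes d'erreur provenant des it\'erations pass\'ees et des d\'eformations induites par $g_t$ au voisinage de $z$. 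La rigidit\'e impos\'ee par l'hypoth\`ese, loin d'\^etre un frein, est pr\'ecis\'ement ce qui permet \`a la projection centrale consid\'er\'ee de traverser la position critique plut\^ot que de la contourner.
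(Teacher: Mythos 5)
Votre strat\'egie ne peut pas fonctionner telle quelle, pour deux raisons li\'ees. D'abord, la r\'eduction \`a des points p\'eriodiques $x',y'$ h\'eritant de la connexion robuste $x'_g\in W^{ss}(y'_g)$ est injustifi\'ee~: appartenir \`a une m\^eme feuille stable forte dans $W^s(q)$ est une condition de codimension un (la coordonn\'ee centrale), qui n'est pas transmise \`a des points p\'eriodiques voisins, et sa robustesse pour toute perturbation $g$ encore moins. Ensuite, et surtout, si l'on garde pour $(x',y')$ une paire v\'erifiant la connexion robuste, alors $W^{ss}_{loc}(x'_{g_t})$ contient $y'_{g_t}\in W^u_{loc}(y'_{g_t})$ pour \emph{tout} $t$~: l'intersection est permanente et d\'eg\'en\'er\'ee, il n'y a aucune ``travers\'ee'' \`a d\'etecter par valeurs interm\'ediaires, et translater $x'$ dans la direction centrale ne fait que tra\^\i ner $y'$ avec lui par hypoth\`ese. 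La conclusion de la proposition porte n\'ecessairement sur une \emph{nouvelle} paire de points, pour laquelle l'intersection est un \'ev\'enement de codimension un en $t$~; votre proposition ne contient aucun m\'ecanisme pour produire ces points.

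La preuve du texte proc\`ede tout autrement~: apr\`es lin\'earisation en $q$ et redressement du feuilletage stable fort dans $W^s(q)$, on perturbe la \emph{diff\'erentielle} en $f^{-1}(x)$ (\`a la Franks) en laissant $f$ inchang\'e sur $W^s(q)$ --- ce qui pr\'eserve exactement les points $x,y$ et leur connexion --- de fa\c{c}on \`a faire tourner $E^u(x_g)$ d'un angle $\alpha$ sans modifier $E^u(y_g)$. On prend ensuite un point homocline $z\in W^u(q)\cap W^s(q)\cap K$ et un disque $D\subset W^s(q)$ le contenant~; les it\'er\'es $g^{-n}(D)$ coupent $W^u_{loc}(x_g)$ et $W^u_{loc}(y_g)$ en deux points $x',y'$, et c'est l\`a que l'\'ecart angulaire $\alpha$ se convertit, via l'holonomie stable forte, en un d\'ecalage \emph{central} entre $W^{ss}(x')$ et $W^u_{loc}(y')$. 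Faire varier $\alpha$ le long d'un arc fait changer de signe ce d\'ecalage, d'o\`u la travers\'ee. La ``principale difficult\'e'' que vous signalez sans la r\'esoudre est pr\'ecis\'ement ce point~: il faut une perturbation qui fixe la paire $(x,y)$ et son lien stable fort tout en cr\'eant une asym\'etrie d\'etectable ailleurs~; une translation des points eux-m\^emes ne le permet pas.
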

On obtient la connexion forte en rempla\c{c}ant $z$ par un point p\'eriodique proche.
\medskip

Nous supposerons pour simplifier que $q$ est un point fixe
et (quitte \`a remplacer $f$ par un diff\'eomorphisme proche)
que la dynamique est lin\'eaire au voisinage de $q$. Dans une carte, $f$ est de la forme~:
$$(x^{ss},x^c,x^u)\mapsto (A^s(x^{ss}),\lambda_c.x^c,A^u(x^u)),$$
o\`u $A^s,(A^u)^{-1}$ sont des applications lin\'eaires contractantes et $\lambda_c$ appartient \`a $(-1,0)\cup (0,1)$. Nous notons $d^{ss}$ la dimension du fibr\'e $E^{ss}$.

Dans la vari\'et\'e stable locale $W^{s}_{loc}(q)$ (qui co\"\i ncide avec $\RR^{d^{ss}+1}\times \{0\}$ localement), le feuilletage stable fort $\cF^{ss}$ co\"\i ncide avec le feuilletage en sous-espaces affines parall\`eles \`a $\RR^{d^{ss}}\times \{0\}$.
\smallskip

Il existe un point $z\in W^u(q)\cap W^{s}(q)\cap K$ d'orbite distincte de celles de $x$ et $y$.
Fixons un disque $D$ de $W^s(q)$ qui contient $z$.
Quitte \`a remplacer $z$ par un it\'er\'e n\'egatif, nous pouvons supposer que tous ses it\'er\'es n\'egatifs sont contenus dans le voisinage lin\'earisant de $q$.
Par petite perturbation, nous pouvons redresser $D$ et le feuilletage stable fort dans $D$~:
le disque $D$ est alors contenu dans un plan parall\`ele \`a $\RR^{d^{ss}+1}\times \{0\}$
et $\cF^{ss}$ est le feuilletage en sous-espaces parall\`eles \`a $\RR^{d^{ss}}\times \{0\}$.
\smallskip

Nous comparons \`a pr\'esent les espaces instables aux points de $K$ proches de $q$~:
nous nous int\'eressons ainsi aux angles entre deux espaces instables dans la direction centrale,
i.e. \`a l'angle entre leurs projections dans l'espace de coordonn\'ees $E^c(q)\oplus E^u(q)$.

En rempla\c{c}ant $x,y$ par des it\'er\'es positifs,
\emph{$E^u(x),E^u(y)$ sont arbitrairement proches de $E^u(q)$.}
\smallskip

\begin{figure}[ht]
\begin{center}
\input{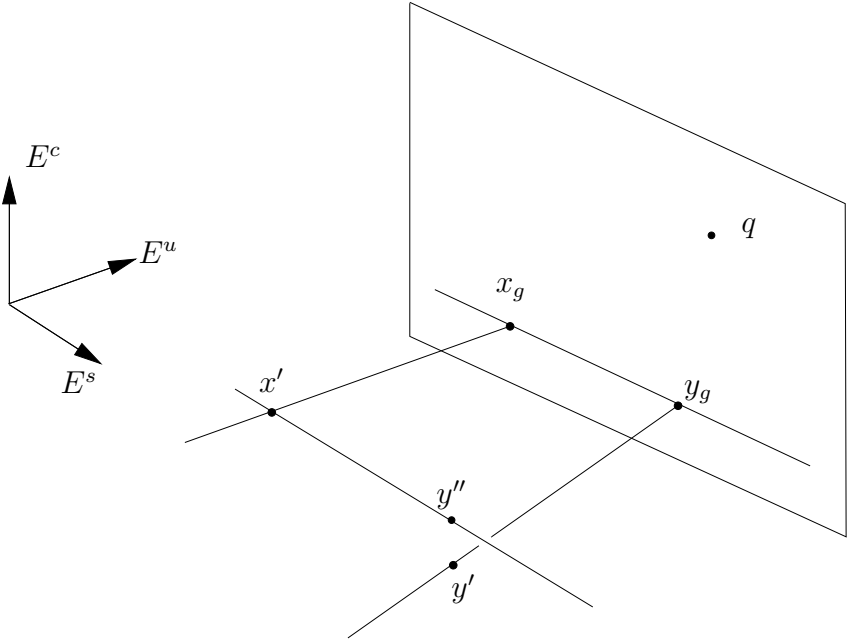}
\end{center}
\caption{Perturbation dans une vari\'et\'e stable p\'eriodique. \label{f.connexion-stable}}
\end{figure}

Nous fixons une petite constante $\alpha>0$.
Nous perturbons $f$ au voisinage de $f^{-1}(x)$ en un diff\'eomorphisme $g$ de sorte que
$g$ co\"\i ncide avec $f$ au voisinage de $f^{-1}(x)$ sur $W^s(q)$
mais $Dg_{f^{-1}(x)}.E^u(f^{-1}(x))$ est envoy\'e sur
un espace faisant un angle $\alpha$ avec $E^u(x)$.

La norme $C^0$ de la perturbation peut \^etre rendue arbitrairement $C^0$-petite si le support de la perturbation est contenu dans un
voisinage arbitrairement petit de $f^{-1}(x)$. En revanche l'angle $\alpha$ est donn\'e par la taille
de la perturbation $C^1$. On en d\'eduit que les continuations $x_g,y_g$ restent arbitrairement proches de $x,y$. Par cons\'equent,
\emph{$E^u(x_g)$ fait un angle proche de $\alpha$ avec $E^u(x)$ et $E^u(q)$~;
 l'espace $E^u(y_g)$ un angle arbitrairement petit avec $E^u(y)$ et $E^u(q)$.}
\smallskip

Les it\'er\'es $g^{-n}(D)$ pour $n\geq 0$ grand s'accumulent sur $W^s_{loc}(q)$ et recoupent
$W^u_{loc}(x_g), W^u_{loc}(y_g)$ en deux points $x',y'$. Pour $n$ grand les vecteurs $(x_g,x')$
et $(y_g,y')$ sont proches de $E^u(x_g)$ et $E^u(y_g)$.
Soit $y''$ la projection de $x'$ sur la plaque centre-instable de $y'_g$,
par l'holonomie le long des vari\'et\'es stables fortes.
Voir la figure~\ref{f.connexion-stable}.

Les orbites positives de $x'$ par $g$ et $f$ co\"{\i}ncident. Par cons\'equent la
vari\'et\'e stable forte locale de $x'$ est la m\^eme pour $f$ et pour $g$~:
c'est un sous-espace affine parall\`ele \`a $W^{ss}_{loc}(x')$.
On en d\'eduit que les vecteurs $(x_g,x')$ et $(y_g,y'')$ ont la m\^eme direction.
Par cons\'equent pour $g$, $(y_g,y''_g)$ fait un angle proche de $\alpha$ avec $E^u(y_g)$ et $E^u(q)$.

Consid\'erons \`a pr\'esent un arc de diff\'eomorphismes en faisant varier l'angle $\alpha$~:
par continuit\'e nous pouvons assurer que la direction du vecteur $(y_g,y''_g)$ traverse
$E^u_g$, en projection dans l'espace de coordonn\'ee $E^c(q)\oplus E^{ss}(q)$.
On en d\'eduit que $W^{ss}_{loc}(x'_g)$ traverse $W^u_{loc}(y_g)$. \qed

\section{Continuation des classes hyperboliques par cha\^\i nes}
Pour g\'en\'eraliser les raisonnements des sections pr\'ec\'edentes dans le cas des classes
hyperboliques par cha\^\i nes (i.e. dans le cadre du th\'eor\`eme~\ref{t.dichot}), nous avons besoin
de d\'efinir la notion de continuation de la classe (qui ne pose pas de probl\`eme dans le cadre hyperbolique).

On consid\`ere une classe de r\'ecurrence par cha\^\i nes qui est une classe homocline
$H(p)$ avec une structure partiellement hyperbolique $T_{H(p)}M=E^s\oplus E^c\oplus E^u$,
$\dim(E^c)=1$, telle que $E^c$ est finement pi\'eg\'ee.
Pour tout $g$ $C^1$-proche de $f$, la classe homocline $H(p_g)$ est contenue dans un petit
voisinage de $H(p)$, poss\`ede la m\^eme structure partiellement hyperbolique, et reste
hyperbolique par cha\^\i nes (th\'eor\`eme~\ref{t.chain-robust}).

Remarquons que $H(p)$ poss\`ede un ensemble dense $\cP$ de points p\'eriodiques $q$ 
pour lesquels l'exposant de Lyapunov central est uniform\'ement major\'e par une constante
strictement n\'egative, et il existe \'egalement une famille de plaques $\cW^{cs}$ tangente \`a $E^s\oplus E^c$,
pi\'eg\'ee, telle que $\cW^{cs}_q\subset W^s(q)$ pour chaque point $q\in \cP$.
\begin{lemme}\label{l.densite}
Il existe un voisinage $\cU\subset \diff^1(M)$ de $f$ form\'e de diff\'eomorphismes $g$
pour lesquels la continuation hyperbolique $q_g$ de chaque point $q\in \cP$ est bien d\'efinie
et tels que l'ensemble $\{q_g,\; q\in \cP\}$ est dense dans $H(p_g)$.
\end{lemme}

\begin{definition}
Pour $g,g'\in \cU$, deux points $x\in H(p_g)$, $x'\in H(p_{g'})$
\emph{ont la m\^eme continuation} s'il existe une suite $(q_n)$ de $\cP$ telle que
$(q_{n,g})$ converge vers $x$ et $(q_{n,g'})$ converge vers $x'$.
\end{definition}

Le lemme~\ref{l.densite} implique que pour tout $g,g'\in \cU$ et tout $x\in H(p_g)$,
il existe $x'\in H(p_{g'})$ ayant la m\^eme continuation que $x$.
Cette notion ne d\'epend pas du choix de l'ensemble de points p\'eriodiques $\cP$.
En g\'en\'eral la continuation d'un point n'est pas unique mais si $x'_1,x'_2$ ont la m\^eme
continuation que $x$, alors $x'_1$ et $x'_2$ appartiennent \`a la m\^eme plaque centre-stable.
\medskip

Pour les diff\'eomorphismes loin des connexions fortes, chaque point poss\`ede au plus deux continuations.
En effet, si $x'_1,x'_2, x'_3$ sont des continuations de $x$,
on introduit trois suites $(q_{1,n})$, $(q_{2,n})$, $(q_{3,n})$ dans $\cP$ telles que
leurs continuations pour $g$ convergent toutes vers $x$ et leurs continuations pour $g'$ convergent
vers $x'_1,x'_2,x'_3$ respectivement.
Si $(g_t)$ est un arc de diff\'eomorphismes dans $\cU$ joignant $g$ \`a $g'$, il existe $g_t$
ayant une connexion forte pour l'un des points p\'eriodiques consid\'er\'es.
Nous allons donner un \'enonc\'e plus pr\'ecis.
\medskip

Pour chaque $g\in \cU$, nous introduisons
$\widetilde{H(p_g)}$, l'ensemble des paires $(x,\sigma)$ o\`u $x$ est un point de $H(p_g)$
et $\sigma$ est une orientation de $E^c(x)$ telle que $x$ est accumul\'e par $H(p_g)$
dans la composante de $\cW^{cs}(x)\setminus W^{ss}_{loc}(x)$ d\'etermin\'ee par $\sigma$.
C'est une partie du fibr\'e unitaire associ\'e \`a $E^c_{H(p_g)}$.
La dynamique de $g$ s'\'etend donc en une dynamique $\widetilde g$ sur $\widetilde{H(p_g)}$.
On introduit aussi la projection $\pi_g\colon \widetilde{H(p_g)} \to H(p_g)$
telle que $\pi_g(x,g)=x$.

\begin{proposition}
Consid\'erons une classe de r\'ecurrence par cha\^\i nes de $f$ qui est une classe homocline
non triviale $H(p)$ munie d'une structure partiellement hyperbolique
$T_{H(p)}M=E^s\oplus E^c\oplus E^u$ avec $\dim(E^c)=1$ et telle que $E^{cs}=E^s\oplus E^c$
est finement pi\'eg\'e. Supposons aussi que pour tout diff\'eomorphisme $g$ $C^1$-proche de $f$,
la classe $H(p_g)$ n'a pas de connexion forte.

Il existe alors un voisinage $\cU\subset \diff^1(M)$ de $f$
tel que pour tout $g,g'\in \cU$ on ait les propri\'et\'es suivantes.
\begin{itemize}
\item[--] (Rel\`evement.) L'application $\pi_g\colon \widetilde{H(p_g)} \to H(p_g)$ est surjective et
semi-conjugue $\widetilde g$ et $g$.
\item[--] (Continuation du rel\`evement.) Pour tout $\widetilde x_g=(x_g,\sigma)\in \widetilde{H(p_g)}$,
il existe un unique $\widetilde x_{g'}=(x_{g'},\sigma')\in \widetilde{H(p_{g'})}$ tel que
$\pi_{g}(\widetilde x_g)=x_g$ et $\pi_{g'}(\widetilde x_{g'})=x_{g'}$ ont la m\^eme continuation
et les orientations $\sigma$ sur $E^c(x_g)$ et $\sigma'$ sur $E^c(x_{g'})$ co\"\i ncident.
Ceci d\'efinit une bijection $$\Phi_{g,g'}\colon \widetilde{H(p_g)} \to \widetilde{H(p_{g'})}.$$
\item[--] (Continuation de la projection.) Pour tout $x_g\in H(p_g)$ et $x_{g'}\in H(p_{g'})$ ayant la
m\^eme continuation il existe $\widetilde x\in \widetilde{H(p)}$ tel que
$\pi_g(\Phi_{f,g}(\widetilde x))=x_g$ et $\pi_{g'}(\Phi_{f,g'}(\widetilde x))=x_{g'}$.
\end{itemize}
\end{proposition}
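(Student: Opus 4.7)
The plan is to exploit the density of hyperbolic continuations of the periodic set $\cP$ (given by Lemma~\ref{l.densite}) together with the absence of strong connections (for all $g$ in a neighborhood of $f$) in order to transport lifts between nearby diffeomorphisms. The key geometric input is that each $q\in\cP$ is a hyperbolic periodic point with $E^c$ a stable direction, so for $g$ close to $f$ its continuation $q_g$ is well-defined and the local plaque $\cW^{cs}_{q_g}$ is contained in $W^s(q_g)$ (cf.\ Lemma~\ref{l.hyp-chain}). Since $H(p_g)$ is non-trivial and hyperbolic par cha\^\i nes, it accumulates at $q_g$ along $\cW^{cs}_{q_g}\setminus W^{ss}_{loc}(q_g)$, and the ``no strong connection'' hypothesis will be used precisely to control the number and the continuous dependence of these accumulation sides.

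First, for the lifting and semi-conjugation assertion, I fix $g\in\cU$ and $x\in H(p_g)$. By Lemma~\ref{l.densite} there is $(q_n)\subset\cP$ with $q_{n,g}\to x$; for $n$ large $H(p_g)$ accumulates $q_{n,g}$ in at least one of the two components of $\cW^{cs}_{q_{n,g}}\setminus W^{ss}_{loc}(q_{n,g})$, and up to extraction the selected side converges to an orientation $\sigma$ of $E^c(x)$ giving $\widetilde x=(x,\sigma)\in\widetilde{H(p_g)}$. Surjectivity of $\pi_g$ follows; the identity $\pi_g\circ\widetilde g=g\circ\pi_g$ is automatic because $\widetilde g$ is defined as the action of $Dg$ on the unit central bundle.

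Next, to build $\Phi_{g,g'}$ I start from $\widetilde x_g=(x_g,\sigma_g)$, pick $(q_n)\subset\cP$ with $q_{n,g}\to x_g$ and accumulation side converging to $\sigma_g$, extract so that $q_{n,g'}\to x_{g'}\in H(p_{g'})$ with accumulation side converging to an orientation $\sigma_{g'}$, and set $\Phi_{g,g'}(\widetilde x_g)=(x_{g'},\sigma_{g'})$. The bijectivity and continuity of the $\pi$'s will then follow from a symmetric construction of $\Phi_{g',g}$. The delicate point is well-definedness: if two sequences $(q_n^a),(q_n^b)$ in $\cP$ both realize $\widetilde x_g$ but produce limits $\widetilde x_{g'}^a\neq\widetilde x_{g'}^b$, I would join $g$ to $g'$ by an arc $(g_t)$ inside $\cU$, follow the hyperbolic continuations $q^{a}_{n,g_t}$ and $q^{b}_{n,g_t}$ for large fixed $n$, and track the corresponding accumulation-side data in the lifted space. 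At $t=0$ the two lifts coincide while at $t=1$ they are separated; by continuity of strong stable and unstable manifolds and by the fine trapping of $\cW^{cs}$, this separation must pass through a critical configuration in which $W^{ss}(q^{a}_{n,g_t})$ meets $W^{uu}(q^{b}_{n,g_t})$ off their orbits, producing a strong connection for $g_t$ and contradicting the hypothesis. This path/crossing argument is the technical heart of the proof and the step I expect to be the main obstacle, since making the ``critical configuration'' precise requires quantitative uniformity of stable/unstable plaque sizes on $H(p_{g_t})$ and a careful bookkeeping of orientations.

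Finally, for the continuation-of-projection assertion, given $x_g\in H(p_g)$ and $x_{g'}\in H(p_{g'})$ with the same continuation, I pick a common sequence $(q_n)\subset\cP$ with $q_{n,g}\to x_g$ and $q_{n,g'}\to x_{g'}$; passing to a subsequence, $q_{n,f}\to\bar x\in H(p)$, and the accumulation sides along $(q_n)$ simultaneously define compatible lifts $\widetilde x\in\widetilde{H(p)}$, $\widetilde x_g\in\widetilde{H(p_g)}$ and $\widetilde x_{g'}\in\widetilde{H(p_{g'})}$. By the very construction of $\Phi$ in the previous step, $\Phi_{f,g}(\widetilde x)=\widetilde x_g$ and $\Phi_{f,g'}(\widetilde x)=\widetilde x_{g'}$, and projecting gives $\pi_g(\Phi_{f,g}(\widetilde x))=x_g$ and $\pi_{g'}(\Phi_{f,g'}(\widetilde x))=x_{g'}$, as required.
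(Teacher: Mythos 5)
Votre stratégie coïncide avec celle esquissée dans le texte juste avant l'énoncé : on utilise la densité des continuations des points périodiques de $\cP$ (lemme~\ref{l.densite}) pour définir les relevés, puis un arc $(g_t)$ joignant $g$ à $g'$ pour montrer que deux continuations distinctes d'un même relevé forceraient l'apparition d'une connexion forte pour l'un des points périodiques considérés, contredisant l'hypothèse. Notez seulement que la configuration critique à exhiber est une connexion forte au sens du texte, c'est-à-dire une intersection de $W^{ss}(O)\setminus O$ avec $W^{uu}(O)\setminus O$ pour \emph{une même} orbite périodique (ou une connexion forte généralisée de la classe), et non une intersection entre les variétés fortes de deux orbites $q^a_n$ et $q^b_n$ distinctes ; il faut donc ramener votre croisement à ce cadre, par exemple en observant qu'une intersection $W^{ss}(q^a_{n,g_t})\cap W^{uu}(q^b_{n,g_t})$ entre points homocliniquement reliés fournit un point de $H(p_{g_t})$ dans $W^{ss}(q^a_{n,g_t})\setminus\{q^a_{n,g_t}\}$.
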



\chapter{Centralisateurs de diff\'eomorphismes}\label{c.centralisateur}
Nous utilisons les r\'esultats des chapitres pr\'ec\'edents
pour d\'ecrire le centralisateur des dif\-f\'e\-o\-mor\-phismes
de classe $C^1$ dans le groupe $\diff^1(M)$.
Dans \cite{centralisateur-conservatif,
centralisateur} nous montrons que le centralisateur
d'un diff\'eomorphisme $C^1$-g\'en\'erique $f$ est trivial, i.e.
co\"\i ncide avec le groupe engendr\'e par $f$.
Ceci r\'epond \`a une conjecture de Smale~\cite{Sm1,Sm2}.
Pour obtenir ce r\'esultat, nous devons avoir une bonne
description de la dynamique globale~:
on peut donc voir ce chapitre comme un test de notre compr\'ehension
des dynamiques $C^1$-g\'en\'eriques.
La d\'emonstration n\'ecessite de savoir perturber un diff\'eomorphisme
en pr\'eservant sa dynamique topologique mais en modifiant
la dynamique de son application tangente.
Par ailleurs, dans \cite{gioia} nous construisons un ouvert non vide $\cO$
de $\diff^1(M)$ et une partie dense de $\cO$ constitu\'ee de diff\'eomorphismes
dont le centralisateur est non d\'enombrable, donc non trivial.

\section{Diff\'eomorphismes commutant}
Si $f,g$ sont des diff\'eomorphismes qui commutent,
$g$ pr\'eserve l'ensemble des orbites de $f$
ainsi que les invariants dynamiques diff\'erentiables et topologiques
de $f$. Un argument facile de transversalit\'e
(voir~\cite[proposition 4.5]{ghys}) montre que pour
tout $p$-uple g\'en\'erique $(f_1,\dots f_p)\in (\diff^r(M))^p$ avec $p\geq 2$ et $r\geq 0$,
le groupe $\langle f_1, \ldots, f_p\rangle$ est libre.
Ceci motive la question suivante pos\'ee par Smale~\cite{Sm1,Sm2}.

Consid\'erons une vari\'et\'e connexe compacte sans bord $M$
et pour tout diff\'eomorphisme $f\in \diff^r(M)$, d\'efinissons
son \emph{centralisateur}
$$Z^r(F)=\{g\in \diff^1(M), fg=gf\}.$$
Il contient le groupe cyclique $\langle f\rangle$.
Nous disons que \emph{$f$ a un centralisateur trivial} si
$Z^r(f)=\langle f\rangle$.

\begin{question*}[Smale]
\emph{Pour $r\geq 1$, consid\'erons l'ensemble
des diff\'eomorphismes $C^r$ ayant un centralisateur trivial.}
\begin{enumerate}
\item Cet ensemble est-il dense dans $\diff^r(M)$?
\item Cet ensemble contient-il un G$_\delta$ dense de $\diff^r(M)$?
\item Cet ensemble contient-il un ouvert dense de $\diff^r(M)$?
\end{enumerate}
\end{question*}
\medskip

Ce probl\`eme remonte au travail de Kopell~\cite{kopell}
qui a donn\'e une r\'eponse compl\`ete pour $r\geq 2$
et $M=S^1$~: dans ce cas,
l'ensemble des diff\'eomorphismes \`a centralisateur trivial
contient un ouvert dense.

En dimension plus grande,
la description du centralisateur demande une compr\'ehension fine
de la dynamique globale~: puisqu'un \'el\'ement du centralisateur d'un diff\'eomorphisme peut co\"\i ncider
avec l'identit\'e sur certaines parties de $M$, nous devons tenir compte des orbites pour $f$ d'une partie dense de $M$.
Seules certaines classes de diff\'eomorphismes ont \'et\'e trait\'ees.
\begin{itemize}
\item[--] En r\'egularit\'e $C^\infty$, l'ensemble des diff\'eomorphismes \`a
centralisateur trivial contient un ouvert dense de l'ensemble des diff\'eomorphismes
poss\'edant un puits ou une source et
satisfaisant l'axiome $A$ et la condition de transversalit\'e forte~\cite{PY1}.
Sur le tore $\TT^n$, il contient un ouvert dense de l'ensemble
des diff\'eomorphismes d'Anosov~\cite{PY2} (voir aussi~\cite{fisher}).
Il contient aussi un G$_\delta$ dense d'un ouvert
de diff\'eomorphismes partiellement hyperboliques avec fibr\'e central de dimension $1$~\cite{Bu}.
\item[--] En r\'egularit\'e $C^1$, Togawa~\cite{To1, To2} a montr\'e que
l'ensemble des diff\'eomorphismes de classe $C^1$ \`a centralisateur trivial
contient un G$_\delta$ de dense de l'ensemble des diff\'eomorphismes
satisfaisant l'axiome $A$~; en particulier,
les diff\'eomorphismes $C^1$-g\'en\'eriques du cercle ont un centralisateur
trivial~\cite{To1, To2}.
\end{itemize}
\medskip

Nous avons obtenu une r\'eponse compl\`ete
\`a la question de Smale en classe $C^1$.

\begin{theoreme}[Bonatti-Crovisier-Wilkinson~\cite{centralisateur}]\label{t.centralisateur}
Il existe dans $\diff^1(M)$ un G$_\delta$ dense 
form\'e de diff\'eomorphismes $f$ ayant un centralisateur $Z^1(f)$
trivial.
\end{theoreme}

\begin{theoreme}[Bonatti-Crovisier-Vago-Wilkinson~\cite{gioia}]
\label{t.centralisateur-large}
Il existe un ouvert non vide $\cU$ de $\diff^1(M)$
et une partie dense $\cD\subset \cU$ form\'ee
de diff\'eomorphismes $f$ ayant un centralisateur $Z^1(f)$
non d\'enombrable. En particulier l'ensemble des diff\'eomorphismes
\`a centralisateur trivial ne contient pas un ouvert dense de $\diff^1(M)$.
\end{theoreme}

Dans le cas conservatif (lorsque qu'une forme volume ou une
forme symplectique est pr\'e\-ser\-v\'ee), l'analogue
du th\'eor\`eme~\ref{t.centralisateur}
reste vrai~\cite{centralisateur-conservatif}.
L'analogue du th\'eor\`eme~\ref{t.centralisateur-large}
reste vrai dans le cadre symplectique mais nous n'avons
pas de construction pour l'espace des diff\'eomorphismes
pr\'eservant une forme volume.

\section{Strat\'egie pour montrer que le centralisateur est trivial}
\subsection{Centralisateur localement trivial}\label{ss.local}
Bien souvent, la premi\`ere \'etape pour montrer
que le centralisateur d'un diff\'eomorphisme est trivial consiste
\`a montrer qu'il est ``localement trivial''.

\begin{definition}\label{d.local}
Le centralisateur $Z^1(f)$ d'un diff\'eomorphisme $f$
est \emph{localement trivial} si pour tout $g\in Z^1(f)$,
il existe un ouvert dense $U\subset M$ et une application
localement constante $\alpha\colon U\mapsto \ZZ$ telle que
$g(x)=f^{\alpha(x)}(x)$ pour tout $x\in U$.
\end{definition}

Dans~\cite{kopell}, le contr\^ole de la distorsion
de la d\'eriv\'ee des applications $C^2$ est un des ingr\'edients
principaux pour montrer que le centralisateur est trivial.
Dans le cadre des diff\'eomorphismes $C^1$ g\'en\'eriques,
nous devons changer d'argument puisque la distorsion n'est pas born\'ee~:
curieusement, c'est cette fois le fait que la distorsion
explose qui nous permet d'amorcer la d\'emonstration.

\paragraph{Propri\'et\'e (MD$^{M\setminus \Omega}$).}
Un diff\'eomorphisme $f$ satisfait la \emph{propri\'et\'e de
mauvaise distorsion sur l'ensemble errant},
s'il existe une partie dense $\cD\subset M\setminus \Omega(f)$ telle
que pour tout $K>0$ et tous $x\in \cD$, $y\in M\setminus \Omega(f)$
appartenant \`a des orbites diff\'erentes, il existe
$n\geq 1$ tel que
$$|\log |\det Df^n(x)|-\log|\det Df^n(y)||>K.$$

\paragraph{Propri\'et\'e (MD$^\text{s}$).}
Un diff\'eomorphisme $f$ satisfait la \emph{propri\'et\'e de mauvaise
distorsion sur les vari\'et\'es stables} (MD$^\text{s}$)
si, pour toute orbite p\'eriodique hyperbolique $O$, il existe
une partie dense $\cD\subset W^s(O)$
(pour la topologie intrins\`eque de $W^s(O)$)
telle que pour tout $K>0$ et tous $x\in \cD$, $y\in W^s(O)$
appartenant \`a des orbites diff\'erentes, il existe
$n\geq 1$ tel que
$$|\log |\det Df_{|W^s(\cO)}^n(x)|-\log|\det Df_{|W^s(\cO)}^n(y)||>K.$$
\medskip

D'apr\`es les th\'eor\`emes~\ref{t.kupka-smale} et~\ref{t.franks}
ainsi que les r\'esultats de la section~\ref{s.consequence}, les propri\'et\'e suivantes sont $C^1$-g\'en\'eriques~:
\begin{enumerate}
\item\label{g1} toutes les orbites p\'eriodiques sont hyperboliques~;
\item\label{g2} des orbites p\'eriodiques distinctes ont des valeurs propres distinctes~;
\item\label{g3} pour toute composante connexe $U$ appartenant \`a l'int\'erieur de
l'ensemble non-errant $\Omega$, il existe une orbite p\'eriodique
$O\subset \interior(\Omega)$ telle que $U$ est contenue dans l'adh\'erence
de $W^s(O)$.
\end{enumerate}

\begin{consequence*}
Tout diff\'eomorphisme $f$ ayant les propri\'et\'es
$C^1$-g\'en\'eriques~\ref{g1}, \ref{g2}, \ref{g3} ci-dessus et
qui satisfait les propri\'et\'es (MD$^{M\setminus \Omega}$) et (MD$^\text{s}$)
a un centralisateur localement trivial.
\end{consequence*}
\begin{proof}
Les propri\'et\'es (MD) permettent d'``individualiser'' les orbites~:
si $f$ satisfait la propri\'et\'e (MD$^{M\setminus \Omega}$),
alors tout diff\'eomorphisme $g\in Z^1(f)$ doit pr\'eserver
chaque orbite de $\cD$. Ceci permet de construire
la fonction $\alpha$ de la d\'efinition~\ref{d.local}
sur $\cD$. Sur l'ensemble errant, la fonction $\alpha$
doit \^etre localement constante. Par cons\'equent,
$\alpha$ s'\'etend contin\^ument sur tout $M\setminus \Omega(f)$
(en particulier, elle est constante sur chaque composante de l'ensemble errant).

Si $f$ satisfait la propri\'et\'e (MD$^\text{s}$),
le m\^eme argument s'applique \`a la vari\'et\'e stable de toute
orbite p\'eriodique hyperbolique pr\'eserv\'ee par $g\in Z^1(f)$~;
en particulier $Dg$ et $Df^\alpha$ co\"\i ncident aux points de l'orbite $O$
et $\alpha$ est constante sur toute la vari\'et\'e stable.
Puisque les orbites p\'eriodiques de $f$ ont des valeurs propres distinctes,
$g$ fixe individuellement chacune d'entre elles et l'on peut donc appliquer
la propri\'et\'e (MD$^\text{s}$).
Pour toute composante connexe $U$
de $\interior(\Omega(f))$, il existe une orbite p\'eriodique
$O\subset \interior(\Omega(f))$ telle que $U$ est contenue dans l'adh\'erence
de $W^s(O)$. On en d\'eduit que $\alpha$ s'\'etend contin\^ument en une fonction
localement constante sur $\interior(\Omega(f))$.
\end{proof}
\medskip

La premi\`ere \'etape vers le th\'eor\`eme~\ref{t.centralisateur}
consiste donc \`a montrer le r\'esultat suivant.

\begin{theoreme}\label{t.MD}
Il existe un G$_\delta$ dense de $\diff^1(M)$
form\'e de diff\'eomorphismes qui satisfont les propri\'et\'es
(MD$^{M\setminus \Omega}$) et (MD$^\text{s}$).
\end{theoreme}

La propri\'et\'e (MD$^\text{s}$) peut s'obtenir comme variante
des arguments de~\cite{To1,To2}. La propri\'et\'e (MD$^{M\setminus \Omega}$)
est beaucoup plus d\'elicate \`a obtenir. En effet, les points d'une m\^eme
vari\'et\'e stable d'orbite p\'eriodique ont la m\^eme dynamique future, proche
d'une application lin\'eaire. Au contraire dans l'ensemble errant
les orbites distinctes peuvent avoir des comportements tr\`es diff\'erents.
La d\'emonstration de ce r\'esultat utilise la notion de perturbation fant\^ome
(section~\ref{s.fantome} plus bas) mais nous ne l'expliquons pas ici.

\subsection{Du local au global}

Nous introduisons une nouvelle propri\'et\'e.

\paragraph{Propri\'et\'e (GD).}
Un diff\'eomorphisme $f$ satisfait la \emph{propri\'et\'e de grande dilatation}
sur un ensemble $X\subset M$ si, pour tout $K>0$, il existe $n(K)\geq 1$ et pour tous
$x\in X$ et $n\geq n(K)$, il existe $j\in \ZZ$ tels que
$$\sup\{\|Df^n(f^j(x))\|,\|Df^{-n}(f^{j+n}(x))\|\}>K.$$
\medskip

\begin{consequence*}
Soit $f$ un diff\'eomorphisme d'une vari\'et\'e connexe $M$
de dimension $d\geq 2$ et $\per(f)$ l'ensemble de ses points p\'eriodiques.
Supposons que $f$ ait un centralisateur
localement trivial, ait toutes ses orbites p\'eriodiques hyperboliques et
satisfait la propri\'et\'e (GD) sur $M\setminus \per(f)$.
Alors $f$ a un centralisateur trivial.
\end{consequence*}
\begin{proof}
Consid\'erons $g$ dans le centralisateur de $f$
et $K>\max_{x\in M} (\|D_xg\|,\|D_xg^{-1}\|)$.
Puisque le centralisateur de $f$ est localement trivial,
on a $g=f^\alpha$ o\`u $\alpha\colon M\to \ZZ$
est une fonction localement constante sur un ouvert dense de $M$. La propri\'et\'e (GD) permet de
borner $\alpha$ par $n(K)$.
Puisque les orbites p\'eriodiques de $f$ sont hyperboliques,
l'ensemble des points p\'eriodiques $\cP_0$ de p\'eriode inf\'erieure
\`a $2n(K)$ est fini. Consid\'erons \`a pr\'esent une collection d'ouverts
$\{U_i\}_{|i|\leq n(K)}$ d'union dense dans $M$
et telle que $g=f^i$ sur $U_i$. Sur $M\setminus \cP_0$, les ensembles
$\overline {U_i}$ sont deux \`a deux disjoints. Puisque $M\setminus \cP_0$ est connexe,
seul un des ensembles $U_i$ est non vide. Ceci montre que $g=f^i$
pour un certain entier $i$. Le centralisateur de $f$ est donc trivial.
\end{proof}
\medskip

Pour obtenir le th\'eor\`eme~\ref{t.centralisateur},
nous montrons l'\'enonc\'e suivant.

\begin{theoreme}\label{t.GD}
Soit $f$ un diff\'eomorphisme dont les points p\'eriodiques sont hyperboliques.
Il existe $g\in \diff^1(M)$ arbitrairement proche de $f$ tel que la propri\'et\'e
(GD) soit satisfaite sur $M\setminus \per(f)$.

De plus,
\begin{itemize}
\item[--] $f$ et $g$ sont conjugu\'e par un hom\'eomorphisme $\Phi$ de $M$,~;
\item[--] pour tout orbite p\'eriodique $O$ de $f$, les d\'eriv\'ees de $f$
le long de $O$ et de $g$ le long de $\Phi(O)$ sont conjugu\'ees~;
\item[--] si $f$ satisfait les propri\'et\'es (MD$^{M\setminus \Omega}$) et (MD$^\text{s}$), alors $g$ les satisfait \'egalement.
\end{itemize}
\end{theoreme}

\subsection{Densit\'e et g\'en\'ericit\'e}
Nous pouvons d\'eduire des sections pr\'ec\'edentes
qu'il existe un ensemble dense de dif\-f\'e\-o\-mor\-phis\-mes de $\diff^1(M)$
ayant un centralisateur trivial.
En effet, on peut perturber tout diff\'eomorphisme $f\in \diff^1(M)$
pour que les propri\'et\'es g\'en\'eriques~\ref{g1}, \ref{g2} et \ref{g3}
de la section~\ref{ss.local} ainsi que (MD$^{M\setminus \Omega}$) et (MD$^\text{s}$) soient satisfaites. On perturbe ensuite $f$ en un diff\'eomorphisme
$g$ donn\'e par le th\'eor\`eme~\ref{t.GD} qui satisfait la propri\'et\'e (GD).
Les propri\'et\'es ~\ref{g1}, \ref{g2}, (MD$^{M\setminus \Omega}$) et (MD$^\text{s}$) sont pr\'eserv\'ees. Puisque $f$ et $g$ sont conjugu\'ees,
la propri\'et\'e~\ref{g3} l'est \'egalement. Ceci implique que
le centralisateur de $g$ est trivial.

Nous ne pouvons pas obtenir la g\'en\'ericit\'e des dif\-f\'e\-o\-mor\-phis\-mes
\`a centralisateur trivial de cette fa\c con puisque la propri\'et\'e
$GD$ n'est pas v\'erifi\'ee sur un G$_\delta$ dense.
Pour conclure nous faisons un argument de g\'en\'ericit\'e~:
pour des raisons de compacit\'e, nous travaillons avec l'espace des
hom\'eomorphismes bilipschitziens $\lip(M)$.
Les raisonnements des sections pr\'ec\'edentes permettent de montrer que
l'ensemble des diff\'eomorphismes $f$ ayant un centralisateur trivial
dans $\lip(M)$ forme une partie dense de $\diff^1(M)$.
En utilisant la proposition~\ref{p.baire}, nous en d\'eduisons qu'il contient
un G$_\delta$ dense, ce qui conclut la d\'emonstration du th\'eor\`eme~\ref{t.centralisateur}.

\section{Perturbations fant\^omes}\label{s.fantome}
Nous pr\'esentons \`a pr\'esent quelques ingr\'edients importants pour
la d\'emonstration du th\'e\-o\-r\`e\-me~\ref{t.GD}.
Certains de ces arguments sont aussi utilis\'es pour obtenir le
th\'eor\`eme~\ref{t.MD}.

Le th\'eor\`eme~\ref{t.GD} permet de perturber un
diff\'eomorphisme $f$ pour changer la dynamique de son application
tangente sans changer la classe de conjugaison topologique de $f$.
Pour cela, nous consid\'erons des domaines ouverts $U\subset M$ et 
un entier $n\geq 1$ tels que
les it\'er\'es $\overline U, f(\overline U),\dots, f^{n}(\overline U)$
soient deux-\`a-deux disjoints.
La perturbation $g_1$ de $f$ est conjugu\'ee \`a $f$ par un diff\'eomorphisme
$\phi_1$ qui co\"\i ncide avec l'identit\'e hors de l'union des $f^i(\overline U)$,
$0\leq i\leq n$. Pour la construire, nous modifions $f$ sur les ouverts
$U,f(U),\dots, f^{j-1}(U)$ pour un certain entier $1\leq j<n$
et nous utilisons les domaines restants $f^j(U),\dots,f^{n-1}(U)$
pour compenser la perturbation de sorte que $f^n$ et $g^n_1$
co\"\i ncident sur $U$. Une telle modification est appel\'ee
\emph{perturbation fant\^ome} de $f$.\index{perturbation fant\^ome}

Pour modifier les propri\'et\'es de l'application tangente $Df$, il est commode
de pouvoir supposer que les composantes connexes de chaque ouvert $f^i(U)$,
$0\leq i\leq n$ sont petites
de sorte que $f$ y soit proche d'une application lin\'eaire.
Par ailleurs, afin de modifier la dynamique le long de l'orbite de tout point non p\'eriodique,
nous souhaitons que $U$ contienne un it\'er\'e de chaque point $x\in M\setminus \per(f)$.
C'est exactement ce que permet le th\'eor\`eme~\ref{t.tour}
d'existence de tours topologiques.

Apr\`es avoir effectu\'e une telle perturbation fant\^ome, les
dynamiques des applications tangentes $Df$ et $Dg_1$
restent conjugu\'ees par $D\varphi_1$.
Pour modifier effectivement la dynamique, nous introduisons une suite
de perturbations fant\^omes $(g_i)$ convergeant dans
$\diff^1(M)$ vers un diff\'eomorphisme $g$.
On l'obtient en conjuguant
successivement par une suite de diff\'eomorphismes $(\varphi_i)$.
En choisissant la taille des composantes connexes du support
de chaque application $\varphi_i$ suffisamment petite,
nous garantissons que la suite $(\varphi_i)$ converge vers un hom\'eomorphisme
$\varphi$ de $M$ qui conjugue $f$ et $g$.

Cette technique de construction de perturbation comme limite de diff\'eomorphismes con\-ju\-gu\'ees
a d\'ej\`a \'et\'e utilis\'ee en dynamiques par Anosov et Katok~\cite{anosov-katok} pour des dynamiques lisses ou par Rees (voir~\cite{rees}) en r\'egularit\'e $C^0$.

\section{Probl\`emes}

Nous n'avons pas obtenu d'analogue au th\'eor\`eme~\ref{t.centralisateur-large}
dans le cadre conservatif.

\begin{question}
\emph{Consid\'erons une forme volume $v$ sur $M$.}\\
Existe-t-il un ouvert non vide $\cU$ de l'espace $\diff^1_v(M)$
des diff\'eomorphismes pr\'eservant $v$ et une partie dense $\cD$ dont les \'el\'ements
ont un centralisateur dans $\diff^1_v(M)$ non trivial~?
\end{question}
\medskip

Les th\'eor\`emes~\ref{t.centralisateur}
et~\ref{t.centralisateur-large} sugg\`erent que la topologie de l'ensemble des
diff\'eomorphismes \`a centralisateur trivial doit \^etre compliqu\'ee
et motive les questions suivantes.

\begin{question}
\begin{enumerate}
\item \emph{Consid\'erons l'ensemble des diff\'eomorphismes
ayant un centralisateur trivial.}
Quel est son int\'erieur~?\\
\emph{\cite{PY1} implique que pour $r\geq 2$, l'int\'erieur est non vide.
Pour $M=S^1$ et $r=1$, \cite{gioia} implique que l'int\'erieur est vide.}
\item Est-ce un ensemble bor\'elien~?\\
\emph{(Voir~\cite{FRW} pour une r\'eponse n\'egative dans le cadre mesurable.)}
\item \emph{L'ensemble $\{(f,g)\in (\diff^1(M))^2, fg=gf\}$
est ferm\'e.} Quelle est sa topologie locale~? Est-il localement connexe~?
\end{enumerate}
\end{question}
\medskip

Pour d\'emontrer le th\'eor\`eme~\ref{t.centralisateur}, nous
exhibons des propri\'et\'es dynamiques qui impliquent que le centralisateur est trivial.
Ceci illustre l'existence de liens entre la dynamique d'un dif\-f\'e\-o\-mor\-phis\-me
et ses propri\'et\'es alg\'ebriques dans le groupe $\diff^1(M)$.
En voici un autre exemple~: la relation de Baumslag-Solitar
$gfg^{-1}=f^n$, o\`u $n>1$ est un entier, implique que les points p\'eriodiques de $f$
ne sont pas hyperboliques.

\begin{question}
\begin{enumerate}
\item Consid\'erons un groupe finiement engendr\'e
$G=\langle a_1,\dots,a_k | r_1,\dots,r_m\rangle$
o\`u $\{a_i\}$ est une partie g\'en\'eratrice et les $r_i$ sont des relations.
Quelle est la taille de l'ensemble des diff\'eomorphismes
$f\in \diff^r(M)$ tels qu'il existe un morphisme injectif $\rho\colon G\to \diff^r(M)$
satisfaisant $\rho(a_1)=f$~?
\emph{Le th\'eor\`eme~\ref{t.centralisateur} implique que cet ensemble
est maigre lorsque $G$ est ab\'elien
(ou m\^eme nilpotent) et diff\'erent de $\ZZ $.}
\item Existe-t-il un diff\'eomorphisme $f\in \diff^r(M)$ tel que
pour tout $g\in \diff^r(M)$ le groupe engendr\'e par $f$ et $g$
est $\langle f\rangle$ lorsque $g\in \langle f\rangle$
ou le produit libre $\langle f\rangle*\langle g\rangle$~?
\end{enumerate}
\end{question}

\printindex

\begin{thebibliography}{}
\bibitem{abdenur-decomposition}
F. Abdenur, Generic robustness of spectral decompositions.
\textit{Ann. Sci. \'Ecole Norm. Sup.} \textbf{36} (2003), 213--224.

\bibitem{abdenur-attractor}
F. Abdenur, Attractors of generic diffeomorphisms are persistent.
\textit{Nonlinearity} \textbf{16} (2003), 301--311. 

\bibitem{abc-newhouse}
F. Abdenur, C. Bonatti, S. Crovisier,
Global dominated splittings and the $C^1$ Newhouse phenomenom.
\textit{Proceedings of the AMS} \textbf{134} (2006), 2229--2237. 

\bibitem{abc-ergodic}
F. Abdenur, C. Bonatti, S. Crovisier,
\textit{Nonuniform hyperbolicity for $C^1$-generic diffeomorphisms.}
ArXiv:0809.3309 (2008).
\`A para\^\i tre \`a \textit{Israel J. Math}.

\bibitem{abcd}
F. Abdenur, C. Bonatti, S. Crovisier, L. D\'\i az,
Generic diffeomorphisms on compact surfaces.
\textit{Fundamenta Mathematicae} \textbf{187} (2005), 127--159.

\bibitem{abcdw}
F. Abdenur, C. Bonatti, S. Crovisier, L. D\'\i az, L. Wen,
Periodic points and homoclinic classes.
\textit{Ergodic Theory \& Dynam. Systems} \textbf{27} (2007), 1--22.

\addtocounter{enumiv}{1}
\bibitem{ABD-interior}
F. Abdenur, C. Bonatti, L. D\'\i az, Non-wandering sets with non-empty interiors.
\textit{Nonlinearity} \textbf{17} (2004), 175--191.

\bibitem{abraham-robbin}
R. Abraham, J. Robbin,
\emph{Transversal mappings and flows.}
W. A. Benjamin, Inc., New York-Amsterdam (1967).

\bibitem{abraham-smale}
R. Abraham, S. Smale, Nongenericity of $\Omega$-stability.
\emph{Global analysis I}, Proc. Symp. Pure Math. AMS \textbf{14} (1970), 5--8.

\bibitem{hurley-livre}
E. Akin, M. Hurley, J. Kennedy,
Dynamics of topologically generic homeomorphisms.
\textit{Mem. Amer. Math. Soc.} \textbf{164} (2003).

\bibitem{anosov-katok}
D. Anosov, A. Katok, New examples in smooth ergodic theory.
Ergodic diffeomorphisms. \emph{Transactions of the Moscow Mathematical Society} \textbf{23} (1970), 1--35.

\bibitem{aoki}
N. Aoki, The set of Axiom A diffeomorphisms with no cycles.
\textit{Bol. Soc. Brasil. Mat.} \textbf{23} (1992), 21--65.

\bibitem{arnaud-closing}
M.-C. Arnaud, Le ``closing lemma'' en topologie $C\sp 1$.
\textit{M\'em. Soc. Math. Fr.} \textbf{74} (1998).

\bibitem{arnaud-connecting}
M.-C. Arnaud, Cr\'eation de connexions en topologie $C^1$.
\textit{Ergodic Theory \& Dynam. Systems} \textbf{31} (2001), 339--381.

\bibitem{arnaud-approximation}
M.-C. Arnaud, Approximation des ensembles $\omega$-limites des diff\'eomorphismes par des orbites p\'eriodiques.
\textit{Ann. Sci. \'Ecole Norm. Sup.} \textbf{36} (2003), 173--190.

\bibitem{symplectic}
M.-C. Arnaud, C. Bonatti, S. Crovisier,
Dynamiques symplectiques g\'en\'eriques.
\textit{Ergodic Theory \& Dynamical Systems} \textbf{25} (2005), 1401--1436.

\bibitem{asaoka}
M. Asaoka,
Hyperbolic sets exhibiting $C\sp 1$-persistent homoclinic tangency for higher dimensions.
\textit{Proc. Amer. Math. Soc.} \textbf{136} (2008), 677--686.

\bibitem{auslander}
J. Auslander, Generalized recurrence in dynamical systems.
\emph{Contributions to Differential Equations} \textbf{3} (1964), 65--74.

\bibitem{rees}
F. B\'eguin, S. Crovisier, F. Le Roux,
Construction of curious minimal uniquely ergodic
homeomorphisms on manifolds: the Denjoy-Rees technique.
\textit{Ann. Sci. \'Ecole Norm. Sup.} \textbf{40} (2007), 251--308.

\bibitem{birkhoff}
G. D. Birkhoff, Nouvelles recherches sur les syst\`emes dynamiques.
\textit{Memoriae Pont. Acad. Sci. Novi Lyncaei} \textbf{1} (1935), 85--216 and
\textit{Collected Math. Papers}, vol. II, 530--659.

\bibitem{bonatti-conjecture}
C. Bonatti,
\textit{On robust tangencies.}
International workshop on Global Dynamics beyond Uniform Hyperbolicity, Beijing 2009,\\ http://www.math.pku.edu.cn/teachers/gansb/conference09/BeijingAout2009.pdf.

\bibitem{bc}
C. Bonatti, S. Crovisier,
R\'ecurrence et g\'en\'ericit\'e.
\textit{Invent. Math.} \textbf{158} (2004), 33--104.

\bibitem{whitney}
C. Bonatti, S. Crovisier,
\textit{Central manifolds for partially hyperbolic set without strong unstable connections.}
En pr\'eparation.

\bibitem{gioia}
C. Bonatti, S. Crovisier, G. Vago, A. Wilkinson,
\textit{Local density of diffeomorphisms with large centralizers.}
\textit{Ann. Sci. \'Ecole Norm. Sup.}
\textbf{41} (2008), 925--954.

\bibitem{centralisateur-conservatif}
C. Bonatti, S. Crovisier, A. Wilkinson,
\textit{$C^1$-generic conservative diffeomorphisms have trivial centralizer.}
\textit{J. Mod. Dyn.} \textbf{2} (2008), 359--373.

\bibitem{centralisateur}
C. Bonatti, S. Crovisier, A. Wilkinson,
\textit{The $C^1$ generic diffeomorphism has trivial centralizer.}
\textit{Publ. Math. Inst. Hautes \'Etudes Sci.} \textbf{109} (2009), 185--244.

\bibitem{blender}
C. Bonatti, L. D\'\i az, Persistence of transitive diffeomorphims.
\textit{Ann. Math.} \textbf{143} (1995), 367--396.

\bibitem{BD-newhouse}
C. Bonatti, L. D\'\i az, Connexions h\'et\'eroclines et g\'en\'ericit\'e d'une infinit\'e de puits ou de sources.
\textit{Ann. Scient. \'Ec. Norm. Sup.} \textbf{32} (1999), 135--150.

\bibitem{aperiodic}
C. Bonatti, L. D\'\i az, On maximal transitive sets of generic diffeomorphisms.
\textit{Publ. Math. Inst. Hautes \'Etudes Sci.} \textbf{96} (2003), 171--197.

\bibitem{cycles}
C. Bonatti, L. D\'\i az, Robust heterodimensional cycles and $C\sp 1$-generic dynamics.
\textit{J. Inst. Math. Jussieu} \textbf{7} (2008), 469--525.

\bibitem{tangences}
C. Bonatti, L. D\'\i az, \textit{Abundance of $C^1$-robust tangencies.}
Pr\'epublication. ArXiv:0909.4062.

\bibitem{bdp}
C. Bonatti, L. D\'\i az, E. Pujals,
A ${C}^{1}$-generic dichotomy for diffeomorphisms: weak forms of hyperbolicity or infinitely many sinks or sources. \textit{Ann. of Math.} \textbf{158} (2003), 355--418.

\bibitem{bdpr}
C. Bonatti, L. D\'\i az, E. Pujals, J. Rocha,
Robustly transitive sets and heterodimensional cycles.
in Geometric methods in dynamics. I. 
\textit{Ast\'erisque} \textbf{286} (2003), 187--222. 

\bibitem{bdv}
C. Bonatti, L. D\'\i az, M. Viana, \textit{Dynamics Beyond Uniform Hyperbolicity}. Springer, Berlin, 2004.

\bibitem{bgw}
C. Bonatti, S. Gan, L. Wen,
On the existence of non-trivial homoclinic classes.
\textit{Ergodic Theory \& Dynam. Systems} \textbf{27} (2007), 1473--1508.

\bibitem{bgy}
C. Bonatti, S. Gan, D. Yang,
\emph{On the Hyperbolicity of Homoclinic Classes.}
\textit{Discrete Contin. Dyn. Syst.} \textbf{25} (2009), 1143--1162.

Pr\'epublication IMB Dijon, 2008.

\bibitem{bgv}
C. Bonatti, N. Gourmelon, T. Vivier,
Perturbations of the derivative along periodic orbits.
\textit{Ergodic Theory \& Dynam. Systems} \textbf{26} (2006), 1307--1337.

\bibitem{bly}
C. Bonatti, M. Li, D. Yang,
\emph{On the existence of attractors.}
Pr\'epublication. ArXiv:0904.4393.

\bibitem{BGLY}
C. Bonatti, S. Gan, M. Li, D. Yang,
\textit{Lyapunov stable classes and essential attractors}.
En pr\'eparation.

\bibitem{bv}
C. Bonatti, M. Viana,
SRB measures for partially hyperbolic systems whose central direction is mostly contracting.
\textit{Israel J. Math.} \textbf{115} (2000) 157--193.

\bibitem{bowen-specification} R. Bowen,
Periodic points and measures for Axiom $A$ diffeomorphisms.
\textit{Trans. Amer. Math. Soc.} \textbf{154} (1971), 377--397.

\bibitem{bowen} R. Bowen, \textit{Equilibrium states and the ergodic theory of Axiom A diffeomorphisms.}
Lect. Notes in Mathematics {\bf 470}, Springer-Verlag (1975).

\bibitem{Bu} L. Burslem,
Centralizers of partially hyperbolic diffeomorphisms. 
\textit{Ergod. Th. \& Dynam. Sys.} {\bf 24}  (2004), 55--87.

\bibitem{cmp}
C. Carballo, C. Morales, M.-J. Pacifico, Maximal transitive sets with singularities for generic $C^1$ vector fields. \textit{Bol. Soc. Brasil. Mat.} \textbf{31} (2000), 287--303. 

\bibitem{carvalho}
M. Carvalho,
Sina\"\i-Ruelle-Bowen measures for $N$-dimensional derived from Anosov diffeomorphisms.
\textit{Ergodic Theory Dynam. Systems} \textbf{13} (1993), 21--44.

\bibitem{conley}
C. Conley. \textit{Isolated Invariant Sets and the Morse Index.}
\textit{CBMS Regional Conference Series in Mathematics} \textbf{38}, American Mathematical Society, Providence. RI (1978).

\bibitem{panorama}
S. Crovisier,
Perturbations of $C^1$-diffeomorphisms and dynamics of 
generic conservative diffeomorphisms of surface.
In \emph{Dynamiques des diff\'eomorphismes conservatifs de surfaces~:
un point de vue topologique}., 1--33, \textit{Panorama et Synth\`eses}, \textbf{21}, Soc. Math. France, Paris (2006).

\bibitem{approximation}
S. Crovisier,
Periodic orbits and chain-transitive sets of $C^1$-diffeomorphisms.
\textit{Publ. Math. Inst. Hautes \'Etudes Sci.} \textbf{104} (2006), 87--141.

\bibitem{palis-faible}
S. Crovisier,
\textit{Birth of homoclinic intersections:
a model for the central dynamics of partially hyperbolic systems.}
Pr\'epublication. ArXiv math.DS/0605387 (2006).

\bibitem{model}
S. Crovisier, \textit{Partial hyperbolicity far from homoclinic bifurcations.}
Pr\'epublication.\\ ArXiv:0809.4965 (2008).

\bibitem{cp1}
S. Crovisier, E. Pujals,
\textit{Essential hyperbolicity versus homoclinic bifurcations.}
En pr\'e\-pa\-ra\-tion.

\bibitem{dg}
L. D\'iaz, A. Gorodetski,
\textit{Non-hyperbolic ergodic measures for non-hyperbolic homoclinic classes.}
\textit{Ergodic Theory \& Dynam. Systems} \textbf{29} (2009), 1479--1513.

\bibitem{dpu}
L. D\'\i az, E. Pujals, R. Ures,
Partial hyperbolicity and robust transitivity.
\textit{Acta Math.} \textbf{183} (1999), 1--43.

\bibitem{fisher} T. Fisher,
Trivial centralizers for axiom A diffeomorphisms.
\textit{Nonlinearity} \textbf{21} (2008), 2505--2517.

\bibitem{FRW} M. Foreman, M, D. Rudolph, L. Weiss,
On the conjugacy relation in ergodic theory.
{\em C. R. Math. Acad. Sci. Paris}  {\bf 343} (2006), 653--656.

\bibitem{franks}
J. Franks, Necessary conditions for stability of diffeomorphisms.
\textit{Trans. Amer. Math. Soc.} \textbf{158} (1971), 301--308.

\bibitem{gan-surface}
S. Gan, Horseshoe and entropy for $C^1$ surface diffeomorphisms,
\textit{Nonlinearity} \textbf{15} (2002), 841-848. 

\bibitem{gan-shadowing}
S. Gan, A generalized shadowing lemma.
\textit{Discrete and Continuous Dynamical Systems} \textbf{8} (2002), 627--632.

\bibitem{gan-wen-connecting}
S. Gan, L. Wen,
Heteroclinic cycles and homoclinic closures for generic diffeomorphisms.
In \emph{Special issue dedicated to Victor A. Pliss on the occasion of his 70th birthday}. 
\textit{J. Dynam. Differential Equations} \textbf{15} (2003), 451--471.

\bibitem{gwy} S. Gan, L. Wen, D. Yang, 
\textit{Minimal non-hyperbolicity and index completeness.}
\textit{Discrete Contin. Dyn. Syst.} \textbf{25} (2009), 1349--1366.

\bibitem{ghys} Ghys, \'E.,
Groups acting on the circle.
{\em L'Enseign. Math.} \textbf{47} (2001), 329--407.

\bibitem{gourmelon-metrique}
N. Gourmelon, Adapted metrics for dominated splittings.
\textit{Ergodic Theory \& Dynam. Systems} \textbf{27} (2007), 1839--1849.

\bibitem{gourmelon-tangence}
N. Gourmelon,
\emph{Generation of homoclinic tangencies by $C^1$-perturbations.}
Pr\'epublication Universit\'e de Bourgogne (2007),
\textit{Discrete Contin. Dyn. Syst.} \textbf{26} (2010), 1--42.

\bibitem{gourmelon-franks}
N. Gourmelon,
\textit{A Franks' lemma that preserves invariant manifolds.}
Pr\'epublication IMPA (2008).

\bibitem{gutierrez-against}
C. Gutierrez,
A counter-example to a $C^2$ closing lemma.
\emph{Ergodic Theory Dynam. Systems} \textbf{7} (1987), 509--530.

\bibitem{gutierrez-surface}
C. Gutierrez,
On $C\sp r$-closing for flows on 2-manifolds.
\emph{Nonlinearity} \textbf{13} (2000), 1883--1888.

\bibitem{gutierrez-survey}
C. Gutierrez, On the $C\sp r$-closing lemma.
The geometry of differential equations and dynamical systems. \emph{Comput. Appl. Math.} \textbf{20} (2001), 179--186.

\bibitem{hasselblatt-katok}
B. Hasselblatt, A. Katok,
\textit{Introduction to the modern theory of dynamical systems.}
Cambridge University Press (1995).

\bibitem{hayashi-star}
S. Hayashi,
Diffeomorphisms in $\mathcal{F}^1(M)$ satisfy Axiom A.
\textit{Ergodic Theory \& Dynam. Systems} \textbf{12} (1992), 233--253. 

\bibitem{hayashi-connecting}
S. Hayashi, Connecting invariant manifolds and the solution of the $C^1$-stability and $\Omega$-stability conjectures for flows. \textit{Ann. of Math.} \textbf{145} (1997), 81--137 et
\textit{Ann. of Math.} \textbf{150}  (1999), 353--356.

\bibitem{hayashi-application}
S. Hayashi, A $C\sp 1$ make or break lemma.
\textit{Bol. Soc. Brasil. Mat.} \textbf{31} (2000), 337--350. 

\bibitem{hayashi-survey}
S. Hayashi, Stability of dynamical systems.
\emph{Sugaku Expositions} \textbf{14} (2001), 15--29.
Traduction de \textit{Sugaku} \textbf{50} (1998), 149--162.

\bibitem{herman} M. Herman, \textit{Sur les courbes invariantes par les diff\'eomorphismes de l'anneau.}
\textit{Ast\'erisque} {\bf 103--104}, (1983).

\bibitem{herman-closing2}
M. Herman,
Exemples de flots hamiltoniens dont aucune perturbation en topologie $C^\infty$ n'a d'orbites p\'eriodiques sur un ouvert de surfaces d'\'energies.
\emph{C. R. Acad. Sci. Paris S\'er. I Math.} \textbf{312} (1991), 989--994.

\bibitem{herman-closing1}
M. Herman,
Diff\'erentiabilit\'e optimale et contre-exemples \`a la fermeture en topologie $C^\infty$ des orbites r\'ecurrentes de flots hamiltoniens.
\emph{C. R. Acad. Sci. Paris S\'er. I Math.} \textbf{313} (1991), 49--51.

\bibitem{herman-problems}
M. Herman,
Some open problems in dynamical systems.
In \emph{Proceedings of the International Congress of Mathematicians}, Vol. II (Berlin, 1998). Doc. Math. 1998, 797--808.

\bibitem{hirsch}
M. Hirsch, \textit{Differential topology.}
\textit{Graduate Texts in Mathematics} \textbf{33}, Springer-Verlag, New York-Heidelberg (1976).

\bibitem{hps}
M. Hirsch, C. Pugh, M. Shub,
\textit{Invariant Manifolds.}
\textit{Lecture Notes in Mathematics} \textbf{583}, Springer Verlag, Berlin (1977).

\bibitem{hurley}
M. Hurley, Attractors: persistence, and density of their basins.
\textit{Transactions AMS} \textbf{269} (1982), 247--271.

\bibitem{katok} A. Katok,
Lyapunov exponents, entropy and periodic orbits for diffeomorphisms.
\textit{Inst. Hautes \'Etudes Sci. Publ. Math.} \textbf{51} (1980), 137--173.

\bibitem{kopell} N. Kopell,
Commuting diffeomorphisms. In {\em Global Analysis},
\textit{Proc. Sympos. Pure Math.}, Vol. XIV, AMS (1970), 165--184.

\bibitem{kupka}
I. Kupka. Contributions \`a la th\'eorie des champs g\'en\'eriques.
\textit{Contrib. Differential Equations} \textbf{2} (1963), 457--484.

\bibitem{kuratowski}
K. Kuratowski, \emph{Topology II}, Academic Press - PWN - Polish Sci. Publishers Warszawa (1968).

\bibitem{ledrappier}
F. Ledrappier,
Quelques propri\'et\'es des exposants caract\'eristiques.
In \emph{\'Ecole d'\'et\'e de probabilit\'es de Saint-Flour}, 1982.
\emph{Lecture Notes in Math.} \textbf{1097} (1984), 305--396.

\bibitem{liao-closing}
S. Liao, An extension of the $C^1$ closing lemma.
\textit{Beijing Daxne Xuebao} \textbf{2} (1979), 1--41.

\bibitem{liao-obstruction1}
S. Liao, Obstruction sets. I.
\textit{Acta Math. Sinica} \textbf{23} (1980), 411--453.

\bibitem{liao-obstruction2}
S. Liao, Obstruction sets. II.
\textit{Beijing Daxue Xuebao} \textbf{2} (1981), 1--36.

\bibitem{liao-stability}
S. Liao, On the stability conjecture.
\textit{Chinese Ann. Math.} \textbf{1} (1980), 9--30.

\bibitem{liao-shadowing}
S. Liao, An existence theorem for periodic orbits.
\textit{Acta Sci. Natur. Univ. Pekinensis} \textbf{1} (1979), 1--20.

\bibitem{liao-livre}
S. Liao, \textit{Qualitative Theory of Differentiable Dynamical Systems.}
China Science Press (1996). (Traduction en Anglais des articles pr\'ec\'edents.)

\bibitem{mai}
J. Mai, A simpler proof of $C\sp 1$ closing lemma.
\textit{Sci. Sinica Ser. A} \textbf{29} (1986), 1020--1031.

\bibitem{mane-contribution}
R. Ma\~n\'e, Contributions to the stability conjecture.
\textit{Topology} \textbf{17}, 397--405.

\bibitem{mane-ergodic-closing}
R. Ma\~n\'e, An ergodic closing lemma.
\textit{Ann. of Math.} \textbf{116} (1982), 503--540.

\bibitem{mane-oseledets}
R. Ma\~n\'e, Oseledec's theorem from the generic viewpoint.
In \textit{Proceedings of the International Congress of Mathematicians} (Warsaw,
1983) vols. \textbf{1-2}, 1269-1276, (1984).

\bibitem{mane-1D}
R. Ma\~n\'e, Hyperbolicity, sinks and measure in one dimensional dynamics.
\textit{Commun. Math. Phys.} \textbf{100} (1985), 495--524 et
\textit{Comm. Math. Phys.} \textbf{112} (1987), 721--724.

\bibitem{mane-livre}
R. Ma\~n\'e, \textit{Ergodic Theory and Differential Dynamics.} Springer-Verlag, New York (1987).

\bibitem{mane-connecting}
R. Ma\~n\'e, On the creation of homoclinic points.
\textit{Inst. Hautes \'etudes Sci. Publ. Math.}  \textbf{66} (1988), 139--159.

\bibitem{mane-stabilite}
R. Ma\~n\'e, A proof of the $C\sp 1$ stability conjecture.
\textit{Inst. Hautes \'etudes Sci. Publ. Math.} \textbf{66} (1988), 161--210.

\bibitem{morales-pacifico}
C. Morales, M.-J. Pacifico, Lyapunov stability of $\omega$-limit sets.
\textit{Discrete Contin. Dyn. Syst.} \textbf{8} (2002), 671--674.

\bibitem{gugu}
C. Moreira,
\emph{There are no $C^1$-stable intersections of regular Cantor sets}.
Pr\'epublication. ArXiv:0901.3131.

\bibitem{moser}
J. Moser, \textit{Stable and random motions in dynamical systems.}
\textit{Annals of mathematics studies} {\bf 77}, Princeton university Press (1973).

\bibitem{munkres}
J. Munkres,
Obstructions to the smoothing of piecewise-differentiable homeomorphisms.
\textit{Ann. of Math.} \textbf{72} (1960), 521--554. 

\bibitem{newhouse-thesis} S. Newhouse, Nondensity of axiom ${\rm A}({\rm a})$ on $S\sp{2}$.
In \textit{Global analysis} I, \textit{Proc. Symp. Pure Math. AMS} \textbf{14} (1970), 191--202.

\bibitem{newhouse-homocline} S. Newhouse,
Hyperbolic limit sets.
\emph{Trans. Amer. Math. Soc.} \textbf{167} (1972), 125--150. 

\bibitem{newhouse-phenomenon}
S. Newhouse, Diffeomorphisms with infinitely many sinks, Topology 13 (1974), 9--18.

\bibitem{newhouse-elliptique}
S. Newhouse, Quasi-elliptic periodic points in conservative dynamical systems.
\textit{Amer. J. Math.} {\bf 99} (1977), 1061--1087.

\bibitem{newhouse-cours} S. Newhouse,
Lectures on dynamical systems.
In \textit{Dynamical systems}, CIME. Lect., Bressanone 1978, \textit{Progress in Mathematics} \textbf{8},
Birk\"auser (1980), 1--114.

\bibitem{newhouse-wild}
S. Newhouse, The abundance of wild hyperbolic sets and nonsmooth stable sets for diffeomorphisms, Publ. Math. I.H.E.S. 50 (1979), 101--151.

\bibitem{newhouse-palis}
S. Newhouse, J. Palis, Cycles and bifurcation theory.
Trois \'etudes en dynamique qualitative.  \emph{Ast\'erisque} \textbf{31}
(1976), 43--140.

\bibitem{oliveira1}
F. Oliveira, On the generic existence of homoclinic points.
\textit{Ergod. Th. \& Dynam. Sys.} {\bf 7} (1987), 567--595.

\bibitem{oliveira2}
F. Oliveira, On the $C\sp \infty$ genericity of homoclinic orbits.
\textit{Nonlinearity} \textbf{13}  (2000), 653--662.

\bibitem{palis-omega}
J. Palis, A note on $\Omega$-stability.
in \textit{ Global Analysis} I, \textit{Proc. Sympos. Pure Math.} {\bf 14}, A.M.S. Providence R. I. (1970).

\bibitem{palis-stabilite} J. Palis,
On the $C\sp 1$ $\Omega$-stability conjecture,
\textit{Publ. Math. Inst. Hautes \'Etudes Sci.}, \textbf{66,} (1988), 211--215.

\bibitem{palis-survey}
J. Palis,
On the contribution of Smale to dynamical systems.
In \emph{From Topology to Computation: Proceedings of the Smalefest, Berkeley 1990}, Springer (1993), 165--178.

\bibitem{palis-conjecture1} J. Palis,
A global view of dynamics and a conjecture on the denseness of finitude of attractors.
In \textit{G\'eom\'etrie complexe et syst\`emes dynamiques}, \textit{Ast\'erisque} \textbf{261} (2000), 335--347.

\bibitem{palis-conjecture2} J. Palis,
A global perspective for non-conservative dynamics.
\textit{Ann. I. H. Poincar\'e} \textbf{22} (2005), 485--507.

\bibitem{palis-conjecture3}
J. Palis, Open questions leading to a global perspective in dynamics.
\textit{Nonlinearity} \textbf{21} (2008), T37--T43.

\bibitem{palis-demelo}
J. Palis, W. de Melo, \textit{Geometric theory of dynamical systems. An introduction.}
Springer-Verlag, Berlin (1982).

\bibitem{palis-pugh}
J. Palis, C. Pugh, Fifty problems in dynamical systems.
In \emph{Dynamical systems}, Warwick 1974.
\emph{Lecture Notes in Math.} \textbf{468} (1975), 345--353.

\bibitem{ppss}
J. Palis, C. Pugh, M. Shub, M. D. Sullivan, D,
Genericity theorems in topological dynamics.
In \emph{Dynamical systems} (Warwick 1974).
\textit{Lecture Notes in Math.} \textbf{468} (1975), 241--250.

\bibitem{palis-smale} J. Palis, S. Smale,
Structural stability theorem.
\textit{Global analysis} I, Proc. Symp. Pure Math. AMS \textbf{14} (1970), 223--232.

\bibitem{palis-takens}
J. Palis, F. Takens, \textit{Hyperbolicity and Sensitive Chaotic Dynamics at Homoclinic Bifurcations.}
Cambridge studies in advanced mathematics \textbf{35}, Cambridge University Press (1993).

\bibitem{palis-viana}
J. Palis, M. Viana,
High dimension diffeomorphisms displaying infinitely many periodic attractors.
\textit{Ann. Math.} \textbf{140} (1994), 207--250.

\bibitem{PY1} J. Palis, J.-C. Yoccoz,
Rigidity of centralizers of diffeomorphisms.
\textit{Ann. Sci. \'Ecole Norm. Sup.} {\bf 22} (1989), 81--98.

\bibitem{PY2} J. Palis, J.-C. Yoccoz,
Centralizers of Anosov diffeomorphisms on tori.
\textit{Ann. Sci. \'Ecole Norm. Sup.} {\bf 22} (1989), 99--108.

\bibitem{peixoto} M. Peixoto, Structural stability on two-dimensional manifolds.
\textit{Topology} \textbf{1} (1962), 101--120 et \textit{Topology} \textbf{2} (1963) 179--180.

\bibitem{pesin}
Y. Pesin, Characteristic Lyapunov exponents and smooth ergodic theory.
\textit{Russian Mathematical Surveys} \textbf{324}, 55-114, (1977).

\bibitem{pixton}
D. Pixton, Planar homoclinic points.
\textit{J. Differential Equations} {\bf 44} (1982), 365--382.

\bibitem{pliss}
V. Pliss, On a conjecture of smale.
\textit{Differ. Uravn.} \textbf{8} (1972), 262--268.

\bibitem{plykin}
R. Plykin,
Sources and sinks of A-diffeomorphisms on surfaces.
\textit{Math. Sb. (N.S.)} \textbf{94}(136) (1974), 243--264, 336. 

\bibitem{poincare1} H. Poincar\'e, Sur le probl\`eme des trois corps et les \'equations de la dynamique.
\textit{Acta Math.} \textbf{13} (1890), 1--270.

\bibitem{poincare2} H. Poincar\'e, \textit{Les m\'ethodes nouvelles de la m\'ecanique c\'eleste}, tome III.
Gauthier-Villars (1899).

\bibitem{potrie} R. Potrie,
\emph{Generic bi-Lyapunov homoclinic classes.} Pr\'epublication. ArXiv:0903.4090.

\bibitem{potrie-sambarino}
R. Potrie, M. Sambarino,
\emph{Codimension one generic homoclinic classes with interior}. ArXiv:0806.3036. \`A para\^\i tre au \textit{Bull. Braz. Math. Soc.}.

\bibitem{pugh-closing}
C. Pugh, The closing lemma.
\textit{Amer. J. Math.} \textbf{89} (1967), 956--1009.

\bibitem{pugh-closing2}
C. Pugh, An improved closing lemma and a general density theorem
\textit{Amer. J. Math.} \textbf{89} (1967), 1010--1021.

\bibitem{pugh-closing3}
C. Pugh,
On arbitrary sequences of isomorphisms in $R\sp{m}\rightarrow R\sp{m}$.
\textit{Trans. Amer. Math. Soc.} \textbf{184} (1973), 387--400.

\bibitem{pugh-against}
C. Pugh,
Against the $C^{2}$ closing lemma.
\emph{J. Differential Equations} \textbf{17} (1975), 435--443. 

\bibitem{pugh-closing-smooth}
C. Pugh, A special $C\sp{r}$ closing lemma.
In \emph{Geometric dynamics} (Rio de Janeiro, 1981),
\emph{Lecture Notes in Math.} \textbf{1007} (1983), 636--650

\bibitem{pugh-hypotheseC2}
C. Pugh, The $C\sp{1+\alpha }$ hypothesis in Pesin theory.
\textit{Inst. Hautes \'Etudes Sci. Publ. Math.} \textbf{59} (1984), 143--161.

\bibitem{pugh-connecting}
C. Pugh,
The $C\sp 1$ connecting lemma.
\emph{J. Dynam. Differential Equations} \textbf{4} (1992), 545--553. 

\bibitem{pugh-robinson}
C. Pugh, C. Robinson,
The $C^{1}$ closing lemma, including Hamiltonians.
\textit{Ergod. Th. \& Dynam. Sys.} \textbf{3} (1983), 261--313.

\bibitem{PSW}
C. Pugh, M. Shub, A. Wilkinson,
H\"older foliations.
\textit{Duke Math. J.} \textbf{86} (1997), 517--546.

\bibitem{pujals-3D1}
E. Pujals,
On the density of hyperbolicity and homoclinic bifurcations for 3D- diffeomorphisms
in attracting regions. \emph{Discrete Contin. Dyn. Syst.} \textbf{16} (2006), 179--226.

\bibitem{pujals-3D2}
E. Pujals,
Density of hyperbolicity and homoclinic bifurcations for attracting topologically hyperbolic sets.
\emph{Discrete Contin. Dyn. Syst.} \textbf{20} (2008), 335-405.

\bibitem{pujals-pb}
E. Pujals,
Some simple questions related to the $C^r$-stability conjecture.
\textit{Nonlinearity} \textbf{21} (2008), T233--T237.

\bibitem{pujals-hertz}
E. Pujals, F. Rodriguez-Hertz,
Critical points for surface diffeomorphisms.
\textit{J. Mod. Dyn.} \textbf{1} (2007), 615--648.

\bibitem{pujals-sambarino}
E. Pujals, M. Sambarino,
Homoclinic tangencies and hyperbolicity for surface diffeomorphisms.
\textit{Ann. of Math.} \textbf{151} (2000), 961--1023.

\bibitem{pujals-sambarino-integrabilite}
E. Pujals, M. Sambarino, Integrability on codimension one dominated splitting.
\textit{Bull. Braz. Math. Soc.} \textbf{38} (2007), 1--19.

\bibitem{pujals-sambarino-dissipative}
E. Pujals, M. Sambarino,
Density of hyperbolicity and tangencies in sectional dissipative regions.
\textit{Ann. Inst. H. Poincaré Anal. Non Linéaire} \textbf{26} (2009), 1971--2000.

\bibitem{pujals-sambarino2}
E. Pujals, M. Sambarino,
On the dynamics of dominated splitting.
\textit{Ann. of Math.} \textbf{169} (2009), 675--740.


\bibitem{robbin} J. Robbin,
A structural stability theorem,
\textit{Ann. of Math.}, \textbf{94,} (1971), 447--493.

\bibitem{robinson-kupka-smale}
C. Robinson,
$C\sp{r}$ structural stability implies Kupka-Smale.
In \emph{Dynamical systems}, Proc. Sympos. Univ. Bahia, Salvador 1971, Academic Press (1973), 443--449.

\bibitem{robinson-periodique}
C. Robinson, Generic properties of conservative systems I \& II.
\textit{Amer. J. Math.} {\bf 92}(1970), 562--603 et 897--906.

\bibitem{robinson-homocline} C. Robinson,
Closing stable and unstable manifolds on the two sphere.
\textit{Proc. Amer. Math. Soc.} {\bf 41} (1973), 299--303.

\bibitem{robinson-stabilite} C. Robinson
Structural stability of $C^1$-diffeomorphisms.
\textit{J. Diff. Equ.}, \textbf{22,} (1976), 28-73.

\bibitem{robinson-livre}
C. Robinson, \textit{Dynamical Systems, Stability, Symbolic Dynamics, and Chaos.}
CRC Press (1994).

\bibitem{romero}
N. Romero,
Persistence of homoclinic tangencies in higher dimensions.
\textit{Ergod. Th. \& Dynam. Sys.}, \textbf{15,} (1995), 735--757.

\bibitem{shub-transitif}
M. Shub, Topological transitive diffeomorphism on $T^4$.
\textit{Lect. Notes in Math.} {\bf 206} (1971), 39--40.

\bibitem{shub-genericite}
M. Shub,
Stability and genericity for diffeomorphisms.
\textit{Dynamical systems}, Proc. Sympos., Univ. Bahia, Salvador, (1971), 493--514.

\bibitem{shub-topologique}
M. Shub, Structurally stable diffeomorphisms are dense.
\emph{Bull. Amer. Math. Soc.} \textbf{78} (1972), 817--818.

\bibitem{shub-livre}
M. Shub, Stabilit\'e globale des syst\`emes dynamiques.
\textit{Ast\'erisque} \textbf{56}, Soci\'et\'e Math\'ematique de France, Paris (1978).

\bibitem{sigmund}
K. Sigmund, Generic properties of invariant measures for Axiom A diffeomorphisms.
\textit{Invent. Math.} \textbf{11} (1970), 99-109.

\bibitem{simon}
C. Simon, A 3-dimensional Abraham-Smale example.
\textit{Proc. Amer. Math. Soc.} \textbf{34} (1972), 629--630.

\bibitem{smale-morse} S. Smale,
Morse inequalities for a dynamical system.
\textit{Bull. Amer. Math. Soc.} \textbf{66} (1960), 43--49.

\bibitem{smale-ICM-1962} S. Smale,
Dynamical systems and the topological conjugacy problem for diffeomorphisms.
\textit{Proc. Internat. Congr. Mathematicians (Stockholm, 1962)}, 49--496.

\bibitem{smale-kupka} S. Smale,
Stable manifolds for differential equations and diffeomorphisms.
\textit{Ann. Scuola Norm. Sup. Pisa} \textbf{17} (1963), 97--116.

\bibitem{smale-horseshoe} S. Smale,
Diffeomorphisms with many periodic points.
\textit{Differential and combinatorial topology}, Princeton Univ. Press (1965), 63--80.

\bibitem{smale-structural-stability}
S. Smale, Structurally stable systems are not dense.
\textit{Amer. J. Math.} \textbf{88} (1966), 491--496.

\bibitem{smale-dynamics}
S. Smale, Differentiable dynamical systems.
\textit{Bull. Amer. Math. Soc.} \textbf{73} (1967), 747--817.

\bibitem{smale-stability}
S. Smale, The $\Omega $-stability theorem.
In \textit{ Global Analysis} I, \textit{Proc. Sympos. Pure Math.} {\bf 14}, A.M.S. Providence R. I. (1970).

\bibitem{smale-survey}
S. Smale, Stability and genericity in dynamical systems.
S\'eminaire Bourbaki Vol. 1969/70, expos\'e 374.
\emph{Lecture Notes in Mathematics} \textbf{180} (1971).

\bibitem{Sm1} S. Smale,
Dynamics retrospective: great problems, attempts that failed.
\textit{Nonlinear science: the next decade} (Los Alamos, NM, 1990). 
\textit{Phys. D}  {\bf 51}  (1991), 267--273.

\bibitem{Sm2} S. Smale,
Mathematical problems for the next century.
\textit{Math. Intelligencer} {\bf 20} (1998), 7--15.

\bibitem{takens-tolerance1}
F. Takens, On Zeeman's tolerance stability conjecture.
\textit{Lecture Notes in Math.}, \textbf{197,} (1971), 209--219.

\bibitem{takens-tolerance2} F. Takens,
Tolerance stability.
\textit{Lecture Notes in Math.}, \textbf{468,} (1975), 293--304.

\bibitem{To1} Y. Togawa,
Generic Morse-Smale diffeomorphisms have only trivial symmetries.
\textit{Proc. Amer. Math. Soc.} {\bf 65} (1977), 145--149.

\bibitem{To2} Y. Togawa,
Centralizers of $C\sp{1}$-diffeomorphisms.
\textit{Proc. Amer. Math. Soc.} {\bf 71} (1978), 289--293.

\bibitem{wen-endo}
L. Wen, The $C\sp 1$ closing lemma for endomorphisms with finitely many singularities.
\textit{Proc. Amer. Math. Soc.} \textbf{114} (1992), 217--223.

\bibitem{wen-connecting} L. Wen, A uniform $C^1$ connecting lemma.
\textit{Discrete Contin. Dyn. Syst.} \textbf{8} (2002), 257--265.

\bibitem{wen-tangences}
L. Wen, Homoclinic tangencies and dominated splittings.
\textit{Nonlinearity} \textbf{15} (2002), 1445--1469.

\bibitem{wen-palis}
L. Wen, Generic Diffeomorphisms away from homoclinic tangencies and heterodimensional cycles.
\textit{Bull. Braz. Math. Soc.} \textbf{35} (2004), 419--452.

\bibitem{wen-selecting}
L. Wen, The selecting lemma of Liao.
\textit{Discrete Contin. Dyn. Syst.} \textbf{20} (2008), 159--175.

\bibitem{wen-xia-basic}
L. Wen, Z. Xia, A basic $C^1$ perturbation theorem.
\textit{J. Diff. Eq.} \textbf{154} (1999), 267--283.

\bibitem{wen-xia-connecting}
L. Wen, Z. Xia, $C^1$ connecting lemmas.
\textit{Trans. Amer. Math. Soc.} \textbf{352} (2000), 5213--5230.

\bibitem{yang-newhouse}
J. Yang, \emph{Newhouse phenomenon and homoclinic classes}.
Pr\'epublication. ArXiv:0712.0513.

\bibitem{yang-lyapunov-stable}
J. Yang, \emph{Lyapunov stable chain recurrent classes}.
Pr\'epublication. ArXiv:0712.0514.

\bibitem{yang-mesure}
J. Yang, \emph{Ergodic measures far away from tangencies}. Pr\'epublication
IMPA A629 (2009).

\bibitem{yoccoz-hyperbolique}
J.-C. Yoccoz, Introduction to hyperbolic dynamics.
In \emph{Real and complex dynamical systems}, \textit{NATO Adv. Sci. Inst. Ser. C Math. Phys. Sci.} \textbf{464} (1995), 265--291.

\bibitem{yoccoz-tore}
J.-C. Yoccoz,
Travaux de Herman sur les tores invariants.
S\'eminaire Bourbaki 754,
\textit{Ast\'erisque} \textbf{206} (1992), 311--344. 

\bibitem{zehnder}
E. Zehnder, Homoclinic points near elliptic fixed points.
\textit{Comm. Pure Appl. Math.} {\bf 26} (1973), 131--182.
\end{thebibliography}
\end{document}